\crefname{subsection}{subsection}{subsections}
\setlist{font=\normalfont}
\newtheorem{thm}{Theorem}[chapter]
\newtheorem{cor}[thm]{Corollary}
\newtheorem{lem}[thm]{Lemma}
\newtheorem{prob}[thm]{Problem}
\newtheorem{prop}[thm]{Proposition}
\theoremstyle{definition}
\newtheorem{defi}[thm]{Definition}
\theoremstyle{remark}
\newtheorem{remark}[thm]{Remark}
\numberwithin{section}{chapter}
\numberwithin{equation}{chapter}
\renewcommand{\epsilon}{\varepsilon}
\renewcommand{\phi}{\varphi}
\renewcommand{\restriction}[2]{#1 \upharpoonright #2}
\newcommand{\A}{\mathcal{A}}
\newcommand{\ac}{\mathbf}
\newcommand{\al}{\mathcal{A}}
\newcommand{\amen}{\mathrm{amen}}
\newcommand{\ap}{\mathcal{AE}}
\newcommand{\aper}{\mathrm{aper}}
\newcommand{\aph}{\mathcal{AH}}
\newcommand{\apt}{\mathcal{AT}}
\newcommand{\B}{\mathcal{B}}
\newcommand{\baire}{\mathbb{N}^{\mathbb{N}}}
\newcommand{\bs}{\boldsymbol}
\newcommand{\btu}{\bigtriangleup}
\newcommand{\C}{\mathbb{C}}
\newcommand{\cantor}{2^{\mathbb{N}}}
\newcommand{\car}{\curvearrowright}
\newcommand{\CBER}{\textup{CBER}}
\newcommand{\comp}{\mathrm{comp}}
\newcommand{\concat}{\string^}
\newcommand{\e}{\mathcal{E}}
\newcommand{\F}{\mathbb{F}}
\newcommand{\fin}{\mathrm{fin}}
\newcommand{\free}{\mathrm{free}}
\newcommand{\freeHyp}{\mathrm{freeHyp}}
\newcommand{\freeMeasHyp}{\mathrm{freeMeasHyp}}
\newcommand{\hra}{\hookrightarrow}
\newcommand{\hyp}{\mathrm{hyp}}
\newcommand{\I}{\mathbb{I}}
\newcommand{\id}{\mathrm{id}}
\newcommand{\inc}{\subseteq_B}
\newcommand{\mc}{\mathcal}
\newcommand{\measAmen}{\mathrm{measAmen}}
\newcommand{\measHyp}{\mathrm{measHyp}}
\newcommand{\mr}{\mathrel}
\newcommand{\N}{\mathbb{N}}
\newcommand{\ol}{\overline}
\newcommand{\PI}{\mathbf{\Pi}}
\newcommand{\pinc}{\subset_B}
\newcommand{\Q}{\mathbb{Q}}
\newcommand{\R}{\mathbb{R}}
\newcommand{\SIGMA}{\mathbf{\Sigma}}
\newcommand{\sm}{\mathrm{sm}}
\newcommand{\sq}{\sqsubseteq}
\newcommand{\uhr}{{\upharpoonright}}
\newcommand{\T}{\mathbb{T}}
\newcommand{\thra}{\twoheadrightarrow}
\newcommand{\tree}{\mathrm{Tr}}
\newcommand{\WF}{\mathrm{WF}}
\newcommand{\Z}{\mathbb{Z}}
\DeclareMathOperator{\Act}{Act}
\DeclareMathOperator{\Ap}{Ap}
\DeclareMathOperator{\Aut}{Aut}
\DeclareMathOperator{\CEINV}{CEINV}
\DeclareMathOperator{\CInd}{CInd}
\DeclareMathOperator{\Conv}{Conv}
\DeclareMathOperator{\EINV}{EINV}
\DeclareMathOperator{\Fin}{Fin}
\DeclareMathOperator{\Fr}{Fr}
\DeclareMathOperator{\grph}{graph}
\DeclareMathOperator{\Hom}{Hom}
\DeclareMathOperator{\Homeo}{Homeo}
\DeclareMathOperator{\im}{im}
\DeclareMathOperator{\Imagpart}{Im}
\DeclareMathOperator{\INV}{INV}
\DeclareMathOperator{\MGR}{MGR}
\DeclareMathOperator{\Prob}{Prob}
\DeclareMathOperator{\Realpart}{Re}
\DeclareMathOperator{\Sh}{Sh}
\DeclareMathOperator{\SL}{SL}
\DeclareMathOperator{\St}{St}
\DeclareMathOperator{\Stab}{Stab}
\DeclareMathOperator{\Supp}{Supp}
\begin{document}

\frontmatter

\title{Realizations of countable Borel equivalence relations}

\author{J. R. Frisch}
\address{Department of Mathematics, University of California, San Diego, La Jolla, CA 92093, USA}
\curraddr{}
\email{joshfrisch@gmail.com}
\thanks{JRF was partially supported by NSF Grants DMS-1464475, DMS-1950475 and DMS-2102838.}

\author{A. S. Kechris}
\address{Department of Mathematics, California Institute of Technology, Pasadena, CA 91125, USA}
\curraddr{}
\email{kechris@caltech.edu}
\thanks{ASK was partially supported by NSF Grants DMS-1464475 and DMS-1950475.}

\author{F. Shinko}
\address{Department of Mathematics, University of California, Berkeley, Berkeley, CA 94720, USA}
\curraddr{}
\email{forteshinko@berkeley.edu}
\thanks{FS was partially supported by NSF Grants DMS-1464475 and DMS-1950475.}

\author{Z. Vidny\'anszky}
\address{E\"otv\"os Lor\'and University, Institute of Mathematics, P\'azm\'any P\'eter stny. 1/C}
\curraddr{}
\email{zoltan.vidnyanszky@ttk.elte.hu}
\thanks{ZV was supported by NKFIH grants 113047 and 129211.}

\date{}

\subjclass[2020]{Primary 03E15}

\keywords{}

\dedicatory{
    We would like to thank
    Scot Adams,
    Ronnie Chen,
    Clinton Conley,
    Jingyin Huang,
    Andrew Marks,
    Petr Naryshkin,
    Severin Mejak,
    Aristotelis Panagiotopoulos,
    Brandon Seward,
    Michael Wolman and
    Andy Zucker
    for many helpful suggestions.
}

\begin{abstract}
    We study topological realizations of countable Borel equivalence relations,
    including realizations by continuous actions of countable groups,
    with additional desirable properties.
    Some examples include minimal realizations on any perfect Polish space,
    realizations as $K_\sigma$ relations,
    and realizations by continuous actions on the Baire space.
    We also consider questions related to
    realizations of specific important equivalence relations,
    like Turing and arithmetical equivalence.
    We focus in particular on the problem of realization
    by continuous actions on compact spaces and more specifically subshifts.
    This leads to the study of properties of subshifts,
    including universality of minimal subshifts,
    and a characterization of amenability
    of a countable group in terms of subshifts.
    Moreover we consider a natural universal space for actions and equivalence relations
    and study the descriptive and topological properties
    in this universal space of various properties,
    like, e.g.,
    compressibility, amenability or hyperfiniteness.
\end{abstract}

\maketitle

\tableofcontents

\mainmatter

\chapter{Introduction}\label{intro}
\section{Topological and continuous action realizations}\label{intro-top}
This paper is a contribution to the theory of countable Borel equivalence relations (CBER),
a recent survey of which can be found in \cite{Kec25}.
One of our main concerns is the subject of well-behaved, in some sense, realizations of CBER.
Given CBER $E, F$ on standard Borel spaces $X,Y$, resp.,
a Borel isomorphism of $E$ with $F$ is a Borel bijection $f \colon X \to Y$
which takes $E$ to $F$.
If such $f$ exists, we say that $E,F$ are \textbf{Borel isomorphic}, in symbols $E \cong_B F$.
A realization of a CBER $E$ is any CBER $F \cong_B E$.
We will be looking for realizations that have various desirable properties. 

To start with, a \textbf{topological realization} of $E$ is
an equivalence relation $F$ on a \textit{Polish space} $Y$ such that $E \cong_B F$,
in which case we say that $F$ is a topological realization of $E$ in the space $Y$.
It is clear that every $E$ admits a topological realization in some Polish space,
but we will look at topological realizations that have additional properties. 

Recall here the Feldman-Moore theorem that asserts that
every CBER is induced by a Borel action of a countable group
(see \Cref{theorem-feldman-moore}).
By \cite[13.11]{Kec95},
there is a Polish topology with the same Borel structure in which this action is continuous.
Thus every CBER admits a topological realization in some Polish space,
which is induced by a continuous action of some countable (discrete) group.
We will look again at such \textbf{continuous action realizations}
for which the space and the action have additional properties.

\textit{
    To avoid uninteresting situations,
    unless it is otherwise explicitly stated or clear from the context,
    all the standard Borel or Polish spaces below will be uncountable
    and all {\CBER} will be \textup{\textbf{aperiodic}},
    i.e., have infinite classes.
    We will denote by $\boldsymbol{\ap}$
    the class of all aperiodic {\CBER} on uncountable standard Borel spaces.
}

Concerning topological realizations, we first show the following (in \cref{3.1}),
where a minimal topological realization is one
in which all equivalence classes are dense:
\begin{thm}
    For every equivalence relation $E \in \ap$ and every perfect Polish space $Y$,
    there is a minimal topological realization of $E$ in $Y$.
\end{thm}
This has in particular as a consequence a stronger new version of a \textit{marker lemma}
(for the original form of the Marker Lemma see, e.g., \cite[Theorem 3.15]{Kec25}).
Let $E$ be a CBER on a standard Borel space $X$.
A \textbf{Lusin marker scheme} for $E$ is a family
$\{A_s\}_{s \in \N^{<\N}}$ of Borel sets such that
\begin{enumerate}[label=(\roman*)]
    \item
        $A_\emptyset = X$;
    \item \label{marker-scheme-union-condition}
        $\{A_{s \concat n}\}_n$ are pairwise disjoint and
        $\bigsqcup_n A_{s \concat n} \subseteq A_s$;
    \item
        Each $A_s$ is a complete section for $E$ (i.e., it meets every $E$-class).
\end{enumerate}

We have two types of Lusin marker schemes:
\begin{enumerate}[label=(\arabic*)]
    \item
        The Lusin marker scheme $\{A_s\}_{s \in \N^{<\N}}$ for $E$
        is of \textbf{type I} if in \ref{marker-scheme-union-condition} above,
        we actually have that $\bigsqcup_n A_{s \concat n} = A_s$
        and moreover the following holds:
        \begin{enumerate}[label=(\roman*), start=4]
            \item \label{item-markerschemei}
                For each $x \in \baire$,
                $\bigcap_n A_{x \uhr n}$ is a singleton.
        \end{enumerate}
        (Then in this case,
        for each $x \in \baire$,
        $A^x_n = A_{x \uhr n} \setminus \bigcap_n A_{x \uhr n}$
        is a vanishing sequence of markers
        (i.e., $\bigcap_n A^x_n = \emptyset$).) 
    \item
        The Lusin marker scheme $\{A_s\}_{s \in \N^{<\N}}$
        for $E$ is of \textbf{type II} if it satisfies the following:
        \begin{enumerate}[label=(\roman*)', start=4]
            \item
                If for each $n$,
                $B_n = \bigsqcup\{A_s : s\in \N^n\}$,
                then $\{B_n\}$ is a vanishing sequence of markers.
        \end{enumerate}
\end{enumerate}

We now have (see \cref{3.3}):
\begin{thm}
    Every $E \in \ap$ admits
    a Lusin marker scheme of type \textup{I} and
    a Lusin marker scheme of type \textup{II}.
\end{thm}

We next look at continuous action realizations.
One such realization of $E \in \ap$ would be
a realization $F$ on a compact Polish space,
where $F$ is generated by a continuous action of a countable (discrete) group.
We call these \textbf{compact action realizations}.
A \textbf{minimal, compact action realization} is a compact action realization
in which the group acts minimally, i.e., all the orbits are dense.
Finally, for each countable group $\Gamma$ and topological space $X$,
consider the shift action of $\Gamma$ on $X^\Gamma$
(see \Cref{prelim-action}).
The restriction of this action to a nonempty invariant closed set
is called a \textbf{subshift} of $X^\Gamma$.
We often identify a subshift with the underlying closed set.
Finally $\F_n$, $1 \le n \le \infty$, is the free group with $n$ generators.

Excluding the case of smooth CBERs
(see \Cref{prelim})
for which such a realization is impossible,
we show the following (in \cref{3.9}).
\begin{thm}\label{1.3}
    Every non-smooth hyperfinite equivalence relation in $\ap$ has a minimal, compact action realization.
    In fact this realization can be taken to be a subshift of $2^{\F_2}$
    if the equivalence relation is compressible and a subshift of $2^\Z$ otherwise.
\end{thm}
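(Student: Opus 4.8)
\emph{Sketch of the intended argument.} The plan is to split according to whether $E$ is compressible; this dichotomy is forced, since any subshift of $2^{\Z}$ is a compact $\Z$-system and hence, by amenability of $\Z$, carries an invariant Borel probability measure, so its orbit equivalence relation is non-compressible, and thus a compressible $E$ cannot be realized as a subshift of $2^{\Z}$. In both cases the strategy is to reduce, via the structure theory of hyperfinite equivalence relations (Dougherty--Jackson--Kechris), to producing a single concrete minimal subshift. We will use freely that a CBER is compressible iff it admits no invariant Borel probability measure (Nadkarni), and that every aperiodic hyperfinite CBER is the orbit equivalence relation of a free Borel action of $\Z$ (Slaman--Steel).

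Suppose first that $E$ is not compressible. Then $E$ has invariant probability measures, forming a nonempty metrizable Choquet simplex $K(E)$, and by the classification of hyperfinite CBER a non-smooth, non-compressible, aperiodic hyperfinite equivalence relation is determined up to Borel isomorphism by the affine homeomorphism type of $K(E)$. By Downarowicz's realization theorem, every nonempty metrizable Choquet simplex is affinely homeomorphic to the simplex of invariant measures of some minimal subshift $Y \subseteq 2^{\Z}$ with no periodic points. Choosing $Y$ so that its simplex matches $K(E)$, the relation $F := E(\Z \curvearrowright Y)$ is hyperfinite (being generated by a $\Z$-action), aperiodic and non-smooth (being the orbit equivalence relation of an infinite minimal $\Z$-system, which carries a non-atomic ergodic invariant measure), and has invariant-measure simplex affinely homeomorphic to $K(E)$; hence $F \cong_B E$, so $F$ is a minimal, compact action realization of $E$ realized as a subshift of $2^{\Z}$.

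Now suppose $E$ is compressible. By the classification there is, up to Borel isomorphism, a unique non-smooth, aperiodic, compressible hyperfinite CBER, so it suffices to realize one such relation. We take $E_\partial$, the orbit equivalence relation of the boundary action $\F_2 \curvearrowright \partial\F_2$ on the Cantor space of right-infinite reduced words: this is exactly tail equivalence on $\partial\F_2$, i.e. the restriction to $\partial\F_2$ of the (hyperfinite) tail equivalence relation on the full shift over the four-letter generating set, so it is hyperfinite; it is aperiodic since every tail class is infinite; it is non-smooth since it contains an isomorphic copy of $E_0$; and it is compressible since prepending to each reduced word the least generator whose prepending keeps it reduced is a Borel injection that strictly compresses every tail class. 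Moreover the boundary action is minimal. It remains to realize $\partial\F_2$, with this action, as a subshift of $2^{\F_2}$: we plan to produce a clopen set $U \subseteq \partial\F_2$ whose translates $\{gU : g \in \F_2\}$ separate points, so that $\xi \mapsto (\mathbf 1_U(g^{-1}\xi))_{g \in \F_2}$ is a continuous, $\F_2$-equivariant injection of $\partial\F_2$ onto a closed invariant set $Y \subseteq 2^{\F_2}$; being an equivariant homeomorphism onto $Y$, it makes the shift action on $Y$ minimal and gives $E(\F_2 \curvearrowright Y) \cong_B E_\partial \cong_B E$.

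The step I expect to be the main obstacle is this last embedding. A single clopen subset of $\partial\F_2$ carries little information, and the obvious choices do not separate points: for instance, if $U$ is the cylinder of words beginning with a fixed generator $a$, then $U$ is fixed by the homeomorphism exchanging $b \leftrightarrow b^{-1}$, this symmetry is inherited by the whole family $\{gU\}$, and so $\{gU\}$ cannot distinguish a word from its image under that flip. One must therefore choose $U$ carefully (equivalently, code a finite separating family of clopen sets into a single $\{0,1\}$-valued continuous function, using that $\F_2$ is infinite) so as to destroy all such residual symmetries of the tree, and then verify injectivity of the resulting map. On the non-compressible side, the analogous delicate point is the exact form of the Dougherty--Jackson--Kechris classification being invoked, namely that for non-smooth non-compressible aperiodic hyperfinite relations the invariant-measure simplex is a complete invariant, so that matching simplices via Downarowicz's construction does match the relations up to Borel isomorphism.
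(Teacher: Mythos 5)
Your non-compressible case is essentially the paper's proof of \cref{3.9}(ii): invoke Downarowicz's theorem to produce a minimal subshift of $2^\Z$ whose simplex of invariant measures has the prescribed number of extreme points, then apply the Dougherty--Jackson--Kechris classification. One small imprecision: the complete invariant for non-smooth, non-compressible, aperiodic hyperfinite relations is the \emph{cardinality} $|{\rm EINV}_E|$ of the set of ergodic invariant measures, not the affine homeomorphism type of the whole simplex of invariant measures; but affinely homeomorphic simplices have equinumerous extreme-point sets, so the implication you actually use (matching simplices $\Rightarrow$ Borel isomorphic relations) is valid, and this half of your argument is correct.

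The compressible case has a genuine gap, and it sits exactly where you predicted. The reduction to a single model is fine: by the classification there is a unique non-smooth, compressible, aperiodic hyperfinite relation up to $\cong_B$, and your verifications that the orbit equivalence relation of $\F_2 \car \partial\F_2$ is hyperfinite, aperiodic, non-smooth (via a copy of $E_0$ sitting inside it together with \cref{2.2}(i)), and compressible, and that the boundary action is minimal, all go through. But the actual content of the theorem in this case is the realization as a subshift of $2^{\F_2}$, i.e., the existence of a single clopen set $U\subseteq\partial\F_2$ whose translates $\{\gamma\cdot U\}_{\gamma\in\F_2}$ separate points, and you do not produce one. What comes for free on the boundary is only a clopen $4$-generator: the first-letter partition separates points, since if two distinct reduced infinite words first differ at position $n$ with common prefix $w$ of length $n-1$, then $w\cdot[s]$ separates them, where $s$ is the $n$th letter of one of them. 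Compressing four symbols into one bit is a genuine coding problem that you have not carried out, and it is not clear how to do it directly on $\partial\F_2$. (Your heuristic for why $U=[a]$ fails is also not airtight: the flip $b\leftrightarrow b^{-1}$ sends $gU$ to $\phi(g)U$ for the corresponding automorphism $\phi$, so it permutes the family rather than fixing each member, and does not by itself exhibit two unseparated points.)

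The paper sidesteps this difficulty entirely by choosing a different model: an explicit continuous action of $\F_2$ on $2^\N$ whose orbit equivalence relation \emph{is} $E_t$ (one generator flips the first bit; the other acts by $0\concat x\mapsto 00\concat x$, $11\concat x\mapsto 1\concat x$, $10\concat x\mapsto 01\concat x$). Because the phase space is already $2^\N$, the partition by the first coordinate is a clopen $2$-generator essentially by inspection, and minimality is automatic since $E_t$-classes are dense in $2^\N$. So to complete your proof you should either supply the coding argument producing a separating clopen set on $\partial\F_2$, or switch to a model of $E_t$ on $2^\N$ where the $2$-generator is immediate.
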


We discuss other cases of CBER which admit such realizations in \cref{realizations-cpt}.

For each countable group $\Gamma$ and standard Borel space $X$,
let $E(\Gamma, X)$ be the equivalence relation
induced by the shift action of $\Gamma$ on $X^\Gamma$.
Let $\Ap(X^\Gamma)$ be the \textbf{aperiodic part} of $X^\Gamma$,
i.e., the set of points $x\in X^\Gamma$ with infinite orbit,
and let $E^{\mathrm{ap}}(\Gamma, X)$ be the restriction of $E(\Gamma, X)$ to $\Ap(X^\Gamma)$.
Let also $\Fr(X^\Gamma)$ be its \textbf{free part},
i.e., the set of points $x\in X^\Gamma$ such that $\gamma\cdot x \neq x$
for all $\gamma \in \Gamma \setminus \{1\}$.
Denote by $F(\Gamma, X)$ the restriction of $E(\Gamma, X)$ to $\Fr(X^\Gamma)$.
Every aperiodic (i.e., having infinite orbits) Borel action of $\Gamma$ on a standard Borel space
is Borel isomorphic to the restriction of the shift action to an invariant Borel subset of $\Ap((\cantor)^\Gamma)$,
and similarly every free Borel action of $\Gamma$ on a standard Borel space
is Borel isomorphic to the restriction of the shift action to an invariant Borel subset of $\Fr((\cantor)^\Gamma)$

Also a CBER is \textbf{universal} if every CBER can be Borel reduced to it.
As opposed to \cref{1.3}, the next results (see \cref{3166} and \cref{3333})
show that some very complex CBER have compact action realizations.

\begin{thm}\label{1.4}
    \leavevmode
    \begin{enumerate}[label=(\arabic*)]
        \item
            For every infinite countable group $\Gamma$,
            $F(\Gamma, \cantor)$ admits a compact action realization.
            If $\Gamma$ is also finitely generated,
            then $E^{\mathrm{ap}}(\Gamma, \cantor)$ admits a compact action realization.
            In fact in both cases,
            such a realization can be taken to be a subshift of $(\cantor)^\Gamma$.
        \item
            Every compressible, universal {\CBER} admits a compact action realization.
            In fact such a realization can be taken to be a minimal subshift of $2^{\F_4}$.
    \end{enumerate}
\end{thm}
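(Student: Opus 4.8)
The plan for both parts is the same: to write down an explicit subshift and then identify its equivalence relation, up to Borel isomorphism, by a Borel Schröder--Bernstein argument for invariant Borel embeddings. I will use freely that if $E_1$ Borel embeds onto an invariant Borel subset of $E_2$ and vice versa then $E_1\cong_B E_2$, that any Borel reduction into a compressible CBER upgrades to such an invariant embedding, and that a compressible $E$ satisfies $E\cong_B E\sqcup E$, so that $E\sqcup F\cong_B E$ whenever $F$ Borel reduces to $E$.

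For (i), fix $\Gamma$ countably infinite. First I would produce a $\Gamma$-equivariant Borel map $\phi\colon Fr((\cantor)^\Gamma)\to 2^\Gamma$ landing in the \emph{uniformly free} points of $2^\Gamma$ (a Borel free colouring). Such a $\phi$ exists by the theorem of Gao--Jackson--Seward, and can also be extracted from a Lusin marker scheme for $Fr((\cantor)^\Gamma)$ (the marker scheme theorem above) by Borel-placing, around each point, patterns from a fixed aperiodic clopen family. Set $Z_0:=\overline{\phi\big(Fr((\cantor)^\Gamma)\big)}\subseteq 2^\Gamma$; since uniform freeness is a closed condition, $Z_0$ is a \emph{free subshift}. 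Now let $Y:=Z_0\times(\cantor)^\Gamma$ with the diagonal shift action: $Y$ is closed and shift-invariant, hence a subshift of $(2\times\cantor)^\Gamma\cong(\cantor)^\Gamma$, and it is free because its first coordinate always lies in $Z_0$. It remains to check $E\uhr Y\cong_B F(\Gamma,\cantor)$: since $Y$ is a free Borel $\Gamma$-space, $E\uhr Y$ Borel embeds onto an invariant Borel subset of $F(\Gamma,\cantor)$ by the universal property recalled in the introduction; and the map $x\mapsto(\phi(x),x)$ is a $\Gamma$-equivariant Borel injection of $Fr((\cantor)^\Gamma)$ onto an invariant Borel subset of $Y$, which gives the reverse embedding. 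Schröder--Bernstein then finishes this case. For $E^{ap}(\Gamma,\cantor)$ with $\Gamma$ finitely generated one repeats the argument with \emph{free} replaced by \emph{aperiodic} throughout: one needs an equivariant Borel map from $Ap((\cantor)^\Gamma)$ into a subshift $Z_1\subseteq 2^\Gamma$ all of whose points have infinite orbit, after which $Z_1\times(\cantor)^\Gamma$ works; finite generation enters because the marker/colouring machinery for aperiodic (not necessarily free) actions uses the local finiteness of the Cayley graph.

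For (ii), let $E$ be a compressible universal CBER. It is unique up to Borel isomorphism: if $E'$ is another, then $E$ and $E'$ Borel reduce to each other by universality, hence, by compressibility, invariantly Borel embed into each other, so $E\cong_B E'$. Hence it suffices to realize \emph{some} compressible universal CBER as a subshift of $2^{\F_4}$. I would use $\F_4=\F_2*\F_2$: the first free factor carries a compressible universal \emph{free} $\F_2$-action realized as a subshift over the alphabet $\cantor$ --- by the method of part (i), choosing the free-subshift coordinate so that the resulting subshift admits no invariant Borel probability measure (possible since $\F_2$ is non-amenable), one obtains a subshift of $(\cantor)^{\F_2}$ whose CBER is compressible and universal. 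The second free factor is then used to code the alphabet $\cantor$ down to $\{0,1\}$: along its (tree-structured) cosets one records, at each point of the first factor, the $\N$-indexed sequence of bits that is its $\cantor$-label. This yields a subshift $Y_0\subseteq 2^{\F_4}$; replacing it by its closure $\hat Y$ (a genuine subshift, arranged to be aperiodic as in (i) via a free-subshift coordinate) may add a Borel invariant garbage set $G$, but then $E\uhr\hat Y=E_0\sqcup(E\uhr G)$ with $E_0$ the coded copy, and $E_0\sqcup(E\uhr G)\cong_B E_0\cong_B E$ since $E_0$ is compressible universal.

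The hardest part of (i) will be keeping the compact realization genuinely aperiodic while still as large as $F(\Gamma,\cantor)$: a naive compactification of a Borel-coded copy of the free part acquires periodic, even fixed, points, which forces the free-subshift coordinate into the construction and hence the use of a Borel form of Gao--Jackson--Seward --- and for the aperiodic variant this is exactly where finite generation becomes indispensable. In (ii) the delicate point is the combined bookkeeping: shrinking the alphabet to $\{0,1\}$ while keeping the relation simultaneously universal (which needs the non-amenable factor) and compressible; the rank $4$ is precisely the number of free directions needed to carry the universal dynamics and the alphabet coding at once. Once the subshift is correctly engineered, the Schröder--Bernstein steps and the absorption fact $E\sqcup F\cong_B E$ for compressible $E$ are routine.
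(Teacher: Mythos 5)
Your overall strategy (build an explicit subshift, then apply Schr\"oder--Bernstein for invariant Borel embeddings, using compressibility to upgrade reductions to invariant embeddings) is the right one, and the free half of (i) is workable and close in spirit to the paper's argument, which extends a lemma of Elek; the paper, however, does not invoke Gao--Jackson--Seward. It augments the standard equivariant embedding $p\colon X\to(3^\N)^\Gamma$ with Borel $3$-colorings of the graphs of the generators (from Kechris--Solecki--Todorcevic), so that $\gamma\cdot x\neq x$ forces a disagreement in a fixed coordinate, and this disagreement survives passage to the closure. Two cautions about your version. First, ``uniform freeness is a closed condition'' is only true if the freeness witnesses are uniform over the image of $\phi$; the set of points of $2^\Gamma$ with free orbit closure is \emph{not} closed, so you need the strong form of the structurability theorem in which the blocking sets depend only on the group element. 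Second, and more seriously, your aperiodic case is a genuine gap: the assertion that there is an equivariant Borel map from $Ap((\cantor)^\Gamma)$ into a subshift all of whose points have infinite orbits is essentially the theorem itself, there is no marker/colouring machinery for non-free aperiodic actions to outsource it to, and ``local finiteness of the Cayley graph'' is not where finite generation enters. The actual mechanism is a semicontinuity statement for stabilizers: for $y\in\overline{p(X)}$ one produces $y_n\in p(X)$ with $\mathrm{stab}(y)\subseteq\bigcup_m\bigcap_{n\geq m}\mathrm{stab}(y_n)$; if $\mathrm{stab}(y)$ had finite index it would be finitely generated (this is where finite generation of $\Gamma$ is used), so its finitely many generators would eventually all lie in a single $\mathrm{stab}(y_n)$, forcing that stabilizer to have finite index and contradicting aperiodicity of the $y_n$.

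For (ii), the uniqueness of the compressible universal CBER up to $\cong_B$ is correct and is exactly how the paper reduces to exhibiting one example; but your free-product coding of that example has two unresolved problems. (a) When the second free factor of $\F_4=\F_2*\F_2$ is used to spread the $\cantor$-labels into bits, the $\F_4$-orbit equivalence relation on the coded set is generated by the coded copy of $E$ \emph{together with} the second factor's translation relation; an equivalence relation containing a universal one need not be universal (nor Borel bireducible with it), so controlling exactly how the second factor enlarges orbits is the main content, not bookkeeping. (b) Absorbing the garbage $G$ in the closure via $E_0\oplus(E\uhr G)\cong_B E_0$ requires $E\uhr G$ (equivalently the whole closure) to be \emph{compressible}, not merely aperiodic, since $|\EINV|$ is a Borel-isomorphism invariant; a free-subshift coordinate does not by itself prevent the closure from supporting an invariant measure. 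The paper sidesteps both issues by passing to the factor $\F_2\wr^\oplus\F_2$ of $\F_4$ and using the coinduction/restricted-jump formalism: there the orbit equivalence relation of the jump is computed exactly ($E_{\ac a^{\oplus[L]}}=(E_{\ac a})^{\oplus[L]}$), clopen $2$-generators and compressibility are shown to pass to the jump, and universality comes from embedding $E(\F_2,\R)$; this even yields a \emph{minimal} subshift, which your construction cannot.
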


In particular,
it follows that arithmetical equivalence $\equiv_A$
on $\cantor$ has a compact action realization,
but it is unknown if Turing equivalence $\equiv_T$ has such a realization.
More generally,
we do not know whether \textit{every} non-smooth CBER
has a compact action realization.
We also do not know if \textit{every} non-smooth CBER
even admits some other kinds of realizations,
for example transitive
(i.e., having at least one dense orbit)
continuous action realizations
on arbitrary or special types of Polish spaces.
These problems as well as the situation with smooth CBER
in such realizations are discussed in \cref{realizations-cts}.

In \Cref{realizations-compress} we discuss some special properties of continuous actions
of countable groups on compact Polish spaces,
related to compressibility and paradoxical decompositions,
that may be relevant to compact action realizations.

Returning to Turing equivalence, in \cref{realizations-turing},
we discuss topological realizations of Turing equivalence $\equiv_T$
and show that it admits a Baire class $2$ isomorphism
to an equivalence relation given by a continuous group action
on the Baire space $\baire$.
We do not know if this can be improved to Baire class $1$
but we also show that no such isomorphism can be below
the identity on a cone of Turing degrees.

\section{Subshifts}\label{intro-subshifts}

Related to \cref{1.4},
we call a countable group $\Gamma$ \textbf{minimal subshift universal}
if there is a minimal subshift of $2^\Gamma$ on which
the restriction of the shift equivalence relation is universal.
Then we have, see \cref{univFlow} and \cref{3333}:
\begin{thm}
    Let $\Gamma$ and $\Lambda$ be infinite groups,
    where $\Lambda$ admits a Borel action on a standard Borel space
    whose induced equivalence relation is universal
    (e.g., any group containing $\F_2$).
    Then the wreath product $\Gamma \wr \Lambda$ is minimal subshift universal.
    In particular, $\F_3$ is minimal subshift universal.
\end{thm}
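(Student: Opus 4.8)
The plan is to reduce this to Theorem~\ref{1.4}(ii) by building, out of a compact action realization of a universal CBER coming from $\Lambda$, a \emph{minimal} subshift of $2^{\Gamma \wr \Lambda}$ whose shift equivalence relation is still universal. First I would recall that $\Gamma \wr \Lambda = \bigoplus_{\Lambda} \Gamma \rtimes \Lambda$, so a point of $2^{\Gamma \wr \Lambda}$ can be thought of as a coloring indexed by pairs. The key observation is that the base group $\bigoplus_\Lambda \Gamma$ is an infinite locally finite (hence amenable, in fact locally finite) group, and conjugation by $\Lambda$ permutes its coordinates transitively; this is exactly the kind of structure that lets one ``spread'' a single orbit around to make it dense while not disturbing the equivalence relation induced on a well-chosen invariant piece.

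The main steps would be: (1) Fix a Borel action of $\Lambda$ on a standard Borel space inducing a universal CBER; by Theorem~\ref{1.4}(i) (applied via the assumption, or directly by the shift argument) we may realize a compressible universal CBER as a subshift, so WLOG we start from a subshift $X \subseteq 2^\Lambda$ (thinking of $\Lambda$ as a quotient of $\Gamma\wr\Lambda$) whose shift relation is universal; pull this back along the quotient $\Gamma \wr \Lambda \twoheadrightarrow \Lambda$ to get a closed invariant $\widetilde X \subseteq 2^{\Gamma\wr\Lambda}$. (2) Show the restricted shift equivalence relation on (the free or aperiodic part of) $\widetilde X$ contains a universal CBER as an invariant restriction — this uses that adding the direction-$\Gamma$ coordinates and the $\bigoplus_\Lambda\Gamma$-action only enlarges classes by a smooth/compressible amount, which one absorbs. (3) The crucial step: ``minimalize'' — replace $\widetilde X$ (or an appropriate saturation) by a \emph{minimal} subshift $M$ of $2^{\Gamma\wr\Lambda}$ while arranging that some invariant Borel subset of $M$ carries the universal relation. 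The standard device is to encode a point $x \in \widetilde X$ together with a generic/Toeplitz-like pattern in the $\Gamma$ coordinates along each $\Lambda$-coset so that the orbit closure becomes minimal, but a Borel ``decoding map'' recovers $x$ on a large invariant set; the $\bigoplus_\Lambda \Gamma$ part gives exactly enough room to carry such an auxiliary pattern without collapsing the reduction.

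I expect the main obstacle to be step (3): making the subshift honestly minimal while \emph{still} having the shift equivalence relation on some Borel invariant subset be universal. Minimality forces every orbit to be dense, which tends to \emph{merge} information and destroy a Borel reduction; one has to thread the needle by a careful coding — for instance, a hierarchical/Toeplitz construction in the $\Gamma$-coordinates indexed by a Følner-type exhaustion of $\bigoplus_\Lambda\Gamma$, chosen so that the ``universal'' data sits in a tail that is invisible to the closure operation but visible to a Borel map. Verifying simultaneously (a) the orbit closure of every point is all of $M$ (minimality), (b) $M$ is nonempty and the aperiodic/free part is conull in the right sense, and (c) the decoding is Borel and class-preserving on an invariant universal piece, is where the real work lies. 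Once (3) is in place, universality of the restricted relation follows from Theorem~\ref{1.4} and Borel bireducibility bookkeeping, and the final sentence (``$\F_3$ is minimal subshift universal'') is immediate by taking $\Gamma = \Lambda = \F_1 \wr$ ... more precisely by noting $\F_3 \cong \Z \wr \Z$-type decompositions are not available, so instead one uses $\Gamma = \Z$, $\Lambda = \F_2$ (which contains $\F_2$, hence admits a universal Borel action), giving $\Z \wr \F_2$, and then observing $\Z \wr \F_2$ embeds appropriately — or directly that $\F_3$ surjects onto and is commensurable with a group of the stated form so the construction transfers.
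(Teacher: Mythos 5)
There is a genuine gap here, and it sits exactly where you locate it: step (3), the ``minimalization''. You never actually construct the minimal subshift; you only describe the kind of coding that would be needed and acknowledge that verifying minimality, nonemptiness of the right invariant part, and Borel decodability ``is where the real work lies''. Worse, the route you sketch rests on a false premise: $\bigoplus_\Lambda\Gamma$ is locally finite (or amenable) only when $\Gamma$ is, and the theorem allows arbitrary infinite $\Gamma$, so a F\o lner/Toeplitz-type hierarchical coding over the base group is not available in general. The general difficulty you correctly identify --- that passing to an orbit closure to force minimality tends to merge orbits and destroy a Borel reduction --- is real, and your proposal does not overcome it. The derivation of the $\F_3$ case is also off: what one needs is that $\F_3$ \emph{surjects onto} the $3$-generated restricted wreath product $\Z\wr^\oplus\F_2$ (minimal subshift universality passes to groups surjecting onto a minimal subshift universal group); ``embeds appropriately'' and ``commensurable'' go in the wrong direction.

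The paper sidesteps the minimalization problem entirely by inverting where the two properties live. It starts from a \emph{minimal} free subshift $X\subseteq 2^\Gamma$, which exists for every infinite $\Gamma$ by Gao--Jackson--Seward, so minimality is secured in the $\Gamma$-direction from the outset. It then forms the restricted jump $\ac a^{\oplus[\Lambda]}$: the $\Gamma\wr^\oplus\Lambda$-flow on $X^\Lambda$ where $\Gamma^{\oplus\Lambda}$ acts coordinatewise and $\Lambda$ permutes coordinates. Minimality of this flow is automatic (a basic open set constrains finitely many coordinates, which $\Gamma^{\oplus\Lambda}$ moves independently; see \cref{jumpProps}), a clopen $2$-generator is obtained by composing the coordinate-projection generator with the $2$-generator of $X$ (\cref{liftProps}), and universality comes from \cref{jumpEmbedding}: a Borel injection of $\R$ into pairwise inequivalent points of $X$ induces $\R^\Lambda\to X^\Lambda$ witnessing $E(\Lambda,\R)\sqsubseteq_B E_{\ac a^{\oplus[\Lambda]}}$, and $E(\Lambda,\R)$ is universal by hypothesis. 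So the universal data is carried by the $\Lambda$-indexed \emph{product} of copies of the minimal $\Gamma$-subshift, not by a pullback of a universal subshift of $2^\Lambda$ that must afterwards be made minimal. If you want to salvage your outline, this reversal --- put minimality into the base and universality into the jump --- is the missing idea.
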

We do not know if $\F_2$ is minimal subshift universal.

It is well known that a countable group $\Gamma$ is \textbf{amenable}
iff every continuous action of $\Gamma$ on a compact space
admits an invariant Borel probability measure.
Call a class $\mathcal F$ of such actions
a \textbf{test for amenability} for $\Gamma$
if $\Gamma$ is amenable provided that every action in $\mathcal F$
admits an invariant Borel probability measure.
In \cite{GH97} it is shown that the class of actions on $\cantor$
is a test for amenability for any group.
Equivalently this says that the class of all subshifts of $(\cantor)^\Gamma$
is a test of amenability for $\Gamma$.
It turns out that the strongest result along these lines is actually true,
namely that the class of all subshifts of $2^\Gamma$
is a test of amenability for $\Gamma$,
see \cref{compressibleSubshift}.
This gives another characterization of amenability.

\begin{thm}\label{amenChar}
    Let $\Gamma$ be a countable group.
    Then $\Gamma$ is amenable iff every subshift of $2^\Gamma$
    admits an invariant Borel probability measure.
\end{thm}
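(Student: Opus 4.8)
The forward direction is classical: if $\Gamma$ is amenable, then every continuous action of $\Gamma$ on a compact space admits an invariant Borel probability measure, and a subshift of $2^\Gamma$ is such an action. So the content is the converse: I want to show that if $\Gamma$ is non-amenable, then some subshift of $2^\Gamma$ fails to carry an invariant Borel probability measure. Let me think about this carefully.

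The forward implication is immediate from the classical characterization quoted above: if $\Gamma$ is amenable, then every continuous action of $\Gamma$ on a compact (metrizable) space admits an invariant Borel probability measure, and a subshift of $2^\Gamma$ is in particular such an action. So the entire content is the converse, which I would prove in contrapositive form: if $\Gamma$ is not amenable, then some subshift of $2^\Gamma$ carries no invariant Borel probability measure. The starting point is the theorem of \cite{GdlH97}: since $\Gamma$ is non-amenable, the class of actions on $\cantor$ fails to be a test for amenability, so there is a continuous action of $\Gamma$ on $\cantor$ with no invariant Borel probability measure; via the equivariant embedding $x\mapsto(\gamma\mapsto\gamma^{-1}\cdot x)$ of $\cantor$ into $(\cantor)^\Gamma$, whose image is closed and shift-invariant, this yields a subshift $Y\subseteq(\cantor)^\Gamma$ with no invariant Borel probability measure. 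What remains is to ``compress the alphabet'' from $\cantor$ down to $2$.

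The plan is to construct a $\Gamma$-equivariant homeomorphism $\phi$ of $Y$, or of a product $Y\times M$ with an auxiliary \emph{free} subshift $M\subseteq 2^\Gamma$, onto a subshift $Z\subseteq 2^\Gamma$. (Such an $M$ exists for every infinite countable $\Gamma$ by the existence of hyper-aperiodic elements / free subshifts in the sense of Gao--Jackson--Seward. Also note that $Y\times M$, viewed inside $(\cantor\times 2)^\Gamma\cong(\cantor)^\Gamma$, still admits no invariant probability measure, since projection to $Y$ is a $\Gamma$-factor map that would push any such measure forward to one on $Y$.) Equivariance forces $\phi$ to be given by a local rule $\phi(x)(\gamma)=f(\gamma^{-1}\cdot x)$ for some continuous $f$ valued in $2$; since a single $\Gamma$-orbit inside $\Gamma$ cannot absorb the $\aleph_0$ orbits of $\Gamma\times\N$, one cannot merely ``place'' the bits of $x(\gamma)\in\cantor$, and one must instead exploit the unbounded hierarchical combinatorial structure carried by $M$ — a cofinal system of nested ``blocks'' (blueprints) of arbitrarily large size. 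On an $n$-th level block around $\gamma$ there is room to record the first $n$ bits of the relevant $\cantor$-coordinate; since the blocks are nested and cofinal, $x(\gamma)$ is recovered in full, while the $M$-coordinate is copied verbatim so that the whole configuration is reconstructible. This gives injectivity; continuity of $f$ and compactness make the image $Z$ a genuine (closed, shift-invariant) subshift.

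The hard part will be precisely this coding step: arranging a single rule $f$ that is simultaneously well-defined, continuous, shift-equivariant, injective, and has closed image, which is where the structure theory of subshifts of $2^\Gamma$ for arbitrary countable $\Gamma$ must be brought in carefully; this is essentially the reason the statement goes beyond \cite{GdlH97}. Once $\phi$ is in hand, the conclusion is routine: $\phi$ is a topological conjugacy, so the pushforward $\phi_*$ is an affine homeomorphism between the spaces of Borel probability measures on $Y$ (resp.\ $Y\times M$) and on $Z$ that commutes with the $\Gamma$-actions; hence $Z$ admits an invariant Borel probability measure iff $Y$ does. As $Y$ does not, $Z$ is the desired subshift of $2^\Gamma$ with no invariant Borel probability measure, establishing the contrapositive. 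Equivalently, in the language of \cref{compressibleSubshift}, $Z$ is a \emph{compressible} subshift of $2^\Gamma$ — compressibility being a Borel-isomorphism invariant transported by $\phi$, and equivalent by Nadkarni's theorem to the nonexistence of an invariant probability measure.
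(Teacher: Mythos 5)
The forward direction and the overall reduction (non-amenable $\Gamma$ $\Rightarrow$ some compressible subshift of $2^\Gamma$) are fine, and your observations that $Y\times M$ remains compressible and that a continuous equivariant injection into $2^\Gamma$ automatically has closed image by compactness are correct. But the core of the theorem is exactly the step you defer: producing a \emph{continuous}, equivariant, injective map from a compressible zero-dimensional flow into $2^\Gamma$, i.e.\ a clopen $2$-generator. Nothing you cite supplies this. The Hochman--Seward theorem (\cref{4.1}) gives a \emph{Borel} $2$-generator for any action without invariant measure, but the image is only a Borel invariant set; its closure is a subshift that may well acquire an invariant measure, and that gap between Borel realization and subshift realization is precisely what the theorem is about. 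The blueprint/marker machinery you invoke is likewise a Borel construction: a free subshift $M\subseteq 2^\Gamma$ exists, but extracting from it a continuously varying, nested, cofinal system of blocks and using them to code a $\cantor$-valued coordinate is not something you can take off the shelf, and your one-line description of the coding (``record the first $n$ bits on the $n$-th level block'') does not address how the rule is made continuous, injective, and well-defined simultaneously. So as written the proof reduces the theorem to an unproved claim (``every compressible flow of the form $Y\times M$ has a clopen $2$-generator'') that is at least as hard as the theorem itself.

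The paper's proof (\cref{compressibleSubshift}) sidesteps the infinite-alphabet problem entirely. For $\Gamma\supseteq\F_2$ it coinduces a specific compressible $\F_2$-flow that already has a clopen $2$-generator (the $E_t$ action of \cref{3.9}), using that both compressibility and clopen $2$-generators pass to coinduced flows. For general non-amenable $\Gamma$ it builds a subshift over a \emph{finite} alphabet $T\sqcup\{*\}$ directly from an expansion set $S$ witnessing non-amenability: configurations whose support is maximal right $S^3$-disjoint and which encode a $2$-to-$1$ self-surjection of the support (nonemptiness via Hall's theorem, compressibility from the $2$-to-$1$ map), and then exploits the sparseness of the support to encode the finitely many symbols of $T$ by a clopen injection $T\hra 2^{S'}$, giving the clopen $2$-generator by hand. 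If you want to salvage your route, you would need to either prove the clopen $2$-generator claim for your $Y\times M$ or first replace $Y$ by a compressible flow over a finite alphabet, at which point you are essentially reconstructing the paper's argument.
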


Our proof of \Cref{amenChar} is based on an explicit construction
of a subshift of $2^\Gamma$ with no invariant Borel probability measure
for every non-amenable group $\Gamma$.
Andy Zucker communicated subsequently to us a different proof
based on ideas of abstract topological dynamics,
especially the concept of strongly proximal flows.

We study in \cref{subshifts-space} a universal space for actions and equivalence relations
and the descriptive or topological properties of various subclasses.

Fix a countable group $\Gamma$.
For any Polish space $X$,
let $F(X^\Gamma)$ be the Effros Borel space
of all closed nonempty subsets of $X^\Gamma$
and define the standard Borel space of subshifts of $X^\Gamma$ as follows:
\[
    \Sh(\Gamma, X)
    = \{F \in F(X^\Gamma) : \textup{$F$ is $\Gamma$-invariant}\}.
\]
If $X$ is compact, we endow $F(X^\Gamma)$ with the compact Vietoris topology
and then $\Sh(\Gamma, X)$ is a compact Polish space in this topology.

Consider the Hilbert cube $\I^\N$.
Every compact Polish space is
(up to homeomorphism)
a closed subspace of $\I^\N$,
and thus every $\mathbf\Gamma$-\textbf{flow}
(i.e., a continuous action of $\Gamma$ on a compact Polish space)
is (topologically) isomorphic to a subshift of $(\I^\N)^\Gamma$,
i.e., there is homeomorphism of the underlying space of the flow
with the space of a subshift preserving the action.
We can thus consider the compact Polish space $\Sh(\Gamma, \I^\N)$
as the universal space of $\Gamma$-flows.

Similarly consider the product space $\R^\N$.
Every Polish space is
(up to homeomorphism)
a closed subspace of $\R^\N$,
and thus every continuous $\Gamma$-action on a Polish space is
(topologically) isomorphic to a subshift of $(\R^\N)^\Gamma$.
We can thus consider the standard Borel space $\Sh(\Gamma, \R^\N)$
as the universal space of continuous $\Gamma$-actions.

In particular taking $\Gamma = \F_\infty$,
the free group with a countably infinite set of generators,
we see that every CBER is Borel isomorphic to the equivalence relation $E_F$
induced on some subshift $F$ of $(\R^\N)^{\F_\infty}$
and so we can view $\Sh(\F_\infty, \R^\N)$ also as the universal space of CBER
and study the complexity of various classes of CBER
(like, e.g., smooth, aperiodic, hyperfinite, etc.)
as subsets of this universal space.
Similarly, we can view $\Sh(\F_\infty, \I^\N)$ as the universal space of
CBER that admit a compact action realization.
In this case we can also consider complexity questions
as well as generic questions of various classes. 

Let $\Phi$ be a property of continuous $\Gamma$-actions on Polish spaces
which is invariant under (topological) isomorphism.
Let 
\[
    \Sh_\Phi(\Gamma, X)
    = \{F \in \Sh(\Gamma, X) : F \models \Phi \},
\]
where we write $F \models \Phi$ to mean that $F$ has the property $\Phi$.

We will consider below the following $\Phi$,
where for the definition of the concepts in 7)--10)
see \ref{subshifts-space-props} of \cref{subshifts-space}.
When a property is stated in terms of an equivalence relation,
it is understood that this refers to the equivalence reaction induced by the action.
For example, for 1)
this is the class of all actions whose induced equivalence relation is finite.
\newcommand{\summarySubshiftProps}{
    \begin{enumerate}[label=\arabic*)]
        \item
            $\fin$: finite equivalence relation
        \item
            $\sm$: smooth equivalence relation
        \item
            $\free$: free action
        \item
            $\aper$: aperiodic equivalence relation
        \item
            $\comp$: compressible equivalence relation
        \item
            $\hyp$: hyperfinite equivalence relation
        \item
            $\amen$: amenable equivalence relation
        \item
            $\measHyp$: measure-hyperfinite equivalence relation
        \item
            $\measAmen$: measure-amenable action
        \item
            $\freeMeasHyp$: free action $+$ measure-hyperfinite equivalence relation
        \item
            $\freeHyp$: free action $+$ hyperfinite equivalence relation
    \end{enumerate}
    
    We summarize in the following table what we know
    concerning the descriptive or generic properties of the $\Phi$ above. Various parts of this table are true for certain classes of infinite groups $\Gamma$ and we explain that in detail in the paragraph following the table. We would like however to point out that they all hold for the free non-abelian groups $\F_n$, $2 \le n \le \infty$.
    
    \begin{center}
        \renewcommand{\arraystretch}{1.1}
        \begin{tabular}{|l|c|c|c|}
            \hline
            \multicolumn{1}{|c|}{$\Phi$}
                & \multicolumn{2}{c|}{$\Sh_\Phi(\Gamma, \I^\N)$}
                & $\Sh_\Phi(\Gamma, \R^\N)$ \\
            \hline \hline
                fin & \multirow{2}{*}{meager}
                & \multicolumn{2}{c|}{\multirow{2}{*}{$\PI^1_1$-complete}} \\ \cline{1-1}
            sm & & \multicolumn{2}{c|}{} \\ \hline
            free & \multirow{9}{*}{comeager}
                & \multirow{2}{*}{$G_\delta$}
                & \multirow{3}{*}{$\PI^1_1$-complete} \\ \cline{1-1}
            aper & & & \\ \cline{1-1}\cline{3-3}
            comp & & open & \\ \cline{1-1}\cline{3-4}
            hyp & & \multicolumn{2}{c|}{\multirow{2}{*}{$\PI^1_1$-hard, $\SIGMA^1_2$}} \\ \cline{1-1}
            amen & & \multicolumn{2}{c|}{} \\ \cline{1-1}\cline{3-4}
            measHyp & & \multicolumn{2}{c|}{$\PI^1_1$-complete} \\ \cline{1-1}\cline{3-4}
            measAmen & & \multirow{2}{*}{$G_\delta$} &
                \multirow{2}{*}{$\PI^1_1$-complete} \\ \cline{1-1}
            freeMeasHyp & & & \\ \cline{1-1}\cline{3-4}
            freeHyp & & $\SIGMA^1_2$ & $\PI^1_1$-hard, $\SIGMA^1_2$ \\ \hline
        \end{tabular}
    \end{center}

    In this table,
    $\Gamma$ is a countably infinite group,
    $\Gamma$ is residually finite in the ``$\boldsymbol{\Pi^1_1}$-complete'' entry of the first two rows,
    $\Gamma$ is non-amenable in the ``comeager'' entry of the fifth row,
    $\Gamma$ is non-amenable in the 
    ``$\boldsymbol{\Pi^1_1}$-hard'' and ``$\boldsymbol{\Pi^1_1}$-complete'' entries of the last six rows for $\R^\N$ and $\Gamma$ is also  
    residually finite in the
    ``$\boldsymbol{\Pi^1_1}$-hard'' and
    ``$\boldsymbol{\Pi^1_1}$-complete'' entries in rows 6--8 for $\I^\N$.
    $\Gamma$ is exact in the ``comeager'' entry of rows 7--10
    and $\Gamma$ is a free group (in fact any group of finite asymptotic dimension, e.g., any hyperbolic group) in the ``comeager'' entry
    of the ``hyp'' and ``freeHyp'' rows, as it was recently proved in \cite{IS25}, answering a question previously left open in this work.
    We show in \Cref{hypDense} that hyperfiniteness is dense in
    $\Sh(\Gamma, \I^\N)$ for any subgroup $\Gamma$ of a hyperbolic group,
    but we do not know if it is generic in general.
    We also do not know the exact descriptive complexity of hyperfiniteness.
}
\summarySubshiftProps

\section{$K_\sigma$ realizations}
Clinton Conley also raised the question of whether
every $E \in \ap$ admits a $K_\sigma$ realization in a Polish space.
We show in \cref{transitiveKSigma}
that one can even obtain a transitive $K_\sigma$ realization on $\cantor$,
where an equivalence relation is \textbf{transitive}
if it has at least one dense class.
This raises the related question of whether every $E\in \ap$
admits a minimal $K_\sigma$ or even $F_\sigma$ realization in a Polish space,
where an equivalence relation is called \textbf{minimal} if all its classes are dense.
We show in \cref{t:fsminimal} that the answer is positive for $F_\sigma$.
In view of \cref{1.3},
every non-smooth hyperfinite equivalence relation in $\ap$
has a minimal $K_\sigma$ realization on a compact Polish space
and Solecki in \cite{Sol02} had shown that this fails for smooth relations,
but this is basically the extent of our knowledge concerning $K_\sigma$.
Call a CBER on a compact Polish space $X$ \textbf{compactly graphable}
if there is a compact graphing of $E$,
i.e., a compact graph (irreflexive, symmetric relation) $K\subseteq E$
so that the $E$-classes are the connected components of $K$.
Clearly every such $E$ is $K_\sigma$.
We also show in \cref{3.9.5} that every hyperfinite and every compressible CBER
in $\ap$ has a \textbf{compactly graphable realization},
i.e., is Borel isomorphic to an equivalence relation
on a (necessarily) compact Polish space that is compactly graphable.
We do not know if this is true for every $E\in \ap$.
Finally in \cref{ksigma-ideal} we study a $\sigma$-ideal associated with a $K_\sigma$ CBER.

\section{The Borel inclusion order}
In connection with these realization problems,
we were also led to consider the following quasi-order on CBER,
which we call the \textbf{Borel inclusion order}.
Given CBER $E,F$ on standard Borel spaces,
we put $E\subseteq_B F$ if there is $E'\cong_B E$ with $E'\subseteq F$.

\textit{
    Below,
    unless otherwise explicitly stated or understood from the context,
    by a \textbf{measure} on a standard Borel space
    we will always mean a Borel probability measure.
}
 
We now have the following result (see \cref{2.3}, \cref{2.8} and \cref{2.12}),
where $\boldsymbol{\aph}$ is the class of hyperfinite relations in $\ap$.

\begin{thm}
    \leavevmode
    \begin{enumerate}[label=(\arabic*)]
        \item
            If $E\subseteq_B F$ are in $\ap$,
            then $|\EINV_E|\ge |\EINV_F|$ and if $E,F\in \aph$ are not smooth,
            then $E \subseteq_B F \iff |\EINV_E|\ge |\EINV_F|$.
        \item
            For any $E \in \ap$, there is $F\in \aph$ with $F \subseteq E$
            such that moreover $\EINV_E = \EINV_F$.
    \end{enumerate}
\end{thm}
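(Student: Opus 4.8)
The plan is to handle (i) and (ii) separately, both relying on the ergodic decomposition of CBERs (Ditzen's theorem) and, for the statements about $\aph$, on the Dougherty--Jackson--Kechris classification of aperiodic hyperfinite equivalence relations.

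For the first assertion of (i): since $\EINV$ is a $\cong_B$-invariant I may assume $E\subseteq F$ on a common space $X$. Fix ergodic decomposition maps $e^E\colon X\to\EINV_E$ and $e^F\colon X\to\EINV_F$, defined off the respective compressible parts, which carry no invariant measure. For $\mu\in\EINV_F$ put $A^F_\mu=\{x:e^F(x)=\mu\}$; this is an $F$-invariant, hence (since $E\subseteq F$) an $E$-invariant, Borel set with $\mu(A^F_\mu)=1$, and these sets are pairwise disjoint as $\mu$ varies. As $E\subseteq F$, the measure $\mu$ is $E$-invariant, so it has an $E$-ergodic decomposition $\mu=\int\nu\,d\hat\mu(\nu)$ with $\hat\mu\in\Prob(\EINV_E)$; from $\mu(A^F_\mu)=1$ we get $\nu(A^F_\mu)=1$ for $\hat\mu$-a.e.\ $\nu$, so the pairwise disjoint Borel sets $S_\mu=\{\nu\in\EINV_E:\nu(A^F_\mu)=1\}$ satisfy $\hat\mu(S_\mu)=1$, and choosing $\nu_\mu\in S_\mu$ gives an injection $\EINV_F\to\EINV_E$, whence $|\EINV_E|\ge|\EINV_F|$. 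For the equivalence when $E,F\in\aph$ the forward direction is this, so assume $|\EINV_E|\ge|\EINV_F|$; by the Dougherty--Jackson--Kechris classification, a member $G\in\aph$ is $\cong_B$ the unique compressible member of $\aph$ if $\EINV_G=\emptyset$ and is $\cong_B$ ``$E_0$ with $\kappa$ disjoint copies of each class'' (on $\cantor\times[\kappa]$, identifying $(x,i)$ with $(y,j)$ iff $x\mathrel{E_0}y$ and $i=j$) if $|\EINV_G|=\kappa\ge 1$. Using $E_0\cong_B E_0\times E_0$ (interleave coordinates), ``$E_0$ with $\kappa$ copies'' is $\cong_B$ a subrelation of $E_0$ --- on $\cantor\times\cantor$ take the relation that is $E_0$ on the first coordinate and equality on the second, restricted to $\cantor\times W$ with $|W|=\kappa$ --- and then, using $|\EINV_E|\ge|\EINV_F|\ge 1$, partition the fibres of the canonical form of $E$ among those of $F$ and assemble these inclusions fibrewise to get $E\subseteq_B F$; the compressible case is similar, since the compressible member of $\aph$ absorbs products with $[\kappa]$ and contains a copy of $E_0$.

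For (ii): fix $E\in\ap$ on $X$ with ergodic decomposition $X=X_c\sqcup X_m$, an $E$-invariant Borel partition with $E\uhr X_c$ compressible, together with a Borel surjection $e=e^E\colon X_m\to\EINV_E$ whose fibres $X_\mu=e^{-1}(\mu)$ are $E$-invariant with $E\uhr X_\mu$ aperiodic having $\mu$ as its unique invariant measure. On $X_c$, fix a fixed-point-free Borel injection $\phi\colon X_c\to X_c$ with $\phi(x)\mathrel{E}x$ witnessing compressibility and with empty recurrent part ($\bigcap_n\phi^n(X_c)=\emptyset$), and let $F_c$ be the orbit equivalence relation of $\phi$; it is aperiodic, hyperfinite and compressible, so $\INV_{F_c}=\emptyset$. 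On each $X_\mu$, since $E\uhr X_\mu$ is aperiodic and $\mu$-ergodic, a standard application of Dye's theorem gives an aperiodic hyperfinite $F_\mu\subseteq E\uhr X_\mu$ whose domain is a $\mu$-conull Borel set $Y_\mu\subseteq X_\mu$ and which is Borel-isomorphic to $E_0$ restricted to a $\mu_{E_0}$-conull $E_0$-invariant $Z_\mu\subseteq\cantor$, the isomorphism carrying $\mu$ to $\mu_{E_0}$. Since $\mu_{E_0}$ is the unique $E_0$-invariant probability measure on $\cantor$ and $Z_\mu$ is invariant and conull, $\INV_{F_\mu}=\{\mu\}$. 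All these choices can be made Borel in $\mu$ by Jankov--von Neumann/Luzin--Novikov uniformization over the standard Borel space $\EINV_E$, so $F:=F_c\sqcup\bigsqcup_\mu F_\mu$ is an aperiodic hyperfinite equivalence relation with $F\subseteq E$, living on $X_c\sqcup\bigsqcup_\mu Y_\mu$.

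Finally I would verify $\INV_F=\INV_E$, which yields $\EINV_F=\mathrm{ext}(\INV_F)=\mathrm{ext}(\INV_E)=\EINV_E$. For $\INV_E\subseteq\INV_F$: an $E$-invariant $\nu$ vanishes on $X_c$ (Nadkarni) and equals $\int\mu\,d(e_*\nu)(\mu)$ by ergodic decomposition, and each such $\mu$ is supported on $Y_\mu$ and is $F_\mu$-invariant, so $\nu$ is $F$-invariant. For $\INV_F\subseteq\INV_E$: a $\nu\in\INV_F$ vanishes on $X_c$ (as $F_c$ is compressible) and so lives on $\bigsqcup_\mu Y_\mu$; writing $F$ as a countable union of Borel involutions $g_i\subseteq E$, each preserving every $X_\mu$ (these being $E$-invariant) and hence every $Y_\mu$, and disintegrating $\nu$ over $e$, the $g_i$-invariance of $\nu$ forces the fibre measure $\nu_\mu$ to be $F_\mu$-invariant for $e_*\nu$-a.e.\ $\mu$, hence $\nu_\mu=\mu$; so $\nu=\int\mu\,d(e_*\nu)(\mu)\in\INV_E$. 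The routine points --- hyperfiniteness and aperiodicity of the disjoint union, that its domain is uncountable, uniqueness of $\mu_{E_0}$, the Borel-inclusion computations in (i) --- are standard. I expect the real difficulty in (ii) to lie in making the fibrewise construction genuinely Borel, uniformly over $\EINV_E$, and in keeping exact track of the ``hidden'' compressible pieces --- the set $X_c$ and the $\mu$-null sets $X_\mu\setminus Y_\mu$ --- which must be discarded or compressed so that they contribute no spurious invariant measure; this, together with the precise form of Ditzen's theorem, is where care is needed.
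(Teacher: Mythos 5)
Your overall strategy coincides with the paper's: the inequality in part (i) is proved exactly as in the paper (restrict each piece of the ergodic decomposition of $F$, which is $E$-invariant, and extract an ergodic invariant measure for $E$ there), the equivalence for $\aph$ goes through the Dougherty--Jackson--Kechris classification together with explicit fibrewise inclusions $\mu E_0\inc E_0$, and part (ii) is the paper's argument: Dye's theorem applied uniformly (effectively in $\mu$) over the ergodic decomposition, plus a compressible hyperfinite subrelation on the leftover.

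Two points need repair. First, your claim that $\aph$ has a \emph{unique} compressible member up to $\cong_B$ is false: there are two, namely $\R I_\N$ and $E_t$, and this is precisely where the stated ``iff'' breaks down --- taking $E=E_t$ and $F=\R I_\N$ gives $|\EINV_E|=0=|\EINV_F|$, yet $E_t\not\subseteq_B \R I_\N$, since $\subseteq_B$ into a smooth relation forces smoothness. The paper's corresponding corollary restricts to \emph{non-smooth} $E,F\in\aph$, and your argument is fine in that case; as literally stated the equivalence is false. Second, remember that $\subseteq_B$ demands a Borel \emph{bijection} of the underlying spaces: realizing ``$E_0$ with $\kappa$ copies'' as a subrelation of $E_0$ restricted to a proper invariant subset $\cantor\times W$ only yields an invariant embedding, not $\inc$. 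What is actually needed (and what your ``partition the fibres'' assembly implicitly uses) are genuine partitions of $\cantor$ into $\kappa$ pieces each carrying a copy of $E_0$: clopen halves iterated for finite $\kappa$, the $1^n0$-prefix partition (modulo one point) for $\aleph_0$, and interleaving for $2^{\aleph_0}$. Relatedly, in (ii) the null sets $X_\mu\setminus Y_\mu$ cannot simply be ``discarded'': they form an $E$-invariant Borel set carrying no invariant measure, so $E$ restricted there is compressible and must be absorbed into the compressible case, ensuring $F$ is an equivalence relation on all of $X$.
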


Using this and the classification theorem for hyperfinite CBER from \cite[9.1]{DJK94},
one can then prove the next result
(see \cref{2.10} and \cref{2.13a}),
where we use the following terminology and notation:

For each CBER $E$ and standard Borel space $S$,
$SE$ is the direct sum of ``$S$'' copies of $E$
(see \Cref{prelim}).
We let $E_0$ be the equivalence relation on $\cantor$ given by
$x E_0 y \iff \exists m \forall n \ge m (x_n = y_n)$;
$E_t$ is the equivalence relation on $\cantor$ given by
$x E_t y \iff \exists m \exists n \forall k (x_{m+k} = y_{n+k})$;
$I_\N = \N^2$;
$E_\infty$ is a CBER universal under Borel reducibility;
and $E\times F$ is the product of $E$ and $F$.
Finally $\subset_B$ is the strict part of $\subseteq_B$
and for any quasi-order $\preceq$ with strict part $\prec$
on a set $Q$ and $q,r\in Q$,
we say that $r$ is a \textbf{successor} to $q$
if $q\prec r$ and $(q\prec s\preceq r \implies  r\preceq s)$.
Finally, for each cardinal $\kappa \in \{0,1, 2, 3, \dots, \aleph_0, 2^{\aleph_0}\}$,
let $\ap_\kappa$ be the class of all $E \in \ap$ such that $|\EINV_E| = \kappa$.
Thus by Nadkarni's Theorem (\Cref{nadkarni}),
$\ap_0$ is the class of compressible relations.
We also let for $\kappa > 0$,
$\kappa E = SE$,
where $S$ is a standard Borel space of cardinality $\kappa$.

\begin{thm}
    \leavevmode
    \begin{enumerate}[label=(\arabic*)]
        \item
            $\R E_0 \pinc \N E_0 \pinc \cdots \pinc 3 E_0 \pinc 2 E_0 \pinc E_0 \pinc E_t$,
            each equivalence relation in this list
            is a successor in $\inc$ of the one preceding it
            and $\N E_0$ is the infimum in $\inc$
            of the $n E_0, n\in \N\setminus\{0\}.$
        \item
            $\R I_\N  \pinc E_t$ and
            $E_t$ is a successor of $\R I_\N$ in $\inc$. 
        \item
            $\R I_\N$ is $\inc$-minimum in ${\ap}_0$ and
            $E_t$ is $\inc$-minimum among the non-smooth elements of ${\ap}_0$.
        \item
            (B. Miller) $E_\infty\times I_\N$ is $\inc$-maximum in ${\ap}_0$.
        \item
            For each $\kappa >0$,
            $\kappa E_0$ is a $\inc$-minimum element of $\ap_\kappa$
            but $\ap_\kappa$ has no $\inc$-maximum element.
        \item
            Let $\kappa \le \lambda$.
            Then for every $E \in \ap_\lambda$,
            there is $F\in \ap_\kappa$ such that $E\subseteq_B F$.
    \end{enumerate}
\end{thm}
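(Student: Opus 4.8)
The plan is to reduce everything to three available ingredients: (A) the description of $\inc$ on the hyperfinite members of $\ap$ from the preceding theorem, which with the Dougherty--Jackson--Kechris classification \cite[9.1]{DJK94} says that up to $\cong_B$ the hyperfinite relations in $\ap$ are exactly $\R I_\N$, $E_t$ and $\kappa E_0$ for $1\le\kappa\le 2^{\aleph_0}$, with $|\mathrm{EINV}|$-values $0,0,\kappa$ and with $E\inc F$ holding iff $|\mathrm{EINV}_E|\ge|\mathrm{EINV}_F|$ subject to the obvious requirement that a Borel sub-equivalence relation of a smooth one is smooth (so $\R I_\N\inc E_t$ but $E_t\not\inc\R I_\N$); (B) the fact, also from that theorem, that every $E\in\ap$ has a hyperfinite $F\subseteq E$ with $\mathrm{EINV}_F=\mathrm{EINV}_E$; and (C) the ``promotion lemma'' that if $F$ is compressible and $E\le_B F$ then $E\inc F$ (a Borel reduction into a compressible CBER upgrades first to a Borel embedding, using $F\cong_B F\times I_\N$, and then to a full-space copy, absorbing the complement by the compressibility of $F$). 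I will also use that for non-smooth $E$ one has $E_0\le_B E$, and that $E_t\equiv_B E_0$.

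For (i) and (ii): every relation named is hyperfinite and aperiodic, so these are direct computations with (A). The displayed strict inclusions come from the strict inequalities $2^{\aleph_0}>\aleph_0>\dots>3>2>1>0$ of $|\mathrm{EINV}|$ (and for $\R I_\N\pinc E_t$, from equality of these values together with the smoothness constraint). For the successor claims, if $B\pinc C\inc A$ with $A,B$ consecutive in one of the lists, then $C$ is hyperfinite (a sub-equivalence relation of the hyperfinite $A$) and aperiodic, so $|\mathrm{EINV}_C|$ is squeezed between the two consecutive values; it cannot equal $|\mathrm{EINV}_B|$, since that would force $C\cong_B B$ by the classification, contradicting $B\pinc C$ (at the step $E_0\pinc E_t$ one also uses that $E_0\inc C$ makes $C$ non-smooth, excluding $C\cong_B\R I_\N$); so $|\mathrm{EINV}_C|=|\mathrm{EINV}_A|$ and $C\cong_B A$. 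Finally, any $\inc$-lower bound $D$ of all $nE_0$ is hyperfinite with $|\mathrm{EINV}_D|\ge n$ for every $n$, hence $|\mathrm{EINV}_D|\ge\aleph_0$, hence $D\inc\N E_0$, so $\N E_0$ is the infimum.

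For the minimum and maximum assertions in (iii) and the minima in (iv): if $E\in\ap_0$ the $F\subseteq E$ from (B) is compressible hyperfinite aperiodic, so $F\cong_B\R I_\N$ or $E_t$, and since $\R I_\N\inc E_t$ by (ii), $\R I_\N\inc F\subseteq E$ in either case; thus $\R I_\N$ is $\inc$-minimum in $\ap_0$. If in addition $E$ is non-smooth, then $E_t\equiv_B E_0\le_B E$ and $E$ is compressible, so (C) gives $E_t\inc E$; thus $E_t$ is $\inc$-minimum among the non-smooth members of $\ap_0$. Likewise $E_\infty\times I_\N$ is aperiodic and compressible, hence in $\ap_0$, and for $E\in\ap_0$ universality gives $E\le_B E_\infty\equiv_B E_\infty\times I_\N$, so (C) gives $E\inc E_\infty\times I_\N$; thus $E_\infty\times I_\N$ is $\inc$-maximum in $\ap_0$. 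For $\kappa>0$ and $E\in\ap_\kappa$, the $F\subseteq E$ from (B) has $|\mathrm{EINV}_F|=\kappa\ge1$, so it is non-smooth, hence $F\cong_B\kappa E_0$, giving $\kappa E_0\inc F\subseteq E$ and so $\kappa E_0$ is $\inc$-minimum in $\ap_\kappa$.

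For (v): I would use the ergodic decomposition of $E\in\ap_\lambda$ to split $X_E$ into a Borel $E$-invariant compressible part together with pieces $(A_i)_{i\in\Lambda}$, $|\Lambda|=\lambda$, with $A_i$ carrying the ($i$-th, necessarily non-atomic) ergodic invariant measure $\mu_i$ and $E\upharpoonright A_i$ uniquely ergodic. If $\kappa=\lambda$, take $F=E$. If $\kappa=0$, take $F=E\times I_\N\in\ap_0$, using $E\inc E\times I_\N$ (map each $E$-class $\aleph_0$-to-one onto itself by a Borel class-preserving map and record the fibre index in the $\N$-coordinate, getting a full-space copy of $E$ inside $E\times I_\N$). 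If $0<\kappa<\lambda$, partition $\Lambda$ into $\kappa$ nonempty Borel groups and, inside each group, enlarge $E$ on the same space by adjoining Borel bijections from the non-central pieces of the group onto subsets of a fixed central piece, with the measures of these target subsets tuned by a summable positive weight system so that the group fuses into a single $F$-ergodic component carrying exactly one invariant measure, while the compressible part is untouched; the resulting $F\supseteq E$ is then aperiodic with exactly $\kappa$ ergodic invariant measures, so $E\inc F\in\ap_\kappa$. The $\lambda=2^{\aleph_0}$ case needs this carried out Borel-uniformly over a standard Borel space of ergodic measures. I expect the genuinely hard points to be the promotion lemma (C), the verification in (v) that the enlargement keeps all classes countable and produces precisely $\kappa$ ergodic invariant measures, and --- most of all --- the absence of a $\inc$-maximum in $\ap_\kappa$ for $\kappa>0$: unlike $\ap_0$ there is no universal candidate, so one must show that for each $F\in\ap_\kappa$ there is some $E\in\ap_\kappa$ with $E\not\inc F$, presumably by amalgamating $F$ with a sufficiently generic compressible aperiodic relation and producing a Borel-structural invariant of $F$ not shared by all of $\ap_\kappa$.
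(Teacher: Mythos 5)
Your arguments for (iii), for the $\inc$-minima in (iv), for the successor and infimum claims, and your promotion lemma (C) are essentially correct and close to the paper's (the paper derives the maximum of $\ap_0$ from a more general absorption theorem, and upgrades $E_t\sqsubseteq_B E$ to an invariant embedding using compressibility of the \emph{source} $E_t$ rather than of the target; also, in your "absorption" step you must first pass to the $F$-saturation of the embedded image, since the raw complement is not $F$-invariant). But there are three genuine gaps. The first is a circularity in (i)--(ii): the biconditional in your ingredient (A), namely $E\inc F\iff |{\rm EINV}_E|\ge|{\rm EINV}_F|$ for non-smooth $E,F\in\aph$, is in the paper a \emph{corollary} of exactly the chain $\R E_0\inc\N E_0\inc\cdots\inc E_0\inc E_t$ that you are asked to establish; its backward direction has no prior source. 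The real content of (i) is the construction of these inclusions, which the paper gets by exhibiting Borel partitions of $\cantor$ (or of $\cantor\setminus\{\bar 1\}$) into pieces on each of which $E_0$ restricts to a copy of $(\cantor,E_0)$, with the union of the restrictions contained in $E_0$: partition by the value of $x_0$ (giving $2E_0\inc E_0$), by the sets of sequences beginning with $1^n0$ (giving $\N E_0\inc E_0$), and by the restriction to the even coordinates (giving $\R E_0\inc E_0$). You must supply these (or equivalent) witnesses; the strictness, successor and infimum arguments you give on top of them are fine.

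The second gap is the non-existence of a $\inc$-maximum in $\ap_\kappa$ for $\kappa>0$, which you explicitly leave open; your proposed "Borel-structural invariant obtained by amalgamation" is not the idea that works. The paper argues as follows: if $E$ were $\inc$-maximum in $\ap_\kappa$, fix an invariant measure $\mu$ for $E$; any countable group $\Gamma$ acts freely on the restriction of the free part of its Bernoulli shift to $\kappa$ ergodic components, producing some $G\in\ap_\kappa$, and maximality carries $G$ by a Borel bijection onto a subrelation $F\subseteq E$ induced by a free Borel $\Gamma$-action, so $\Gamma$ embeds algebraically into $[F]\le[E]$ (full groups with respect to $\mu$); this contradicts a theorem of Ozawa that no single such full group contains a copy of every countable group. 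The third gap is your $0<\kappa<\lambda$ case of (v): the "fusing" construction is unverified and cannot work as described when a group contains uncountably many ergodic components (no positive summable weight system exists there), and fusing forces delicate measure-compatibility constraints on the adjoined partial bijections. The paper's route is much simpler: keep $E$ unchanged on an invariant Borel set $Y$ carrying exactly $\kappa$ ergodic components, and on $X\setminus Y$ replace $E$ by any compressible CBER containing $E|(X\setminus Y)$ (e.g.\ the join of $E|(X\setminus Y)$ with a suitable aperiodic compressible relation); the resulting $F\supseteq E$ lies in $\ap_\kappa$ and witnesses $E\inc F$ via the identity map.
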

In particular $\R E_0$ is $\subseteq_B$-minimum non-smooth in $\ap$
and $E_\infty \times I_\N$ is $\subseteq_B$-maximum in $\ap$.
Thus one has the following version of the Glimm-Effros Dichotomy
\Cref{theorem-smooth-equivalence}
for $\subseteq_B$ (see \cref{2.13}):

\begin{thm}
    Let $E\in \ap$. Then exactly one of the following holds:
    \begin{enumerate}[label=(\roman*)]
        \item
            $E$ is smooth,
        \item
            $\R E_0 \inc E$.
    \end{enumerate}
\end{thm}

\section{2-adequate groups}
Consider now a Borel action of $\Gamma$ on an uncountable standard Borel space,
which we can assume is equal to $\R$.
Then the map $f \colon X\to \R^\Gamma$ given by $x\mapsto p_x$,
where $p_x(\gamma) = \gamma^{-1}\cdot x$,
is an equivariant Borel embedding of this action to the shift action on $\R^\Gamma$.
Thus every aperiodic CBER $E$ induced by a Borel action of $\Gamma$
can be realized as (i.e., is Borel isomorphic to)
the restriction of $E^{\mathrm{ap}}(\Gamma, \R)$ to an invariant Borel set.
By a result in \cite[5.5]{JKL02},
we also have $E^{\mathrm{ap}}(\Gamma, \R)\cong_B E^{\mathrm{ap}}(\Gamma, \N)$,
so such realizations exist for $E^{\mathrm{ap}}(\Gamma, \N)$ as well.
We consider here the question of whether these realizations can be achieved in the optimal form,
i.e., replacing $E^{\mathrm{ap}}(\Gamma, \N)$ by $E^{\mathrm{ap}}(\Gamma, 2)$.
This is equivalent to the statement that
$E^{\mathrm{ap}}(\Gamma, \R) \cong_B E^{\mathrm{ap}}(\Gamma, 2)$.
If this happens, then we call the group $\Gamma$ \textbf{$2$-adequate}.

Using a recent result of Hochman-Seward,
we show the following
(see \cref{amenable-2-adequate}):

\begin{thm}
    Every infinite countable amenable group is 2-adequate.
\end{thm}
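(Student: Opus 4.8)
The plan is to show that for any infinite countable amenable group $\Gamma$ we have $E^{ap}(\Gamma,\R)\cong_B E^{ap}(\Gamma,2)$; by the discussion preceding the statement, this is exactly $2$-adequacy. The only nontrivial direction is to embed $E^{ap}(\Gamma,\R)$ (equivalently $E^{ap}(\Gamma,\N)$, by \cite[5.5]{JKL02}) into $E^{ap}(\Gamma,2)$ in a Borel and equivariant-up-to-isomorphism way, since the reverse embedding is trivial and one finishes with a Borel Schröder–Bernstein argument for CBER. The natural strategy is to find, for each point $x$ in the aperiodic part of $\N^\Gamma$, a way to encode the $\Gamma$-labeling $x$ by a $2$-coloring of $\Gamma$ in a manner that is (i) Borel in $x$, (ii) $\Gamma$-equivariant, and (iii) injective on orbits, landing in the aperiodic part of $2^\Gamma$. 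The role of amenability, and of the Hochman–Seward result, is precisely to supply enough ``room'' inside $\Gamma$ — via Følner-type tilings or finitely many shapes with good separation and syndeticity properties — to carry out such an encoding while simultaneously planting markers that let one recover the origin and hence decode.

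Concretely, I would proceed as follows. First, invoke the Hochman–Seward theorem to obtain, in a Borel way over the free part (or aperiodic part) of the relevant shift, a tiling of each orbit by finitely many tile-shapes $S_1,\dots,S_k\subseteq\Gamma$, equivalently a Borel equivariant map assigning to each point a tiling of $\Gamma$ with controlled geometry (the shapes can be taken to be large Følner sets, and the tiling is ``exact'' in that tiles partition $\Gamma$). Second, use this tiling as scaffolding: within each tile, reserve a small sub-block of coordinates to serve as a header that (a) marks the tile's ``center'' so the decoder can locate tile boundaries purely from the $2$-coloring, and (b) records, in binary, the shape index and enough combinatorial data to reconstruct adjacency of tiles. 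Third, on the remaining coordinates of each tile, write out in binary the values $x(\gamma)$ for $\gamma$ ranging over that tile — here one uses that a single tile, being a large Følner set, has enough coordinates to encode an $\N$-valued (or unboundedly-valued) function after a standard length-prefix trick, and aperiodicity of $x$ guarantees the resulting $2$-coloring is itself aperiodic. Fourth, verify Borelness: every step is a countable Boolean combination of shift-evaluation conditions, so the assignment $x\mapsto \widehat{x}\in 2^\Gamma$ is Borel; verify equivariance: since the tiling map is equivariant and the encoding is defined locally relative to tile structure, $\widehat{\gamma\cdot x}=\gamma\cdot\widehat{x}$; verify injectivity on orbits: from $\widehat{x}$ one Borel-recovers the tiling (via the center markers), then the shape data, then the original values, so $\widehat{x}=\widehat{y}$ forces $x$ and $y$ to agree, hence the induced map on orbit spaces is injective. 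This produces a Borel class-bijective-onto-its-image reduction of $E^{ap}(\Gamma,\N)$ into $E^{ap}(\Gamma,2)$; combined with the evident embedding the other way and the Borel Schröder–Bernstein theorem for countable Borel equivalence relations, we get $E^{ap}(\Gamma,\R)\cong_B E^{ap}(\Gamma,2)$.

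The main obstacle is the third step, namely squeezing an $\N$-valued label into a $2$-coloring of a single tile while keeping the global $2$-coloring aperiodic and, crucially, keeping the decoding unambiguous. The danger is that a clever encoding on one tile could, after shifting, coincide with the encoding of a different configuration on an overlapping region, or that the header markers could be mimicked by accident inside the data region, destroying injectivity; handling this requires the headers to be chosen from a ``comma-free'' or otherwise self-delimiting set of $2$-words, which is where the size of the Følner tiles (hence amenability in the strong form provided by Hochman–Seward) is essential. A secondary but real subtlety is ensuring that the tiling map, the marker-recovery map, and the decoding are all genuinely Borel and equivariant on the nose rather than merely on a conull or comeager set — since we are after Borel isomorphism, everything must be arranged on the full aperiodic part. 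I expect the bulk of the technical work to be a careful bookkeeping argument showing that a fixed finite ``codebook'' of self-delimiting blocks suffices uniformly, using only that each tile contains a ball of arbitrarily large radius (a consequence of the Følner condition on the shapes).
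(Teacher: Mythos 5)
There is a genuine gap, and it is fatal to the approach as stated: your third step — encoding the $\N$-valued labels of a whole orbit into a $2$-coloring by an \emph{equivariant} Borel injection — is blocked by entropy, and this is exactly the obstruction the Hochman--Seward theorem is designed to sidestep rather than overcome. If $f\colon X\to 2^\Gamma$ is Borel, $\Gamma$-equivariant and injective, then for any ergodic invariant measure $\mu$ on $X$ the pushforward $f_*\mu$ is an invariant measure on $2^\Gamma$ and $f$ is a measure isomorphism of the two systems; since Kolmogorov--Sinai entropy (in its Ornstein--Weiss form for amenable groups) is an isomorphism invariant and every invariant measure on $2^\Gamma$ has entropy at most $\log 2$, taking $\mu$ to be a Bernoulli measure on $4^\Gamma\subseteq\N^\Gamma$ of entropy $\log 4$ gives a contradiction. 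No choice of tiles, headers or comma-free codebooks can fix this: a tile of size $n$ has only $2^n$ binary patterns but must faithfully record more than $2^n$ configurations on a set of positive measure. The Hochman--Seward theorem is stated only for actions \emph{without} invariant measure (i.e., compressible actions), precisely because of this; you have applied it to the whole aperiodic part.

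The missing idea is a dichotomy that abandons equivariance where entropy forbids it. Decompose $E=E^{ap}(\Gamma,\R)$ into its compressible part $E|X'$ and, inside each ergodic component $X_e$, a hyperfinite uniquely ergodic piece $Z_e$ carrying the measure $e$ (Ornstein--Weiss, which is where amenability enters). On $X'$ there is no invariant measure, so Hochman--Seward does give a $2$-generator and hence an equivariant Borel isomorphism onto an invariant piece of $E^{ap}(\Gamma,2)$. On the measured part one instead observes that $E^{ap}(\Gamma,2)$ also has $2^{\aleph_0}$ many ergodic invariant measures, fixes a Borel bijection between the two sets of ergodic measures, and matches corresponding uniquely ergodic hyperfinite pieces by Dye's theorem --- a Borel isomorphism of equivalence relations that is \emph{not} equivariant and therefore not subject to the entropy obstruction. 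Gluing the two maps yields an invariant embedding $E\sqsubseteq^i_B E^{ap}(\Gamma,2)$, and the Schr\"oder--Bernstein step you describe then finishes. Your proposal, by insisting on a single global equivariant encoding, cannot be repaired without this splitting.
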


This in particular answers in the negative a question of Thomas \cite[Page 391]{Tho12},
who asked whether there are infinite countable amenable groups $\Gamma$
for which $E(\Gamma, \R)$ is not Borel reducible to $E(\Gamma, 2)$. 

We also show the following (see \cref{4.9} and \cref{4.11}):

\begin{thm}
    \leavevmode
    \begin{enumerate}[label=(\arabic*)]
        \item
            The free product of any countable group with a group
            that has an infinite amenable factor and thus,
            in particular,
            the free groups $\F_n$,
            $1 \le n \le \infty$,
            are $2$-adequate.
        \item
            Let $\Gamma$ be $n$-generated, $1 \le n \le \infty$.
            Then $\Gamma\times \F_n$ is $2$-adequate.
            In particular,
            all products $\F_m\times \F_n$,
            $1 \le m, n \le \infty$,
            are $2$-adequate.
    \end{enumerate}
\end{thm}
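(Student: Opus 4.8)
The plan is to establish, in each case, the equivalent assertion that the universal aperiodic $\Gamma$-equivalence relation $E^{ap}(\Gamma,\R)\cong_B E^{ap}(\Gamma,\N)$ is Borel isomorphic to the restriction of the shift equivalence relation to an invariant Borel subset of $2^\Gamma$. Since the shift on $2^\Gamma$ sits inside the shift on $\N^\Gamma$ as such a restriction, a Schr\"oder--Bernstein argument for equivalence relations shows this is enough. The template for the construction is the amenable case: by \Cref{4.4} (which goes through a theorem of Hochman--Seward) every infinite amenable group $M$ admits, for each aperiodic Borel $M$-action, a Borel recoding realizing it inside $2^M$; combined with a type II Lusin marker scheme for $E^{ap}(\Gamma,\R)$ (\Cref{3.3}), the idea is to cut each orbit into blocks at markers of vanishing scales and to write the $\R$-data of a block in binary. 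What restricts this to special $\Gamma$ is that one needs \emph{canonical combinatorial coordinates} inside the blocks in which to place the bits; producing them is exactly where the free-product and free-direct-factor hypotheses are used.

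\medskip
\noindent\emph{Part (i).} Write $\Gamma=\Delta*\Lambda$ and fix an infinite amenable subgroup (or quotient) $M$ of $\Lambda$ witnessing its ``infinite amenable factor''; for $\F_n$, $2\le n\le\infty$, take $\Gamma=\F_{n-1}*\Z$ with $M=\Z$, and note $\F_1=\Z$ is already covered by \Cref{4.4}. I would exploit the Bass--Serre tree $T$ of $\Delta*\Lambda$: a point $x\in\R^\Gamma$ is an $\R$-labeling of the Cayley graph, refined to a labeling of the edges of $T$ and of the $\Lambda$- and $\Delta$-fibers over its vertices. Within a marker block, $T$ supplies a canonical coordinate system rooted at the block's marker; over each $\Lambda$-vertex in that block one writes the finitely many relevant $\R$-values in binary along a F\o lner exhaustion of $M$, i.e.\ applies the amenable coding of \Cref{4.4} fiberwise. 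Reserved symbol patterns let a decoder recover first the markers, then the block coordinates, then the $\R$-values, hence $x$; checking that the result is Borel with invariant image is then routine, and the substance lies in the coherence of the multi-scale scheme and in routing the coding through only an amenable \emph{quotient} of $\Lambda$ when no amenable subgroup is available.

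\medskip
\noindent\emph{Part (ii).} Let $\Gamma=\Delta\times\F_n$ with $\Delta$ $n$-generated, and fix a surjection $\phi\colon\F_n\twoheadrightarrow\Delta$. Pulling back along $\pi=\phi\times\mathrm{id}\colon\F_n\times\F_n\twoheadrightarrow\Gamma$ gives a $\F_n\times\F_n$-equivariant Borel injection $x\mapsto x\circ\pi$ of $\R^\Gamma$ into $\R^{\F_n\times\F_n}$ with image the points fixed by $\ker\phi\times 1$, so every aperiodic $\Gamma$-action embeds $\F_n\times\F_n$-equivariantly into an aperiodic $\F_n\times\F_n$-action, and it suffices to realize $E^{ap}(\F_n\times\F_n,\R)$ inside $2^{\F_n\times\F_n}$ compatibly with this quotient (so that the recoding descends to $2^\Gamma$). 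For $\F_n\times\F_n$ itself I would use the following dichotomy: if $K\le\F_n\times\F_n$ has infinite index then $K$ must meet $\F_n\times 1$ or $1\times\F_n$ in infinite index, since otherwise $K$ would contain a finite-index subgroup of each factor, hence a finite-index subgroup of the product. As both factors are normal, this gives an invariant Borel partition $Ap(\R^{\F_n\times\F_n})=Z_1\sqcup Z_2$ on each of which one whole free direct factor acts aperiodically; on $Z_1$, using that $\F_n\times 1$ is $2$-adequate by Part (i), one realizes its part of the structure inside the $2^{\F_n\times 1}$-coordinates of $2^{\F_n\times\F_n}$ and records the remaining (possibly periodic) $1\times\F_n$-data via markers in the $\F_n\times 1$-direction, and symmetrically on $Z_2$. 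This also gives all products $\F_m\times\F_n$ (take $\Delta=\F_m$ with $m\le n$, or swap the factors).

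\medskip
\noindent The hard part, in every case, is the same: on orbits that are infinite only ``because of'' the non-amenable directions (the $\Delta$ in $\Delta*\Lambda$, resp.\ the non-cyclic part of $\F_n$ in $\F_n\times\F_n$) one cannot simply coinduct from the amenable (resp.\ cyclic) piece, so the block/fiber coding must be engineered by hand; the delicate points are the coherence and injectivity of the multi-scale decoding and, in Part (ii), ensuring the recoding for $\F_n\times\F_n$ is compatible enough with the quotient $\F_n\times\F_n\to\Gamma$ to descend to $2^\Gamma$.
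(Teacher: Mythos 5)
Your proposal takes a genuinely different route from the paper, but in both parts the steps you yourself flag as ``the substance'' or ``the delicate points'' are exactly where the argument is missing, and they are not minor. In part (i), the multi-scale Bass--Serre/marker/F\o lner coding is never constructed, and the problem of routing the coding through an amenable \emph{quotient} of $\Lambda$ (where there is no amenable subgroup whose orbits live inside the space) is raised but not solved. The paper shows that no such coding is needed: the key observation is that a $2$-generator for the restriction of an action to a subgroup is automatically a $2$-generator for the whole action. Given an action $\ac a$ of $\Gamma*\Delta$, one restricts to $\Delta$, enlarges the resulting subrelation to an aperiodic one still generated by a $\Delta$-action (\cref{4.7}, using that $\Delta$ is hyperfinite generating, \cref{5.2}), and --- because the product is free --- replaces the $\Delta$-part of the action by this new one without disturbing the $\Gamma$-part or the orbit equivalence relation. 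A $2$-generator for the new $\Delta$-action, supplied ultimately by Hochman--Seward through the amenable factor (\cref{4.4} together with \cref{4.10}), is then a $2$-generator for the whole free-product action (\cref{4.8}). All of the block-coding you describe is thereby avoided.

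In part (ii) there are two concrete obstructions to your plan. First, descending a coding from $\F_n\times\F_n$ to its quotient $\Gamma=\Delta\times\F_n$ goes the wrong way: $2$-adequacy of $\F_n\times\F_n$ produces a $2$-generator for \emph{some} action of $\F_n\times\F_n$ generating the given relation, and there is no reason that action factors through $\Gamma$ (equivalently, that the recoded points of $2^{\F_n\times\F_n}$ are constant on $\ker\phi\times 1$-cosets); \cref{4.10} transfers $2$-adequacy \emph{from} a quotient \emph{to} the group, not from a cover down to a quotient. Second, on your piece $Z_1$, once you replace the $\F_n\times 1$-action by one admitting a $2$-generator, it will in general no longer commute with the $1\times\F_n$-action, so the two cannot be reassembled into an action of the product; the ``record the remaining data via markers'' step does not address this. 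The paper's argument sidesteps both issues: pairing each generator $\gamma_i$ of $\Gamma$ with the free generator $\alpha_i$ of $\F_n$, the subgroup $\langle\gamma_i,\alpha_i\rangle$ is abelian, so the subrelation $E_i$ it generates is hyperfinite and hence generated by a single Borel $\Z$-action $\ac a_i$; letting $\alpha_i$ act by $\ac a_i$ gives an $\F_n$-action generating the whole relation $E=\bigvee_i E_i$, and since $\F_n$ is a factor of $\Gamma\times\F_n$ and is $2$-adequate by part (i), \cref{4.10} concludes (\cref{4.11}).
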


On the other hand there are groups which are not 2-adequate (see \cref{4.12}).

\begin{thm}
    The group $\SL_3(\Z)$ is not 2-adequate.
\end{thm}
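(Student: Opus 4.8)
The plan is to argue by contradiction, exploiting the measure rigidity of $\mathrm{SL}_3(\Z)$ together with the fact that any invariant measure on $2^\Gamma$ automatically comes equipped with a two-element topological generator. Write $\Gamma=\mathrm{SL}_3(\Z)$ and suppose it is $2$-adequate, so that there is a Borel isomorphism $\phi$ witnessing $E^{ap}(\Gamma,\R)\cong_B E^{ap}(\Gamma,2)$. Fix an embedding $\{0,1,2\}\hookrightarrow\R$, so that $3^\Gamma$ becomes a closed $\Gamma$-invariant subset of $\R^\Gamma$, and let $u$ be the uniform Bernoulli measure on $3^\Gamma$. Then $u$ is an ergodic $\Gamma$-invariant probability measure concentrated on the free (in particular aperiodic) part, hence an ergodic invariant measure for $E^{ap}(\Gamma,\R)$. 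Its push-forward $\nu:=\phi_*u$ is therefore an ergodic, shift-invariant probability measure on $2^\Gamma$ concentrated on the aperiodic part, and, chasing the definitions, the restriction of $\phi$ to the (conull, $\Gamma$-invariant) free part of $3^\Gamma$ is an orbit equivalence between the Bernoulli shift $\Gamma\car(3^\Gamma,u)$ and the shift action on $(2^\Gamma,\nu)$.

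The first real step is to upgrade this orbit equivalence to an isomorphism of measure-preserving systems. Since the shift action on $(2^\Gamma,\nu)$ is an aperiodic ergodic p.m.p.\ action of the higher-rank lattice $\mathrm{SL}_3(\Z)$, the Stuck--Zimmer theorem (every ergodic p.m.p.\ action of such a lattice is essentially free or essentially transitive) forces it to be essentially free. Now I would apply Popa's cocycle superrigidity theorem: $\Gamma$ has property (T) and $\Gamma\car(3^\Gamma,u)$ is a Bernoulli (s-malleable, weakly mixing) action, so the orbit-equivalence cocycle $\Gamma\times 3^\Gamma\to\Gamma$ is cohomologous to a group automorphism of $\Gamma$; since the orbit equivalence has index one, the target action is free, and $\mathrm{SL}_3(\Z)$ has no nontrivial finite normal subgroup, this yields that the shift action on $(2^\Gamma,\nu)$ is isomorphic, as a measure-preserving system, to the Bernoulli shift $\Gamma\car(3^\Gamma,u)$.

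The contradiction then comes from entropy. As $\mathrm{SL}_3(\Z)$ is residually finite it is sofic, so for any fixed sofic approximation the sofic entropy is a conjugacy invariant of its p.m.p.\ actions. On one hand, the system $(2^\Gamma,\nu)$ has the two-element partition $\{\{x:x(e)=0\},\{x:x(e)=1\}\}$ as a generating partition, so its sofic entropy is at most $\log 2$. On the other hand, by Bowen's computation of the sofic entropy of Bernoulli shifts over sofic groups, $\Gamma\car(3^\Gamma,u)$ has sofic entropy exactly $\log 3$. Since $\log 3>\log 2$, these two systems cannot be isomorphic, contradicting the previous paragraph. Hence $\mathrm{SL}_3(\Z)$ is not $2$-adequate.

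I expect the main obstacle to be the middle step, namely being careful that Popa's superrigidity genuinely promotes the given index-one orbit equivalence to an honest conjugacy in this setting: the essential freeness of the $2^\Gamma$-side (supplied by Stuck--Zimmer) and the absence of finite normal subgroups in $\mathrm{SL}_3(\Z)$ both enter, and one must check that the standard cocycle-to-homomorphism-to-conjugacy argument goes through when both actions are actions of the same property-(T) group. By contrast, choosing the right transported measure (a Bernoulli measure whose base entropy exceeds $\log 2$, which cannot "fit" on $2^\Gamma$) and the generating-partition entropy bound are routine; one could equally use $n^\Gamma$ for any fixed $n\ge 3$, or the base $([0,1],\mathrm{Leb})$ of infinite entropy, in place of $3^\Gamma$.
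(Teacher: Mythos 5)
Your proof is correct and follows essentially the same route as the paper's: assume $2$-adequacy, push the uniform Bernoulli measure on $3^\Gamma$ through the Borel isomorphism, and invoke Stuck--Zimmer to conclude that the resulting shift-invariant ergodic measure on $2^\Gamma$ is essentially free. The paper then simply defers the contradiction to the arguments of \cite[Section 6]{Tho12}; your finish via Popa's cocycle superrigidity (upgrading the index-one orbit equivalence to a conjugacy, using freeness and the absence of finite normal subgroups) followed by the sofic-entropy comparison $\log 3 > \log 2$ is precisely the content of that citation, so you have correctly unpacked the black box rather than taken a different route.
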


We do not know if there is a characterization of 2-adequate groups.

\section{Some other classes of groups}
In the course of the previous investigations two other classes of groups have been considered. A countable group $\Gamma$ is called \textbf{hyperfinite generating} if for every $E\in \aph$ there is a Borel action of $\Gamma$ that generates $E$. We provide equivalent formulations of this property in \cref{5.1} and show in \cref{5.2} that all countable groups with an infinite amenable factor are hyperfinite generating. On the other hand, no infinite countable group with property (T) has this property (see \cref{5.3}). 

Finally we say that an infinite countable group $\Gamma$ is \textbf{dynamically compressible} if every $E\in \ap$ generated by a Borel action of $\Gamma$ can be Borel reduced to a compressible $F\in \ap$ induced by a Borel action of $\Gamma$. We show  in \cref{5.7} that every infinite countable amenable group is dynamically compressible and the same is true for any countable group that contains a non-abelian free group (see \cref{5.6}). However there are infinite countable groups that fail to satisfy these two conditions but they are still dynamically compressible (see \cref{5.9}). We do not know if \textit{every} infinite countable group is dynamically compressible.

\section{Organization}
The paper is organized as follows.
In \Cref{inclusion},
we study the structure of the Borel inclusion order
on countable Borel equivalence relations.
In \Cref{realizations},
we consider topological realizations
of countable Borel equivalence relations.
In \Cref{subshifts},
we discuss subshifts in connection to this theory.
\Cref{ksigma} concerns $K_\sigma$ and $F_\sigma$ realizations.
In \Cref{generators},
we introduce and study the concept of 2-adequate groups,
and in \Cref{additional},
we discuss results concerning the concepts of
hyperfinite generating groups and dynamically compressible groups.
In \Cref{problems},
we collect some of the main open problems discussed in this paper.
In \Cref{appendix-amenable-actions},
we discuss various notions of amenability for
actions of countable groups that are relevant to the results in \cref{subshifts-space}.
In \Cref{appendix-weak-containment},
we give the proof of \cref{395} concerning weak containment of actions.
Finally in \cref{appendix-correspondence-theorem},
we give a proof of the Correspondence Theorem of Hochman
(see \cref{correspondence}).

\chapter{Preliminaries}\label{prelim}
\section{Equivalence relations}\label{prelim-eqrel}
For general background on equivalence relations,
see \cite{Kec25}.

Given an equivalence relation $E$ on a set $X$ and an element $x \in X$,
the \textbf{$E$-class of $x$}
is the set $[x]_E := \{x' \in X : x \mathrel E x'\}$.

An equivalence relation $E$ on a set $X$
is \textbf{finite} if $\forall x \in X \; |[x]_E| < \infty$,
it is \textbf{countable} if $\forall x \in X \; |[x]_E| \le \aleph_0$,
and it is \textbf{aperiodic} if $\forall x \in X \; |[x]_E| = \infty$.

A \textbf{Borel equivalence relation}
is an equivalence relation $E$ on a standard Borel space $X$
such that $E$ is Borel as a subset of $X^2$.
A \textbf{countable Borel equivalence relation (CBER)}
is a Borel equivalence relation which is countable.

Let $E$ be an equivalence relation on a set $X$
and let $A$ be a subset of $X$.
The \textbf{$E$-saturation of $A$}
is the set $[A]_E := \bigcup_{a \in A} [a]_E$.
If $E$ is a CBER and $A$ is Borel,
then $[A]_E$ is also Borel.
We say that $A$ is \textbf{$E$-invariant} if $[A]_E = A$,
a \textbf{partial transversal of $E$}
if $\forall x \in X \; |A \cap [x]_E| \le 1$,
and a \textbf{transversal of $E$}
if $\forall x \in X \; |A \cap [x]_E| = 1$.

Let $E$ and $F$ be equivalence relations on sets $X$ and $Y$ respectively.
A function $f \colon X \to Y$ is  a \textbf{homomorphism from $E$ to $F$}
if $\forall x, x' \in X \; [xEx' \implies f(x) F f(x')]$,
a \textbf{cohomomorphism from $E$ to $F$}
if $\forall x, x' \in X \; [f(x) F f(x') \implies xEx']$,
a \textbf{reduction from $E$ to $F$}
if it is both a homomorphism and a cohomomorphism,
an \textbf{embedding from $E$ to $F$}
if it is an injective reduction,
an \textbf{invariant embedding from $E$ to $F$}
if it is an embedding with $F$-invariant image,
and an \textbf{isomorphism from $E$ to $F$}
if it is a bijective reduction from $E$ to $F$.

Given a Borel equivalence relation $F$,
a Borel equivalence relation $E$
is \textbf{Borel reducible to $F$}
(denoted $E \le_B F$)
if there is a Borel reduction from $E$ to $F$,
\textbf{Borel embeddable in $F$}
(denoted $E \sqsubseteq_B F$)
if there is a Borel embedding from $E$ to $F$,
\textbf{invariantly Borel embeddable in $F$}
(denoted $E \sqsubseteq^i_B F$)
if there is a invariant Borel embedding from $E$ to $F$,
and \textbf{Borel isomorphic to $F$}
(denoted $E \cong_B F$)
if there is a Borel isomorphism from $E$ to $F$.
Note that $\le_B$,
$\sqsubseteq_B$,
and $\sqsubseteq^i_B$
are quasi-orders on the class of Borel equivalence relations.
We respectively write $<_B$,
$\sqsubset_B$,
and $\sqsubset^i_B$
for their strict parts
(the strict part of a quasi-order $\preceq$
is the relation $\prec$ where $x \prec y$
iff $x \preceq y \mathrel\& y \not\preceq x$).
We say that $E$ and $F$ are \textbf{Borel bireducible},
denoted $E \sim_B F$,
if $E \le_B F \mathrel\& F \le_B E$;
note that $\sim_B$ is an equivalence relation
since $\le_B$ is a quasi-order.
If $E \sqsubseteq^i_B F$ and $F \sqsubseteq^i_B E$,
then $E \cong_B F$
(see \cite[Page 195]{DJK94}).

Given an equivalence relation $E$ on $X$ and a subset $A$ of $X$,
we denote by $E \uhr A$ the \textbf{restriction of $E$ to $A$},
i.e. the equivalence relation on $A$ defined by $E \uhr A := E \cap A^2$.
Note that the inclusion map $A \hra X$ is an embedding from $E \uhr A$ to $E$.

Let $E$ be an equivalence relation on $X$.
A \textbf{selector} for $E$
is a homomorphism $s$ from $E$ to $\Delta_X$
satisfying $\forall x \in X \; [x E s(x)]$.

Let $E$ and $F$ be equivalence relations on $X$ and $Y$.
The \textbf{direct sum of $E$ and $F$}
is the equivalence relation $E \oplus F$ on $X \sqcup Y$ defined by
$(z, z') \in E \oplus F$ iff
$(z, z' \in X \mathrel\& zEz') \lor (z, z' \in Y \mathrel\& zFz')$.
The \textbf{product of $E$ and $F$}
is the equivalence relation $E \times F$ on $X \times Y$ defined by
$((x, y), (x', y')) \in E \times F$ iff $xEx' \mathrel\& yFy'$.
If $E$ and $F$ are CBERs,
then $E \oplus F$ is a CBER with
$E \sqsubseteq^i_B E \oplus F$ and $F \sqsubseteq^i_B E \oplus F$,
and $E \times F$ is a CBER.

Note that if $X$ is a set with a partition $X = A \sqcup B$,
and there are equivalence relations $E_A$ on $A$ and $E_B$ on $B$ ,
then $E_A \oplus E_B$ is an equivalence relation on $X$.

More generally,
for every $n \in \N$,
let $E_n$ be an equivalence relation on a set $X_n$.
The \textbf{direct sum of $(E_n)_{n \in \N}$},
denoted $\bigoplus_{n \in \N} E_n$,
is the equivalence relation on $\bigsqcup_n X_n$
defined by $z \mathrel{\bigoplus_{n \in \N} E_n} z'$
iff $\exists n \in \N \; [z, z' \in X_n \mathrel\& z E_n z']$.
If every $E_n$ is a CBER,
then so is $\bigoplus_{n \in \N} E_n$,
and for every $n \in \N$,
we have $E_n \sqsubseteq^i_B \bigoplus_{n \in \N} E_n$.

Let $X$ be a set.
The \textbf{discrete equivalence relation on $X$},
denoted $\Delta_X$,
is the equivalence relation defined by
$x \Delta_X x'$ iff $x = x'$.
The \textbf{indiscrete equivalence relation on $X$},
denoted $I_X$,
is the equivalence relation such that for all $x, x' \in X$,
we have $x I_X x'$.
If $X$ is a standard Borel space,
then $\Delta_X$ is a CBER,
and if furthermore $X$ is countable,
then $I_X$ is CBER.

Given an equivalence relation $E$ on a set $X$ and a set $S$,
we define $SE := \Delta_S \times E$
(this is an equivalence relation on $S \times X$).
If $E$ is a CBER and $S$ is a standard Borel space,
then $SE$ is a CBER.

Given equivalence relations $E$ and $F$ on $X$,
the \textbf{join of $E$ and $F$},
denoted $E \vee F$,
is the smallest equivalence relation on $X$ containing $E$ and $F$.
If $E$ and $F$ are CBERs,
then $E \vee F$ is also a CBER.

Let $\Gamma$ be a group,
let $X$ be a set,
and let $\mathbf a \colon \Gamma \times X \to X$ be a group action.
The \textbf{orbit equivalence relation of $\mathbf a$},
or the \textbf{equivalence relation generated by $\mathbf a$},
is the equivalence relation $E_{\mathbf a}$ on $X$ defined by
$x E_{\mathbf a} x'$
iff
$\exists \gamma \in \Gamma \; [\gamma \cdot x = x']$.
We will denote the orbit equivalence relation by $E_\Gamma^X$,
or even by $E_\Gamma$,
when the action is understood.
If $\Gamma$ is countable,
$X$ is a standard Borel space,
and $\mathbf a$ is Borel,
then $E_{\mathbf a}$ is a CBER.
Every CBER is an orbit equivalence relation:
\begin{thm}[{Feldman-Moore \cite[Theorem 1]{FM77}}]\label{theorem-feldman-moore}
    Every {\CBER} is generated by a Borel action of a countable group.
\end{thm}

The \textbf{eventual equality relation}
is the CBER $E_0$ on $\cantor$ defined by
$x E_0 y$
iff
$\exists k \, \forall n \; [x_{k + n} = y_{k + n}]$.
Note that $E_0$ is the orbit equivalence relation
of the continuous action $(\Z/2)^{\oplus \N} \car 2^\N$.

The \textbf{tail equivalence relation}
is the CBER $E_t$ on $2^\N$ defined by
$x E_t y$
iff
$\exists k, l \,\forall n \; [x_{k + n} = y_{l + n}]$.

\section{Measures}\label{prelim-measure}
Given a standard Borel space $X$,
we denote by $\Prob(X)$
the standard Borel space of Borel probability measures on $X$.

Let $X$ be a standard Borel space,
let $f \colon X \to X$ be a Borel function,
and let $\mu$ be a Borel probability measure on $X$.
We say that $f$ \textbf{preserves $\mu$} or is \textbf{$\mu$-preserving}
(or just \textbf{measure-preserving} if $\mu$ is understood),
or that $\mu$ is \textbf{$f$-invariant}
(or just \textbf{invariant} if $f$ is understood),
if $f_* \mu = \mu$.

Let $E$ be a CBER on $X$,
and let $\mu$ be a Borel probability measure on $X$.
We say that $E$ \textbf{preserves $\mu$} or is \textbf{$\mu$-preserving}
(or \textbf{measure-preserving} if $\mu$ is understood),
or that that $\mu$ is \textbf{$E$-invariant}
(or just \textbf{invariant} if $E$ is understood),
if any of the following equivalent conditions hold
(see \cite[Proposition 2.1]{KM04} for a proof of equivalence):
\begin{enumerate}[label=(\roman*)]
    \item
        For all Borel sets $A, B \subseteq X$ and all Borel bijections $f \colon A \to B$
        whose graph is contained in $E$,
        we have $\mu(A) = \mu(B)$.
    \item
        There is a countable group $\Gamma$
        and a Borel action $\Gamma \car X$ generating $E$
        such that every $\gamma \in \Gamma$ is $\mu$-preserving.
    \item
        For every countable group $\Gamma$
        and every Borel action $\Gamma \car X$ generating $E$,
        every $\gamma \in \Gamma$ is $\mu$-preserving.
\end{enumerate}

A Borel probability measure $\mu$ on $X$ is \textbf{$E$-ergodic}
if every $E$-invariant Borel subset $A$ of $X$
is $\mu$-null or $\mu$-conull.
We denote by $\INV_E$ the Borel subset of $\Prob(X)$
consisting of $E$-invariant measures,
and we denote by $\EINV_E$ the Borel subset of $\INV_E$
consisting of $E$-ergodic measures,
which is also the set of extreme points of $\INV_E$.
Given $\kappa \in \N \cup \{\aleph_0, 2^{\aleph_0}\}$,
we say that $E$ is \textbf{$\kappa$-ergodic} if $|\EINV_E| = \kappa$.
We say that $E$ is \textbf{uniquely ergodic} if it is $1$-ergodic.

A CBER $E$ on $X$ is \textbf{compressible}
iff there is a Borel injection $f \colon X \to X$
such that for every $E$-class $C$,
we have $f(C) \subsetneq C$.
Nadkarni's theorem says that this characterizes $0$-ergodicity:
\begin{thm}[{Nadkarni \cite{Nad90}}]\label{nadkarni}
    A {\CBER} $E$ is compressible iff $\EINV_E$ is empty.
\end{thm}

Given a non-compressible CBER $E$ on $X$,
an \textbf{ergodic decomposition} of $E$
is a Borel homomorphism $x \mapsto e_x$
from $E$ to $\Delta_{\EINV_E}$
such that for every $e \in \EINV_E$,
the set $X_e := \{x \in X : e_x = e\}$ is $e$-conull.
An ergodic decomposition is often specified
as a partition $(X_e)_{e \in \EINV_E}$ of $X$.
Ergodic decompositions exist,
and are ``unique mod compressible'':
\begin{thm}[{Farrell \cite{Far62}, Varadarajan \cite{Var63}}]\label{ergodic-decomposition}
    Every non-compressible {\CBER} $E$ has an ergodic decomposition.
    Moreover,
    if $(X_e)_{e \in \EINV_E}$ and $(Y_e)_{e \in \EINV_E}$
    are ergodic decompositions of $E$,
    then $E \uhr (X \setminus\bigsqcup_{e \in \EINV_E} X_e \cap Y_e)$
    is compressible.
\end{thm}

Let $X$ be a standard Borel space equipped with a probability measure $\mu$.
We denote by $\Aut(X, \mu)$ the group of $\mu$-preserving Borel bijections $X \to X$,
where two such are identified if they agree on a $\mu$-conull set.
Given a $\mu$-preserving CBER $E$ on $X$,
the \textbf{measure-theoretic full group of $E$ with respect to $\mu$},
denoted $[E]$,
is the subgroup of $T \in \Aut(X, \mu)$ for which
$\{x \in X : T(x) E x\}$ is $\mu$-conull.

\section{Classes of CBERs}\label{prelim-classes}
A Borel equivalence relation is \textbf{smooth}
if it is Borel reducible to $\Delta_\R$.
We have the following equivalences:
\begin{thm}\label{theorem-smooth-equivalence}
    For a {\CBER} $E$,
    the following are equivalent:
    \begin{enumerate}[label=(\roman*)]
        \item \label{theorem-smooth-equivalence-smooth}
            $E$ is smooth.
        \item \label{theorem-smooth-equivalence-transversal}
            $E$ has a Borel transversal.
        \item \label{theorem-smooth-equivalence-selector}
            $E$ has a Borel selector.
        \item \label{theorem-smooth-equivalence-E0}
             $E_0 \not \sqsubseteq_B E$.
        \item \label{theorem-smooth-equivalence-Et}
            $E_t \not \sqsubseteq^i_B E$.
        \item \label{theorem-smooth-equivalence-stable}
            $E \not\cong_B E \oplus E_t$.
    \end{enumerate}
\end{thm}
The equivalence between
\ref{theorem-smooth-equivalence-smooth} and \ref{theorem-smooth-equivalence-E0}
is known as the ``Glimm-Effros dichotomy'',
and holds more generally for Borel equivalence relations,
see \cite[Theorem 1.1]{HKL90}.
To see that \ref{theorem-smooth-equivalence-Et}
implies \ref{theorem-smooth-equivalence-E0},
note that if there is a Borel injective reduction from $E_0$ to $E$,
then $E_0 \sqsubseteq_B E$,
so since $E_t \sqsubseteq_B E_0$,
we have $E_t \sqsubseteq_B E$,
and hence $E_t \sqsubseteq^i_B E$
by compressibility of $E_t$
(see \cite[Proposition 2.3]{DJK94}).
To see that \ref{theorem-smooth-equivalence-stable}
implies \ref{theorem-smooth-equivalence-Et},
note that if $E_t \sqsubseteq^i_B E$,
then $E \cong_B F \oplus E_t$ for some $F$,
and hence
$E \oplus E_t \cong_B F \oplus E_t \oplus E_t \cong_B F \oplus E_t \cong_B E$,
since $E_t \oplus E_t \cong_B E_t$.

Let $E$ be a CBER on an uncountable standard Borel space.
By Mycielski's theorem \cite{Myc73},
there is a Borel embedding from $\Delta_{2^\omega}$ to $E$.
By taking the saturation of the image,
we can write $E = E_{\text{smooth}} \oplus F$,
where $E_{\text{smooth}}$ is a smooth CBER with continuum many classes.
In particular,
if $E$ is aperiodic,
then $E_{\text{smooth}} \cong_B \R I_\N$,
so $E \cong_B \R I_\N \oplus F$.
But since $\R I_\N \oplus \R I_\N \cong_B \R I_\N$,
we have $E \cong_B E \oplus \R I_\N$.
More generally,
this means that if $E$ is an aperiodic CBER on an uncountable space,
and $F$ is a smooth aperiodic CBER,
then $E \cong E \oplus F$.
In particular,
deleting or adding countably many infinite classes
from an aperiodic CBER on an uncountable space
does not change its isomorphism type.

A CBER $E$ on $X$ is \textbf{hyperfinite}
if there is an increasing sequence $(F_n)_n$ of finite CBERs on $X$
such that $E = \bigcup_{n \in \N} F_n$
(increasing meaning that $\forall n \in \N \; F_n \subseteq F_{n+1}$).

The hyperfinite CBERs are completely classified up to Borel isomorphism:
\begin{thm}[{Dougherty-Jackson-Kechris \cite[Corollary 9.3]{DJK94}}]\label{thm-hf-classif}
    Up to Borel isomorphism,
    every aperiodic hyperfinite {\CBER}
    is Borel isomorphic to exactly one of
    $\R I_\N$,
    $E_t$,
    $E_0$,
    $2E_0$,
    $3E_0$,
    $\ldots$,
    $\N E_0$,
    $\R E_0$.
    Furthermore,
    we have
    \[
        \R I_\N \sqsubset^i_B 
        E_t \sqsubset^i_B
        E_0 \sqsubset^i_B
        2E_0 \sqsubset^i_B
        3E_0 \sqsubset^i_B
        \cdots \sqsubset^i_B
        \N E_0 \sqsubset^i_B
        \R E_0
    \]
\end{thm}

A CBER $E$ is \textbf{amenable}
if there is a sequence of Borel maps
$p_n \colon E \to [0, 1]$ such that
\begin{enumerate}[label=(\roman*)]
    \item
        for every $x \in X$,
        we have $p_n^x \in \Prob([x]_E)$;
    \item
        for every $(x, y) \in E$,
        we have $\|p_n^x - p_n^y\|_1 \to 0$ in $\ell^1([x]_E)$.
\end{enumerate}
Every hyperfinite CBER is amenable,
and every orbit equivalence relation
of a Borel action of a countable amenable group is amenable.

Let $\Phi$ be a property of CBERs
(such as ``smooth'', ``hyperfinite'', ``amenable'', etc.),
and let $E$ be a CBER on $X$.
Given a Borel probability measure $\mu$ on $X$,
we say that $E$ is \textbf{$\mu$-$\Phi$}
if there is an $E$-invariant $\mu$-conull Borel subset
$Y \subseteq X$ such that $E\uhr Y$ is $\Phi$.
We say that $E$ is \textbf{measure-$\Phi$}
if for every Borel probability measure $\mu$ on $X$,
we have that $E$ is $\mu$-$\Phi$.

By the Connes-Feldman-Weiss theorem \cite{CFW81},
for every Borel probability measure $\mu$,
a CBER is $\mu$-hyperfinite iff it is $\mu$-amenable;
in particular,
a CBER is measure-hyperfinite iff it is measure-amenable.
Thus for instance,
every orbit equivalence relation of a Borel action
of a countable amenable group is measure-hyperfinite.

We will use Segal's effective witness to measure-hyperfiniteness
(see \cite[Theorem 1.7.8]{CM17} for a proof):
\begin{thm}[Segal]\label{segal}
    Let $E$ be a {\CBER} on $X$.
    Then there is a hyperfinite Borel subequivalence relation
    $F$ of $\Delta_{\Prob(X)} \times E$
    such that for every $\mu \in \Prob(X)$
    and every Borel $A \subseteq X$ with $E\uhr A$ hyperfinite,
    the set
    $A \setminus \{x \in X : [(\mu, x)]_F = \{\mu\} \times [x]_E\}$
    is $\mu$-null.
\end{thm}

We will need the following consequence:
\begin{cor}\label{corollary-hf-compressible}
    Let $E$ be a measure-hyperfinite {\CBER}.
    Then $E \cong_B E_{\textup{hf}} \oplus E_{\textup{comp}}$,
    for some hyperfinite $E_{\textup{hf}}$
    and compressible $E_{\textup{comp}}$.
\end{cor}
\begin{proof}
    If $E$ is compressible,
    then we are done by taking $E_{\textup{comp}} = E$,
    so assume that $E$ is not compressible.
    
    By Segal's effective witness to measure-hyperfiniteness (\Cref{segal}),
    there is a hyperfinite Borel subequivalence relation
    $F$ of $\Delta_{\Prob(X)} \times E$
    such that for every $\mu \in \Prob(X)$
    and every Borel $A \subseteq X$ with $E\uhr A$ hyperfinite,
    the set
    $A \setminus \{x \in X : [(\mu, x)]_F = \{\mu\}\times [x]_E\}$
    is $\mu$-null.
    Then for every $\mu \in \Prob(X)$,
    since the restriction of $E$
    to some $\mu$-conull $E$-invariant Borel set is hyperfinite,
    we have that
    $\{x \in X : [(\mu, x)]_F = \{\mu\} \times [x]_E\}$
    is $\mu$-conull.

    Fix an ergodic decomposition $(X_e)_{e \in \EINV_E}$ of $E$
    (see \Cref{ergodic-decomposition}).
    Consider the $E$-invariant Borel set
    $X_{\textup{hf}} = \{x \in X : [(e_x, x)]_F = \{e_x\} \times [x]_E\}$.
    On one hand,
    the restriction of $E$ to $X_{\textup{hf}}$ is hyperfinite,
    since the map $x \mapsto (e_x, x)$ is an invariant Borel embedding into $F$.
    On the other hand,
    since $X_{\textup{hf}}$ is $e$-conull for every $e \in \EINV_E$,
    we have by Nadkarni's theorem (\Cref{nadkarni})
    that the restriction of $E$ to $X \setminus X_{\textup{hf}}$
    is compressible.
\end{proof}

A CBER $E$ on $X$ is \textbf{treeable}
if there is an acyclic Borel graph on $X$
whose connected components are the $E$-classes.

A CBER $E$ is \textbf{universal} if every CBER $F$ satisfies $F \le_B E$,
or equivalently by \cite[Theorem 3.6]{MSS16},
if every CBER $F$ satisfies $F \sqsubseteq_B E$.

\section{Group actions}\label{prelim-action}
Fix a countable group $\Gamma$.

A \textbf{$\Gamma$-set} is a set $X$
equipped with an action $\Gamma \car X$.
A \textbf{Borel $\Gamma$-space} is a standard Borel space $X$
equipped with a Borel action $\Gamma \car X$.
A \textbf{Polish $\Gamma$-space} is a Polish space $X$
equipped with a continuous action $\Gamma \car X$.
A Polish $\Gamma$-space is \textbf{(topologically) transitive}
if it is non-empty and there is a dense orbit,
and a Polish $\Gamma$-space is \textbf{minimal}
if it is non-empty and every orbit is dense.
A \textbf{$\Gamma$-flow} is a compact Polish $\Gamma$-space,
and a \textbf{subflow} of a $\Gamma$-flow
is a closed $\Gamma$-invariant subset.

Let $A$ be a set,
and let $L$ be a countable $\Gamma$-set.
The \textbf{shift action} on $A^L$ is
the $\Gamma$-action defined by $(\gamma\cdot y)_l = y_{\gamma^{-1} l}$.
We view $A^L$ as a $\Gamma$-set equipped with the shift action.
In particular,
if $L = \Gamma$,
then this defines the shift action on $A^\Gamma$.
If $A$ is a Polish space,
then we view $A^L$ as a Polish $\Gamma$-space with the product topology,
and a \textbf{subshift} of $A^L$ is a closed $\Gamma$-invariant subset of $A^L$.
If $A$ is a standard Borel space,
then we view $A^L$ as a Borel $\Gamma$-space with the product Borel structure.
If $A$ is a countable set,
then we view $A$ as a discrete Polish space,
so that $A^L$ is a Polish $\Gamma$-space.

\chapter[The Borel Inclusion Order of CBER]{The Borel Inclusion Order of Countable Borel Equivalence Relations}\label{inclusion}

\section{General properties}
\begin{defi}
    Let $E,F$ be CBER on standard Borel spaces $X,Y$, resp.
    We put $E\subseteq_B F$ if there is a Borel isomorphism
    $f \colon X\to Y$ with $f(E)\subseteq F$.
\end{defi}

It is clear that $\subseteq_B$ is a quasi-order on CBER,
which we call the \textbf{Borel inclusion order}.
We also let $E\subset_B F \iff E\subseteq_B F  \ \& \ F\nsubseteq_B E$
be the strict part of this order. 

We will study in this section the structure of this inclusion order
on aperiodic CBER in uncountable standard Borel spaces.

\begin{prop}\label{prop-smbasic}
    \leavevmode
    \begin{enumerate}[label=(\arabic*)]
        \item
            If $E\subseteq_B F$ and $F$ is smooth, then $E$ is smooth.
        \item \label{item-smbasic-comp}
            $E$ is compressible iff $\R I_\N\subseteq_B E$.
            Therefore if $E\subseteq_B F$ and $E$ is compressible,
            then $F$ is compressible.
    \end{enumerate}
\end{prop}

\begin{proof}
    \leavevmode
    \begin{enumerate}[label=(\arabic*)]
        \item
            By the Feldman-Moore theorem (\Cref{theorem-feldman-moore}),
            there is a countable group $\Gamma = (\gamma_n)_{n \in \N}$
            and a Borel action $\Gamma \car X$ with orbit equivalence relation $F$.
            Let $f$ be a Borel selector for $F$
            and for each $x \in X$,
            let $n(x)$ be the least $n$ with $\gamma_n \cdot f(x) \mathrel E x$.
            Then $g(x) = \gamma_{n(x)} \cdot f(x)$ is a Borel selector for $E$.
        \item
            This follows from \cite[Proposition 2.5]{DJK94};
            see also \cite[Proposition 2.23]{Kec25}.
    \end{enumerate}
\end{proof}

We note here the following basic fact:
\begin{prop}\label{2.3}
    If $E \subseteq_B F$,
    then $|\EINV_E| \ge |\EINV_F|$.
\end{prop}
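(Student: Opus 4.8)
The plan is to show that if $f\colon X\to Y$ is a Borel isomorphism with $f(E)\subseteq F$, then every ergodic invariant probability measure for $F$ pulls back along $f^{-1}$ to an ergodic invariant probability measure for $E$, and that distinct measures pull back to distinct ones; this injection $\mathrm{EINV}_F\hookrightarrow\mathrm{EINV}_E$ gives the desired inequality. First I would set up the pushforward: since $f$ is a Borel bijection, for any measure $\mu$ on $X$ we get a measure $f_*\mu$ on $Y$, and for any $\nu$ on $Y$ we get $(f^{-1})_*\nu$ on $X$; these operations are mutually inverse, so in particular $\nu\mapsto (f^{-1})_*\nu$ is injective on all Borel probability measures on $Y$.

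The substantive points are: (a) if $\nu$ is $F$-invariant then $(f^{-1})_*\nu$ is $E$-invariant, and (b) if $\nu$ is $F$-ergodic then $(f^{-1})_*\nu$ is $E$-ergodic. For (a) and (b) the natural route is to work with the equivalence-relation notion of invariance/ergodicity phrased via the full pseudogroup (or equivalently via a generating countable group action as in Feldman--Moore): a measure $\mu$ is $E$-invariant iff it is invariant under every Borel automorphism of $X$ whose graph is contained in $E$, and $E$-ergodic iff it is $E$-invariant and every $E$-invariant Borel set is $\mu$-null or $\mu$-conull. The key observation is that because $f(E)\subseteq F$, the map $f$ carries $E$-related pairs to $F$-related pairs, so any partial Borel automorphism $\varphi$ of $X$ with graph in $E$ is conjugated by $f$ to a partial Borel automorphism $f\varphi f^{-1}$ of $Y$ with graph in $F$; and any $E$-invariant Borel set $A\subseteq X$ has $f(A)$ contained in an $F$-invariant set, but more directly, the $F$-saturation $[f(A)]_F$ of $f(A)$ still pulls back into the $E$-saturation of $A$ which is $A$ itself when $A$ is $E$-invariant — wait, one must be careful here, so let me phrase it the other way.

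For ergodicity it is cleanest to argue contrapositively at the level of invariant sets. Suppose $(f^{-1})_*\nu$ is not $E$-ergodic; having already shown it is $E$-invariant, there is an $E$-invariant Borel set $A\subseteq X$ with $0<(f^{-1})_*\nu(A)<1$, i.e. $0<\nu(f(A))<1$. Then $f(A)$ need not be $F$-invariant, but its $F$-saturation $B=[f(A)]_F$ satisfies $f^{-1}(B)\supseteq A$, and since $f(E)\subseteq F$ means $f^{-1}(F)\supseteq E$ — i.e. $x\mathbin{E}y\implies f(x)\mathbin{F}f(y)$, so if $f(x)\in B$ then $f(x)\mathbin F f(a)$ for some $a\in A$ — hmm, that does not immediately give $x\in A$. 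The clean fix: instead of pushing $E$-invariant sets forward, push $F$-invariant sets backward. If $(f^{-1})_*\nu$ fails ergodicity, this does not obviously help either. The correct approach is: invariance and ergodicity of $\nu$ for $F$ are each witnessed by the countable group $\Gamma$ acting on $Y$ inducing $F$ (Feldman--Moore); transport this action through $f^{-1}$ to get a Borel action of $\Gamma$ on $X$ inducing an equivalence relation $E'$ with $E'\subseteq E$ (since $f(E)\subseteq F$ gives $f^{-1}(F)=E'\subseteq$... no, $f^{-1}(F)\supseteq E$). So in fact the transported relation $E' = f^{-1}(F)$ \emph{contains} $E$. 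Then $(f^{-1})_*\nu$ is $E'$-invariant and $E'$-ergodic, and since $E\subseteq E'$, every $E$-invariant set is $E'$-invariant, hence null or conull, and every partial automorphism with graph in $E$ has graph in $E'$, hence preserves the measure; so $(f^{-1})_*\nu$ is $E$-invariant and $E$-ergodic. This is the right argument, and the main obstacle — which is now resolved — was getting the direction of the containment correct: $f(E)\subseteq F$ translates to $E\subseteq f^{-1}(F)=:E'$, and shrinking the equivalence relation only makes it \emph{easier} to be invariant and ergodic.

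To finish: define $\Theta\colon\mathrm{EINV}_F\to\mathrm{EINV}_E$ by $\Theta(\nu)=(f^{-1})_*\nu$. By the above it is well-defined, and it is injective because $\nu\mapsto(f^{-1})_*\nu$ is a bijection on all Borel probability measures with inverse $\mu\mapsto f_*\mu$. Hence $|\mathrm{EINV}_F|\le|\mathrm{EINV}_E|$. One small point to record is that when $|\mathrm{EINV}_F|=0$ the inequality is trivial, and otherwise the argument above applies verbatim; also the cardinalities live in $\N\cup\{\aleph_0,2^{\aleph_0}\}$ so ``$\ge$'' is the usual order. I do not expect any real difficulty beyond the bookkeeping already described; the one thing to state carefully is the equivalence between ``$E$-invariant/ergodic'' defined via the full pseudogroup and defined via any generating countable group action, which is standard (see, e.g., \cite[\S5]{Kec22}).
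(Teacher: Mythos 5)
Your reduction to the relation $E' = f^{-1}(F) \supseteq E$ is the right move for \emph{invariance}: a measure invariant under every partial Borel automorphism with graph contained in $E'$ is a fortiori invariant under those with graph in the smaller relation $E$, so $(f^{-1})_*\nu$ is indeed $E$-invariant. But the ergodicity step has the inclusion backwards. If $E\subseteq E'$, then every $E'$-invariant set is $E$-invariant, not the other way around: shrinking the equivalence relation \emph{enlarges} the family of invariant sets, so $E'$-ergodicity does not pass down to $E$-ergodicity. Concretely, the paper's own example $2E_0\subseteq_B E_0$ (realized as the restriction of $E_0$ to pairs lying in the same piece of $\cantor = X_0\sqcup X_1$, $X_i=\{x: x_0=i\}$) refutes your claim (b): the unique $E_0$-ergodic measure $\mu_0$ pulls back to an invariant measure for the copy of $2E_0$, but $X_0$ is invariant for that subrelation and has measure $\tfrac12$, so the pullback is not ergodic. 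Hence your map $\Theta$ does not land in $\mathrm{EINV}_E$; indeed $|\mathrm{EINV}_{2E_0}|=2>1=|\mathrm{EINV}_{E_0}|$, and this strict inequality is exactly the phenomenon your argument would have ruled out.

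The statement is still true, and the repair is what the paper does. Identify $E$ with its image so that $E\subseteq F$ on one space $X$, and take the ergodic decomposition $\{X_e\}_{e\in \mathrm{EINV}_F}$ of $F$. Each $X_e$ is $E$-invariant (here the inclusion points the right way: $F$-invariant sets are $E$-invariant) and carries the $E$-invariant measure $e$, hence by the ergodic decomposition of $E\uhr X_e$ it supports at least one $e'\in \mathrm{EINV}_E$. Injectivity of $e\mapsto e'$ comes not from injectivity of a pushforward of measures but from the pairwise disjointness of the sets $X_e$. In short, one does not transport each ergodic $F$-measure to an ergodic $E$-measure; one chooses, for each ergodic component of $F$, \emph{some} ergodic component of $E$ sitting inside it.
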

\begin{proof}
    Assume without loss of generality that $E \subseteq F$
    are CBERs on the same standard Borel space $X$.
    This is clear when $|\EINV_F| = 0$,
    so assume that $|\EINV_F| \ge 1$.
    Fix an ergodic decomposition $(X_e)_{e \in \EINV_F}$ of $F$
    (see \Cref{ergodic-decomposition}).
    Then for each $e \in \EINV_F$,
    $X_e$ is $E$-invariant and $e$ is an invariant measure for $E \uhr X_e$,
    thus $X_e$ supports at least one ergodic,
    invariant measure for $E$, say $e'$.
    Since the map $e\mapsto e'$ is injective the proof is complete.
\end{proof}

We will next show that many subclasses of $\ap$,
including $\ap$ itself,
admit maximum under $\subseteq_B$ elements.
This was proved for $\ap$ by Ben Miller,
see \cite[Proposition 11.34]{Kec25},
and the proof below is an adaptation of his argument to a more general context.
Later we will show the existence of a minimum under $\inc$ non-smooth element of $\ap$
(see the paragraph following \cref{2.13}).

\begin{thm}\label{2.4}
    Let $\e\subseteq \ap$ be a class of {\CBER} such that
    $\e$ contains a maximum under $\sqsubseteq_B$ element $E$
    such that $E \times I_\N\in \e$.
    Then $E \times I_\N\in \e$ is $\subseteq_B$-maximum for $\e$.
\end{thm}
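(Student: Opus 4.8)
The plan is to use the $\sqsubseteq_B$-maximality of $E$ to realize each member of $\e$ as a restriction of $E$, hence of $E\times I_\N$, and then to use the compressibility of $E\times I_\N$ to upgrade such a realization to a genuine $\inc$-inclusion; this adapts B.~Miller's argument for $\e=\ap$, with the $\sqsubseteq_B$-universal aperiodic CBER replaced by the given $\sqsubseteq_B$-maximum element $E$ of $\e$.

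First I fix $F\in\e$. Since $E\times I_\N\in\e$ by hypothesis, the theorem amounts to proving $F\inc E\times I_\N$. As $E$ is $\sqsubseteq_B$-maximum in $\e$ we have $F\sqsubseteq_B E$, and since $x\mapsto(x,0)$ witnesses $E\sqsubseteq_B E\times I_\N$, this gives $F\sqsubseteq_B E\times I_\N$. So it is enough to prove the following lemma, which I would isolate: \emph{if $H$ is a compressible CBER and $D\in\ap$ satisfies $D\sqsubseteq_B H$, then $D\inc H\times I_\N$}. Granting it, we note that $H:=E\times I_\N$ is compressible — the Borel injection $(x,n)\mapsto(x,n+1)$ maps each $H$-class $[x]_E\times\N$ properly into itself — so the lemma with $D=F$ yields $F\inc H\times I_\N=(E\times I_\N)\times I_\N\cong_B E\times I_\N$, i.e.\ $F\inc E\times I_\N$.

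For the lemma, fix a Borel set $W$ in the space $X$ of $H$ with $H\uhr W\cong_B D$; then $H\uhr W$ is aperiodic. It suffices to produce a Borel surjection $\phi\colon W\to X$ which is a homomorphism of $H\uhr W$ into $H$ and all of whose fibers $\phi^{-1}(x)$ are countably infinite: from such a $\phi$, the Lusin--Novikov theorem gives a Borel enumeration of the fibers uniformly in $x$, and pairing $\phi$ with the position-in-fiber map produces a Borel bijection $W\to X\times\N$ which is a homomorphism of $H\uhr W$ into $H\times I_\N$, witnessing $D\inc H\times I_\N$. To build $\phi$ I would combine two ingredients. Using aperiodicity of $H\uhr W$ (a marker argument; cf.\ \cite[Theorem 3.15]{Kec22}), Borel-partition $W=\bigsqcup_{n\in\N}W_n$ into complete sections of $H\uhr W$, so each $W_n$ meets every $(H\uhr W)$-class. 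Using compressibility of $H$, invoke \cref{2.2}(ii) to get a sub-CBER $H_0\subseteq H$ with $H_0\cong_B\R I_\N$, and choose a transversal of $H_0$ meeting each $H$-class in an infinite set; this yields a Borel parametrization of the $H$-classes of $X$ rich enough to assign, Borel-uniformly, to each $(H\uhr W)$-class a target $H$-class so that the assignment is onto $X$ and each $H$-class is the target of infinitely many $(H\uhr W)$-classes. Finally define $\phi$ by sending, for each $(H\uhr W)$-class $C$, the pieces $C\cap W_n$ into the assigned target class of $C$, distributing points so that every $\phi$-fiber is countably infinite. Each $(H\uhr W)$-class lands in a single $H$-class, so $\phi$ is a homomorphism; by construction it is Borel, onto, and has countably infinite fibers.

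The first reduction is routine and the deduction of the theorem from the lemma is immediate, so the content is the construction of $\phi$, and there the main obstacle is to make $\phi$ \emph{onto} $X$ — equivalently, to make the induced map from $(H\uhr W)$-classes to $H$-classes surjective — even though $W$ need not meet every $H$-class, while doing this Borel-measurably, respecting the homomorphism condition, and keeping every fiber countably infinite. This is exactly where compressibility of $H$ is used: it is what supplies the ``$\aleph_0$-fold room'' (via the transversal of $H_0$ and the partition into the $W_n$) needed to map a proper restriction of $H$ onto all of $H$; without it a restriction omitting $H$-classes cannot be mapped onto $H$ at all (cf.\ \cref{2.3}). I expect the delicate point to be performing the target-class assignment and the per-class spreading simultaneously in a Borel fashion.
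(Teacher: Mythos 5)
Your reduction of the theorem to the lemma ``$D\in\ap$, $D\sqsubseteq_B H$, $H$ compressible $\implies D\inc H\times I_\N$'' is correct, and the lemma itself is true, so the architecture is viable; but it is a genuinely different route from the paper's. The paper never builds a countable-to-one surjection by hand: it writes $F\oplus G\subseteq_B E$ from $F\sqsubseteq_B E$, proves the soft \cref{2.5} ($S\subseteq_B R\oplus S$ for compressible $R$ and aperiodic $S$, using only that every aperiodic CBER has an invariant Borel piece isomorphic to $\R I_\N$ and that $\R I_\N\oplus\R I_\N\cong_B\R I_\N$), and then computes $F\subseteq_B F\oplus(F\times I_\N)\oplus(G\times I_\N)\subseteq_B(F\oplus G)\times I_\N\subseteq_B E\times I_\N$. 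Your approach concentrates all the work in one constructive lemma; note in passing that the compressibility hypothesis on $H$ is not actually what makes your lemma true --- the $\times I_\N$ on the right-hand side already destroys all invariant measures and supplies the ``$\aleph_0$-fold room'' --- so invoking \cref{2.2}(ii) for $H$ is somewhat beside the point.

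The gap is at the step you yourself flag, and it is not merely bookkeeping. A transversal $T$ of $H_0\cong_B\R I_\N\subseteq H$ parametrizes the \emph{target} $H$-classes, but gives no handle on the \emph{source}: $H\uhr W$ is in general non-smooth, so its quotient has no Borel transversal, and ``assign, Borel-uniformly, to each $(H\uhr W)$-class a target $H$-class so that the assignment is onto'' is exactly the assertion that needs proof --- an $(H\uhr W)$-invariant Borel map $W\to X$ inducing a surjection on quotients cannot be manufactured from a parametrization of the targets alone. The repair is to do on the source what you did on the target: extract an $(H\uhr W)$-invariant Borel set $W_0\subseteq W$ with $(H\uhr W)\uhr W_0\cong_B\R I_\N$ (this exists for every aperiodic CBER on an uncountable space, and is precisely the fact on which the paper's \cref{2.5} rests), take a Borel bijection $h$ from its transversal $T_0$ onto $T$, send $[t]_{H\uhr W}$ into $[h(t)]_H$ for $t\in T_0$, and send each class of $W\setminus W_0$ into the $H$-class containing it; surjectivity on quotients and countably infinite fibers then follow after the Lusin--Novikov distribution you describe. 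Two smaller slips: a transversal of $H_0$ meets an $H$-class once per $H_0$-subclass, which need not be infinite unless you first refine $H_0$ (split each $I_\N$-class into infinitely many infinite pieces); and once the class-level surjection is in place, the partition of $W$ into complete sections $W_n$ is no longer needed, since each $H$-class already receives infinitely many source classes under $h$.
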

\begin{proof}
    \begin{lem}\label{2.5}
        Let $R$ be compressible.
        Then for any $S\in \ap$, $S\subseteq_B R\oplus S$.
    \end{lem}
    \begin{proof}
        We have $S \cong_B \R I_\N \oplus S$
        (see \Cref{prelim}).
        Then by \cref{prop-smbasic},
        we have $S \cong_B  \R I_\N \oplus S \subseteq_B R \oplus S$.
    \end{proof}
    
    We next show that for every $F\in \e$,  $F\subseteq_B E\times I_\N$. Since $F\sqsubseteq_B E$, there is $G$ such that $F\oplus G \subseteq_B E$. Recalling (see, e.g., \cite[Theorem 3.23]{Kec25}) that for any CBER $R$, $R\times I_\N$ is compressible, we have, by \cref{2.5}, that $F\subseteq_B F\oplus(F\times I_\N) \oplus (G\times I_\N)$. Note now that $F\oplus (F\times I_\N) \subseteq_B F\times I_\N$, therefore $F\subseteq_B F\oplus(F\times I_\N) \oplus (G\times I_\N)\subseteq_B (F\times I_\N)\oplus (G\times I_\N)\cong_B (F\oplus G)\times I_\N\subseteq_B E\times I_\N$.
\end{proof}
In particular this applies to the following classes $\e$:
hyperfinite,
$\alpha$-amenable (see \cite[Section 9.2]{Kec25}),
treeable (see \cite[Section 10.2]{Kec25}),
$\ap$.

\section{Hyperfiniteness}
We will discuss here the inclusion order on the hyperfinite equivalence relations.
Recall first the following well-known fact (see, e.g., \cite[Theorem 8.23]{Kec25}):

\begin{prop}
    If $E$ is hyperfinite and $F\subseteq_B E$,
    then $F$ is hyperfinite.
\end{prop}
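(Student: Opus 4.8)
The plan is to first reduce to the case in which $F$ is literally a subequivalence relation of $E$ on the same space. By the definition of $\subseteq_B$, there is a Borel isomorphism $f\colon X\to Y$ with $f(F)\subseteq E$, where $F$ lives on $X$ and $E$ on $Y$. Since hyperfiniteness is clearly invariant under Borel isomorphism, it suffices to prove that $f(F)$ is hyperfinite, so after replacing $F$ by $f(F)$ we may assume that $F\subseteq E$ are both CBER on the same standard Borel space $X$.

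Next I would unpack hyperfiniteness of $E$: write $E=\bigcup_n E_n$ where each $E_n$ is a finite Borel equivalence relation and $E_n\subseteq E_{n+1}$. Put $F_n = F\cap E_n$. Each $F_n$ is Borel, and it is an equivalence relation, since reflexivity, symmetry and transitivity are all inherited from the intersection of the equivalence relations $F$ and $E_n$. Moreover $F_n$ is finite: every $F_n$-class is contained in an $E_n$-class, which is finite. Clearly $F_n\subseteq F_{n+1}$, and $\bigcup_n F_n = F\cap\bigcup_n E_n = F\cap E = F$, the last equality using $F\subseteq E$. Hence $F$ is the increasing union of the finite Borel equivalence relations $F_n$, so $F$ is hyperfinite.

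There is essentially no obstacle in this argument; the only points that warrant even a moment's care are that the intersection of two equivalence relations is again an equivalence relation and that finiteness of $E_n$ forces finiteness of $F_n$, both of which are immediate. (One could alternatively invoke the characterization of hyperfiniteness via Borel $\Z$-actions, but the direct argument above is shorter and self-contained.)
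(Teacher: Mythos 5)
Your proof is correct and is the standard argument: after transporting $F$ by the Borel isomorphism so that $F\subseteq E$ on the same space, intersecting $F$ with a witnessing increasing sequence of finite Borel equivalence relations for $E$ immediately yields one for $F$. The paper does not supply its own proof, merely citing this as a well-known fact from the literature, and your argument is precisely the standard one given there.
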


Thus the class $\boldsymbol{\aph}$ of hyperfinite aperiodic CBER
forms an initial segment in $\inc$.
It is also downwards cofinal in $\inc$ in view of the following standard result
(see, e.g., \cite[Theorem 8.16]{Kec25}):

\begin{thm}\label{2.7}
    For any $E\in \ap$, there is $F\in \aph$ with $F\subseteq E$.
\end{thm}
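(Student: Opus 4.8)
The plan is to reduce the statement to the construction of a single aperiodic Borel automorphism of $X$ whose orbits stay inside $E$-classes. Precisely, suppose we have produced a Borel bijection $T\colon X\to X$ with $T(x)\mathrel E x$ for all $x$ and with every $T$-orbit infinite. Let $F=E_T$ be the orbit equivalence relation of $T$. Then $F\subseteq E$ (since each $T^n(x)\mathrel E x$), $F$ is aperiodic (its classes are the infinite $T$-orbits), and $F$ is hyperfinite because any CBER generated by a single Borel automorphism is hyperfinite (Slaman--Steel and Weiss; see, e.g., \cite{Kec22}); thus $F\in\aph$ and $F\subseteq E$, as required. Equivalently, and slightly closer to the definition of hyperfiniteness used here, one can aim to produce directly an increasing sequence $F_0\subseteq F_1\subseteq\cdots$ of \emph{finite} Borel subequivalence relations of $E$ whose union $F=\bigcup_n F_n$ has all classes infinite; this again exhibits the desired $F\in\aph$ with $F\subseteq E$.

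For the construction I would use two standard inputs. By the Feldman--Moore Theorem, fix a Borel action of a countable group $\Gamma=\{g_n\}_{n\in\N}$ on $X$, with $g_0=1$, which induces $E$; this makes the enumeration $n\mapsto g_n\cdot x$ of each $E$-class Borel and uniform, and in particular, for any Borel set $A\subseteq X$ and any $x$, singles out a canonical element of $A\cap [x]_E$ (namely $g_n\cdot x$ for the least $n$ with $g_n\cdot x\in A$) whenever that set is nonempty. By the Marker Lemma in its classical form (see \cite[Theorem 3.15]{Kec22}), fix a decreasing sequence $X=A_0\supseteq A_1\supseteq A_2\supseteq\cdots$ of Borel complete sections for $E$ with $\bigcap_n A_n=\emptyset$. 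A preliminary observation I would record is that each $A_n$ in fact meets every $E$-class in an \emph{infinite} set: within a fixed class $C$, if some $C\cap A_N$ were finite then $C\cap A_N\supseteq C\cap A_{N+1}\supseteq\cdots$ would be a decreasing sequence of finite nonempty sets, hence with nonempty intersection, contradicting $\bigcap_n(C\cap A_n)=\emptyset$. Writing $\ell(x)$ for the largest $n$ with $x\in A_n$ (well-defined and Borel), this means: every $E$-class contains, for each $n$, infinitely many points $x$ with $\ell(x)\geq n$.

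I would then run the classical marker recursion, using $\ell$ as a ``clock'' that governs when merges are permitted. Starting from $F_0$ the equality relation, pass from $F_n$ to $F_{n+1}$ by amalgamating certain $F_n$-classes: each $F_n$-class $P$ is a finite, hence proper, subset of its $E$-class $C$, so the $\Gamma$-action (together with a fixed Borel transversal for the smooth relation $F_n$) canonically exhibits a witness $g_m\cdot y\in C\setminus P$ with $y\in P$ and $m$ least possible, which designates another $F_n$-class $Q$ to amalgamate $P$ with; one performs this amalgamation, but only subject to the clock condition that the level $\ell$ of the relevant points is at least $n$. The infinitude observation guarantees that along any fixed $E$-class a legal merge is available at infinitely many stages, so $F=\bigcup_n F_n$ has all classes infinite; each $F_n$ is finite by the way the merges are bounded; and $F_n\subseteq E$ throughout, so $F\subseteq E$. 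The point requiring genuine care --- the main obstacle, and the reason a clock rather than a naive greedy rule is needed --- is to organize the stage-$n$ merges so that they are simultaneously Borel, produce only finite $F_{n+1}$-classes (no infinite chain of amalgamations is triggered in a single step, which is exactly what can go wrong in the absence of a Borel ordering of the $E$-classes), and still force the classes through any given point to grow without bound. This bookkeeping is precisely the classical argument, which I would carry out following \cite[Theorem 8.16]{Kec22}; the same bookkeeping can alternatively be arranged to output directly an aperiodic Borel automorphism $T$ with $T(x)\mathrel E x$, as in the reduction above.
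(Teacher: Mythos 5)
Your strategy is the classical combinatorial one, and it is worth noting up front that it is \emph{not} the route the paper takes: the paper treats \cref{2.7} as a standard fact, citing \cite[Theorem 8.16]{Kec22}, and the proof it actually writes out is of the stronger \cref{2.8} (the same conclusion plus ${\rm EINV}_E = {\rm EINV}_F$), obtained by measure-theoretic means --- Nadkarni's theorem to isolate the compressible part (where \cref{2.2}(ii) already yields a smooth aperiodic, hence hyperfinite, subrelation), and then the ergodic decomposition together with Dye's theorem (\cref{2.9}) to plant a conull copy of $E_0$ inside $E$ on each ergodic component. That route buys the preservation of invariant measures that Section 2 needs later; your marker route is more elementary and purely Borel, and would prove \cref{2.7} directly if completed. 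Bear in mind that the completed argument genuinely must produce something non-smooth on the non-compressible part: if $F\subseteq E$ is aperiodic smooth then $F$, and hence $E$, is compressible, so no ``transversal-style'' shortcut is available.

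As written, however, your proof is not self-contained at exactly the point you flag. The clock condition ``merge only when the level $\ell$ of the relevant points is at least $n$'' does not by itself prevent an infinite cascade of amalgamations at a single stage: by your own preliminary observation, every $E$-class contains \emph{infinitely} many points with $\ell\geq n$, so at stage $n$ infinitely many $F_n$-classes are active; since each active class designates a target, a chain $P_1\to P_2\to P_3\to\cdots$ of distinct classes can form (already for a $\Z$-action with the ``least witness'' rule), and amalgamating its union produces an infinite $F_{n+1}$-class. Arranging the merges so that the designation graph has finite components while classes through every point still grow without bound is the entire content of the theorem, and deferring it to \cite[Theorem 8.16]{Kec22} makes your write-up a citation with commentary rather than a proof --- defensible, since the paper does the same for \cref{2.7}, but it should be stated as such. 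Finally, your opening reduction to an aperiodic Borel $T$ with $T(x)\mathrel{E}x$ is not actually a simplification: by Slaman--Steel, such a $T$ exists if and only if an aperiodic hyperfinite $F\subseteq E$ does, so it merely restates the theorem.
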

We will actually need a more precise version of this result,
see again \cite[Theorem 8.16]{Kec25}.
Since a proof of this result has not appeared in print before,
we will include it below. 

\begin{thm}\label{2.8}
    For any $E \in \ap$,
    there is $F \in \aph$ with $F \subseteq E$
    such that moreover $\EINV_E = \EINV_F$.
\end{thm}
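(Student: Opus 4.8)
The plan is to reduce the statement, through the ergodic decomposition of $E$, to a construction performed separately on the compressible part of $E$ and on each of its ergodic components. I would first note that it is enough to find a hyperfinite aperiodic $F\subseteq E$ with $\INV_F=\INV_E$, where $\INV$ denotes the set of all invariant Borel probability measures: the inclusion $\INV_E\subseteq\INV_F$ is automatic from $F\subseteq E$, and since for any CBER the ergodic invariant measures are exactly the extreme points of the convex set of invariant measures (part of the ergodic decomposition, \cite[5.4]{Kec22}), $\INV_F=\INV_E$ forces $\EINV_F=\EINV_E$. So the real content is $\INV_F\subseteq\INV_E$.

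Using the ergodic decomposition \cite[5.4]{Kec22}, let $X_c$ be the largest $E$-invariant Borel set with $E|X_c$ compressible, put $X_{nc}=X\setminus X_c$, and let $\pi\colon X_{nc}\to\EINV_E$ be the $E$-invariant Borel map with the property that $E|\pi^{-1}(e)$ has $e$ as its unique invariant measure for every $e\in\EINV_E$. The plan is to construct, uniformly in the fiber, a subrelation $F=F_c\sqcup\bigsqcup_e F_e\subseteq E$ where $F_c\subseteq E|X_c$ is hyperfinite, aperiodic and compressible and each $F_e\subseteq E|\pi^{-1}(e)$ is hyperfinite, aperiodic and has $e$ as its unique invariant measure. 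For $F_c$ one can use \cref{2.2}(ii): $E|X_c$ being compressible, it contains a subrelation isomorphic to $\R I_\N$, which is hyperfinite, aperiodic and compressible (smooth aperiodic relations carry no invariant measure), and the Marker Lemma \cite[Theorem 3.15]{Kec22} makes this uniform, so the same recipe also handles any compressible fiber. Granting such an $F$: it is a subrelation of the smooth equivalence relation whose classes are $X_c$ and the $\pi$-fibers, with hyperfinite aperiodic classes, so by uniformity $F$ is hyperfinite and aperiodic (a ``smooth-by-hyperfinite'' CBER is hyperfinite). Moreover, if $\nu\in\INV_F$, then, decomposing $\nu$ into its $F$-ergodic components and using that $X_c$ and the $\pi$-fibers are $F$-invariant, each component concentrates on $X_c$ — impossible since $F_c$ is compressible — or on a single $\pi^{-1}(e)$, where it must equal $e$ because $F_e$ is uniquely ergodic; hence $\nu$ is a barycenter of measures in $\EINV_E\subseteq\INV_E$, so $\nu\in\INV_E$. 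This gives $\INV_F\subseteq\INV_E$.

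The heart of the proof, and the step I expect to be the main obstacle, is therefore the uniquely ergodic case: given an aperiodic CBER $E$ with a single invariant measure $\mu$ (the situation on a fiber, with the parameter suppressed), produce a hyperfinite aperiodic $F\subseteq E$ whose only invariant Borel probability measure is $\mu$. Since $\mu\in\INV_F$ holds automatically, the task is to exclude every other invariant measure; equivalently, to arrange that $\mu$ is $F$-ergodic and that every $F$-invariant Borel set of $\mu$-measure zero has compressible $F$-restriction. I would attempt this with a Borel Rokhlin-tower (``odometer-type'') construction refining the marker argument behind \cref{2.7}: build a decreasing sequence of Borel complete sections $X=B_0\supseteq B_1\supseteq\cdots$ with $\bigcap_n B_n=\emptyset$, together with an increasing sequence of finite subrelations $F_n\subseteq E$ whose classes are ``columns'' of $E$-orbit segments based at $B_n$, so that $F=\bigcup_n F_n$ is hyperfinite and aperiodic, and exploit the unique ergodicity of $E$ to force the sections $B_n$ to be ``$\mu$-uniformly distributed'', so that the columns at stage $n$ cover a set of $\mu$-measure tending to $1$; this pushes any $F$-invariant Borel set to be $\mu$-conull or else eventually swept out of all the towers and hence compressible for $F$. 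Carrying this out rigorously, and uniformly in the ergodic parameter — in particular obtaining unique ergodicity in the \emph{Borel} sense, not merely $\mu$-almost-everywhere — is the delicate part; one cannot circumvent it by simply merging $F$-classes until $F$ agrees with $E$ off a $\mu$-null set, since that would make $E$ itself $\mu$-hyperfinite, which fails for non-amenable $E$.
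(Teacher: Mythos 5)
Your reduction to the two cases (a compressible part plus a uniquely ergodic relation on each ergodic fiber), and the bookkeeping surrounding it, are correct and match the architecture of the paper's proof. But there is a genuine gap exactly where you flag one: the uniquely ergodic case. You correctly isolate the task — inside an aperiodic CBER with unique invariant measure $\mu$, produce a hyperfinite aperiodic subrelation whose only invariant measure is $\mu$ — and then propose to solve it by a Borel Rokhlin-tower construction with ``$\mu$-uniformly distributed'' markers, without carrying it out. That step is the entire content of the theorem; as written, the argument is a plan whose hardest component is missing.

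The concrete missing ingredient is (the effective form of) Dye's theorem, i.e.\ \cref{2.9}: for each $e\in\EINV_E$ there is an $E$-invariant Borel set $X_{0,e}\subseteq X_e$ with $e(X_{0,e})=1$, an $E_0$-invariant $\mu_0$-conull Borel set $C_{0,e}\subseteq\cantor$, and a Borel isomorphism of $E_0|C_{0,e}$ onto a subrelation $F_e\subseteq E|X_{0,e}$ carrying $\mu_0$ to $e$. Since the restriction of $E_0$ to a conull invariant set is still uniquely ergodic (any invariant measure for the restriction pushes forward to an invariant measure for $E_0$), each $F_e$ has $e$ as its unique invariant measure with no further work. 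Note also that you are making the problem harder than necessary by demanding Borel unique ergodicity on the \emph{entire} fiber $\pi^{-1}(e)$: it suffices to achieve it on a conull invariant subset, because the leftover set $X\setminus\bigcup_e X_{0,e}$ carries no invariant measure at all (every invariant measure for $E$ is a mixture of the $e$'s, each concentrated on its $X_{0,e}$), hence is compressible by Nadkarni's theorem and can be absorbed into your compressible case. If you replace the Rokhlin-tower sketch by an appeal to \cref{2.9}, together with the effectivity of its proof (needed to make $\bigcup_e X_{0,e}$ and $\bigcup_e F_e$ Borel, which is also what justifies your ``by uniformity'' step), the rest of your argument goes through.
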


\begin{proof}
    If $E$ is compressible,
    then the result follows from \cref{prop-smbasic}\ref{item-smbasic-comp},
    so assume that $E$ is not compressible.

    Let $\mu$ be the unique $E_0$-invariant measure on $2^\N$.
    By the proof of Dye's theorem
    (see e.g., \cite[Theorem 7.13]{KM04} and \cite[5.26]{Kec94}),
    there is a bijective Borel homomorphism $f$
    from $\Delta_{\EINV_E} \times E_0$ to $E$
    such that for every $e \in \EINV_E$,
    the pushforward of $\mu$ along the map
    $2^\N \to X$ defined by $x \mapsto f(e, x)$
    is $e$.
    Then we are done by setting
    $F := f(\Delta_{\EINV_E} \times E_0)$.
\end{proof}

Below for a quasi-order $\preceq$ with strict part $\prec$
on a set $Q$ and $q, r\in Q$,
we say that $r$ is a successor to $q$
if $q \prec r$ and $(q \prec s \preceq r \implies r \preceq s)$. 

We now have:
\begin{thm}\label{2.10}
    \leavevmode
    \begin{enumerate}[label=(\arabic*)]
        \item \label{thm-incl-hf-nonsm}
            $\R E_0 \pinc \N E_0 \pinc \cdots \pinc 3 E_0 \pinc 2 E_0 \pinc E_0 \pinc E_t$,
            each equivalence relation in this list is a successor in $\inc$ of the one preceding it
            and $\N E_0$ is the infimum in $\inc$ of the $n E_0, n\in \N\setminus\{0\}$.
        \item
            $\R I_\N \pinc E_t$ and $E_t$ is a successor of $\R I_\N$ in $\inc$. 
    \end{enumerate}
\end{thm}

\begin{proof}
    \leavevmode
    \begin{enumerate}[label=(\arabic*)]
        \item
            Clearly $E_0\subseteq E_t$ and thus $E_0 \pinc E_t$ as $E_0$ is not compressible.
            To see that $2E_0 \inc E_0$, note that $\cantor = X_0\sqcup X_1$, where $X_i = \{x\in \cantor : x_0 = i\}$, and $E_0 \uhr X_i \cong_B E_0$. From this it follows immediately that $(n+1) E_0 \inc n E_0$, for each $n\in \N, n \ge 1$. 
            
            To show that $\N E_0 \inc n E_0$, for each $n\in \N\setminus \{0\}$,
            it is enough to show that $\N E_0 \inc E_0$,
            since then we have $\N E_0 \cong_B n\N E_0 \subseteq_B nE_0$.
            Let $s_n = 1^n 0$ be the finite sequence staring with $n$ 1's followed by one 0, for $n\in \N$. Let $X_n$ be the subset of $\cantor$ consisting of all sequences starting with $s_n$, let $\bar{1}$ be the constant 1 sequence and put $X= \cantor \setminus \{\bar{1}\}$.
            Then $X =\bigsqcup_n X_n$ and
            $E_0\cong_B E_0 \uhr X \cong_B E_0 \uhr X_n$,
            for each $n\in \N$, which completes the proof that $\N E_0 \inc  E_0$. 
            
            Finally to show that $\R E_0 \inc \N E_0$,
            it is enough to show that $\R E_0 \inc  E_0$
            since then we have $\R E_0 \cong_B (\N\times\R) E_0 \subseteq_B \N E_0$.
            To prove this, let for each $y\in \cantor$, $X_y = \{x \in \cantor : \forall n\in \N (x_{2n} = y_n)\}$. Then $\cantor = \bigsqcup_y X_y$ and $E_0 \uhr X_y \cong_B E_0, \forall y\in \cantor$, which immediately implies that $\R E_0 \inc  E_0$.
            
            This establishes the non-strict orders in the list of \ref{thm-incl-hf-nonsm}.
            The strict orders and the last two statements of \ref{thm-incl-hf-nonsm} 
            now follow from \cref{2.3}.
        \item
            Since $E_t$ is compressible and not smooth, by \cref{prop-smbasic}, $\R I_\N  \pinc E_t$. It is also clear that $E_t$ is a successor of $\R I_\N$.
    \end{enumerate}
\end{proof}

The following is an immediate corollary of \cref{2.10} and the observations preceding this theorem:

\begin{cor}\label{2.12}
    Let $E,F \in \aph$ be non-smooth.
    Then 
    \[
        E \inc F \iff |\EINV_E| \ge |\EINV_F|.
    \]
\end{cor}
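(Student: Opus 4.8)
The plan is to deduce \Cref{2.12} directly from the classification of hyperfinite CBER recalled above together with \Cref{2.10} and the monotonicity of $|{\rm EINV}|$ from \Cref{2.3}. The list of non-smooth members of $\aph$ up to $\cong_B$ is exactly $E_t, E_0, 2E_0, 3E_0,\dots, \N E_0, \R E_0$, and the corresponding values of $|{\rm EINV}|$ are $0,1,2,3,\dots,\aleph_0,2^{\aleph_0}$. So it suffices to observe that under $\inc$ these are linearly ordered in reverse, i.e.\ $\R E_0 \pinc \N E_0 \pinc \cdots \pinc 2E_0 \pinc E_0 \pinc E_t$, which is precisely the content of \Cref{2.10}(i).

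The forward direction ($E\inc F \implies |{\rm EINV}_E|\ge |{\rm EINV}_F|$) is immediate from \Cref{2.3} and requires nothing about hyperfiniteness. For the reverse direction, I would argue by cases on the position of $F$ in the classification list. Given non-smooth $E,F\in\aph$ with $|{\rm EINV}_E|\ge|{\rm EINV}_F|$, replace them by the canonical representatives from the list; since $|{\rm EINV}|$ is a complete invariant within this list (the values $0,1,2,\dots,\aleph_0,2^{\aleph_0}$ are pairwise distinct), the inequality $|{\rm EINV}_E|\ge|{\rm EINV}_F|$ pins down that $F$ occurs weakly to the right of $E$ in the chain $\R E_0, \N E_0,\dots, 2E_0, E_0, E_t$ (reading right-to-left as $|{\rm EINV}|$ increases). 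Then \Cref{2.10}(i), which gives the full chain of $\inc$-comparabilities together with the fact that $\N E_0$ is the infimum of the $nE_0$, yields $E\inc F$: every earlier (larger-$|{\rm EINV}|$) relation in the chain is $\inc$ the later one, and transitivity of $\inc$ closes the gap. The only mildly delicate bookkeeping is the case where $E\cong_B \R E_0$ and $F\cong_B nE_0$ for finite $n$, or $E\cong_B \N E_0$ and $F\cong_B nE_0$; here one uses $\R E_0\inc \N E_0\inc nE_0$ from \Cref{2.10}(i).

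I do not expect any real obstacle: this corollary is a formal consequence of already-established facts, so the ``proof'' is essentially a remark that the chain in \Cref{2.10}(i) exhausts the non-smooth hyperfinite relations (by the classification theorem cited from \cite[9.1]{DJK94}) and is strictly decreasing in $|{\rm EINV}|$ (by \Cref{2.3}). If anything needs care, it is making explicit that the classification list has no non-smooth members other than those appearing in the \Cref{2.10}(i) chain, so that ``$|{\rm EINV}_E|\ge|{\rm EINV}_F|$'' genuinely forces a position in that chain; this is just quoting the classification theorem. I would therefore write the proof in two or three lines, citing \Cref{2.3} for one direction and \Cref{2.10}(i) plus the classification for the other.
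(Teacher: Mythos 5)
Your proposal is correct and matches the paper's intended argument exactly: the paper presents \cref{2.12} as an immediate consequence of \cref{2.10}, with the forward direction given by \cref{2.3} and the reverse direction by reading the chain of \cref{2.10}(i) against the classification of non-smooth hyperfinite relations from \cite[9.1]{DJK94}, on which $|{\rm EINV}|$ is a complete invariant. Your bookkeeping (including the $\N E_0 \inc nE_0$ cases and the observation that \cref{2.10}(ii) is not needed since both relations are non-smooth) is accurate.
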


\section{A global decomposition}

For each cardinal $\kappa \in \{0, 1, 2, 3, \dots, \aleph_0, 2^{\aleph_0}\}$,
let $\ap_\kappa$ be the class of all $E\in \ap$ which are $\kappa$-ergodic.
Clearly $\ap = \bigsqcup_\kappa \ap_\kappa$ and each $\ap_\kappa$ is invariant under the equivalence relation associated with the quasi-order $\inc$, by \cref{2.3}.
We also let for $\kappa >0$,
$\kappa E = S E$, where $S$ is a standard Borel space of cardinality $\kappa$.

\begin{prop}\label{2.13a}
    \leavevmode
    \begin{enumerate}[label=(\arabic*)]
        \item \label{item-incl-min}
            $\R I_\N$ is $\inc$-minimum in $\ap_0$ and
            $E_t$ is $\inc$-minimum among the non-smooth elements of $\ap_0$.
        \item \label{item-incl-max}
            (B. Miller)
            $E_\infty\times I_\N$ is $\inc$-maximum in $\ap_0$.
        \item
            For each $\kappa >0$,
            $\kappa E_0$ is a $\inc$-minimum element of $\ap_\kappa$
            but $\ap_\kappa$ has no $\inc$-maximum element.
        \item
            Let $\kappa \le \lambda$.
            Then for every $E \in \ap_\lambda$,
            there is $F\in \ap_\kappa$ such that $E \inc F$.
        \item
            (with R. Chen)
            The map $E \mapsto E \oplus E_0$ is an order embedding
            of the non-smooth elements of $\ap$ into $\ap$,
            i.e., for non-smooth $E, F \in \ap$,
            $E \subseteq_B F \iff E \oplus E_0 \subseteq_B F \oplus E_0$.
            It maps $\ap_\kappa$ into $\ap_{\kappa + 1}$ if $\kappa$ is finite,
            and $\ap_\kappa$ into itself if $\kappa$ is infinite.
    \end{enumerate}
\end{prop}
The following picture illustrates parts
\ref{item-incl-min} and \ref{item-incl-max}
of \cref{2.13a}.
\[
  \resizebox{3.50in}{3.50in}{
    \begin{tikzpicture}
      \node [circle, fill, scale=0.6, label=below:$E_t$] (Et) {};
      \node [circle, fill, scale=0.6, label=below:$\R I_\N$] at ($(Et) + (2.5,-1.5)$) (smooth) {};
      \node [circle, fill, scale=0.6, label=below:$E_0$] at ($(Et) + (-2.5,-1.5)$) (E0) {};
      \node [circle, fill, scale=0.6, label=below:$2E_0$] at ($(E0) + (-2.5,-1.5)$) (2E0) {};
      \node [circle, fill, scale=0.6, label=below:$\N E_0$] at ($(2E0) + (-3,-2)$) (NE0) {};
      \node [circle, fill, scale=0.6, label=below:$\R E_0$] at ($(NE0) + (-2.5,-1.5)$) (RE0) {};

      \draw (Et) to (E0);
      \draw (Et) to (smooth);
      \draw (E0) to (2E0);
      \draw [dashed] (2E0) to (NE0);
      \draw (NE0) to (RE0);

      \node [circle, fill, scale=0.6, label=above:$E_\infty\times I_\N$, above = 7 of Et] (aboveEt) {};
      \node at ($(E0) + (0.5,6)$) (aboverightE0) {};
      \node at ($(E0) + (-0.5,6)$) (aboveleftE0) {};
      \node at ($(2E0) + (0.5,6)$) (aboveright2E0) {};
      \node at ($(2E0) + (-0.5,6)$) (aboveleft2E0) {};
      \node at ($(NE0) + (0.5,6)$) (aboverightNE0) {};
      \node at ($(NE0) + (-0.5,6)$) (aboveleftNE0) {};
      \node at ($(RE0) + (0.5,6)$) (aboverightRE0) {};
      \node at ($(RE0) + (-0.5,6)$) (aboveleftRE0) {};

      \node [above = 3.5 of Et] (EINV0) {$|\EINV| = 0$};
      \node [above = 3 of E0] (EINV1) {$|\EINV| = 1$};
      \node [above = 3 of 2E0] (EINV2) {$|\EINV| = 2$};
      \node [above = 3 of NE0] (EINVN) {$|\EINV| = \aleph_0$};
      \node [above = 3 of RE0] (EINVR) {$|\EINV| = 2^\aleph_0$};

      \draw (Et) to [bend right] (aboveEt);
      \draw (Et) to [bend left] (aboveEt);
      \draw (E0) to [bend right] (aboverightE0);
      \draw (E0) to [bend left] (aboveleftE0);
      \draw (2E0) to [bend right] (aboveright2E0);
      \draw (2E0) to [bend left] (aboveleft2E0);
      \draw (NE0) to [bend right] (aboverightNE0);
      \draw (NE0) to [bend left] (aboveleftNE0);
      \draw (RE0) to [bend right] (aboverightRE0);
      \draw (RE0) to [bend left] (aboveleftRE0);
    \end{tikzpicture}
  }
\]
\begin{proof}
    \leavevmode
    \begin{enumerate}[label=(\arabic*)]
        \item
            That $\R I_\N$ is $\subseteq_B$-minimum in ${\ap}_0$
            follows from \cref{prop-smbasic}.

            If $E \in \ap_0$,
            then $E$ is compressible,
            so we have $E_t \subseteq_B E \oplus E_t$ by \Cref{2.5}.
            But if $E$ is non-smooth,
            then $E \oplus E_t \cong_B E$
            (see \Cref{theorem-smooth-equivalence}),
            so we are done.
        \item
            That $E_\infty\times I_\N$ is $\inc$-maximum
            in ${\ap}_0$ follows from \cref{2.4}.
        \item
            The fact that $\kappa E_0$ is a $\inc$-minimum element of $\ap_\kappa$ is clear from \cref{2.8}. That $\ap_\kappa$ has no $\inc$-maximum element can be seen as follows.
            
            Assume that $E$ is such a $\inc$-maximum,
            towards a contradiction.
            Say $E$ lives on the space $X$.
            Fix an invariant measure $\mu$ for $E$.
            We will show that every infinite countable group $\Gamma$
            embeds algebraically into $[E]$,
            the measure-theoretic full group of $E$ with respect to $\mu$
            (see \Cref{prelim-measure}).
            contradicting a result of Ozawa,
            see \cite[page 29]{Kec10}.  
            
            The group $\Gamma$ admits a free Borel action on a standard Borel space $Y$,
            with associated equivalence relation $G$ that has exactly $\kappa$ ergodic,
            invariant measures.
            To see this, consider the free part of the shift action of $\Gamma$ on $2^\Gamma$, which has $2^{\aleph_0}$ ergodic components, and restrict the action to $\kappa$ many ergodic components.
            Since $E$ is $\inc$-maximum in $\ap_\kappa$, let $f \colon Y \to X$ be a Borel isomorphism such that $f(G) = F\subseteq E$.
            Then $\Gamma$ acts freely in a Borel way on $X$ inducing $F$,
            so that $\Gamma$ can be algebraically embedded in $[F]$,
            the measure-theoretic full group of $F$ with respect to $\mu$
            (which is clearly invariant for $F$).
            But $[F] \le [E]$, so $\Gamma$ embeds algebraically into $[E]$.
        \item
            For $\kappa = 0$,
            we are done by \ref{item-incl-max},
            so assume $\kappa > 0$.
            Let $E\in \ap_\lambda$.
            Fix an $E$-invariant Borel subset $Y$ of $X$ such that
            $E \uhr Y$ is $\kappa$-ergodic:
            this exists since if $(X_e)_{e \in \EINV_E}$ is an ergodic decomposition of $E$
            (see \Cref{ergodic-decomposition}),
            then we can find a Borel subset $A \subseteq \EINV_E$ of size $\kappa$,
            then set $Y = \bigsqcup_{e \in A} Y_e$.
            Let $G$ be a compressible CBER on $X \setminus Y$ containing $E \uhr (X \setminus Y)$,
            and let $F = (E \uhr Y) \oplus G$.
            Then $E \subseteq F$ and $F\in \ap_\kappa$.
        \item
            We show that $E\mapsto E\oplus E_0$ is an order embedding
            on non-smooth aperiodic CBERs (on uncountable standard Borel spaces).
            (Note that the only failure is that $E_t\oplus E_0\cong_B \R I_\N\oplus E_0$.)
            
            Clearly,
            if $E\inc F$,
            then $E\oplus E_0\inc F\oplus E_0$.
            Conversely,
            suppose that $E\oplus E_0\inc F\oplus E_0$.
            We want to show that $E\inc F$.
            
            We can write $E \cong_B R\oplus R'$ and $E_0 \cong_B S\oplus S'$
            with $R\oplus S\inc F$ and $R'\oplus S'\inc E_0$.
            Note that $R',S,S'$ are all aperiodic hyperfinite
            (although they may live in a countable space,
            possibly empty),
            and since $E_0 \cong_B S\oplus S'$,
            exactly one of $S$ or $S'$ must be Borel isomorphic to $E_0$,
            and the other is compressible hyperfinite. Also since $E$ is non-smooth,
            we have $E\cong_B E\oplus E_t$,
            and similarly for $F$.
            
            We have two cases:
            \begin{enumerate}[label=(\alph*)]
                \item
                    If $S \cong_B E_0$ and $S'$ is compressible,
                    then since $R'\oplus S'\inc E_0$,
                    we must have $R'\inc E_0 \cong_B S$.
                \item
                    If $S$ is compressible,
                    then we have $R'\oplus E_t\inc S\oplus E_t$,
                    since $R'\oplus E_t$ is hyperfinite and $S\oplus E_t \cong_B E_t$.
            \end{enumerate}
            In both cases,
            we get
            \[
                E
                \cong_B E\oplus E_t
                \cong_B R\oplus R'\oplus E_t
                \inc R\oplus S\oplus E_t
                \inc F\oplus E_t
                \cong_B F.
            \]
    \end{enumerate}
\end{proof}

The next result,
which is an immediate corollary of the above,
is a version of the Glimm-Effros dichotomy (\Cref{theorem-smooth-equivalence})
for the inclusion order $\inc$ instead of $\sqsubseteq_B$.

\begin{cor}\label{2.13}
    Let $E\in \ap$.
    Then exactly one of the following holds:
    \begin{enumerate}[label=(\roman*)]
        \item
            $E$ is smooth,
        \item
            $\R E_0 \inc E$.
    \end{enumerate}
\end{cor}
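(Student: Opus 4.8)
The claim is the Glimm--Effros-type dichotomy for $\inc$: every $E\in\ap$ is either smooth or satisfies $\R E_0\inc E$, and the two alternatives are exclusive. First I would verify exclusivity: by \cref{2.2}(i), if $\R E_0\inc E$ and $E$ were smooth, then $\R E_0$ would be smooth, which it is not (it has $2^{\aleph_0}$ ergodic invariant measures, or more simply is non-smooth by Glimm--Effros). So at most one of (i), (ii) holds. The substance is that at least one holds, i.e., if $E$ is non-smooth then $\R E_0\inc E$.

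\textbf{Main argument.} The plan is to reduce to the results already established in this subsection. Suppose $E\in\ap$ is non-smooth. By the classical Glimm--Effros dichotomy in the form $E$ non-smooth $\implies E_0\sqsubseteq_B E$ (cited in the excerpt as \cite[6.5 and 8.5]{Kec22}), we obtain an $E$-invariant Borel set on which $E$ restricts to a copy of $E_0$ — but I want the stronger conclusion that $\R E_0$ sits \emph{inside} $E$ up to isomorphism. The cleanest route is to invoke \cref{2.13a}(iii) with $\kappa=2^{\aleph_0}$: it is not literally applicable since $E$ need not be in $\ap_\lambda$ with $\lambda\ge 2^{\aleph_0}$, but its \emph{proof technique} is exactly what is needed. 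Concretely: since $E$ is non-smooth, we may (by \cite[Theorem 3.23]{Kec22} and the structure of the ergodic decomposition, cf. the proof of \cref{2.13a}(i)) write $X = Y\sqcup Z$ into $E$-invariant Borel sets where either $E|Y\cong_B E_t$ is compressible non-smooth, or $E|Y$ supports a nontrivial family of ergodic invariant measures. In the compressible-non-smooth case, $E_t\inc E$ by \cref{2.13a}(i), and then $\R E_0\inc E_t$ by \cref{2.10}(i) (the chain $\R E_0\pinc\N E_0\pinc\cdots\pinc E_0\pinc E_t$), so $\R E_0\inc E$ by transitivity. In the case where $E$ admits an invariant measure, apply \cref{2.8} to get $F\in\aph$ with $F\subseteq E$ and ${\rm EINV}_E={\rm EINV}_F$; since $E$ is non-smooth, so is $F$ (it contains a copy of $E_t$ by the argument in \cref{2.13a}(i), or: a smooth $F$ would force $E$ to have a Borel transversal on a measure-one set for each ergodic component, contradicting non-smoothness via Nadkarni/Dye considerations). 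Then $F$ is a non-smooth hyperfinite CBER, so by the classification list and \cref{2.10}, $\R E_0\inc F\subseteq E$, hence $\R E_0\inc E$.

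\textbf{Streamlined version.} Actually the slickest packaging: by the preceding results, the non-smooth elements of $\aph$ that are $\inc$-below every non-smooth element of $\ap$ — this is exactly the content that $\R E_0$ is $\inc$-minimum among non-smooth CBER in $\ap$, which the excerpt explicitly announces in "the paragraph following \cref{2.13}" and which follows by combining \cref{2.8}, \cref{2.10}(i), and \cref{2.13a}(i). So the corollary is: if $E\in\ap$ is non-smooth, pick $F\in\aph$ with $F\subseteq E$ and the same ergodic invariant measures (\cref{2.8}); $F$ is non-smooth (else $E$ smooth on a conull-per-component set plus compressible part would be smooth, contradiction); by the hyperfinite classification $F$ is one of $E_t, E_0, 2E_0,\ldots,\N E_0,\R E_0$, all of which contain $\R E_0$ in the $\inc$ order by \cref{2.10}(i); hence $\R E_0\inc F\inc E$. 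Combined with exclusivity, this is the dichotomy.

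\textbf{Main obstacle.} The one genuinely delicate point is showing that the hyperfinite subequivalence relation $F\subseteq E$ produced by \cref{2.8} is \emph{non-smooth} whenever $E$ is. The guarantee ${\rm EINV}_E={\rm EINV}_F$ handles the case $|{\rm EINV}_E|\ge 1$ (then $F$ has an invariant measure so $F\ne\R I_\N$; but one must still rule out $F\cong_B E_t$ being "smooth" — it isn't, $E_t$ is non-smooth). The truly problematic case is $|{\rm EINV}_E|=0$, i.e., $E$ compressible and non-smooth: then ${\rm EINV}_F=\emptyset$ too, so $F$ is compressible hyperfinite, but a priori $F\cong_B\R I_\N$ is possible and that is smooth. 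To fix this one should instead appeal to \cref{2.13a}(i), which directly gives $E_t\inc E$ for $E$ compressible non-smooth (its proof uses $E_t\sqsubseteq^i_B E$, i.e., $E_t$ embeds as an invariant restriction), and then $\R E_0\inc E_t$ from \cref{2.10}(i). So the real proof forks on whether $E$ is compressible (use \cref{2.13a}(i) then \cref{2.10}(i)) or not (use \cref{2.8}, then the non-smooth hyperfinite classification and \cref{2.10}(i)). Either way the conclusion $\R E_0\inc E$ follows, and that is the whole proof — it is genuinely just assembling the pieces already proved in this section.
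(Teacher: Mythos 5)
Your proof is correct and assembles exactly the pieces the paper intends: exclusivity via \cref{2.2}(i), and for non-smooth $E$ a fork on compressibility, using \cref{2.13a}(i) together with $\R E_0\inc E_t$ from \cref{2.10}(i) in the compressible case, and \cref{2.8} plus the hyperfinite classification and \cref{2.10}(i) in the non-compressible case (which is precisely the content of \cref{2.13a}(ii)). The paper states the corollary as immediate from the preceding results without writing this out, and you correctly identified and resolved the one delicate point (that the hyperfinite $F\subseteq E$ from \cref{2.8} could be smooth when $E$ is compressible).
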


Thus, by \cref{2.13},
$\R E_0$ is a $\inc$-minimum among all the non-smooth relations in $\ap$ and,
by \cref{2.4},
$E_\infty\times I_\N$ is a $\inc$-maximum relation in $\ap$. 

It is interesting to consider the problem of existence of $\inc$-maximum elements in ${\e}_\kappa = \ap_\kappa \cap \e$ for other classes $\e\subseteq \ap$.
This is clearly the case if $\kappa = 0$ and $\e$ satisfies the conditions of \cref{2.4},
so we will consider $\kappa \ge 1$.

Clearly $\kappa E_0$ is $\inc$-maximum in $\aph_\kappa$.
Denote by $\boldsymbol{\apt}$
the subclass of $\ap$ consisting of the treeable equivalence relations.

\newcommand{\probtreeable}{
    Let $\kappa \ge 1$.
    Does $\apt_\kappa$ have a $\inc$-maximum element?
}
\begin{prob}\label{prob-treeable}
    \probtreeable
\end{prob}
If $E$ is $\inc$-maximum in ${\apt}_1$, then $\kappa E$ is $\inc$-maximum in $\apt_\kappa$, for every $1 \le \kappa \le \aleph_0$, so we will concentrate on the case $\kappa = 1$, i.e., the class of \textbf{uniquely ergodic} elements of $\apt$. We do not know the answer to this problem but we would like to point out that a positive answer has an implication in the context of the theory of measure preserving CBER, see \cite{Kec24}. 

Let $(X, \mu)$ be a standard Borel space equipped with a probability measure $\mu$.
We will consider as in \cite{Kec24} \textbf{pmp} CBER on $X$,
i.e., $\mu$-preserving CBER on $X$,
where we identify two such CBERs if they agree $\mu$-a.e.
Inclusion of pmp CBERs is also understood in the $\mu$-a.e. sense.
Such a relation is treeable if it has this property $\mu$-a.e.

\begin{prop}\label{215}
    If $E$ on a standard Borel space $X$ is a $\inc$-maximum uniquely ergodic,
    equivalence relation in $\apt$,
    with (unique) invariant measure $\mu$,
    then for every treeable pmp relation $F$ on $(X, \mu)$,
    there is some $T \in \Aut(X, \mu)$ with $T(F) \subseteq E$.
\end{prop}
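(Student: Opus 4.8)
The plan is to realize the abstract pmp relation $F$ as a genuine Borel equivalence relation lying in the class $\apt_1$ of uniquely ergodic treeable aperiodic CBER, to invoke the $\inc$-maximality of $E$ in $\apt_1$ to produce a Borel isomorphism $g$ with $g(F)\subseteq E$, and then to observe that this $g$ must in fact preserve $\mu$, so that it represents the desired automorphism $T\in\Aut(X,\mu)$.

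First I would reduce to the case where $F$ is aperiodic and ergodic. Since $E$ is aperiodic pmp, the part of $F$ with finite classes embeds into $E$ by a standard direct argument, and a non-ergodic $F$ reduces to the ergodic case, either by embedding it mod null into an ergodic aperiodic treeable pmp relation on $(X,\mu)$ or by carrying out the argument below fiberwise over the ergodic decomposition of $F$. I regard these as routine and would only indicate them, so assume henceforth that $F$ is aperiodic and ergodic (in particular $\mu$ is atomless). Next, by the Feldman--Moore theorem together with the usual adjustment on a $\mu$-null set, I may assume that $F$ is given by a genuine Borel CBER $F_0$ on a $\mu$-conull Borel set $X_0\subseteq X$ which genuinely preserves $\mu\uhr X_0$; shrinking $X_0$ first to the ($\mu$-conull, $F_0$-invariant) set of points with infinite class and then to the ergodic part, I may also assume that $F_0$ is aperiodic and that $\mu\uhr X_0$ is its unique invariant probability measure. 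Since $F$ is treeable, $F_0$ carries a Borel treeing, so $F_0$ is a treeable aperiodic CBER on an uncountable standard Borel space with $|\EINV_{F_0}|=1$; that is, $F_0\in\apt_1$, and $F_0=F$ mod $\mu$-null.

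Now comes the main point. Since $E$ is $\inc$-maximum in $\apt_1$ and $F_0\in\apt_1$, there is a Borel isomorphism $g\colon X_0\to X$ with $g(F_0)\subseteq E$. As $g$ conjugates $F_0$ to $g(F_0)$, the CBER $g(F_0)$ also satisfies $|\EINV_{g(F_0)}|=1$, and its unique invariant probability measure is the pushforward $g_*(\mu\uhr X_0)$. On the other hand $\mu$ is invariant for $E$, and since $g(F_0)\subseteq E$, invariance passes to the subrelation, so $\mu$ is invariant for $g(F_0)$ as well. By uniqueness, $\mu=g_*(\mu\uhr X_0)$; i.e., $g$ is a $\mu$-preserving Borel isomorphism of the $\mu$-conull set $X_0$ onto $X$, and it therefore represents an element $T\in\Aut(X,\mu)$, with $T(F)=g(F_0)\subseteq E$ mod $\mu$-null, as desired.

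I do not expect a deep obstacle: once the setup is in place, the whole argument turns on the single observation that an invariant measure for $E$ restricts to an invariant measure for any subrelation, and that unique ergodicity of $E$ (hence of $g(F_0)$) pins this measure down, forcing the a priori merely Borel isomorphism $g$ to be measure preserving. The genuinely fiddly steps, and the ones I would be most careful about, are the preliminary reductions: passing from the abstract pmp relation to a faithful Borel, $\mu$-preserving, uniquely ergodic representative in $\apt_1$, and disposing of the periodic and non-ergodic parts of $F$.
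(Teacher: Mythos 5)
Your core argument is exactly the paper's: realize $F$ mod null as a genuine Borel relation $F'\in\apt$ that is uniquely ergodic with invariant measure $\mu$, invoke $\inc$-maximality to get a Borel isomorphism $g$ with $g(F')\subseteq E$, and then observe that both $g_*\mu$ and $\mu$ are invariant for the subrelation $g(F')$ of $E$, so unique ergodicity forces $g_*\mu=\mu$. That final observation is the heart of the proof and you have it right.

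The gap is in the reduction to the ergodic case, which you dismiss as routine. Your first suggested route --- embedding $F$ mod null into an ergodic aperiodic treeable pmp relation on $(X,\mu)$ --- is precisely the content of the paper's \cref{2.16}, and it is not routine: the natural candidate $H=E_T\vee F$ for an ergodic $T\in\Aut(X,\mu)$ is ergodic and pmp, but the union of a treeing of $F$ with the graph of $T$ is in general only a graphing of $H$, not a treeing (it can create cycles). One needs $E_T$ to be \emph{independent} of $F$ in the sense of \cite[Section 27]{KM04}, and the existence of an ergodic such $T$ rests on the genericity result \cite[Theorem 8]{CM14}; this is the one genuine external input in the paper's proof. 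Your second suggested route --- running the argument fiberwise over the ergodic decomposition of $F$ --- does not work: $\inc$-maximality of $E$ in $\apt_1$ only applies to relations that are themselves uniquely ergodic on an uncountable standard Borel space, and it produces for each ergodic component $X_e$ a Borel bijection of $X_e$ onto \emph{all} of $X$; these cannot be patched into a single automorphism of $(X,\mu)$. So the ergodic-join lemma is unavoidable and needs an actual proof (or citation), not a wave. The remaining bookkeeping in your write-up (passing to a conull Borel representative, restricting to the ergodic component of $\mu$, noting that a measure-preserving Borel bijection from a conull set represents an element of $\Aut(X,\mu)$) is fine.
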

\begin{proof}
    We will use the following lemma.
    \begin{lem}\label{2.16}
        Let $G$ be a treeable pmp {\CBER} on $(X,\mu)$.
        Then there is an ergodic treeable pmp {\CBER} $H$ on $(X,\mu)$ with $G\subseteq H$.
    \end{lem}
    \begin{proof}
        For each $T \in \Aut(X, \mu)$,
        denote by $E_T$ the equivalence relation induced by $T$.
        By \cite[Theorem 8]{CM14},
        the set of $T \in \Aut(X, \mu)$ such that $E_T$ is independent of $G$
        (see \cite[Section 27]{KM04} for the notion of independence)
        is comeager in $\Aut(X, \mu)$,
        equipped with the usual weak topology.
        So is the set of all ergodic $T \in \Aut(X, \mu)$, see \cite[Theorem 2.6]{Kec10}.
        Thus there is an ergodic $T \in \Aut(X, \mu)$ such that $E_T$ is independent of $G$.
        Then put $H = E_T\vee G$.
    \end{proof}
    
    By \cref{2.16}, we can assume that $F$ is ergodic.
    We can also assume that there is $F'\in \apt$ which agrees with $F$ $\mu$-a.e.
    By considering the ergodic decomposition of $F'$
    (see \Cref{ergodic-decomposition}),
    we can also assume that $\mu$ is the unique invariant measure for $F'$.
    Fix then a Borel automorphism $T \colon X\to X$ such that $T(F')\subseteq E$.
    Then both $T_*\mu$ and $\mu$ are $T(F')$-invariant.
    Since $T(F')$ is uniquely ergodic, it follows that $T_*\mu =\mu$,
    i.e., $T\in \Aut(X, \mu)$ and the proof is complete.
\end{proof}
\begin{remark}
    We note here that an analog of the conclusion of \cref{215} is valid for the class $\aph$.
    More precisely, let $X = \cantor$ and let $\mu$ be the usual product measure on $X$.
    Then for every hyperfinite pmp CBER $F$ on $(X, \mu)$,
    there is an automorphism $T \in \Aut(X, \mu)$ such that $T(F) \subseteq E_0$.
    This can be seen as follows:
    By \cite[5.4]{Kec10} (in which the aperiodicity of $E$ is not needed),
    we can find an ergodic hyperfinite pmp relation $F'$ such that $F\subseteq F'$.
    By Dye's Theorem (see, e.g., \cite[3.13]{Kec10}),
    there is an automorphism $T \in \Aut(X, \mu)$ such that
    $T(F')= E_0$ and thus $T(F) \subseteq E_0$.
\end{remark}

\chapter{Topological Realizations}\label{realizations}

\section{Dense realizations and Lusin marker schemes}
We will first use the results in \Cref{inclusion} to prove the following:

\begin{thm}\label{3.1}
    For every equivalence relation $E\in \ap$ and every perfect Polish space $Y$,
    there is a minimal topological realization of $E$ in $Y$.
\end{thm}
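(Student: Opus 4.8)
The plan is to build the realization by transporting a suitable concrete CBER onto $Y$ via a carefully chosen Borel isomorphism, rather than trying to construct an action directly. First I would reduce to the case where $E$ is already presented in a convenient form. Since $Y$ is a perfect Polish space, it is Borel isomorphic to $\cantor$, so it suffices to produce a minimal topological realization in $\cantor$ itself and then push it forward; but we must be careful that pushing forward along an arbitrary Borel isomorphism $\cantor \to Y$ need not preserve the property ``every class is dense,'' so instead the cleaner route is to realize $E$ as an equivalence relation on $Y$ directly using a Borel isomorphism that can be arranged to have dense-image behavior on classes. Concretely, fix a countable basis $\{U_n\}$ of nonempty open subsets of $Y$.

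The heart of the argument is a back-and-forth/marker construction. Using \cref{2.7} (or \cref{2.8}), choose $F \in \aph$ with $F \subseteq E$; hyperfiniteness gives us an exhaustion $F = \bigcup_n F_n$ by finite Borel subequivalence relations with $F_n \subseteq F_{n+1}$, and each $F_n$-class is finite while each $F$-class (hence each $E$-class) is infinite. The idea is to construct a Borel isomorphism $g \colon X \to Y$ (where $E$ lives on $X$) such that the image under $g$ of every $E$-class is dense, i.e.\ meets every $U_n$. To do this one arranges, by a recursion on $n$, a Borel partition of $X$ refining the $F_n$-classes together with an assignment of these pieces into the basic open sets, so that within each $F$-class (which is the increasing union of its $F_n$-classes) one can always find, for every $m$, an element mapped into $U_m$. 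Using that $F_n$-classes grow to infinity inside each $F$-class, there is always ``room'' to devote new elements of the class to the open sets $U_m$ not yet hit. A standard bookkeeping (enumerate pairs $(\text{class index hitting stage}, m)$) ensures every class is dense. Setting $F' = g(E)$, we get $F' \cong_B E$ on $Y$ with every $F'$-class dense, which is exactly a minimal topological realization.

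The main obstacle I anticipate is the simultaneous Borel uniformity: one needs the whole construction — the choice of which elements of each class go into which $U_m$ — to be carried out Borel-uniformly across the uncountably many $E$-classes at once, not class-by-class. The finite subequivalence relations $F_n$ from hyperfiniteness are precisely what makes this tractable, since at each finite stage we are only manipulating finite $F_n$-classes and can use Borel selectors/enumerations of them (Lusin--Novikov uniformization). A secondary point to check is that the resulting $g$ is genuinely a Borel \emph{bijection} onto $Y$ (not just an injection); this can be handled by also running a ``forth'' part of the back-and-forth that, as a byproduct of the density requirement, forces $g$ to be surjective — every point of $Y$ lies in some $U_m$, and the construction can be set up so that some class element is mapped arbitrarily close to it, with a final Schröder--Bernstein-type adjustment (or direct bookkeeping) to achieve exact surjectivity. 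I expect the density requirement to essentially \emph{give} surjectivity for free once the basic open sets are all targeted densely, so the only real work is the Borel-uniform recursion.
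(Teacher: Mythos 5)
Your proposal is correct in outline, but it takes a genuinely different route from the paper. The paper first reduces to $Y=\baire$ via a continuous bijection $\baire\to Y$ (which carries dense sets to dense sets), and then uses \cref{2.13} to reduce everything to just two relations: $\R I_\N$ (the smooth case) and $\R E_0$, which is $\inc$-minimum among non-smooth relations in $\ap$, so that a minimal realization of a copy of $\R E_0$ contained in $E$ automatically makes every $E$-class (a union of dense classes) dense. These two are then realized concretely: the dense-orbit part of the shift on $2^\Z$ for $\R E_0$, and an explicit gluing for $\R I_\N$. You instead work directly with an arbitrary $E$ and build a Borel bijection $g\colon X\to Y$ sending each class onto a dense set; this avoids the Borel-inclusion-order machinery of Section 2 (in particular the classification of hyperfinite CBERs behind \cref{2.13}) at the cost of a marker-type recursion. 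Your argument can in fact be streamlined so that no recursion is needed: all you require is a Borel partition $X=\bigsqcup_n A_n$ into countably many complete sections (obtainable from a smooth aperiodic subrelation of $E$ together with a Lusin--Novikov enumeration, rather than from the full strength of \cref{2.8}), plus a Borel partition $Y=\bigsqcup_n B_n$ into uncountable pieces with $B_n\subseteq U_n$ for $n\geq 1$ (take pairwise disjoint nowhere dense Cantor sets $K_n\subseteq U_n$ and absorb $Y\setminus\bigcup_n K_n$ into $B_0$); mapping $A_n$ onto $B_n$ by Borel bijections gives $g$ in one step, with surjectivity built in.

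Two cautions about the write-up as it stands. First, the remark that density of the class images should give surjectivity of $g$ ``for free'' is not right---a dense image can be very far from all of $Y$---so the Schr\"oder--Bernstein-style repair (or the direct partition above) is genuinely needed, not optional. Second, in the recursion on the $F_n$'s you must make the designation ``this element gets sent into $U_m$'' stable under the merging of $F_n$-classes into $F_{n+1}$-classes, or else two elements of one class can end up competing for the same target piece of $Y$ and injectivity of the bookkeeping breaks. Both points are repairable by standard means, so the approach does go through.
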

\begin{proof}
    First,
    since for every perfect Polish space $Y$
    there is a continuous bijection from the Baire space $\baire$ onto $Y$
    (see \cite[7.15]{Kec95}),
    we can assume that $Y = \baire$.
    Moreover by \cref{2.13},
    it is enough to prove this result for $E=\R E_0$ and $E= \R I_\N$.
    
    \textit{Case 1}: $\R E_0$.
    
    Consider the shift map of $\Z$ on $2^\Z$ with associated equivalence relation $F'$.
    Let $Y = \{x \in 2^\Z : \text{$[x]_{F'}$ is dense in $2^\Z$}\}$.
    Clearly $Y$ is a dense, co-dense $G_\delta$ set in $2^\Z$,
    so,
    in particular,
    it is a zero-dimensional Polish space
    (with the relative topology from $2^\Z$).
    We next check that every compact subset of $Y$ has empty interior (as a subset of $Y$).
    Indeed let $K \subseteq Y$ be compact.
    If now $V$ is open in $2^\Z$ and $\emptyset \neq V\cap Y\subseteq K$,
    then since $Y$ is dense in $2^\Z$,
    by looking at $V\setminus K$ we see that $V\subseteq K$,
    contradicting that $Y$ is also co-dense in $2^\Z$. 
    
    By \cite[7.7]{Kec95},
    $Y$ is homeomorphic to $\baire$.
    Moreover if $F = F' \uhr Y$,
    $F$ has dense classes and $|\EINV_F| = 2^{\aleph_0}$,
    since for each $p \in (0, 1)$,
    the set $Y$ is conull for the ergodic invariant measure
    $(p \delta_0 + (1-p)\delta_1)^\Z$ on $2^\Z$.
    Thus $F\cong_B \R E_0$.
    
    \textit{Case 2}: $\R I_\N$. 
    
    Consider the equivalence relation $R$ on $\baire$ given by
    \[
        x R y \iff \exists m \forall n \ge m (x_n = y_n).
    \]
    Let $A\subseteq \baire$ be an uncountable Borel partial transversal for $R$ (i.e., no two distinct elements of $A$ are in $R$). Then, as $R$ is not smooth, denoting by $B= [A]_R$ the $R$-saturation of $A$, we also have that $Y = \baire \setminus B$ is uncountable. Fix then a Borel bijection $f \colon A\to Y$ and let $F$ be the equivalence relation obtained by adding to each $[a]_R$, $a\in A$, the point $f(a)$. Then $F$ is a smooth CBER, so $F\cong_B \R I_\N$, and every $F$-class is dense in $\baire$.
\end{proof}

A \textbf{complete section} of an equivalence relation $E$ on $X$
is a subset $Y\subseteq X$ which meets every $E$-class.
Recall that a \textbf{vanishing sequence of markers} for a CBER $E$
is a decreasing sequence of complete Borel sections $\{A_n\}$ for $E$
such that $\bigcap_n A_n = \emptyset$.
A very useful result in the theory of CBER is the Marker Lemma of Slaman-Steel,
which asserts that every $E\in\ap$ admits a vanishing sequence of markers,
see \cite[Lemma 6.7]{KM04} for a proof.
We will see next that \cref{3.1} implies a strong new version of the Marker Lemma.
Recall also here the definition of Lusin marker scheme from
\cref{intro-top}.

\begin{thm}\label{3.3}
    Every $E\in \ap$ admits a Lusin marker scheme of type \textup{I}
    and a Lusin marker scheme of type \textup{II}.
\end{thm}
\begin{proof}
    Type I: By \cref{3.1}, we can assume that $E$ lives on $\baire$ and that every equivalence class is dense.
    Let then for each $s\in \N^n$, $A_s = \{ x : x \uhr n = s \}$. 
    
    Type II: By \cref{3.1}, we can assume that $E$ lives on $\R$ and that every equivalence class is dense. By induction on $n$, we can easily construct open sets $A_s, s\in \N^n$, such that $\{A_s\}_{s\in \N^{<\N}}$ is a Lusin marker scheme for $E$ and moreover it has the following properties:
    \begin{enumerate}[label=(\alph*)]
        \item
            Each $A_s, s \in \N^n, n \ge 1$,
            is contained in $(n, \infty)$;
        \item
            Each $A_s, s\in \N^n, n \ge 1$,
            has non-empty intersection with the interval $(k, k+1)$ for every $k \ge n$.
    \end{enumerate}
    Then clearly $\{A_s\}_{s\in \N^{<\N}}$ is of type II.
\end{proof}

\begin{remark}
    \leavevmode
    \begin{enumerate}[label=(\alph*)]
        \item
            We can also easily see that every $E\in \ap$ admits a
            \textbf{Cantor marker scheme} $\{A_s\}_{s \in 2^{<\N}}$ of each type,
            which is defined in an analogous way.
        \item
            By applying \cref{3.3} to $\R E$,
            and using the ccc property for category,
            we can see that every $E\in \ap$ admits a variant of
            a Lusin marker scheme of type I,
            where condition \ref{item-markerschemei}
            in the definition is replaced by the following condition:
            \begin{enumerate}[label=(\roman*)*, start=4]
                \item
                    For each $x \in \baire$,
                    $\bigcap_n A_{x \uhr n}$ has at most one element
                    and for a comeager set of $x$ it is empty.
            \end{enumerate}
    \end{enumerate}
\end{remark}

\section{Continuous action realizations}\label{realizations-cts}
Any CBER has a continuous action realization, i.e., a topological realization induced by a continuous action of a countable group on a Polish space. We will consider what additional properties of the action and the Polish space of the realization are possible. For example, we have the following:

\begin{prop}\label{BaireActionRealization}
    Every $E \in \ap$ has a continuous action realization
    in the Baire space $\baire$.
\end{prop}
\begin{proof}
    By the usual change of topology arguments,
    we can assume that $E$ is induced by a
    continuous action of a countable group on a $0$-dimensional space $X$.
    Let $P\subseteq X$ be the perfect kernel of $X$.
    Since $P$ is $E$-invariant and cocountable,
    we have $E \cong_B E \uhr P$
    (see \Cref{prelim-classes}),
    so we can assume that $X$ is perfect.
    Let then $D$ be a countable dense subset of $X$
    which is also invariant under the action and put $Y = X\setminus D$.
    Then again $E\cong_B E \uhr Y$.
    The space $Y$ is a nonempty, $0$-dimensional Polish space
    in which every compact set has empty interior and thus is homeomorphic to the Baire space
    (see \cite[Theorem 7.7]{Kec95}).
\end{proof}

Recall that a continuous action realization of a CBER $E$ on $X$
is a compatible Polish topology on $X$
for which $E$ is induced by a continuous action $\F_\infty \car X$.
\begin{defi}\label{3.5}
    A continuous action realization of a CBER is
    \begin{enumerate}[label=(\roman*)]
        \item \textbf{transitive}
            if $E$ is transitive
            (i.e. there is a dense $E$-class).
        \item \textbf{minimal}
            if $E$ is minimal
            (i.e. every $E$-class is dense).
        \item \textbf{compact}
            (resp. locally compact, $\sigma$-compact)
            if $X$ is compact
            (resp. locally compact, $\sigma$-compact).
    \end{enumerate}
\end{defi}
We will omit the word ``continuous'' when using these adjectives,
for instance in ``minimal action realization'',
``transitive, compact action realization'',
and so on.

We first note the following fact:

\begin{prop}\label{3.6}
    If $E\in\ap$ has a compact action realization
    or a transitive action realization on a perfect Polish space
    or a minimal action realization,
    then $E$ is not smooth.
\end{prop}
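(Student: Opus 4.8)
\emph{Plan of proof.} I will argue by contraposition: assume $E\in\ap$ is smooth and that $F\cong_B E$ realizes $E$ in one of the three listed ways, say via a continuous action of a countable group $\Gamma$ on a Polish space $X$; I will derive a contradiction. Since smoothness is invariant under $\cong_B$, the relation $F$ is smooth and aperiodic, hence Borel isomorphic to $\R I_\N$; in particular $F$ admits a Borel transversal $T$, and $T$ is an \emph{uncountable} standard Borel space (otherwise $X$, being a countable union of countable $F$-classes, would be countable). Concretely, smoothness gives a Borel retraction $\pi\colon X\to T$ with $x\,F\,y\iff\pi(x)=\pi(y)$, so $\pi^{-1}(\{t\})=[t]_F$ for each $t\in T$.

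\emph{Core case: $X$ perfect and the $\Gamma$-action topologically transitive.} Topological transitivity makes the action generically ergodic: every $F$-invariant Borel $A\subseteq X$ is meager or comeager, because the largest open set $U$ in which $A$ is comeager is $\Gamma$-invariant and open, hence either empty (so $A$ is meager) or dense (so, as $A\mathbin{\triangle}U$ is meager, $A$ is comeager). Therefore setting $\mu(B):=1$ if $\pi^{-1}(B)$ is comeager and $\mu(B):=0$ if $\pi^{-1}(B)$ is meager defines a $\{0,1\}$-valued Borel probability measure on $T$ (for countable additivity: a countable disjoint union of meager sets is meager, and at most one member of a countable partition of $T$ can pull back to a comeager set). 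But a two-valued Borel probability measure on an uncountable standard Borel space is a Dirac mass $\delta_{t_0}$ (identify $T$ with $\cantor$ and read off the coordinates of $t_0$ from which half of each coordinate has measure $1$). Then $[t_0]_F=\pi^{-1}(\{t_0\})$ is comeager in $X$, while being a countable subset of the perfect Polish space $X$ it is meager --- a contradiction.

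\emph{Reducing the other two cases to the core case.} A topologically minimal action on the (uncountable) Polish space $X$ forces $X$ to be perfect: an isolated point would have an open orbit, and since the complementary closed invariant set must be empty by minimality, $X$ would equal that orbit, a countable discrete --- hence not uncountable --- space; and minimal actions are topologically transitive, so this is the core case. For a compact action realization, use Zorn's lemma together with compactness to pass to a minimal subflow $Y\subseteq X$. Then $F\uhr Y$ is the orbit equivalence relation of the $\Gamma$-action on $Y$; it is still aperiodic (as $Y$ is invariant, its classes are full $F$-classes, hence infinite) and smooth (take $T\cap Y$ as transversal); and the isolated-point argument now shows $Y$ is perfect, since an isolated point of $Y$ would give a single orbit that is discrete and compact, hence finite, contradicting aperiodicity. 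So $Y$ is a perfect compact Polish space carrying a minimal --- in particular topologically transitive --- $\Gamma$-action, and we are again in the core case.

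\emph{Main difficulty.} The one genuine subtlety is that the quick argument ``a continuous action on a compact space has an invariant probability measure, but $\R I_\N$ is compressible'' only works when $\Gamma$ is amenable; for general $\Gamma$ one must detour through a minimal subflow and then replace the measure-theoretic argument by the Baire category (generic ergodicity) argument above, and the passage to the subflow relies essentially on aperiodicity in order to exclude finite orbits.
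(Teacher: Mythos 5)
Your proof is correct and follows essentially the same route as the paper: in the compact case you pass to a minimal subflow and then exploit the tension between smoothness and Baire category on a perfect space, which is exactly the paper's argument (the paper cites \cite[8.46]{Kec95} to produce a non-meager orbit, whereas you reprove that step from scratch via generic ergodicity and a two-valued measure on the transversal). The only cosmetic difference is the endgame of the compact case: the paper concludes that the minimal subflow is a single infinite discrete orbit, contradicting compactness, while you first rule out isolated points using aperiodicity and then reuse the perfect-space argument.
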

 
\begin{proof}
    Suppose a smooth $E$ has a compact action realization $F$, towards a contradiction. Then there is a compact invariant subset $K$ in which the action is minimal. Since $F \uhr K$ is also smooth, by \cite[8.46]{Kec95} some orbit in $K$ is non-meager in $K$, thus consists of isolated points in $K$. Minimality then implies that $K$ consists of a single infinite orbit, contradicting compactness.

    The proof of the case of a transitive action realization on a perfect Polish space or a minimal action realization follows also from \cite[8.46]{Kec95}.
\end{proof}

We first note here that the hypothesis of perfectness in \cref{3.6} is necessary.

\begin{prop}\label{3.7}
    Every smooth equivalence relation in $\ap$ has a transitive locally compact action realization
    (in some non-perfect space).
\end{prop}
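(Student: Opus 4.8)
The plan is to exhibit, for the unique smooth aperiodic CBER $\R I_\N$, a single concrete continuous action of a countable group on a locally compact (but non-perfect) Polish space whose induced equivalence relation is smooth, aperiodic, and has a dense orbit. By the uniqueness of $\R I_\N$ up to Borel isomorphism, it suffices to handle $E = \R I_\N$.

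The key idea is to take the space $X = \N \times (\N \cup \{\infty\})$, where $\N \cup \{\infty\}$ carries the one-point compactification topology of $\N$ (so $X$ is locally compact, second countable, Hausdorff, hence Polish) but is \emph{not} perfect since the points $(n, k)$ with $k \in \N$ are isolated. The natural equivalence relation to put on $X$ is the one whose classes are the ``columns'' $\{n\} \times (\N \cup \{\infty\})$; this is obviously smooth (the map $(n,k)\mapsto (n,\infty)$, or projection to the first coordinate, is a Borel selector/reduction to equality) and aperiodic (each class is countably infinite). To make this the orbit equivalence relation of a continuous action of a countable group and to obtain a dense orbit, I would first note that each column $\{n\}\times(\N\cup\{\infty\})$ is homeomorphic to a fixed convergent sequence, and then realize the relation by a continuous action of $\Z$ (or a suitable countable group) that permutes the points within each column. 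Concretely, enumerate the isolated points of $X$ as $\{x_j : j \in \N\}$ and the limit points as $\{(n,\infty) : n \in \N\}$; one can define a homeomorphism of $X$ realizing a ``shift'' along each column, for instance by fixing in each column an order-isomorphic copy of $\Z \cup \{+\infty, -\infty\}$ collapsed appropriately — but a cleaner route is to use a countable group acting so that on the $n$-th column it acts as the full symmetric group $S_\infty$'s image generated by finitely many involutions, or simply take $\Gamma = \bigoplus_{n} \Z$ acting on the columns coordinatewise, with the $n$-th $\Z$-factor acting by an aperiodic homeomorphism of the $n$-th column fixing $\infty$. The main point I must verify is that such a homeomorphism of a convergent sequence with aperiodic orbits on the isolated points exists and is continuous: this is easy, e.g.\ identify the isolated points of a column with $\Z$ via $k \leftrightarrow$ (some bijection $\N \to \Z$) and let $\Z$ act by translation, extended by fixing the limit point; continuity at the limit point holds because translation moves any cofinite neighborhood to a cofinite neighborhood.

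For the density of some orbit, the trick is that $X$ is \emph{not} required to be perfect, so I will instead arrange the \emph{space} to have a dense orbit by a different grouping: replace $X$ by a single convergent sequence $Z = \N \cup \{\infty\}$ together with one extra ``spreading'' coordinate, or — most simply — take $X = (\N \cup \{\infty\}) \times \cantor$ is wrong since that is perfect; rather I would take $X$ to be $\N$ with the discrete topology together with one limit point $\infty$, i.e.\ $X = \N \cup \{\infty\}$ itself, which is locally compact Polish, non-perfect, and has every point except $\infty$ isolated. But $X$ must be uncountable by the standing convention, so instead let $X = (\N\cup\{\infty\}) \sqcup C$ where $C$ is a Cantor set of isolated-at-infinity points arranged so that $C$ sits as a convergent sequence of Cantor-set ``fibers.'' The honest and cleanest construction: let $X = \baire \times (\N \cup \{\infty\})$ where $\baire$ is discrete — no, $\baire$ discrete is not Polish. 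Let me instead simply use the locally compact space $X = \baire \cup \{\ast\}$? That is not locally compact. The actual resolution, which I would carry out carefully in the proof, is: start from $\R I_\N$ realized (via \Cref{baireActionRealization} or directly) as a continuous $\Gamma$-action on the non-perfect Polish space $Y = \baire \sqcup \N$ where $\N$ is a sequence of isolated points converging into $\baire$...

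Given the subtleties, the approach I would actually commit to in the write-up is the following clean one. By \Cref{3.1} applied carefully (or by a direct construction), realize $\R I_\N$ as an equivalence relation on $\cantor$ with all classes dense — but $\cantor$ is perfect, so instead: take the countably infinite direct sum $Z = \bigsqcup_{k\in\N} C_k$ of countably many disjoint copies $C_k$ of $\cantor$, topologized so that $\mathrm{diam}(C_k) \to 0$ and the $C_k$ converge to a single added point $\infty$; then $Z' = Z \cup \{\infty\}$ is compact, but removing one point from each $C_k$ and from the ``limit fiber'' we get a locally compact, non-perfect, non-compact Polish space $X$. Put on each $C_k$ a copy of a dense-class realization of $\R I_\N$ (which exists within each $\cantor$ by taking $E_0$-type relations, but we want \emph{smooth}, so put the smooth dense realization $\R I_\N$ on each $C_k$ from \Cref{3.1} Case 2 — wait, \Cref{3.1} gives it on $\baire$, not $\cantor$). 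Since I am overcomplicating, the step I expect to be the genuine obstacle — and where I would spend the real effort — is reconciling three constraints simultaneously: (1) the relation is \emph{smooth}, (2) the action has a \emph{dense} orbit (transitivity), and (3) the space is locally compact. The resolution is that smoothness plus a dense orbit is perfectly consistent on a \emph{non-perfect} locally compact space: take $X = \baire \sqcup D$ where $D = \{d_n : n \in \N\}$ is a sequence of isolated points chosen dense in $\baire$ (possible since $\baire$ is separable and has no isolated points, one can attach extra isolated points accumulating everywhere), make $X$ locally compact by instead working inside a compactification and deleting points, let $E$ have as classes $\{\{d_n\}\cup O_n\}$ where $O_n$ are the $R$-orbits of a smooth $R$ on $\baire$ as in \Cref{3.1} Case 2, glued to the isolated points — then $E$ is smooth, has a dense class (any class containing a point of $\baire$ whose $R$-orbit was made dense, union an isolated point dense in $\baire$), and realizes $\R I_\N$; the continuous group action generating it is built by letting a countable group act within each glued class, as in the proof of \Cref{3.1} Case 2 combined with the observation that isolated points can always be freely permuted continuously.
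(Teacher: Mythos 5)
Your write-up never converges to a single construction, and the final sketch you do commit to fails at two concrete points. First, local compactness: your space is $\baire \sqcup D$ with $D$ a countable set of isolated points accumulating on $\baire$. But $\baire$ is nowhere locally compact, and attaching isolated points cannot repair this — every point of $\baire$ still lacks a compact neighborhood in $X$. You flag the issue (``make $X$ locally compact by working inside a compactification and deleting points'') but never resolve it, and it is exactly the step that requires the real idea: the perfect part of the space must be locally compact from the start. Second, transitivity: you glue a single isolated point $d_n$ to each class $O_n$. Since each $d_m$ is isolated, any dense subset of $X$ must contain every $d_m$; a class $\{d_n\}\cup O_n$ contains only one isolated point, so its closure is contained in $\baire\cup\{d_n\}\neq X$ and no class is dense, regardless of whether $O_n$ is dense in $\baire$. (Your earlier attempts either give countable spaces, violating the standing uncountability convention, or have no dense orbit at all.)

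The paper's proof repairs precisely these two defects. It takes the perfect part to be $\R$ (locally compact), places the isolated points $z_n$ in the open upper half-plane with $\mathrm{Im}\,z_n\to 0$ and real parts enumerating $\Q$, so that $X=\R\cup\{z_n\}_{n\in\N}$ is closed in $\C$ and hence locally compact; and, crucially, it makes the set of \emph{all} isolated points a single orbit of the acting group (generated by a shift $T$ moving $z_n$ between the pieces $N_q$ and $N_{q+1}$, together with the transpositions $T_{m,n}$), so that this one orbit is dense and the action is topologically transitive, while the remaining classes are the cosets $x+\Z$ in $\R$, giving a smooth aperiodic relation with transversal $\{z_0\}\cup[0,1)$. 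If you want to salvage your approach, the minimal repairs are: replace $\baire$ by $\R$ (or another locally compact perfect Polish space), put all isolated points into one class, and actually verify continuity of the generating homeomorphisms at the accumulation points — this last check is not automatic for the map producing an infinite orbit on the isolated points and is carried out explicitly in the paper.
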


\begin{proof}
    Let $\N = \bigsqcup_{q \in \Q} N_q$ be a decomposition of $\N$ into infinite sets indexed by the rationals.
    Define then recursively $\{z_n\}_{n \in \N} \subseteq \C$,
    with $\Imagpart z_n > 0$,
    $\Imagpart z_{n+1} < \Imagpart z_n$,
    $\Imagpart z_n \to 0$,
    and pairwise disjoint closed squares $S_n$ with center $z_n$ with $\Imagpart S_n > 0$ as follows:

    If $0 \in N_q$,
    choose $z_0 \in \{q\} \times \R$ and let $S_0$ be a very small square around $z_0$.
    At stage $n+1$,
    if $n+1 \in N_q$,
    choose $z_{n+1} \in \{q\}\times \R$ so that
    $0 < \Imagpart z_{n+1} < \frac{1}{n+1}$,
    $\Imagpart z_{n+1} < \Imagpart z_n$,
    $z_{n+1} \notin \bigcup_{m \le n} S_m$,
    and then choose $S_{n+1}$ to be a small square around $z_{n+1}$ so that
    it has empty intersection with all $S_m, m \le n$.

    Put $X = \R \cup \{z_n\}_{n \in \N}$.
    Then $X$ is closed in $\C$, so it is locally compact.
    Next define $T \colon X \to X$ as follows:

    If $x \in \R$, then $T(x) = x + 1$.
    If $x = z_n$ with $n \in N_q$, so that $x \in \{q\} \times \R$,
    and if in the increasing enumeration of $N_q$, $n$ is the $i$th element, then put $T(x) = z_m$,
    where $m$ is the $i$th element in the increasing enumeration of $N_{q+1}$.
    It is not hard to check that $T$ is a homeomorphism of $X$.
    For example, to check that $T$ is continuous (a similar argument works for $T^{-1}$),
    let $w_n, w \in X$, with $w_n \to w$, in order to show that $T(w_n) \to T(w)$.
    We can assume of course that $w_n \notin \R, w\in \R, \Imagpart w_n \to 0$.
    Now $\Realpart T(w_n) = \Realpart w_n + 1$ and $\Imagpart T(w_n) \to 0$,
    thus $T(w_n) = \Realpart w_n + 1 + i\Imagpart T(w_n) \to w + 1 = T(w)$.

    Next for each pair $(m,n) \in \N^2$,
    let $T_{m,n}$ be the homeomorphism of $X$ that switches $z_m$ with $z_n$ and keeps every other point of $X$ fixed.
    Then the group generated by all $T_{m,n}$ and $T$ acts continuously on $X$.
    One of its orbits is $\{z_n\}$ which is dense in $X$, thus the action is topologically transitive.
    The equivalence relation $F$ it generates has as classes the set $\{z_n\}$ and the sets of the form  $x+\Z$,
    for $x\in \R$, so it is aperiodic and smooth, with transversal $\{z_0\} \cup [0,1)$.
\end{proof}

Also the hypothesis of compactness in \cref{3.6} is necessary.

\begin{prop}
    Every smooth equivalence relation in $\ap$ has a locally compact action realization on a perfect space,
    in fact one in the space $\cantor\setminus \{\bar{1}\}$, where $\bar{1}$ is the constant 1 sequence.
\end{prop}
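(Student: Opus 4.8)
The plan is to reduce to $\R I_\N$ (recall that, up to Borel isomorphism, $\R I_\N$ is the unique smooth member of $\ap$) and realize it by a continuous action of a countable group on $Y := \cantor\setminus\{\bar{1}\}$. First I would record that $Y$ has the stated topological features: it is open in the compact space $\cantor$, hence locally compact and Polish, and it is perfect, since $\cantor$ has no isolated points and every singleton (in particular $\{\bar{1}\}$) has empty interior in $\cantor$, so deleting $\bar{1}$ isolates no point. It is of course also zero-dimensional.

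Next I would decompose $Y$ into clopen Cantor pieces and let the finitary symmetric group permute them. For $n\in\N$ set $U_n = \{x\in\cantor : x\uhr n = 1^n \text{ and } x_n = 0\}$; each $U_n$ is clopen in $\cantor$, the map $1^n0y\mapsto y$ identifies $U_n$ homeomorphically with $\cantor$, and $Y = \bigsqcup_{n\in\N} U_n$ because $\bar{1}$ is the unique point of $\cantor$ lying in no $U_n$. For $n\neq m$ I would define the involution $\sigma_{n,m}\colon Y\to Y$ which sends $1^n0y\leftrightarrow 1^m0y$ and fixes every point of $U_k$ for $k\notin\{n,m\}$, and let $\Gamma\leq\Homeo(Y)$ be the subgroup generated by all the $\sigma_{n,m}$. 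Then the $\sigma_{n,m}$ are precisely the transpositions of the index set $\N$ transported through the identifications $U_n\cong\cantor$, so $\Gamma\cong\Sym_{\fin}(\N)$ and in particular $\Gamma$ is countable; and the $\Gamma$-action on $Y$ is continuous. Finally I would verify that the induced equivalence relation $F$ on $Y$ does the job: the $\Gamma$-orbit of $1^n0y$ is $\{1^m0y : m\in\N\}$, countably infinite, so $F\in\ap$, while $U_0$ is a clopen complete transversal for $F$, so $F$ is smooth; by uniqueness $F\cong_B\R I_\N$, giving the desired realization in $Y$.

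The only step that is not purely formal is checking that each $\sigma_{n,m}$ — hence every element of $\Gamma$ — is a homeomorphism of $Y$, but this is immediate: with respect to the clopen partition $Y=\bigsqcup_k U_k$, a generator acts as a fixed homeomorphism on each piece and carries pieces onto pieces, so it is continuous with continuous inverse. Thus I do not expect any real obstacle here; the content is organizational. I would add the remark that $\Sym_{\fin}(\N)$ may be replaced by any countable group acting transitively on $\N$ — for instance, after identifying the index set $\N$ with $\Z$, one can instead take the single homeomorphism of $Y$ shifting the $U_n$'s by translation, yielding a realization of $\R I_\N$ on $\cantor\setminus\{\bar{1}\}$ by a continuous $\Z$-action.
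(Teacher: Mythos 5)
Your proof is correct and follows essentially the same route as the paper's: the paper likewise decomposes $\cantor\setminus\{\bar 1\}$ into the clopen pieces $1^n0\concat\cantor$ and lets the group generated by the swap homeomorphisms $1^m0\concat y\leftrightarrow 1^n0\concat y$ act, yielding the smooth aperiodic relation $xFy\iff\exists z(x=1^m0\concat z\ \&\ y=1^n0\concat z)$. Your additional verifications (local compactness and perfectness of the space, the clopen transversal, uniqueness of the smooth aperiodic CBER) and the $\Z$-action remark are fine but do not change the substance.
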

\begin{proof}
    We use an example in \cite[page 200, (b)]{DJK94}. Consider the space $X = \cantor\setminus \{\bar{1}\}$. For each $m \neq n$, let $h_{m,n}$ be the homeomorphism of $X$ defined by: $h_{m,n}(1^m0\concat y) = 1^n0\concat y, h_{m,n}(1^n0\concat y) = 1^m0\concat y, h_{m,n}(x) =x$, otherwise. Then the group generated by these homeomorphisms acts continuously on $X$ and generates the equivalence relation $F$ given by: $x F y \iff \exists z(x =1^m0\concat z \ \& \ y =1^n0\concat z)$, which is smooth aperiodic. 
\end{proof}
We next show that non-smooth hyperfinite equivalence relations in $\ap$ have the strongest kind of topological realization. For a countable group $\Gamma$, recall that a subshift of $2^\Gamma$ is the restriction of the shift action of $\Gamma$ to a nonempty closed invariant subset.

\begin{thm}\label{3.9}
    Every non-smooth hyperfinite equivalence relation in $\ap$ has
    a minimal, compact action realization on the Cantor space $\cantor$.
    In fact, we have the following:
    \begin{enumerate}[label=(\arabic*)]
        \item
            If it is compressible,
            then it can be realized by a minimal subshift of $2^{\F_2}$.
        \item
            If it is not compressible,
            then it can be realized by a minimal subshift of $2^\Z$.
    \end{enumerate}
\end{thm}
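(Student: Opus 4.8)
The plan is to handle the two cases separately, in both cases using the classification of hyperfinite CBER from the excerpt together with the results of \cref{section2} to reduce to a single canonical equivalence relation, and then to realize that canonical relation as a minimal subshift.

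For case (ii), the non-compressible non-smooth hyperfinite relations are, up to Borel isomorphism, exactly the relations $nE_0$ ($1\le n<\infty$), $\N E_0$, and $\R E_0$. By \cref{2.10}(i) all of these satisfy $\R E_0\inc nE_0\inc\cdots\inc E_0$, and in fact each is obtained from $E_0$ by restricting to suitable invariant Borel sets; in particular it suffices to produce a minimal subshift of $2^\Z$ whose induced equivalence relation $F$ is hyperfinite, non-smooth, and has $2^{\aleph_0}$ ergodic invariant measures, since then $F\cong_B\R E_0$ and the other cases follow by passing to invariant Borel subsets of the realization (which remains a subshift of $2^\Z$ if one is careful, or else one invokes \cref{2.10} abstractly). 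The natural candidate is a minimal subshift of $2^\Z$ with positive entropy — e.g.\ a minimal subshift supporting uncountably many ergodic measures, which one can build by a standard coding/substitution-type construction, or by taking a minimal subshift factoring onto a full shift on a Cantor set of parameters. The shift action of $\Z$ on such a subshift is hyperfinite (every Borel $\Z$-action is), minimal by construction, and non-smooth with $|\EINV|=2^{\aleph_0}$.

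For case (i), the compressible non-smooth hyperfinite relations are, up to Borel isomorphism, $\R I_\N$ (smooth, excluded) and $E_t$; so it suffices to realize $E_t$ — equivalently, by \cref{2.13a}(i), any non-smooth compressible hyperfinite relation — as a minimal subshift of $2^{\F_2}$. Here I would exploit the fact that $E_t$ is the tail equivalence relation, and that on $\F_2$ one has enough room to build a minimal subshift on which the orbit equivalence relation of the shift is hyperfinite but compressible: concretely, take a minimal subshift $X\subseteq 2^{\F_2}$ arising from a suitable "tree-like" coloring so that the induced equivalence relation, while generated by the (non-amenable) group $\F_2$, is nonetheless hyperfinite on $X$ and admits no invariant Borel probability measure, hence is compressible by Nadkarni's Theorem. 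A clean way to force hyperfiniteness is to arrange that on $X$ the equivalence classes look like ends-of-trees or like $E_0$-orbits (e.g.\ via a cocycle into $\Z$ whose fibers exhaust the classes), and to force non-existence of an invariant measure by an explicit paradoxical/compressing map.

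The main obstacle I anticipate is case (i): producing a \emph{minimal} subshift of $2^{\F_2}$ whose shift-orbit equivalence relation is simultaneously \emph{hyperfinite} and \emph{compressible}. Minimality severely constrains the combinatorics of the coloring, while hyperfiniteness of an $\F_2$-action is a strong structural demand (the full shift on $\F_2$ is very much not hyperfinite), so the construction must thread between these: one needs colorings rigid enough to be minimal yet whose orbits carry an auxiliary finite or $E_0$-like structure witnessing hyperfiniteness, plus a built-in compression. I expect the right tool is a careful marker/tiling construction on $\F_2$ — using something like the Lusin marker schemes of \cref{3.3} or an explicit system of nested tilings — to define the coloring, after which hyperfiniteness, minimality, and compressibility can each be read off; verifying all three hold simultaneously for the same subshift is where the real work lies. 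Case (ii) should be comparatively routine given a standard positive-entropy minimal $\Z$-subshift.
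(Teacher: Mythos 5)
Your overall skeleton --- reduce via the Dougherty--Jackson--Kechris classification to the canonical relations $E_t$ (compressible case) and $\kappa E_0$ (non-compressible case), then realize each as a minimal subshift --- matches the paper's, but both halves of your argument contain genuine gaps.

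In case (ii), your reduction of the cardinalities $\kappa\in\{1,2,\dots,\aleph_0\}$ to the single case $\kappa=2^{\aleph_0}$ does not work. A minimal subshift has no proper nonempty closed invariant subsets, so you cannot ``pass to invariant Borel subsets of the realization'' and remain within the class of (minimal) subshifts; and invoking \cref{2.10} abstractly only yields Borel inclusions $\inc$, not topological realizations. The step you are missing, and the one the paper uses, is a theorem of Downarowicz: every metrizable Choquet simplex is affinely homeomorphic to the simplex of invariant measures of some minimal subshift of $2^\Z$. Applying this to the simplex of probability measures on a compact Polish space of cardinality $\kappa$ (whose extreme points are the $\kappa$ Dirac measures) gives, for \emph{each} $\kappa>0$, a minimal subshift of $2^\Z$ with exactly $\kappa$ ergodic invariant measures, hence Borel isomorphic to $\kappa E_0$ by \cref{2.12}. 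Your positive-entropy construction covers only $\kappa=2^{\aleph_0}$.

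In case (i) you correctly reduce to realizing $E_t$, but you stop at a sketch and flag the simultaneous achievement of minimality, hyperfiniteness and compressibility on a subshift of $2^{\F_2}$ as the hard remaining step. No new marker or tiling construction is needed: there is a standard explicit continuous action of $\F_2$ on $\cantor$ generating $E_t$ (one generator acts by $i\concat x\mapsto(1-i)\concat x$; the other by $0\concat x\mapsto 00\concat x$, $11\concat x\mapsto 1\concat x$, $10\concat x\mapsto 01\concat x$). It is minimal because every $E_t$-class is dense in $\cantor$, its orbit relation is $E_t$ (so hyperfiniteness and compressibility are automatic), and it admits the clopen $2$-generator $\{0\concat\cantor,\ 1\concat\cantor\}$; having a clopen $2$-generator is precisely the condition for being topologically isomorphic to a subshift of $2^{\F_2}$. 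So the ``real work'' you anticipate is discharged by quoting this known action rather than by constructing a coloring from scratch.
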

\begin{proof}
    \leavevmode
    \begin{enumerate}[label=(\arabic*)]
        \item
            Consider $E_t$.
            Then $E_t$ is generated by a continuous action of $\F_2$,
            see \cite[Section 3.2]{Kec25},
            defined as follows: 
            The first generator acts via
            $i\concat x \mapsto (1-i)\concat x$,
            and the second generator acts via
            \begin{align*}
                00 \concat x & \mapsto 0 \concat x \\
                1 \concat x & \mapsto 11 \concat x \\
                01 \concat x & \mapsto 10 \concat x
            \end{align*}
            This action has a clopen 2-generator,
            namely the partition given by
            $\{X_0 = 0 \concat \cantor, X_1 = 1 \concat \cantor\}$.
            This means that the sets $\gamma\cdot X_i, \gamma \in \F_2, i \le 1$, separate points.
            This implies that this action is (topologically) isomorphic to a subshift of $2^{\F_2}$.
        \item
            Assume that $E\in \aph$ is non-compressible and let $\kappa = |\EINV_E| >0$.
            By a theorem of Downarowicz \cite[Theorem 5]{Dow91},
            for every metrizable Choquet simplex $K$,
            there is a minimal subshift of $2^\Z$ such that
            $K$ is affinely homeomorphic to the simplex of invariant measures for this subshift.
            In particular the cardinality of the set of ergodic, invariant measures for this subshift
            is the same as the cardinality of the set of extreme points of $K$.
            Fix now a compact Polish space $X$ of cardinality $\kappa$ and
            let $K$ be the Choquet simplex of measures on $X$.
            The extreme points are the Dirac measures, so there are exactly $\kappa$ many of them.
            Thus we can find a minimal subshift of $2^\Z$ with exactly $\kappa$ many ergodic, invariant measures
            and therefore if $F$ is the equivalence relation induced by this subshift,
            we have that $E\cong_B F$.
    \end{enumerate}
\end{proof}

Although $E_t$ does not have a minimal, compact action realization where the acting group is amenable (otherwise it would have an invariant measure), we have the following:

\begin{prop}
    A compressible, non-smooth, hyperfinite {\CBER} has a minimal,
    locally compact action realization where the acting group is $\Z$.
\end{prop}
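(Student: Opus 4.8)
The plan is to realize a compressible, non-smooth, hyperfinite CBER $E$ as the orbit equivalence relation of a $\Z$-action (i.e.\ a single homeomorphism) on a locally compact, non-compact Polish space, with all orbits dense. By the classification theorem for hyperfinite CBER quoted in the excerpt, the unique non-smooth compressible member of $\aph$ up to Borel isomorphism is $E_t$, so it suffices to produce such a realization for $E_t$. The natural candidate is the \emph{Z-odometer-like tail relation on an open subset of a compact model}: start with a minimal subshift $Z \subseteq 2^\Z$ whose orbit equivalence relation is $E_0$ (for instance a Sturmian subshift, which is minimal and uniquely ergodic, hence Borel isomorphic to $E_0$), and pass to a suitable dense invariant open or $G_\delta$ subset on which the $\Z$-action is still defined, becomes non-compact, and the induced relation picks up the extra ``shift by a block of changed coordinates'' moves that distinguish $E_t$ from $E_0$.

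Concretely, the first step is to recall that $E_t$ is generated by a Borel $\Z$-action: indeed $E_t$ is hyperfinite, hence (Slaman--Steel / Weiss) induced by a single Borel automorphism $T$, and being compressible and aperiodic this $T$ may be taken to have all orbits infinite. The second step is to equip the underlying standard Borel space with a locally compact, second countable, non-compact topology compatible with its Borel structure in which $T$ is a homeomorphism and every orbit is dense; one should do this on $\cantor \setminus \{\bar 1\}$ (as in the previous proposition's target space, which is the canonical non-compact zero-dimensional perfect Polish space all of whose compact subsets have empty interior). A clean way: take the minimal $E_0$-realization $Z\subseteq 2^\Z$ from \cref{3.9}(ii) with acting homeomorphism the shift $\sigma$, remove a countable dense $\sigma$-invariant orbit to get a dense invariant $G_\delta$ set $Y$ which is still Polish, zero-dimensional, perfect, and non-compact; then $\sigma\uhr Y$ is a homeomorphism of a locally compact space. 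But $E_0$ is \emph{not} compressible, so this exact construction gives the wrong isomorphism type, and one must instead build a $\Z$-action whose orbit relation is $E_t$: this is done by conjugating $T$ (from step one) through a Borel isomorphism $Y \to (\text{space of }E_t)$ and then \emph{transporting the topology} of $Y$ back, noting that a Borel isomorphism lets us declare a new Polish (indeed locally compact, zero-dimensional, non-compact) topology on the domain of $E_t$ in which $T$ is a homeomorphism; density of orbits is arranged by a marker-type argument (or by first arranging $E_t$ on a space where a distinguished orbit is already dense, using \cref{3.1}) and then checking it is preserved.

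The main obstacle is simultaneously securing all of: (a) that the topology is \emph{locally compact} but \emph{not} compact (forced by compressibility via \cref{3.6}); (b) that the single generating homeomorphism $T$ really does generate $E_t$ and not some coarser or finer relation; and (c) that \emph{every} orbit — not just one — is dense, i.e.\ genuine topological minimality of a $\Z$-action, which is delicate on a non-compact space since minimal $\Z$-actions on compact spaces are equicontinuity-constrained whereas here non-compactness buys the needed room. I expect (c) to be the crux: one should engineer, by transfinite/recursive bookkeeping over a countable basis $\{U_n\}$ of the locally compact space, a Borel isomorphism $E_t \cong_B F$ where $F$ lives on that space and $T$-orbits visit every $U_n$; the ccc-for-category trick noted in the Remark after \cref{3.3}, together with \cref{3.1} applied to place a dense orbit and then a saturation/marker argument to spread density to all orbits, is the tool I would reach for. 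Finally one verifies $F \cong_B E_t$ by computing $\mathrm{EINV}$ (it must be empty, i.e.\ $F$ compressible) and non-smoothness, and invokes the classification theorem to conclude $F\cong_B E_t \cong_B E$.
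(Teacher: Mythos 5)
Your reduction to $E_t$ via the classification theorem is correct and matches the logic of the paper's argument, but the heart of the proposition --- actually exhibiting a locally compact Polish topology in which some $\Z$-action generating $E_t$ is a minimal homeomorphism --- is never carried out, and the method you sketch for it does not work. Transporting the topology of $Y$ through a Borel isomorphism $h$ between $Y$ and the domain of $E_t$ makes your automorphism $T$ continuous only if $h$ conjugates $T$ to the shift on $Y$; but the shift on $Y$ generates $E_0\uhr Y\cong_B E_0$, which is not compressible, so no such conjugating $h$ can exist. The general change-of-topology theorem does give \emph{some} Polish topology making a prescribed Borel $\Z$-action continuous, but it yields a zero-dimensional Polish space with no control whatsoever over local compactness, and local compactness (with compactness excluded by compressibility) is precisely the content of the statement. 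Your items (a)--(c) are exactly the right list of obstructions, but all three are deferred to ``marker-type arguments'' and ``recursive bookkeeping'' that are never specified; in particular nothing in the sketch forces \emph{every} orbit to be dense, and nothing extracts compressibility from a topological construction.

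The paper's proof supplies the missing device explicitly, and the mechanism is different from anything in your outline. Let $A=\Z/4\Z$ and let $X\subseteq A^\N$ be the set of sequences eventually lying in $\{1,2\}$, written as the increasing union of the compact sets $X_n=A^n\times\{1,2\}^\N$, each clopen in the next; $X$ carries the inductive limit topology, which is automatically locally compact, Polish and non-compact. The odometer $\phi$ (addition by $1$ with carry) preserves $X$ and restricts to a homeomorphism of it; the induced relation is the tail relation $E_0'\uhr X$, every orbit is dense in each $X_n$ and hence in $X$, so the action is minimal; and the relation is compressible via the explicit injection that adds $2$ to the coordinate at the least index from which the tail lies in $\{1,2\}$, pushing every point strictly deeper into the filtration. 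It is the exhaustion $X=\bigcup_n X_n$ that simultaneously buys local compactness and compressibility (points escape to infinity along the filtration), while minimality is inherited from minimality of the odometer on $A^\N$ rather than engineered by bookkeeping. One then concludes by uniqueness of the compressible non-smooth element of $\aph$, exactly as you intended. Without an explicit construction of this kind your argument does not close.
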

\begin{proof}
    It is known that there are minimal homeomorphisms on uncountable locally compact spaces with no invariant measure,
    which thus generate a compressible non-smooth hyperfinite CBER; see, e.g., \cite[Section 2]{Dan01}.
    Below we give a simple example:

    Let $A = \Z/4\Z$ as an abelian group,
    and let $X\subseteq A^\N$ be the set of sequences
    which eventually lie in $\{1, 2\}$.
    Consider $X_n = A^n\times \{1, 2\}^\N$,
    so that $X_0\subseteq X_1 \subseteq X_2\subseteq \dots$
    and $X = \bigcup_n X_n$.
    We give $X_n$ the usual product topology,
    so that $X_n$ is clopen in $X_{n+1}$,
    and we give $X$ the inductive limit topology,
    so that $U\subseteq X$ is open iff
    $\forall n (U\cap X_n$ is open in $X_n$).
    This is Hausdorff,
    locally compact and second countable,
    with basis $\bigcup_n {\B}_n$,
    where ${\B}_n$ is a countable basis for $X_n$.
    Thus $X$ is a locally compact Polish space,
    see, e.g., \cite[5.3]{Kec95}.
    Note that this topology on $X$ is finer than the subspace topology from $A^\N$
    since the $X_n$ are not open in $A^\N$.
    
    Let now $\phi \colon A^\N \to A^\N$ be the odometer map,
    i.e., addition by $1$ with carry,
    which is a homeomorphism of $A^\N$.
    Note that $\phi(X)\subseteq X$ and $\phi^{-1}(X)\subseteq X$.
    We next check that $\phi\uhr X$ is a homeomorphism of $X$.
    It enough to check that $\phi\uhr X_n \colon X_n\to X$
    and $\phi^{-1}\uhr X_n \colon X_n \to X$ are continuous.
    This follows from noticing that $\phi(X_n) \subseteq X_{n+1}$
    and $\phi^{-1}(X_n) \subseteq X_{n+1}$.
    
    Let $E$ be the equivalence relation on $X$ induced by $\phi\uhr X$.
    Denote by $E_0'$ the equivalence relation on $A^\N$ defined by
    $x E_0' y \iff \exists m \forall n \ge m (x_n = y_n)$.
    Then $E = E_0'\uhr X$ and $E\uhr X_n = E_0'\uhr X_n$,
    so $\phi\uhr X$ is minimal, i.e., has dense orbits. 
    
    Finally,
    we show that $E$ is compressible.
    For every $x\in X$, let $n_x$ be least such that $x\in X_{n_x}$,
    and define the Borel map $f \colon X \to X$ as follows:
    \[
        f(x)_n =
        \begin{cases}
            x_n + 2 & n = n_x \\
            x_n & n\neq n_x
        \end{cases}
    \]
    Then $f$ is a compression of $E$.
\end{proof}

\begin{remark}
    Here are also some other minimal, locally compact action realizations of a compressible, non-smooth, hyperfinite CBER (but where the acting group is not $\Z$).
    
    \begin{enumerate}[label=(\arabic*)]
        \item
            Let $X$ be the locally compact space constructed in the proof of \cref{3.7}, whose notation we use below. For each $q\in \Q$, let $T_q \colon X\to X$ be the homeomorphism which is translation by $q$ on $\R$ and defined on $\{z_n\}$ in a way similar to translation by 1 in the proof of \cref{3.7}. Also define a homeomorphism $T \colon X\to X$ as follows: $T$ is the identity on $\R$. Next let for each $q\in \Q$, $N_q = \{n^q_0 < n^q_1 < n^q_2 < \dots\}$ be the increasing enumeration of $N_q$ and define $T(z_{n^q_{2n+3}}) = z_{n^q_{2n+1}}, T(z_{n^q_1}) = z_{n^q_0}, T(z_{n^q_{2n}}) = z_{n^q_{2n+2}}, n\in \N$. 
        
            The group generated by $T, T_q, q\in \Q$ is abelian and acts continuously on $X$. The orbits consist of $\{z_n\}$ and the sets of the form $x+\Q$ for $x\in \R$, so the action is minimal. Finally there is clearly no invariant measure for this action.
    
        \item
            Another construction, where the acting group is actually $\Z^2$ is the following: Let $S$ be a minimal homeomorphism on an uncountable  compact metric space $K$, inducing the equivalence relation $F$, and let $X= K\times \Z$. Then let $\Z^2$ act by homeomorphisms on $X$, where one of the generators acts like $S$ on $K$ and the other as translation by 1 on $\Z$. The associated equivalence relation of this action is Borel isomorphic to $F\times I_N$ so it is compressible, non-smooth and hyperfinite.
    \end{enumerate}
\end{remark}

We next discuss an implication of \cref{3.9} to a new characterization of non-smoothness of a CBER.

Below for a Borel action of a countable group $\Gamma$ on a standard Borel space $X$
and a probability measure $\zeta$ on $\Gamma$ whose support generates $\Gamma$ as a semigroup,
we say that a measure $\mu$ on $X$ is \textbf{$\boldsymbol{\zeta}$-stationary}
if $\mu = \int \gamma_* \mu \dd{\zeta(\gamma)}$.
 
It is easy to see that if $\mu$ is $\zeta$-stationary,
then $\mu$ is \textbf{quasi-invariant} under the action,
i.e., the action sends $\mu$-null sets to $\mu$-null sets.
Next we check that if the action has infinite orbits, then $\mu$ is non-atomic.
We show that every orbit $O$ is $\mu$-null.
By aperiodicity,
it suffices to show that every point in $O$ has the same $\mu$-measure.
Let $O' = \{x \in O : \forall y \in O \; [\mu(\{y\}) \le \mu(\{x\})]\}$.
Since $O$ has bounded measure,
the set $O'$ is non-empty.
It suffices to show that $O' = O$.
Given $x \in O'$,
we have $\mu(\{x\}) = \int \mu(\{\gamma^{-1} x\}) \dd \zeta(\gamma)$,
so by maximality,
for every $\gamma \in \Supp(\zeta)$,
we have $\mu(\gamma^{-1} x) = \mu(\{x\})$,
and hence $\gamma^{-1} x \in O'$.
Thus $\Supp(\zeta)^{-1} O' \subseteq O'$.
Since $\Supp(\zeta)^{-1}$ generates $\Gamma$ as a semigroup,
we have $O = \Gamma O' \subseteq O'$.

We use these facts and \cref{3.9} to prove the following:

\begin{prop}
    Let $E\in\ap$ be an equivalence relation on a standard Borel space $X$.
    Then the following are equivalent:
    \begin{enumerate}[label=(\roman*)]
        \item \label{item-stationary-nonsm}
            $E$ is not smooth;
        \item \label{item-stationary-every-meas}
            There is a Borel action of a countable group $\Gamma$ on $X$ generating $E$,
            such that for \textup{every} measure $\zeta$ on $\Gamma$,
            whose support generates $\Gamma$ as a semigroup,
            there is a $\zeta$-stationary, ergodic for this action measure on $X$.
        \item \label{item-stationary-some-meas}
            There is a Borel action of a countable group $\Gamma$ on $X$ generating $E$,
            such that for \textup{some} measure $\zeta$ on $\Gamma$,
            whose support generates $\Gamma$ as a semigroup,
            there is a $\zeta$-stationary, ergodic for this action measure on $X$.
    \end{enumerate}
\end{prop}
\begin{proof}
    If \ref{item-stationary-some-meas} holds,
    then $E$ admits a non-atomic, ergodic, quasi-invariant measure,
    so it is not smooth.
    Thus \ref{item-stationary-some-meas} implies \ref{item-stationary-nonsm}.
    Clearly \ref{item-stationary-every-meas} implies \ref{item-stationary-some-meas}.
    We finally prove that \ref{item-stationary-nonsm} implies \ref{item-stationary-every-meas}.
    
    Since $E$ is not smooth,
    by the Glimm-Effros dichotomy (\Cref{theorem-smooth-equivalence}),
    there is an $E$-invariant Borel set $Y\subseteq X$ such that $E \uhr Y$ is non-smooth, hyperfinite.
    Then, by \cref{3.9},
    there is a continuous action of a group $\Gamma$ on a compact space $Z$
    inducing an equivalence relation $F\cong_B E \uhr Y$.
    Let $\zeta$ by any measure on $\Gamma$, whose support generates $\Gamma$ as a semigroup.
    Then there is a $\zeta$-stationary for this action measure on $Z$, see, e.g.,  \cite[Corollary 9]{CKM13}.
    The set of $\zeta$-stationary for this action measures is thus
    a non-empty compact, convex set of measures,
    so it has an extreme point which is therefore ergodic.
    Transferring this back to $Y$ and extending the $\Gamma$ action to $X$
    so that it generates $E \uhr (X \setminus Y)$ on $X\setminus Y$,
    we see that \ref{item-stationary-every-meas} holds.
\end{proof}

The following question is open:

\newcommand{\probrealization}{
    Does \textup{every} non-smooth $E\in \ap$ have any of the topological realizations stated in \cref{3.5}?
    In particular,  does \textup{every} non-smooth $E\in \ap$ admit a compact action realization?
}
\begin{prob}\label{prob-realization}
    \probrealization
\end{prob}
We will consider the case of compact action realizations in the next two sections.
The answer to the following is also unknown:

\newcommand{\probcantor}{
    If a {\CBER} admits a compact action realization, does it admit one in which the underlying space is $\cantor$?
}
\begin{prob}\label{prob-cantor}
    \probcantor
\end{prob}

We note here that the following weaker version of \cref{prob-realization} is also open:

\newcommand{\probbired}{
    Is \textup{every} non-smooth $E\in \ap$ Borel bireducible to some $F\in \ap$ which has any of the topological realizations stated in \cref{3.5}? In particular, can one find such an $F$ that admits a compact action realization?
}
\begin{prob}\label{prob-bired}
    \probbired
\end{prob}

\section{Compact action realizations}\label{realizations-cpt}

\subsection{\nopunct}
We have seen in \cref{3.9} that the answer to \cref{prob-realization}
is affirmative in the strongest sense for hyperfinite $E$
but the situation for general $E$ is unclear.
The following results provide several cases of non-hyperfinite equivalence relations
that admit compact action realizations.

\begin{thm}\label{3166}
For every infinite countable group $\Gamma$, $F(\Gamma, \cantor)$ admits a compact action realization. If $\Gamma$ is also finitely generated, then $E^{\mathrm{ap}}(\Gamma, \cantor)$ admits a compact action realization. In fact in both cases such a realization can be taken to be a subshift of $(\cantor)^\Gamma$.
\end{thm}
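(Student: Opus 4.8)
The plan is to realize $F(\Gamma,\cantor)$ — and, when $\Gamma$ is finitely generated, $E^{ap}(\Gamma,\cantor)$ — as a subshift of $(\cantor)^\Gamma$ by ``compactifying'' the free (resp.\ aperiodic) part of the shift. Recall that every free Borel $\Gamma$-action on a standard Borel space embeds equivariantly into the shift on $(\cantor)^\Gamma$, and the point of the theorem is to find a \emph{closed} invariant subset $C\subseteq (\cantor)^\Gamma$ such that the shift restricted to $C$ is Borel isomorphic to $F(\Gamma,\cantor)$ (resp.\ $E^{ap}(\Gamma,\cantor)$). The idea is that $Fr((\cantor)^\Gamma)$ is a $G_\delta$ subset of the compact space $(\cantor)^\Gamma$, hence Polish but not compact; one wants to ``add back'' a countable invariant set of bad points to turn it into a closed set, without changing the Borel isomorphism type of the induced equivalence relation (since adjoining countably many extra orbits to an aperiodic CBER does not change its Borel isomorphism type — here one uses that $F(\Gamma,\cantor)$ has continuum many classes so that absorbing a meager/countable invariant piece is harmless, cf.\ the arguments in \Cref{3.1} and \Cref{baireActionRealization}).

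First I would make the space large enough to have ``room'': replace $\cantor$ by $(\cantor)^\N$ or note $F(\Gamma,\cantor)\cong_B F(\Gamma,(\cantor)^\N)$, so that we may work inside $((\cantor)^\N)^\Gamma\cong (\cantor)^\Gamma$ and have freedom to code auxiliary data into extra coordinates. The key construction step: starting from the shift on $(\cantor)^\Gamma$, identify the free part $U=Fr((\cantor)^\Gamma)$, which is open dense in its closure; I would exhibit a \emph{closed invariant} set $C$ with $U\subseteq C\subseteq \overline{U}$ such that $C\setminus U$ is a countable union of orbits (equivalently, $C\setminus U$ carries only periodic or ``tame'' points), and such that the inclusion $U\hookrightarrow C$ witnesses $E_{C}\cong_B F(\Gamma,\cantor)$. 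To get $C\setminus U$ countable one arranges, by encoding a marker/name structure into the Cantor coordinates à la the clopen-generator trick used in the proof of \Cref{3.9}(i), that any limit of free points that fails to be free must be eventually periodic in a very restricted pattern, so that only countably many such limits arise. For the $E^{ap}$ statement with $\Gamma$ finitely generated: here one uses finite generation to ensure that the aperiodic part is itself obtained from a \emph{subshift of finite type}-like condition (the set of points whose orbit under the finitely many generators never returns to itself within a ball of radius $n$, intersected over $n$), giving enough compactness to run the same closure argument; the orbit-equivalence-relation adjustments are then handled exactly as in the free case via $E^{ap}(\Gamma,\R)\cong_B E^{ap}(\Gamma,\N)$ from \cite{JKL02} cited in the excerpt.

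The main obstacle I anticipate is controlling the boundary $C\setminus U$: a priori the closure of the free part of the shift can contain a large (perfect) set of non-free points, and a continuous image / closed invariant superset need not have countable non-free part. The crux is therefore the encoding: one must build the realization \emph{inside} an auxiliary Cantor factor so that the ``syndeticity of the coding pattern'' forces every non-free limit point to be one of countably many explicitly describable orbits — this is where a careful combinatorial argument (choosing the names so that a convergent sequence of free points whose limit is periodic must have its periods tending to infinity unless the limit lies in a prescribed countable set) is needed. Once the boundary is countable and invariant, the rest is routine: adjoining countably many orbits to an aperiodic CBER with continuum-many classes does not change its Borel isomorphism type, so $E_C\cong_B F(\Gamma,\cantor)$ (resp.\ $E^{ap}(\Gamma,\cantor)$), and $C$ is by construction a subshift of $(\cantor)^\Gamma$, completing the proof.
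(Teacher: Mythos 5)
Your overall shape (equivariantly embed the free/aperiodic part into a shift and control the boundary of the closure) matches the paper's, but the concrete plan has two genuine problems. First, as literally set up it is vacuous: if $U=Fr((\cantor)^\Gamma)$ and $C$ is closed and invariant with $U\subseteq C$, then $C\supseteq\overline{U}=(\cantor)^\Gamma$ (the free part is dense), so $C\setminus U$ is the entire set of non-free points --- a perfect set containing finite orbits --- and no choice of $C$ can make it countable. So everything hinges on first re-embedding the free part by some equivariant Borel map $p$ and controlling $\overline{p(U)}\setminus p(U)$; but you give no mechanism forcing that boundary to be countable, and ``countable'' is in fact the wrong target: if any boundary point has a finite orbit, the absorption step fails outright, since a Borel isomorphism preserves class sizes and $F(\Gamma,\cantor)$ is aperiodic. (Adjoining countably many infinite orbits is harmless; adjoining even one finite orbit is fatal.)

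Second, the missing idea --- the actual crux of the paper's proof (\cref{332}) --- is different from and stronger than what you aim for: one encodes into the coordinates of $p(x)$, for each $\gamma_n\in\Gamma$, a Borel $3$-coloring $\varphi_n$ of the graph of $\gamma_n$ (from Kechris--Solecki--Todorcevic), i.e., $\gamma_n\cdot x=y\neq x\implies\varphi_n(x)\neq\varphi_n(y)$. This discrete datum survives passage to limits, and one deduces that every $y\in\overline{p(X)}$ satisfies $\mathrm{stab}(y)\subseteq\bigcup_m\bigcap_{n\geq m}\mathrm{stab}(y_n)$ for suitable $y_n\in p(X)$. Hence the boundary need not be countable or empty at all: it merely consists entirely of free points (resp., for finitely generated $\Gamma$, of points with infinite orbit, using that a finite-index subgroup of a finitely generated group is finitely generated), and the possibly large free remainder is absorbed using the universality of $F(\Gamma,\cantor)$ among free $\Gamma$-actions. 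Your ``syndeticity of the coding pattern'' heuristic does not supply this coloring device, and without it the proof does not go through.
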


\begin{proof}
    The result will follow easily from the following lemma, which is an extended version of the result in \cite{Ele18}, who dealt with the case of free actions. Denote below by $\mathbf{s}= \mathbf{s}_{\Gamma, 3^\N}$ the shift action of $\Gamma$ on $(3^{\N})^\Gamma$ and for each $y\in (3^{\N})^\Gamma$, let $\Stab(y) \le \Gamma$ be the stabilizer of $y$ in this action.

    \begin{lem}\label{332}
        Let $\mathbf{a}$ be a Borel action of an infinite countable group $\Gamma$ on a standard Borel space $X$. Then there is an equivariant Borel embedding $p \colon X \to (3^{\N})^\Gamma$ of the action $\mathbf{a}$ to the shift action $\mathbf{s}$
        of $\Gamma$ on $(3^{\N})^\Gamma$ such that if $y\in \overline{p(X)}$, then there are $y_0, y_1, \dots \in p(X)$ with
        \[
            \Stab(y) \subseteq \bigcup_m\bigcap_{n \ge m} \Stab(y_n).
        \]
    \end{lem}
    
    \begin{proof}
        Let $\Gamma = \{\gamma_n\}$ and let $\varphi_n \colon X \to \{0,1,2\}$ be a Borel coloring of the graph of $\gamma_n^{\mathbf{a}}$, where $\gamma_n^\mathbf{a} (x) = \mathbf{a} (\gamma_n, x)$ (see \cite[4.6]{KST99}). By changing the topology, we can assume that $X$ is 0-dimensional, so a $G_\delta$ subset of $3^\N$, and $\mathbf{a}, \varphi_n$ are continuous. 
        
        Let $\theta \colon X \to 3^\N$ be defined by 
        \[
            \theta(x) = (x(0), \varphi_0 (x), x(1), \varphi_1 (x), \dots).
        \]
        Then $\theta$ is a homeomorphism of $X$ with a $G_\delta$ subset of $3^\N$, and we have that if $\gamma_m^\mathbf{a} \cdot x = y \neq x$, then $\theta (y) (2m+1) \neq \theta (x) (2m+1)$, where we write $\gamma_m^\mathbf{a} \cdot x$ for $\gamma_m^\mathbf{a} (x)$.
        
        Thus identifying $x$ with $\theta(x)$, we can thus assume that
        
        \begin{enumerate}[label=(\roman*)]
            \item
                $X$ is a $G_\delta$ subset of $3^\N$,
            \item
                The action $\mathbf{a}$ is continuous,
            \item \label{item-elek-color-distinct}
                $\gamma_m^\mathbf{a} \cdot x = y \neq x \implies y(2m + 1) \neq x(2m + 1)$.
        \end{enumerate}
        
        Consider now the standard equivariant Borel embedding of the action $\mathbf{a}$
        into the shift action on $(3^\N)^\Gamma$ given by 
        \[
            p \colon X \to (3^\N)^\Gamma
        \]
        \[
            x\mapsto (\gamma\mapsto (\gamma^{-1})^\mathbf{a} \cdot x).
        \]
        Let now $y\in \overline{p(X)}$ and take $x_n \in X$ with $y_n = p(x_n)\to y$.
        
        Let $\gamma = \gamma_m \in \Stab(y)$.
        Then, as $p(x_n)\to y, p(x_n)(1)\to y(1)$, i.e., $x_n\to y(1)$.
        Also $\gamma^\mathbf{s}\cdot p(x_n) \to \gamma^\mathbf{s}\cdot y$,
        so $(\gamma^\mathbf{s}\cdot p(x_n))(1) \to (\gamma^\mathbf{s}\cdot y) (1)$,
        i.e., $p(x_n)(\gamma^{-1})\to y(\gamma^{-1})$ or
        $\gamma^\mathbf{a} \cdot x_n \to (\gamma^\mathbf{s} \cdot y) (1) = y(1) $,
        so both $x_n (2m+1)$ and $ (\gamma^\mathbf{a} \cdot x_n ) (2m+1)$ converge to $y(1)(2m+1)$,
        in the discrete space $\{0,1,2\}$,
        thus they are eventually equal.
        Then by \ref{item-elek-color-distinct} above $x_n, \gamma^\mathbf{a} \cdot x_n$ are eventually equal,
        so $\gamma\in \Stab(x_n) = \Stab(y_n)$,
        for all large enough $n$.
    \end{proof}
    
    From \cref{332} it is clear that if $\mathbf{a}$ is a free action, then $\overline{p(X)}$ is contained in $\Fr((3^\N)^\Gamma)$ and thus in particular by taking $\mathbf{a}$ to be the restriction of the shift action to $\Fr((3^\N)^\Gamma)$, we see that this action is Borel isomorphic to a subshift contained in $\Fr((3^\N)^\Gamma)$ (we are using here that Borel invariant biembeddability implies Borel isomorphism). Of course we can trivially replace $3^\N$ by $\cantor$ here, so this proves the first  statement of the theorem.
    
    Assume now that $\Gamma$ is finitely generated. In a similar way, to prove the second statement of the theorem, it is enough to show, in the notation of \cref{332}, that if the action $\mathbf{a}$ is aperiodic, so that the stabilizers of the points $y_n$ have infinite index, then the stabilizer of the point $y$ also has infinite index. This follows from the conclusion of \cref{332} and the fact that a finite index subgroup of a finitely generated group is also finitely generated.
\end{proof}

\newcommand{\probaperiodiccompact}{
    Is it true that for an \textup{arbitrary} infinite countable group $\Gamma$,
    $E^{\mathrm{ap}}(\Gamma, \cantor)$ admits a compact action realization?
}
\begin{prob}\label{prob-aperiodiccompact}
    \probaperiodiccompact
\end{prob}

We next note the following fact,
which can be used to provide more examples of CBER that admit compact action realizations.
\begin{prop}
    Let $E$ be an aperiodic {\CBER} on an uncountable Polish space $X$.
    Then there is an invariant meager Borel set $M \subseteq X$
    such that $E \cong_B E \uhr M$.
\end{prop}
Note that given such an $M$,
if $Y$ is an invariant Borel set containing $M$,
then $E \cong_B E \uhr M \sqsubseteq^i_B E \uhr Y \sqsubseteq^i_B E$,
and hence $E \cong_B E \uhr Y$
(see \Cref{prelim-eqrel}).
So in particular,
if $X$ is compact and $E$ is the orbit equivalence relation of a continuous action,
then $E\uhr Y$ has a compact action realization as well.

\begin{proof}
    If $E$ is compressible hyperfinite,
    then $E$ is Borel isomorphic to either $\R I_\N$ or $E_t$
    (see \Cref{thm-hf-classif}),
    and thus $E \cong_B \R E$.
    So by the countable chain condition for category,
    one of these copies of $E$ must be meager,
    and we are done by setting $M$ to be this meager set.

    So assume that $E$ is not both compressible and hyperfinite.
    Fix by \cite[12.1 and 13.3]{KM04}
    an invariant meager Borel set $M \subseteq X$
    such that $E \uhr (X \setminus M)$ is compressible hyperfinite.
    Then $E \uhr M$ is non-smooth,
    since if it were smooth,
    then it would be compressible hyperfinite,
    making $E$ compressible hyperfinite.
    Thus
    $E \uhr M \cong_B (E \uhr M) \oplus (E \uhr (X \setminus M)) \cong_B E$
    (see \Cref{prelim-classes}).
\end{proof}

Since for every $E \in \ap$ on a Polish space $X$
there is an invariant comeager Borel set $Y\subseteq X$
such that $E \uhr Y$ is hyperfinite,
it follows that if $E\in \ap$ is not smooth
when restricted to any invariant comeager Borel set,
then there is an invariant comeager Borel set $Y\subseteq X$
such that $E \uhr Y$ admits a minimal, compact action realization.
Whether this holds for measure instead of category is an open problem.

\newcommand{\probconull}{
    Let $E\in \ap$ be on a standard Borel space $X$
    and let $\mu$ be a measure on $X$
    such that the restriction of $E$ to any invariant Borel set
    of measure 1 is not smooth.
    Is there is an invariant Borel set $Y\subseteq X$ with $\mu(Y) = 1$
    such that $E \uhr Y$ admits a compact action realization? 
}
\begin{prob}\label{prob-conull}
    \probconull
\end{prob}

\subsection{\nopunct}
We next describe a ``gluing'' construction of two continuous actions of
groups on compact Polish spaces at an orbit of one of the actions
and apply it to the compact action realization problem.
We thank Aristotelis Panagiotopoulos for a useful discussion on this construction. 

Let the countable group $\Gamma$ act continuously on the compact Polish space $X$ and
let $X_0\subseteq X$ be an infinite orbit of this action.
Let also the countable group $\Delta$ act continuously on the compact Polish space $Y$
with a fixed point $y_0 \in Y$.
Fix compatible metrics $d_X \le 1$ and $d_Y \le 1$ for $X$ and $Y$, respectively.
Fix also a map $x\mapsto |x|$ from $X_0$ to $\R^+$ such that $\lim _{x\to \infty} |x| = \infty$, i.e., for every $M\in \R^+$, there is a finite $F\subseteq X_0$ such that $x \notin F \implies |x| > M$.
For each $x\in X_0$,
let $Y_x \ni x$ be a set and
let $\pi_x$ be a bijection $\pi_x \colon Y \to Y_x$ such that $\pi_x (y_0) = x$ and $x_1 \neq x_2 \implies Y_{x_1} \cap Y_{x_2} =\emptyset$. Put $Y'_x = Y_x\setminus \{x\}$ and let $Z= X\sqcup \bigsqcup_{x\in X_0} Y'_x$. Define a metric $d_x$ on $Y_x$ as follows:
\[
    d_x (y_1, y_2) = \frac{d_Y(\pi_x^{-1}(y_1), \pi_x^{-1}(y_2))}{|x|}.
\]
Then define  a metric $d_Z$ on $Z$ as follows:
\[
    d_Z(x_1, x_2) = d_X(x_1, x_2), \textrm{if}  \ x_1, x_2\in X,
\]
\[
    d_Z(y_1, y_2) = d_x (y_1, y_2), \textrm{if} \ y_1, y_2 \in Y_x, x\in X_0,
\]
\[
    d_Z(y,x') = d_x(y,x) + d_X(x,x'), \textrm{if} \  y\in Y_x, x\in X_0, x'\in X,
\]
\[
    d_Z(y_1, y_2) = d_{x_1}(y_1, x_1) + d_X(x_1, x_2) + d_{x_2}(x_2, y_2), \textrm{if} \ y_1\in Y_{x_1}, y_2\in Y_{x_2}, x_1 \neq x_2.
\]

\begin{remark}\label{gluingRemark}
    We note here that in the preceding ``gluing'' construction, if the spaces $X,Y$ are 0-dimensional, so is the space $Z$. To see this we start with metrics $d_X, d_Y$ as above which are actually ultrametrics (these exist since $X,Y$ are 0-dimensional). Then it is enough to show that for every $z\in Z$, there is an $\epsilon_z >0$ such that every open ball (in the metric $d_Z$) $B_z (\epsilon) $, for $\epsilon < \epsilon_z$, is closed. Below recall that open balls in ultrametrics are closed.

    Consider first the case where $ z\in X$ and fix $z_1, z_2, \dots \in B_z(\epsilon)$ with $z_n \to z_\infty$. If infinitely many $z_n$ are in $X$, then clearly $z_\infty\in B_z(\epsilon)$ as $d_X$ is an ultrametric. Otherwise, we can assume that all $z_n$ are in $Z\setminus X$. If now there is some $x\in X_0$ such that infinitely many $z_n \in Y'_x$, so that $z_\infty \in Y_x$, we have $d_Z(z_n, z) = d_x(z_n, x) + d_X(x,z)$, so $d_x(z_n, x) < \epsilon - d_X(x,z)$, thus, since $d_x$ is an ultrametric, $d_x(z_\infty , x) <  \epsilon - d_X(x,z)$ and thus $d_Z(z_\infty , z) < \epsilon$. Otherwise there is a subsequence $(z_{n_i})$ and $x_i \in X_0$ with $z_{n_i}\in Y'_{x_i}$ and $x_i$ converges to $x\in X$ and thus $z_{n_i}\to z_\infty = x$ (since $d_Z(z_{n_i}, x_i)  < \frac{1}{|x_i|})$. Now $d_Z(z, z_{n_i}) = d_X( z, x_i) + d_{x_i}(x_i, z_{n_i}) < \epsilon$, so $d_X (z, x_i) < \epsilon$ and, since $d_X$ is an ultrametric, $d_Z(z, z_\infty) = d_X (z, z_\infty) < \epsilon$. 
    
    The other case is when $z\in Y'_x$, for some $x\in X_0$. Take $\epsilon_z = d_x(z,x)$. Then for $\epsilon < \epsilon_z$ the open ball  $B_z (\epsilon)$ is the same as the open ball of radius $\epsilon$ in the metric $d_x$, so the proof is complete.
\end{remark}

\begin{prop}
    $(Z,d_Z)$ is a compact metric space.
\end{prop}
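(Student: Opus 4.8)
The plan is to verify first that $d_Z$ is a metric, and then that $(Z,d_Z)$ is sequentially compact, which for a metric space is equivalent to compactness; note that the group actions play no role in this statement. Throughout, let $p\colon Z\to X$ be the retraction with $p(z)=z$ for $z\in X$ and $p(z)=x$ for $z\in Y'_x$, and let $e\colon Z\to[0,\infty)$ be the ``elevation'' with $e(z)=0$ for $z\in X$ and $e(z)=d_x(z,x)$ for $z\in Y'_x$. With this notation the four clauses defining $d_Z$ collapse into the single prescription: $d_Z(z,w)=d_x(z,w)$ if $z$ and $w$ both lie in $Y_x$ for a common $x\in X_0$, and $d_Z(z,w)=e(z)+d_X(p(z),p(w))+e(w)$ otherwise. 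Since $d_x(y,x)=e(y)$ and $d_X(x,x)=0$, these two expressions agree on the overlap (the case where one of the points is a basepoint $x\in X_0$), so $d_Z$ is well defined; symmetry and the fact that $d_Z(z,w)=0\iff z=w$ are then immediate from the corresponding facts for $d_X$ and for the $d_x$.

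The substantive point is the triangle inequality $d_Z(z_1,z_3)\le d_Z(z_1,z_2)+d_Z(z_2,z_3)$, which I would prove by a case analysis on the positions of $z_1,z_2,z_3$ relative to $X$ and the balloons $\{Y_x\}_{x\in X_0}$. If all three lie in a single piece (namely $X$, or one fixed $Y_x$) it is just the triangle inequality in that piece. If $z_1$ and $z_3$ lie in a common piece $P$ while $z_2$ does not, then when $P=X$ the inequality follows from the triangle inequality in $X$ applied through $p(z_2)$ (using that each $d_Z(z_i,z_2)$ contains the summand $d_X(z_i,p(z_2))$), and when $P=Y_x$ it follows from $d_Z(z_1,z_3)=d_x(z_1,z_3)\le e(z_1)+e(z_3)$ (triangle inequality in $Y_x$ through $x$) together with the fact that $d_Z(z_i,z_2)\ge e(z_i)$. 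Finally, if $z_1$ and $z_3$ lie in different pieces then $d_Z(z_1,z_3)=e(z_1)+d_X(p(z_1),p(z_3))+e(z_3)$, and one splits according to whether $z_2$ lies in $X$, in the piece containing $z_1$ (or the one containing $z_3$), or in none of these; in each subcase the inequality reduces to the triangle inequality for $d_X$ applied to the $p$-images, possibly combined with the triangle inequality in a single $Y_w$ through $w$. This bookkeeping, while entirely elementary, is the only place where care is needed, and I expect it to be the main (purely organizational) obstacle.

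For compactness, I would argue as follows. First, the hypothesis $\lim_{x\to\infty}|x|=+\infty$ forces $\{x\in X_0:|x|\le 1\}$ to be finite, so $c:=\inf_{x\in X_0}|x|>0$; hence $e(z)\le 1/c$ for every $z\in Z$ and each $Y_x$ has $d_x$-diameter at most $1/c$, so $(Z,d_Z)$ has finite diameter. Also $(X,d_X)$ embeds isometrically into $(Z,d_Z)$, each $Y'_x$ with $d_x$ embeds isometrically into $(Z,d_Z)$, and $(X,d_X)$ as well as each $(Y_x,d_x)$ (which is just $(Y,d_Y)$ rescaled by $1/|x|$) is compact. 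Now let $(z_n)$ be a sequence in $Z$. If infinitely many $z_n$ lie in $X$, a subsequence of those converges in the compact space $X$, hence in $Z$. Otherwise, pass to a subsequence lying entirely in $\bigsqcup_{x\in X_0}Y'_x$ and write $z_n\in Y'_{w_n}$ with $w_n\in X_0$. If some $w\in X_0$ satisfies $w_n=w$ for infinitely many $n$, then a subsequence of those $z_n$ converges in the compact space $Y_w$ to some $y^*\in Y_w\subseteq Z$, and since $d_Z$ agrees with $d_w$ on $Y_w$ this subsequence converges to $y^*$ in $Z$. In the remaining case each $w\in X_0$ occurs only finitely often among the $w_n$, so $|w_n|\to\infty$ (otherwise some $w$ with $|w|\le M$, for a fixed $M$, would occur infinitely often, contradicting finiteness of $\{x\in X_0:|x|\le M\}$); then $e(z_n)\le 1/|w_n|\to 0$, and passing to a subsequence with $w_n=p(z_n)\to x^*$ in the compact space $X$ we get $d_Z(z_n,x^*)=e(z_n)+d_X(w_n,x^*)\to 0$. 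In every case $(z_n)$ has a convergent subsequence, so $(Z,d_Z)$ is compact. As noted, the only genuinely fiddly step is the triangle-inequality case analysis; the compactness argument is short once one sees that elevations are uniformly bounded and tend to $0$ along any sequence whose basepoints escape every finite subset of $X_0$.
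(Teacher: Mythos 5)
Your proof is correct and takes essentially the same route as the paper: verify the metric axioms (which the paper dismisses as routine) and then establish sequential compactness by reducing to the case where the points lie in balloons $Y_{x_n}$ whose basepoints escape every finite subset of $X_0$, so that $d_Z(z_n,x_n)\le 1/|x_n|\to 0$ and the $z_n$ track a convergent subsequence of the basepoints in the compact space $X$. Your extra cases (infinitely many terms in $X$, or in a single $Y_w$) and the triangle-inequality bookkeeping are just a fuller write-up of what the paper calls "obvious" and "routine."
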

\begin{proof}
    It is routine to check that $d_Z$ is a metric on $Z$. We next verify compactness. Let $(z_n)$ be a sequence in $Z$ in order to find a converging subsequence. The other cases being obvious, we can assume that $z_n\in Y_{x_n}$ with $x_n\in X_0$ distinct and therefore $|x_n|\to \infty$, in which case, by going to a subsequence, we can also assume  that  $x_n\to x\in X$. Since $d_Z(z_n, x_n) \le \frac{1}{|x_n|}$, it follows that $z_n\to x$.
\end{proof}
We next define an action of $\Delta$ on $Z$. Given $\delta\in \Delta$ and $z\in Z$ we define $\delta\cdot z$ as follows:

\[
    \delta\cdot z = \pi_x(\delta\cdot \pi_x^{-1}(z)), \textrm{if} \ z\in Y_x, x\in X_0,
\]
\[
    \delta\cdot z = z, \textrm{if} \ z\in X.
\]
If we identify each $Y_x$ with $Y$, then this action ``extends'' the action of $\Delta$ on $Y$.

We finally extend the action of $\Gamma$ from $X$  to all of $Z$. Given $\gamma \in \Gamma$ and $z\in Y_x, x\in X_0$, define $\gamma\cdot z$ as follows:
\[
\gamma \cdot z =  \pi_{\gamma\cdot x}(\pi_x^{-1}(z)), \textrm{if} \ z\in Y_x, x\in X_0.
\]
It is easy to see that these two actions commute, so they give an action of $\Gamma\times \Delta$ on $Z$.
\begin{prop}
    The action of $\Gamma\times \Delta$ on $Z$ is continuous.
\end{prop}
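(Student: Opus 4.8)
The plan is to verify continuity one group element at a time. Since $\Gamma\times\Delta$ is countable discrete, it suffices to show that each $(\gamma,\delta)$ acts by a continuous self-map of $(Z,d_Z)$ — then, the inverse map being the action of $(\gamma^{-1},\delta^{-1})$, each element in fact acts by a homeomorphism. Because the two actions commute, $(\gamma,\delta)\cdot z=\gamma\cdot(\delta\cdot z)$, so I would check separately that each $\gamma\in\Gamma$ and each $\delta\in\Delta$ acts continuously on $Z$. The two verifications are parallel: in both cases the map fixes $X$ setwise, restricting there to a homeomorphism (by the element $\gamma$ of the given $\Gamma$-action, resp.\ by the identity), it permutes the fibers $\{Y_x\}_{x\in X_0}$ (sending $Y_x$ to $Y_{\gamma\cdot x}$, resp.\ to $Y_x$), and on each fiber it acts as the transport under the maps $\pi_x$ of the given action on $Y$.

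First I would handle the open set $\bigsqcup_{x\in X_0}Y'_x$. Reading off the piecewise definition of $d_Z$, any $w$ with $d_Z(z,w)<d_x(z,x)$ must lie in $Y_x\setminus\{x\}=Y'_x$; hence $Y'_x$ is open in $Z$ and $d_Z$ agrees with $d_x$ on it. Since $\pi_x$ rescales $d_Y$ by the fixed positive constant $1/|x|$, it is a homeomorphism $(Y,d_Y)\to(Y_x,d_x)$; therefore the $\delta$-map restricted to $Y_x$ is the $\pi_x$-conjugate of the continuous action of $\delta$ on $Y$, and the $\gamma$-map $\pi_{\gamma\cdot x}\circ\pi_x^{-1}\colon(Y_x,d_x)\to(Y_{\gamma\cdot x},d_{\gamma\cdot x})$ rescales $d_x$ by the constant $|x|/|\gamma\cdot x|$. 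Either way it is a homeomorphism onto an open fiber $Y'_{x'}$, which gives continuity at every point of $\bigsqcup_x Y'_x$.

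The heart of the argument — and the step I expect to be the main obstacle — is continuity at a point $z\in X$. Here I would fix $z_n\to z$ and a group element $g$ (writing $z_\infty:=g\cdot z\in X$) and show that every subsequence admits a further subsequence along which $g\cdot z_n\to z_\infty$. Terms $z_n\in X$ are dealt with by continuity of the given action on $X$ together with $d_Z\uhr X=d_X$. Otherwise we may assume $z_n\in Y'_{x_n}$; unwinding $d_Z(z_n,z)=d_{x_n}(z_n,x_n)+d_X(x_n,z)$ yields $x_n\to z$ in $X$ and $d_{x_n}(z_n,x_n)\to 0$. If one $x^*$ repeats infinitely often, then $z=x^*$, all $g\cdot z_n$ lie in a single fiber, and since $d_{x^*}(z_n,x^*)\to 0$ translates to $\pi_{x^*}^{-1}(z_n)\to y_0$ in $Y$, applying continuity of the action on $Y$ at the fixed point $y_0$ gives $g\cdot z_n\to z_\infty$. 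If instead the $x_n$ may be taken pairwise distinct, then $|x_n|\to\infty$ (and $|\gamma\cdot x_n|\to\infty$ when $g=\gamma$, since $\gamma$ permutes $X_0$) by the hypothesis $\lim_{x\to\infty}|x|=+\infty$; combined with $d_Y\le 1$ this bounds the fiber-coordinate of $d_Z(g\cdot z_n,z_\infty)$ by $1/|x_n|$ (resp.\ $1/|\gamma\cdot x_n|$), forcing it to $0$, while the base-coordinate tends to $0$ by continuity of the $X$-action applied to $x_n\to z$. The subtlety isolated by this last case is that the given actions on $X$ and $Y$ need not be isometric, so applying $g$ might spread points apart inside a fiber; the construction neutralizes this precisely because each fiber $(Y_x,d_x)$ has diameter at most $1/|x|$ with $|x|\to+\infty$ along $X_0$, so its image collapses onto the base point however $g$ distorts it. Everything else is routine bookkeeping with the definition of $d_Z$.
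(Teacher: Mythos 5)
Your proof is correct and follows essentially the same route as the paper's: reduce to single group elements, treat the $\Gamma$- and $\Delta$-parts separately, and at a base point $z\in X$ pass to subsequences, using that the fibers $Y_{x_n}$ have $d_Z$-diameter at most $1/|x_n|\to 0$ for pairwise distinct $x_n\in X_0$ so that $g\cdot z_n$ collapses onto $g\cdot x_n\to g\cdot z$. You merely spell out the cases the paper dismisses as trivial, and you correctly read the $\Gamma$-action on $X\subseteq Z$ as the original action (as the paper's own proof does), rather than the literal ``$\gamma\cdot z=z$'' in the displayed definition.
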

\begin{proof}
    It is enough to check that the action of $\Gamma$ on $Z$ is continuous and so is the action of $\Delta$.
    
    Let first $\gamma\in \Gamma$ and $z_n\in Z$ be such that $z_n \to z$, in order to show that $\gamma\cdot z_n \to \gamma\cdot z$. It is enough to find a subsequence $(n_i)$ such that $\gamma \cdot z_{n_i} \to \gamma\cdot z$. Again, the other cases being trivial, we can assume that $z_n\in Y_{x_n}$ with $x_n\in X_0$ distinct, so that also $|x_n|\to \infty$, in which case, by going to a subsequence, we can also assume that $x_n\to x\in X$. Then $\gamma \cdot x_n \to \gamma\cdot x$ and $d_Z(\gamma\cdot z_n,\gamma\cdot  x_n) \le \frac{1}{|\gamma\cdot x_n|}\to 0$, as the $\gamma\cdot x_n$ are also distinct and thus $|\gamma\cdot x_n| \to \infty$. Since $d_Z( z_n, x_n) \le \frac{1}{|x_n|}$, clearly $x=z$, and thus  $\gamma\cdot z_n \to \gamma\cdot z$.
    
    Let now $\delta \in \Delta$ and  $z_n\in Z$ be such that $z_n \to z$, in order to show that $\delta\cdot z_n \to \delta\cdot z$. It is enough again to find a subsequence $(n_i)$ such that $\delta \cdot z_{n_i} \to \delta \cdot z$ and as before we can assume that $z_n\in Y_{x_n}$ with $x_n\in X_0$ distinct, so that also $|x_n|\to \infty$, in which case, by going to a subsequence, we can also assume that $x_n\to x\in X$. Then $\delta \cdot x_n = x_n \to \delta \cdot x =x$. Now $\delta\cdot z_n \in Y_{x_n}$, so that $\delta_Z( \delta \cdot z_n , x_n) \to 0$ and $d_Z( z_n , x_n) \to 0$. Thus $z=x$ and $\delta\cdot z_n \to \delta\cdot z =z$.
\end{proof}

Let now $E$ be the equivalence relation induced by the action of $\Gamma$ on $X$,  let $F$ be the equivalence relation induced by the action of $\Delta$ on $Y\setminus \{y_0\}$ and finally let $G$ be the equivalence relation induced by the action of $\Gamma\times \Delta $ on $Z$. Then it is easy to check the following;

\begin{prop}\label{321}
    $G \cong_B E\oplus (F\times I_\N)$
\end{prop}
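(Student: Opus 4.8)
The plan is to analyze the orbit structure of the $\Gamma\times\Delta$-action on $Z$ directly from the definitions. First I would identify the orbits. A point $z\in X$ is fixed by every element of $\Delta$ and moved only by $\Gamma$, so its orbit is exactly its $E$-class $\Gamma\cdot z\subseteq X$; thus the restriction of $G$ to $X$ is precisely $E$. Now fix an infinite orbit $X_0=\Gamma\cdot x^\ast$ (the distinguished orbit along which the gluing was performed) and consider a point $z$ lying in some $Y'_x$ with $x\in X_0$. Applying $\delta\in\Delta$ keeps $z$ inside $Y_x$ and moves it according to the $\Delta$-action on $Y$ transported by $\pi_x$; applying $\gamma\in\Gamma$ sends $Y_x$ to $Y_{\gamma\cdot x}$ via $\pi_{\gamma\cdot x}\circ\pi_x^{-1}$. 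Hence the $\Gamma\times\Delta$-orbit of $z$ is $\bigsqcup_{x'\in X_0}\{\pi_{x'}(w) : w\in\Delta\cdot(\pi_x^{-1}(z))\}$, i.e., the $\Delta$-orbit of $\pi_x^{-1}(z)$ in $Y$, replicated across every point of the single orbit $X_0$. Since $z\in Y'_x$ means $\pi_x^{-1}(z)\neq y_0$, this $\Delta$-orbit lives inside $Y\setminus\{y_0\}$ and is exactly an $F$-class.

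Next I would assemble the Borel isomorphism witnessing $G\cong_B E\oplus(F\times I_\N)$. On the $X$-part, $G\restriction X = E$, which accounts for the $E$ summand. On the remaining part $\bigsqcup_{x\in X_0}Y'_x$, use the fixed bijections $\pi_x$: the map $z\mapsto(\pi_x^{-1}(z),x)$ (for the unique $x\in X_0$ with $z\in Y'_x$) is a Borel bijection from $\bigsqcup_{x\in X_0}Y'_x$ onto $(Y\setminus\{y_0\})\times X_0$. Under this identification, two points $z_1\in Y'_{x_1}$, $z_2\in Y'_{x_2}$ are $G$-equivalent iff $\pi_{x_1}^{-1}(z_1)$ and $\pi_{x_2}^{-1}(z_2)$ are $F$-equivalent (with no constraint relating $x_1$ and $x_2$, since $\Gamma$ acts transitively on $X_0$), which is precisely the relation $F\times I_{X_0}$ on $(Y\setminus\{y_0\})\times X_0$. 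As $X_0$ is a countably infinite Borel set, $I_{X_0}\cong_B I_\N$, so $F\times I_{X_0}\cong_B F\times I_\N$. Taking the disjoint union of these two pieces gives the desired Borel isomorphism $G\cong_B E\oplus(F\times I_\N)$.

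The only points requiring a little care — and the place where I expect the bulk of the bookkeeping, though no real obstacle — are verifying the Borelness of everything in sight: that $X$ and each $Y'_x$ are Borel in $Z$, that the assignment $z\mapsto(\pi_x^{-1}(z),x)$ is Borel (which follows since the family $\{Y_x\}_{x\in X_0}$ is a Borel partition and each $\pi_x$ is a fixed Borel bijection, chosen uniformly in $x$), and that $X_0$ is a Borel subset of $X$. All of these are immediate from the construction of $Z$ and its standard Borel structure, so the proposition reduces to the orbit computation above together with the observation that fixing a point $y_0$ of $Y$ is exactly what deletes it from the glued copies, yielding $Y\setminus\{y_0\}$ rather than $Y$ in the second summand.
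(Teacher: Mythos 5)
Your proof is correct and is precisely the direct orbit computation that the paper leaves to the reader (the proposition is stated there with no argument beyond ``it is easy to check''). The only point worth making explicit is that the clause ``$\gamma\cdot z=z$ if $z\in X$'' in the paper's definition of the extended $\Gamma$-action must be read as the \emph{original} $\Gamma$-action on $X$ --- which is how you read it, and which is forced both by consistency with the second clause on $X_0$ and by the statement of the proposition itself.
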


We present now an application of this construction to the problem of compact action realizations.

\begin{thm}\label{322}
    Let $F$ be a non-smooth {\CBER}
    which admits a locally compact action realization.
    Then $F\times I_\N$ admits a compact action realization.
    In particular,
    if $F$ is compressible,
    then $F$ admits a compact action realization.
    
    Moreover,
    if the locally compact space is $0$-dimensional,
    $F\times I_\N$ admits a compact action realization on the Cantor space $\cantor$.
\end{thm}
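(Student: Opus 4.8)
The plan is to realize $F\times I_\N$ via the \emph{gluing} construction of \textbf{(B)}, attaching a copy of the given system onto an infinite orbit of a compact ``anchor'' that realizes $E_t$. Write $F$ as the equivalence relation induced by a continuous action of a countable group $\Lambda$ on a locally compact Polish space $W$; we may assume $F$ is non-smooth, since otherwise $F\times I_\N$ is smooth and aperiodic and, by \cref{3.6}, admits no compact action realization at all. Let $Y=W\cup\{\infty\}$ be the one-point compactification of $W$, a compact Polish space on which the $\Lambda$-action extends continuously fixing $\infty$; with $\Delta=\Lambda$ and $y_0=\infty$, the equivalence relation induced by $\Delta$ on $Y\setminus\{y_0\}$ is exactly $F$. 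For the anchor, use \cref{3.9}(i) to fix a minimal subshift $X\subseteq 2^{\F_2}$ whose shift equivalence relation $E$ is isomorphic to $E_t$; this $X$ is compact, perfect, and zero-dimensional, $\Gamma=\F_2$, and every orbit of $X$ is infinite (as $E_t$ is aperiodic), so fix an infinite orbit $X_0\subseteq X$.

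Plugging $(\Gamma\car X,\,X_0)$ and $(\Delta\car Y,\,y_0)$ into the construction of \textbf{(B)} yields a compact Polish space $Z$ carrying a continuous action of $\F_2\times\Lambda$, and by \cref{321} the induced equivalence relation $G$ satisfies $G\cong_B E\oplus(F\times I_\N)\cong_B E_t\oplus(F\times I_\N)$. It therefore suffices to prove $E_t\oplus(F\times I_\N)\cong_B F\times I_\N$. Now $F\times I_\N$ is compressible (like any relation $R\times I_\N$) and, since $F$ is non-smooth, so is $F\times I_\N$; hence, exactly as in the proof of \cref{2.13a}(i), $E_t$ is Borel isomorphic to the restriction of $F\times I_\N$ to an invariant Borel set, i.e.\ $F\times I_\N\cong_B E_t\oplus B$ for some CBER $B$. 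Since $E_t\oplus E_t$ is aperiodic, hyperfinite, compressible and non-smooth, the classification of hyperfinite CBER \cite[9.1]{DJK94} gives $E_t\oplus E_t\cong_B E_t$, so
\[
    E_t\oplus(F\times I_\N)\;\cong_B\;E_t\oplus E_t\oplus B\;\cong_B\;E_t\oplus B\;\cong_B\;F\times I_\N .
\]
Thus $G\cong_B F\times I_\N$, which is the desired realization. For the ``in particular'' clause: if $F$ is compressible then $F\cong_B F\times I_\N$, so this is simultaneously a compact action realization of $F$.

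For the zero-dimensional refinement, assume $W$ is zero-dimensional. Then $Y=W\cup\{\infty\}$ is zero-dimensional — complements of compact open subsets of $W$ give a clopen neighborhood basis at $\infty$ — and so is $X\subseteq 2^{\F_2}$, so by \cref{gluingRemark} the space $Z$ is zero-dimensional. It is compact and nonempty but possibly not perfect, so I pass to its perfect kernel $Z_0$: this is a closed, $G$-invariant, zero-dimensional, perfect, nonempty set (it contains the perfect set $X$) with $Z\setminus Z_0$ countable, hence $Z_0\cong\cantor$ by \cite[7.4]{Kec95}. As an invariant restriction of the compressible, non-smooth relation $G$, the relation $G|Z_0$ is again compressible and non-smooth, hence has a copy of $E_t$ as an invariant restriction; meanwhile $G|(Z\setminus Z_0)$ is aperiodic on a countable space (its classes, of the form $((\text{an }F\text{-class})\times X_0)\cap(Z\setminus Z_0)$, are infinite since $X_0$ is infinite), so it is isomorphic to some $m\,I_\N$ with $1\le m\le\aleph_0$, and $E_t\oplus m\,I_\N\cong_B E_t$ by the classification of hyperfinite CBER. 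Therefore $G|Z_0\cong_B G\cong_B F\times I_\N$, giving a compact action realization on $\cantor$.

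The gluing itself, the continuity of the action, and the compactness of $Z$ are supplied by \textbf{(B)}; the part needing care is the chain of Borel-isomorphism absorptions — chiefly $E_t\oplus E_t\cong_B E_t$, the invariant embedding of $E_t$ into $F\times I_\N$, and, in the zero-dimensional case, the verification that discarding $Z\setminus Z_0$ does not change the isomorphism type — and I expect this to be the main obstacle.
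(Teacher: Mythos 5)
Your proof is correct and follows essentially the same route as the paper's: one-point compactify the locally compact space, glue it via the construction of \textbf{(B)} onto an infinite orbit of a compact realization of $E_t$, and then absorb the $E_t$ summand into $F\times I_\N$ (the paper performs this absorption as in the proof of \cref{315}, whereas you use the invariant embedding $E_t\sqsubseteq^i_B F\times I_\N$ together with $E_t\oplus E_t\cong_B E_t$ — both are valid). Your explicit handling of the perfect-kernel step in the zero-dimensional case and of the implicit non-smoothness assumption is more careful than, but entirely consistent with, the paper's argument.
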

\begin{proof}
    In the preceding ``gluing'' construction, take $X = \cantor$ and a continuous action of $\Gamma= \F_2$ such that $E= E_t$. Fix also a countable group $\Delta$ and a continuous action of $\Delta$ on a locally compact space $Y'$ which induces $F$. Let $Y= Y'\sqcup \{y_0\}$ be the one-point compactification of $Y'$ (if $Y'$ is already compact, we obtain $Y$ by adding an isolated point to $Y'$). Then the action of $\Delta$ can be continuously extended to $Y$ by fixing $y_0$. Thus we have by \cref{321} that $E\oplus (F\times I_{\N})$ admits a compact action realization.
    Since $F$ is not smooth,
    we have $E \oplus (F\times I_\N) \cong_B (F\times I_\N)$
    (see \Cref{prelim-classes}).
    
    In the case where $Y'$ is 0-dimensional,
    by \cref{gluingRemark} $F\times I_\N$ admits a compact action realization
    on a $0$-dimensional space $Z$.
    By going to the perfect kernel of $Z$,
    we can assume that $Z$ is perfect (see the proof of \cref{BaireActionRealization}),
    thus homeomorphic to the Cantor space.
\end{proof}

The following is an immediate  consequence of \cref{322}.

\begin{cor}\label{323a}
    Let each $E_n\in \ap$ admit a compact action realization. Then $\bigoplus_n E_n\times I_\N$ also admits a compact action realization. In particular, if also every $E_n$ is compressible,  $\bigoplus_n E_n$ admits a compact action realization.
\end{cor}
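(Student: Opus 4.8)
The plan is to realize $F:=\bigoplus_n E_n$ as the equivalence relation induced by a continuous action of a countable group on a \emph{locally compact} Polish space, and then simply invoke \cref{322}, which gives that $F\times I_\N=\bigoplus_n E_n\times I_\N$ admits a compact action realization. The only work is to assemble the given compact action realizations of the $E_n$ into a single locally compact one.

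Concretely, I would proceed as follows. By hypothesis, for each $n$ fix a compact Polish space $X_n$ together with a continuous action of a countable group $\Gamma_n$ on $X_n$ whose induced equivalence relation $F_n$ satisfies $F_n\cong_B E_n$. Let $X=\bigsqcup_n X_n$ with the disjoint-sum topology, so that each $X_n$ is clopen in $X$. Then $X$ is metrizable, separable and completely metrizable, and every point lies in the compact clopen set $X_n$ for the appropriate $n$; hence $X$ is a (zero-dimensional, if each $X_n$ is) locally compact Polish space, see \cite[5.3]{Kec95}. Let $\Gamma=\bigoplus_n\Gamma_n$ be the restricted direct product, a countable group, and let it act on $X$ by having $\gamma=(\gamma_n)_n\in\Gamma$ act on $X_n$ through its $n$-th coordinate $\gamma_n$. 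Since each $X_n$ is clopen and preserved by every $\gamma$, and each $\Gamma_n$-action is continuous, this $\Gamma$-action is continuous, and the equivalence relation it induces is exactly $\bigoplus_n F_n$, which is Borel isomorphic to $F=\bigoplus_n E_n$ by pasting together Borel isomorphisms witnessing $F_n\cong_B E_n$. Applying \cref{322} to $F$ now yields that $\bigoplus_n E_n\times I_\N$ has a compact action realization. For the final assertion, if each $E_n$ is compressible then so is $F=\bigoplus_n E_n$ (the disjoint union of Borel compressions of the $E_n$ compresses $F$), so $F\cong_B F\times I_\N$ (as in the proof of \cref{322}; see also \cite[Theorem 3.23]{Kec22}), and therefore $F$ itself admits a compact action realization by the first part.

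I do not expect a real obstacle here; the statement is, as indicated, an immediate consequence of \cref{322}. The only points requiring (routine) care are: checking that a countable disjoint union of compact Polish spaces is locally compact Polish; checking that the direct-sum group $\bigoplus_n\Gamma_n$ acts continuously on $\bigsqcup_n X_n$ and induces precisely $\bigoplus_n F_n$; and recalling that a compressible CBER $R$ satisfies $R\cong_B R\times I_\N$, which is exactly what is already used inside the proof of \cref{322}.
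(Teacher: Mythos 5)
Your proof is correct and is exactly the argument the paper intends: the paper gives no explicit proof, calling the corollary an immediate consequence of \cref{322}, and your assembly of the compact realizations into a single continuous action of $\bigoplus_n\Gamma_n$ on the locally compact Polish space $\bigsqcup_n X_n$, followed by an application of \cref{322}, is the intended (and correct) way to fill in the routine details.
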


Note that by \cite[Proposition 5.23 (d)]{CK18},
there is a unique,
up to Borel isomorphism, compressible, universal CBER.
The following are immediate consequences of \cref{322}.
\begin{cor}\label{3233}
    Let $E$ be a compressible, universal {\CBER}. Then $E$ admits a transitive, compact action realization on the Cantor space $\cantor$.
\end{cor}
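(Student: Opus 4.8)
The plan is to realize $E$ as $E(\F_2,\cantor)\times I_\N$ and then invoke \cref{322}. Recall (as noted just before the statement, via \cite[Proposition 3.27 (ii)]{CK18}) that, up to Borel isomorphism, there is a unique compressible universal CBER. Now $E(\F_2,\cantor)$ is the shift equivalence relation on the compact, zero-dimensional, perfect Polish space $(\cantor)^{\F_2}$, and it is universal since it contains the universal CBER $F(\F_2,\cantor)$ as its restriction to an invariant Borel set (and is itself a CBER, hence $\sqsubseteq_B E_\infty$). Therefore $E(\F_2,\cantor)\times I_\N$ is universal, and it is compressible because $R\times I_\N$ is compressible for every $R$; so by uniqueness $E(\F_2,\cantor)\times I_\N\cong_B E$. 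Applying \cref{322} with $F=E(\F_2,\cantor)$ — a continuous action of $\F_2$ on the compact (hence locally compact), zero-dimensional space $(\cantor)^{\F_2}$ — the ``moreover'' clause already gives a compact action realization of $E\cong_B F\times I_\N$ on $\cantor$; the only thing left to arrange is transitivity.

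For that I would re-enter the ``gluing'' construction of \cref{S3.C}, part {\bf (B)}, choosing the ingredients carefully. On the $\Gamma$-side take $\Gamma=\F_2$ acting on $X=\cantor$ by the \emph{minimal} subshift of $2^{\F_2}$ realizing $E_t$ from \cref{3.9}(i), so every $\F_2$-orbit in $X$ is dense; fix one such orbit $X_0$. On the $\Delta$-side take $\Delta=\F_2$ acting on $Y'=(\cantor)^{\F_2}$ by the shift, which is topologically transitive, and let $Y=Y'\sqcup\{y_0\}$ with $y_0$ an isolated point fixed by $\Delta$. Form $Z$, $d_Z$ and the commuting $\Gamma$- and $\Delta$-actions as in {\bf (B)}. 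Then $Z$ is compact (by the construction), zero-dimensional by \cref{gluingRemark}, and \emph{perfect}: $X\cong\cantor$ is perfect in $Z$, each copy $Y'_x$ is homeomorphic to the perfect space $Y'$, and the one point that could be isolated, $y_0$, has been glued onto the non-isolated point $x\in X$. Hence $Z$ is homeomorphic to $\cantor$. By \cref{321}, the relation $G$ induced by the $\F_2\times\F_2$-action on $Z$ satisfies $G\cong_B E_t\oplus(F\times I_\N)$, and since $F\times I_\N$ is non-smooth while $E_t$ is hyperfinite and compressible, \cref{315} gives $G\cong_B F\times I_\N\cong_B E$.

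It remains to check transitivity. Fix $x_0\in X_0$, pick $w_0\in Y'=(\cantor)^{\F_2}$ with dense shift-orbit, and put $z_0=\pi_{x_0}(w_0)\in Y'_{x_0}$. From the defining formulas for the two actions, the $(\F_2\times\F_2)$-orbit of $z_0$ equals $\{\pi_{\gamma\cdot x_0}(\delta\cdot w_0):\gamma,\delta\in\F_2\}$, which meets every $Y'_{x'}$ with $x'\in X_0$ in $\pi_{x'}$ of the dense shift-orbit of $w_0$, hence densely. Since $X_0$ is dense in the perfect space $X$, every nonempty open subset of $X$ meets $X_0$ in an infinite set, and therefore (using $\lim_{x\to\infty}|x|=+\infty$) contains points $x'\in X_0$ with $|x'|$ arbitrarily large; as every point of $Y'_{x'}$ lies within $1/|x'|$ of $x'$ in $d_Z$, the orbit of $z_0$ also accumulates at every point of $X$, so it is dense in $Z$. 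Thus the $\F_2\times\F_2$-action on $Z\cong\cantor$ realizing $E$ is topologically transitive, which completes the proof.

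The routine part — all the Borel isomorphisms — is supplied by \cref{322,321,315} together with the uniqueness of the compressible universal CBER. The point requiring care, and the reason one cannot cite \cref{322} as a black box, is transitivity: one must go into the gluing construction and arrange that $E_t$ is realized \emph{minimally} and that $w_0$ has dense shift-orbit, so that a single orbit of the glued action is dense; one should also double-check that $Z$ is genuinely perfect (so literally homeomorphic to $\cantor$), which is why the one-point compactification is taken with an isolated added point that then disappears into the perfect set $X$.
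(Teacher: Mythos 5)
Your proof is correct and follows essentially the same route as the paper: the paper also identifies $E$ (via uniqueness of the compressible universal CBER) with $E(\F_2,2)\times I_\N$, applies \cref{322}, and then asserts that ``an inspection of the gluing construction'' yields topological transitivity. The only difference is cosmetic --- you use $E(\F_2,\cantor)$ in place of $E(\F_2,2)$ and actually carry out the inspection (minimal realization of $E_t$ on the $\Gamma$-side, a dense shift-orbit on the $\Delta$-side, perfectness of $Z$), which the paper leaves to the reader.
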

\begin{proof}
    Let us first note that there exists a compressible, universal CBER $F$ that is generated by a continuous action of a countable group on $\cantor$.
    Indeed, let $E(\mathbb{F}_2,2)$ be the equivalence relation generated by the shift action of $\mathbb{F}_2$ on $2^{\mathbb{F}_2}$. Consider the equivalence relation $F = E(\mathbb{F}_2,2) \times I_{\mathbb{N}}$. This equivalence relation is compressible, universal. By \cref{322}, $F$ has a continuous action realization on $\cantor$. An inspection of the ``gluing'' construction involved in the proof of \cref{322} shows that this action is topologically transitive.
\end{proof}

\begin{cor}\label{3266}
    Let $E$ be a compressible, universal {\CBER}.
    Then $E$ admits a minimal action realization on the Baire space $\baire$.
\end{cor}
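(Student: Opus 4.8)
The plan is to begin with the transitive, compact action realization of $E$ on $\cantor$ produced by \cref{3233} --- a continuous action of a countable group $\Gamma$ on $\cantor$ whose induced \CBER\ $F$ satisfies $F\cong_B E$ --- and then to pass to the $\Gamma$-invariant $G_\delta$ set $C$ of points with dense orbit. On $C$ the action is minimal, since an orbit contained in $C$ is dense in $\cantor$, hence dense in $C$; thus $F|C$ is automatically a minimal continuous action realization of the \CBER\ $F|C$, and the two things left to arrange are that the underlying space $C$ is homeomorphic to $\baire$ and that $F|C\cong_B E$.

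For the space: first delete from $C$ a countable dense $\Gamma$-invariant subset $D$, which always exists (by second countability of $\cantor$, choose countably many orbits in $C$ meeting a basis and take their union). Replacing $C$ by $C\setminus D$, we may assume $C$ is moreover co-dense in $\cantor$, while $C$ is still a perfect dense $G_\delta$ and $F|C$ is still minimal. Now a dense, co-dense $G_\delta$ subset of the perfect zero-dimensional compact space $\cantor$ has the property that every compact subset has empty interior --- exactly the argument in Case 1 of the proof of \cref{3.1} --- so by \cite[7.7]{Kec95} we get $C\cong\baire$.

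For the relation: $F|C$ is the restriction of the compressible relation $F$ to an invariant Borel set, hence compressible; it is also non-smooth, a minimal continuous action of a countable group on an uncountable Polish space never being smooth (cf.\ \cref{3.6} and \cite[8.46]{Kec95}). By the uniqueness of the compressible universal \CBER\ \cite[Proposition 3.27 (ii)]{CK18}, it then suffices to show $F|C$ is universal; and since deleting a smooth (here, countable) invariant Borel part from a universal \CBER\ leaves a universal one, this in turn reduces to showing that the transitive part of the \cref{3233}-realization is universal. Unwinding that realization (which is built by the gluing construction behind \cref{322}), its transitive part is Borel isomorphic to $(E(\F_2,2)|T)\times I_\N$, where $T\subseteq 2^{\F_2}$ is the invariant dense $G_\delta$ set of points whose $\F_2$-shift orbit is dense --- equivalently, the ``disjunctive'' colourings, in which every finite pattern occurs. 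Hence it is enough to prove that the shift action of $\F_2$ restricted to its disjunctive points induces a universal \CBER: starting from any free Borel action of $\F_2$ with a Polish set of colours (a realization of $E_\infty$), one uses a Borel marker/tiling structure on the orbits to reserve a sparse family of sites on which to write out, one after another, all finite $2$-patterns --- forcing the output to be disjunctive, hence to lie in $T$ --- while encoding the original colouring faithfully on the remaining sites, so that the resulting $\F_2$-equivariant Borel map is a reduction.

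The main obstacle is precisely this last universality statement: one must check that the constraint ``the image consists of transitive points'' can be met without destroying the reduction. Since free orbits of $\F_2$ grow exponentially there is ample room, but the coding must be carried out with Borel uniformity in the orbit. A secondary point is the bookkeeping needed to see $F|C\cong_B(E(\F_2,2)|T)\times I_\N$, i.e.\ to track through the gluing construction of \cref{322} exactly which points of the realization have dense orbit; once that identification is in hand, the rest is routine.
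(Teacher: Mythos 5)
Your overall skeleton --- start from the \cref{3233} realization on $\cantor$, pass to an invariant $G_\delta$ set on which the action is minimal, thin it out to a copy of $\baire$, and invoke uniqueness of the compressible universal CBER to get $F|C\cong_B E$ --- matches the paper's, and the topological part (deleting a countable dense invariant set, checking every compact subset has empty interior, applying \cite[7.7]{Kec95} as in Case 1 of \cref{3.1}) is fine. The gap is in the choice of invariant set and the universality claim that choice forces on you. You commit to $C=$ the set of \emph{all} transitive points, and after unwinding the gluing construction you are left needing the statement that $E(\F_2,2)$ restricted to its disjunctive points $T$ is universal. That statement is not proved in the paper, is not obviously true, and your marker/coding sketch does not establish it: you must produce an equivariant Borel map that is a genuine \emph{reduction} (not merely a homomorphism), which means the inserted pattern-witnessing blocks of unbounded size must not create spurious orbit equivalences between images of inequivalent points, and the whole marker structure must be recoverable from the image in a Borel, equivariant way. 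Moreover, the paper itself points out (in the discussion following \cref{315}) that for any topologically transitive flow there is an invariant comeager set of transitive points on which the relation is compressible \emph{hyperfinite}; so any universality in $E(\F_2,2)|T$ is necessarily concentrated on a meager subset of $T$, and your coding would have to target exactly that meager set. The closely related question of whether $\F_2$ admits a universal minimal subshift is explicitly open (\cref{3612}), so this is not a step one can treat as routine.

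The paper avoids this entirely: instead of fixing $K=\cantor$ in advance, it considers the Borel $E$-invariant map $f(x)=\ol{\Gamma\cdot x}\in K(\cantor)$ and applies \cite[Theorem 3.1]{MSS16} to conclude that for \emph{some} value $K$ the restriction $F|f^{-1}(K)$ is universal; the set $Z=f^{-1}(K)$ is automatically an invariant $G_\delta$ contained in $K$ on which the action is minimal, and one then proceeds exactly as you do (pass to a copy of $\baire$ inside $Z$ as in \cref{baireActionRealization}, note $F|Z$ is compressible and universal, and conclude by uniqueness of the compressible universal CBER). If you replace your ``restrict to the disjunctive points'' step by this appeal to the orbit-closure map and \cite[Theorem 3.1]{MSS16}, the rest of your argument goes through.
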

\begin{proof}
    By \cref{3233} consider a continuous action $\boldsymbol{a}$ of a countable group $\Gamma$ on $\cantor$, which induces an equivalence relation $F$ which is Borel isomorphic to $E$.  Then consider the Borel map $f$ that sends $x\in \cantor$ to the closure of its orbit (which is a member of the space of all compact subsets of $\cantor$),
    By \cite[Theorem 3.1]{MSS16}, there is some $K$ such that $F\uhr f^{-1}(K)$ is universal. But clearly $Z =f^{-1}(K)$ is a $G_\delta$ set, so a Polish, 0-dimensional space, invariant under the action $\boldsymbol{a}$. Moreover this action restricted to $Z$ is minimal. As in the proof of \cref{BaireActionRealization}, we can find a subspace $Y$ of $Z$ homeomorphic to $\baire$ invariant under the action, such that $F \uhr Z\cong_B F \uhr Y$. Thus $F \uhr Y$ is induced by a minimal action on the Baire space and is compressible, universal, therefore $E \cong_B F \uhr Y$.
\end{proof}

The following is an open problem:
\newcommand{\probcbercpt}{
    Does an arbitrary (not necessarily compressible) aperiodic,
    universal {\CBER} admit a compact action realization?
}
\begin{prob}\label{prob-cbercpt}
    \probcbercpt
\end{prob}

In \cref{subshifts-realizations} we will consider realizations of equivalence relations by subshifts
and in particular prove a considerable strengthening of \cref{3233}.

\section[Compressibility and paradoxicality]{Continuous actions on compact spaces, compressibility and paradoxicality}\label{realizations-compress}
\subsection{\nopunct}\label{realizations-compress-borel}
In connection with \cref{prob-realization}, \textit{for the case of compact action realizations},
we discuss here some special properties of continuous actions of countable groups
on compact Polish spaces that may have some relevance to this question.

Let $\Gamma$ be a countable group
and let $\ac a$ be a Borel action of $\Gamma$ on a standard Borel space $X$
(we are not assuming that $X$ is uncountable here).
Put $\gamma\cdot x = \ac a(\gamma, x)$.
We denote by $\ev{\ac a}$ the set of all Borel maps $T \colon X \to X$
such that $\forall x \exists \gamma \in \Gamma \;(T(x) = \gamma \cdot x)$.
Equivalently this means that there is a Borel partition
$X = \bigsqcup_{\gamma \in \Gamma} X_\gamma$ such that
$T(x) = \gamma\cdot x$ for $x\in X_\gamma$.
We also let $\ev{\ac a}^f$ consist of all Borel maps $T \colon X\to X$
for which there is a \textit{finite} subset $F\subseteq \Gamma$ such that
$\forall x \exists \gamma\in F \;(T(x) = \gamma\cdot x)$.
Equivalently this means that there is a Borel partition
$X = \bigsqcup_{\gamma\in F} X_\gamma$ such that
$T(x) = \gamma \cdot x$ for $x \in X_\gamma$.

We say that the action $\ac a$ is \textbf{compressible}
(resp., \textbf{finitely compressible})
if there is an injective Borel map in $T \in \ev{\ac a}$
(resp., $T \in \ev{\ac a}^f$) such that
for every orbit $C$ of $\ac a$,
$T(C) \subsetneq C$ or equivalently $\Gamma\cdot (X\setminus T(X)) = X$.
Clearly the action $\ac a$ is compressible
iff the associated equivalence relation is compressible.
The action $\ac a$ is \textbf{paradoxical}
(resp., \textbf{finitely paradoxical})
if there are two injective Borel maps $T_1, T_2$ in $\ev{\ac a}$
(resp., in  $\ev{\ac a}^f$) such that
$T_1(X) \cap T_2(X) = \emptyset$,
$T_1(X) \cup T_2(X) = X$.

Clearly if $\ac a$ is paradoxical (resp., finitely paradoxical),
then $\ac a$ is compressible (resp., finitely compressible).
It is also known that if $\ac a$ is compressible,
then $\ac a$ is paradoxical,
see \cite[Proposition 2.1]{DJK94}.

\begin{remark}\label{rmk-fincomp-notimp-finpara}
    It is easy to see that finite compressibility does not imply imply finite paradoxicality. Take for example $\Z$ acting on itself by translation. Since $\Z$ is amenable this action is not finitely paradoxical. On the other hand the map $T \colon \Z\to \Z$ such that $T(n) = n$, if $n<0$, and $T(n) = n+1$, if $n \ge 0$, shows that this action is finitely compressible.
\end{remark}
\begin{remark}
    One can easily see that finite paradoxicality is equivalent to the following strengthening of finite compressibility: There is an injective Borel map $T\in \ev{\ac a}^f$ and a finite subset $F\subseteq \Gamma$ such that $F\cdot (X\setminus T(X)) = X$. 
\end{remark}

For $n \ge 1$,
let $[n] = \{ 1,2, \dots , n\}$.
The {\bf $\boldsymbol{n}$-amplification} of $\ac a$ is the action $\ac a_n$
of the group $\Gamma \times S_n$ on $X \times [n]$ given by
$(\gamma, \pi) \cdot (x, i) = (\gamma \cdot x, \pi(i))$,
where $S_n$ is the group of permutations of $[n]$.
An \textbf{amplification} of $\ac a$ is an $n$-amplification of $\ac a$,
for some $n$.

\begin{thm}\label{thm-actionparadox}
    Let $\ac a$ be a continuous action of a countable group $\Gamma$ on a compact Polish space $X$.
    Then the following are equivalent:
    \begin{enumerate}[label=(\roman*)]
        \item \label{item-actionparadox-comp}
            $\ac a$ is compressible;
        \item \label{item-actionparadox-para}
            $\ac a$ is paradoxical;
        \item \label{item-actionparadox-fincomp}
            an amplification of $\ac a$ is finitely compressible;
        \item \label{item-actionparadox-finpara}
            an amplification of $\ac a$ is finitely paradoxical.
    \end{enumerate}
\end{thm}

The proof will be based on Nadkarni's Theorem and the following two results.
We first recall some standard terminology:

Let $X$ be a standard Borel space and let $B(X)$ be the $\sigma$-algebra of its Borel sets.
A \textbf{finitely additive Borel probability measure} is a map $\mu \colon B(X) \to [0,1]$
such that $\mu(\emptyset) = 0, \mu(X) = 1$ and  $\mu(A \cup B) = \mu(A) + \mu(B)$,
if $A \cap B = \emptyset$.
It is \textbf{countably additive} if moreover $\mu(\bigcup_n A_n) = \sum_n \mu (A_n)$,
for any pairwise disjoint family $(A_n)$.
Recall that we call these simply \textit{measures}.
If $\ac a$ is a Borel action of a countable group $\Gamma$ on $X$,
then $\mu$ is invariant if for any Borel set $A$ and $\gamma \in \Gamma$,
$\mu(\gamma \cdot A) = \mu(A)$.
    
\begin{thm}[{\cite[5.3]{Tse15}}]\label{fam}
    Let $\Gamma$ be a countable group and
    let $\ac a$ be a continuous action of $\Gamma$ on a compact Polish space $X$.
    If $\ac a$ admits an invariant finitely additive Borel probability measure,
    then it admits an invariant measure.
\end{thm}
\begin{remark}
    The hypothesis that $X$ is compact Polish is necessary here.
    We show below in \cref{notCompactRemark} that there is a counterexample to this statement
    even with $X$ Polish locally compact.
\end{remark}

\begin{thm}[{\cite[11.3]{TW16}}]\label{dlh}
    Let $\Gamma$ be a countable group and let $\ac a$ be a Borel action of $\Gamma$ on a standard Borel space $X$.
    Then the following are equivalent:
    \begin{enumerate}[label=(\roman*)]
        \item
            there is no invariant finitely additive Borel probability measure on $X$;
        \item
            there is a finitely paradoxical amplification of $\ac a$.
    \end{enumerate}
\end{thm}

We now prove \cref{thm-actionparadox}.
\begin{proof}[Proof of \Cref{thm-actionparadox}]
     We have already mentioned
     (in the paragraph preceding \cref{rmk-fincomp-notimp-finpara})
     the equivalence of \ref{item-actionparadox-comp} and \ref{item-actionparadox-para}).
    
    \ref{item-actionparadox-comp} $\implies$ \ref{item-actionparadox-finpara}:
    If $\ac a$ is compressible,
    then by Nadkarni's Theorem \Cref{nadkarni}
    it does not admit an invariant measure,
    so by \cref{fam} it does not admit an invariant finitely additive Borel probability measure.
    Then by \cref{dlh} some amplification of $\ac a$ is finitely paradoxical. 
    
    \ref{item-actionparadox-finpara} $\implies$ \ref{item-actionparadox-fincomp} is obvious.
    
    \ref{item-actionparadox-fincomp} $\implies$ \ref{item-actionparadox-comp}:
    Assume that for some $n$ the amplification $\ac a_n$ is finitely compressible but,
    towards a contradiction,
    $\ac a$ is not compressible.
    Then by Nadkarni's Theorem \Cref{nadkarni},
    $\ac a$ admits an invariant measure and thus so does $\ac a_n$,
    contradicting the compressibility of $\ac a_n$.
\end{proof}

\newcommand{\probactionparadox}{
    In \cref{thm-actionparadox},
    can one replace
    \ref{item-actionparadox-fincomp} by
    ``$\ac a$ is finitely compressible''
    and similarly for \ref{item-actionparadox-finpara}?
}
\begin{prob}\label{prob-actionparadox}
    \probactionparadox
\end{prob}

\begin{remark}
    It follows from \cref{thm-actionparadox} that for a continuous action $\ac a$ of a countable group on a compact Polish space, the property ``$\ac a$ has a finitely compressible (resp., finitely paradoxical) amplification'' is a property of the induced equivalence relation $E_{\ac a}$. More precisely, if $\ac a, {\ac b}$ are two continuous actions of groups $\Gamma, \Delta$ on compact metrizable spaces $X,Y$, resp.,  and $E_\ac a\cong_B E_{\ac b}$, i.e., $E_{\ac a}, E_{\ac b}$ are Borel isomorphic, then $\ac a$ admits a finitely compressible (reap., finitely paradoxical) amplification iff ${\ac b}$ admits a finitely compressible (reap., finitely paradoxical) amplification.
    In view of \cref{prob-actionparadox}, this may not be true for the property ``$\ac a$ is finitely compressible'' or ``$\ac a$ is finitely paradoxical''. In fact one way to try to decide \cref{prob-actionparadox} is to search for two continuous actions $\ac a,{\ac b}$  of countable groups $\Gamma, \Delta$ on a compact metrizable space $X$ with $E_{\ac a} = E_{\ac b}$, for which $\ac a$ is finitely compressible (or finitely paradoxical) but ${\ac b}$ is not.
\end{remark}

\begin{remark}\label{notCompactRemark}
    \cref{thm-actionparadox} fails if the space $X$ is not compact.
    In fact there are even counterexamples with $X$ Polish locally compact.
    Recall that an action of a group $\Gamma$ on a set $X$ is \textbf{amenable}
    if there is a finitely additive probability measure defined on all subsets of $X$ and invariant under the action.
    Any action of a countable amenable group is amenable.
    Take now $\Gamma$ to be a locally finite, infinite group and
    consider the left-translation action of $\Gamma$ on itself.
    This action is not finitely compressible.
    Let then $X = \cantor \times \Gamma$ (with $\Gamma$ discrete).
    This is Polish locally compact and $\Gamma$ acts on it continuously by the action $\ac a$
    given by $\gamma\cdot (x,\delta) = (x, \gamma\delta)$.
    This action is clearly compressible via the map $T(x, \gamma) = (x, f(\gamma))$,
    where $f \colon \Gamma \to \Gamma$ is an injection with $f(\Gamma) \neq \Gamma$,
    so \ref{item-actionparadox-comp} in \Cref{thm-actionparadox} holds.
    On the other hand,
    all amplifications $\ac a_n$ are amenable,
    so not finitely paradoxical and
    \ref{item-actionparadox-finpara} in \cref{thm-actionparadox} fails.
    Also all the actions $\ac a_n$ are not finitely compressible and
    \ref{item-actionparadox-fincomp} in \cref{thm-actionparadox} also fails. 
    
    In this counterexample the action $\ac a$ is smooth.
    One can find another counterexample where the action $\ac a$ is not smooth as follows:
    Let $\Gamma$ be as before and consider again the translation action of $\Gamma$ on itself.
    Let also $\Gamma$ act on $2^\Gamma$ by shift and consider the action $\ac b$ of
    $\Delta = \Gamma^2$ on $X = 2^\Gamma \times \Gamma$ given by
    $(\gamma, \delta) \cdot (x, \epsilon) = (\gamma \cdot x, \delta\epsilon)$.
    This action is not smooth and is compressible but it is also amenable,
    since the action of each factor of $\Delta$ is amenable on the corresponding space
    and therefore the action of $\Delta$ is amenable by taking the product of
    finitely additive probability measures witnessing the amenability of these two actions.
    (By the product of a finitely additive probability measure $\mu$ defined on all subsets of a set $A$
    and a finitely additive probability measure $\nu$ defined on all subsets of a set $B$,
    we mean the finitely additive probability measure $\mu \times \nu$ on $A \times B$
    defined by $\mu \times \nu(C) = \int_A \nu(C_x) \dd{\mu(x)}$.)
    Also all the actions $\ac a_n$ are not finitely compressible.
\end{remark}

\begin{prop}\label{prop-cberparadox}
    Let $E$ be a {\CBER} on a standard Borel space $X$.
    Then the following are equivalent:
    \begin{enumerate}[label=(\roman*)]
        \item \label{item-cberparadox-comp}
            $E$ is compressible.
        \item \label{item-cberparadox-para}
            $E$ is generated by a paradoxical Borel action.
        \item \label{item-cberparadox-fincomp}
            $E$ is generated by a finitely compressible Borel action.
        \item \label{item-cberparadox-finpara}
            $E$ is generated by a finitely paradoxical Borel action.
    \end{enumerate}
\end{prop}
\begin{proof}
    We have already seen that
    \ref{item-cberparadox-finpara}
    implies both
    \ref{item-cberparadox-fincomp}
    and 
    \ref{item-cberparadox-para},
    and that both of the latter imply
    \ref{item-cberparadox-comp}.
    So it remains to show that
    \ref{item-cberparadox-comp}
    implies
    \ref{item-cberparadox-finpara}.

    Note that being generated by a finitely paradoxical Borel action
    is closed upwards under $\subseteq_B$,
    since if $\ac a$ is a finitely paradoxical Borel action of some group $\Gamma$,
    then for every $F$ containing $E_{\ac a}$,
    if $\ac b$ is a Borel action of some group $\Delta$ generating $F$,
    then the free product action of $\Gamma * \Delta$
    is finitely paradoxical and generates $F$.
    
    So since $RI_\N$ is $\subseteq_B$-minimum for compressible CBERs,
    it suffices to show that $RI_\N$ is finitely paradoxical.
    Fix a transitive action of the free group on two generators $\F_2$ on $\N$
    which is finitely paradoxical.
    Then the induced action on $\R \times \N$
    is finitely paradoxical and generates $RI_\N$.
\end{proof}
 
\begin{remark}
    Ronnie Chen pointed out that
    \ref{item-actionparadox-finpara} $\implies$ \ref{item-actionparadox-para}
    in \cref{thm-actionparadox} can be also proved
    by using the cardinal algebra $K(E\times I_{\N})$ as in \cite{Che21}
    and the cancellation law for cardinal algebras.
\end{remark}

Recall also that a CBER $E$ admits an invariant measure iff \textit{some} Borel action of a countable group that generates $E$ has an invariant measure iff \textit{every} Borel action of a countable group that generates $E$ has an invariant measure (iff $E$ is not compressible). On the other hand, there are aperiodic CBER $E$ such that some Borel action of a countable group that generates $E$ has an invariant finitely additive Borel probability measure but some other Borel action of a countable group that generates $E$ has no invariant finitely additive Borel probability measure. For example, let $E= E_t$. There is a continuous action of $\F_2$ on $\cantor$ that generates $E$ (see the proof of \cref{3.9}) and this action has no invariant finitely additive Borel probability measure by \cref{fam}. On the other hand, $E_t$ is induced by a Borel action of $\Z$ and this action has in fact an invariant  finitely additive probability measure defined on all subsets of $\cantor$. 

However in view of \cref{prop-cberparadox}
we have the following equivalent formulation
of existence of invariant measures for a CBER:
\begin{prop}
    For every aperiodic {\CBER} $E$,
    $E$ admits an invariant measure iff every Borel action of a countable group that generates $E$
    admits an  invariant finitely additive Borel probability measure.
\end{prop}

\subsection{\nopunct}
The preceding results in \ref{realizations-compress-borel}
of \Cref{realizations-compress} can be generalized as follows.

Let $\Gamma$ be a countable group and let $\ac a$ be an action of $\Gamma$ on a set $X$.
Let also $\al$ be an algebra of subsets of $X$ invariant under this action.
For $A,B \in \al$, let $A \sim_{\al} B$ iff there are partitions $ A =\bigsqcup_{i=1}^n A_i, B =\bigsqcup_{i=1}^n B_i$,
where $A_i, B_i \in \al$, and $\gamma_i\in \Gamma$ such that $\gamma_i\cdot A_i = B_i$.
We say that the action is $\boldsymbol{\al}${\bf -finitely compressible}
if $X \sim_{\al}Y$, for some $Y\in \al$, with witnesses $X_i, Y_i, \gamma_i$ as above,
so that if $T \colon X\to X$ is such that $T(x) = \gamma_i \cdot x$, for $x\in X_i$,
then for every orbit $C$ of the action, $ T(C)\subsetneq C$.
Also the action is $\boldsymbol{\al}${\bf -finitely paradoxical}
if there is a partition $X=Y\sqcup Z$, with $Y, Z\in \al$ and $X\sim_{\al} Y\sim_{\al} Z$.
The concept of an invariant finitely additive probability measure on $\al$ is defined as before.

We extend the algebra $\al$ to an algebra ${\al}_{n}$ of subsets of $X\times [n]$ by letting $A\in { \al}_{n} \iff A =\bigcup^n_{i=1} A_i\times\{i\}$, where $A_i\in \al$. We say that $\ac a_n$ is $\al$-finitely compressible if it is ${\al}_{n}$-finitely compressible. Similarly we define what it means for $\ac a_n$ to be $\al$-finitely paradoxical.

We now have the following generalization of \cref{thm-actionparadox}:

\begin{thm}\label{thm-algparadox}
    Let $\ac a$ be a continuous action of a countable group $\Gamma$ on a compact Polish space $X$. Let $\al$ be an algebra of Borel subsets of $X$ which is invariant under the action and contains a basis for $X$.
    Then the following are equivalent:
    \begin{enumerate}[label=(\roman*)]
        \item
            there is no invariant finitely additive probability measure $\mu$ on $\al$;
        \item
            there is no invariant measure $\nu$;
        \item
            an amplification of $\ac a$ is $\al$-finitely compressible;
        \item
            an amplification of $\ac a$ is $\al$-finitely paradoxical.
    \end{enumerate}
\end{thm}
The proof of \cref{thm-algparadox} is similar to the proof of \cref{thm-actionparadox}
using the following generalizations of \cref{fam} and \cref{dlh}.

\begin{thm}[{\cite[5.3]{Tse15}}]\label{fam*}
    Let $\Gamma$ be a countable group and let $\ac a$ be  a continuous action of $\Gamma$ on a second countable Hausdorff space $X$. Let $\al$ be an algebra of subsets of $X$ which is invariant under the action and contains a basis for $X$ and a compact set $K$. If there is an invariant finitely additive probability measure $\mu$ on $\al$ with $\mu (K) >0$, then there is an invariant (Borel probability, countably additive) measure $\nu$.
\end{thm}

\begin{thm}[{\cite[11.3]{TW16}}]\label{dlh*}
    Let $\Gamma$ be a countable group and let $\ac a$ be an action of $\Gamma$ on a set $X$.
    Let $\al$ be an algebra of subsets of $X$ invariant under this action.
    Then the following are equivalent:
    \begin{enumerate}[label=(\roman*)]
        \item
            there is no invariant finitely additive probability measure on $\al$;
        \item
            there is a $\al$-finitely paradoxical amplification of $\ac a$.
    \end{enumerate}
\end{thm}

As a particular case of \cref{thm-algparadox} we have the following.
Let $\ac a$ be a continuous action of a countable group $\Gamma$
on a zero-dimensional compact Polish space $X$
(e.g., the Cantor space).
Let $\mathcal C$ be the algebra of clopen subsets of $X$.
Then the following are equivalent:

\begin{enumerate}[label=(\roman*)]
    \item
        there is no invariant finitely additive probability measure $\mu$ on $\mathcal C$;
    \item
        there is no invariant measure $\nu$;
    \item
        an amplification of $\ac a$ is $\mathcal C$-finitely compressible;
    \item
        an amplification of $\ac a$ is $\mathcal C$-finitely paradoxical;
    \item
        $\ac a$ is compressible;
    \item
        $\ac a$ is paradoxical;
    \item
        an amplification of $\ac a$ is finitely compressible;
    \item
        an amplification of $\ac a$ is finitely paradoxical.
\end{enumerate}

Thus,
rather surprisingly,
for a continuous action of a countable group on a zero-dimensional compact Polish space,
existence of a (countable Borel) paradoxical decomposition is equivalent to
the existence of an amplification with a finite paradoxical decomposition using Borel pieces
and also equivalent to the existence of an amplification
with a finite paradoxical decomposition using \textit{clopen} pieces.

\section{Turing and arithmetical equivalence}\label{realizations-turing}
Below let $\equiv_T$ denote {\bf Turing equivalence} and $\equiv_A$ {\bf arithmetical equivalence} on $\cantor$.

The following is an immediate consequence of \cref{3233},
since $\equiv_A$ is compressible and universal by \cite{MSS16}:
\begin{cor}
    Arithmetical equivalence $\equiv_A$ on $\cantor$ admits a compact action realization on $\cantor$.
\end{cor}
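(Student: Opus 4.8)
The plan is to read the statement off from three ingredients, two of which are quoted and one of which (the elementary facts about $\equiv_A$) is essentially trivial. As the paper indicates, the corollary is the combination of: (1) arithmetical equivalence $\equiv_A$ on $\cantor$ is a compressible, universal CBER, which is the content of \cite{MSS16}; (2) the strengthening of \cref{3233} carried out in \cref{3D} — equivalently \cref{3333} / Theorem~1.4(ii) — which asserts that \emph{every} compressible, universal CBER is Borel isomorphic to the shift equivalence relation on some \emph{minimal} subshift of $2^{\F_4}$; and (3) the uniqueness result \cite[Proposition~3.27(ii)]{CK18}, that up to Borel isomorphism there is exactly one compressible universal CBER. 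Putting these together: by (2) there is a minimal subshift $F$ of $2^{\F_4}$ whose induced equivalence relation $E_F$ is compressible and universal; by (1) and (3), $E_F\cong_B{\equiv_A}$; hence this minimal subshift of $2^{\F_4}$ is a compact action realization of $\equiv_A$, as desired.

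So at this point the only thing left to verify is the pair of elementary claims about $\equiv_A$, and both are already available: universality of $\equiv_A$ is the main theorem of \cite{MSS16}, and compressibility of $\equiv_A$ is also established there (or, via \cref{2.2}(ii), amounts to exhibiting a Borel copy of $\R I_\N$ inside $\equiv_A$). Everything substantive — that a compressible universal CBER can be realized by a \emph{minimal} subshift over a free group of finite rank — lives in \cref{3D} and is assumed here.

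The one point that deserves care, and that I expect is the genuine obstacle in the background, is the minimality. A universal subshift typically carries many invariant Borel probability measures and so is far from compressible, and one cannot repair this by crossing with $I_\N$ inside the subshift without destroying minimality. Hence the construction of \cref{3D} must secure universality \emph{and} the complete absence of an invariant Borel probability measure (equivalently, compressibility by Nadkarni's Theorem) simultaneously inside a single minimal subshift of $2^{\F_4}$; this is where the wreath-product machinery yielding minimal subshift universality of $\F_3$ (hence of $\F_4$), together with the explicit measure-free subshift construction underlying \cref{amenChar}, do the real work. For the present corollary that work is simply invoked, so the proof is indeed a one-line deduction once those results are in hand.
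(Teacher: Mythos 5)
Your proof is correct and is essentially the paper's own argument: the corollary is derived there by quoting that $\equiv_A$ is compressible and universal (by the cited work of Marks--Slaman--Steel) and then applying the ``in particular'' clause of \cref{3333}, which already packages the step you spell out explicitly, namely combining the existence of a compressible orbit-universal minimal subshift of $2^{\F_4}$ with the uniqueness up to Borel isomorphism of the compressible universal CBER from \cite[Proposition 3.27(ii)]{CK18}. Your closing remarks about why compressibility and minimality must be achieved simultaneously are accurate background on where the real work lies, but they are not needed for the deduction itself.
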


In fact, in \cref{3333}, we will see that it admits a realization which is a minimal subshift of $2^{\F_4}$.
On the other hand the following is open:

\newcommand{\probturingcompact}{
    Does Turing equivalence $\equiv_T$ on $\cantor$ admit a compact action realization?
}
\begin{prob}\label{prob-turingcompact}
    \probturingcompact
\end{prob}
A negative answer to this question would provide
a new proof of the failure of hyperfiniteness for $\equiv_T$ but,
much more importantly,
give a negative answer to the long-standing problem of the universality of $\equiv_T$,
see \cite{DK00}.

Concerning Turing equivalence, we know from \cref{BaireActionRealization} that it admits a continuous action realization on the Baire space $\baire$, i.e., that  there is a Borel isomorphism of $\cantor$ with $\baire$ which sends $\equiv_T$ to an equivalence relation induced by a continuous action of a countable group on $\baire$. We calculate below an upper bound for the  Baire class of such a Borel isomorphism. A version of the next theorem was first proved by Andrew Marks, in response to an inquiry of the authors, with ``Baire class 3" instead of ``Baire class 2''. The proof of \cref{t:main} below uses some of his ideas along with other additional arguments.

\begin{thm}
	\label{t:main}
	There exists a Baire class 2 bijection $\Phi \colon \cantor \to \baire$
	that is an isomorphism between $\equiv_T$
	and an equivalence relation given by a continuous group action on $\baire$.
\end{thm}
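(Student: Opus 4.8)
The plan is to realize $\equiv_T$ as the orbit equivalence relation of a continuous action of a countable group on $\baire$ by means of a complexity‑controlled refinement of the topology on $\cantor$. For each pair $(e,e')$ of indices of Turing functionals, let $f_{e,e'}$ be the partial injection $x\mapsto\Phi^x_e$ whose domain is the set of $x\in\cantor$ for which $\Phi^x_e$ is total, takes values in $\{0,1\}$, and satisfies $\Phi^{\Phi^x_e}_{e'}=x$. Two elementary observations drive everything: first, the $n$‑th bit of $\Phi^x_e$ depends on $x$ in a $\SIGMA^0_1$ way (run the machine until it halts), so each $f_{e,e'}$ is \emph{continuous} on its domain in the standard topology; second, the domain of $f_{e,e'}$ is a $\PI^0_2$ set, and the graphs of the $f_{e,e'}$ cover $\equiv_T$. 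Feeding this countable family of partial continuous injections into the Feldman--Moore construction produces a countable group $G$ acting in a Borel way on $\cantor$ with orbit equivalence relation exactly $\equiv_T$; the crucial point, discussed below, is that $G$ can be chosen so that each $g\in G$ acts \emph{piecewise continuously along a $\PI^0_2$ partition} of $\cantor$, i.e. so that for every clopen $U\subseteq\cantor$ the set $g^{-1}(U)$ is a countable union of sets of the form $(\textnormal{open})\cap(\PI^0_2)$, hence lies in $\SIGMA^0_3$ with respect to the standard topology.

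Granting this, fix a countable clopen basis $\mathcal B$ for $\cantor$ closed under complements, and let $\tau'$ be the topology on $\cantor$ generated by the standard topology together with $\{g^{-1}U:g\in G,\ U\in\mathcal B\}$. By the standard change‑of‑topology lemmas behind \cite[13.11]{Kec95}, $\tau'$ is a zero‑dimensional Polish topology with the same Borel sets, and since $G$ is a group the $G$‑action is automatically $\tau'$‑continuous, because $g^{-1}\bigcap_i h_i^{-1}U_i=\bigcap_i(h_ig)^{-1}U_i$ is again basic. Moreover every $\tau'$‑open set is a countable union of finite intersections of standard‑open sets and sets $g^{-1}U$, and since $\SIGMA^0_3$ is closed under finite intersections and countable unions, every $\tau'$‑open set belongs to $\SIGMA^0_3$ in the standard topology. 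Hence the identity map $(\cantor,\tau_{\mathrm{std}})\to(\cantor,\tau')$ pulls open sets back to $\SIGMA^0_3$ sets, so it is of Baire class $2$.

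It remains to recognize $(\cantor,\tau')$, up to a harmless modification, as $\baire$. As in the proof of \cref{baireActionRealization}, pass to the perfect kernel of $(\cantor,\tau')$ and then delete a countable $\tau'$‑dense subset; since every $\equiv_T$‑class is countable, the $\equiv_T$‑saturation of everything deleted is again countable and invariant, so deleting it changes $\equiv_T$ only by a smooth piece and hence not at all up to Borel isomorphism, while the remaining invariant set, with the topology inherited from $\tau'$, is a nonempty, zero‑dimensional, perfect Polish space in which every compact set has empty interior, hence homeomorphic to $\baire$ by \cite[7.7]{Kec95}, the $G$‑action carrying over continuously. Composing the Baire class $2$ identity map with this identification (and with the simple Borel isomorphism witnessing the absorption of the deleted invariant set) yields the claimed Baire class $2$ map $\Phi\colon\cantor\to\baire$ that is an isomorphism of $\equiv_T$ with a continuous $G$‑action on $\baire$.

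The real work --- and where Marks's Baire class $3$ argument must be improved --- is the middle claim that the Feldman--Moore group $G$ can be taken with each element acting piecewise continuously along a $\PI^0_2$, rather than $\PI^0_3$, partition. Applied carelessly, the Feldman--Moore construction subdivides the $\PI^0_2$ domains of the $f_{e,e'}$ through an induction that consults the entire $\equiv_T$‑class structure, which tends to raise complexity by one level and would only give $\SIGMA^0_4$ preimages of clopen sets, i.e. Baire class $3$. The additional arguments needed are to organize this subdivision economically --- for instance by choosing the matching pieces of domains and ranges so that they remain relatively clopen inside $\PI^0_2$ sets, or by replacing $G$ by a slightly different countable group with the same orbit equivalence relation --- so that no extra quantifier is incurred. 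By contrast, the perfect‑kernel and dense‑subset surgery at the end, and the verification that it can be carried out $\equiv_T$‑invariantly without harming the Baire class, are routine, using that Turing degrees are countable and that $\equiv_T$ is aperiodic and non‑smooth.
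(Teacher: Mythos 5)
There is a genuine gap, and you have in fact flagged it yourself: the claim that the Feldman--Moore group can be arranged so that each $g\in G$ pulls clopen sets back to $\SIGMA^0_3$ sets (rather than $\SIGMA^0_4$) \emph{is} the entire content of the improvement from Baire class $3$ to Baire class $2$, and your proposal leaves it as a programmatic remark (``organize this subdivision economically \dots or replace $G$ by a slightly different countable group''). Nothing in the proposal actually produces such a group, so the argument does not close. Note how the paper gets around this: \cref{l:easy} shows that any two $\equiv_T$-equivalent reals are already linked by a \emph{single} index $i$ for which $\phi^i$ is an involution on both, so Feldman--Moore is never invoked; the group is simply generated by the maps $\delta_i$, and the set on which $\delta_i$ acts nontrivially is made relatively clopen \emph{in the image} by pre-recording the relevant halting/totality data in the extra coordinates $\alpha,\beta$ of the explicit Baire class $2$ embedding $\Psi$ of \cref{p:main}. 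That device is what you are missing.

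A second, independent problem is the change-of-topology step. The topology generated by the standard topology together with the sets $g^{-1}U$ (each only $\boldsymbol{\Delta}^0_3$) is not automatically Polish, and \cite[13.11]{Kec95} does not assert that it is: for instance, the topology generated by $\tau\cup\{A,A^c\}$ for a $\PI^0_2$ set $A$ restricts on the $\SIGMA^0_2$ complement to a non-Polish subspace topology, so one must adjoin further sets to recover Polishness, and then re-close under the $G$-action, and so on. It is exactly in this back-and-forth that a quantifier is typically lost (which is presumably why the naive argument yields Baire class $3$). Your one-line appeal to the standard lemmas does not control the complexity of the open sets of the resulting Polish topology. The paper avoids the issue entirely by exhibiting $\Psi(\cantor)$ as a \emph{closed} subset of a fixed product space on which the $\delta_i$ are visibly continuous, so no topology refinement on the domain is ever performed; only the final passage to $\baire$ via a countable modification (\cref{l:bairespace}) remains, and that is harmless for the Baire class.
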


	The most natural construction of the isomorphism will yield \cref{p:main} below. We will show later that it in fact implies \cref{t:main}.

\begin{prop}
	\label{p:main}
	There exists a Baire class 2 map $\Psi$ that is an isomorphism between $\equiv_T$ on $\cantor$ and an equivalence relation given by a continuous group action on a 0-dimensional Polish space.
\end{prop}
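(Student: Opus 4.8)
The plan is to construct the isomorphism $\Psi$ explicitly by sending a real $x\in\cantor$ to a suitable "orbit code" living in a space where Turing equivalence becomes a continuous group action. The natural acting group is the group $S_\infty^{\mathrm{fin}}$ of finitely supported permutations of $\N$, or more conveniently a countable group built from Turing reductions; but the cleanest approach is to use the fact that $x\equiv_T y$ iff there are indices $e,e'$ with $\Phi_e^x=y$ and $\Phi_{e'}^y=x$. The idea is to let $\Psi(x)$ record, in a 0-dimensional Polish space, the pair consisting of $x$ itself together with enough bookkeeping data that the "relabel by a Turing reduction" maps act continuously. Concretely, I would take the target space to be (a closed or $G_\delta$ subspace of) a product $\baire\times\mathcal{K}$ where the second coordinate packages the graphs of all total functionals $\Phi_e^x$ that happen to be total on $x$, indexed so that the group of "swap the roles of $x$ and $\Phi_e^x$" operations acts by shifting coordinates. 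The group $\Gamma$ acting will be generated by involutions $\sigma_{e,e'}$ that, on a point coding a real $x$ with $\Phi_e^x$ total and $\Phi_{e'}^{\Phi_e^x}=x$, swap to the code of $\Phi_e^x$, and act trivially otherwise; one checks $\Gamma$ can be taken countable and its action generates exactly $\equiv_T$.

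The key steps, in order: (1) Fix a 0-dimensional Polish space $Z$ and a Borel injection $\Psi\colon\cantor\to Z$ whose image is the set of "consistent codes"; arrange that $Z\setminus\Psi(\cantor)$ contributes only classes we can absorb, or simply work on the invariant $G_\delta$ image. (2) Define the countable group $\Gamma$ and its action on $Z$ by the swap maps above, and verify the action is by homeomorphisms of $Z$ — this is where continuity must be checked, and the coding has to be chosen precisely so that the predicate "$\Phi_e^x$ is total and returns value $y$" becomes, after coding, a condition that is decided by longer and longer initial segments, making the swap maps continuous. (3) Verify that the induced equivalence relation on $\Psi(\cantor)$ is $\equiv_T$ transported along $\Psi$: every swap lands in the same Turing-degree, and conversely any $x\equiv_T y$ is connected by a finite chain of swaps since some $\Phi_e^x=y$ and some $\Phi_{e'}^y=x$. (4) Compute the Baire class of $\Psi$: the coordinates of $\Psi(x)$ that record $x$ are continuous, and the coordinates recording whether $\Phi_e^x$ is total and its values are $\Pi^0_2$-in-$x$ predicates with $\Sigma^0_2$ values, so $\Psi$ is Baire class 2; the point is that "totality of $\Phi_e^x$" is a $\Pi^0_2$ condition, hence each coordinate of $\Psi$ is of Baire class $\le 2$, and a countable product of Baire class $\le 2$ maps into a product space is Baire class $\le 2$. (5) Finally, if the image $\Psi(\cantor)$ is not all of $Z$, note it is invariant Borel and either pass to it directly (if it is $G_\delta$, it is a 0-dimensional Polish space), or enlarge to an invariant $G_\delta$ $Z'\supseteq\Psi(\cantor)$ on which the action is still continuous and apply \cref{315}-type absorption if the complement's restriction is hyperfinite compressible.

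The main obstacle I expect is step (2): making the swap maps $\sigma_{e,e'}$ genuinely \emph{continuous} on the whole space $Z$, not just Borel. The difficulty is that whether $\sigma_{e,e'}$ "fires" at a point depends on totality of a functional, which is only $\Pi^0_2$, so naively the map is discontinuous at the boundary. The fix — and this is the heart of the construction — is to build the code $\Psi(x)$ so that it does \emph{not} directly encode the bivalent predicate "$\Phi_e^x$ total or not," but rather encodes the \emph{monotone approximation}: for each $e$ and each stage $s$, the finite partial function $\Phi_{e,s}^{x\upharpoonright s}$, recorded as an element of a space of partial functions with the topology making "agrees on a finite domain" clopen. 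Then "$\Phi_e^x$ is total with value $y$" becomes a \emph{closed} condition in $Z$ (an intersection of clopen sets), the swap maps permute these approximation-coordinates in a way that is continuous, and the locus where a swap is nontrivial is clopen-in-the-closed-subspace. This is essentially Marks's idea refined; getting the space $Z$ and the group action right so that all the verifications (homeomorphism, generates $\equiv_T$, Baire class 2) go through simultaneously is the delicate bookkeeping, and I would organize it by first defining $Z$ abstractly as a space of "totality-consistent partial-functional-approximation arrays," proving it is a 0-dimensional Polish space, then checking the three properties in turn.
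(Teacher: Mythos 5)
Your overall architecture is the paper's: code $x$ together with computation data into a countable product of discrete spaces, let a countable group of ``swap by a Turing reduction'' maps act by selecting and permuting coordinates on a clopen firing locus, and read off Baire class $2$ from the $\Pi^0_2$-ness of totality. But the paragraph you flag as the heart of the construction resolves the continuity obstacle in exactly the wrong direction (and contradicts your own step (4), which does posit coordinates recording totality). If the code records only the monotone approximations $\Phi_{e,s}^{x\uhr s}$ --- equivalently, for each input, the halting time or $*$ --- then ``$\Phi_e^x$ is total'' is a countable intersection of clopen conditions on the code, hence closed but \emph{not} open there; worse, the image $\Psi(\cantor)$ is then not closed, since one can have $x_k\to x$ with $\Phi_e$ non-total on every $x_k$ (failure points $m_k\to\infty$) but total on $x$, so the codes converge to a point whose approximation-coordinates assert totality while its value-coordinates are all $*$, which is not a valid code. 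A swap map that is the identity off a closed, non-open firing locus is discontinuous at boundary points of that locus. The paper's fix is the opposite of yours: it \emph{does} include, as a literal discrete-valued coordinate of the code, the bivalent answer to the totality/involution question (the map $\alpha$), which makes the firing locus clopen in the ambient product, hence relatively clopen in the image, and simultaneously forces the image to be closed (a convergent sequence of codes must have eventually constant $\alpha$-coordinates). That coordinate is exactly what is Baire class $2$ as a function of $x$; the approximation data (the paper's $\beta$, which is only Baire class $1$) is auxiliary information used to verify closedness of the image, not a substitute for $\alpha$.

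A second, smaller gap: your generators $\sigma_{e,e'}$, which fire at $x$ when $\Phi_e^x$ is total and $\Phi_{e'}^{\Phi_e^x}=x$ and act as the identity otherwise, are not bijections --- the firing condition is not symmetric in $x$ and $y=\Phi_e^x$ (nothing guarantees $\Phi_e^y$ is even total), so $x$ and $y$ can both be sent to $y$. The paper's \cref{l:easy} is the missing ingredient: any $x\equiv_T y$ are linked by a \emph{single} index $i$ with $\phi^i(x)=y$ and $\phi^i(y)=x$, so one can take one generator $\delta_i$ per index, firing exactly when $\phi^i\circ\phi^i$ agrees with the identity on the given point --- a condition that is preserved under applying $\phi^i$, which makes $\delta_i$ a genuine involution and still suffices to generate all of $\equiv_T$.
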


\begin{proof}
	Let $\phi^i$ denote the partial function computed by the $i$th Turing machine, in some recursive enumeration of all the Turing machines. That is, we consider Turing machines with oracle and input tapes, and $\phi^i(x)=y$ iff for each $n$ the $i$th Turing machine with oracle $x$ and input $n$ halts with the output $y(n)$.
	
	We start with an easy observation. Below, for $s\in 2^{<\N}$, put $[s] = \{x\in \cantor : s\subseteq x\}$.
	\begin{lem}
		\label{l:easy}
		Assume that $x\equiv_T y$. There exists an $i$ with $\phi^i(x)=y$ and $\phi^i(y)=x$.
	\end{lem}
	\begin{proof}
		We can assume that $x \neq y$. Pick $j,k \in \mathbb{N}$ with $\phi^j(x)=y$ and $\phi^k(y)=x$, and $n$ with $\restriction{x}{n} \neq \restriction{y}{n}$. Then, an $i$ with $ \restriction{\phi^i}{[\restriction{x}{n}]}=\restriction{\phi^j}{[\restriction{x}{n}]}$ and $\restriction{\phi^i}{[\restriction{y}{n}]}=\restriction{\phi^k}{[\restriction{y}{n}]}$ clearly works.
	\end{proof}

	The idea is to define a coding function $\Psi=(\alpha,\beta,\gamma)$ that will serve as an isomorphism. The crucial property of $\Psi(x)$ is that $\alpha$ encodes for each $i$ whether $\phi^i$ is an involution on $x$ (and does this for every $y \equiv_T x$), $\beta$ will ensure that $\Psi$ is continuous, while $\gamma$ will be the identity, in order to encode $x$.

	Let us now give the precise definitions. Fix a function $\iota \colon \mathbb{N}^2 \to \mathbb{N}$ such that for each $i,j \in \mathbb{N}$ we have $\phi^{\iota(i,j)}=\phi^i \circ \phi^j$. 
	
	Let $\beta \colon \cantor \to (\N\cup\{*\})^{\N^3}$ be defined by $\beta(x)(i,j,m)=n$, if $n$ is least such that both the $i$th and the $j$th Turing machines with oracle $x$ and input $m$ halt with the same output in at most $n$ steps, and let $\beta(x)(i,j,m)=*$, if such an $n$ does not exist.
	
	Define a map $\alpha \colon \cantor \to \N^{\N^2}$ by letting \[\alpha(x)(i,j)=0 \iff \beta(x)(i,j) \in \baire,\]
	and
	\[\alpha(x)(i,j)=m+1 \iff \text{$m$ is least with }\beta(x)(i,j,m)=*.\]
	
	Finally, let $\Psi(x)=(\alpha(x),\beta(x),x)$. Let us denote the space $\N^{\N^2} \times (\N\cup\{*\})^{\N^3} \times \cantor$ by $X$, where $\N \cup \{*\}$ is endowed with the discrete topology.

	\begin{lem}
		$\Psi(\cantor)$ is closed in $X$.
	\end{lem}
	\begin{proof}
		Assume that $(\alpha(x_k),\beta(x_k),x_k)_k$ is a convergent sequence, and let $x$ be the limit of $(x_k)_k$. Take any $i,j,m \in \N$. It is clear from the definition of $\beta$ that $\beta(x)(i,j,m)=n$ holds for some $n \in \N$ if and only if $\beta(x_k)(i,j,m)=n$ is true for every large enough $k$. This shows that $\beta(x_k) \to \beta(x)$. Using this, it is easy to check that $\alpha(x_k) \to \alpha(x)$ holds as well.
	\end{proof}
     
	For $i \in \mathbb{N}$ define a map $\delta_{i}$ from $\Psi(\cantor)$ to itself as follows: 
	\[\delta_{i}(\Psi(x))=\Psi(x) \text{ if } \alpha(x)(\iota(i,i),0) \neq 0,\] otherwise \[\delta_{i}(\Psi(x))=\Psi(\phi^i(x)).\]

	\begin{lem}
		The maps $(\delta_{i})_{i \in \N}$ are $\Psi(\cantor) \to \Psi(\cantor)$ homeomorphisms. 
		
	\end{lem}
\begin{proof}
	Fix $i \in \mathbb{N}$. First we check that $\delta_i$ is continuous on the set $S=\{\Psi(x):\alpha(x)(\iota(i,i),0)=0\}$. For each $i',j',m \in \mathbb{N}$ we have that
	\[\delta_{i}(\alpha(x),\beta(x),x)(0)(i',j')=(\alpha(x))(\iota(i',i),\iota(j',i)),\]
	\[\delta_{i}(\alpha(x),\beta(x),x)(1)(i',j',m)=(\beta(x))(\iota(i',i),\iota(j',i),m),\]
    thus, $\delta_i$ selects and permutes some of the coordinates of $\Psi(x)$. Moreover, if $\Psi(x) \in S$, as $\Psi$ is injective, we have $\alpha(x)(\iota(i,i),0)=0$. Therefore, $x \in dom(\phi^i)$ and as $\phi^i$ is continuous on its domain, this shows the continuity of $\delta_i$. 

    Note that the set $S$
	is relatively clopen and on $\Psi(\cantor) \setminus S$ the function $\delta_i$ is the identity, showing that $\delta_i$ is continuous on the entire $\Psi(\cantor).$
 
	Finally, it follows from the definition of $\alpha$ and $\delta_i$ that $\delta_{i}(\delta_{i}(\Psi(x)))=\Psi(x)$ holds for each $x$: indeed, $\Psi^{-1}(S)$ is the collection of $x \in \cantor$ for which $\phi^i \circ \phi^i(x)=x$. Hence, $\delta_i$ is a continuous involution $\Psi(\cantor) \to \Psi(\cantor)$.
\end{proof}

Let $E_\Delta$ be the equivalence relation on $\Psi(\cantor)$ generated by the maps $\{\delta_i:i \in \N\}$. 

\begin{lem}
	$\Psi$ is an isomorphism between $\equiv_T$ and $E_\Delta$.
\end{lem}
\begin{proof}
	First, it is clear from the definition of $\delta_i$ that $\delta_i(\Psi(x))=\Psi(y)$ implies that $x \equiv_T y$. So $\Psi^{-1}$ is a homomorphism.

    Second, assume that $x \equiv_T y$. Then by Lemma \ref{l:easy} there exists an $i$ with $\phi^i(x)=y$ and $\phi^i(y)=x$. Then $\alpha(x)(\iota(i,i),0)=0$, so $\delta_i(\Psi(x))=\Psi(\phi^i(x))=\Psi(y)$, so $\Psi(x)E_\Delta\Psi(y)$.
\end{proof}

Now we turn to the calculation of the complexity of the map $\Psi$. 
	\begin{lem}
	\label{l:complexity}
	 The map $\beta$ is Baire class $1$ and the map
	$\alpha$ is Baire class $2$.
	Consequently, the map $\Psi$ is Baire class $2$.
\end{lem}
\begin{proof}
	For $\beta$, take any $i,j,m \in \N$. Then for each natural number $n$, the set $\{x:\beta(x)(i,j,m)=n\}$ is open. Thus, the set $\{x:\beta(x)(i,j,m)=*\}$ is closed. This shows that $\beta$ preimages of basic clopen sets are $\mathbf{\Delta}^0_2$. 
	
	For $\alpha$, for a given $i$, the set $\{x:\alpha(x)(i,j)=0\}=\{x:\forall m (\beta(x)(i,j,m) \in \N)\}$ is $\mathbf{\Pi}^0_2$, and also, for $m \neq 0$ we have that 
	$\{x:\alpha(x)(i,j)=m\}=\{x:\forall m' < m \ (\beta(x)(i,j,m') \in \N \mathrel\& \beta(x)(i,j,m)=*)\}$, which shows that these sets are $\mathbf{\Pi}^0_2$ as well, and thus $\alpha$ is indeed Baire class $2$. 
\end{proof}
	
	This completes the proof of \cref{p:main}
	\end{proof}
	In order to finish the proof of Theorem \ref{t:main} we need a last observation.
	
	\begin{lem}
		\label{l:bairespace} Assume that $\Gamma$ acts continuously on an uncountable and zero-dimensional Polish space $X$, so that the induced equivalence relation $E_\Gamma^X$ is aperiodic. Then there exist an invariant under the action set $X' \subseteq X$ that is homeomorphic to $\baire$, and an isomorphism $\varphi$ between $E_\Gamma^X$ and $\restriction{E_\Gamma^X}{X'}$ that moves only countably many points. 
	\end{lem}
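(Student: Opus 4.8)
The plan is to produce $X'$ by deleting a carefully chosen countable invariant set from $X$, arranged so that what remains is homeomorphic to $\baire$, and then to absorb the (countably many) equivalence classes that were thrown away by a Hilbert-hotel style rearrangement that disturbs only countably many points. This closely follows the argument for \cref{baireActionRealization}, with the extra refinement that the isomorphism be the identity off a countable set.

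First I would pass to a subspace homeomorphic to $\baire$. Let $P$ be the perfect kernel of $X$; it is a nonempty (since $X$ is uncountable), $0$-dimensional, perfect Polish space, and it is $\Gamma$-invariant because the perfect kernel is topologically canonical and $\Gamma$ acts by homeomorphisms. Fix a countable dense set $D_0\subseteq P$ and set $D=(X\setminus P)\cup\Gamma\cdot D_0$, so that $D$ is countable and invariant while $\Gamma\cdot D_0$ is still dense in $P$. Put $X'=X\setminus D=P\setminus\Gamma\cdot D_0$; this is invariant. A routine verification shows $X'$ is a nonempty $0$-dimensional perfect Polish space in which every compact set has empty interior: if a compact $K\subseteq X'$ had nonempty interior, it would contain $U\setminus\Gamma\cdot D_0$ for some nonempty open $U\subseteq P$, and since $U\setminus\Gamma\cdot D_0$ is co-countable (hence dense) in $U$ and $K$ is closed in $P$, we would get $U\subseteq K$, contradicting the density of $\Gamma\cdot D_0$. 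By the Alexandrov--Urysohn characterization (\cite[7.7]{Kec95}), $X'$ is homeomorphic to $\baire$, and the restriction of the action gives a continuous $\Gamma$-action on $X'$ inducing $\restriction{E_\Gamma^X}{X'}$.

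Next I would construct $\varphi$. Since $D$ is invariant and countable, it is a union of at most countably many $E_\Gamma^X$-classes; list them $C_0,C_1,\dots$ (all countably infinite, by aperiodicity). Since $X'$ is uncountable with all classes infinite, I recursively pick $z_0,z_1,\dots\in X'$ lying in pairwise distinct classes and set $D_i=[z_i]_{E_\Gamma^X}$. Define $\varphi\colon X\to X'$ to be the identity on $X'\setminus\bigsqcup_i D_i$, to carry $D_i$ onto $D_{2i}$ by a fixed bijection, and to carry $C_i$ onto $D_{2i+1}$ by a fixed bijection; these bijections exist as all sets involved are countably infinite. Each piece is Borel (a bijection between countable Borel sets has Borel graph), so $\varphi$ is a Borel bijection of $X$ onto $X'$, hence a Borel isomorphism; on the level of classes it redistributes the $C_i$ and the old $D_i$ among the $D_j$ and fixes every other class, so it bijects $X/E_\Gamma^X$ with $X'/\!\restriction{E_\Gamma^X}{X'}$ and is thus an isomorphism of $E_\Gamma^X$ with $\restriction{E_\Gamma^X}{X'}$. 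It moves only the points of $\bigsqcup_i C_i\cup\bigsqcup_i D_i$, a countable set.

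There is no serious obstacle here; the only points needing care are the verification that $X'$ satisfies the Alexandrov--Urysohn criterion (the density computation sketched above) and the observation that aperiodicity of $E_\Gamma^X$ is exactly what is needed twice over --- to know that each discarded class is countably infinite so that the bijections $C_i\to D_{2i+1}$ exist, and to know that $X'$ still carries infinitely many classes from which to choose the $D_i$. The rest is bookkeeping.
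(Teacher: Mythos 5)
Your proof is correct and follows essentially the same route as the paper, whose proof of this lemma is simply a reference back to the argument for \cref{baireActionRealization}: pass to the perfect kernel, delete a countable dense invariant set, and invoke the Alexandrov--Urysohn characterization of $\baire$. The only thing you add is the explicit Hilbert-hotel bookkeeping showing the discarded classes can be absorbed by a Borel bijection moving only countably many points, which is exactly the (routine) refinement the paper leaves implicit.
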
	
	\begin{proof}
	As in the proof of \cref{BaireActionRealization}.
	\end{proof}
	
\begin{proof}[Proof of Theorem \ref{t:main}]
    By Proposition \ref{p:main} there exists a Baire class $2$ isomorphism between $\equiv_T$ and some equivalence relation of the form $E_\Gamma^X$, where $\Gamma$ acts continuously on a zero-dimensional Polish space $X$. Applying Lemma \ref{l:bairespace} we get an isomorphism with an equivalence relation on the Baire space. Moreover, as countable modifications of Baire class $2$ functions do not change their class, we are done.
\end{proof}

We do not know if the complexity of the Borel isomorphism in \cref{t:main} is optimal.

\newcommand{\probturingbc}{
    Is there a Baire class $1$ bijection that is an isomorphism between
    $\equiv_T$ and an equivalence relation given by a continuous group action on $\baire$?
}
\begin{prob}\label{prob-turingbc}
    \probturingbc
\end{prob}

For the bijection $\Phi$ that was constructed in the proof of \cref{t:main},
it is easy to see that $\Phi (x) \equiv_T x''$,
since $x''$ can be easily computed from the map $\alpha$ defined in the proof of \cref{p:main}.
Similar to the problem \cref{prob-turingbc},
we have the following,
where $x \le_T y$ iff $x$ is recursive in $y$:

\newcommand{\probturingjump}{
    Is there a Borel bijection $\Phi \colon \cantor \to \baire$
    that is an isomorphism between $\equiv_T$ and an equivalence relation given by
    a continuous group action on $\baire$ such that $\Phi (x) \le_T x'$ on a cone?
}
\begin{prob}\label{prob-turingjump}
    \probturingjump
\end{prob}

On the other hand we have the following result:
\begin{prop}
	There is no Borel map $\Phi \colon \cantor \to \baire$
    that is an isomorphism between $\equiv_T$ and
    an equivalence relation given by a continuous group action on $\baire$ such that
    $\Phi(x) \le_T x$ on a cone.
\end{prop}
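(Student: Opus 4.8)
The plan is to argue by contradiction, combining Martin's theorem on degree‑invariant Borel functions with the elementary fact that an orbit equivalence relation of a continuous action of a countable group is $F_\sigma$. So suppose $\Phi\colon\cantor\to\baire$ is a Borel isomorphism of $\equiv_T$ with $E=E^X_\Gamma$, where the countable group $\Gamma$ acts continuously on $X=\baire$, and $\Phi(x)\le_T x$ for all $x$ in some cone. Two observations about $E$ are the backbone. First, since $\Gamma$ is countable and acts by homeomorphisms, $E=\bigcup_{\gamma\in\Gamma}\{(y,\gamma\cdot y):y\in\baire\}$ is a countable union of graphs of continuous functions, hence $F_\sigma$. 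Second, each homeomorphism $y\mapsto\gamma\cdot y$ is recursive in some real, and coding these together over the countably many $\gamma$ yields a single real $z$ with $\gamma\cdot y\le_T y\oplus z$ uniformly; applying $\gamma^{-1}$ as well, $y\mathbin{E}y'$ implies $y\oplus z\equiv_T y'\oplus z$, so every $E$-class lies inside a single $z$-Turing degree.

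By the second observation, if $x\equiv_T x'$ then $\Phi(x)\mathbin{E}\Phi(x')$, whence $\Phi(x)\oplus z\equiv_T\Phi(x')\oplus z$; therefore $\psi(\deg_T x):=\deg_T(\Phi(x)\oplus z)$ is a well‑defined degree‑invariant Borel function on the Turing degrees, and on the intersection of the given cone with the cone above $z$ one has $\psi(\mathbf a)\le\mathbf a$. By Martin's theorem there is a cone on which $\psi$ is either constant or equal to the identity. In the constant case, $\psi\equiv\deg_T d_0$ on a cone $\mathcal C_1$ for a fixed real $d_0$, so $\Phi(x)\le_T d_0$ for every $x\in\mathcal C_1$; but then $\Phi$ maps the uncountable set $\mathcal C_1$ into the countable set $\{y:y\le_T d_0\}$, contradicting injectivity of $\Phi$. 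It remains to rule out the identity case, i.e.\ $\Phi(x)\oplus z\equiv_T x$ on a cone.

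The identity case is where the real work lies, and I expect it to be the main obstacle. The intended route: relativizing everything to $z$ (harmless, since this changes neither the $F_\sigma$‑ness of $E$ nor the descriptive complexity of $\equiv_T$), the action becomes recursive, $E$ refines $\equiv_T$, and $\Phi$ becomes a homomorphism of $\equiv_T$ to itself which is the identity on degrees on a cone $\mathcal C_1$. One then tries to isolate a Borel, $E$‑invariant set $B$ — a suitable portion of $\Phi(\mathcal C_1)$ — on which $E$ and $\equiv_T$ coincide and which still meets every Turing degree above the base of $\mathcal C_1$ (in a full $E$‑class). Then $E\upharpoonright B$, being $F_\sigma$ in $B\times B$ by the first observation, would force $\equiv_T\upharpoonright B$ to be $F_\sigma$; but $\equiv_T$ is genuinely $\SIGMA^0_3$‑complete, and so is its restriction to such a $B$ — via a continuous reduction of the form $(x,y)\mapsto(x\oplus b,y\oplus b)$, composed with a Borel choice of representatives inside $B$ — hence not $F_\sigma$, a contradiction.

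The delicate point, and the reason the identity case is genuinely harder than the constant case, is that $\Phi$ is only Borel — continuous off a meager set — while the cone on which $\Phi(x)\equiv_T x$ is itself meager, so the descriptive complexity needed for the final contradiction must be read off the equivalence‑relation structure directly rather than from any continuity of $\Phi$, and in particular one must arrange the $\SIGMA^0_3$‑hardness reduction to land inside the concrete set $B$ rather than in an abstractly prescribed cone.
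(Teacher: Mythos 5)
Your reductions up to the case split are sound: $E$ is indeed $F_\sigma$, each $E$-class is contained in a single $z$-Turing degree for a real $z$ coding the action, and the constant case of the Slaman--Steel theorem is correctly dispatched by injectivity of $\Phi$. But the identity case, which you yourself flag as the main obstacle, is where the proof actually lives, and your sketch for it does not close. Your plan is to exhibit a Borel set $B$ meeting every degree in a cone on which $\equiv_T$ (relativized to $z$) agrees with the $F_\sigma$ relation $E$, and then to contradict the $\boldsymbol{\Sigma^0_3}$-hardness of $\equiv_T$. Two things go wrong. First, the hardness would have to be witnessed by a reduction landing inside $B$, and your proposed reduction uses ``a Borel choice of representatives inside $B$''; a Borel (rather than continuous) reduction into a relatively $F_\sigma$ subset of a Borel set yields no contradiction, since Borel preimages of $F_\sigma$ sets are merely Borel. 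Second, and more fundamentally, without any continuity of $\Phi$ (or of $\Phi^{-1}$) on a set that faithfully represents the Turing degrees, there is no mechanism for transferring the low complexity of $E$ back to $\equiv_T$: the assertion ``there is a Borel set meeting every degree in a cone on which $\equiv_T$ is relatively $F_\sigma$'' is not an evident absurdity, and you give no argument against it.

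The missing idea is Martin's pointed perfect tree machinery, which the paper applies directly to the hypothesis $\Phi(x)\le_T x$ on a cone, bypassing Slaman--Steel entirely. From that hypothesis one first gets a pointed perfect tree $T$ with $\Phi(x)\le_T x$ for all $x\in[T]$, and then a pointed perfect subtree $S\subseteq T$ and a single index $i$ such that $\Phi(x)=\varphi^i(x)$ for all $x\in[S]$; hence $\Phi$ is \emph{continuous} on the closed set $[S]$. Now the complexity transfer is legitimate: $x\equiv_T y\iff\Phi(x)\mathrel{E}\Phi(y)$ shows that $\equiv_T$ restricted to $[S]$ is $\boldsymbol{\Sigma^0_2}$, and the canonical homeomorphism of $\cantor$ with $[S]$ preserves Turing degrees of reals computing $S$, so some single Turing degree would be $\Sigma^0_2$ in one of its members --- contradicting the fact (via the relativized Shoenfield basis theorem and Baire category) that no Turing degree is $\Pi^0_3$ in a member. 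To repair your argument, replace the entire identity-case sketch by this uniformization step; it is exactly the tool that converts ``$\Phi(x)\le_T x$ on a cone'' into continuity on a degree-faithful closed set, which is what your complexity argument needs.
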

\begin{proof}
    Recall that a {\bf pointed perfect tree} is a perfect binary tree $S\subseteq 2^{<\N}$ such that $x \in [S] \implies S \le_T x$, where $[S] \subseteq \cantor$ is the set of infinite branches of $S$. Below we will use certain properties of pointed perfect trees due to Martin, whose proofs can be found, for example, in \cite{Kec88}. 
    
    Assume that $\Phi (x)  \le_T  x$ on a cone, towards a contradiction. Then by \cite[Theorem 1.3]{Kec88} there is a perfect pointed tree $T$ such that $x\in [T]\implies \Phi (x)  \le_T  x$. Then by \cite[Lemma 1.4]{Kec88}, there is a perfect pointed subtree $S\subseteq T$ and $i\in \N$ such that if $x\in [S]$, then $\varphi^i (x)$ is defined and $\varphi^i (x) = \Phi (x)$. Thus $\Phi$ is continuous on $[S]$. It follows that $\equiv_T $ restricted to $[S]$ is $\boldsymbol{\Sigma^0_2}$. Let now $\Psi$ be the canonical homeomorphism of $\cantor$ with $[S]$, so that $\Psi (x) \equiv_T x$, if $S \le_T x$. It follows that for $S \le_T x, y$, we have $x\equiv_T y \iff \Psi (x) \equiv_T \Psi (y)$, thus, in particular, for some $z\in \cantor$, the Turing degree of $z$, i.e., the set $\{w\in \cantor : w\equiv_T z\}$ is $\Sigma^0_2 (z)$. This is false in view of the following well-known fact:
    
    \begin{lem}
        For any $z \in \cantor$, the Turing degree of $z$ is in $\Sigma^0_3(z)$ but not in $\Pi^0_3(z)$.
    \end{lem}
    \begin{proof}
        It is easy to check that the Turing degree of $z$ is $\Sigma^0_3(z)$.
        Assume now that it is in $\Pi^0_3(z)$, towards a contradiction.
        Then if $A = \{w \in \cantor : w \le_T z\}$,
        we have that $A$ is also $\Pi^0_3(z)$,
        since $w \in A \iff \ev{w,z} \equiv_T z$.
        But then $\cantor \setminus A$ is a comeager $\Sigma^0_3(z)$ set,
        so by the relativized version of the basis theorem of Shoenfield \cite{Sho58},
        it contains a recursive in $z$ real, a contradiction.
    \end{proof}
\end{proof}

\chapter{Subshifts}\label{subshifts}

\section{Realizations by subshifts}\label{subshifts-realizations}

\textit{In this and the next two sections,
unless it is otherwise stated or clear from the context,
we assume that all groups are countable}.

Let $\Gamma$ be a group.

Given a countable $\Gamma$-set $L$ and a compact Polish space $X$,
denote the $\Gamma$-flow $X^L$ by $\ac s_{L, X}$.
In particular,
$\ac s_{\Gamma, X}$ is the shift action of $\Gamma$,
where $\Gamma$ acts on itself by left multiplication.

For $\Gamma$-flows $\ac a$ and $\ac b$ on $X$ and $Y$ respectively,
a $\boldsymbol\Gamma$-{\bf map} $\ac a\to \ac b$ is
a $\Gamma$-equivariant continuous function $X\to Y$.
Let $\Hom_\Gamma(\ac a, \ac b)$ denote the set of
$\Gamma$-maps $\ac a\to \ac b$.

Below for any action $\ac a$, we denote by $X_{\ac a}$ the space of $\ac a$ and by $E_{\ac a}$ the induced orbit equivalence relation.

In \ref{subshifts-realizations-coind}
and \ref{subshifts-realizations-jumps}
of \Cref{subshifts-realizations},
we will develop some properties of flows that we will use in \cref{3333}
to prove a strong realization result for compressible,
universal CBER in terms of subshifts.

\subsection{Coinduction and generators}\label{subshifts-realizations-coind}
Let $\Gamma\le\Delta$ be groups.
Given a $\Delta$-flow $\ac b$,
we denote the $\Gamma$-restriction of $\ac b$ by $\ac b\uhr_\Gamma$.

Given a $\Gamma$-flow $\ac a$ on $X$,
the \textbf{coinduced $\Delta$-flow} of $\ac a$,
denoted by $\CInd_\Gamma^\Delta(\ac a)$,
is the $\Delta$-subflow of $\ac s_{\Delta, X}$ on the subspace
\[
    \{ x\in X^\Delta :
    \forall \gamma\in\Gamma\,
    \forall \delta\in\Delta\,
    [x_{\delta\gamma} = \gamma^{-1}\cdot x_\delta]
    \}.
\]
In particular,
$\ac s_{\Delta, X}$ is isomorphic to $\CInd_1^\Delta(\ac s_{1,X})$,
where $1$ is the trivial group
(note that $\ac s_{1, X}$ is the $1$-flow on $X$).

There is a natural bijection
\[
    \Hom_\Delta(\ac b, \CInd_\Gamma^\Delta(\ac a))
    \cong \Hom_\Gamma(\ac b\uhr_\Gamma, \ac a)
\]
taking $f$ to the map $y\mapsto (f(y))_1$.

Let $\ac a$ and $\ac b$ be flows of $\Gamma$ and $\Delta$ on $X_{\ac a}$, $X_{\ac b}$, respectively.
A $\Gamma$-map $f \colon \ac b\uhr_\Gamma\to \ac a$
is an \textbf{$\ac a$-generator} for $\ac b$
if its corresponding $\Delta$-map
$\ac b \to \CInd_\Gamma^\Delta(\ac a)$ is injective.
Explicitly,
$f$ is an $\ac a$-generator for $\ac b$
if for every $x, x'\in X_{\ac a}$,
if $f(\delta\cdot x) = f(\delta\cdot x')$
for every $\delta$,
then $x = x'$.

We note the following facts:

\begin{enumerate}[label=(\arabic*)]
    \item \label{item-clopengenerator}
        Let $\ac a$ be a $\Gamma$-flow on $X$,
        and let $n \ge 2$.
        Considering $n$ as a discrete space,
        an $\ac s_{1, n}$-generator for $\ac a$
        coincides with the usual notion of a
        \textbf{clopen $n$-generator} for $\ac a$,
        that is,
        a partition $(A_i)_{i < n}$ of $X$ into clopen sets
        such that for every $x, x'\in X$,
        if for every $\gamma\in\Gamma$ and every $i < n$
        we have
        \[
            \gamma\cdot x \in A_i
            \iff \gamma\cdot x' \in A_i,
        \]
        then $x = x'$.
        Equivalently $\ac a$ admits a clopen $n$-generator
        iff it is (topologically) isomorphic to a subshift of $n^\Gamma$.
    \item
        Every injective $\Gamma$-map $\ac b\uhr_\Gamma\hra \ac a$
        is an $\ac a$-generator for $\ac b$.
    \item
        If $\ac b = \CInd_\Gamma^\Delta(\ac a)$,
        then the map $y\mapsto y_1$ is an $\ac a$-generator for $\ac b$,
        since it corresponds to the identity on $\CInd_\Gamma^\Delta(\ac a)$.
    \item
        Let $\Gamma\le\Delta\le\Lambda$ be groups
        with flows $\ac a$, $\ac b$ and $\ac c$ respectively.
        If $\ac c$ has a $\ac b$-generator $f$,
        and $\ac b$ has an $\ac a$-generator $g$,
        then the composition $f\circ g$
        is an $\ac a$-generator for $\ac c$.
        To see this,
        let $x, x'\in X_{\ac a}$
        and suppose that
        $f(g(\lambda\cdot x)) = f(g(\lambda\cdot x'))$
        for every $\lambda\in \Lambda$.
        Then for every $\delta\in \Delta$ and every $\lambda\in \Lambda$,
        we have $f(\delta\cdot g(\lambda\cdot x))
        = f(\delta\cdot g(\lambda\cdot x'))$.
        Thus since $f$ is a generator,
        we have $g(\lambda\cdot x) = g(\lambda\cdot x')$
        for every $\lambda \in \Lambda$.
        Since $g$ is a generator,
        we have $x = x'$.
\end{enumerate}

Below we call a flow {\bf compressible} iff the induced equivalence relation is compressible.
Equivalently by Nadkarni's Theorem \Cref{nadkarni},
this means that the flow admits no invariant Borel probability measure.

\begin{prop}\label{liftProps}
    Let $\Gamma\le\Delta$ be groups,
    let $\ac a$ be a $\Gamma$-flow,
    and let $\ac b$ be a $\Delta$-flow.
    \begin{enumerate}[label=(\arabic*)]
        \item
            Suppose there is a $\Gamma$-map $\ac b\uhr_\Gamma\to \ac a$.
            If $\ac a$ is compressible,
            then $\ac b$ is compressible.
        \item
            Suppose $\ac b$ has an $\ac a$-generator.
            If $\ac a$ has a clopen $n$-generator,
            then $\ac b$ has a clopen $n$-generator.
        \item
            Suppose $\ac b\uhr_\Gamma = \ac a$.
            If $\ac a$ is minimal,
            then $\ac b$ is minimal.
    \end{enumerate}
\end{prop}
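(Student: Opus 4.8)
The plan is to handle the three parts separately, each reducing quickly to machinery already in place. For (i), I would use the characterization recalled just before the proposition (via Nadkarni's Theorem) that a flow is compressible exactly when it admits no invariant Borel probability measure, and argue contrapositively. Suppose $\ac b$ is not compressible and fix a $\Delta$-invariant Borel probability measure $\mu$ on $X_{\ac b}$. Then $\mu$ is in particular $\Gamma$-invariant, so if $f\colon \ac b\uhr_\Gamma\to\ac a$ is the given $\Gamma$-map, the pushforward $f_*\mu$ is a Borel probability measure on $X_{\ac a}$; the one line to check is that it is $\Gamma$-invariant, which follows from the $\Gamma$-equivariance of $f$ (giving $f^{-1}(\gamma\cdot A) = \gamma\cdot f^{-1}(A)$) and the $\Gamma$-invariance of $\mu$. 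Hence $\ac a$ admits an invariant Borel probability measure and is not compressible.

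For (ii), I would invoke the composition rule for generators, fact (4) above, together with fact (1). By fact (1), a clopen $n$-generator for $\ac a$ is the same as an $\ac s_{1,n}$-generator $g$ for $\ac a$; by hypothesis $\ac b$ has an $\ac a$-generator $f$. Applying fact (4) to the chain of groups $1\le\Gamma\le\Delta$ with flows $\ac s_{1,n}$, $\ac a$, $\ac b$ respectively, the composite of $f$ and $g$ is an $\ac s_{1,n}$-generator for $\ac b$, i.e. a clopen $n$-generator for $\ac b$; equivalently $\ac b$ is (topologically) isomorphic to a subshift of $n^\Delta$.

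For (iii), I would simply note that $\ac b\uhr_\Gamma = \ac a$ means $X_{\ac b} = X_{\ac a}$ with the $\Gamma$-actions literally agreeing, so for every point the $\Gamma$-orbit is contained in the $\Delta$-orbit; minimality of $\ac a$ makes every $\Gamma$-orbit dense, hence every $\Delta$-orbit is dense and $\ac b$ is minimal.

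I do not anticipate a real obstacle: (iii) is immediate, (i) is a routine pushforward-of-measures argument once the Nadkarni characterization is cited, and (ii) is a direct application of the composition-of-generators lemma. The only thing needing a bit of care is the bookkeeping of the group inclusions and the identification (fact (1)) of clopen $n$-generators with $\ac s_{1,n}$-generators when applying fact (4).
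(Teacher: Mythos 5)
Your proposal is correct and follows essentially the same route as the paper's proof: part (i) is the same pushforward-of-invariant-measures argument (stated contrapositively via Nadkarni), part (ii) is the same composition of the $\ac a$-generator with an $\ac s_{1,n}$-generator using facts (1) and (4), and part (iii) is the same orbit-containment observation the paper dismisses as obvious.
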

\begin{proof}
    \leavevmode
    \begin{enumerate}[label=(\arabic*)]
        \item 
            If $\mu$ is an invariant Borel probability measure for $\ac b$,
            then it is invariant for $\ac b\uhr_\Gamma$,
            so its pushforward to $\ac a$ is invariant.
        \item
            If $\ac a$ has a clopen $n$-generator,
            then it has an $\ac s_{1, n}$-generator.
            Composing them gives a clopen $n$-generator for $\ac b$.
        \item
            This is obvious.
    \end{enumerate}
\end{proof}

\begin{cor}\label{cindProps}
    Let $\Gamma\le\Delta$ be groups.
    The following properties of a $\Gamma$-flow
    pass to its coinduced $\Delta$-flow:
    \begin{enumerate}[label=(\roman*)]
        \item Compressibility.
        \item Existence of a clopen $n$-generator.
    \end{enumerate}
\end{cor}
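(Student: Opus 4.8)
The plan is to derive both statements directly from Proposition~\ref{liftProps}, the only additional input being the identification of a canonical map relating the coinduced flow to $\ac a$. Fix a $\Gamma$-flow $\ac a$ on $X$ and set $\ac b = \CInd_\Gamma^\Delta(\ac a)$. Recall from fact (3) above that the evaluation map $e\colon \ac b\uhr_\Gamma\to\ac a$ given by $e(y) = y_1$ is an $\ac a$-generator for $\ac b$: under the natural bijection $\Hom_\Delta(\ac b, \CInd_\Gamma^\Delta(\ac a))\cong \Hom_\Gamma(\ac b\uhr_\Gamma, \ac a)$ it corresponds to the identity map of $\ac b$, which is injective. In particular, $e$ is a $\Gamma$-map $\ac b\uhr_\Gamma\to\ac a$; its continuity is clear since it is a coordinate projection, and its $\Gamma$-equivariance is exactly the defining relation of $\CInd_\Gamma^\Delta(\ac a)$ evaluated at the identity: for $\gamma\in\Gamma$ one has $(\gamma\cdot y)_1 = y_{\gamma^{-1}} = \gamma\cdot y_1$.

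For (i), suppose $\ac a$ is compressible. Since $e\colon \ac b\uhr_\Gamma\to\ac a$ is a $\Gamma$-map, Proposition~\ref{liftProps}(i) applies directly and yields that $\ac b$ is compressible.

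For (ii), suppose $\ac a$ has a clopen $n$-generator. Since $\ac b$ has the $\ac a$-generator $e$, Proposition~\ref{liftProps}(ii) applies directly and yields that $\ac b$ has a clopen $n$-generator.

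There is no substantive obstacle here: the corollary is simply the special case $\ac b = \CInd_\Gamma^\Delta(\ac a)$ of Proposition~\ref{liftProps}, the point being that this particular flow always carries the canonical generator $e$, which simultaneously serves as a $\Gamma$-map $\ac b\uhr_\Gamma\to\ac a$. The only detail that requires a moment of attention is verifying that $e$ is genuinely continuous and $\Gamma$-equivariant, which was checked above.
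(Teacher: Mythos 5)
Your proof is correct and is essentially identical to the paper's, which simply says ``take $\ac b = \CInd_\Gamma^\Delta(\ac a)$ in \cref{liftProps}''; you have merely spelled out the (correct) verification that the evaluation map $y\mapsto y_1$ is simultaneously a $\Gamma$-map $\ac b\uhr_\Gamma\to\ac a$ and an $\ac a$-generator.
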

\begin{proof}
    Take $\ac b = \CInd_\Gamma^\Delta(\ac a)$ in \cref{liftProps}.
\end{proof}

\subsection{Jumps}\label{subshifts-realizations-jumps}
Let $\Gamma$ and $\Lambda$ be groups,
and let $L$ be a countable $\Lambda$-set.
The \textbf{unrestricted wreath product}
is the group $\Gamma\wr_L\Lambda$
defined by
\[
    \Gamma\wr_L \Lambda
    := \Gamma^L\rtimes\Lambda,
\]
where $\Lambda$ acts on the product group
$\Gamma^L$ by shift.
If $L = \Lambda$ with the left-translation action,
then we omit the subscript and write $\Gamma\wr\Lambda$.
Denote by $\Gamma^{\oplus L}$
the subgroup of $\Gamma^L$
consisting of those elements
which are the identity on cofinitely many coordinates.
Note that the shift $\Lambda$-action on $\Gamma^L$
preserves $\Gamma^{\oplus L}$.
The \textbf{restricted wreath product}
is the subgroup $\Gamma\wr_L^\oplus\Lambda$ of $\Gamma\wr_L\Lambda$
generated by $\Gamma^{\oplus L}$ and $\Lambda$.
If $L$ is a transitive $\Lambda$-set,
and $S$ and $T$ are generating sets for $\Gamma$ and $\Lambda$ respectively,
and $l$ is any element of $L$,
then the set $\{(\tilde s, 1) : s \in S\} \cup \{(\ol 1, t) : t \in T\}$
generates $\Gamma\wr_L^{\oplus}\Lambda$
(see \cite[2.3]{HR94}),
where $\tilde s$ is $s$ at $l$ and $1$ otherwise.

Let $E$ be a CBER on $X$.
The \textbf{unrestricted $L$-jump} of $E$,
denoted $E^{[L]}$
is the Borel equivalence relation on $X^L$ defined by
\[
    x \mr{E^{[L]}} y
    \iff \exists\lambda\,
    [\lambda \cdot x \mr{E^L} y],
\]
where $E^L$ is the product equivalence relation on $X^L$ defined by
\[ 
(x_l ) E^L (y_l) \iff \forall l (x_l E y_l)
\]
(see \cite{CC22} for more uses of this jump).
Let $E^{\oplus L}$ be the subequivalence relation
of the product equivalence relation $E^L$
consisting of pairs which are equal on cofinitely many coordinates.
The \textbf{restricted $L$-jump} of $E$,
is the subequivalence relation $E^{\oplus[L]}$ of $E^{[L]}$
obtained by using $E^{\oplus L}$ instead of $E^L$
in the definition of $E^{[L]}$.

Given a Borel embedding $E\sq_B F$ via a map $X\to Y$,
the induced map $X^L \to Y^L$ witnesses the Borel embeddings
$E^{[L]}\sq_B F^{[L]}$ and $E^{\oplus[L]}\sq_B F^{\oplus[L]}$.

Let $\ac a$ be a $\Gamma$-flow on $X$.
Let $\ac a^L$ be the action of $\Gamma^\Lambda$ on $X^L$
defined by
$( \gamma\cdot x)_l
= \gamma_l\cdot x_l$.
We have $E_{\ac a^L} = (E_{\ac a})^L$.
The \textbf{unrestricted $L$-jump} of $\ac a$,
denoted $\ac a^{[L]}$,
is the unique $\Gamma\wr_L \Lambda$-flow on $X^L$
which simultaneously extends both
$\ac a^L$
and the $\Lambda$-flow $\ac s_{L, X}$ on $X^L$.
Explicitly,
the action is given by
$(\bs\gamma\lambda\cdot \bs x)_{l}
= \gamma_l \cdot x_{\lambda^{-1}l}$.
We have
$E_{\ac a^{[L]}}
= (E_{\ac a})^{[L]}$,
since
\[
    x \mr{E_{\ac a^{[L]}}} y
    \iff \exists\bs\gamma\, \exists\lambda\,
    [\bs\gamma\lambda \cdot x = y]
    \iff \exists\lambda\,
    [\lambda \cdot x \mr{E_{\ac a^L}} y]
    \iff \exists\lambda\,
    [\lambda \cdot x \mr{E_{\ac a}^L} y].
\]
Let $\ac a^{\oplus L}$ be the $\Gamma^{\oplus L}$-flow
$\ac a^L\uhr_{\Gamma^{\oplus L}}$.
We have $E_{\ac a^{\oplus L}} = E_{\ac a}^{\oplus L}$.
The \textbf{restricted $L$-jump} of $\ac a$,
denoted $\ac a^{\oplus[L]}$,
is the restriction $\ac a^{[L]}\uhr_{\Gamma\wr_L^\oplus\Lambda}$.
We have $E_{\ac a^{\oplus[L]}} = (E_{\ac a})^{\oplus[L]}$.

If $L$ is a transitive $\Lambda$-set,
then for any $l_0\in L$,
the map $x \mapsto x_{l_0}$
is an $\ac a$-generator for $\ac a^{\oplus[L]}$,
since if $(\lambda\cdot x)_{l_0} = (\lambda\cdot y)_{l_0}$
for every $\lambda$,
then by transitivity,
we have $x_l = y_l$ for every $l\in L$,
and thus $x = y$.

\begin{prop}\label{jumpProps}
    If $L$ is a transitive $\Lambda$-set,
    then the following properties of a $\Gamma$-flow
    pass to its restricted $L$-jump:
    \begin{enumerate}[label=(\roman*)]
        \item Compressibility.
        \item Existence of a  clopen $n$-generator.
        \item Minimality.
    \end{enumerate}
\end{prop}
\begin{proof}
    Let $\ac a$ be a $\Gamma$-flow.
    
    Since $\ac a^{\oplus[L]}$ has an $\ac a$-generator,
    the first two properties follow from \cref{liftProps}.
    
    If $\ac a$ is a minimal $\Gamma$-flow,
    then $\ac a^{\oplus L}$ is a minimal $\Gamma^{\oplus L}$-flow.
    Since $\ac a^{\oplus[L]}\uhr_{\Gamma^{\oplus L}}
    = \ac a^{\oplus L}$,
    we have by \cref{liftProps} that $\ac a^{\oplus[L]}$ is minimal.
\end{proof}

\subsection{Realizations by minimal subshifts}
A flow is \textbf{orbit-universal}
if its orbit equivalence relation is a universal CBER.

Let $\Gamma$ and $\Lambda$ be groups,
and let $L$ be a countable $\Lambda$-set.

Let $E(L, \R)$ denote the orbit equivalence relation
of the shift action $\Lambda\car \R^L$.
\begin{thm}\label{jumpEmbedding}
    Let $\Gamma$ and $\Lambda$ be countable groups.
    Let $L$ be a countable $\Lambda$-set,
    and let $\ac a$ be a $\Gamma$-flow on $X$, with $X$ uncountable.
    Then there is a Borel injection $f \colon \R^L \to X^L$
    which simultaneously witnesses
    $E(L, \R) \sq_B E_{\ac a^{\oplus[L]}}$
    and $E(L, \R)\sq_B E_{\ac a^{[L]}}$.
    In particular,
    for every group $G$
    (no definability condition required)
    with
    $\Gamma\wr_L^\oplus \Lambda
    \le G
    \le \Gamma\wr_L \Lambda$,
    the map $f$ witnesses $E(L, \R)\sq_B E_{\ac a^{[L]}\uhr_G}$.
    
    In particular,
    if $E(L, \R)$ is universal and $\Delta$ is a countable group
    with
    $\Gamma\wr_L^\oplus \Lambda
    \le \Delta
    \le \Gamma\wr_L \Lambda$,
    then $\ac a^{[L]}\uhr_\Delta$ is orbit-universal.
\end{thm}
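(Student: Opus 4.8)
The plan is to build a single Borel injection $f\colon \R^L \to X^L$ and show it serves both purposes at once. First I would fix, using that $X$ is uncountable standard Borel, a Borel injection $\iota\colon \R \hookrightarrow X$ whose image $\iota(\R)$ is a Borel subset of $X$; since we want $f$ to be Borel (not continuous) this costs nothing. Define $f$ coordinatewise: $f(\bs r)_l = \iota(r_l)$ for $\bs r \in \R^L$, $l\in L$. This is clearly a Borel injection $\R^L \to X^L$, and it is $\Lambda$-equivariant for the shift actions on $\R^L$ and on $X^L$, since applying a coordinate permutation commutes with applying $\iota$ in each coordinate. So $f$ witnesses $E(L,\R) \sqsubseteq_B E_{\ac s_{L,X}}$, where $\ac s_{L,X}$ is the $\Lambda$-shift on $X^L$. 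The point is to upgrade this to the larger equivalence relations $E_{\ac a^{[L]}}$ and $E_{\ac a^{\oplus[L]}}$, which also quotient by the $\Gamma^L$ (resp.\ $\Gamma^{\oplus L}$) part of the wreath product.

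The key step is the one-directional implication that makes this work: I must show that for $\bs r, \bs r' \in \R^L$, if $f(\bs r) \mathrel{E_{\ac a^{[L]}}} f(\bs r')$ then $\bs r \mathrel{E(L,\R)} \bs r'$ (the converse is immediate from the previous paragraph, since $E(L,\R) = E_{\ac s_{L,\R}}$ embeds into $E_{\ac s_{L,X}} \subseteq E_{\ac a^{[L]}}$). Using the description $x \mathrel{E_{\ac a^{[L]}}} y \iff \exists \lambda\, [\lambda \cdot x \mathrel{E_{\ac a}^L} y]$ recorded just before the theorem statement: suppose $\lambda \cdot f(\bs r) \mathrel{E_{\ac a}^L} f(\bs r')$, i.e.\ for every $l\in L$ we have $(\lambda \cdot f(\bs r))_l \mathrel{E_{\ac a}} f(\bs r')_l$, that is $f(\bs r)_{\lambda^{-1} l} \mathrel{E_{\ac a}} f(\bs r')_l$, i.e.\ $\iota(r_{\lambda^{-1}l}) \mathrel{E_{\ac a}} \iota(r'_l)$. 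Now here is where I need to be slightly careful, because an arbitrary Borel $\iota$ need not separate $E_{\ac a}$-classes; so I should \emph{choose $\iota$ to be a Borel partial transversal} for $E_{\ac a}$, i.e.\ arrange that $\iota(\R)$ is a Borel set meeting each $E_{\ac a}$-class in at most one point. This is possible: $X$ has uncountably many $E_{\ac a}$-classes containing singletons among the aperiodic part — actually more simply, since $E_{\ac a}$ is a CBER on an uncountable standard Borel space, it admits an uncountable Borel partial transversal (pick one point from uncountably many distinct classes via a Borel selection / marker argument), and any two uncountable standard Borel spaces are Borel isomorphic, giving the desired $\iota$. With this choice, $\iota(r_{\lambda^{-1}l}) \mathrel{E_{\ac a}} \iota(r'_l)$ forces $\iota(r_{\lambda^{-1}l}) = \iota(r'_l)$, hence $r_{\lambda^{-1}l} = r'_l$ for all $l$, i.e.\ $\lambda \cdot \bs r = \bs r'$ in $\R^L$, so $\bs r \mathrel{E(L,\R)} \bs r'$. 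The same computation handles $E_{\ac a^{\oplus[L]}}$: if moreover $\lambda \cdot f(\bs r)$ and $f(\bs r')$ agree on cofinitely many coordinates then $\bs r, \bs r'$ do too, so we land in $E(L,\R)$ restricted to the cofinitely-agreeing part, but since $E(L,\R)$ is the \emph{full} unrestricted jump and we only need an embedding \emph{into} $E_{\ac a^{\oplus[L]}}$, I should double-check the indexing: the statement asks for $E(L,\R) \sqsubseteq_B E_{\ac a^{\oplus[L]}}$, so I actually want $f$ restricted to pairs in $E(L,\R)$ to land in the restricted jump, which holds because $\lambda\cdot \bs r = \bs r'$ already gives $f(\bs r),f(\bs r')$ in the same $E_{\ac s_{L,X}}$-class, a subrelation of $E_{\ac a^{\oplus[L]}}$; and conversely as shown it reflects back. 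I expect the main obstacle to be exactly this bookkeeping around the restricted versus unrestricted jump and making sure the single map $f$ is simultaneously a reduction for both, rather than anything deep.

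The final two ``in particular'' clauses are then formal. For any group $G$ with $\Gamma\wr_L^\oplus\Lambda \le G \le \Gamma\wr_L\Lambda$, the orbit equivalence relation $E_{\ac a^{[L]}\uhr_G}$ is sandwiched: $E_{\ac a^{\oplus[L]}} \subseteq E_{\ac a^{[L]}\uhr_G} \subseteq E_{\ac a^{[L]}}$ (since the $G$-orbits refine the $(\Gamma\wr_L\Lambda)$-orbits and coarsen the $(\Gamma\wr_L^\oplus\Lambda)$-orbits). Since $f$ is an injective Borel reduction $E(L,\R) \to E_{\ac a^{\oplus[L]}}$ and also, by the argument above, $f(\bs r) \mathrel{E_{\ac a^{[L]}}} f(\bs r') \implies \bs r \mathrel{E(L,\R)} \bs r'$, the map $f$ is automatically a Borel reduction witnessing $E(L,\R)\sqsubseteq_B E_{\ac a^{[L]}\uhr_G}$ for every such $G$ — this is the standard ``reductions into a subrelation and reflections from a superrelation meet in the middle'' observation. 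Finally, if $E(L,\R)$ is universal, then $E_{\ac a^{[L]}\uhr_\Delta}$ admits $E(L,\R)$ as a Borel subrelation-reduction, and conversely $E_{\ac a^{[L]}\uhr_\Delta}$ is itself a CBER hence Borel-reducible to $E(L,\R)$ by universality; so $E_{\ac a^{[L]}\uhr_\Delta}$ is universal, i.e.\ $\ac a^{[L]}\uhr_\Delta$ is orbit-universal. No further work is needed here.
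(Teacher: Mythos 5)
Your proposal is correct and is essentially the paper's proof: the paper fixes a Borel map $\R\to X$ witnessing $\id_\R\sqsubseteq_B E_{\ac a}$ (which is exactly your injection onto a Borel partial transversal of $E_{\ac a}$) and invokes the previously noted functoriality of the jumps under $\sqsubseteq_B$, whereas you unwind that functoriality by hand. The coordinatewise definition of $f$, the reflection argument through $E_{\ac a^{[L]}}$, and the sandwich argument for intermediate groups $G$ all match the paper's intent.
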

\begin{proof}
    Since $X$ is uncountable,
    there is a Borel map $\R\to X$ witnessing $\id_\R\sq_B E_{\ac a}$.
    Let $f \colon \R^L \to X^L$ be the induced map.
    Then $f$ witnesses
    \[
        E(L, \R)
        = (\id_\R)^{\oplus[L]}
        \sq_B (E_{\ac a})^{\oplus[L]}
        = E_{\ac a^{\oplus[L]}}
    \]
    and also
    \[
        E(L, \R)
        = (\id_\R)^{[L]}
        \sq_B (E_{\ac a})^{[L]}
        = E_{\ac a^{[L]}}
    \]
\end{proof}
\begin{cor}\label{univFlow}
    Let $\Gamma$ and $\Lambda$ be groups,
    and suppose that
    there is a countable transitive $\Lambda$-set $L$
    such that $E(L, \R)$ is a universal {\CBER}.
    Let $\Delta$ be a countable group with a factor $\Delta'$
    such that
    $\Gamma\wr_L^\oplus\Lambda
    \le \Delta'
    \le \Gamma\wr_L\Lambda$.
    Then there is an orbit-universal minimal $\Delta$-flow
    with a clopen $2$-generator.
    If $\Gamma$ is non-amenable,
    then this flow can be taken to be compressible.
\end{cor}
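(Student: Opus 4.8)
The plan is to produce the desired $\Delta$-flow as the inflation along the quotient $\Delta\thra\Delta'$ of a restricted $L$-jump of a carefully chosen minimal $\Gamma$-flow, so that minimality and the clopen $2$-generator are supplied by \cref{jumpProps} and \cref{liftProps}, orbit-universality by \cref{jumpEmbedding}, and compressibility (when $\Gamma$ is non-amenable) by \cref{jumpProps}(i). First I would fix a minimal subshift $\ac a$ of $2^\Gamma$ whose underlying space $X$ is uncountable; such an $\ac a$ exists because $\Gamma$ is infinite (take, for instance, a free minimal subshift, whose underlying space is perfect). When $\Gamma$ is non-amenable I would instead let $\ac a$ be a minimal subflow of a subshift $\ac a_0$ of $2^\Gamma$ carrying no invariant Borel probability measure, which exists by the construction behind \cref{amenChar} (see \cref{compressibleSubshift}); then $\ac a$ is again a subshift of $2^\Gamma$, and it is compressible because an invariant probability measure for $\ac a$ would in particular be one for $\ac a_0$ (it lives on the smaller closed invariant set). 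Such an $\ac a$ is automatically uncountable: being compressible it is aperiodic, hence infinite, and an infinite minimal flow has no isolated points (an isolated point would have open $\Gamma$-invariant orbit, so by minimality the whole space would be that single orbit of isolated points, which by compactness is finite), so $X$ is perfect.

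Next I would run the jump. Since $\ac a$ is a minimal $\Gamma$-flow with a clopen $2$-generator, \cref{jumpProps} gives that $\ac a^{\oplus[L]}$ is a minimal $\Gamma\wr_L^\oplus\Lambda$-flow with a clopen $2$-generator, compressible whenever $\ac a$ is. Put $\ac b=\ac a^{[L]}\uhr_{\Delta'}$, using $\Gamma\wr_L^\oplus\Lambda\le\Delta'\le\Gamma\wr_L\Lambda$. Restricting $\ac b$ further to $\Gamma\wr_L^\oplus\Lambda$ recovers $\ac a^{\oplus[L]}$, so $\ac b$ is minimal by \cref{liftProps}(iii), and if $\ac a^{\oplus[L]}$ is compressible then so is $\ac b$, since an invariant probability measure for $\ac b$ restricts to one for $\ac a^{\oplus[L]}$. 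As $L$ is a transitive $\Lambda$-set and $\Lambda\le\Delta'$, the map $\bs x\mapsto x_{l_0}$ is an $\ac a$-generator for $\ac b$ — this is precisely the computation recorded just before \cref{jumpProps}, which only uses elements of $\Lambda$ — so \cref{liftProps}(ii) endows $\ac b$ with a clopen $2$-generator; and since $E(L,\R)$ is universal, \cref{jumpEmbedding} shows $\ac b$ is orbit-universal. Finally, letting $q\colon\Delta\thra\Delta'$ be the given quotient map and inflating $\ac b$ along $q$ to a $\Delta$-flow on $X^L$, note that surjectivity of $q$ makes the $\Delta$-orbit of every point equal to its $\Delta'$-orbit; hence this $\Delta$-flow has the same orbit equivalence relation (so it is minimal and orbit-universal), the same invariant measures (so it is compressible exactly when $\ac b$ is), and any clopen partition generating for $\ac b$ is generating for it. This is the required flow.

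The one genuinely non-formal ingredient — and the step I expect to be the main obstacle — is the construction of the seed flow $\ac a$: a minimal uncountable subshift of $2^\Gamma$ for an arbitrary infinite countable $\Gamma$, and, in the non-amenable case, one that is moreover compressible. The first is a known but nontrivial fact (existence of free minimal actions of countable groups realized as subshifts of $2^\Gamma$); the second then comes essentially for free by passing to a minimal subflow of the explicit measure-free subshift underlying \cref{amenChar} and invoking the elementary remarks above. Everything after that is bookkeeping with \cref{liftProps}, \cref{jumpProps} and \cref{jumpEmbedding}; the only places asking for a moment's attention are verifying that $\bs x\mapsto x_{l_0}$ stays an $\ac a$-generator when the acting group is cut down to $\Delta'$ (it does, the argument using only $\Lambda\le\Delta'$) and that inflation along $q$ transports all four properties (it does, $q$ being surjective).
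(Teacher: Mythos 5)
Your argument is correct and follows essentially the same route as the paper: seed with an uncountable minimal subshift of $2^\Gamma$ having a clopen $2$-generator (a minimal subflow of the compressible subshift from \cref{compressibleSubshift} in the non-amenable case), apply the restricted $L$-jump and \cref{jumpProps}/\cref{liftProps}/\cref{jumpEmbedding}, and transport along the factor map. The only difference is that you spell out the bookkeeping for passing from $\Gamma\wr_L^\oplus\Lambda$ to $\Delta'$ and then inflating to $\Delta$, which the paper compresses into ``it suffices to consider the case $\Delta'=\Delta$.''
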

\begin{proof}
    It suffices to consider the case where $\Delta' = \Delta$.
    Let $\ac a$ be an uncountable minimal $\Gamma$-flow
    with a clopen $2$-generator;
    for instance,
    take a minimal subshift of a free subshift of $2^\Gamma$
    (these exist by \cite[Theorem 1.5]{GJS09};
    see also \cite{Ber17}).
    If $\Gamma$ is non-amenable,
    by \cref{compressibleSubshift} below
    we can take $\ac a$ to be compressible
    and then pass to a subflow to ensure minimality.
    
    Now consider the $\Gamma\wr_L^\oplus\Lambda$-flow
    $\ac a^{\oplus[\Lambda]}$.
    By \cref{jumpProps},
    this is minimal and has a clopen $2$-generator,
    and is compressible if $\Gamma$ is non-amenable.
    Orbit-universality follows from \cref{jumpEmbedding}.
\end{proof}

\begin{cor}\label{3333}
    \leavevmode
    \begin{enumerate}[label=(\arabic*)]
        \item
            There is an orbit-universal minimal subshift of $2^{\F_3}$.
        \item
            There is a compressible orbit-universal minimal subshift of $2^{\F_4}$.
            In particular any compressible, universal {\CBER} admits
            a minimal, compact action realization
            which is in fact a minimal subshift of $2^{\F_4}$.
    \end{enumerate}
\end{cor}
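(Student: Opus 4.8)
The plan is to derive the entire corollary, including the displayed ``in particular'' clause, from \cref{univFlow}, so that the only real work is choosing the groups and verifying the (purely group‑theoretic) hypotheses. First I would record the translation into subshift language: by the remark in part (A) above, a $\Delta$-flow admits a clopen $2$-generator if and only if it is topologically isomorphic to a subshift of $2^\Delta$, and both orbit-universality and minimality are invariant under topological isomorphism; hence to prove (i) and (ii) it suffices to produce an $\F_3$-flow (resp.\ an $\F_4$-flow) that is orbit-universal, minimal, and has a clopen $2$-generator, and in the $\F_4$ case is moreover compressible. In every application of \cref{univFlow} I would take $L = \Lambda$ with the left-translation action, so that $L$ is a countable transitive $\Lambda$-set and $E(L,\R) = E(\Lambda,\R)$; I will use $\Lambda = \F_2$, and here $E(\F_2,\R)$ is universal, since on an invariant Borel set it restricts to the free part of the shift $\F_2\car\R^{\F_2}$, which is a universal CBER (this is the standard fact, used throughout the introduction, that groups containing $\F_2$ carry a Borel action inducing a universal CBER).

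For (i) I would apply \cref{univFlow} with $\Gamma = \Z$, $\Lambda = \F_2$, $L = \F_2$ and $\Delta = \F_3$, taking $\Delta' = \Z\wr_{\F_2}^\oplus\F_2$. By \cite[2.3]{HR94} this restricted wreath product is generated by one generator of $\Z$ together with the two generators of $\F_2$, so it is $3$-generated and hence a factor (quotient) of $\F_3$, while trivially $\Z\wr_{\F_2}^\oplus\F_2 \le \Z\wr_{\F_2}^\oplus\F_2 \le \Z\wr_{\F_2}\F_2$. Then \cref{univFlow} produces an orbit-universal minimal $\F_3$-flow with a clopen $2$-generator, i.e.\ an orbit-universal minimal subshift of $2^{\F_3}$. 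For (ii) I would instead take $\Gamma = \F_2$, $\Lambda = \F_2$, $L = \F_2$ and $\Delta = \F_4$, with $\Delta' = \F_2\wr_{\F_2}^\oplus\F_2$, which is now $4$-generated (two generators for each copy of $\F_2$) and hence a factor of $\F_4$; since $\Gamma = \F_2$ is non-amenable, the ``compressible'' clause of \cref{univFlow} applies, giving a compressible orbit-universal minimal $\F_4$-flow with a clopen $2$-generator, that is, a compressible orbit-universal minimal subshift of $2^{\F_4}$.

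Finally, for the displayed assertion I would invoke the uniqueness up to Borel isomorphism of the compressible universal CBER (\cite[Proposition 3.27(ii)]{CK18}, as recalled before \cref{3233}): the orbit equivalence relation of the subshift constructed in (ii) is compressible and universal, hence Borel isomorphic to every compressible universal CBER $E$; and since that subshift is the restriction of a continuous $\F_4$-action on the compact space $2^{\F_4}$ to a closed invariant set on which the action is minimal, $E$ thereby acquires a minimal compact action realization which is a minimal subshift of $2^{\F_4}$. I do not expect a genuine obstacle in any of this, since all the substantive content is already packaged in \cref{univFlow} (which in turn rests on \cref{jumpProps} and the universality of the $\F_2$-Bernoulli shift); the only point demanding any care is the elementary bookkeeping showing that $\Z\wr_{\F_2}^\oplus\F_2$ and $\F_2\wr_{\F_2}^\oplus\F_2$ are generated by $3$ and $4$ elements respectively.
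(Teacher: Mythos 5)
Your proposal is correct and follows essentially the same route as the paper: both apply \cref{univFlow} with $\Lambda=\F_2$ and $\Gamma=\Z$ (resp.\ $\Gamma=\F_2$), using that $\Z\wr^\oplus\F_2$ and $\F_2\wr^\oplus\F_2$ are $3$- and $4$-generated and hence factors of $\F_3$ and $\F_4$, together with the universality of the $\F_2$-Bernoulli shift. Your extra bookkeeping (the clopen $2$-generator/subshift translation and the appeal to uniqueness of the compressible universal CBER for the ``in particular'' clause) just makes explicit steps the paper leaves implicit.
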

\begin{proof}
    Recall that $E(\F_2, 2)$ is a universal CBER.
    \begin{enumerate}[label=(\arabic*)]
        \item
            $\F_3$ has the factor $\Z\wr^\oplus \F_2$,
            so by \cref{univFlow},
            $\Z\wr^\oplus\F_2$ admits an orbit-universal minimal flow
            with a clopen $2$-generator.
        \item
            $\F_4$ has the factor $\F_2\wr^\oplus \F_2$,
            so since $\F_2$ is non-amenable,
            by \cref{univFlow},
            $\F_2\wr^\oplus\F_2$ admits a compressible orbit-universal
            minimal flow with a clopen $2$-generator,
    \end{enumerate}
\end{proof}

\newcommand{\probtwogenuniv}{
    Does \cref{3333} hold with $\F_2$ instead of $\F_3,\F_4$?
}
\begin{prob}\label{prob-twogenuniv}
    \probtwogenuniv
\end{prob}
By \cref{univFlow},
it suffices to find some $\Gamma$
and $\Lambda\ge \F_2$
such that there is a $2$-generated group
between $\Gamma \wr_L^\oplus\Lambda$
and $\Gamma \wr_L\Lambda$
(and $\Gamma$ non-amenable
if we want compressibility).

In view of \cref{3.9} and \cref{3333}
one can ask whether the following very strong realization result is true:
\newcommand{\probcbershift}{
    Does every non-smooth aperiodic {\CBER} have a realization
    as a subshift of $2^\Gamma$ for some group $\Gamma$?
    Also does it have a realization as a minimal subshift? 
}
\begin{prob}\label{prob-cbershift}
    \probcbershift
\end{prob}

\subsection{Minimal subshift universality}
Let $\Gamma$ be a countable group. We say that $\Gamma$ is {\bf minimal subshift universal} if there is a minimal subshift $K$ of $2^\Gamma$ such that if $E(\Gamma , 2)$ is the shift equivalence relation on $2^\Gamma$, then $E(\Gamma, 2) \uhr K$ is universal.  We note the following equivalent formulation of this notion. Recall that a point $x\in 2^\Gamma$ is minimal
if $\ol{\Gamma\cdot x}$ is a minimal $\Gamma$-flow;
equivalently,
for every finite $A\subseteq \Gamma$,
the set $\{\gamma\in \Gamma : (\gamma\cdot x)\uhr_A = x\uhr_A\}$
is \textbf{left syndetic},
i.e. finitely many left translates of it cover $\Gamma$
(see \cite[IV(1.2)]{Vri93}).

\begin{prop}
    Let $\Gamma$ be a countable group. Then the following are equivalent:
    \begin{enumerate}[label=(\roman*)]
        \item \label{item-minuniv-minuniv}
            $\Gamma$ is minimal subshift universal;
        \item \label{item-minuniv-flow}
            There is a minimal $\Gamma$-flow
            which admits a clopen 2-generator and
            such that the induced equivalence relation is universal;
        \item \label{item-minuniv-bernoulli}
            If $M$ is the set of minimal points in $2^\Gamma$
            and $E$ is the shift equivalence relation,
            then $E \uhr M$ is universal.
    \end{enumerate}
\end{prop}

\begin{proof}
    Clearly \ref{item-minuniv-minuniv} $\implies$ \ref{item-minuniv-flow} $\implies$ \ref{item-minuniv-bernoulli}.
    Assume now \ref{item-minuniv-bernoulli}).
    Consider the Borel map $f$ that sends $x \in M$ to the closure of its orbit
    (which is an element of the space of compact subsets of $2^\Gamma$).
    Then by \cite[Theorem 3.1]{MSS16},
    there is some $K$ such that $E \uhr f^{-1}(K)$ is universal.
    But clearly $f^{-1}(K) =K$,
    so $K$ is a minimal subshift,
    thus \ref{item-minuniv-minuniv} holds.
\end{proof}

Clearly if $\Gamma$ is minimal subshift universal
and there is a surjective homomorphism of $\Delta$ on $\Gamma$,
then $\Delta$ is minimal subshift universal.
The existence of a minimal subshift universal group was first proved by Brandon Seward,
who showed that $\F_\infty$ has this property.
\cref{univFlow} shows that any wreath product $\Gamma\wr\Lambda$,
where $\Gamma$ is infinite and $\Lambda$ contains $\F_2$,
is minimal subshift universal and in particular by \cref{3333},
$\F_3$ is minimal subshift universal.
We include below Seward's proof for $\F_\infty$ (and somewhat more),
with his permission,
as it is based on a very different method. 

\begin{thm}[Seward]
    Let $\Gamma$ be a group.
    Then $E(\Gamma, 2)\sq_B E(\Gamma*\F_\infty, 2)\uhr_M$,
    where $M \subseteq 2^{\Gamma*\F_\infty}$
    denotes the set of minimal points.
\end{thm}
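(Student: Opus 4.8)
The plan is to embed $E(\Gamma,2)$ into the shift equivalence relation on the minimal points of $2^{\Gamma*\F_\infty}$ by taking a point $x\in 2^\Gamma$ and attaching to it, along the free generators of $\F_\infty$, a sequence of ``repair'' bits that forces the resulting point to be minimal while still allowing $x$ to be decoded. Concretely, I would fix a point $x\in 2^\Gamma$ and define $\hat x\in 2^{\Gamma*\F_\infty}$ by first copying $x$ onto the coset $\Gamma\subseteq\Gamma*\F_\infty$, i.e.\ $\hat x(\gamma)=x(\gamma)$ for $\gamma\in\Gamma$, and then using the infinitely many free generators $t_0,t_1,\dots$ of $\F_\infty$ to ``program'' the rest of $\hat x$ so that every finite pattern that occurs in $\hat x$ is syndetically recurrent. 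The key point is that $\F_\infty$ is infinitely generated, so we have unlimited room: we can dedicate the $n$-th generator (and its powers) to recording enough auxiliary data so that, reading off a window of $\hat x$, one can reconstruct where one is and fill in the $\Gamma$-part, which is exactly what makes $\overline{(\Gamma*\F_\infty)\cdot\hat x}$ a minimal flow. Decoding $x$ from $\hat x$ is immediate (restrict to the $\Gamma$-coordinates), so $x\mapsto\hat x$ is a Borel reduction of $E(\Gamma,2)$ into $E(\Gamma*\F_\infty,2)\uhr_M$.

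The main work is verifying minimality of $\hat x$, for which I would use the characterization recalled just before the theorem: $\hat x$ is minimal iff for every finite $A\subseteq\Gamma*\F_\infty$ the return set $\{g:(g\cdot\hat x)\uhr_A=\hat x\uhr_A\}$ is left syndetic. The standard device (this is essentially Seward's trick) is to build $\hat x$ so that it is ``self-locating'': choose a coding scheme in which, using the free generators, the word $g$ labelling the current position can be read off from a bounded neighborhood of $g$ in $\hat x$, together with a book-keeping scheme guaranteeing that the $\Gamma$-data appears in a doubly-indexed array of blocks, arranged periodically enough that translates of any fixed finite pattern recur with bounded gaps. Because $\F_\infty$ has a generator to spare for each new ``level'' of the construction, one can iterate: level $n$ of the coding handles windows of radius $n$, and syndeticity at level $n$ is arranged by a crude periodicity in the $t_n$-direction. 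Then any finite $A$ sits inside some level-$n$ window, and its return set contains a syndetic subset coming from the level-$n$ periodicity.

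I would organize the construction so that $\hat x$ factors as the ``join'' of two pieces: a fixed minimal point $m\in 2^{\F_\infty}$ (e.g.\ obtained from a minimal subshift of a free subshift of $2^{\F_\infty}$, which exists by \cite{GJS09}) carrying the self-locating/syndeticity data, pulled back along $\Gamma*\F_\infty\thra\F_\infty$, and the copy of $x$ on the $\Gamma$-coset, with the two combined into a single $\{0,1\}$-valued point via a suitable interleaving of coordinates (using that $\Gamma*\F_\infty$ is a disjoint union of translates of $\Gamma$ and that each such translate can be tagged by its ``$\F_\infty$-address''). The syndeticity of return times for $\hat x$ then follows from the syndeticity of return times for $m$ (minimality of $m$) on the $\F_\infty$-part, together with the fact that the $\Gamma$-part only contributes finitely many coordinates to any finite $A$ and those are recovered from the address data — so matching the $\F_\infty$-address already pins down the $\Gamma$-data. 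The hard part is precisely making this interleaving explicit enough that minimality is genuinely forced while injectivity on $x$ is preserved; once the bookkeeping is set up, Borelness of $x\mapsto\hat x$ is clear, $\hat x$ lands in $M$ by the syndeticity argument, and $x E(\Gamma,2) y\iff \hat x\, E(\Gamma*\F_\infty,2)\,\hat y$ follows from reading off the $\Gamma$-coordinates on one side and from $\Gamma*\F_\infty$-equivariance of the interleaving on the other.
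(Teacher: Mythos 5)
Your high-level plan (use one free generator per ``level'' $n$ and a finite window $A_n$, and force syndetic recurrence of $A_n$-patterns level by level) is the same as Seward's, but the concrete architecture you propose in the last paragraph does not work, and the step you defer as ``the hard part'' is in fact the entire content of the theorem. The problem is with putting $x$ only on the coset $\Gamma$ and filling the remaining cosets with data pulled back from a fixed minimal point $m\in 2^{\F_\infty}$ plus finite ``address'' tags. For minimality of $\hat x$ you need, for every finite $A\subseteq\Gamma$, the return set $\{g : (g\cdot\hat x)\uhr_A = \hat x\uhr_A\}$ to be left syndetic in $\Gamma*\F_\infty$. If the pattern $x\uhr_A$ literally occurs only on the single coset $\Gamma$, the return set is contained in $\Gamma$, and no subset of a single left coset of an infinite-index subgroup is left syndetic (finitely many left translates of a subset of $\Gamma$ meet only finitely many cosets $h\Gamma$). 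So the $A$-patterns of $x$ itself must recur on infinitely many cosets, which means each coset $w\Gamma$ must carry a full $\Gamma$-translate of $x$ — an infinite object that cannot be ``recovered from the address data,'' since a bounded address determines only finitely many bits. Your phrase ``matching the $\F_\infty$-address already pins down the $\Gamma$-data'' is exactly where the argument breaks.

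What the paper's proof supplies, and what your sketch is missing, is the mechanism for choosing \emph{which} translate of $x$ sits on each coset. In the key lemma, for a single new generator $t$ and a fixed finite $A\subseteq\Gamma$, one fixes a transitive permutation $\sigma_{P_x}$ of the (finite!) set $P_x$ of $A$-patterns occurring in $x$, and decrees that passing from the coset $w\Gamma$ to $tw\Gamma$ replaces the data by the $\Gamma$-translate of it whose $A$-pattern at the origin is $\sigma_{P_x}$ applied to the previous one. This simultaneously guarantees: (a) every coset carries an element of $\Gamma\cdot x$, which is what makes the map a cohomomorphism (if $g\cdot\hat x=\hat y$ then $\hat y\uhr_\Gamma\in\Gamma\cdot x$, so $x\mathrel{E(\Gamma,2)}y$ — note that mere injectivity of $x\mapsto\hat x$, which is all your ``decode by restricting to $\Gamma$'' gives, is not enough for $\sq_B$); and (b) the $A$-pattern at the origin cycles with period at most $2^{|A|}$ in the $t$-direction, yielding the explicit syndetic set $T=\{t^k : 0\le k<2^{|A|}\}$. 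Iterating over $n$ with $A_n\nearrow\Gamma*\F_\infty$ and generator $t_n$, plus an induction showing the level-$n$ transitivity survives all later extensions, completes the proof. Without some device of this kind your construction either fails minimality (patterns of $x$ don't recur syndetically) or fails the cohomomorphism property (you lose control of what appears on the $\Gamma$-coordinates of limit points and translates), so the proposal has a genuine gap rather than being a complete alternative argument.
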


\begin{proof}
We start with the following lemma.
\begin{lem}
    Let $\Gamma$ be a group
    and let $A\subseteq \Gamma$ be a finite subset.
    There is a $\Gamma$-equivariant Borel embedding $x\mapsto x'$
    from $E(\Gamma, 2)$ to $E(\Gamma*\Z, 2)$ such that for every $x\in 2^\Gamma$,
    \begin{enumerate}[label=(\roman*)]
        \item
            $x'\uhr_\Gamma = x$ (i.e. $x'$ extends $x$),
        \item
            $((\Gamma*\Z)\cdot x')\uhr_\Gamma \subseteq \Gamma\cdot x$,
        \item
            the $\Z$-action on $(\Gamma*\Z)\cdot x'$ factors
            via the restriction map $2^{\Gamma*\Z}\to 2^A$
            to a transitive action
            (on the image).
    \end{enumerate}
\end{lem}
\begin{proof}
    Fix an enumeration of $\Gamma$,
    and let $t$ denote the generator of $\Z$.
    For every nonempty subset $P$ of $2^A$,
    fix a transitive permutation $\sigma_P$ of $P$.
    Let $x\in 2^\Gamma$,
    and let $P_x = (\Gamma\cdot x)\uhr_A$
    be the set of $A$-patterns appearing in $x$.
    We define $x'$ inductively on left cosets of $\Gamma$,
    starting with $x'\uhr_\Gamma = x$.
    
    Let $tw$ be a reduced word in ${\Gamma*\Z}$
    for which $(w\cdot x')\uhr_\Gamma$ is already defined.
    Then set $(tw\cdot x')\uhr_\Gamma = \gamma\cdot((w\cdot x')\uhr_\Gamma)$,
    where $\gamma$ is minimal with
    $(\gamma\cdot ((w\cdot x')\uhr_\Gamma))\uhr_A
    = \sigma_{P_x}((w\cdot x')\uhr_A)$.
    
    Similarly,
    if $t^{-1}w$ is a reduced word for which
    $(w\cdot x')\uhr_\Gamma$ is defined,
    then set $(t^{-1}w\cdot x')\uhr_\Gamma
    = \gamma^{-1}\cdot((w\cdot x')\uhr_\Gamma)$,
    where $\gamma$ is minimal with
    $(\gamma^{-1}\cdot ((w\cdot x')\uhr_\Gamma))\uhr_A
    = \sigma_{P_x}^{-1}((w\cdot x')\uhr_A)$.
\end{proof}

    Let now $t_0, t_1, t_2,\ldots$ be the free generators of $\F_\infty$,
    and let $\Gamma_n = \Gamma*\ev{t_i}_{i < n}$ (this includes the case $n=\infty$).
    Let $(A_n)_n$ be an exhaustive increasing sequence
    of finite subsets of $\Gamma_\infty$ such that $A_n\subseteq \Gamma_n$.
    For every $n$,
    apply the lemma with $\Gamma_n$ and $A_n$
    to obtain a Borel embedding $E(\Gamma_n, 2)\sq_B E(\Gamma_{n+1}, 2)$.
    Given $x_0\in 2^{\Gamma_0}$,
    let $x_1$ denote the extension to $2^{\Gamma_1}$ of $x_0$,
    let $x_2$ denote the extension to $2^{\Gamma_2}$ of $x_1$,
    and so on for $x_n \in 2^{\Gamma_n}$.
    Let $x_\infty = \bigcup_n x_n$.
    We claim that for every $n$ and every $m > n$
    (including $m = \infty$),
    the $\ev{t_n}$-action on $\Gamma_m \cdot x_m$
    factors via the restriction map $2^{\Gamma_m}\to 2^{A_n}$
    to a transitive $\ev{t_n}$-action on the image.
    It suffices to show this for every finite $m$.
    We proceed by induction on $m$,
    for which the base case $m = n + 1$ holds by the lemma.
    Now suppose that this holds for $m$.
    Let $\gamma \in \Gamma_{m+1}$.
    Then by the lemma,
    $(\gamma\cdot x_{m+1})\uhr_{\Gamma_m} = h\cdot x_m$ for some $h \in \Gamma_m$,
    and thus
    \[
        (t_m \gamma\cdot x_{m+1})\uhr_{\Gamma_m}
        = t_m\cdot ((\gamma\cdot x_{m+1})\uhr_{\Gamma_m})
        = t_m\cdot (h\cdot x_m)
        = t_m h\cdot x_m
    \]
    so
    \[
        (t_m \gamma\cdot x_{m+1})\uhr_{A_n}
        = (t_m \gamma\cdot x_{m+1}\uhr_{\Gamma_m})\uhr_{A_n}
        = (t_m h\cdot x_m)\uhr_{A_n}
    \]
    which only depends on $(h\cdot x_m)\uhr_{A_n} = (\gamma\cdot x_{m+1})\uhr_{A_n}$,
    so the $\ev{t_n}$-action factors through $2^{\Gamma_{m+1}}\to 2^{A_n}$,
    and the action is clearly still transitive.
    
    We show that the map $x_0\mapsto x_\infty$ is the desired embedding.
    It is clearly a $\Gamma_0$-invariant Borel injection.
    To see that it is a cohomomorphism,
    if $(x_\infty, y_\infty) \in E(\Gamma_\infty, 2)$,
    then $x_\infty = \gamma\cdot y_\infty$ for some $\gamma \in \Gamma_\infty$.
    Now $\gamma\in \Gamma_n$ for some $n$,
    so $x_n = \gamma\cdot y_n$,
    and thus $(x_n, y_n) \in E(\Gamma_n, 2)$.
    Since each extension map is a cohomomorphism,
    we have $(x_0, y_0) \in E(\Gamma_0, 2)$.
    
    It remains to show that the image lies in $M$.
    Fix $x_\infty$ and let $A\subseteq \Gamma_\infty$.
    We show that the set
    $\{\gamma\in \Gamma_\infty:(\gamma\cdot x_\infty)\uhr_A = x_\infty\uhr_A\}$
    is left syndetic.
    By enlarging $A$,
    we can assume that $A = A_n$ for some $n$.
    Let $T = \{(t_n)^k : 0\le k < 2^{|A_n|}\}$.
    Now let $\gamma\in \Gamma_\infty$.
    Then by transitivity,
    there is some $0 \le k < 2^{|A_n|}$ for which
    $((t_n)^k \gamma\cdot x)\uhr_{A_n} = x_\infty \uhr_{A_n}$,
    so we are done.
\end{proof}
  
We can now restate \cref{prob-twogenuniv}, in a more general form, as follows:
\newcommand{\probminuniv}{
    Is $\F_2$ minimal subshift universal?
    More generally,
    is every group that contains $\F_2$
    minimal subshift universal?
}
\begin{prob}\label{prob-minuniv}
    \probminuniv
\end{prob}

\section{Subshifts as tests for amenability}
It is well known that a group $\Gamma$ is amenable iff every continuous action of $\Gamma$ on a compact space admits an invariant Borel probability measure. Call a class $\mathcal F$ of such actions a {\bf test for amenability} for $\Gamma$ if $\Gamma$ is amenable provided that  every action in $\mathcal F$ admits an invariant Borel probability measure. In \cite{GH97} a compact metrizable space $X$ is called a {\bf test space} for the amenability of $\Gamma$ if the class of all $\Gamma$-flows on $X$ is a test for amenability for $\Gamma$. 
Giordano and de la Harpe show in \cite{GH97} that the Cantor space $\cantor$ is a test space for amenability of any group. Equivalently this says that the class of all subshifts of $(\cantor)^\Gamma$ is a test of amenability for $\Gamma$. We show next that the strongest result along these lines is actually true, namely that the class of all subshifts of $2^\Gamma$ is a test of amenability for $\Gamma$. This gives another characterization of amenability.

\begin{thm}\label{compressibleSubshift}
    Let $\Gamma$ be a group. Then $\Gamma$ is amenable iff every subshift of $2^\Gamma$ admits an invariant Borel probability measure.
\end{thm}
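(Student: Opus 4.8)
The forward direction is the easy one: if $\Gamma$ is amenable then every $\Gamma$-flow, and in particular every subshift of $2^\Gamma$, carries an invariant Borel probability measure, by the usual fixed-point characterization of amenability. So the content is the reverse direction, and the plan is to produce, from non-amenability of $\Gamma$, a subshift of $2^\Gamma$ with no invariant Borel probability measure. By Nadkarni's theorem (``no invariant measure $\iff$ compressible''), the equivalence ``action compressible $\iff$ induced equivalence relation compressible'', and \cref{main} applied to the continuous shift action on a closed invariant subset of $2^\Gamma$, it suffices, and is most convenient, to produce a closed invariant $X\subseteq 2^\Gamma$ together with two injective Borel maps $T_1,T_2\colon X\to X$, each of the form $x\mapsto \gamma(x)\cdot x$ with $\gamma(x)$ ranging over a fixed finite subset of $\Gamma$ (clopen-piecewise), such that $T_1(X)\cap T_2(X)=\emptyset$. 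Indeed, if $\mu$ were an invariant Borel probability measure on $X$, then writing each $T_a$ as a piecewise translation over a finite clopen partition and using invariance together with injectivity gives $\mu(T_a(X))=\mu(X)=1$ for $a=1,2$, whence $1\ge\mu(T_1(X))+\mu(T_2(X))=2$, a contradiction.

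The plan for building $X$ is to ``internalize'' a paradoxical decomposition of $\Gamma$ into a subshift. Tarski's theorem gives, from non-amenability, finitely many pairwise disjoint subsets $A_1,\dots,A_p,B_1,\dots,B_q\subseteq\Gamma$ and elements $s_1,\dots,s_p,t_1,\dots,t_q\in\Gamma$ (I would use right translations, to match the shift convention) with $\bigsqcup_i A_i s_i=\bigsqcup_j B_j t_j=\Gamma$; equivalently, $\Gamma$ has two disjoint subsets each mapped bijectively onto $\Gamma$ by a bounded ``wobble''. First I would encode such data as a colouring $c_0\in\Sigma^\Gamma$ over a finite alphabet $\Sigma=\{*,a_1,\dots,a_p,b_1,\dots,b_q\}$ recording, for each group element, its cell, and let $X_0\subseteq\Sigma^\Gamma$ be the subshift of finite type cut out by the finitely many local (finite-pattern) conditions asserting precisely that the recorded data is a valid decomposition of this shape --- e.g.\ that for every group element exactly one of the $p$ relevant shifts falls in the matching $A$-cell, and similarly for the $B$-cells. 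Then $X_0$ is closed and shift-invariant, it is non-empty since $c_0\in X_0$, and every point of $X_0$ carries --- locally and, with the right/left conventions chosen correctly, equivariantly under the shift --- a paradoxical decomposition of $\Gamma$, from which one reads off the maps $T_1,T_2\in\langle\ac s\rangle^f$; the first paragraph then finishes the job for $X_0$. Finally I would fold the alphabet down to $\{0,1\}$: since $\Gamma$ is infinite one recodes $X_0$ as a subshift of $2^\Gamma$ by a block substitution (writing the $\Sigma$-data in binary near a sparse equivariant family of markers, in the style of Gao--Jackson--Seward), and, if convenient, one may first pass to a finitely generated non-amenable subgroup $\Gamma_0\le\Gamma$ (amenability being a local property), carry out the construction over $\Gamma_0$, and then apply $\CInd_{\Gamma_0}^\Gamma$, which by \cref{cindProps} preserves both compressibility and the existence of a clopen finite generator.

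For context, this reverse direction strengthens the result of Giordano--de la Harpe \cite{GdlH97} that $\cantor$ is a test space for amenability (equivalently, that some subshift of $(\cantor)^\Gamma$ has no invariant measure) by pushing the alphabet from $2^\N$ down to $2$, and the explicit construction above is meant to make that improvement direct rather than going through a reduction of the Giordano--de la Harpe flow.

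The step I expect to be the main obstacle is verifying that $X_0$ genuinely admits the two injective maps $T_1,T_2$ with disjoint images --- i.e.\ that the shift action on $X_0$ is really paradoxical, not merely that each individual point encodes a paradoxical decomposition of $\Gamma$. The naive selectors (``shift $x$ to $\psi_x^{-1}(1)$'', with $\psi_x$ the encoded wobble) need not be injective, because a wobble does not compose cleanly with itself; one must record enough auxiliary data in $X_0$ (essentially the decomposition together with enough local information to invert the wobbles) and arrange that everything transforms equivariantly under the shift, so that the finitely many clopen pieces on which $T_1$ (resp.\ $T_2$) is a fixed translation have pairwise disjoint translates. The secondary technical point --- carrying out the alphabet reduction to two symbols while remaining a subshift of $2^\Gamma$ for the given $\Gamma$ --- is where the combinatorics of colourings of arbitrary countable groups enters, and needs some care for groups with few finite quotients, but is routine in spirit.
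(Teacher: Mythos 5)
Your forward direction is fine, your measure-theoretic contradiction from two injective, finitely-piecewise-translation maps with disjoint images is correct, and the idea of internalizing a Tarski paradoxical decomposition into a subshift is sound: it is essentially the paper's space $X_{S,T}$ of $(S,T)$-paradoxical decompositions, the closed invariant subset of $(S\sqcup T)^\Gamma$ of labellings $x$ for which $\{x^{-1}(s)s\}_{s\in S}$ and $\{x^{-1}(t)t\}_{t\in T}$ are both partitions of $\Gamma$. Moreover, the obstacle you single out as primary --- whether $X_0$ is genuinely paradoxical rather than merely pointwise encoding paradoxical decompositions of $\Gamma$ --- is not in fact an obstacle once the bijectivity of $\gamma\mapsto\gamma x_\gamma$ on each part is written into the (clopen, shift-invariant) defining conditions: on the clopen piece $\{x:x_1\in S\}$ the map $x\mapsto x_1^{-1}\cdot x$ is a bijection onto $X_{S,T}$, since the preimage of $y$ is determined by the unique $\gamma$ with $y_\gamma\in S$ and $\gamma y_\gamma=1$, and likewise for $T$; so your $T_1,T_2$ exist with no auxiliary data needed.

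The genuine gap is the step you wave off as ``routine in spirit'': reducing the alphabet to $\{0,1\}$. This is the entire content of the theorem. What you need is a topological conjugacy of your subshift onto a subshift of $2^\Gamma$, i.e., a clopen $2$-generator, and no general block-substitution or marker argument provides one: already for $\Gamma=\Z$ a subshift over a $3$-letter alphabet of entropy greater than $\log 2$ cannot be recoded into $2^\Z$, and the Gao--Jackson--Seward marker machinery produces Borel, not clopen, structure (a Borel isomorphism onto an invariant Borel subset of $2^\Gamma$ would not yield a closed set, hence not a subshift). Your construction needs alphabet size roughly $l_\Gamma$, and by \cite{EGS15} there are groups for which this is arbitrarily large, so the problem cannot be sidestepped; passing to a finitely generated non-amenable subgroup and coinducing (via \cref{cindProps}) preserves a clopen $2$-generator but does not create one. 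This is exactly why the paper's proof does not take your route for general non-amenable $\Gamma$: it builds the subshift from scratch over the alphabet $T\sqcup\{*\}$ with $T=S^n$, but supported on a maximal right $S^3$-disjoint set, so that each point carries a symbol only on a sparse set; nonemptiness comes from Hall's theorem (\cref{hallSurjection}), compressibility from a $2$-to-$1$ self-surjection of the support, and --- crucially --- the sparseness leaves room to spread the symbol at each support point over an independent set $S'\subseteq S$ via an injection $T\hookrightarrow 2^{S'}$, which is what produces the clopen $2$-generator. (When $\Gamma\ge\F_2$ the paper instead coinduces a compressible $\F_2$-subshift of $2^{\F_2}$ from the proof of \cref{3.9}, which is closer in spirit to your fallback but again relies on already having a $2$-letter realization over the subgroup.) Without an argument of this kind your proof only yields a compressible subshift of $k^\Gamma$ for some $k$ depending on $\Gamma$, which is the easier statement the paper records separately.
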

\begin{proof}
We have to show that if $\Gamma$ is not amenable then there is a compressible subshift of $2^\Gamma$. We will first give a proof for the case that $\Gamma$ contains $\F_2$, which is much simpler, and then give the full proof for arbitrary non-amenable $\Gamma$.

\textit{Proof when $\Gamma\ge \F_2$.}

    It suffices to find a compressible $\F_2$-flow with a clopen $2$-generator,
    since if $\ac a$ is such an $\F_2$-flow,
    then $\CInd_{\F_2}^\Gamma(\ac a)$ is a compressible $\Gamma$-flow
    with a clopen $2$-generator by \cref{cindProps}. For the existence of such a $\F_2$-flow, see the proof of \cref{3.9}.

\textit{Proof for all non-amenable $\Gamma$.}

    By nonamenability,
    there is a finite symmetric subset $S\subseteq\Gamma$
    containing $1$ such that:
    \begin{enumerate}[label=(\roman*)]
        \item
            for every finite $F\subseteq\Gamma$,
            we have $|FS|\ge 2|F|$;
        \item
            there is an integer $n$ with
            \[
                4 + 3\log_2(|S|)
                \le n
                \le \frac{|S|-6}{3\log_2(|S|)}
            \]
        \item \label{item-nonamen-noninvol}
            there is some $r \in S$ with $r^2 \neq 1$.
    \end{enumerate}
    Let $T = S^n$.
    Given a point $x \in (T\sqcup\{*\})^\Gamma$,
    let $\Supp(x)$ denote the set of $\gamma \in \Gamma$
    such that $x_\gamma \neq *$.
    Let $X$ be the subshift of $(T\sqcup\{*\})^\Gamma$
    such that $x \in X$ iff the following hold:
    \begin{enumerate}[label=(\roman*)]
        \item
            $\Supp(x)$ is maximal right $S^3$-disjoint
            (a subset $A\subseteq\Gamma$ is \textbf{right $S^3$-disjoint}
            if for any distinct $a, a'\in\Supp(A)$,
            we have $a'\notin a S^3$);
        \item
            the function $\gamma\mapsto \gamma x_\gamma$
            is a $2$-to-$1$ surjection from $\Supp(x)$ onto $\Supp(x)$.
    \end{enumerate}
    We claim that $X$ is the desired subshift.
    We first recall a fact from graph theory.
    \begin{lem}\label{hallSurjection}
        Let $G$ be a locally finite
        (not necessarily simple)
        graph with vertex set $V$,
        such that every finite $F\subseteq V$ satisfies $|N_G(F)|\ge k|F|$,
        where $N_G(F)$ denotes the set of neighbours of $F$.
        Then there is a $k$-to-$1$ surjection $p \colon V \to V$
        such that for every $v\in V$,
        there is an edge from $v$ to $p(v)$.
    \end{lem}
    \begin{proof}
        Consider the bipartite graph $B$ with bipartition $(V_l, V_r)$,
        where $V_l = V_r = V$,
        and where there is an edge from $v \in V_l$ to $w\in V_r$
        if $vw$ is an edge in $V$.
        Then every finite $F\subseteq V_l$ satisfies $|N_B(F)| \ge |F|$,
        and every finite $F\subseteq V_r$ satisfies $|N_B(F)|\ge k|F|$,
        so by Hall's theorem \cite[C.4(b)]{TW16},
        there are matchings $(M_i)_{i < k}$
        such that every vertex in $V_l$ is covered by a unique $M_i$,
        and every vertex is $V_r$ is covered by every $M_i$.
        Then $\bigcup_{i < k} M_i$ is
        (the graph of)
        the desired $k$-to-$1$ surjection.
    \end{proof}
    
    We show that $X$ is nonempty.
    Let $A\subseteq\Gamma$ be any maximal right $S^3$-disjoint subset,
    and consider $A$ as a (non-simple) graph
    where $a$ and $a'$ are adjacent iff $a' \in aT$.
    Let $F\subseteq A$ be a finite subset.
    By maximality of $A$,
    every element of $FS^{n-3}$ is within $S^3$
    of some element of $FT\cap A$,
    and thus
    \[
        |FT\cap A|
        \ge \frac{|F S^{n-3}|}{|S^3|}
        \ge \frac{2^{n-3} |F|}{|S|^3}
        \ge 2|F|
    \]
    by our choice of $n$.
    Thus by \cref{hallSurjection},
    there is a $2$-to-$1$ surjection $p \colon A\to A$
    such that $p(a) \in a T$ for every $a\in A$.
    Define $x\in X^\Gamma$ by
    \[
        x_\gamma =
        \begin{cases}
            \gamma^{-1} p(\gamma) & \gamma \in A, \\
            * & \text{otherwise.}
        \end{cases}
    \]
    Then $x\in X$.
    
    Next,
    we show that $X$ is a compressible subshift.
    Let $Y\subseteq X$ be the set of $x\in X$ with $1 \in \Supp(x)$.
    Consider the Borel map $Y\to Y$ defined by $y\mapsto y_1^{-1}\cdot y$.
    This is a $2$-to-$1$ surjection,
    since the preimage of $y\in Y$ is the set
    $\{\gamma^{-1}\cdot y : \gamma y_\gamma = 1\}$.
    Thus $Y$ is a compressible subset,
    so since $Y$ is a complete section,
    $X$ is also a compressible subset.
    
    It remains to show that there is a clopen $2$-generator,
    which suffices by \ref{item-clopengenerator} before \cref{liftProps}.
    
    Recall that if $G$ is a finite graph with maximum degree $d$,
    then every independent set $I\subseteq G$
    can be extended to an independent set
    of size at least $\frac{|G|}{d+1}$.
    To see this, note that for any set $X\subseteq G$, we have that $|N_G [X]| \le (d+1) |X|$, where $N_G [X]$ is the set of vertices within distance 1 of $X$. If $I$ is maximal independent, then clearly $N_G[I] = G$ and we are done.
    
    Consider $S$ as a graph where   $s'$ are adjacent
    iff $s' = s r^{\pm 1}$ and $\{s, s'\}\neq \{1, r\}$,
    where $r$ is as in \ref{item-nonamen-noninvol} before \cref{hallSurjection}.
    Then by the above,
    there is an independent set $S' \supseteq \{1, r\}$
    of size at least $\frac{|S|}{3}$.
    Fix an injection $\phi \colon T\hra 2^{S'}$
    such that $\phi(t)_1 = \phi(t)_r = 1$
    for every $t\in T$;
    this is possible since
    \[
        \log_2(|T|)
        \le n \log_2(|S|)
        \le \frac{|S|}{3} - 2
        \le |S'|-2
    \]
    by our choice of $n$.
    Define the continuous map $f \colon X\to 2$ by
    \[
        f(x) =
        \begin{cases}
            \phi(x_{s^{-1}})_s &
            \text{$s^{-1} \in \Supp(x)$ for some $s\in S'$} \\
            0 & \text{otherwise}
        \end{cases}
    \]
    This is well-defined,
    since if $s_0^{-1}$ and $s_1^{-1}$ are both in $\Supp(x)$,
    then since $\Supp(x)$ is right $S^3$-disjoint,
    we have $s_0 = s_1$.
    
    We claim that
    \[
        \gamma \in \Supp(x)
        \iff f(\gamma^{-1}\cdot x)
        = f((\gamma r)^{-1}\cdot x) = 1
    \]
    For $(\implies)$,
    since $x_\gamma\in T$,
    we have $\phi(x_\gamma)_1 = \phi(x_\gamma)_r = 1$,
    which is equivalent to what we need.
    For $(\impliedby)$,
    we must have some $s_0, s_1 \in S'$
    such that $s_0^{-1} \in \Supp(\gamma^{-1}\cdot x)$
    and $s_1^{-1} \in \Supp((\gamma r)^{-1}\cdot x)$.
    Thus $\gamma s_0^{-1}, \gamma r s_1^{-1} \in \Supp(x)$,
    but since $\Supp(x)$ is right $S^3$-disjoint,
    we get that $\gamma s_0^{-1} = \gamma r s_1^{-1}$,
    i.e. $s_1 = s_0 r$.
    Thus by our choice of $S'$,
    we have $\{s_0, s_1\} = \{1, r\}$,
    and since $r^2\neq 1$,
    we have $s_0 = 1$.
    and thus $1 \in \Supp(\gamma^{-1}\cdot x)$,
    i.e. $\gamma \in \Supp(x)$.
    
    We now show that $f$ is a generator.
    Let $x, x' \in X$,
    and suppose that $f(\gamma\cdot x) = f(\gamma\cdot x')$
    for every $\gamma \in \Gamma$.
    Then by above,
    we have that $\Supp(x) = \Supp(x')$.
    If $\gamma\notin\Supp(x)$,
    then $x_\gamma = * = x_{\gamma'}$.
    If $\gamma\in\Supp(x)$,
    then for any $s\in S'$,
    we have
    \[
        \phi(x_\gamma)_s
        = f((\gamma s)^{-1}\cdot x)
        = f((\gamma s)^{-1}\cdot x')
        = \phi(x'_\gamma)_s
    \]
    So since $\phi$ is injective,
    we have $x_\gamma = x'_\gamma$.
    Thus $x = x'$,
    and $f$ is a generator.
\end{proof}

It turns out that if one is willing to replace $2^\Gamma$ by $k^\Gamma$, where $k$ depends on $\Gamma$, it is easier to get compressible subshifts.

Fix a group $\Gamma$.
For finite subsets $S$ and $T$ of $\Gamma$,
denote by $X_{S,T}$
the space of $(S, T)$-paradoxical decompositions of $\Gamma$,
that is,
the subshift of $(S\sqcup T)^\Gamma$
such that $x\in X_{S, T}$
iff $\{x^{-1}(s) s\}_{s\in S}$ and $\{x^{-1}(t) t\}_{t\in T}$
are both partitions of $\Gamma$
(we allow pieces of a partition to be empty).

For a finite subset $T$ of $\Gamma$,
denote by $X_T$ be the space of $2$-to-$1$ $T$-surjections of $\Gamma$,
that is,
the subshift of $T^\Gamma$ such that
$x\in X_T$ iff the map $\Gamma\to\Gamma$ defined by
$\gamma\mapsto \gamma x_\gamma$ is a $2$-to-$1$ surjection.

Note that $X_{S, T}$ is a subset of $X_{S\cup T}$.
Also,
$\Gamma$ is non-amenable
iff $X_{S, T}$ is nonempty for some $S$ and $T$
iff $X_T$ is nonempty for some $T$.

The Tarski number $k_\Gamma$ of $\Gamma$ is minimum of $|S| + |T|$
over all $S$ and $T$ with $X_{S, T}$ nonempty
(it's the smallest number of pieces in a paradoxical decomposition).
There is a number $l_\Gamma$
which is the minimum of $|T|$ over all $T$ with $X_T$ nonempty,
or equivalently,
the minimum of $|S\cup T|$ over all $S$ and $T$ with $X_{S, T}$ nonempty
(it's the smallest number of group elements required
in a paradoxical decomposition).
Note that we have $l_\Gamma < k_\Gamma$ for any non-amenable $\Gamma$,
since if $X_{S, T}$ is nonempty,
then $X_{S, T\gamma}$ is nonempty for any $\gamma$,
and thus we can assume that $S$ and $T$ have at least one element in common,
i.e. $|S\cup T| < |S| + |T|$. Note that by \cite{EGS15} there are groups $\Gamma$ with arbitrarily large $k_\Gamma$.

\begin{prop}
    $X_{S, T}$ and $X_T$ are compressible.
    Thus if $\Gamma$ is non-amenable,
    then there is a compressible subshift of $(l_\Gamma)^\Gamma$.
\end{prop}
So, for example,
this easily gives a compressible subshift of $3^{\F_2}$.

\begin{proof}
    For $X_{S, T}$,
    let $P$ and $Q$ be the set of $x\in X_{S, T}$
    such that $x_1 \in S$ and $x_1 \in T$ respectively.
    Then the map defined by $x\mapsto x_1^{-1} x$
    is a bijection from $P\to X_{S, T}$ and a bijection $Q\to X_{S, T}$,
    so $X_{S, T}$ is equidecomposable with two copies of itself,
    and thus it is compressible.
    
    For $X_T$,
    let $P$ be the set of $x\in X_T$
    such that $x_1$ is the least of the two elements of
    $\{\gamma : \gamma x_\gamma = x_1\}$
    (in some fixed ordering).
    Then proceed as above.
\end{proof}

From \cref{compressibleSubshift} a group $\Gamma$ is non-amenable
iff there is a compressible subshift of $2^\Gamma$.
The following question asks whether
an analogous characterization exists for groups that contain $\F_2$.

\newcommand{\probcompuniv}{
    Is it true that a group $\Gamma$ contains $\F_2$
    iff there is a compressible,
    orbit-universal subshift of $2^\Gamma$?
}
\begin{prob}\label{prob-compuniv}
    \probcompuniv
\end{prob}

\section{The space of subshifts}\label{subshifts-space}

\subsection{\nopunct}
We will first review the standard embedding of actions into the shift action.
Consider a continuous action of a countable group $\Gamma$ on a Polish space $Y$ and let $Y$ be a closed subspace of a Polish space $X$. Define $f \colon Y \to X^\Gamma$ by
\[
f(y)_\gamma = \gamma^{-1} \cdot y.
\]
Then it is easy to check that $f$ is $\Gamma$-equivariant, $f(Y)$ is a closed subset of $X^\Gamma$ and $f$ is a homeomorphism of $Y$ with $f(Y)$, i.e., the action of $\Gamma$ on $Y$ is (topologically) isomorphic to a subshift of $X^\Gamma$, where of course $\Gamma$ acts on itself by left translation.

For any Polish space $X$,
define the standard Borel space of subshifts of $X^\Gamma$ as follows:
\[
    \Sh(\Gamma, X)
    = \{F \in F(X^\Gamma) : \textup{$F$ is $\Gamma$-invariant}\}
\]
If $X$ is compact,
we view this as a compact Polish space with the Vietoris topology.

Consider the Hilbert cube $\I^\N$.
Every compact Polish space is
(up to homeomorphism)
a closed subspace of $\I^\N$,
and thus every $\Gamma$-flow is
(topologically) isomorphic to a subshift of $(\I^\N)^\Gamma$.
We can thus consider the compact Polish space $\Sh(\Gamma, \I^\N)$
as the universal space of $\Gamma$-flows.

Similarly consider the product space $\R^\N$.
Every Polish space is
(up to homeomorphism)
a closed subspace of $\R^\N$,
and thus every continuous $\Gamma$-action is
(topologically) isomorphic to a subshift of $(\R^\N)^\Gamma$.
We can thus consider the standard Borel space $\Sh(\Gamma, \R^\N)$
as the universal space of continuous $\Gamma$-actions.

In particular taking $\Gamma = \F_\infty$,
the free group with a countably infinite set of generators,
we see that every CBER is Borel isomorphic to the equivalence relation $E_F$ induced on some subshift $F$ of $(\R^\N)^{\F_\infty}$ and so we can view $\Sh(\F_\infty, \R^\N)$ also as the universal space of CBER and study the complexity of various classes of CBER (like, e.g., smooth, aperiodic, hyperfinite, etc.) as subsets of this universal space. Similarly we can view $\Sh(\F_\infty, \I^\N)$ as the universal space of CBER that admit a compact action realization. In this case we can also consider complexity questions as well as generic questions of various classes. 

\subsection{\nopunct}\label{subshifts-space-props}
Let $\Phi$ be a property of continuous $\Gamma$-actions on Polish spaces
which is invariant under (topological) isomorphism.
Let 
\[
    \Sh_\Phi(\Gamma, X) =
    \{F \in \Sh(\Gamma, X) : F \models \Phi\},
\]
where we write $F\models\Phi$ to mean that $F$ has the property $\Phi$.

Denote by $\Prob(\Gamma)$ the closed $\Gamma$-subspace of $[0, 1]^\Gamma$
consisting of probability measures on $\Gamma$.

A Borel action $\Gamma\car X$ on a standard Borel space is \textbf{Borel amenable}
if there is a sequence of Borel maps $p_n \colon X \to \Prob(\Gamma)$
such that, letting $p^x_n = p_n(x)$, $\|p_n^{\gamma\cdot x} - \gamma\cdot p_n^x\|_1\to 0$,
for every $\gamma\in\Gamma$ and $x\in X$.
If $\mu$ is a Borel probability measure on $X$,
then $\Gamma\car X$ is \textbf{$\boldsymbol{\mu}$-amenable}
if there is a $\Gamma$-invariant $\mu$-conull Borel subset of $X$
where the action is Borel amenable.
The action $\Gamma\car X$ is \textbf{measure-amenable}
if it is $\mu$-amenable for every $\mu$.

Let $\Gamma$ be a countable group,
and let $X$ be a Polish space.
A continuous action $\Gamma\car X$ is
\textbf{topologically amenable}
if for every finite $S\subseteq \Gamma$,
every compact $K\subseteq X$,
and every $\epsilon > 0$,
there is some continuous $p \colon X \to \Prob(\Gamma)$,
such that, letting $p^x = p(x)$, we have
\[
    \max_{\substack{\gamma \in S \\ x \in K}}
    \|p^{\gamma\cdot x} - \gamma\cdot p^x\|_1
    < \epsilon.
\]

By \cref{topAmenChar},
measure-amenability is equivalent to topological amenability.

A countable discrete group $\Gamma$ is \textbf{exact}
if it admits a topologically amenable
action on a compact Polish space (see \cite[Chapter 5]{BO08}),
in which case there exists such an action on the Cantor space $\cantor$,
since every compact $\Gamma$-flow is a factor of a $\Gamma$-flow on $\cantor$,
see \cite{GH97}.
Examples of exact groups include all linear groups,
and all groups of finite asymptotic dimension.
There exist groups which are not exact,
but all such groups have been constructed explicitly
for this purpose via small cancellation.

We will consider below the following $\Phi$,
where we recall our terminology convention from
\Cref{intro-subshifts}:

\summarySubshiftProps

The following two problems are open:
\newcommand{\probhfcomeager}{
    Let $\Gamma$ be an infinite exact group.
    Is $\Sh_\hyp(\Gamma, \I^\N)$ comeager in $\Sh(\Gamma, \I^\N)$?
}
\begin{prob}\label{prob-hfcomeager}
    \probhfcomeager
\end{prob}
Note that the hypothesis of exactness is necessary.
In fact,
if $\Gamma$ is non-exact,
then every free continuous action on a compact space is non-measure-hyperfinite,
so in particular,
since the generic action is free,
the generic action is non-measure-hyperfinite.

As it was mentioned before it was shown in \cite{IS25} that this holds if $\Gamma$ is a free group (in fact any group of finite asymptotic dimension, e.g., any hyperbolic group).

\newcommand{\probhfcomplexity}{
    Let $\Gamma$ be an infinite group.
    What is the exact descriptive complexity of
    $\Sh_\hyp(\Gamma, \I^\N)$ in $\Sh(\Gamma, \I^\N)$?
}
\begin{prob}\label{prob-hfcomplexity}
    \probhfcomplexity
\end{prob}

Note that from the results in the 5th row of the above table,
it follows that a countable group $\Gamma$ is amenable
iff the generic subshift of $(\I^\N)^\Gamma$
admits an invariant Borel probability measure.

We will now prove the results in the table in a series of propositions. 
A property $\Phi$ of {\bf Polish} $\boldsymbol{\Gamma -}${\bf spaces} (i.e.,  continuous actions of $\Gamma$ on Polish spaces),
invariant under topological isomorphism, is: 
\begin{itemize}
    \item \textbf{satisfiable} if some Polish $\Gamma$-space satisfies $\Phi$;
    \item \textbf{compactly satisfiable} if some (compact) $\Gamma$-flow satisfies $\Phi$;
    \item \textbf{product-stable}
        if for any (compact) $\Gamma$-flows $\mathbf a$ and $\mathbf b$,
        if $\mathbf a$ satisfies $\Phi$,
        then $\mathbf a\times\mathbf b$ satisfies $\Phi$.
\end{itemize}

\begin{prop}\label{stableDense}
    Let $\Phi$ be a compactly satisfiable, product-stable property.
    Then the set
    \[
        \{K\in \Sh(\Gamma, \I^\N) : \textup{$K$ satisfies $\Phi$}\}
    \]
    is dense in $\Sh(\Gamma, \I^\N)$.
\end{prop}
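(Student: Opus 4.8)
The plan is to exploit the universality of the space $\Sh(\Gamma,\I^\N)$: a basic open set in the Vietoris topology is determined by finitely many constraints on which closed invariant subsets of $(\I^\N)^\Gamma$ it contains near specified open boxes. So, to prove density, it suffices to show that any $\Gamma$-flow $\mathbf c$ (viewed as a subshift $K_0\subseteq(\I^\N)^\Gamma$) can be approximated arbitrarily well, in the Vietoris topology, by a subshift $K$ satisfying $\Phi$. First I would fix a compact $\Gamma$-flow $\mathbf a$ witnessing compact satisfiability of $\Phi$, realize it as a subshift $A\subseteq(\I^\N)^\Gamma$, and also realize the given flow $\mathbf c$ as a subshift $C\subseteq(\I^\N)^\Gamma$. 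The key construction is to form the product flow $\mathbf a\times\mathbf c$, which by product-stability still satisfies $\Phi$. The point is that $\mathbf a\times\mathbf c$ is a $\Gamma$-flow on a compact Polish space, hence (topologically) isomorphic to a subshift of $(\I^\N)^\Gamma$ via the standard embedding described in part {\bf (A)} of \cref{S3FF}; call this subshift $K_1$. Since $\Phi$ is invariant under topological isomorphism, $K_1$ satisfies $\Phi$.

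The second step is to arrange that $K_1$ — or a suitable variant of it — is Vietoris-close to the target $C$. Here the natural trick is to \emph{rescale} the $\I^\N$-coordinate carried by the $\mathbf a$-factor so that it occupies a tiny region of the Hilbert cube. Concretely, the Hilbert cube $\I^\N$ is homeomorphic to $\I^\N\times\I^\N$, and we may choose the homeomorphism so that one factor is compressed into an $\ve$-ball around a fixed point. Applying this homeomorphism coordinatewise to $(\I^\N)^\Gamma=((\I^\N)^\Gamma)$ and pushing $K_1$ through it, we obtain an isomorphic subshift $K$ still satisfying $\Phi$, but now the $\mathbf a$-data lives in an $\ve$-ball, so $K$ is contained in an $\ve$-neighborhood of $C$ and meets every open box that $C$ meets. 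For $\ve$ small this places $K$ in any prescribed basic Vietoris neighborhood of $C$. Since the $C$ of this form are dense in $\Sh(\Gamma,\I^\N)$ (indeed every subshift is such a $C$), density of $\{K:K\models\Phi\}$ follows.

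The main obstacle I anticipate is the bookkeeping in the second step: making precise the claim that after rescaling, the subshift $K$ genuinely lands in an arbitrarily small Vietoris neighborhood of $C$. One has to check both halves of the Vietoris topology — that $K$ stays inside a prescribed open neighborhood of $C$ (handled by the $\ve$-compression of the $\mathbf a$-factor, using that the metric on $(\I^\N)^\Gamma$ is a weighted sup of coordinate metrics) and that $K$ still intersects each of the finitely many open sets that $C$ intersects (handled by the fact that the projection $\mathbf a\times\mathbf c\to\mathbf c$ is surjective and $\Gamma$-equivariant, so the $\mathbf c$-shadow of $K$ is exactly $C$). Neither is deep, but writing the homeomorphism $\I^\N\cong\I^\N\times\I^\N$ with explicit control of diameters, and verifying equivariance is preserved, is the part that requires care rather than ideas. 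Everything else — compact satisfiability giving $\mathbf a$, product-stability giving $\Phi$ for $\mathbf a\times\mathbf c$, isomorphism-invariance transferring $\Phi$ through the embedding — is immediate from the definitions recalled just above the statement.
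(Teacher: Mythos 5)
Your proposal is correct and is essentially the paper's argument: both take the product of the target subshift with a compact flow witnessing $\Phi$, invoke product-stability, and then arrange for the witness factor to be topologically negligible. The paper sidesteps the $\ve$-compression bookkeeping you flag by presenting $\Sh(\Gamma,\I^\N)$ as the inverse limit of the $\Sh(\Gamma,\I^n)$, so the witness factor is simply placed in the tail coordinates $\I^{\N\setminus n}$, which a given basic open set does not see at all; your explicit homeomorphism $\I^\N\cong\I^\N\times\I^\N$ with a metrically small factor accomplishes the same thing.
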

\begin{proof}
    Since $\I^\N$ is the inverse limit of the spaces $\I^n$,
    we have that $\Sh(\Gamma, \I^\N)$ is the inverse limit of
    $(\Sh(\Gamma, \I^n))_n$.
    Thus it suffices to show, for every $n \in \N$
    and every nonempty open $U\subseteq \Sh(\Gamma, \I^n)$,
    that some subshift in $\pi_n^{-1}(U)$ satisfies $\Phi$,
    where $\pi_n : \Sh(\Gamma, \I^\N)\to \Sh(\Gamma, \I^n)$
    is the projection map.
    Fix $K \in U$,
    and fix $L \in \Sh(\Gamma, \I^{\N\setminus n})$
    satisfying $\Phi$.
    Then $K\times L$ satisfies $\Phi$ by product stability,
    and is contained in $\pi_n^{-1}(U)$,
    so we are done.
\end{proof}

For compact Polish $X$, a subset $\mathcal I\subseteq\Sh(\Gamma, X)$
is a \textbf{$\boldsymbol\sigma$-ideal}
if the following hold:
\begin{enumerate}[label=(\roman*)]
    \item
        if $K \in \mathcal I$, $L \in \Sh(\Gamma, X)$   
        and $L\subseteq K$, then $L \in \mathcal I$;
    \item
        if $K \in \Sh(\Gamma, X)$ and $K = \bigcup_n K_n$
        for some countable sequence $K_n\in \mathcal I$,
        then $K \in \mathcal I$.
\end{enumerate}
Every $\Sh_\Phi(\Gamma, X)$ in the above table is a $\sigma$-ideal.
We will need the following to show $\PI^1_1$-hardness.
It is an analog of \cite[Section 1.4, Theorem 7]{KLW87}
and can be proved by the same argument
which we repeat here for the convenience of the reader.
\begin{prop}\label{idealHard}
    Let $X$ be a compact Polish space
    and let $\mathcal I$ be a $\sigma$-ideal in $\Sh(\Gamma, X)$.
    If $\mathcal I$ is $F_\sigma$-hard,
    then $\mathcal I$ is $\PI^1_1$-hard.
\end{prop}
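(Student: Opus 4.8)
The plan is to carry out the reduction of the $\PI^1_1$-complete set $\WF$ of well-founded trees on $\N$ to $\mathcal I$, exactly along the lines of \cite[Section 1.4, Theorem 7]{KLW87} but with compact sets replaced by subshifts. Concretely I would build a Borel (in fact continuous) map $\tree\to\Sh(\Gamma,X)$, $T\mapsto K_T$, with $K_T\in\mathcal I \iff T\in\WF$. The hypothesis enters only at the start: since $\mathcal I$ is $F_\sigma$-hard, fix a continuous reduction $\theta\colon\cantor\to\Sh(\Gamma,X)$ of the $\SIGMA^0_2$-complete set $\{x\in\cantor : \{n : x(n)=1\}\text{ is finite}\}$ to $\mathcal I$; so $\theta(x)\in\mathcal I$ iff $x$ is eventually $0$, and in particular $\theta(\bar 0)\in\mathcal I$ while $\theta(\bar 1)\notin\mathcal I$. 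This is all that ``$F_\sigma$-hard'' (equivalently, ``not $G_\delta$'') is used for.

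The core of the construction is a ``$\sigma$-ideal-adapted countable join'': a continuous operation $(L_n)_n\mapsto\biguplus_n L_n$ on $\Sh(\Gamma,X)$ that glues (in the sense of the gluing construction of \cref{S3C}) shrinking copies of the $L_n$ onto an orbit of a fixed background flow, arranged so that (a) each copy sits inside $\biguplus_n L_n$ as a subshift, so that if all $L_n\in\mathcal I$ then the union of the copies is in $\mathcal I$ by countable additivity; (b) the points added upon taking the closure (the ``depth-$1$ accumulation'') lie inside a fixed copy of $\theta(\bar 0)$, hence in $\mathcal I$, so that $\biguplus_n L_n\in\mathcal I$ whenever every $L_n\in\mathcal I$; and (c) iterating the join to infinite depth produces, in the closure, a copy of $\theta(\bar 1)\notin\mathcal I$. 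With such a $\biguplus$ in hand I would then invoke the recursion theorem — the key point being that the rank of a node is not a continuous function of $T$, so a self-referential definition is needed — to obtain, continuously in $T$ and uniformly in $s\in\N^{<\N}$, subshifts $K_s(T)$ with $K_s(T)=\theta(\bar 0)$ for $s\notin T$ and $K_s(T)=\biguplus_n K_{s^\frown n}(T)$ for $s\in T$, and set $K_T=K_\emptyset(T)$.

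The verification then splits in two. If $T\in\WF$, transfinite induction on the rank $\rho_T(s)$ shows $K_s(T)\in\mathcal I$ for every $s$: at a leaf, or a node off $T$, $K_s(T)$ is $\theta(\bar 0)$ or a join of copies of it, hence in $\mathcal I$ by (a) and (b); at a node of rank $\xi$, $K_s(T)$ is a join of $\mathcal I$-members of rank $<\xi$ together with copies of $\theta(\bar 0)$, hence in $\mathcal I$ by (a), (b), downward closure, and countable additivity. In particular $K_T\in\mathcal I$. If $T\notin\WF$, fix an infinite branch $b$; then each $K_{b\uhr k}(T)$ is a join in which a copy of $K_{b\uhr(k+1)}(T)$ occurs, so iterating down $b$ realizes inside $K_T$ the infinite-depth accumulation set, which by (c) contains a copy of $\theta(\bar 1)\notin\mathcal I$; since $\mathcal I$ is downward closed, $K_T\notin\mathcal I$.

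The step I expect to be the main obstacle is constructing the operation $\biguplus$ with properties (a)–(c) holding simultaneously, and in particular making (c) coexist with (b): the join must carry a witness to infinite descent in its closure without already containing one at finite depth. This is precisely where the recursion theorem interacts with the fine structure of the $F_\sigma$-hardness witness $\theta$ (in \cite{KLW87} this is the delicate point), and here it additionally requires the bookkeeping of the \cref{S3C} gluing — checking that the glued object is a genuine nonempty closed $\Gamma$-invariant subset of $X^\Gamma$, that the copies and their accumulation sets behave as claimed, and that the whole assignment is jointly continuous in $T$ so that the recursion theorem applies to honest elements of $\Sh(\Gamma,X)$. Everything else — fixing $\theta$, the transfinite induction, and recognizing $T\mapsto K_\emptyset(T)$ as a reduction of $\WF$ — is then a routine adaptation of the classical compact-sets argument.
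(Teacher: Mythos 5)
Your plan reaches for the wrong tool: the argument behind \cite[Section 1.4, Theorem 7]{KLW87} that this proposition adapts is the short ``union over a compact index set'' trick, not a tree/recursion construction, and the machinery you propose is both unnecessary and, as described, does not go through. The concrete obstructions are these. First, the gluing construction of \cref{S3C} produces a new compact space carrying an action of a product group; it does not produce an element of the fixed space $\Sh(\Gamma,X)$, and for a general compact $X$ (say $X=2$) there is no room to realize ``shrinking copies'' of arbitrary subshifts $L_n$ inside $X^\Gamma$. Second, and more fundamentally, $\mathcal I$ is an arbitrary $\sigma$-ideal: it is not assumed invariant under topological isomorphism, so knowing that the infinite-depth accumulation set ``contains a copy of $\theta(\bar 1)$'' gives nothing. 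The only way the axioms let you certify $K_T\notin\mathcal I$ is to show that $K_T$ literally contains, as a subset, a subshift already known not to be in $\mathcal I$ --- i.e.\ some actual $\theta(x)$ with $x$ not eventually zero --- and your construction does not arrange this. Third, the self-referential definition $K_s(T)=\biguplus_n K_{s^\frown n}(T)$ for $s\in T$ is a recursion along a possibly ill-founded tree; a fixed point may exist, but nothing in your verification controls what $K_s(T)$ actually is along an infinite branch, which is exactly the case you need to analyze.

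The intended proof is two lines. Identify $2^{<\N}$ with the countable dense set of eventually zero points of $\cantor$; by $F_\sigma$-hardness there is a continuous $\alpha\mapsto K_\alpha$ with $K_\alpha\in\mathcal I\iff\alpha\in 2^{<\N}$ (this is your $\theta$). Now send $A\in K(\cantor)$ to $\bigcup_{\alpha\in A}K_\alpha$, which is a subshift depending continuously on $A$ (adjoin a fixed member of $\mathcal I$ to handle $A=\emptyset$). If $A\subseteq 2^{<\N}$, then $A$ is a compact subset of a countable set, hence countable, so the union is a countable union of members of $\mathcal I$ and lies in $\mathcal I$; conversely, if the union lies in $\mathcal I$, then each $K_\alpha$ does by heredity, so $A\subseteq 2^{<\N}$. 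This reduces $K(2^{<\N})=\{A\in K(\cantor):A\subseteq 2^{<\N}\}$ to $\mathcal I$, and that set is already $\PI^1_1$-complete by \cite[27.4(ii)]{Kec95}, so no reduction from $\WF$ and no recursion theorem is needed. The point you were missing is that the $\sigma$-ideal axioms interact with compactness of the \emph{index} set: compact subsets of a countable set are countable, which is exactly what makes countable additivity applicable, and which has no analogue in a tree-indexed scheme.
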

\begin{proof}
    There is a continuous map $\cantor\to\Sh(\Gamma, X)$
    reducing $2^{<\N}\subseteq \cantor$ to $\mathcal I$,
    which we will denote by $\alpha\mapsto K_\alpha$.
    Consider the composition
    \[
        K(\cantor)
        \to K(\Sh(\Gamma, X))
        \hra K(K(X^\Gamma))
        \to K(X^\Gamma)
    \]
    where the first map is the one induced by
    the map $\cantor \to \Sh(\Gamma, X)$,
    the second map is the inclusion map,
    and the third map is the union map $L \mapsto \bigcup L$.
    The image of this map is contained in $\Sh(\Gamma, X)$,
    so we obtain a continuous map
    $K(\cantor) \to \Sh(\Gamma, X)$
    defined by
    $A\mapsto \bigcup_{\alpha\in A} K_\alpha$.
    This map reduces
    $K(2^{<\N}) = \{K \in K(\cantor) : K \subseteq 2^{<\N}\}$
    to $\mathcal I$,
    since for every $A\in K(\cantor)$,
    we have
    \begin{align*}
        A \subseteq 2^{<\N}
        & \implies \text{$K_\alpha \in \mathcal I$ for all $\alpha\in A$, and $A$ is countable} \\
        & \implies \bigcup_{\alpha \in A} K_\alpha\in \mathcal I \\
        & \implies \text{$K_\alpha \in \mathcal I$ for all $\alpha\in A$} \\
        & \implies A \subseteq 2^{<\N}.
    \end{align*}
    So the result follows,
    since $K(2^{<\N})$ is $\PI^1_1$-hard
    (see \cite[27.4(ii)]{Kec95}).
\end{proof}

For a subset $(F_s)_{s\in\N^{<\N}}$ of $\Sh(\Gamma, \R^\N)$,
there is a closed $\Gamma$-invariant subspace of
the product $\Gamma$-space $\baire\times((\R^\N)^\Gamma)^\N$
(where $\Gamma$ acts trivially on $\baire$),
given by
\[
    \coprod_{\alpha \in \baire} \prod_n F_{\alpha\uhr n}
    = \{(\alpha, (x_n)_n) \in \baire \times ((\R^\N)^\Gamma)^\N :
    \forall n \, [x_n \in F_{\alpha\uhr n}]\}.
\]
Fixing a closed embedding $\baire \times ((\R^\N)^\Gamma)^\N \hra \R^\N$,
we obtain an element of $\Sh(\Gamma, \R^\N)$,
which we denote by $\mathcal A_s F_s$.

\begin{prop}\label{souslinHard}
    Let $\Phi$ and $\Psi$ be disjoint satisfiable properties
    of Polish $\Gamma$-spaces such that
    \begin{enumerate}[label=(\roman*)]
        \item there is some $F_\Phi \in \Sh(\Gamma, \R^\N)$ such that
            for every subset $(F_s)_{s\in \N^{<\N}}$ of $\Sh(\Gamma, \R^\N)$
            such that $\{s\in \N^{<\N} : F_s \neq F_\Phi\}$ is well-founded,
            we have that $\mathcal A_s F_s$ satisfies $\Phi$;
        \item there is some $F_\Psi \in \Sh(\Gamma, \R^\N)$ such that
            for every subset $(F_s)_{s\in {\N^<\N}}$ of $\Sh(\Gamma, \R^\N)$
            such that $\{s\in \N^{<\N} : F_s = F_\Psi\}$ is ill-founded,
            we have that $\mathcal A_s F_s$ satisfies $\Psi$.
    \end{enumerate}
    Then $\Sh_\Phi(\Gamma, \R^\N)$ is $\PI^1_1$-hard.
\end{prop}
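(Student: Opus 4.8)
The plan is to exhibit a Borel reduction of the set $\WF\subseteq\tree$ of well-founded trees on $\N$ --- a $\PI^1_1$-complete set (see, e.g., \cite{Kec95}) --- to $\Sh_\Phi(\Gamma,\R^\N)$. First I would use the satisfiability of $\Phi$ and of $\Psi$, together with the fact recalled at the start of this subsection that every continuous $\Gamma$-action on a Polish space is topologically isomorphic to a subshift of $(\R^\N)^\Gamma$, to fix subshifts $F_\Phi,F_\Psi\in\Sh(\Gamma,\R^\N)$ with $F_\Phi\models\Phi$ and $F_\Psi\models\Psi$. Since $\Phi$ and $\Psi$ are disjoint, $F_\Phi\not\models\Psi$ and $F_\Psi\not\models\Phi$.

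To a tree $T\subseteq\N^{<\N}$ I would associate the family $(F^T_s)_{s\in\N^{<\N}}$ in $\Sh(\Gamma,\R^\N)$ defined by $F^T_s=F_\Psi$ if $s\in T$ and $F^T_s=F_\Phi$ if $s\notin T$, and set $G_T:=\mathcal A_s F^T_s$. The assignment $T\mapsto(F^T_s)_s$ is continuous, since for each fixed $s_0\in\N^{<\N}$ the set $\{T:F^T_{s_0}=F_\Psi\}=\{T:s_0\in T\}$ is clopen in $\tree$; and $(F_s)_s\mapsto\mathcal A_s F_s$ is Borel, as one checks directly from its definition --- for a basic open $U\subseteq(\R^\N)^\Gamma$, whether $\mathcal A_s F_s$ meets $U$ is determined by the existence of a \emph{finite} $\subseteq$-chain $s_0\subsetneq s_1\subsetneq\cdots\subsetneq s_k$ along which prescribed finitely many of the sets $F_{s_i}$ meet prescribed basic open subsets of $(\R^\N)^\Gamma$. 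Composing, $T\mapsto G_T$ is Borel (indeed continuous, since here each $F^T_s$ takes only one of two fixed values).

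Next I would check the equivalence $T\in\WF\iff G_T\models\Phi$. If $T$ is well-founded, then $\{s:F^T_s\not\models\Phi\}=T$ (using $F_\Phi\models\Phi$ and $F_\Psi\not\models\Phi$), which is well-founded, so hypothesis (i) yields $G_T\models\Phi$. If $T$ is ill-founded, then $\{s:F^T_s\models\Psi\}=T$ (using $F_\Psi\models\Psi$ and $F_\Phi\not\models\Psi$), which is ill-founded, so hypothesis (ii) yields $G_T\models\Psi$, whence $G_T\not\models\Phi$ by disjointness of $\Phi$ and $\Psi$. Therefore $T\mapsto G_T$ is a Borel reduction of $\WF$ to $\Sh_\Phi(\Gamma,\R^\N)$, and since $\WF$ is $\PI^1_1$-complete, $\Sh_\Phi(\Gamma,\R^\N)$ is $\PI^1_1$-hard.

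The one point that needs care --- really just bookkeeping, not a genuine obstacle --- is confirming that $(F_s)_s\mapsto\mathcal A_s F_s$ is Borel with respect to the (product of the) Effros Borel structures, which as indicated above is made completely routine in the present situation by the fact that $F^T_s$ ranges over a two-element set. The remainder is a direct substitution into hypotheses (i) and (ii), exactly in the spirit of the Suslin-scheme argument behind \cref{idealHard} and \cite[Section 1.4]{KLW87}.
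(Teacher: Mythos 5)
Your proposal is correct and follows essentially the same route as the paper's own proof: fix $F_\Phi\models\Phi$ and $F_\Psi\models\Psi$, set $F^T_s=F_\Psi$ for $s\in T$ and $F_\Phi$ for $s\notin T$, and use hypotheses (i) and (ii) together with disjointness to see that $T\mapsto\mathcal A_s F^T_s$ reduces $\WF$ to $\Sh_\Phi(\Gamma,\R^\N)$. The extra care you take with the Borelness of $(F_s)_s\mapsto\mathcal A_s F_s$ and the identification of the relevant sets with $T$ is fine but not a departure from the paper's argument.
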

\begin{proof}
    Let $\tree\subseteq 2^{\N^{<\N}}$ denote the space of trees,
    and let $\WF\subseteq \tree$ be the subset of well-founded trees,
    which is $\PI^1_1$-complete;
    see \cite[33.A]{Kec95}.
    
    For every $T\in\tree$ and $s\in\N^{<\N}$,
    define $F^T_s \in \Sh_\Phi(\Gamma, \R^\N)$ by
    \[
        F^T_s :=
        \begin{cases}
            F_\Phi & s \notin T \\
            F_\Psi & s \in T
        \end{cases}
    \]
    Then
    \begin{align*}
        T\in\WF & \implies \A_s F^T_s \models\Phi  ,\\
        T\notin\WF & \implies \A_s F^T_s \models\Psi ,
    \end{align*}
    so the Borel map $T\mapsto \A_s F^T_s$ is a reduction
    from $\WF$ to $\Sh_\Phi(\Gamma, \R^\N)$,
    whence the latter is $\PI^1_1$-hard.
\end{proof}

\begin{prop}\label{freeAperShift}
    Let $\Gamma$ be a countably infinite group,
    and let $\Phi \in \{\free, \aper\}$.
    Then $\Sh_\Phi(\Gamma, \I^\N)$ is dense $G_\delta$,
    and $\Sh_\Phi(\Gamma,\R^\N)$ is $\PI^1_1$-complete.
\end{prop}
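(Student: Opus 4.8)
The plan is to treat the two topologies separately. For the $G_\delta$ and density claims in $\Sh(\Gamma,\I^\N)$, first note that both $\free$ and $\aper$ are compactly satisfiable (e.g., the free part $Fr(2^\Gamma)$ admits a nonempty closed invariant subshift since $\Gamma$ is infinite, and restricting to a minimal subshift of it gives a compact free flow; any free flow is automatically aperiodic) and product-stable (if $\mathbf a$ is free then $\mathbf a\times\mathbf b$ is free for any $\mathbf b$, and similarly for aperiodic). Hence by \cref{stableDense} the set $\Sh_\Phi(\Gamma,\I^\N)$ is dense in $\Sh(\Gamma,\I^\N)$. To get $G_\delta$, I would exhibit $\Sh_\free(\Gamma,\I^\N)$ as a countable intersection of open sets: for each $\gamma\in\Gamma\setminus\{1\}$, the set $\{K : K\cap \{x : \gamma\cdot x = x\} = \emptyset\}$, i.e.\ $\{K : K\subseteq (\I^\N)^\Gamma\setminus \Fix(\gamma)\}$ where $\Fix(\gamma)=\{x:\gamma\cdot x=x\}$ is closed, is open in the Vietoris topology (being contained in a fixed open set is an open condition on closed sets of a compact space). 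For $\aper$ the argument is slightly more involved since aperiodicity is not simply ``avoid finitely many fixed-point sets''; instead I would write aperiodicity as: for every finite $F\subseteq\Gamma$ and every $x\in K$ there is $\gamma\notin F$ with $\gamma\cdot x\ne x$, and for each finite $F$ this is the condition that $K$ misses the closed set $\{x : \Stab(x)\supseteq \Gamma\setminus F \text{ on } \dots\}$ — more carefully, $K$ misses the closed set of points whose orbit is contained in $F\cdot x$, which is closed; so $\aper$ is again a countable (over finite $F$) intersection of open conditions. Density is immediate from \cref{stableDense}, so a dense $G_\delta$ is comeager.

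For the $\PI^1_1$-completeness claims in $\Sh(\Gamma,\R^\N)$, the upper bound $\PI^1_1$ is routine: freeness says $\forall x\in F\,\forall\gamma\ne 1\,(\gamma\cdot x\ne x)$, and membership $x\in F$ together with the shift action are Borel, so the condition is $\PI^1_1$ (a co-analytic statement quantifying over the Polish space $(\R^\N)^\Gamma$); aperiodicity says $\forall x\in F$ the orbit map $\gamma\mapsto\gamma\cdot x$ is finite-to-one, equivalently $\forall x\in F\,\forall$ finite $F_0\,\exists\gamma\notin F_0\,(\gamma\cdot x\ne x)$, again $\PI^1_1$. For hardness I would apply \cref{souslinHard} with $\Phi\in\{\free,\aper\}$ and $\Psi$ the negation-witnessing property ``$\Gamma$ has a fixed point'' (or ``some orbit is finite''), which is satisfiable (e.g.\ the trivial one-point flow, or $I_\N$-type flow for $\aper$) and disjoint from $\Phi$. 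The content to check is the two Souslin-operation conditions: if $\{s : F_s\not\models\free\}$ is well-founded then $\mathcal A_s F_s$ is free, and if $\{s : F_s\models\Psi\}$ is ill-founded then $\mathcal A_s F_s$ satisfies $\Psi$. The second is easy: an infinite branch $\alpha$ through the ill-founded tree gives a point in $\coprod_\alpha\prod_n F_{\alpha\uhr n}$ all of whose coordinates lie in flows with a common fixed point, yielding a fixed point (or finite orbit) of $\mathcal A_s F_s$.

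The main obstacle I expect is verifying the first Souslin condition for $\aper$ (and, with a little care, for $\free$): one must show that if $\{s : F_s\text{ is not aperiodic}\}$ is well-founded then a point $(\alpha,(x_n)_n)\in\coprod_\alpha\prod_n F_{\alpha\uhr n}$ with a finite $\Gamma\wr$-orbit — here a finite orbit under the relevant group acting on $\baire\times((\R^\N)^\Gamma)^\N$, i.e.\ the group generating the subshift, which acts diagonally fixing $\alpha$ — forces infinitely many coordinates $x_n$ to have finite $\Gamma$-orbit in $F_{\alpha\uhr n}$, hence $\alpha$ determines an infinite branch through $\{s:F_s\text{ not aperiodic}\}$, contradicting well-foundedness. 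This requires pinning down precisely which group acts on $\mathcal A_s F_s$ and checking that its action on a point respects the coordinate decomposition indexed by $\alpha$, so that finiteness of the global orbit propagates coordinatewise; the bookkeeping of the embedding $\baire\times((\R^\N)^\Gamma)^\N\hra\R^\N$ and the induced $\Gamma$-action is where the real work lies, though it parallels the analogous verifications elsewhere in \cref{S3FF}. Once these are in place, $\PI^1_1$-hardness follows from \cref{souslinHard}, and combined with the upper bound we get $\PI^1_1$-completeness.
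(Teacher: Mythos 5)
Your proposal is correct and follows essentially the same route as the paper: the $G_\delta$ bound comes from the free/aperiodic part of $(\I^\N)^\Gamma$ being $G_\delta$, density from \cref{stableDense} via compact satisfiability and product-stability, the $\PI^1_1$ upper bound from the Borelness of the free/aperiodic part of $(\R^\N)^\Gamma$, and hardness from \cref{souslinHard} with $\Psi$ = ``has a fixed point''; you merely spell out the Souslin-scheme verifications that the paper leaves implicit (and they do go through, since $\Gamma$ acts diagonally on $\coprod_\alpha\prod_n F_{\alpha\uhr n}$ fixing $\alpha$, so the orbit of a point surjects onto the orbit of each coordinate). One small caution: your first formulation of aperiodicity (``for every finite $F$ there is $\gamma\notin F$ with $\gamma\cdot x\neq x$'') only says the stabilizer is not cofinite, which does not imply infinite orbit (e.g.\ stabilizer $2\Z\le\Z$), but your corrected version via the closed sets $\{x:\Gamma\cdot x\subseteq F\cdot x\}$ is what is needed.
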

\begin{proof}
    For every $\gamma\in \Gamma$,
    the set of fixed points of $\gamma$ in $(\I^\N)^\Gamma$
    (resp., $(\R^\N)^\Gamma$)
    is closed
    (resp., Borel).
    Thus the set of points with trivial stabilizer is $G_\delta$
    (resp., Borel),
    whence $\Sh_{\free}(\Gamma, \I^\N)$ is $G_\delta$
    (resp., $\Sh_\free(\Gamma, \R^\N)$ is $\PI^1_1$).
    Similarly,
    the set of points with infinite orbit in $(\I^\N)^\Gamma$
    (resp., $(\R^\N)^\Gamma$)
    is $G_\delta$
    (resp., Borel),
    so $\Sh_\aper(\Gamma, I^\N)$ is $G_\delta$
    (resp., $\Sh_\free(\Gamma, \R^\N)$ is $\PI^1_1$).
    
    The property $\Phi$ is compactly satisfiable
    (see, e.g. \cite[1(B)]{KPT05})
    and product-stable,
    so density of $\Sh_\Phi(\Gamma, \I^\N)$
    follows from \cref{stableDense}.
    
    Finally,
    $\PI^1_1$-completeness follows from \cref{souslinHard}
    by taking $\Psi$ to be ``has a fixed point''.
\end{proof}

\begin{prop}\label{387}
    Let $\Gamma$ be a countably infinite group.
    Then $\Sh_\comp(\Gamma, \I^\N)$ is open,
    $\Sh_\comp(\Gamma, \R^\N)$ is $\PI^1_1$-complete,
    and if $\Gamma$ is non-amenable,
    then the former is dense
    (in $\Sh(\Gamma, \I^\N)$).
\end{prop}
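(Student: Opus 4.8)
**Proof plan for: $\Sh_\comp(\Gamma,\I^\N)$ is open, $\Sh_\comp(\Gamma,\R^\N)$ is $\PI^1_1$-complete, and if $\Gamma$ is non-amenable the former is dense.**

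The plan is to handle the three assertions in the natural order. First, \textbf{openness of $\Sh_\comp(\Gamma,\I^\N)$.} By Nadkarni's Theorem (stated in the excerpt), a subshift $K\in\Sh(\Gamma,\I^\N)$ fails to be compressible iff it carries a $\Gamma$-invariant Borel probability measure, i.e.\ iff the (nonempty, compact, convex) set of invariant measures on $K$ is nonempty. So I want to show that $\{K : K$ admits an invariant probability measure$\}$ is closed in the Vietoris topology. This is the classical upper-semicontinuity argument: if $K_n\to K$ in $\Sh(\Gamma,\I^\N)$ and $\mu_n$ is an invariant probability measure on $K_n$, then by compactness of $\Prob(\I^\N)$ (weak$^*$) we may pass to a subsequence with $\mu_n\to\mu$; one checks that $\mu$ is supported on $K$ (using that $K_n\to K$ in Vietoris forces $\limsup K_n\subseteq K$, so any weak$^*$ limit of measures on the $K_n$ is supported on $K$) and that $\mu$ is $\Gamma$-invariant (each $\gamma\in\Gamma$ acts continuously, so $\gamma_*\mu = \lim\gamma_*\mu_n = \lim\mu_n = \mu$). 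Hence the non-compressible subshifts form a closed set and $\Sh_\comp(\Gamma,\I^\N)$ is open.

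Next, \textbf{$\PI^1_1$-completeness of $\Sh_\comp(\Gamma,\R^\N)$.} For the upper bound: compressibility of $E_F$ is equivalent to $\R I_\N\sq_B E_F$, and more usefully, "not compressible" is "$F$ admits an invariant Borel probability measure", which is a $\SIGMA^1_1$ condition (quantifying existentially over $\mu\in\Prob(\R^\N)$ and asserting the Borel/arithmetical conditions that $\mu$ is supported on $F$ and $\Gamma$-invariant). Hence $\Sh_\comp(\Gamma,\R^\N)$ is $\PI^1_1$. For hardness, I would apply \cref{souslinHard}: take $\Phi = \comp$ and $\Psi = $ "admits an invariant Borel probability measure" (equivalently "not compressible"), which are disjoint and both satisfiable (a compressible action such as $\R I_\N$ realized as a subshift for $\Phi$; any action with an invariant measure for $\Psi$). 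Condition (ii) of \cref{souslinHard} is the content of the previous paragraph's compactness argument, transplanted to the Souslin operation context: if $\{s : F_s$ admits an invariant measure$\}$ is ill-founded, then along an infinite branch $\alpha$ the product $\prod_n F_{\alpha\uhr n}$ carries a product of invariant measures, which pushes forward to an invariant measure on $\mathcal A_s F_s$, so $\mathcal A_s F_s\models\Psi$. Condition (i) is the complementary direction: if $\{s : F_s\not\models\comp\}$ is well-founded, then every branch eventually passes only through compressible $F_s$, and I would argue that $\mathcal A_s F_s$ is then compressible — one builds a compression of the big subshift $\coprod_\alpha\prod_n F_{\alpha\uhr n}$ by a well-founded recursion along $\baire$, using at each stage a compression of the first compressible $F_{\alpha\uhr n}$ along the branch, combined with the standard fact that $F\times I_\N$ (the "extra coordinates" direction) is always compressible. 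The details here mirror the reasoning in \cref{souslinHard}'s proof and the handling of $\sigma$-ideals; I expect the bookkeeping of uniformizing these compressions Borel-measurably to be the fussiest part.

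Finally, \textbf{density of $\Sh_\comp(\Gamma,\I^\N)$ for non-amenable $\Gamma$.} Here I would invoke \cref{stableDense}: compressibility is product-stable (if $\ac a$ has a compression $T$ then $T\times\id$ compresses $\ac a\times\ac b$, since the saturation of the complement of the image is all of $X\times Y$), and it is compactly satisfiable precisely when $\Gamma$ is non-amenable — indeed by \cref{compressibleSubshift} a non-amenable $\Gamma$ admits a compressible subshift of $2^\Gamma$, hence a compressible $\Gamma$-flow, giving the needed witness. Then \cref{stableDense} yields that $\{K : K\models\comp\}$ is dense in $\Sh(\Gamma,\I^\N)$. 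The main obstacle in the whole proof is clause (i) of the application of \cref{souslinHard}: verifying that the Souslin combination $\mathcal A_s F_s$ of a well-founded family of compressible subshifts is again compressible requires assembling the individual Borel compressions along branches into a single Borel compression of the ambient subshift, which I would carry out by a rank-induction argument on the well-founded part, being careful that the map $\alpha\mapsto(\text{first }n\text{ with }F_{\alpha\uhr n}\text{ compressible})$ is Borel and the associated compressions can be chosen Borel-in-$\alpha$.
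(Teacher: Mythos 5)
Your plan follows the paper's proof almost exactly: openness via Nadkarni plus compactness of the space of measures on $(\I^\N)^\Gamma$, hardness via \cref{souslinHard}, and density via \cref{compressibleSubshift} and \cref{stableDense}. Two points of comparison. First, the paper takes $\Psi$ to be ``has a fixed point'' rather than ``admits an invariant measure''; both satisfy condition (ii) of \cref{souslinHard} (yours via product measures, the paper's trivially via a fixed point of the product), but note that in the actual reduction the family $F^T_s$ takes only the two values $F_\Phi, F_\Psi$, so the verification of condition (i) that you flag as the fussiest part collapses to something much tamer than a general rank induction: partitioning $\baire$ into the countably many clopen invariant pieces $\{\alpha : \alpha\uhr n = s\}$ on which the first compressible level is $s$, each piece of $\mathcal A_s F_s$ admits a Borel $\Gamma$-map onto the fixed compressible subshift $F_s$, hence carries no invariant measure, and a countable invariant Borel partition into compressible pieces gives compressibility. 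No uniformization of compressions over an uncountable family is needed.

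Second, there is one genuine misstep: your justification of product-stability via $T\times\id$ does not work. The product $\ac a\times\ac b$ carries the \emph{diagonal} $\Gamma$-action, so the class of $(x,y)$ is $\{(\gamma\cdot x,\gamma\cdot y):\gamma\in\Gamma\}$, not a product of classes, and $(T(x),y)$ generally does not lie in it; so $T\times\id$ is not even a map into the equivalence classes, let alone a compression. The fact itself is true and is exactly what the paper's \cref{liftProps}(i) supplies: the first projection is a $\Gamma$-map $\ac a\times\ac b\to\ac a$, so any invariant probability measure for $\ac a\times\ac b$ would push forward to one for $\ac a$, contradicting compressibility of $\ac a$ via Nadkarni. (Alternatively, one can repair the direct construction by applying the \emph{same} group element to both coordinates, i.e.\ $(x,y)\mapsto\gamma_x\cdot(x,y)$ where $T(x)=\gamma_x\cdot x$, but the projection argument is cleaner.) With that substitution the density argument, and hence the whole proof, goes through as in the paper.
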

\begin{proof}
    By Nadkarni's theorem (\Cref{nadkarni}),
    $F\in \Sh (\Gamma, \I^\N)$ is non-compressible iff
    \[
        \exists\mu \in P(F)\,
        \forall\gamma\,
        [\gamma\cdot\mu = \mu],
    \]
    where $P(F)$ is the set of Borel probability measures on $F$,
    which is a compact Polish space for $\I^\N$
    and a standard Borel space for $\R^\N$.
    Thus the set of compressible subshifts is open for $\I^\N$,
    and $\PI^1_1$ for $\R^\N$.
    Moreover,
    $\PI^1_1$-completeness follows from \cref{souslinHard}
    by taking $\Phi$ to be ``compressible''
    and $\Psi$ to be ``has a fixed point'',
    and taking any $F_\Phi$ satisfying $\Phi$
    and any $F_\Psi$ satisfying $\Psi$.
    
    Now suppose $\Gamma$ is non-amenable.
    Then compressibility is compactly satisfiable by non-amenability,
    and it is product-stable,
    so density follows from \cref{stableDense}.
\end{proof}

\begin{prop}
    Let $\Gamma$ be a countably infinite group,
    let $X$ be a Polish space,
    and let $\Phi \in \{\fin, \sm\}$.
    Then $\Sh_\Phi(\Gamma, X)$ is $\PI^1_1$,
    and if $X =\I^\N$,
    then it is meager.
\end{prop}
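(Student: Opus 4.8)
The two assertions will be proved by quite different means, and the common starting point is the observation that $(E_F)_{F\in\Sh(\Gamma,X)}$ is a Borel family of CBERs: the relation $\{(F,x):x\in F\}$ is Borel in the Effros Borel space $\Sh(\Gamma,X)\times X^\Gamma$, and $x\mr{E_F}y$ iff $x,y\in F$ and $\bigvee_{\gamma\in\Gamma}(\gamma\cdot x = y)$, a countable union of Borel conditions.

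For the $\PI^1_1$ bound, the case $\Phi=\fin$ is elementary: a point $x\in X^\Gamma$ has finite $\Gamma$-orbit iff $\exists n\,\exists(\gamma_1,\dots,\gamma_n)\in\Gamma^n\,\forall\gamma\,\bigvee_{i\le n}(\gamma\cdot x = \gamma_i\cdot x)$, and since each condition $\gamma\cdot x = \gamma_i\cdot x$ is closed, the set $A$ of finite-orbit points is $\SIGMA^0_2$; hence $\Sh_\fin(\Gamma,X)=\{F:F\subseteq A\}=\{F:\neg\exists x(x\in F\wedge x\notin A)\}$ is $\PI^1_1$. For $\Phi=\sm$, the plan is to invoke the effective form of the Glimm--Effros dichotomy (Harrington--Kechris--Louveau): for a $\SIGMA^1_1$ equivalence relation $E$ with a real parameter $z$, either $E$ is $\Delta^1_1(z)$-reducible to $\Delta(\R)$ or $E_0\sqsubseteq_c E$. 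Consequently a CBER $E_z$ which is Borel in $z$ and is smooth must already admit a $\Delta^1_1(z)$-reduction to equality (the second alternative would make $E_0$ smooth), so ``$E_z$ is smooth'' becomes ``$\exists e\in\N$ ($e$ codes a total $\Delta^1_1(z)$-function reducing $E_z$ to $=$)'', an existential over $\N$ of a $\PI^1_1$-in-$z$ condition, hence $\PI^1_1$ in $z$. Applying this to the Borel family $(E_F)_F$ shows $\Sh_\sm(\Gamma,X)$ is $\PI^1_1$.

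For the meagerness statement (where $X=\I^\N$), the point is that an aperiodic subshift already fails to be smooth. If $F\in\Sh_\aper(\Gamma,\I^\N)$ then $E_F$ is aperiodic, and $F$ is uncountable: a countable nonempty compact metrizable space is scattered, and its Cantor--Bendixson analysis terminates with a finite nonempty set, which is $\Gamma$-invariant (the derivatives are invariant under every self-homeomorphism of $F$), hence a finite union of finite orbits --- impossible by aperiodicity. So $E_F\in\ap$; and since $F$ is a closed, hence compact, subspace of $(\I^\N)^\Gamma$ carrying the continuous shift action of $\Gamma$, it is a compact action realization of $E_F$, so $E_F$ is non-smooth by \cref{3.6}. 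Therefore $\Sh_\sm(\Gamma,\I^\N)\cap\Sh_\aper(\Gamma,\I^\N)=\emptyset$, and as $\Sh_\aper(\Gamma,\I^\N)$ is dense $G_\delta$, hence comeager, by \cref{freeAperShift}, $\Sh_\sm(\Gamma,\I^\N)$ is meager. Finally $\Sh_\fin(\Gamma,\I^\N)\subseteq\Sh_\sm(\Gamma,\I^\N)$, since every CBER with finite classes admits a Borel transversal (e.g.\ by Luzin--Novikov uniformization) and so is smooth; thus it too is meager.

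The step requiring the most care is the $\PI^1_1$-ness of smoothness. Writing ``$E_F$ is smooth'' naively as ``there is a Borel reduction of $E_F$ to equality'' only gives a $\SIGMA^1_2$ set, and replacing ``Borel'' by the countably many ``$\Delta^1_1$'' possibilities is exactly what the effective dichotomy buys. A purely elementary route --- trying to express non-smoothness as ``there is a continuous embedding of $E_0$ into $E_F$'' --- stalls on the reduction direction of being an embedding, which is an honest universal quantifier over pairs of (countable) classes and so is $\PI^1_1$, again pushing the statement up to $\SIGMA^1_2$. The remaining ingredients --- Borelness of the family, the $\fin$ computation, and the deduction of meagerness from \cref{freeAperShift} via \cref{3.6} --- are routine.
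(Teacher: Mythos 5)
Your argument is correct, and two of its three components (the $\fin$ computation and the deduction of meagerness from disjointness with the comeager set $\Sh_\aper(\Gamma,\I^\N)$ of \cref{freeAperShift}, via \cref{3.6}) coincide with the paper's; your extra care in checking that an aperiodic subshift is uncountable, so that \cref{3.6} literally applies, is a worthwhile detail the paper leaves implicit. Where you genuinely diverge is the $\PI^1_1$ bound for $\sm$. The paper exploits the fact that $E_F$ comes from a \emph{continuous} action of a countable group: by Effros's theorem a subshift is smooth iff every orbit is discrete in its relative topology, and the set of points of $X^\Gamma$ with discrete orbit is Borel, so $\Sh_\sm(\Gamma,X)$ is again just $\{F : F\subseteq B\}$ for a fixed Borel set $B$ --- the same one-line $\PI^1_1$ computation as for $\fin$. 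You instead invoke the effective Harrington--Kechris--Louveau dichotomy to show that smoothness is $\PI^1_1$ for an \emph{arbitrary} Borel family of CBERs, correctly identifying (and correctly resolving) the pitfall that the naive ``there exists a Borel reduction to equality'' formulation only gives $\SIGMA^1_2$. Both routes are valid; the paper's is lighter and tailored to continuous actions, while yours is a heavier general-purpose tool that would survive in settings where the equivalence relations in the family are not orbit equivalence relations of continuous actions. Since the relevant families here always are, the effective dichotomy is more machinery than the statement requires, but there is no gap.
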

\begin{proof}
    The set of points with finite orbit in $X^\Gamma$ is Borel.
    Also,
    a subshift is smooth iff every orbit is discrete
    (see, e.g., \cite[Corollary 22.3]{Kec10}).
    The set of points with discrete orbit is Borel.
    So in either case,
    $\Sh_\Phi(\Gamma, X)$ is $\PI^1_1$.
    
    If $X = \I^\N$,
    then meagerness follows since $\Sh_\Phi(\Gamma, \I^\N)$
    is disjoint from $\Sh_\aper(\Gamma, \I^\N)$ (by \cref{3.6} for $\Phi =$ sm),
    which is comeager by \cref{freeAperShift}.
\end{proof}

We will see in \cref{resFinShift} that $\Sh _{\Phi }(\Gamma , X)$ is $\boldsymbol{\Pi^1_1}$-hard for $X\in \{ \I^\N , \R^\N \}$, when $\Gamma$ is residually finite. This will complete the proofs of the statements in the first five rows of the table.

We now turn to the various notions of hyperfiniteness and amenability.
\begin{prop}
    Let $\Gamma$ be a countably infinite group
    and let $X$ be a Polish space.
    Then for $\Phi$ in $\{\hyp, \amen, \freeHyp\}$, resp.,    
     $\{\measHyp, \measAmen,    
    \freeMeasHyp \}$,
    $\Sh_\Phi(\Gamma, X)$ is $\SIGMA^1_2$, resp., $\PI^1_1$.
    If moreover $\Gamma$ is non-amenable,
    then $\Sh_\Phi(\Gamma, \R^\N)$ is $\PI^1_1$-hard, for any $\Phi\in \{ \hyp, \amen, \freeHyp, \measHyp, \measAmen, \freeMeasHyp \}$.
\end{prop}
\begin{proof}
    First,
    $\Sh_\hyp(\Gamma, X)$ is $\SIGMA^1_2$, since $F\in \Sh (\Gamma, X)$ is hyperfinite iff 
    \begin{align*}
        & \text{$\exists$ sequence $(E_n)_n$ of Borel subsets of $(X^\Gamma)^2$} \\
        & \qquad [\forall n\,
        [\text{$E_n$ is a finite equivalence relation}
        \text{ and } E_n\subseteq E_{n+1}] \\
        & \qquad \text{ and } \forall x\in F\, \forall \gamma \,\exists n\,
        [(\gamma\cdot x, x)\in E_n]].
    \end{align*}
    
    Next,
    $\Sh_\amen(\Gamma, X)$ is $\SIGMA^1_2$,
    since $F\in \Sh (\Gamma, X)$ is amenable iff (letting $R$ be the shift equivalence relation on $F$)
        \begin{align*}
        & \text{$\exists$ sequence $(f_n)_n$ of Borel functions
        $f_n \colon F^2\to[0,1]$} \\
        & \qquad \forall x\in F\, \qty[\forall n\,
        \sum_{y\in [x]_{R}} f_n^x(y) = 1
        \text{ and }\forall y \in [x]_{R}\,
        \|f_n^x - f_n^y\|_1\to 0],
    \end{align*}
    where $\|{\cdot}\|_1$ is on $\ell^1([x]_{R})$.
    
    $\Sh_\measHyp(\Gamma, X)$ is $\PI^1_1$
    by Miri Segal's effective witness to measure-hyperfiniteness
    (see \Cref{segal}).
    
    Now $\Sh_\freeMeasHyp(\Gamma, X)$ is $\PI^1_1$,
    since $\Sh_\free(\Gamma, X)$ and $\Sh_\measHyp(\Gamma, X)$ are $\PI^1_1$.
    
    Similarly,
    the set of points with amenable stabilizer is $G_\delta$,
    since $x$ has amenable stabilizer iff
    \[
        \forall S\in \Fin(\Gamma)\,
        \qty[S\subseteq \Gamma_x
        \implies
        \forall n \in \N\,
        \exists F\subseteq \ev S\,
        \frac{|SF\btu F|}{|F|} < \frac{1}{n}].
    \]
    where $\Gamma_x = \{\gamma : \gamma\cdot x = x\}$ is the stabilizer of $x$.
    
    Thus the set
    \[
        \{F \in \Sh(\Gamma, X)
        : \forall x\in F\, [\text{$\Gamma_x$ is amenable}]\},
    \]
    is $\PI^1_1$
    (in fact $G_\delta$ when $X$ is compact),
    and thus $\Sh_\measAmen(\Gamma, X)$ is $\PI^1_1$ by \cref{amenStabChar}.
    
    If $\Gamma$ is non-amenable,
    then $\PI^1_1$-hardness follows from \cref{souslinHard}
    by taking $F_\Phi$ to be the $\Gamma$-action by left multiplication on $\Gamma$,
    by taking $\Psi$ to be
    ``has a free non-compressible $\Gamma$-invariant closed subspace'',
    and by taking any $F_\Psi$ satisfying $\Psi$.
\end{proof}

We next show $\PI^1_1$-hardness of
$\Sh_\Phi(\Gamma, \I^\N)$ for various $\Gamma, \Phi$.
\begin{prop}\label{resFinShift}
    Let $\Gamma$ be an infinite residually finite group,
    and let $X$ be $\I^\N$ or $\R^\N$.
    Then $\Sh_\Phi(\Gamma, X)$ is $\PI^1_1$-hard,
    where $\Phi \in \{\fin, \sm\}$.
    If moreover $\Gamma$ is non-amenable,
    then $\Sh_\Phi(\Gamma, \I^\N)$ is $\PI^1_1$-hard,
    where $\Phi \in \{\hyp, \amen, \measHyp\}$.
\end{prop}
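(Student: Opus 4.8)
The plan is to prove all cases at once with a single construction coming from the residual finiteness of $\Gamma$, which directly yields $F_\sigma$-hardness, and then to upgrade $F_\sigma$-hardness to $\PI^1_1$-hardness using \cref{idealHard} together with the fact (stated in the table discussion) that each of the relevant $\Sh_\Phi(\Gamma,X)$ is a $\sigma$-ideal. First I would fix a strictly decreasing sequence $\Gamma=N_0\supsetneq N_1\supsetneq\cdots$ of finite-index \emph{normal} subgroups with $\bigcap_n N_n=\{1\}$, let $A_n=\Gamma/N_n$ be the resulting finite transitive $\Gamma$-flows with their quotient $\Gamma$-maps $A_{n+1}\to A_n$, and let $\hat\Gamma=\varprojlim_n A_n$ be the corresponding profinite completion, a $\Gamma$-flow under left translation. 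Two facts about $\hat\Gamma$ are needed. (a) The action $\Gamma\car\hat\Gamma$ is free (the stabilizer of a point is $\bigcap_n N_n=\{1\}$, using normality) and minimal, so $E_{\hat\Gamma}\in\ap$ and, by \cref{3.6}, $E_{\hat\Gamma}$ is \emph{not smooth}. (b) If $\Gamma$ is non-amenable then $\hat\Gamma$ carries its $\Gamma$-invariant Haar probability measure $\mu$, so $\Gamma\car(\hat\Gamma,\mu)$ is free and pmp; hence $E_{\hat\Gamma}$ is \emph{not $\mu$-amenable} --- otherwise $\Gamma$ would be amenable, since a free pmp action with $\mu$-amenable (equivalently, by Connes--Feldman--Weiss, $\mu$-hyperfinite) orbit equivalence relation can only be performed by an amenable group --- and therefore $E_{\hat\Gamma}$ is not measure-hyperfinite, not hyperfinite, and not amenable.

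Next, for $\alpha\in\cantor$, I would build a telescoped subshift $K_\alpha$ as follows. Let $m_\alpha(n)=|\{1\le i\le n:\alpha_i=1\}|$, put $D^\alpha_n=A_{m_\alpha(n)}$, a component of the fixed finite $\Gamma$-flow $C_n:=\bigsqcup_{j=0}^n A_j$, and take the transition map $D^\alpha_{n+1}\to D^\alpha_n$ to be the quotient map $A_{m_\alpha(n)+1}\to A_{m_\alpha(n)}$ when $\alpha_{n+1}=1$ and the \emph{identity} on $A_{m_\alpha(n)}$ when $\alpha_{n+1}=0$; set $K_\alpha=\varprojlim_n D^\alpha_n\subseteq\prod_n C_n$. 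This is a nonempty (take the coherent sequence of identity cosets), closed, $\Gamma$-invariant subset of the fixed compact $\Gamma$-flow $\prod_n C_n$, and via a fixed closed $\Gamma$-equivariant embedding $\prod_n C_n\hookrightarrow\I^\N$ it is realized as a subshift of $(\I^\N)^\Gamma$, hence of $(\R^\N)^\Gamma$. Since the data at coordinate $n$ depend only on $\alpha\uhr(n+2)$, the map $\alpha\mapsto K_\alpha$ is continuous. If $\alpha$ has only finitely many ones, say $M$ of them, the transition maps are eventually the identity, so $K_\alpha\cong A_M$ is a \emph{finite} flow; if $\alpha$ has infinitely many ones, the identity maps merely pad the system and $K_\alpha\cong\varprojlim_m A_m=\hat\Gamma$. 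Writing $2^{<\N}\subseteq\cantor$ for the ($F_\sigma$-complete) set of eventually-zero sequences, we get $\alpha\in 2^{<\N}\iff K_\alpha\models\Phi$ for each $\Phi\in\{\fin,\sm\}$ by (a), and for each $\Phi\in\{\hyp,\amen,\measHyp\}$ by (b) when $\Gamma$ is non-amenable (a finite flow satisfies all five). Thus $\alpha\mapsto K_\alpha$ witnesses that each of the sets $\Sh_\Phi(\Gamma,\I^\N)$ and $\Sh_\Phi(\Gamma,\R^\N)$ in question is $F_\sigma$-hard.

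To finish, since each $\Sh_\Phi(\Gamma,\I^\N)$ is a $\sigma$-ideal, \cref{idealHard} upgrades $F_\sigma$-hardness to $\PI^1_1$-hardness in the compact case $X=\I^\N$. For $X=\R^\N$ one repeats the argument in the proof of \cref{idealHard}: because all the $K_\alpha$ lie in one fixed compact subspace of $(\R^\N)^\Gamma$, the map $A\mapsto\overline{\bigcup_{\alpha\in A}K_\alpha}$ from $K(\cantor)$ to $\Sh(\Gamma,\R^\N)$ is Borel, and using that $\Sh_\Phi(\Gamma,\R^\N)$ is a $\sigma$-ideal it reduces the $\PI^1_1$-hard set $K(2^{<\N})$ to $\Sh_\Phi(\Gamma,\R^\N)$. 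The one genuine subtlety --- and the step I expect to be the main obstacle --- is the collapse in the eventually-zero case: a tail of zeros in $\alpha$ must \emph{force the tail of the inverse system to be constant}, not merely unconstrained, for otherwise $K_\alpha$ would contain a copy of an infinite product $\prod_n A_{k_n}$ with $k_n\to\infty$, which already contains a copy of $\hat\Gamma$ and is thus non-smooth; using identity transition maps is exactly what prevents this. Everything else --- that $K_\alpha$ is a genuine (nonempty, closed, $\Gamma$-invariant) subshift, and that $\alpha\mapsto K_\alpha$, as well as the Borel bookkeeping in the $\R^\N$ case, is continuous/Borel into the space of subshifts --- is routine once the construction is carried out inside the fixed ambient flow $\prod_n C_n$.
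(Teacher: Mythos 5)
Your proof is correct and follows essentially the same strategy as the paper's: reduce to $F_\sigma$-hardness via \cref{idealHard}, and use residual finiteness to build a continuous map $\cantor\to\Sh(\Gamma,\I^\N)$ that sends eventually-zero sequences to finite flows and all other sequences to a free flow carrying an invariant probability measure. The only real difference is in how the finite quotients $\Gamma/\Lambda_n$ are assembled --- the paper takes a product of subshifts with invariant singletons in the ``off'' coordinates, whereas you take an inverse limit converging to the profinite completion $\hat\Gamma$ with its Haar measure --- and your explicit insistence on normal subgroups makes the freeness claim slightly more transparent than in the paper's version.
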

\begin{proof}
    Since $\Sh_\Phi(\Gamma, \I^\N)$
    reduces to $\Sh_\Phi(\Gamma, \R^\N)$
    via the inclusion map,
    it suffices to consider the case where $X = \I^\N$.
    By \cref{idealHard},
    it suffices to show $F_\sigma$-hardness.
    We will define a continuous map $\cantor \to \Sh(\Gamma, \I^\N)$
    which simultaneously reduces $2^{<\N}$
    to $\Sh_{\fin}(\Gamma, \I^\N)$ and to $\Sh_{\sm}(\Gamma, \I^\N)$,
    and if moreover $\Gamma$ is also non-amenable
    also to $\Sh_{\measHyp}(\Gamma, \I^\N)$.
    Fix a descending sequence $(\Lambda_n)_n$
    of finite index subgroups of $\Gamma$
    such that $\bigcap_n \Lambda_n = \{1\}$.
    
    Let $K^n_i\in \Sh(\Gamma,\I^\N),n\in \N,  i\in\{0,1\}$,
    be defined as follows:
    $K^n_0$ is an invariant singleton and
    $K^n_1$ is a subshift isomorphic to the action of $\Gamma$ on $\Gamma/\Lambda_n$.
    Consider now the space $\prod_n (\I^\N)^\Gamma$ on which $\Gamma$ acts diagonally
    and let $F \colon \prod_n (\I^\N)^\Gamma \to (\I^\N)^\Gamma$
    be a $\Gamma$-equivariant continuous embedding.
    Finally for each $\alpha\in \cantor$, let
    \[
        f(\alpha) = F\qty(\prod_n K_{\alpha_n}^n).
    \]
    Then $f \colon \cantor \to \Sh(\Gamma,\I^\N)$ is continuous.
    If $\alpha \in 2^{<\N}$, clearly $f(\alpha)$ is finite.
    If $\alpha \notin 2^{<\N}$,
    then $f(\alpha)$ is a free subshift admitting
    an invariant Borel probability measure,
    so it is not smooth.
    If moreover $\Gamma$ is non-amenable,
    it is also not measure-hyperfinite.
\end{proof}

Surprisingly, for certain $\Gamma$, 
the free measure-hyperfinite subshifts
of $(\I^\N)^{\Gamma}$ form a $G_\delta$ set:
\begin{prop}\label{exactComeager}
    \sloppy Let $\Gamma$ be a countably infinite exact group.
    Then $\Sh_\measAmen(\Gamma, \I^\N)$ and
    $\Sh_\freeMeasHyp(\Gamma, \I^\N)$ are dense $G_\delta$.
    Moreover
    $\Sh_\Phi(\Gamma, \I^\N)$ is comeager
    for $\Phi \in \{\amen, \measHyp\}$.
\end{prop}
\begin{proof}
    Measure-amenability is compactly satisfiable
    (by exactness)
    and product-stable,
    so it is dense by \cref{stableDense}.
    To show that $\Sh_\measAmen(\Gamma, \I^\N)$ is $G_\delta$,
    by \cref{idealHard},
    it suffices to show that it is $\mathbf\Sigma^1_1$.
    
    We use the characterization of measure-amenability
    as topological amenability,
    see \cref{topAmenChar}.
    By \cite[12.13]{Kec95},
    there is Borel function $D \colon \Sh (\Gamma, \I^\N) \to (\I^\N)^\N$
    such that $D(K)$ is a dense subset of $K$
    for every $K\in \Sh (\Gamma, \I^\N)$,
    and we can assume that $D(K)$ is $\Gamma$-invariant.
    Fix a compatible metric $d$ on $\I^\N$.
    Then a subshift $K$ is topologically amenable
    iff for every rational $\epsilon > 0$
    and any finite $S\subseteq\Gamma$,
    there is a function $p \colon \N \to \Prob(\Gamma)$
    such that
    \begin{enumerate}[label=(\roman*)]
        \item (uniform continuity)
            for every $\epsilon_1 > 0$,
            there is a $\epsilon_2 >0$ such that
            for every $n, m\in \N$,
            if $d(D(K)_n, D(K)_m) < \epsilon_2$,
            then $\|p^n - p^m\|_1 < \epsilon_1$;
        \item (invariance)
            for every $\gamma\in S$ and every $n, m\in \N$,
            if $D(K)_n = \gamma\cdot D(K)_m$,
            then $\|p^n - \gamma\cdot p^m\|_1 < \epsilon$.
    \end{enumerate}
    So it is $\mathbf\Sigma^1_1$.
    
    Now $\Sh_\freeMeasHyp(\Gamma, \I^\N)$ is dense $G_\delta$,
    since by \cref{amenStabChar},
    it is the intersection of $\Sh_\free(\Gamma, \I^\N)$
    and $\Sh_\measAmen(\Gamma, \I^\N)$,
    which are both dense $G_\delta$
    (the former by \cref{freeAperShift}).
    
    Finally, by the diagram of implications in the beginning of \cref{appendix-amenable-actions}, we have that 
    $\Sh_\freeMeasHyp(\Gamma, \I^\N) \subseteq \Sh_\amen(\Gamma, \I^\N) \subseteq  \Sh_\measHyp(\Gamma, \I^\N)$, so the last two classes are also comeager.    
\end{proof}

\begin{remark}
    Petr Naryshkin brought out recently to our attention the paper of Suzuki \cite{Suz17},
    in which the author shows that in the space $\Act(\Gamma, \cantor)$
    of continuous actions of $\Gamma$ on $\cantor$
    (see \ref{subshifts-cantor-basics} of \Cref{subshifts-cantor}),
    the set of topologically amenable actions is dense $G_\delta$,
    when the group $\Gamma$ is exact.
    In view of \cref{correspondence},
    this gives a different proof of the comeagerness of $\Sh_\measAmen(\Gamma, \I^\N)$. 
\end{remark}

In connection to \Cref{prob-hfcomeager},
we can show density of hyperfinite subshifts for certain groups.
A countable group $\Gamma$ is \textbf{Borel exact}
if there is a free hyperfinite $\Gamma$-flow
(note that replacing ``hyperfinite'' by ``amenable''
gives the definition of an exact group). For example, all subgroups of hyperbolic groups are Borel exact. We give the proof of that fact below, after the next proposition,

An immediate consequence of \Cref{stableDense}
is that there are densely many hyperfinite subshifts for such groups:

\begin{prop}\label{hypDense}
    Let $\Gamma$ be Borel exact.
    Then $\Sh_\hyp(\Gamma, \I^\N)$ is dense.
\end{prop}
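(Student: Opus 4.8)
The plan is to deduce this directly from \cref{stableDense}, exactly as the remark preceding the statement suggests. Let $\Phi$ be the property of a continuous $\Gamma$-action that it is free and that its orbit equivalence relation is hyperfinite; this is clearly invariant under topological isomorphism. Since $\Sh_\Phi(\Gamma,\I^\N)\subseteq\Sh_\hyp(\Gamma,\I^\N)$, it is enough to show that $\Sh_\Phi(\Gamma,\I^\N)$ is dense, and by \cref{stableDense} this reduces to checking that $\Phi$ is compactly satisfiable and product-stable.

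Compact satisfiability is immediate from the definition of Borel exactness: a free hyperfinite $\Gamma$-flow is precisely a (compact) $\Gamma$-flow satisfying $\Phi$. For product-stability, suppose $\mathbf a$ is a Polish $\Gamma$-space on $X$ satisfying $\Phi$, say $E_{\mathbf a}=\bigcup_n F_n$ with $F_n$ finite Borel equivalence relations and $F_n\subseteq F_{n+1}$, and let $\mathbf b$ be an arbitrary Polish $\Gamma$-space on $Y$. The diagonal action $\mathbf a\times\mathbf b$ is free, since $\mathbf a$ is. Moreover the first-coordinate projection $p\colon X\times Y\to X$ is a Borel homomorphism from $E_{\mathbf a\times\mathbf b}$ to $E_{\mathbf a}$ which is injective on each $E_{\mathbf a\times\mathbf b}$-class: if $\gamma\cdot(x,y)$ and $\gamma'\cdot(x,y)$ have the same first coordinate then $\gamma x=\gamma' x$, so $\gamma=\gamma'$ by freeness of $\mathbf a$, and hence $\gamma\cdot(x,y)=\gamma'\cdot(x,y)$. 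Now set
\[
    E_n = \{(z,z')\in E_{\mathbf a\times\mathbf b} : p(z)\mathrel{F_n}p(z')\}.
\]
Each $E_n$ is a Borel equivalence relation, it has finite classes (an $E_n$-class is contained in an $E_{\mathbf a\times\mathbf b}$-class, on which $p$ is injective, and its image under $p$ lies in a single, finite, $F_n$-class), the $E_n$ are increasing, and $\bigcup_n E_n=E_{\mathbf a\times\mathbf b}$ because $p$ is a homomorphism into $E_{\mathbf a}=\bigcup_n F_n$. Thus $E_{\mathbf a\times\mathbf b}$ is hyperfinite, so $\mathbf a\times\mathbf b$ satisfies $\Phi$.

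With both properties verified, \cref{stableDense} yields that $\{K\in\Sh(\Gamma,\I^\N) : K\models\Phi\}$ is dense in $\Sh(\Gamma,\I^\N)$, and since this set is contained in $\Sh_\hyp(\Gamma,\I^\N)$, the latter is dense as well. The only nontrivial point is the product-stability of (freeness plus) hyperfiniteness above, i.e. the pullback argument for the finite subequivalence relations $E_n$; everything else is a direct unwinding of definitions, so I do not expect any genuine obstacle here — the content is entirely in recognizing that Borel exactness is exactly what makes $\Phi$ compactly satisfiable.
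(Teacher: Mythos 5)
Your proof is correct and follows the same route as the paper, which states the result as an immediate consequence of \cref{stableDense} applied to the property ``free $+$ hyperfinite'' (compactly satisfiable by the definition of Borel exactness, product-stable). The product-stability verification you spell out --- pulling back the finite relations $F_n$ along the first-coordinate projection, using freeness to see that each $E_{\mathbf a\times\mathbf b}\cap(F_n\times I_Y)$ is finite --- is exactly the argument the paper itself uses inside the proof of \cref{borelExactCriterion}, so nothing is missing.
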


We have a criterion for Borel exactness.
A countable group is \textbf{hyperfinite}
if all of its Borel actions
have hyperfinite orbit equivalence relations.
\begin{prop}\label{borelExactCriterion}
    Let $\Gamma$ be a countable group with a $\Gamma$-flow $X$ such that
    \begin{enumerate}[label=(\roman*)]
        \item $X$ is hyperfinite;
        \item the set $S = \{\Gamma_x : x \in X\}$ is countable;
        \item for every infinite $\Delta \in S$,
            the normalizer $N_\Gamma(\Delta)$
            of $\Delta$ in $\Gamma$ is hyperfinite.
    \end{enumerate}
    Then $\Gamma$ is Borel exact.
\end{prop}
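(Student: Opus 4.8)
The plan is to produce a free hyperfinite $\Gamma$-flow of the form $X\times Z$, where $Z$ is any free $\Gamma$-flow --- for instance a free subshift of $2^\Gamma$, which exists by \cite{GJS09} (for finite $\Gamma$ the statement is trivial, so assume $\Gamma$ infinite). The product action on $X\times Z$ is continuous, so $X\times Z$ is a $\Gamma$-flow, and it is free since $Z$ is. Everything thus reduces to showing that $E:=E_\Gamma^{X\times Z}$ is hyperfinite.

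First I would study the \emph{vertical} subequivalence relation $V$ of $E$, namely $(x,z)\mathrel V(x',z')$ iff $x=x'$ and $(x,z)\mathrel E(x',z')$; equivalently $(x,z)\mathrel V(x,z')$ iff $z'\in\Stab_\Gamma(x)\cdot z$, so that inside each fibre $\{x\}\times Z$ the $V$-classes are exactly the $\Stab_\Gamma(x)$-orbits. To see that $V$ is hyperfinite, use hypothesis (ii): for $\Delta\in S$ the set $X_\Delta=\{x:\Stab_\Gamma(x)=\Delta\}$ is Borel, $X=\bigsqcup_{\Delta\in S}X_\Delta$ is a countable Borel partition, and $V\uhr(X_\Delta\times Z)$ is a Borel disjoint union of copies of the orbit equivalence relation $E_\Delta^Z$ of $\Delta\car Z$. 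Now each $E_\Delta^Z$ is hyperfinite: if $\Delta$ is finite this is immediate, and if $\Delta$ is infinite then $\Delta\le N_\Gamma(\Delta)$, which is hyperfinite by hypothesis (iii), and a subgroup of a hyperfinite group is hyperfinite --- if $H\le N$ and $H\car W$ is a Borel action, then the induced $N$-action on $N\times_H W$ is Borel, hence has hyperfinite orbit equivalence relation, and $E_H^W$ is isomorphic to its restriction to the Borel complete section $\{eH\}\times W$. So $\Delta$ is hyperfinite and $E_\Delta^Z$ is hyperfinite. A countable Borel disjoint union of hyperfinite equivalence relations is hyperfinite, so $V$ is hyperfinite.

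Next I would climb from $V$ up to $E$. Using hypothesis (i), fix a filtration $E_\Gamma^X=\bigcup_n F_n$ with $F_0=\Delta_X$, each $F_n$ a finite Borel equivalence relation, $F_n\subseteq F_{n+1}$, and (after refining) $[F_{n+1}:F_n]\le 2$ for all $n$. Put $\hat F_n:=\{((x,z),(x',z'))\in E:(x,x')\in F_n\}$, a Borel equivalence relation on $X\times Z$. Then $\hat F_0=V$, the $\hat F_n$ increase with $\bigcup_n\hat F_n=E$ (if $(x,z)\mathrel E(x',z')$ then $x\mathrel{E_\Gamma^X}x'$, so $(x,x')\in F_n$ eventually), and $[\hat F_{n+1}:\hat F_n]\le 2$: inside an $E$-class $C$ an $\hat F_n$-class has the form $\pi_1^{-1}(D)\cap C$ for an $F_n$-class $D\subseteq\pi_1(C)$, and each $F_{n+1}$-class inside $\pi_1(C)$ contains at most two such $D$. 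Since an index-$\le 2$ extension of a hyperfinite equivalence relation is hyperfinite, each $\hat F_n$ is hyperfinite by induction, and since an increasing union of hyperfinite equivalence relations with uniformly bounded-index successive steps is hyperfinite (both are standard closure properties of hyperfinite CBERs; cf.\ \cite{DJK94}), $E=\bigcup_n\hat F_n$ is hyperfinite. Hence $X\times Z$ is a free hyperfinite $\Gamma$-flow and $\Gamma$ is Borel exact.

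The technical heart --- and the step I expect to be the main obstacle --- is this last passage from $V$ to $E$. The index $[E:V]$ is in general infinite (each $E$-class is a copy of $\Gamma$, the $V$-classes in it indexed by $\Gamma/\Stab_\Gamma(x)$), so no single finite-index step suffices and the increasing-union argument is unavoidable; here it is essential that the successive steps have \emph{uniformly bounded} index, which is why the filtration of $E_\Gamma^X$ is arranged to have index-$\le 2$ steps before being pulled back --- an increasing union of arbitrary hyperfinite CBERs need not be hyperfinite. Hypothesis (iii) is used precisely to make the vertical pieces $E_\Delta^Z$ hyperfinite for the infinite stabilizers, hypothesis (ii) to cut $V$ into countably many such pieces, and hypothesis (i) to supply the filtration. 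An alternative packaging of the last step: write $E_\Gamma^X=E_\rho$ for a Borel $\Z$-action $\rho$ (Slaman--Steel), lift a generator of $\rho$ to a Borel automorphism $\tilde\rho$ of $X\times Z$ compatible with the $\Gamma$-action, check that $E=V\vee E_{\tilde\rho}$ and that $\tilde\rho$ carries $V$-classes to $V$-classes, and invoke closure of hyperfiniteness under a $\Z$-extension respecting a hyperfinite subrelation.
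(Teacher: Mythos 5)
Your reduction to showing that $E=E_\Gamma^{X\times Z}$ is hyperfinite, and your treatment of the vertical relation $V$ (including the subgroup-of-a-hyperfinite-group argument), are fine; but the step you yourself flag as the technical heart is a genuine gap. The assertion that ``an increasing union of hyperfinite equivalence relations with uniformly bounded-index successive steps is hyperfinite'' is not one of the closure properties established in \cite{DJK94}: it is a special case of the increasing-union problem for hyperfiniteness, which is a well-known open question, and the bounded-index hypothesis on the successive steps is not known to suffice. Concretely, your $E$ is an extension of the \emph{infinite-index} subrelation $V$ whose ``quotient'' is the hyperfinite relation $E_\Gamma^X$; ``hyperfinite-by-hyperfinite'', and even ``hyperfinite-by-$\Z$'', is likewise not a known closure property for abstract CBERs, so your alternative packaging via a Slaman--Steel generator has exactly the same problem. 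The known positive results require either that the relations in the union be finite, or a single coherent witness; filtering each $\hat F_n$ separately by finite relations does not assemble into a filtration of $E$.

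The paper's proof avoids this by first splitting $X$ according to whether the stabilizer is finite or infinite, and the two cases use the hypotheses differently from the way you do. On the finite-stabilizer part, the lifted relations $E\cap(F_n\times I_Y)$ have \emph{finite} classes (the class of $(x,y)$ is the orbit of the finitely many $\gamma$ with $\gamma\cdot x\in[x]_{F_n}$, each such $\gamma$ ranging over finitely many cosets of the finite stabilizer), so $E$ there is an increasing union of finite Borel equivalence relations and no union problem arises; this is where hypothesis (i) is used. On the infinite-stabilizer part one does not build up from $V$ at all: after moving every point into $\bigsqcup_{\Delta\in T}Z_\Delta$ by an element of the full group, where $T$ is a transversal for the conjugation action on the stabilizers, one observes that $E\uhr Z_\Delta=E_{N_\Gamma(\Delta)}^{Z_\Delta}$ --- any $\gamma$ carrying a point of $Z_\Delta$ to a point of $Z_\Delta$ must normalize $\Delta$ --- and then hypothesis (iii) is applied at full strength: $N_\Gamma(\Delta)$ being a hyperfinite \emph{group} makes all of $E\uhr Z_\Delta$ hyperfinite, not merely the vertical piece $E_\Delta^Z$. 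To repair your argument you should adopt this case split rather than attempt the union lemma.
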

\begin{proof}
    We show that for any Borel $\Gamma$-space $Y$
    that $E = E_\Gamma^{X\times Y}$ is hyperfinite;
    the result follows from taking $Y$ to be any free $\Gamma$-flow.
    By partitioning $X$,
    it suffices to show this in the case where every stabilizer of $X$ is finite,
    and the case where every stabilizer of $X$ is infinite.
    
    First assume that every stabilizer of $X$ is finite.
    We first show that $X \times Y$ is hyperfinite.
    Since $X$ is hyperfinite,
    we can write $E_\Gamma^X = \bigcup_n F_n$,
    where each $F_n$ is finite.
    Then each $E\cap (F_n\times I_Y)$ is a finite CBER
    (since each stabilizer is finite),
    so their increasing union $E$ is hyperfinite.
    
    Now assume that every stabilizer of $X$ is infinite.
    Fix a transversal $T$ for the conjugation action of $\Gamma$ on $S$,
    and for every $x \in X$,
    choose $\gamma_x \in \Gamma$ such that
    $\Stab_\Gamma(\gamma_x \cdot x)$ is in $T$.
    For every $\Delta$ in $T$,
    let $Z_\Delta = \{(x, y)\in X\times Y : \Stab_\Gamma(x) = \Delta\}$.
    The map $(x, y)\mapsto \gamma_x \cdot (x, y)$ maps each $E$-class into itself,
    with image contained in $\bigsqcup_{\Delta \in T} Z_\Delta$,
    so it suffices to show that each $E\uhr Z_\Delta$ is hyperfinite,
    but $E\uhr Z_\Delta = E_{N_\Gamma(\Delta)}^{Z_\Delta}$,
    which is hyperfinite since $N_\Gamma(\Delta)$ is hyperfinite.
\end{proof}

We use this to show Borel exactness of hyperbolic groups.
\begin{prop}
    Every hyperbolic group $\Gamma$ is Borel exact.
\end{prop}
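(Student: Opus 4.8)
The plan is to verify the three hypotheses of \cref{borelExactCriterion} for the $\Gamma$-flow $X=\partial\Gamma$, the Gromov boundary of $\Gamma$ with its canonical continuous $\Gamma$-action; \cref{borelExactCriterion} then delivers Borel exactness immediately. The elementary case is disposed of separately: if $\Gamma$ is finite or virtually $\Z$ then $\Gamma$ has polynomial growth and is therefore a hyperfinite group by \cite{JKL02}, so any free subshift of $2^\Gamma$ (which exists by \cite{GJS09} when $\Gamma$ is infinite, and for finite $\Gamma$ one may take $\Gamma$ acting on itself) is already a free hyperfinite $\Gamma$-flow. So we may assume $\Gamma$ is non-elementary, and then $\partial\Gamma$ is a nonempty compact metrizable space on which $\Gamma$ acts continuously.

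Condition (i) of \cref{borelExactCriterion}, namely that the orbit equivalence relation $E_\Gamma^{\partial\Gamma}$ is hyperfinite, is precisely the theorem of Huang, Sabok and Shinko \cite{HSS} on hyperfiniteness of boundary actions of word-hyperbolic groups, which I would invoke as a black box.

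For conditions (ii) and (iii) I would appeal to the standard structure theory of hyperbolic groups. For each $\xi\in\partial\Gamma$ the stabilizer $\Stab_\Gamma(\xi)$ is \emph{elementary}, i.e.\ finite or two-ended (virtually $\Z$): an infinite subgroup of $\Gamma$ is either two-ended or contains two loxodromic elements with disjoint pairs of fixed points on $\partial\Gamma$, and the latter kind of subgroup cannot fix any boundary point. This gives (ii): an infinite elementary subgroup contains a loxodromic $h$ and is contained in the maximal elementary subgroup $E(h)$ (the setwise stabilizer of the fixed pair $\mathrm{Fix}(h)$), which is itself two-ended; since the countable group $\Gamma$ has only countably many loxodromic elements, only countably many subgroups $E(h)$ arise, each of them Noetherian and hence with only countably many subgroups, and there are also only countably many finite subgroups of $\Gamma$, so the set $S$ of point stabilizers is countable. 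For (iii), an infinite $\Delta\in S$ is two-ended and carries a well-defined fixed pair $\mathrm{Fix}(\Delta)\subseteq\partial\Gamma$ (that of any loxodromic in $\Delta$); every element of $N_\Gamma(\Delta)$ preserves $\mathrm{Fix}(\Delta)$, so $N_\Gamma(\Delta)$ again lies in a maximal elementary subgroup, is therefore virtually $\Z$, and hence is a hyperfinite group by \cite{JKL02}.

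Having checked (i)--(iii), \cref{borelExactCriterion} yields that $\Gamma$ is Borel exact. The only genuinely substantial input is the imported hyperfiniteness of the boundary action from \cite{HSS}; among the facts to be checked by hand, the most delicate point is the countability of the set $S$ of point stabilizers, which is exactly where the structure theory of elementary subgroups of hyperbolic groups is used.
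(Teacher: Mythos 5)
Your proof is correct and takes essentially the same route as the paper: both verify the three hypotheses of \cref{borelExactCriterion} for the boundary action $\Gamma\car\partial\Gamma$, importing hyperfiniteness of that action as a black box (the paper cites \cite[Theorem A]{MS20}). The only differences are cosmetic: for (ii) the paper simply notes that virtually cyclic subgroups are finitely generated and hence countable in number, and for (iii) it deduces that $N_\Gamma(\Delta)$ is virtually cyclic from quasiconvexity of $\Delta$ and the finite-index-in-normalizer property \cite[III.$\Gamma$, 3.10 and 3.16]{BH99}, whereas you argue via maximal elementary subgroups and stabilizers of fixed pairs --- both are standard and equally valid, and your separate treatment of the elementary case is if anything slightly more careful than the paper's.
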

\begin{proof}
    The Gromov boundary $\partial\Gamma$ is a $\Gamma$-flow.
    We show that it satisfies the conditions of \Cref{borelExactCriterion}.
    Hyperfiniteness follows from \cite[Theorem A]{MS20}.
    By \cite[Chapitre 8, 30]{GH90},
    every stabilizer is virtually cyclic,
    so there are only countably many stabilizers.
    Finally,
    let $\Delta$ be an infinite stabilizer.
    Since $\Delta$ is virtually cyclic,
    by \cite[III.$\Gamma$, 3.10]{BH99},
    it is a quasiconvex subgroup of $\Gamma$.
    So since $\Delta$ is infinite,
    by \cite[Chapter III.$\Gamma$, 3.16]{BH99},
    it has finite index in its normalizer,
    so its normalizer is also virtually cyclic,
    and thus hyperfinite.
\end{proof}
Since Borel exactness is closed under subgroups,
every subgroup of a hyperbolic group is Borel exact.
G\'abor Elek has pointed out that this can also be deduced from recent results as follows.
Every hyperbolic group,
and hence every subgroup of a hyperbolic group,
has finite asymptotic dimension
(this is due to Gromov;
see \cite{Roe05} for a full proof).
So by \cite[Theorem 1.3(iv)]{GWY17},
there is a free Cantor action with finite dynamic asymptotic dimension,
and such actions are hyperfinite by \cite[Theorem 1.7]{CJMST23};
alternatively,
the generic Cantor action,
which is free,
is hyperfinite by \cite[Theorem 3]{IS25}.

The preceding constructions have some relevance to the question of
whether every non-smooth, aperiodic CBER admits a compact action realization. 

\begin{prop}
    Suppose that $\Gamma$ has an infinite Borel exact quotient.
    Then the set
    \[
        \{F \in \Sh(\Gamma, \R^\N)
        : \text{$E_F$ has a compact action realization}\}
    \]
    is $\SIGMA^1_1$-hard.
\end{prop}
\begin{proof}
    For any quotient $\Gamma'$ of $\Gamma$,
    the natural map $\Sh(\Gamma', \R^\N) \to \Sh(\Gamma, \R^\N)$
    reduces shifts with a compact action realization
    to shifts with a compact action realization.
    So without loss of generality,
    we assume that $\Gamma$ itself is infinite and Borel exact.
    
    Let $K, L \in \Sh(\Gamma, \R^\N)$,
    where $K$ is free compact hyperfinite,
    and $L$ is free smooth.
    For every $T \in \Tr$ and $s \in \N^{<\N}$,
    define $F^T_s \in \Sh(\Gamma, \R^\N)$
    by
    \[
        F^T_s :=
        \begin{cases}
            K & s \in T \\
            L & s \notin T
        \end{cases}
    \]
    Let $f \colon \Tr \to \Sh(\Gamma, \R^\N)$
    be defined by $f(T) = \A_s F^T_s$.
    We claim that $T$ is ill-founded iff $E_{f(T)}$
    has a compact action realization.

    Note that for every $T \in \Tr$,
    every branch of $(F^T_s)_s$ consists of free hyperfinite actions,
    so the product along every branch is free hyperfinite,
    and thus $f(T)$ is (free) hyperfinite.

    If $T$ has an infinite branch,
    then there is a branch of $(F^T_s)_s$
    consisting only of $K$ which is compact free,
    so the product along this branch is compact free,
    and thus non-smooth.
    Thus $f(T)$ is non-smooth hyperfinite,
    and thus has a compact action realization.

    On the other hand,
    if $T$ has no infinite branches,
    then every branch of $(F^T_s)_s$ contains $L$
    which is free smooth,
    so the product along every branch is free smooth,
    and thus $f(T)$ is free smooth,
    and thus does not have a compact action realization.
\end{proof}

\begin{prop}
    For every $x\in \cantor$,
    there is some $F\in \Sh(\F_\infty, \R^\N)$
    such that $E_F$ has a compact action realization,
    but there is no $\Delta^1_1(F, x)$
    isomorphism of $E_F$ with some $E_K$,
    $K\in \Sh(\F_\infty, \I^\N)$. 
\end{prop}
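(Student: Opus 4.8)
Fix $x\in\cantor$. The plan is to realize Turing equivalence by an arithmetical subshift and then argue with pointed perfect trees, as in \cref{S3CC}. Using the Feldman--Moore theorem together with the standard equivariant embedding of a Borel $\F_\infty$-action into the shift on $(\R^\N)^{\F_\infty}$ (both effective), fix a lightface arithmetical subshift $F\in\Sh(\F_\infty,\R^\N)$ and a lightface arithmetical isomorphism $\equiv_T\cong_B E_F$; since $\equiv_T$ is non-smooth with infinite classes, $F$ is non-smooth and aperiodic, and since $F$ is arithmetical we have $\Delta^1_1(F,x)=\Delta^1_1(x)$. Suppose, toward a contradiction, that there is a $\Delta^1_1(x)$ isomorphism of $E_F$ with $E_K$ for some $K\in\Sh(\F_\infty,\I^\N)$. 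Composing it with the arithmetical isomorphism $\equiv_T\cong_B E_F$ gives a $\Delta^1_1(x)$ isomorphism $\Phi\colon\cantor\to K$ between $\equiv_T$ and $E_K$. By the effective Lusin--Suslin theorem, $K=\Phi(\cantor)$ is a $\Delta^1_1(x)$ compact subshift of $(\I^\N)^{\F_\infty}$ and $\Phi,\Phi^{-1}$ are $\Delta^1_1(x)$; and since $E_K$ is the orbit equivalence relation of a continuous $\F_\infty$-action on the compact space $K$, each graph $\{(u,\gamma\cdot u):u\in K\}$ is compact, so $E_K$ is $F_\sigma$, in fact $\Sigma^0_2(x)$.

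The crux is to produce an $x$-pointed perfect tree $S\subseteq 2^{<\N}$ on which $\Phi\uhr[S]$ is continuous. Here one wants to run the pointed--perfect--tree machinery of \cite{Kec88}, as in the proof of the last proposition of \cref{S3CC}: first pass to an $x$-pointed perfect tree on which the $\Delta^1_1(x)$ map $\Phi$ is computed by a single hyperarithmetic functional over a fixed hyperarithmetic-in-$x$ oracle, and then, using that $\Phi$ is a \emph{reduction} of $\equiv_T$ onto an $F_\sigma$ equivalence relation, refine to a pointed perfect subtree on which $\Phi$ is computed by a single Turing functional, hence continuous. I expect this to be the main obstacle: a general $\Delta^1_1(x)$ map (e.g.\ the Turing jump) need not be continuous on \emph{any} pointed perfect set, so one must genuinely use that $\Phi$ is an isomorphism, not merely a Borel map.

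Granting this, let $c\in\cantor$ code $S$ together with $x$ and the Turing functional witnessing continuity of $\Phi\uhr[S]$, so $c\le_h x$. Since $\Phi$ is a reduction, $\equiv_T\uhr[S]$ is the preimage of the $\Sigma^0_2(x)$ set $E_K$ under the map $(z,w)\mapsto(\Phi(z),\Phi(w))$, which is continuous relative to $c$ on $[S]^2$; hence $\equiv_T\uhr[S]$ is $\Sigma^0_2(c)$. Transferring along the canonical homeomorphism $\cantor\cong[S]$ attached to a pointed perfect tree --- which satisfies $\Psi(z)\equiv_T z$ whenever $z\ge_T c$ --- we get that the Turing degree of $c$ is $\Sigma^0_2(c)$, hence $\Pi^0_3(c)$, contradicting the lemma of \cref{S3CC} that for any $z$ the Turing degree of $z$ is not $\Pi^0_3(z)$.

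An alternative route, which avoids pointed perfect trees, is a complexity computation. Assuming, toward a contradiction, that \emph{every} non-smooth aperiodic $F\in\Sh(\F_\infty,\R^\N)$ admits a $\Delta^1_1(F,x)$ isomorphism with some compact $E_K$, one would use the effective Lusin--Suslin theorem and Segal's effective witness to measure-hyperfiniteness (\cite[Theorem~1.7.8]{CM17}) to show that measure-hyperfiniteness is $\Sigma^1_1(x)$ on the non-smooth aperiodic subshifts, contradicting the $\PI^1_1$-completeness of $\Sh_\measHyp(\F_\infty,\R^\N)$ (cf.\ \cref{resFinShift}, since $\F_\infty$ is non-amenable and residually finite). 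The delicate point there is the effective bookkeeping needed to bring the bound down from $\Sigma^1_2$ to $\Sigma^1_1$.
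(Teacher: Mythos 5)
There is a genuine gap, and it sits exactly at the step you flag as ``the main obstacle'' in Route~1. You fix $F$ to be an arithmetical copy of $\equiv_T$ \emph{independently of $x$}; since every Borel isomorphism is $\Delta^1_1(F,x)$ for a suitable $x$, carrying out your plan for every $x$ would prove that $\equiv_T$ admits no compact action realization at all, which is the open \cref{342} (and would in turn settle the universality of $\equiv_T$). The pointed-perfect-tree machinery of \cite{Kec88} used in \cref{S3CC} applies only to maps $\Phi$ with $\Phi(z)\le_T z$ (modulo a fixed parameter) on a cone: it is that hypothesis, not the fact that $\Phi$ is an isomorphism onto an $F_\sigma$ relation, which yields a pointed perfect subtree on which $\Phi$ is computed by a single Turing functional. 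Indeed \cref{t:main} is a counterexample to the refinement you hope for: it produces a Baire class $2$ (hence $\Delta^1_1$ in a real) isomorphism $\Phi$ of $\equiv_T$ with an equivalence relation induced by a continuous action on $\baire$ --- an $F_\sigma$ relation --- satisfying $\Phi(z)\equiv_T z''$, so $\Phi$ is continuous on no pointed perfect tree. Nothing in your sketch uses compactness of $K$ to block this, and no such use is known. Note also that the intended conclusion of the proposition is weaker than what you are attempting: it is a \emph{non-uniformity} statement in which $F$ is allowed to depend on $x$, and the paper never exhibits a specific $F$.

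Route~2 is the right kind of argument --- the paper's proof is precisely a complexity/boundedness computation --- but it aims at the wrong class. A $\Delta^1_1(F,x)$ isomorphism with a compact $E_K$ has no evident bearing on the complexity of measure-hyperfiniteness: Segal's effective witness already gives the $\PI^1_1$ bound for arbitrary subshifts, and compactness does not convert it into a $\SIGMA^1_1$ condition. What compactness does detect is \emph{smoothness}: by \cref{3.6} and \cref{322}, for $K\in\Sh(\F_\infty,\I^\N)$ one has $K\notin\Sh_{\sm}(\F_\infty,\I^\N)$ iff $K\times I_\N$ admits a compact action realization. The paper's proof assumes the proposition fails for some $x$, observes that ``aperiodic and $E_F$ admits a compact action realization'' is then $\boldsymbol{\Pi^1_1}$ (an existential quantifier over $\Delta^1_1(F,x)$ codes is $\Pi^1_1$), and concludes via the displayed equivalence that $\Sh_{\sm}(\F_\infty,\I^\N)$ is $\boldsymbol{\Sigma^1_1}$, contradicting its $\PI^1_1$-hardness from \cref{resFinShift} (applicable since $\F_\infty$ is residually finite). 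If you redirect your Route~2 at smoothness through this equivalence you recover the paper's argument; as written, neither of your routes closes.
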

\begin{proof}
    Assume this fails toward a contradiction.
    Then, by $\Delta^1_1$ bounded quantification, there is a $\PI^1_1$ definition of the class of all
    $F\in \Sh(\F_\infty, \R^\N)$
    for which $E_F$ admits a compact action realization,
    contradicting $\SIGMA^1_1$-hardness.
\end{proof}

Informally this implies that there is no ``uniform Borel method''
that will construct a compact action realization
for each CBER with a compact action realization,
even if it is given as a subshift of $(\R^\N)^{\F_\infty}$.

Another version is the following:
For every Borel function
$f \colon \Sh(\F_\infty, \R^\N) \to \Sh(\F_\infty, \I^\N)$,
there is some $F\in \Sh(\F_\infty, \R^\N)$
such that $E_F$ has a compact action realization
but $E_F \not\cong_B E_{f(F)}$.

\subsection{\nopunct}
Let 
\[
    \Sh_\kappa(\Gamma, X)
    = \{F \in \Sh(\Gamma, X) : \text{$F$ is $\kappa$-ergodic}\}
\]
and 
\[
    \Sh_{\free\kappa}(\Gamma, X)
    = \{F \in \Sh(\Gamma, X) : \text{$F$ is free and $\kappa$-ergodic}\}.
\]

We will classify next the complexity of
$\Sh_\kappa(\Gamma, \I^\N)$,
$\Sh_\kappa(\Gamma, \baire)$
as well as their free versions
$\Sh_{\free \kappa}(\Gamma, \I^\N)$,
$\Sh_{\free \kappa}(\Gamma, \baire)$,
for $\kappa \ge 1$ and various $\Gamma$,
in comparison with the results in \ref{subshifts-space-props} above for $\kappa = 0$.

We will first need some results concerning trees in descriptive set theory.
As before we denote by $\Tr$ the space of all subtrees of $\N^{<\N}$
and by $\Tr_2$ the space of all subtrees of $2^{<\N}$.
For each such tree $T$,
we denote the cardinality of its set of infinite branches by $\kappa(T)$.
We now have:

\begin{prop}\label{prop-kappabranch}
    \leavevmode
    \begin{enumerate}[label=(\arabic*)]
        \item \label{item-kappabranch-binary}
            $\{T \in \Tr_2 : \kappa(T) = \aleph_0\}$
            is $\PI^1_1$-complete and
            $\{T \in \Tr_2 : \kappa(T) = 2^{\aleph_0}\}$
            is $\SIGMA^1_1$-complete.
        \item \label{item-kappabranch-infinitary}
            $\{T \in \Tr : \kappa(T) = \kappa\}$,
            where $1 \le \kappa \le \aleph_0$,
            is $\PI^1_1$-complete and
            $\{T \in \Tr : \kappa(T) = 2^{\aleph_0}\}$
            is $\SIGMA^1_1$-complete. 
    \end{enumerate}
\end{prop}

\begin{proof}
    \leavevmode
    \begin{enumerate}[label=(\arabic*)]
        \item
            It is well known that $\{T \in \Tr_2 : \kappa(T) \le \aleph_0\}$ is $\PI^1_1$-complete,
            so $\{T \in \Tr_2 : \kappa(T) = 2^{\aleph_0}\}$ is $\SIGMA^1_1$-complete.
            Fix now a tree $T_0$ in $\Tr_2$ with $\aleph_0$ infinite branches and note that
            $T \mapsto T \cup T_0$ is a Borel reduction of
            $\{T \in \Tr_2 : \kappa(T) \le \aleph_0\}$ to $\{T \in \Tr_2 : \kappa(T) = \aleph_0\}$,
            so the latter is also $\PI^1_1$-complete. 
        \item
            Let $A \subseteq \baire$ be $\PI^1_1$.
            Then there is a closed set $F \subseteq \baire \times (\baire)_0$,
            where $(\baire)_0 = \{x \in \baire : x(0) = 0 \}$,
            such that 
            \[
                x \notin A \iff \exists y \; (x, y) \in F.
            \]
            Let now for $1 \le \kappa < \aleph_0$,
            $C_\kappa$ be a closed subset of $(\baire)_1$,
            where $(\baire)_1 = \{x \in \baire : x(0) = 1\}$,
            such that $C_\kappa$ has cardinality $\kappa$.
            Finally let $F_\kappa = F \cup (\baire \times C_\kappa)$,
            which is a closed subset of $\baire \times \baire$ and
            let $T_\kappa$ be a subtree of $\N^{<\N} \times \N^{<\N}$ such that $F_\kappa = [T_\kappa]$,
            the set of infinite branches of $T_\kappa$.
            Then if $T_n(x) = \bigcup_k \{t \in \N^k : (x|k, t) \in T_n\}$,
            clearly
            \[
                x \in A \iff \kappa(T_\kappa(x)) = \kappa,
            \]
            so the Borel map $x \mapsto T_\kappa(x)$ reduces $A$ to $\{T \in \Tr : \kappa(T) = \kappa\}$,
            so the latter is $\PI^1_1$-complete.
            The results about $\kappa = \aleph_0, 2^{\aleph_0}$
            follow from the corresponding ones in \ref{item-kappabranch-binary} above.
    \end{enumerate}
\end{proof}

Consider the following property \eqref{property-good}
of a countable infinite group $\Gamma$:
\[\label{property-good}
    \parbox{0.8\linewidth}{
        There is a sequence $(\ac a_n)_n$ of free, uniquely ergodic,
        continuous $\Gamma$-actions on compact Polish spaces such that
        the product action $\prod_n \ac a_n$ is uniquely ergodic.}
    \tag{$\star$}
\]
One can see,
using rationally independent rotations of $\T$ and Kronecker’s Theorem,
that $\Z$ satisfies \eqref{property-good}.
Tsankov showed that every amenable group satisfies \eqref{property-good}
and Glasner pointed out that $\SL_n(\Z)$ and free groups satisfy \eqref{property-good}.
It is not clear to what extent the property \eqref{property-good}
is satisfied among countable groups.

Using these results and the method of proof of \cref{souslinHard},
we can now show the following.
\begin{thm}\label{thm-kappaerg}
    \leavevmode
    \begin{enumerate}[label=(\arabic*)]
        \item \label{item-kappaerg}
            For any infinite countable group $\Gamma$, 
            \begin{enumerate}[label=(\alph*)]
                \item
                    $\Sh_1(\Gamma, \I^\N)$ and $\Sh_{\free 1}(\Gamma, \I^\N)$ are $G_\delta$
                    and  $\Sh_\kappa(\Gamma, \I^\N)$ and $\Sh_{\free \kappa}(\Gamma, \I^\N)$,
                    are differences of two $G_\delta$ sets,
                    for $1 < \kappa < \aleph_0$, 
                    while $\Sh_\kappa(\Gamma, \R^\N)$ is $\PI^1_1$-complete,
                    for $1 \le \kappa < \aleph_0$.
                \item \label{item-kappaerg-ctble}
                    $\Sh_{\aleph_0}(\Gamma, \I^\N)$ and $\Sh_{\aleph_0}(\Gamma, \R^\N)$
                    are $\PI^1_1$-complete.
                \item \label{item-kappaerg-cntnm}
                    $\Sh_{2^{\aleph_0}}(\Gamma, \I^\N)$ and $\Sh_{2^{\aleph_0}}(\Gamma, \R^\N)$
                    are $\SIGMA^1_1$-complete.
            \end{enumerate}
        \item \label{item-gooderg}
            For any infinite countable group $\Gamma$ satisfying \eqref{property-good},
            \begin{enumerate}[label=(\alph*)]
                \item \label{item-gooderg-fin}
                     $\Sh_{\free \kappa}(\Gamma, \R^\N)$ is $\PI^1_1$-complete,
                     for $1 \le \kappa < \aleph_0$.
                 \item \label{item-gooderg-ctble}
                     $\Sh_{\free \aleph_0}(\Gamma, \I^\N)$ and $\Sh_{\free \aleph_0}(\Gamma, \R^\N)$
                     are $\PI^1_1$-complete.
                 \item \label{item-gooderg-cntnm}
                     $\Sh_{\free 2^{\aleph_0}}(\Gamma, \I^\N)$ and $\Sh_{\free 2^{\aleph_0}}(\Gamma, \R^\N)$
                     are $\SIGMA^1_1$-complete. 
            \end{enumerate}
    \end{enumerate}
\end{thm}

\begin{proof}
    \leavevmode
    \begin{enumerate}[label=(\arabic*)]
        \item
            \begin{enumerate}[label=(\alph*)]
                \item
                    Consider first $\Sh_\kappa(\Gamma, \I^\N)$.
                    It is enough to show that for $1 \le \kappa < \aleph_0$,
                    $\Sh_{\le \kappa}(\Gamma, \I^\N) = \bigcup_{\lambda \le \kappa}\Sh_\lambda(\Gamma, \I^\N)$
                    is $G_\delta$.
                    Denoting by $P$ the compact space of
                    shift-invariant Borel probability measures on $(\I^\N)^\Gamma$,
                    we note that,
                    using the ergodic decomposition,
                    we have $F \in \Sh_{\le\kappa}(\Gamma, \I^\N) \iff
                        \forall \mu_1, \ldots , \mu_{\kappa + 1} \in P \;
                            [(\forall i \le (\kappa +1) \; (\mu_i(F) = 1))
                            \implies \exists a_1, \dots , a_{\kappa + 1} \in [-1,1] \;
                                (\sum_{i \le \kappa+1} |a_i| = 1  \  \&  \  \sum_{i \le \kappa + 1} a_i \mu_i = 0)]$.

                    Since freeness is a $G_\delta$ condition,
                    the results about $\Sh_{\free \kappa}(\Gamma, \I^\N)$ follow. 

                    Finally the proof of the result about $\Sh_\kappa(\Gamma, \R^\N)$
                    is similar to that in \ref{item-kappaerg}\ref{item-kappaerg-ctble} below,
                    using \cref{prop-kappabranch}\ref{item-kappabranch-infinitary},
                    instead of \cref{prop-kappabranch}\ref{item-kappabranch-binary}.

                \item
                    It is enough to consider $\Sh_{\aleph_0}(\Gamma, \I^\N)$.
                    For any $T \in \Tr_2, s \in 2^{<\N}$,
                    let $F_s^T$ be a 1-point subshift of $(\I^\N)^\Gamma$, if $s \in T$,
                    and a compressible subshift of $(\I^\N)^\Gamma$, if $s\notin T$.
                    Then, as is in the proof of \Cref{souslinHard},
                    but defining $A_s F^T_s$ using the space $\cantor \times ((\I^\N)^\Gamma)^\N$
                    instead of $\baire \times ((\R^\N)^\Gamma)^\N$
                    in the paragraph preceding \cref{souslinHard},
                    the map $T \in \Tr_2 \mapsto A_s F^T_s$ is a Borel reduction of
                    $\{T \in \Tr_2 : \kappa(T) = \aleph_0\}$ to $\Sh_{\aleph_0}(\Gamma, \I^\N)$,
                    so we are done by \cref{prop-kappabranch}\ref{item-kappabranch-binary}.
                \item 
                    The proof is similar to that in \ref{item-kappaerg}\ref{item-kappaerg-ctble}.
            \end{enumerate}
        \item
            Since $\Gamma$ satisfies \eqref{property-good},
            there is a family $(F_s)_{s\in \N^{< \N}}$ of free subshifts of $(\I^\N)^\Gamma$
            which are uniquely ergodic and whose product is also uniquely ergodic.
            Then, again as in the proof of \cref{souslinHard},
            define for any $T \in \Tr, s\in \N^{<\N}$,
            $F_s^T$ to be $F_s$, if $s\in T$,
            and a free, compressible subshift of $(\I^\N)^\Gamma$, if $s\notin T$.
            The the Borel map $T \in \Tr \mapsto A_s F^T_s$ is a Borel reduction of
            $\{T \in \Tr : \kappa(T) = \kappa\}$ to $\Sh_\kappa(\Gamma, \R^\N)$,
            so we are done for \ref{item-gooderg}\ref{item-gooderg-fin}
            by \cref{prop-kappabranch}\ref{item-kappabranch-infinitary}.
            The case of $\I^\N$ in \ref{item-gooderg}\ref{item-gooderg-ctble}\ref{item-gooderg-cntnm},
            is analogous to that in \ref{item-kappaerg}\ref{item-kappaerg-ctble}\ref{item-kappaerg-cntnm},
            and the case of $\R^\N$ clearly follows from this.
    \end{enumerate}
\end{proof}

\begin{remark}\label{3818}
    If $\Gamma$ has an infinite factor satisfying \eqref{property-good},
    then it is clear that all the results in \cref{thm-kappaerg}\ref{item-gooderg}
    hold if ``free'' is replaced by ``aperiodic''.
\end{remark}

In combination with results in \cref{subshifts-cantor},
we also have the following dichotomy result,
which also gives another characterization of amenability for countable groups
(at least in the finitely generated case).

\begin{thm}
    Let $\Gamma$ be an infinite countable group.
    \begin{enumerate}[label=(\arabic*)]
        \item
            If $\Gamma$ is not amenable,
            then the generic subshift of $(\I^N)^\Gamma$ is compressible,
            i.e., has no ergodic invariant measures.
        \item
            If $\Gamma$ is finitely generated and amenable,
            then the generic subshift of $(\I^N)^\Gamma$
            has $2^{\aleph_0}$ ergodic invariant measures.
    \end{enumerate}
\end{thm} 
\begin{proof}
    \leavevmode
    \begin{enumerate}[label=(\arabic*)]
        \item
            This follows from \cref{387}.
        \item
            This follows from the results in \cref{subshifts-cantor},
            see \cref{cor-fgnottrans}\ref{item-fgnottrans-eqrel}
            and \cref{correspondence}.
    \end{enumerate}
\end{proof}

\section{The space of actions on the Cantor space}\label{subshifts-cantor}
\subsection{\nopunct}\label{subshifts-cantor-basics}
Let $\Gamma$ be a countable group,
and let $X$ be a compact zero-dimensional Polish space.
Denote by $\Act(\Gamma, X)$ the space
of group homomorphisms $\Gamma\to\Homeo(X)$, i.e., $\Gamma$-flows on the space $X$.
The group $\Homeo(X)$ is a Polish group with the uniform convergence topology and $\Act(\Gamma, X)$ is a closed subspace of $\Homeo(X)^\Gamma$ with the product topology, so it is a Polish space in this topology. The Polish group $\Homeo(X)$ acts continuously by conjugation on $\Act(\Gamma, X)$.

For $\ac a \in \Act(\Gamma, X)$, let
$\mathcal A_\ac a$ denote the Boolean algebra
of clopen $\ac a$-invariant subsets of $X$,
and let $\St(\mathcal A_\ac a)$ denote its Stone space.
There is a continuous $\ac a$-equivariant surjection
$X \thra \St(\mathcal A_\ac a)$
defined by sending $x$ to the ultrafilter
$\{A \in \mathcal A_\ac a : x \in A\}$.
For every $\mathcal U\in\St(\mathcal A_\ac a)$,
the fiber $C^{\ac a}_\mathcal U$ above $\mathcal U$
is a closed $\ac a$-invariant subset of $X$,
giving the decomposition
\[
    X = \bigsqcup_{\mathcal U \in \St(\mathcal A_\ac a)}
    C^{\ac a}_\mathcal U.
\]

Let $\CEINV(\ac a)$ denote the subset of the space $\INV(\ac a)$
of invariant Borel probability measures for $\ac a$,
consisting of \textbf{clopen-ergodic} measures,
that is,
measures $\mu \in \INV(\ac a)$ for which
every $A\in \mathcal A_\ac a$ is $\mu$-null or $\mu$-conull.
Note that $\CEINV(\ac a)$ is closed by the Portmanteau Theorem
\cite[17.20(v)]{Kec95},
and we have
\[
    \EINV(\ac a) \subseteq \CEINV(\ac a) \subseteq \INV(\ac a).
\]
There is a map $\CEINV(\ac a)\thra\St(\mathcal A_\ac a)$
sending $\mu$ to the ultrafilter $\{A\in \mathcal A_\ac a : \mu(A) = 1\}$, which is surjective if $\Gamma$ is amenable,
and the fiber above $\mathcal U$
can be identified with $\INV(\ac a \uhr C^{\ac a}_\mathcal U)$,
giving a decomposition
\[
    \CEINV(\ac a) = \bigsqcup_{\mathcal U\in\St(\mathcal A_\ac a)}
    \INV(\ac a \uhr C^{\ac a}_\mathcal U).
\]
\begin{prop}
    Suppose $\Gamma$ is amenable.
    Let $\ac a \in \Act(\Gamma, X)$.
    If $|\mathcal A_\ac a| > 2$,
    then $\CEINV(\ac a)$ is a proper subset of $\INV(\ac a)$,
    so in particular, $\INV(\ac a)$ is not Poulsen.
    If $\mathcal A_\ac a$ is atomless,
    then $\EINV(\ac a)$ has size continuum.
\end{prop}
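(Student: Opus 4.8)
The plan is to read everything off the two decompositions recorded above, namely
\[
    X = \bigsqcup_{\mathcal U \in \St(\mathcal A_{\mbf a})} C^{\mbf a}_{\mathcal U}
    \qquad\text{and}\qquad
    \CEINV(\mbf a) = \bigsqcup_{\mathcal U\in\St(\mathcal A_{\mbf a})} \INV\bigl(\mbf a \uhr C^{\mbf a}_{\mathcal U}\bigr),
\]
together with the standard fact that, $\Gamma$ being amenable, every continuous $\Gamma$-action on a nonempty compact Polish space admits an invariant probability measure, and hence (passing to an ergodic component) an ergodic one.

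For the first assertion I would argue as follows. Since $|\mathcal A_{\mbf a}| > 2$, fix a clopen $\mbf a$-invariant set $A$ with $\emptyset \neq A \neq X$; then $A$ and $X\setminus A$ are nonempty compact $\mbf a$-invariant subspaces. By amenability choose $\mu_0 \in \INV(\mbf a\uhr A)$ and $\mu_1\in\INV\bigl(\mbf a\uhr(X\setminus A)\bigr)$, and set $\mu := \tfrac{1}{2}\mu_0 + \tfrac{1}{2}\mu_1$, regarded as a measure on $X$. Then $\mu\in\INV(\mbf a)$ while $\mu(A) = \tfrac{1}{2}$, so $A$ witnesses $\mu\notin\CEINV(\mbf a)$; hence $\CEINV(\mbf a)\subsetneq\INV(\mbf a)$. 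For the ``not Poulsen'' clause, recall that $\CEINV(\mbf a)$ is closed (Portmanteau) and that the extreme points of the Choquet simplex $\INV(\mbf a)$ are exactly $\EINV(\mbf a)\subseteq\CEINV(\mbf a)$; thus the closure of the extreme point set is contained in the proper closed subset $\CEINV(\mbf a)$, so the extreme points are not dense and $\INV(\mbf a)$ is not Poulsen.

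For the second assertion, note first that $\mathcal A_{\mbf a}$ is countable, since $X$ is second countable and so has only countably many clopen sets. If $\mathcal A_{\mbf a}$ is atomless it is therefore the unique countable atomless Boolean algebra, whose Stone space is homeomorphic to $\cantor$; in particular $|\St(\mathcal A_{\mbf a})| = 2^{\aleph_0}$, and each fiber $C^{\mbf a}_{\mathcal U}$ is a nonempty (by surjectivity of $X\thra\St(\mathcal A_{\mbf a})$) closed $\mbf a$-invariant set. By amenability pick, for each $\mathcal U$, an ergodic invariant measure $\mu_{\mathcal U}$ for $\mbf a\uhr C^{\mbf a}_{\mathcal U}$; since $C^{\mbf a}_{\mathcal U}$ is $\mbf a$-invariant, $\mu_{\mathcal U}$ is also ergodic for $\mbf a$, so $\mu_{\mathcal U}\in\EINV(\mbf a)$. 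The map $\mathcal U\mapsto\mu_{\mathcal U}$ is injective: if $\mathcal U\neq\mathcal V$, pick $A\in\mathcal A_{\mbf a}$ with $A\in\mathcal U$, $A\notin\mathcal V$; then $C^{\mbf a}_{\mathcal U}\subseteq A$ and $C^{\mbf a}_{\mathcal V}\subseteq X\setminus A$, so $\mu_{\mathcal U}(A)=1\neq 0=\mu_{\mathcal V}(A)$. Hence $|\EINV(\mbf a)|\geq 2^{\aleph_0}$, and the reverse inequality is automatic since $\EINV(\mbf a)$ sits inside the compact metrizable space of Borel probability measures on $X$; so $|\EINV(\mbf a)| = 2^{\aleph_0}$.

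I do not anticipate a real obstacle here: the whole argument is bookkeeping over the given decompositions plus amenability. The two spots that need a word of justification are that an ergodic measure for a restricted action $\mbf a\uhr C^{\mbf a}_{\mathcal U}$ stays ergodic for $\mbf a$ (immediate, as invariant Borel subsets of $X$ meet $C^{\mbf a}_{\mathcal U}$ in invariant Borel sets), and the identification of an atomless countable $\mathcal A_{\mbf a}$ with the clopen algebra of the Cantor set so that $\St(\mathcal A_{\mbf a})$ has cardinality continuum; the only ``existence'' input, namely nonemptiness of the fibers $C^{\mbf a}_{\mathcal U}$, is exactly the surjectivity of $X\thra\St(\mathcal A_{\mbf a})$ already noted.
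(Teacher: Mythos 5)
Your proposal is correct and follows essentially the same route as the paper: the paper also deduces the first claim from the fact that $|\St(\mathcal A_{\mbf a})|\ge 2$ forces $\CEINV(\mbf a)$ to fail closure under convex combinations (your explicit $\mu=\tfrac12\mu_0+\tfrac12\mu_1$ is exactly the witness), and the second from $|\St(\mathcal A_{\mbf a})|=2^{\aleph_0}$ together with nonemptiness of each $\INV(\mbf a\uhr C^{\mbf a}_{\mathcal U})$ by amenability. Your write-up merely makes explicit a few steps the paper leaves implicit (the Poulsen clause via closedness of $\CEINV(\mbf a)\supseteq\EINV(\mbf a)$, and countability of $\mathcal A_{\mbf a}$ to identify its Stone space).
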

\begin{proof}
    If $|\mathcal A_\ac a| > 2$,
    then $|\St(\mathcal A_\ac a)| \ge 2$,
    so we see from the decomposition that
    $\CEINV(\ac a)$ is not closed under convex combinations,
    and is thus a strict subset of $\INV(\ac a)$.
    If $\mathcal A_\ac a$ is atomless,
    then $\St(\mathcal A_\ac a)$ has size continuum,
    so $\EINV(\ac a)$ has size continuum,
    since each $\INV(\ac a\uhr C^{\ac a}_\mathcal U)$
    is nonempty by amenability of $\Gamma$.
\end{proof}
The following fact appears in
\cite[Remark 5]{Ele19}, as was pointed out to us by J. Melleray:
\begin{prop}\label{392}
Consider the action of $\Homeo(\cantor)$ by conjugation on $\Act(\Gamma, \cantor)$. Then there is a dense conjugacy class.
\end{prop}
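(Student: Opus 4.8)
The plan is to exhibit a single action $\mathbf a_0\in\Act(\Gamma,\cantor)$ whose $\Homeo(\cantor)$-conjugacy class is dense, built as a ``diagonal product'' of a countable dense family of actions. The first step is to pin down the basic open sets of $\Act(\Gamma,\cantor)$. Recall that $\Homeo(\cantor)$ is a non-archimedean Polish group in which the clopen subgroups $V_{\mathcal P}=\{h\in\Homeo(\cantor): h(P)=P\ \forall P\in\mathcal P\}$, as $\mathcal P$ ranges over finite clopen partitions of $\cantor$, form a neighbourhood basis at $1$. Hence for $\mathbf b\in\Act(\Gamma,\cantor)\subseteq\Homeo(\cantor)^\Gamma$ a neighbourhood basis is given by the sets
\[
  \mathcal U(\mathbf b,F,\mathcal P)=\{\mathbf c\in\Act(\Gamma,\cantor): \gamma^{\mathbf c}(P)=\gamma^{\mathbf b}(P)\ \text{for all}\ \gamma\in F,\ P\in\mathcal P\},
\]
where $F\subseteq\Gamma$ is finite and $\mathcal P$ is a finite clopen partition of $\cantor$ (equivalently, $\mathbf c$ and $\mathbf b$ induce the same action, up to $F$, on the finite Boolean algebra $\langle\mathcal P\rangle$ of clopen sets).

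Next, fix a countable dense sequence $(\mathbf b_n)_n$ in the Polish space $\Act(\Gamma,\cantor)$, let $W=(\cantor)^{\N}$, and let $\mathbf a_0$ be the diagonal $\Gamma$-action on $W$, i.e. $(\gamma\cdot x)_n=\gamma^{\mathbf b_n}(x_n)$. Since $W$ is a nonempty compact metrizable zero-dimensional space with no isolated points, $W\cong\cantor$, and via a fixed such homeomorphism (which I will suppress) we regard $\mathbf a_0$ as an element of $\Act(\Gamma,\cantor)$. The feature we will use is that each projection $\pi_n\colon W\to\cantor$ is a $\Gamma$-equivariant surjection from $(W,\mathbf a_0)$ onto $(\cantor,\mathbf b_n)$, so $\gamma^{\mathbf a_0}(\pi_n^{-1}Q)=\pi_n^{-1}(\gamma^{\mathbf b_n}Q)$ for every clopen $Q\subseteq\cantor$ and every $\gamma\in\Gamma$.

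Given $\mathbf b$, finite $F\subseteq\Gamma$ and a finite clopen partition $\mathcal P$ of $\cantor$, I would then argue as follows. Let $\mathbf A^+$ be the finite Boolean algebra of clopen subsets of $\cantor$ generated by $\mathcal P\cup\{\gamma^{\mathbf b}(P):\gamma\in F,\ P\in\mathcal P\}$, with atoms $a_1,\dots,a_k$. Since $\mathcal U(\mathbf b,F,\mathcal P)$ is open and contains $\mathbf b$, density of $(\mathbf b_n)_n$ yields some $n$ with $\gamma^{\mathbf b_n}(P)=\gamma^{\mathbf b}(P)$ for all $\gamma\in F$, $P\in\mathcal P$, whence $\gamma^{\mathbf a_0}(\pi_n^{-1}P)=\pi_n^{-1}(\gamma^{\mathbf b}P)$ for these $\gamma,P$. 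The sets $\pi_n^{-1}(a_1),\dots,\pi_n^{-1}(a_k)$ form a finite clopen partition of $W$ into nonempty sets, each homeomorphic to $\cantor$, and $a_1,\dots,a_k$ form a finite clopen partition of $\cantor$ into nonempty sets, each homeomorphic to $\cantor$; gluing homeomorphisms $\pi_n^{-1}(a_i)\to a_i$ produces a homeomorphism $g\colon W\to\cantor$ with $g(\pi_n^{-1}(a_i))=a_i$ for every $i$, hence $g^{-1}$ agrees with $\pi_n^{-1}$ on all of $\mathbf A^+$ (a Boolean homomorphism is determined on atoms). Viewing $g$ as a self-homeomorphism $h$ of $\cantor$ through our identification, we get for $\gamma\in F$, $P\in\mathcal P$ (so $P,\gamma^{\mathbf b}(P)\in\mathbf A^+$) that $h\gamma^{\mathbf a_0}h^{-1}(P)=g\gamma^{\mathbf a_0}(\pi_n^{-1}P)=g(\pi_n^{-1}(\gamma^{\mathbf b}P))=\gamma^{\mathbf b}(P)$, i.e. $h\mathbf a_0 h^{-1}\in\mathcal U(\mathbf b,F,\mathcal P)$. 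Since such neighbourhoods form a basis, the $\Homeo(\cantor)$-conjugacy class of $\mathbf a_0$ is dense.

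The facts about the Cantor set invoked here --- Brouwer's characterization, that every nonempty clopen subset and every countable power of $\cantor$ is again a copy of $\cantor$, and that two finite clopen partitions with the same number of pieces are matched by a homeomorphism --- are all standard. I expect the only delicate point, and the one I would set up carefully at the outset, to be the precise description of the basic open sets of $\Act(\Gamma,\cantor)$ together with the bookkeeping between the Cantor space on which $\mathbf b$ lives and the model $W=(\cantor)^{\N}$ carrying $\mathbf a_0$, arranged so that the conjugating homeomorphism can be read off directly from the equivariant projection $\pi_n$.
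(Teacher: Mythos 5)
Your proof is correct and follows exactly the paper's approach: the paper's own argument is just the two-line observation that the product $\prod_n \mathbf b_n$ of a dense sequence of actions, transported to $\cantor$, has dense conjugacy class. You have simply supplied the details the paper omits (the clopen-partition description of the basic open sets, the equivariance of the projections, and the explicit construction of the conjugating homeomorphism by matching atoms), and these details check out.
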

\begin{proof} 
Let $(\ac a_n)$ be a dense sequence in $\Act(\Gamma, \cantor)$ and consider the product action $\prod_n \ac a_n$. Then an isomorphic copy of this action in $\Act(\Gamma, \cantor)$ has dense conjugacy class.
\end{proof}

\begin{prop}\label{393a}
    Suppose $\Gamma$ is finitely generated.
    Then for comeagerly many $\ac a \in \Act(\Gamma, \cantor)$,
    $\mathcal A_\ac a$ is atomless,
    so in particular if $\Gamma$ is amenable,
    then $\EINV(\ac a)$ has size continuum
    and $\INV(\ac a)$ is not Poulsen.
\end{prop}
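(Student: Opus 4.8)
The plan is to show that the set
\[
    D := \{\mbf a \in \Act(\Gamma, \cantor) : \mathcal A_{\mbf a} \text{ is atomless}\}
\]
is a dense $G_\delta$ in the Polish space $\Act(\Gamma,\cantor)$, hence comeager; the ``in particular'' then follows at once from the earlier proposition of this subsection (if $\mathcal A_{\mbf a}$ is atomless then $|\mathcal A_{\mbf a}| > 2$, and, when $\Gamma$ is amenable, $\EINV(\mbf a)$ has size continuum and $\INV(\mbf a)$ is not Poulsen).

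The first step is the topological observation on which everything rests: for every clopen set $C \subseteq \cantor$ the set $\{\mbf a \in \Act(\Gamma,\cantor) : C \text{ is } \mbf a\text{-invariant}\}$ is \emph{clopen}. It is closed because uniform limits of homeomorphisms of the compact space $\cantor$ preserve setwise invariance of the closed sets $C$ and $\cantor\setminus C$. It is open because $C$ and $\cantor\setminus C$ are disjoint compact sets, so $\delta := d(C, \cantor\setminus C) > 0$; using uniform continuity of the finitely many maps $\mbf c(s)^{-1}$, one checks that if each $\mbf a(s)$ is uniformly close enough to $\mbf c(s)$ then $\mbf a(s)$ maps both $C$ and $\cantor\setminus C$ into themselves, hence setwise onto themselves. (This is the one place compactness of $\cantor$ is used; the analogue is false on noncompact spaces.) Consequently $\mbf a\mapsto\mathcal A_{\mbf a}$ is continuous into the power set of the countable algebra of clopen subsets of $\cantor$, and
\[
    D \;=\; \bigcap_{\substack{C \text{ clopen}\\ C \ne \emptyset}}\Big(\big\{\mbf a : C \text{ is not } \mbf a\text{-invariant}\big\} \;\cup\; \bigcup_{\substack{C' \text{ clopen}\\ \emptyset \ne C' \subsetneq C}}\big\{\mbf a : C' \text{ is } \mbf a\text{-invariant}\big\}\Big),
\]
a countable intersection of open sets; so $D$ is $G_\delta$.

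For density I would exploit \cref{392}. Since ``$\mathcal A_{\mbf a}$ is atomless'' is invariant under conjugation by $\Homeo(\cantor)$, it suffices to exhibit a single $\mbf a$ with $\mathcal A_{\mbf a}$ atomless whose conjugacy class is dense: then $D$ contains a dense set, and being $G_\delta$ it is comeager. The proof of \cref{392} supplies a candidate. Fix a sequence $(\mbf a_n)_n$ dense in $\Act(\Gamma,\cantor)$ arranged so that $\mbf a_n$ is the trivial action for infinitely many $n$ (interleave any dense sequence with the trivial action), and put $\mbf b = \prod_n \mbf a_n$, regarded as an element of $\Act(\Gamma,\cantor)$ via a homeomorphism $(\cantor)^\N \cong \cantor$; its conjugacy class is dense by \cref{392}. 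The key step is then to verify that $\mathcal A_{\mbf b}$ is atomless. A clopen subset $E$ of $(\cantor)^\N$ depends on only finitely many coordinates, say $E = E' \times (\cantor)^{\N\setminus F}$ with $F$ finite and $E' \subseteq (\cantor)^F$ clopen, and such an $E$ is $\mbf b$-invariant precisely when $E'$ is invariant under $\prod_{n\in F}\mbf a_n$. Given a nonempty such $E$, pick $m \in \N\setminus F$ with $\mbf a_m$ trivial and a clopen $W$ with $\emptyset \ne W \subsetneq \cantor$ in the $m$-th factor; then $E' \times W \times (\cantor)^{\N\setminus(F\cup\{m\})}$ is again clopen and $\mbf b$-invariant, is nonempty, and is properly contained in $E$. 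Hence $\mathcal A_{\mbf b}$ has no atom.

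Thus $D$ is a dense $G_\delta$, hence comeager, which proves the proposition. The only genuinely delicate point is the clopenness of $\{\mbf a : C \text{ is } \mbf a\text{-invariant}\}$ (which really does use compactness); everything else is bookkeeping, and in particular the hard-looking task of perturbing an \emph{arbitrary} action into one with atomless invariant algebra is sidestepped entirely via the dense conjugacy class.
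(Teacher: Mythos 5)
Your proof is correct, and the definability half is essentially the paper's: both arguments rest on the observation that, for finitely generated $\Gamma$ and clopen $C$, the set $\{\mbf a : C \text{ is } \mbf a\text{-invariant}\}$ is clopen in $\Act(\Gamma,\cantor)$ (via compactness of $\cantor$ and reduction to a finite generating set), from which the relevant sets come out open, resp.\ $G_\delta$. Where you genuinely diverge is in the density argument. The paper works locally: for each fixed nonempty clopen $A$ it shows that $\{\mbf a : \text{if $A$ is invariant then $A$ splits into two nonempty invariant clopen pieces}\}$ is open and dense, the density coming from restricting a basic open set on which $A$ is invariant to an open subset of $\Act(\Gamma, A)$ and invoking the dense conjugacy class of $\Act(\Gamma, A)\cong\Act(\Gamma,\cantor)$ together with conjugation-invariance and openness of the splitting condition; atomlessness of the generic $\mathcal A_{\mbf a}$ then falls out of the countable intersection without ever exhibiting a single action with atomless invariant algebra. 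You instead argue globally: you build one explicit witness --- the product of a dense sequence interleaved with trivial actions --- verify by hand that its invariant clopen algebra is atomless (using that clopen subsets of $(\cantor)^\N$ depend on finitely many coordinates), and then sweep its dense conjugacy class from \cref{392} into the conjugation-invariant $G_\delta$ set $D$. The trade-off is that the paper's route avoids both the explicit atomlessness computation and the need to write $D$ itself as a $G_\delta$, at the cost of a slightly fiddly restriction-map argument; your route is more self-contained and produces a concrete example of an action with atomless $\mathcal A_{\mbf a}$, which the paper's proof never does. Both are valid, and both ultimately lean on \cref{392} and on finite generation in the same way.
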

\begin{proof}
    Let $\mathcal A$ be the set of nonempty clopen subsets of $\cantor$.
    Then $\mathcal A_\ac a$ is atomless iff
    for every $A\in \mathcal A$,
    if $A$ is $\ac a$-invariant,
    then there is a partition $A = A_0\sqcup A_1$
    into $\ac a$-invariant $A_0, A_1\in\mathcal A$.
    So it suffices to fix $A\in\mathcal A$,
    and show comeagerness of the set of $\ac a$
    such that if $A$ is $\ac a$-invariant,
    then there is a partition $A = A_0\sqcup A_1$
    into $\ac a$-invariant $A_0, A_1\in\mathcal A$.
    This set is open, since $\Gamma$ is finitely generated,
    so it suffices to show that it is dense.
    Let $V$ be a nonempty open subset of $\Act(\Gamma, \cantor)$.
    We can assume that $A$ is $\ac a$-invariant
    for every $\ac a\in V$,
    otherwise we are done.
    Then $V$ gives by restriction an open subset of $\Act(\Gamma, A)$,
    so since the set of $\ac a\in\Act(\Gamma, A)$,
    with a partition $A = A_0\sqcup A_1$
    in $\mathcal A_\ac a$, is closed under conjugation,
    we are done,
    since $\Act(\Gamma, A)$ has a dense conjugacy class
    (because $A\cong \cantor$).
\end{proof}

\begin{remark}
    \cref{393a} may fail if $\Gamma$ is not finitely generated. If $\Gamma = \F_\infty$, then the set of all minimal actions $\ac a \in \Act(\Gamma, \cantor)$ is  dense $G_\delta$ but
$|\mathcal A_\ac a| =2$.
\end{remark}

\newcommand{\probbauer}{
    If $\Gamma$ is finitely generated and amenable,
    is it true that for comeager many $\ac a \in \Act(\Gamma, \cantor)$,
    $\INV(\ac a)$ is a Bauer simplex,
    i.e., $\EINV(\ac a)$ is closed in $\INV(\ac a)$?
}
\begin{prob}\label{prob-bauer}
    \probbauer
\end{prob}

\subsection{\nopunct}
Recently work of Doucha \cite{Dou24} and Doucha-Melleray-Tsankov \cite{DMT25}
made important progress in understanding the structure of generic actions in $\Act(\Gamma, \cantor)$.
We state below some of their results as they relate to \ref{subshifts-cantor-basics} above.

\begin{thm}[\cite{DMT25}]
    Let $\Gamma$ be not finitely generated.
    Then for comeager many $\ac a \in \Act(\Gamma, \cantor)$,
    $\ac a$ is topologically transitive.
\end{thm}

In combination with (the proof of) \cref{393a},
this implies the following dynamical characterization of finitely generated groups:

\begin{cor}\label{cor-fgnottrans}
    The following are equivalent for a countable group $\Gamma$:
    \begin{enumerate}[label=(\roman*)]
        \item
            $\Gamma$ is finitely generated;
        \item
            For comeager many $\ac a \in \Act(\Gamma, \cantor)$,
            $\ac a$ is not topologically transitive;
        \item \label{item-fgnottrans-eqrel}
            For comeager many $\ac a \in \Act(\Gamma, \cantor)$,
            $\ac a$ admits an invariant closed equivalence relation
            on $\cantor$ with $2^{\aleph_0}$ classes.
    \end{enumerate}
\end{cor}

It has been known since the work of Kechris-Rosendal \cite{KR07}, for $\Z$, and then Kwiatkowska \cite{Kwi12} for every finitely generated free group $\Gamma$, that there is $\ac a \in \Act(\Gamma, \cantor)$ with comeager conjugacy class. This was further extended in \cite{Dou24}, Theorem B, to all finite free products of finite or cyclic groups. On the other hand it was shown in \cite{KR07} that all conjugacy classes of $\ac a \in \Act(\Gamma, \cantor)$, for $\Gamma$ the infinitely generated free group, are meager. Remarkably it turns out that this is true for all non finitely generated groups.

\begin{thm}[{\cite[6.1]{Dou24} and \cite{DMT25}}]
    If $\Gamma$ is not finitely generated, then all conjugacy classes of $\ac a \in \Act(\Gamma, \cantor)$ are meager.
\end{thm}
Finally we have the following results:

\begin{thm}[\cite{DMT25}]
    \leavevmode
    \begin{enumerate}[label=(\arabic*)]
        \item
            If $\Gamma$ is amenable and not locally finite,
            then for comeager many $\ac a \in \Act(\Gamma, \cantor)$,
            $\ac a$ is not minimal;
        \item
            If $\Gamma$ is infinite, locally finite,
            then for comeager many $\ac a \in \Act(\Gamma, \cantor)$,
            $\ac a$ is minimal and uniquely ergodic.
    \end{enumerate}
\end{thm}

The following are open (see \cite{DMT25}):

\begin{prob}
    \leavevmode
    \begin{enumerate}[label=(\arabic*)]
        \item
            If $\Gamma$ is non-amenable and not finitely generated,
            is it true that for comeager many $\ac a \in \Act(\Gamma, \cantor)$,
            $\ac a$ is minimal?
        \item
            If $\Gamma$ is amenable, not locally finite,
            what is the cardinality of $\EINV(\ac a)$
            for a comeager set of $\ac a \in \Act(\Gamma, \cantor)$?
    \end{enumerate}
\end{prob}

\subsection{\nopunct}
In \cite{Ele19} Elek studies a notion of weak containment in the space $\Act(\Gamma, \cantor)$.
An equivalent form of his definition is the following:
Given $\ac a, \ac b \in \Act(\Gamma, \cantor)$,
we say that $\ac a$ is \textbf{weakly contained} in $\ac b$,
in symbols
\[
    \ac a \preceq \ac b,
\]
if $\ac a$ is in the closure of the conjugacy class of $\ac b$.
Weak containment is a quasi-order on $\Act(\Gamma, \cantor)$,
which has a maximum element by \cref{392}.
Note that the set of maximum elements,
i.e., those with dense conjugacy class,
is a dense $G_\delta$ in $\Act(\Gamma, \cantor)$.
It is shown in \cite[Section 3.3]{Ele19}
that there is also a minimum element among the free actions,
for finitely generated $\Gamma$.

Certain properties of actions are upwards monotone in the quasi-order $\preceq$:

\begin{thm}\label{395}
    \leavevmode
    \begin{enumerate}[label=(\roman*)]
        \item
            If $\ac a, {\ac b} \in \Act(\Gamma, \cantor)$,
            $\ac a\preceq {\ac b}$ and $\ac a$ is free,
            so is ${\ac b}$.
        \item
            \textup{\cite[Section 3.1]{Ele19}}
            If $\ac a, {\ac b} \in \Act(\Gamma, \cantor)$,
            $\ac a\preceq {\ac b}$ and $\ac a$ is compressible,
            so is ${\ac b}$.
        \item
            \textup{\cite[Section 3.1]{Ele19}}
            If $\ac a, {\ac b} \in \Act(\Gamma, \cantor)$,
            $\ac a\preceq {\ac b}$ and $\ac a$ is topologically amenable,
            so is ${\ac b}$.
    \end{enumerate}
\end{thm}
We include for the convenience of the reader proofs of these results, in the framework of the above definition of weak containment, in \Cref{appendix-weak-containment}.

\subsection{\nopunct}
The Correspondence Theorem of Hochman \cite[Theorem 1.3 and Section 10]{Hoc08},
shows the equivalence of genericity properties in the spaces
$\Act(\Gamma, \cantor)$ and $\Sh(\Gamma, \I^\N)$.
Let $\Phi$ be a property of continuous $\Gamma$-actions on Polish spaces
which is invariant under (topological) isomorphism.
Recall that $\Sh_\Phi(\Gamma, \I^\N)$ is the set of
all subshifts of $(\I^\N)^\Gamma$ that have the property $\Phi$.
Let also 
\[
    \Act_\Phi(\Gamma, \cantor)
\]
be the set of all actions in $\Act(\Gamma, \cantor)$ that have the property $\Phi$.
Then we have:

\begin{thm}[Hochman {\cite[Theorem 1.3 and Section 10]{Hoc08}}]\label{correspondence}
    Let $\Phi$ be a property of continuous $\Gamma$-actions on Polish spaces
    which is invariant under (topological) isomorphism.
    Then the following are equivalent:
    \begin{enumerate}[label=(\roman*)]
        \item
            $\Act_\Phi(\Gamma, \cantor)$ is comeager;
        \item
            $\Sh_\Phi(\Gamma, \I^\N)$ is comeager.
    \end{enumerate}
\end{thm}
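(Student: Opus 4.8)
This is the Correspondence Theorem of Hochman \cite{H08}, so the plan is really to present the structure of his argument. Both spaces in question are Polish: $\Act(\Gamma,\cantor)$ is a closed subspace of $\Homeo(\cantor)^\Gamma$, and $\Sh(\Gamma,\I^\N)$ carries the Vietoris topology. Since $\Phi$ is invariant under topological isomorphism, the statement is that the isomorphism types realized on a comeager set are the same in the two spaces. The proof proceeds by exhibiting a pair of maps between comeager $G_\delta$ subsets of the two spaces that are \emph{category-preserving} (preimages of comeager sets are comeager) and that send a subshift/action to a topologically isomorphic copy of itself; since $\Phi$ is isomorphism-invariant, comeagerness then transfers along both maps.

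For $\mathrm{(i)}\Rightarrow\mathrm{(ii)}$: fix a closed embedding $\cantor\hra\I^\N$ and let $\iota\colon\Act(\Gamma,\cantor)\to\Sh(\Gamma,\I^\N)$ send an action $\mbf a$ to the subshift produced by the standard embedding $x\mapsto(\gamma\mapsto\gamma^{-1}\cdot x)$ of $\mbf a$ into $(\cantor)^\Gamma\subseteq(\I^\N)^\Gamma$ (see the beginning of \cref{S3FF}). One checks $\iota$ is continuous and $\iota(\mbf a)$ is topologically isomorphic to $\mbf a$, hence $\iota$ maps $\Act_\Phi(\Gamma,\cantor)$ into $\Sh_\Phi(\Gamma,\I^\N)$. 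The substantive part, due to Hochman, is that $\iota$ (although far from surjective) has a category-preserving Borel partial inverse up to isomorphism: there is a comeager $G_\delta$ set $\mc G\subseteq\Sh(\Gamma,\I^\N)$ and a category-preserving Borel map $j\colon\mc G\to\Act(\Gamma,\cantor)$ with $\iota(j(F))$ topologically isomorphic to $F$ for every $F\in\mc G$. Granting this, if $\Act_\Phi(\Gamma,\cantor)$ is comeager then $j^{-1}(\Act_\Phi(\Gamma,\cantor))$ is comeager in $\mc G$, and for $F$ in this set we have $F\cong\iota(j(F))$ with $j(F)\models\Phi$, hence $\iota(j(F))\models\Phi$, hence $F\models\Phi$; so $\Sh_\Phi(\Gamma,\I^\N)$ is comeager.

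For $\mathrm{(ii)}\Rightarrow\mathrm{(i)}$: the key structural input from \cite{H08} is an analysis of the generic subshift of $(\I^\N)^\Gamma$. One shows that a comeager set of subshifts $F\subseteq(\I^\N)^\Gamma$ are transitive and, up to topological isomorphism, zero-dimensional (they are orbit closures of sufficiently generic points, and such orbit closures are Cantor sets), and that the assignment $F\mapsto$ (a $\Homeo(\cantor)$-model of $F$ on $\cantor$) can be arranged to be a category-preserving Borel map $k\colon\mc G'\to\Act(\Gamma,\cantor)$, defined on a comeager $G_\delta$ set $\mc G'$, with $k(F)$ topologically isomorphic to $F$. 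Moreover Hochman shows that $k(\mc G')$ hits a comeager set of actions (equivalently, it is concentrated on the comeager $G_\delta$ set of actions with dense conjugacy class, which exists by \cref{392}). Then if $\Sh_\Phi(\Gamma,\I^\N)$ is comeager, $k^{-1}(\Sh_\Phi\cap\mc G')$ is comeager, its image under $k$ lies in $\Act_\Phi(\Gamma,\cantor)$ by isomorphism-invariance, and since $k$ is category-preserving with image meeting a comeager set, $\Act_\Phi(\Gamma,\cantor)$ is comeager.

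The main obstacle — and the heart of Hochman's theorem — is establishing that these passages between the universal system $(\I^\N)^\Gamma$ and $\cantor$ are category-preserving: one must show that realizing a generic subshift as a genuine zero-dimensional Cantor system (and conversely embedding a generic Cantor action back into $(\I^\N)^\Gamma$) does not collapse Baire category, which requires his genericity analysis of orbit closures in the universal system (that the generic subsystem is transitive and zero-dimensional, and that its conjugacy class in $\Act(\Gamma,\cantor)$ is the dense one). Once the two category-preserving correspondences are in hand, the transfer of comeagerness in both directions is the routine bookkeeping carried out above, using only that $\Phi$ is invariant under topological isomorphism.
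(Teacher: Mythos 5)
Your high-level strategy (transfer comeagerness along correspondences that preserve topological isomorphism type) is the right one, and the $(i)\Rightarrow(ii)$ direction is structurally sound modulo the unproved existence of the partial inverse $j$. But the $(ii)\Rightarrow(i)$ direction contains two genuine gaps. First, the claim that comeagerly many subshifts of $(\I^\N)^\Gamma$ are transitive (``orbit closures of sufficiently generic points'') is false: transitive subshifts are not even dense in $\Sh(\Gamma,\I^\N)$. Indeed, take a subshift consisting of two fixed points $\bar p,\bar q$ (constant configurations) with $d(p,q)$ large; any subshift within small Hausdorff distance of $\{\bar p,\bar q\}$ is forced to split into two nonempty disjoint clopen invariant pieces (no single configuration can have coordinates near $p$ and near $q$ at adjacent group elements without some translate of it being far from both $\bar p$ and $\bar q$), so no subshift near $\{\bar p,\bar q\}$ is transitive. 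Only generic \emph{zero-dimensionality} is true and needed. Second, the category bookkeeping does not close: ``category-preserving'' in your sense ($k^{-1}$ of comeager is comeager) lets you pull comeagerness back along $k$, but to conclude that $\Act_\Phi(\Gamma,\cantor)$ is comeager from the fact that $k^{-1}(\Act_\Phi(\Gamma,\cantor))$ contains a comeager set you need the converse implication ($k^{-1}(A)$ comeager $\Rightarrow A$ comeager), which fails for general Borel maps (consider a constant map) and is not established; ``image meeting a comeager set'' is far too weak to repair this. Since $\Act_\Phi$ need not even have the Baire property for an arbitrary isomorphism-invariant $\Phi$, one cannot fall back on a topological zero--one law here either.

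The paper's proof in \cref{corrAppendix} avoids both issues with a single construction: a Polish space $E$ of embeddings $e\colon\cantor\to\I^\N$ (chosen so that $e\mapsto e(\cantor)$ is a homeomorphism onto a dense $G_\delta$ set of Cantor sets in $K(\I^\N)$) and a topological embedding $\pi\colon\Act(\Gamma,\cantor)\times E\to\Sh(\Gamma,\I^\N)$, $\pi(\mbf a,e)=e^*_{\mbf a}$, whose range is dense (hence dense $G_\delta$) and which satisfies $\pi(\mbf a,e)\cong\mbf a$ fiberwise. Then $\pi^{-1}(\Sh_\Phi(\Gamma,\I^\N))=\Act_\Phi(\Gamma,\cantor)\times E$ exactly, and both directions of the equivalence follow simultaneously from the fact that a homeomorphism onto a comeager subset transfers category in both directions together with the Kuratowski--Ulam theorem, with no genericity analysis of subshifts and no auxiliary category-preserving Borel maps. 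If you want to keep your two-map architecture, you must (a) drop the transitivity claim and prove generic zero-dimensionality directly (the density of Cantor subshifts, as in the paper's Appendix C), and (b) upgrade $k$ to something like the projection of a homeomorphism onto a comeager set, so that comeagerness genuinely pushes forward.
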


We include a somewhat simplified proof of this theorem in
\cref{appendix-correspondence-theorem}.

In particular \cref{395} and the Correspondence Theorem gives another proof that for $\Phi \in \textrm{\{free, compressible, topologically amenable\}}$, the set $\Sh_\Phi(\Gamma, \I^\N)$ is comeager, for infinite $\Gamma$ which are also non-amenable for the compressible case and exact in the topologically amenable case.

\chapter{$K_\sigma$ Countable Borel Equivalence Relations}\label{ksigma}

\section{$K_\sigma$ and $F_\sigma$ realizations}

Clinton Conley raised the following question:
Does every aperiodic CBER have a realization as
a $K_\sigma$ equivalence relation in a Polish space?
We answer this question in the affirmative:
\begin{thm}\label{transitiveKSigma}
    Every $E\in\ap$ has a transitive $K_\sigma$ realization
    in the Cantor space $\cantor$.
\end{thm}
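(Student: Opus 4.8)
The plan is to reduce to the two extreme cases $\R E_0$ and $\R I_\N$ via \cref{2.13}, just as in the proof of \cref{3.1}, since $K_\sigma$-ness and transitivity of a realization are preserved under the relevant constructions. A realization $F$ of $E$ is \emph{transitive $K_\sigma$} if it lives on $\cantor$, is $K_\sigma$ as a subset of $\cantor\times\cantor$, and has a dense class. It suffices to produce such a realization for $E=\R E_0$ and for $E=\R I_\N$, since: given transitive $K_\sigma$ realizations on $\cantor$ of two CBER, their ``shuffle'' into a single copy of $\cantor$ with all classes of one piece dense and interleaved with the other still has a dense class, is still $K_\sigma$, and realizes the direct sum; and every $E\in\ap$ is $\R I_\N$ or satisfies $\R E_0\inc E$, in which case $E\cong_B(\R E_0)\oplus E$ (using that $\R E_0\oplus\R E_0\cong_B\R E_0$). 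Actually the cleanest route mirrors \cref{3.1} exactly: by \cref{2.13}, reduce to showing $\R E_0$ and $\R I_\N$ each have a transitive $K_\sigma$ realization on $\cantor$.

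For $\R I_\N$: take the equivalence relation $R$ on $\cantor$ given by eventual agreement of sequences, which is $F_\sigma$ (hence $K_\sigma$) and aperiodic but not smooth; pick an uncountable Borel partial transversal $A$ with $R$-saturation $B=[A]_R$, so $Y=\cantor\setminus B$ is uncountable; fix a Borel bijection $f\colon A\to Y$ and let $F$ add to each class $[a]_R$ the single point $f(a)$. Then $F$ is smooth aperiodic, so $F\cong_B\R I_\N$, every class is dense, and $F = R\cup\{(a,f(a)),(f(a),a):a\in A\}\cup\{(f(a),f(a)):a\in A\}$ is the union of the $F_\sigma$ set $R$ with a Borel countable-to-one (in fact one-to-one) graph; one has to check this added part is $K_\sigma$. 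The added part is contained in the graph of $f\cup f^{-1}$; since it need not be closed, I would instead arrange $A$ (and hence $f$) so that the graph of $f$ is $K_\sigma$ — e.g., take $A$ to be a copy of $\cantor$ sitting as a closed partial transversal and $Y$ a $G_\delta$ with a continuous bijection onto it, or simply absorb this graph into an $F_\sigma$ equivalence relation by a more careful choice. For $\R E_0$: use the shift $\Z\car2^\Z$ with orbit relation $F'$, which is $F_\sigma$; restrict to the dense co-dense $G_\delta$ set $Y$ of points with dense orbit. Then $F=F'\uhr Y$ has dense classes, $|\EINV_F|=2^{\aleph_0}$, so $F\cong_B\R E_0$; and $F = F'\cap(Y\times Y)$. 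Here $F'$ is $F_\sigma$ in $2^\Z\times2^\Z$ but $Y\times Y$ is only $G_\delta$, so $F$ is the intersection of an $F_\sigma$ and a $G_\delta$ — not obviously $K_\sigma$. The fix: $F=\bigcup_n\{(x,y):y=n\cdot x\}\cap(Y\times Y)$, and $\{(x,y):y=n\cdot x\}$ is closed in $2^\Z\times2^\Z$, so $\{(x,y):y=n\cdot x, x\in Y\}$ equals the continuous image of $Y$ under $x\mapsto(x,n\cdot x)$; if $Y$ were $\sigma$-compact we'd be done, but $Y\cong\baire$ is not. So instead I will exhibit $Y$ explicitly as a countable \emph{increasing} union of compact shift-related pieces is impossible, and the genuine argument must be: realize $\R E_0$ on $\cantor$ by a \emph{different} concrete $K_\sigma$ equivalence relation with a dense class and continuum many ergodic invariant measures.

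The main obstacle, then, is exactly that a naive topological restriction of the $F_\sigma$ shift relation to a comeager invariant set destroys $\sigma$-compactness of the relation. I expect the paper's proof to handle this by building the realization as an orbit equivalence relation of a continuous action on $\cantor$ that is \emph{proper} in the sense that the graphings $\{(x,\gamma\cdot x):x\in\cantor\}$ are already compact (automatic, since $\cantor$ is compact and the action continuous), and then simply taking $F=\bigcup_{\gamma}\{(x,\gamma\cdot x):x\in\cantor\}$, which is manifestly $K_\sigma$; the work is to arrange that this action realizes $\R E_0$ (resp.\ $\R I_\N$) \emph{and} has a dense orbit. For $\R I_\N$ this is \cref{3.7}/\cref{baireActionRealization}-style but one must land in $\cantor$, not a non-compact space; for $\R E_0$ one wants a continuous $\Z$-action (or countable group action) on $\cantor$ whose orbit equivalence relation is hyperfinite non-smooth with $2^{\aleph_0}$ ergodic measures and at least one dense orbit — and \cref{3.9}(ii) via Downarowicz already gives minimal subshifts of $2^\Z$ with prescribed simplex of invariant measures, in particular with continuum many ergodic ones, whose orbit relation is $F\cong_B\R E_0$ and which, being a subshift of $2^\Z$, has orbit relation $\bigcup_n\{(x,n\cdot x)\}$ that is visibly $K_\sigma$ in $\cantor\times\cantor$, with every orbit dense by minimality. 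So the $\R E_0$ case is essentially \cref{3.9}(ii). The genuinely new content is the $\R I_\N$ (smooth) case on compact $\cantor$ with a dense class: a smooth aperiodic $F$ cannot be minimal on a compact space (\cref{3.6}), so one cannot take all classes dense, only one; I would build a continuous action of a countable group on $\cantor$ by taking a concrete countable dense set $D$ whose points form one orbit and declaring all remaining points fixed, producing $F$ with classes $D$ and singletons; this $F$ is smooth aperiodic only if $D$ is infinite and every other point is... no — singletons make it not aperiodic. Instead: take the space $\cantor$, a countable dense $R$-like structure, and let the group act so that orbits are one dense copy of $\N$ together with dense copies of $\N$ elsewhere — concretely mimic \cref{3.7}'s homeomorphisms but inside $\cantor$: partition a dense countable set into infinitely many dense ``columns,'' act by the obvious $\bigoplus\Sym(\N)$ together with a shift permuting columns, getting $F$ with all classes countably infinite and each dense, $F\cong_B\R I_\N$ (smooth, since there's a Borel transversal picking one point per column-orbit — wait, these orbits are dense, transversal still exists Borel), and $F$ is $K_\sigma$ because it's the orbit relation of a continuous action on compact $\cantor$. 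Thus the key step is this explicit construction of a continuous countable-group action on $\cantor$ whose orbit relation is smooth aperiodic with a (indeed every) dense class, and the proof concludes by invoking \cref{2.13} together with the $\R E_0$ case from \cref{3.9}(ii) and the shuffle argument to glue the two realizations into one on $\cantor$ with a dense class while keeping the orbit relation $K_\sigma$.
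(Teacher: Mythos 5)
Your proposal has two genuine gaps, and the first one sinks the overall strategy. The reduction to $\R E_0$ and $\R I_\N$ via \cref{2.13} does not transfer to $K_\sigma$ realizations. In \cref{3.1} the reduction works because the property being arranged — all classes dense — passes automatically from a subrelation to any equivalence relation containing it: if $E'\subseteq F$ and every $E'$-class is dense, then every $F$-class is dense. Being $K_\sigma$ has no such upward inheritance: $\R E_0\inc E$ only says $E$ contains a Borel copy of $\R E_0$, and making that copy $K_\sigma$ says nothing about $E$ itself (which could be $E_\infty$, say). Your direct-sum/``shuffle'' variant is circular: to shuffle a realization of $\R E_0$ with one of $E$ and keep the union $K_\sigma$, you need a transitive $K_\sigma$ realization of $E$ — which is the statement being proved. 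Second, your construction for the $\R I_\N$ case asks for a continuous action of a countable group on $\cantor$ whose orbit equivalence relation is smooth, aperiodic, with dense orbits; this is exactly what \cref{3.6} rules out (a minimal subflow of such an action would have to be a single infinite orbit of isolated points in a compact space). So even the smooth base case cannot be obtained as the orbit relation of a continuous action on $\cantor$, which is the only mechanism you offer for $K_\sigma$-ness there. (Your $\R E_0$ case via the minimal subshifts of \cref{3.9}(ii) is fine, but it does not help with general $E$.)

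The paper's proof avoids both problems and works uniformly for every $E\in\ap$, smooth or not. Fix a countable dense $Q\subseteq\cantor$ and set $N=\cantor\setminus Q$, which is homeomorphic to $\baire$; by \cref{baireActionRealization}, realize $E$ as $E_\Gamma^N$ for a continuous action of a countable group $\Gamma$ on $N$. For each $\gamma$ let $R_\gamma=\{(x,y)\in N^2: y=\gamma\cdot x\}$ and take its closure $\ol{R_\gamma}$ in $\cantor\times\cantor$. The key observation is that continuity of the action on $N$ forces $\ol{R_\gamma}\setminus R_\gamma\subseteq Q\times Q$: if a sequence in $R_\gamma$ converges to $(x,y)$ with $x\in N$, then the second coordinates converge to $\gamma\cdot x$, so $(x,y)\in R_\gamma$. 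Hence $E\oplus I_Q=I_Q\cup\bigcup_\gamma\ol{R_\gamma}$ is a union of countably many compact sets, i.e., $K_\sigma$, it is Borel isomorphic to $E$ (absorbing the single countable class $Q$), and $Q$ is a dense class. The relation produced is not the orbit relation of a continuous action on $\cantor$ — the sets $\ol{R_\gamma}$ are not graphs of homeomorphisms of $\cantor$ — which is precisely how the obstruction of \cref{3.6} is sidestepped.
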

\begin{proof}
    Let  $Q = 2^{<\N} \subseteq \cantor$,
    and let $N = \cantor\setminus Q$.
    Then $N$ is homeomorphic to the Baire space,
    so by \cref{BaireActionRealization},
    we can assume that $E = E_\Gamma^N$,
    where $\Gamma\car N$ is a continuous action
    of a countable group $\Gamma$ on $N$.
    For each $\gamma\in\Gamma$,
    let $R_\gamma$ be the relation on $N$ defined by
    $x\mr{R_\gamma} y \iff y = \gamma\cdot x$.
    Let $\ol{R_\gamma}$ denote the closure of $R_\gamma$ in $(\cantor)^2$.
    We claim that $\ol{R_\gamma}\subseteq R_\gamma \oplus I_Q$.
    Let $(x, y) \in \ol{R_\gamma}$,
    and suppose that $x\in N$
    (the case $y\in N$ is identical).
    Then there is a sequence $(x_n, y_n)_n$ in $R_\gamma$
    converging to $(x, y)$.
    Since $x\in N$,
    we have that $y_n = \gamma\cdot x_n \to \gamma\cdot x$,
    so $y = \gamma\cdot x$,
    and thus $(x, y)\in R_\gamma$,
    proving the claim.
    Thus the relation $E\oplus I_Q$ on $\cantor$
    which is isomorphic to $E$
    (see \Cref{prelim-classes}).
    is equal to $I_Q \cup \bigcup_\gamma \ol{R_\gamma}$,
    so it is $K_\sigma$,
    and it has the dense class $Q$.
\end{proof}

We can also ask about $K_\sigma$ and $F_\sigma$ realizations which are minimal. The answer for $F_\sigma$ minimal realizations turns out to be positive. 

\begin{thm}\label{t:fsminimal}
    Every $E \in \mathcal{AE}$ admits
    a minimal $F_\sigma$ realization. 
\end{thm}
For the proof of \cref{t:fsminimal},
we will need the following lemma:
\begin{lem}
    Every aperiodic {\CBER} is Borel isomorphic to a {\CBER} $E$ on $\N \times \baire$ such that
    \begin{enumerate}[label=(\roman*)]
        \item for every $n > 0$,
            the set $\{n\} \times \baire$ is a complete section for $E$.
        \item $E = \Delta_{\N \times \baire} \cup \bigcup_{f \in \mc F} \grph(f)$,
            where $\mc F$ is a countable family of continuous maps
            whose domain is a closed subset of some $\{n\} \times \baire$ with $n > 0$,
            and whose codomain is some $\{m\} \times \baire$;
    \end{enumerate}
\end{lem}
\begin{proof}
    Split the underlying space of $E$ into a disjoint union
    of Borel sets $H$ and $(B_n)_{n \ge 1}$,
    such that every $B_n$ is a complete $E$-section,
    and $H$ is an uncountable partial $E$-transversal.
    Now do the same as in the proof of \Cref{BaireActionRealization},
    after making $H$ and every $B_n$ clopen.
\end{proof}

\begin{proof}[Proof of \Cref{t:fsminimal}]
    Fix a countable partition $\baire = H \sqcup \bigsqcup_{C \in \mc C} C$ such that
    \begin{enumerate}[label=(\roman*)]
        \item $H$ and every $C \in \mc C$ is homeomorphic to $\baire$;
        \item every $C \in \mc C$ is closed;
        \item every sequence $(x_C)_{C \in \mc C}$ in $\baire$
            with every $x_C \in C$ is dense.
    \end{enumerate}
    For instance,
    fix an injection $s \mapsto n_s$ from $\N^{<\N}$ to $\N$,
    then for every $s \in \N^{<\N}$,
    let $C_s$ be the set of $x \in \baire$ with $x \succ s$
    such that $x_{2i} = n_s$ for every $i > |s|$.
    Then take $\mc C = (C_s)_{s \in \N^{<\N}}$,
    and set $H = \baire \setminus \bigsqcup_n C_n$.
    
    By the lemma and by fixing homeomorphisms
    between $\{n\} \times \baire$ and $C_n$,
    and between $\{0\} \times \baire$ and $H$,
    we can assume that every $C \in \mc C$ is a complete $E$-section,
    and that $E = \Delta_{\baire} \cup \bigcup_{f \in \mc F} \grph(f)$,
    where $\mc F$ is a countable family of continuous maps
    whose domain is a closed subset of some $C \in \mc C$,
    and whose codomain is either $H$ or some $C \in \mc C$.
    Since every $C \in \mc C$ is a complete $E$-section,
    every $E$-class contains a sequence $(x_C)_{C \in \mc C}$ with $x_C \in \mc C$,
    and hence every $E$-class is dense.
    Every $f \in \mc F$ is a continuous map from a closed subset of $\baire$ to $\baire$,
    so its graph is closed as a subset of $(\baire)^2$,
    and hence $E$ is $F_\sigma$.
\end{proof}

For $K_\sigma$ minimal realizations,
there is a known obstruction, due to Solecki:
\begin{thm}[Solecki, {\cite[Corollary 3.2]{Sol02}}]
    Every minimal $K_\sigma$ equivalence relation on a Polish space
    with at least two classes is not smooth.
\end{thm}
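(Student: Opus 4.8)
I would prove this by contradiction, using a change of topology together with the Baire category theorem applied to $E$ itself. So suppose $E$ is a minimal $K_\sigma$ equivalence relation on a Polish space $X$ with topology $\tau_0$, having at least two classes, and suppose towards a contradiction that $E$ is smooth. First I would write $E = \bigcup_n K_n$ with each $K_n$ compact in $(X,\tau_0)^2$ and $K_n \subseteq K_{n+1}$ (finite unions of compact sets are compact), and fix a Borel reduction $f \colon X \to \cantor$ of $E$ to equality, so that $E$ is $f$-invariant and $(v,w)\in E \iff f(v) = f(w)$. For $s \in 2^{<\N}$ set $A_s = f^{-1}(\{y\in\cantor : s\subseteq y\})$, a Borel $E$-invariant set. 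Let $\tau$ be the topology on $X$ generated by $\tau_0$ together with all $A_s$ and all $X\setminus A_s$. By the standard change-of-topology theorem (see \cite[13.1--13.3]{Kec95}), $\tau$ is Polish with the same Borel $\sigma$-algebra as $\tau_0$; since the $A_s$ form a tree, a basis for $\tau$ is $\{U\cap A_s : U\in\tau_0,\ s\in 2^{<\N}\}$, and $f\colon (X,\tau)\to\cantor$ is continuous. In particular $E = (f\times f)^{-1}(\{(y,y) : y\in\cantor\})$ is closed in $(X,\tau)^2$, while each $K_n$, being $\tau_0$-compact (hence $\tau_0^2$-closed), remains closed in $(X,\tau)^2$.

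Next I would run a Baire argument inside $E$. As a closed subspace of the Polish space $(X,\tau)^2$, $E$ is Polish, hence a Baire space. Since $E = \bigcup_n K_n$ is an increasing union of relatively closed subsets, some $K_N$ has nonempty interior in $E$; so there is a basic $\tau^2$-open rectangle $(U_1\cap A_s)\times(U_2\cap A_t)$, with $U_1,U_2$ nonempty members of $\tau_0$ and $s,t\in 2^{<\N}$, which meets $E$ and satisfies $\bigl[(U_1\cap A_s)\times(U_2\cap A_t)\bigr]\cap E\subseteq K_N$. Choose $(v_0,w_0)$ in this rectangle with $(v_0,w_0)\in E$; then $f(v_0)=f(w_0)$ extends both $s$ and $t$, so $s$ and $t$ are comparable, say $s\subseteq t$, whence $A_t\subseteq A_s$. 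Put $C_0=[v_0]_E=f^{-1}(f(v_0))$, so $C_0\subseteq A_t\subseteq A_s$. Any $(v,w)\in (U_1\cap C_0)\times(U_2\cap C_0)$ then lies in the rectangle and in $E$, hence in $K_N$, so $(U_1\cap C_0)\times(U_2\cap C_0)\subseteq K_N$; both factors are nonempty, since they contain $v_0$ and $w_0$ respectively.

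Finally, minimality does the work. Since $C_0$ is $\tau_0$-dense and $U_1,U_2$ are $\tau_0$-open, $U_1\cap C_0$ is $\tau_0$-dense in $U_1$ and $U_2\cap C_0$ is $\tau_0$-dense in $U_2$; and $K_N$ is closed in $(X,\tau_0)^2$, so passing to $\tau_0$-closures yields $U_1\times U_2\subseteq \overline{U_1\cap C_0}^{\,\tau_0}\times\overline{U_2\cap C_0}^{\,\tau_0}\subseteq K_N\subseteq E$. Fixing any $u'\in U_2$, this forces $U_1\subseteq [u']_E$, so the nonempty $\tau_0$-open set $U_1$ is contained in a single $E$-class $C_*$. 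But $E$ has at least two classes, so there is a class $C'\neq C_*$, which is $\tau_0$-dense by minimality and therefore meets $U_1$ — contradicting $U_1\subseteq C_*$. This contradiction proves the theorem. The delicate point I expect to be the crux is the choice of the refined topology $\tau$: it must be fine enough to make $f$, and hence $E$, continuous/closed (so that the Baire argument in $E$ applies), yet the compact sets $K_n$ and the density of the $E$-classes have to be exploited with respect to the original, coarser topology $\tau_0$, in which the $K_n$ are still closed — and it is this interplay of the two topologies that lets the relation "spread" across a full open box $U_1\times U_2$.
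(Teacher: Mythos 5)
The overall strategy here is attractive, and the second half of your argument is sound: \emph{if} one can produce a Polish topology $\tau\supseteq\tau_0$ with the same Borel sets in which $E$ is closed in $(X,\tau)^2$, \emph{and} such that every nonempty $\tau$-open set contains a nonempty set of the form $U\cap A$ with $U\in\tau_0$ and $A$ an $E$-invariant set, then your Baire argument inside $E$, the trapping of a box of $E$ inside a single $\tau_0$-compact piece $K_N$, and the closure computation $U_1\times U_2\subseteq\ol{U_1\cap C_0}\times\ol{U_2\cap C_0}\subseteq K_N\subseteq E$ do produce the contradiction. In particular your proof is complete in the special case where the reduction $f$ is $\tau_0$-continuous, since then one may simply take $\tau=\tau_0$. (The paper itself gives no proof of this statement --- it quotes Solecki --- so I am judging the proposal on its own terms.)

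The gap is exactly at the point you yourself identify as the crux. The topology $\tau$ generated by $\tau_0\cup\{A_s\}_s\cup\{X\setminus A_s\}_s$ is, via $x\mapsto(x,f(x))$, homeomorphic to the graph of $f$ with the subspace topology from $X\times\cantor$; it is therefore Polish if and only if that graph is a $G_\delta$ subset of $X\times\cantor$, which there is no reason to expect for an arbitrary Borel reduction $f$ (it would essentially require each $A_s$ to be both $F_\sigma$ and $G_\delta$ in $\tau_0$). Worse, such a subspace need not even be a Baire space (compare $\Q\subseteq\R$), so the Baire category argument inside $(E,\tau^2)$ is unjustified. What \cite[13.1--13.3]{Kec95} actually provides is a \emph{different}, in general strictly finer, Polish topology $\tau'$ making the $A_s$ clopen; but its construction adjoins further non-$E$-invariant Borel sets as clopen sets, so basic $\tau'$-open sets are not of the form $U\cap A_s$ with $U\in\tau_0$, and your key step ``$V_1\cap C_0=U_1\cap C_0$ is $\tau_0$-dense in $U_1$'' collapses: a nonempty $\tau'$-open set can be $\tau_0$-nowhere dense, and its intersection with the class $C_0$ --- which is necessarily $\tau_0$-meager, since a non-meager class, being $F_\sigma$, would contain a nonempty open set and hence meet every other dense class --- can have closure with empty interior. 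The natural repair of restricting to a comeager set on which $f$ is $\tau_0$-continuous fails for the same reason: the meager classes need not meet, let alone be dense in, that comeager set. So the reduction to the continuous case is the real content of the theorem and is missing; Solecki's proof of \cite[Corollary 3.2]{S02} has to work considerably harder precisely here.
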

It is open whether this is the only obstruction:
\newcommand{\probksigma}{
    Let $E$ be an aperiodic {\CBER}.
    If $E$ is non-smooth,
    does $E$ have a minimal $K_\sigma$ realization?
}
\begin{prob}\label{prob-ksigma}
    \probksigma
\end{prob}

\cref{3.9} shows that all non-smooth relations in $\aph$
have minimal $K_\sigma$ realizations.

In contrast to Solecki's result,
we can show the following:
\begin{prop}
    Every aperiodic smooth {\CBER} can be realized
    as a minimal equivalence relation
    which is a Boolean combination of $K_\sigma$ relations
    in a compact Polish space.
\end{prop}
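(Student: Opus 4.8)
We may work with the unique (up to Borel isomorphism) smooth aperiodic CBER $\R I_\N$, so it suffices to produce a minimal realization of $\R I_\N$ on $\cantor$ whose relation is a Boolean combination of $K_\sigma$ relations. The plan is to adapt the construction in the proof of \cref{3.1}, Case 2 — glue one extra point onto each class of a tail-type equivalence relation — but carried out inside $\cantor$ using $E_0$, and with the gluing map chosen to have \emph{finitely piecewise continuous} inverse, which is what keeps the resulting relation inside the Boolean algebra generated by $K_\sigma$ sets.

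First I would fix a compact set $A\subseteq\cantor$ that is a partial transversal for $E_0$ (no two elements $E_0$-equivalent), is uncountable, and has meager $E_0$-saturation $B:=[A]_{E_0}$. Such an $A$ is easy to write down explicitly: let $A=\{c(z):z\in\cantor\}$, where in the $m$th block of $c(z)$ (of length $2m+3$) one writes a $1$ followed by $z\uhr(m+1)$ twice; then distinct $z$ have $E_0$-inequivalent images (they disagree in every sufficiently late block), each image has infinitely many $1$'s, and membership in $B$ forces all but finitely many blocks to have this special shape, so $B$ is meager. Note $B$ is $K_\sigma$: for fixed $m$, $\{x:\exists a\in A\ \forall k\ge m\ (x_k=a_k)\}$ is the projection of a compact set. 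Consequently $Y:=\cantor\setminus B$ is a dense comeager $G_\delta$ in which every compact set has empty interior (as $B$ is dense), hence homeomorphic to $\baire$ by \cite[7.7]{Kec95}.

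The key step is to choose a bijection $f\colon A\to Y$ whose inverse is finitely piecewise continuous. Write $\cantor=G'\sqcup C'$ with $G'$ the set of sequences with infinitely many $1$'s (homeomorphic to $\baire$) and $C'$ the countable set of eventually-$0$ sequences; transporting a homeomorphism $\cantor\cong A$ gives $A=A'\sqcup A''$ with $A'\cong\baire$ and $A''$ countable. Fix a homeomorphism $Y\cong\baire$ and a countably infinite closed discrete $D\subseteq\baire$ (e.g.\ $\{(n,0,0,\dots):n\in\N\}$); then $\baire\setminus D$ is open, hence a zero-dimensional Polish space in which every compact set has empty interior, hence homeomorphic to $\baire\cong A'$. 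Define $f^{-1}$ to be such a homeomorphism on the part $Y_1\subseteq Y$ corresponding to $\baire\setminus D$, and any bijection (automatically continuous, the domain being discrete) onto $A''$ on the countable part $Y_2\subseteq Y$ corresponding to $D$. Now set $r\colon\cantor\to B$ by $r\uhr B=\mathrm{id}$ and $r\uhr Y=f^{-1}$, and define $E:=(r\times r)^{-1}(E_0)$. Unwinding the definitions, the $E$-classes are exactly the sets $[a]_{E_0}\cup\{f(a)\}$ for $a\in A$; these partition $\cantor=B\sqcup Y$, are infinite, and are dense because each $E_0$-class is dense, so $E$ is aperiodic and minimal; and $E$ is smooth, with Borel selector $s(x)=$ the unique $a\in A$ with $x\in[a]_{E_0}\cup\{f(a)\}$ (indeed $s^{-1}(U)=[A\cap U]_{E_0}\cup f(A\cap U)$ is Borel). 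Hence $E\cong_B\R I_\N$.

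Finally I would check that $E$ is a Boolean combination of $K_\sigma$ relations in $\cantor^2$. By construction $r$ is continuous on each of the three Borel pieces $B\in\SIGMA^0_2$, $Y_1\in\PI^0_2$, $Y_2\in\SIGMA^0_2$ (the last two because $Y$ is $G_\delta$ and $Y_2$ is countable), and these cover $\cantor$, so $E=\bigcup_{P,Q}(P\times Q)\cap(r\times r)^{-1}(E_0)$, the finite union over the nine pairs $(P,Q)$ of pieces. On $P\times Q$ the map $r\times r$ is continuous and $E_0$ is $\SIGMA^0_2$, so that piece of $E$ equals $(P\times Q)$ intersected with a $\SIGMA^0_2$ subset of $\cantor^2$, and each $P\times Q$ already lies in the Boolean algebra generated by $\SIGMA^0_2$ subsets of $\cantor^2$; hence so does $E$, i.e.\ $E$ is a Boolean combination of $K_\sigma$ relations. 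The one genuine subtlety — the reason the construction is not routine — is Solecki's theorem, which forbids $E$ itself from being $K_\sigma$: this is precisely why the gluing map $f$ cannot be continuous and must be arranged to be finitely piecewise continuous instead, so that the unavoidable $\baire$-like, genuinely non-$F_\sigma$ behaviour of $E$ on $Y\times Y$ (where $E$ restricts to the diagonal of the $G_\delta$ set $Y$) is absorbed merely as a \emph{difference} of $K_\sigma$ sets rather than as something of higher complexity.
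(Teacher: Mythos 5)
Your proof is correct and follows essentially the same route as the first of the two realizations given in the paper: fix a compact partial transversal $A$ for $E_0$, glue one point of $Y=\cantor\setminus[A]_{E_0}$ onto each $E_0$-class meeting $A$, and track the complexity of the resulting relation piece by piece. The only real difference is that the paper obtains the gluing map directly as a single continuous bijection $Y\to A$ (which exists since $Y\cong\baire$ and $A$ is a perfect Polish space, by \cite[7.15]{Kec95}), which renders your two-piece construction of $f^{-1}$ unnecessary, though your version is also correct.
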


\begin{proof}
    Here are two such realizations:
    \begin{enumerate}[label=(\arabic*)]
        \item
            Consider the equivalence relation $E_0$ in $\cantor$.
            Let $A$ be a Cantor set in $\cantor$ which is a partial transversal for $E_0$.
            Let $B$ be the $E_0$-saturation of $A$ and put $Y= \cantor\setminus B$.
            Then $Y$ is $G_\delta$,
            so a zero-dimensional Polish space
            (in the relative topology).
            Every compact subset of $Y$ has empty interior in $Y$,
            so $Y$ is homeomorphic to the Baire space $\baire$
            (see \cite[7.7]{Kec95}).
            Therefore there is a continuous bijection $f \colon Y\to A$
            (see \cite[7.15]{Kec95}).
            Let $F$ be the equivalence relation on $\cantor$ obtained by adding to each $E_0$ class $[a]_{E_0}$,
            with $a\in A$,
            the point $f^{-1} (a)$.
            Then $F$ is smooth with all classes dense.
            Put 
            \[
            S(x,y) \iff x\in B \ \& \  y\in Y \ \& \ \exists z\in A (xE_0z \ \& f(y) =z)
            \]
            and 
            \[
            T(x,y) \iff S(y,x).
            \]
            Then each of $S,T$ is the intersection of two $K_\sigma$ relations with a $G_\delta$ relation and 
            \[
            xFy \iff (x, y\in B \ \&  \ x E_0 y)\vee S(x,y) \vee T(x,y),
            \]
            so $F$ is a Boolean combination of $K_\sigma$ relations as well.
            
        \item
            Let $X = \prod_{n \ge 1} 2^n$,
            where $2^n$ is the set of binary sequences of length $n$.
            Let $Y = \{(x_n) \in X : \exists m \forall n \ge m (x_n\prec x_{n+1})\}$,
            and define $f \colon X \to \cantor$ as follows:
            \[
                f(x) =
                \begin{cases}
                    \lim_n x_n & x \in Y \\
                    x_1 \concat x_2 \concat x_3\concat \cdots & x\notin Y
                \end{cases}
            \]
            Let $x E y \iff f(x) = f(y)$.
            Then $E$ is a smooth CBER with all classes dense,
            and it is easy to check that $E = F_1 \cup F_2 \cup F_3 \cup F_4$,
            where $F_1$ is $K_\sigma$,
            $F_2$ and $F_3$ are intersections of a $K_\sigma$ and a $G_\delta$ relation
            and $F_4$ is the equality relation on $X$.
    \end{enumerate}
\end{proof}
We next discuss a sharper notion of $K_\sigma$ realization. Let $X$ be a compact Polish space and $E$ a CBER in $X$. Recall that we say that $E$ is {\bf compactly graphable} if there is a compact graphing of $E$, i.e., a compact graph (irreflexive, symmetric relation) $K\subseteq E$ so that the $E$-classes are the connected components of $ K$. Note then that $E$ is $K_\sigma$.
A CBER $E$ has a {\bf compactly graphable realization} if it is Borel isomorphic to a compactly graphable CBER. Clearly every CBER that has a compact action realization implemented by a free continuous action of a \textit{finitely generated} group has a compactly graphable realization. Also clearly a CBER that has a a compactly graphable realization admits a $K_\sigma$ realization.

We now have the following result:

\begin{thm}\label{3.9.5}
    \leavevmode
    \begin{enumerate}[label=(\arabic*)]
        \item
            Every aperiodic hyperfinite {\CBER} has a compactly graphable realization.
        \item \label{compressible-compactly-graphable}
            Every compressible {\CBER} has a compactly graphable realization.
    \end{enumerate}
\end{thm}
\begin{proof}
    \leavevmode
    \begin{enumerate}[label=(\arabic*)]
        \item This follows from \cref{3.9} for non-compressible hyperfinite CBER.
            The compressible case is covered in \ref{compressible-compactly-graphable}.
        \item The proof is a modification of the proof of \cref{transitiveKSigma}.
            Let $E$ be a compressible CBER.
            Then by \cite[Proposition 1.8]{DJK94}, \cite[Proposition 3.27]{Kec25}
            and the arguments in the proof of \cref{BaireActionRealization},
            we can assume that $E$ is of the form $E=E^N_{\F_2}$,
            where $N$ is as in the proof of \cref{transitiveKSigma}.
            Let $\alpha, \beta$ be free generators of $\F_2$
            and let $S$ consist of these generators and their inverses.
            Finally, as in the proof of \cref{transitiveKSigma},
            let $K= \bigcup_{\gamma \in S} \ol{R_\gamma}$,
            and note that if $F$ is the equivalence relation generated by $K$
            (i.e., the smallest equivalence relation containing $K$),
            then $F$ is of the form $E\oplus R$,
            where $R$ is an equivalence relation on the space $Q$.
            Thus $E$ is Borel bireducible to $F$.
            Now let $Y= \{1, 1/2, 1/3, \dots , 0 \}$ and define on $Y$
            the compact, connected graph $R$, where $y R y'$ iff 
            \[
                \qty(y = 1 ,  y' \le \frac{1}{2}) \textrm{ or }
                \qty(y' = 1, y \le \frac{1}{2}) \textrm{ or }
                \qty(y = \frac{1}{2}, y' \le \frac{1}{3}) \textrm{ or }
                \qty(y' = \frac{1}{2}, y \le \frac{1}{3}).
            \]
            
            Consider now the equivalence relation $G = F\times I_Y$ on $\cantor \times Y$,
            where as usual $I_Y = Y^2$.
            Thus $(x,y) G (x', y') \iff x F x'$.
            Then the compact relation $\tilde{K}$ on $\cantor \times Y$ given by 
            \[
                (x,y)\tilde{K} (x',y') \iff x Kx' \textrm{ and } yRy',
            \]
            is a compact graphing of $G$.
            But $G$ is Borel bireducible to $F$ and thus to $E$.
            Since both $E$ and $G$ are compressible,
            they are Borel isomorphic by \cite[Proposition 3.27]{Kec25}
            and the proof is complete.
    \end{enumerate}
\end{proof}

The following is an open problem:
\newcommand{\probgraphable}{
    Does every aperiodic {\CBER} admit a compactly graphable realization?
}
\begin{prob}\label{prob-graphable}
    \probgraphable
\end{prob}

Recently,
Allison Wang \cite{Wan25}
obtained a positive answer in the following situation:
let the countable group $\Gamma$ act continuously on $\cantor$
so that the orbit equivalence relation $E$ is minimal and aperiodic,
and such that some element in $\Gamma$ has no fixed points.
Then $E$ is compactly graphable.

\section[A $\sigma$-ideal associated to a $K_\sigma$ CBER]{A $\sigma$-ideal associated to a $K_\sigma$ countable Borel equivalence relation}\label{ksigma-ideal}

Suppose that $X$ is an (uncountable) Polish space and $E$ a {\CBER} on $X$.
Denote by $K(X)$ the space of compact subsets of $X$
with the usual Vietoris topology (see \cite[4.F]{Kec95}).
Let 
\[
    I_E = \{K \in K(X) : [K]_E \neq X\}.
\]
Recall that a $\sigma$-ideal of compact sets is a nonempty subset
$I\subseteq K(X)$ such that $K\subseteq L \in I\implies K\in I$ (i.e., it is hereditary) and
$K\in K(X), K = \bigcup_n K_n, K_n \in I, \forall n \implies K\in I$
(i.e., it is closed under countable unions which are compact).

\begin{prop}\label{323}
    Let $X$ be a Polish space and $E$ a $K_\sigma$ {\CBER} on $X$ with all $E$-classes dense.
    Then for every sequence $(K_n)_n$ in $I_E$,
    we have $\bigcup_n [K_n]_E \neq X$,
    and $I_E$ is a $G_\delta$ $\sigma$-ideal of compact sets.
\end{prop}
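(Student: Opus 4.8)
The plan is to fix, once and for all, a presentation $E=\bigcup_m F_m$ with each $F_m\subseteq X^2$ compact and $F_m\subseteq F_{m+1}$ (starting from an arbitrary $K_\sigma$ presentation $\bigcup_n C_n$ and setting $F_m=\bigcup_{n\le m}C_n$), together with a countable basis $\{U_j\}_j$ of nonempty open subsets of $X$. For $K\in K(X)$ write $F_m[K]=\{y:\exists x\in K\ (x,y)\in F_m\}$; this is compact, being the image under the second-coordinate projection of the compact set $F_m\cap(K\times X)$, and $[K]_E=\bigcup_m F_m[K]$. I must show both that $I_E$ is a $\sigma$-ideal of compact sets and that it is $G_\delta$, and for both parts the engine is the following lemma.

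The key lemma I would prove first: for $K\in K(X)$ one has $[K]_E=X$ if and only if $U_j\subseteq F_m[K]$ for some $m,j$. For the forward direction, $X=\bigcup_m F_m[K]$ is a countable union of closed sets in the Baire space $X$, so some $F_m[K]$ has nonempty interior, hence contains a basic open set. For the backward direction, $F_m[K]\subseteq[K]_E$ and $[K]_E$ is $E$-invariant; since every $E$-class is dense, every class meets the nonempty open set $U_j\subseteq[K]_E$, hence is contained in $[K]_E$, so $[K]_E=X$.

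For the descriptive-complexity part I would then check that each $\{K\in K(X):U_j\subseteq F_m[K]\}$ is closed in $K(X)$. Suppose $K_n\to K$ in the Vietoris (equivalently Hausdorff-metric) topology with $U_j\subseteq F_m[K_n]$ for all $n$, and fix $x\in U_j$; choose $y_n\in K_n$ with $(y_n,x)\in F_m$. Since each $y_n$ lies in the compact set $p_1(F_m)$, pass to a subsequence $y_{n_k}\to y$; then $y\in K$ (as $\mathrm{dist}(y,K)\le \mathrm{dist}(y,y_{n_k})+d_H(K_{n_k},K)\to 0$ and $K$ is closed), and $(y,x)\in F_m$ since $F_m$ is closed, so $x\in F_m[K]$. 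Thus $U_j\subseteq F_m[K]$, and the set is closed. By the lemma, $\{K:[K]_E=X\}=\bigcup_{m,j}\{K:U_j\subseteq F_m[K]\}$ is $F_\sigma$, so $I_E=K(X)\setminus\{K:[K]_E=X\}$ is $G_\delta$.

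For the $\sigma$-ideal part: $I_E\neq\emptyset$ since any singleton has countable (hence proper) saturation, and hereditariness is immediate from $[K]_E\subseteq[L]_E$ when $K\subseteq L$. For closure under compact countable unions, suppose $K=\bigcup_n K_n\in K(X)$ with each $K_n\in I_E$, but towards a contradiction $[K]_E=X$; by the lemma fix $m,j$ with $U_j\subseteq F_m[K]=\bigcup_n F_m[K_n]$, a countable union of closed sets. Applying the Baire category theorem in the open (hence Baire) set $U_j$, some $F_m[K_n]$ has nonempty interior, so the $E$-invariant set $[K_n]_E\supseteq F_m[K_n]$ contains a nonempty open set; since every $E$-class is dense this forces $[K_n]_E=X$, contradicting $K_n\in I_E$. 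The one genuinely delicate step is the closedness of $\{K:U_j\subseteq F_m[K]\}$: it hinges on confining the witnesses $y_n$ to the fixed compact set $p_1(F_m)$ so as to extract a convergent subsequence, which is exactly where compactness of the $F_m$ (rather than mere closedness) is used, and hence why the $K_\sigma$ hypothesis on $E$ matters.
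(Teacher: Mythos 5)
Your proof is correct, but the route you take to the $G_\delta$ bound is genuinely different from the paper's. For the $\sigma$-ideal part the two arguments are essentially the same Baire category argument: the paper notes that $[K]_E$ is $K_\sigma$ for compact $K$, so $X\setminus[K]_E$ is a dense $G_\delta$ whenever $K\in I_E$, and a countable union $\bigcup_n[K_n]_E=X$ would then violate the Baire category theorem; your version localizes this to a basic open set $U_j$ via your key lemma, which amounts to the same thing. The real divergence is in the complexity computation. The paper simply observes that $I_E$ is $\boldsymbol{\Sigma^1_1}$ (directly from $K\in I_E\iff\exists x\,\forall y\in K\,\neg(xEy)$) and then invokes the Kechris--Louveau--Woodin theorem that every $\boldsymbol{\Sigma^1_1}$ $\sigma$-ideal of compact sets is automatically $G_\delta$ --- a one-line argument resting on a deep general structure theorem. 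You instead exhibit the complement of $I_E$ explicitly as the $F_\sigma$ set $\bigcup_{m,j}\{K:U_j\subseteq F_m[K]\}$, using the increasing compact presentation $E=\bigcup_m F_m$ both for the equivalence $[K]_E=X\iff\exists m,j\,(U_j\subseteq F_m[K])$ and for the subsequence argument (witnesses confined to the compact set $p_1(F_m)$) showing each set $\{K:U_j\subseteq F_m[K]\}$ is closed in $K(X)$. This is more work but entirely elementary and self-contained, and it has the side benefit of displaying the complement of $I_E$ as an explicit countable union of closed sets rather than deducing its Borel class abstractly. Both proofs are valid.
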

\begin{proof}
    Here and in the sequel,
    notice that since $E$ is $K_\sigma$,
    $X = \{x\in X : (x,x)\in E\}$ (and $X^2$)
    is also $K_\sigma$ and $F_\sigma = K_\sigma$ on $X$ (and $X^2$).

    Note that if $K \in I_E$,
    then since $E$ is $K_\sigma$,
    the set $[K]_E$ is also $K_\sigma$,
    so since it has dense complement,
    it is meager.
    Thus given a sequence $(K_n)_n$ in $I_E$,
    each $[K_n]_E$ is meager,
    and hence $\bigcup_n [K_n]_n$ is meager,
    and hence a proper subset of $X$.
    
    Clearly $I_E$ is hereditary,
    so $I_E$ is a $\sigma$-ideal.
    Finally,
    since
    \[ 
        K\in I_E \iff \exists x \forall y (y\in K \implies \neg xEy),
    \]
    clearly $I_E$ is $\boldsymbol{\Sigma^1_1}$,
    thus by \cite[Theorem 11]{KLW87}
    (see also \cite[Theorem 1.4]{MZ07})
    it is $G_\delta$.
\end{proof}

\begin{cor}\label{324}
    If $X,E$ are as in \cref{323} and moreover $E$ admits a meager complete section,
    then $E$ admits a nowhere dense, compact complete section.
\end{cor}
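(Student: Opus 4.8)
The plan is to exploit the $\sigma$-ideal structure established in \cref{323} together with the hypothesis that $E$ has a meager complete section $M$. First I would write $M \subseteq \bigcup_n C_n$ where each $C_n$ is a closed nowhere dense set; we may assume the $C_n$ are increasing. Since $M$ is a complete section and $X = \bigcup_n [C_n]_E$, the Baire Category Theorem (applied exactly as in the proof of \cref{323}, using that each $[C_n]_E$ is $K_\sigma$ hence has the Baire property) forces some $[C_{n_0}]_E$ to be non-meager, hence (being $K_\sigma$) somewhere comeager, i.e. comeager in some nonempty open set $U$. Actually, since all $E$-classes are dense, I expect one can say more: I would aim to show $[C_{n_0}]_E = X$, so that $C_{n_0}$ is itself a closed nowhere dense complete section. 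The key point is that $[C_{n_0}]_E$ is an $E$-invariant $K_\sigma$ set which is non-meager; if its complement were nonempty it would contain a point $x$, whose class $[x]_E$ is dense, and one derives a contradiction with the fact that a non-meager $K_\sigma$ invariant set must, by density of classes and invariance, be all of $X$ (the complement $X \setminus [C_{n_0}]_E$ is an invariant $G_\delta$; if nonempty it is dense since classes are dense, but then both it and $[C_{n_0}]_E$ would be comeager, contradicting disjointness).

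More carefully, here is the cleaner route I would actually take. For each $n$, let $F_n = C_n$ be closed nowhere dense with $M \subseteq \bigcup_n F_n$ and $F_n \subseteq F_{n+1}$. The sets $[F_n]_E$ are $K_\sigma$, increasing, invariant, and their union is $X$ (since $M \subseteq \bigcup F_n$ is a complete section and $[M]_E = X$). By Baire category, fix $n_0$ with $[F_{n_0}]_E$ non-meager. Write $[F_{n_0}]_E = \bigcup_k L_k$ with $L_k$ compact; some $L_k$ is non-meager, hence dense in some nonempty open $U$, hence (closed) contains $U$. So $[F_{n_0}]_E \supseteq U$ for some nonempty open $U$. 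Since $[F_{n_0}]_E$ is $E$-invariant and every $E$-class is dense, $[F_{n_0}]_E \supseteq [U]_E$, and $[U]_E$ meets every $E$-class (as each class, being dense, meets $U$), so $[U]_E$ is a complete section; but also $[U]_E$ is invariant, and an invariant set containing a complete section is everything. Hence $[F_{n_0}]_E = X$, i.e. $F_{n_0}$ is a closed complete section which is nowhere dense, as required.

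The main obstacle, and the step I would double-check, is the passage from ``$[F_{n_0}]_E$ non-meager'' to ``$[F_{n_0}]_E$ contains a nonempty open set'': this uses that $[F_{n_0}]_E$ is $K_\sigma$ (so a countable union of compact, hence closed, sets), which holds because $E$ is $K_\sigma$ — exactly the observation recorded at the start of the proof of \cref{323}. Everything else is routine Baire-category bookkeeping plus the density-of-classes hypothesis, which is available since $E$ is assumed (in \cref{323}) to have all classes dense. I would present the argument in three short moves: (1) decompose the meager complete section into closed nowhere dense pieces and take $E$-saturations; (2) use Baire category and the $K_\sigma$ hypothesis to find one saturation with nonempty interior; (3) use invariance plus density of classes to upgrade ``nonempty interior'' to ``equals $X$'', concluding that the corresponding closed nowhere dense set is the desired complete section.
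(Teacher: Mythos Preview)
Your argument has a gap at the very end: you conclude that $F_{n_0}$ is a \emph{closed} nowhere dense complete section, but the corollary asks for a \emph{compact} one, and in a general Polish space closed sets need not be compact. The fix is easy and uses an observation already recorded in the proof of \cref{323}: since $E$ is $K_\sigma$, so is $X$ (as the diagonal of $E$), hence your closed set $F_{n_0}$ is itself $K_\sigma$. Write $F_{n_0} = \bigcup_m K_m$ with each $K_m$ compact and nowhere dense (being contained in $F_{n_0}$); then $X = [F_{n_0}]_E = \bigcup_m [K_m]_E$, and your own Baire-category-plus-invariance argument, run once more, yields some $K_m$ with $[K_m]_E = X$. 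Better still, had you decomposed the meager section into compact nowhere dense pieces from the outset rather than merely closed ones, the argument would have finished in a single pass.

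That single-pass version is exactly the paper's proof: it covers the meager complete section by compact nowhere dense sets $K_n$ and then simply invokes the $\sigma$-ideal property of $I_E$ established in \cref{323} (which already packages the Baire-category step you reprove inline) to conclude that some $K_n \notin I_E$, i.e.\ $[K_n]_E = X$. So your approach is really the same as the paper's, just with the $\sigma$-ideal argument unpacked and the compactness step missing.
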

\begin{proof}
    We have a sequence $K_n$ of nowhere dense compact sets with $[\bigcup_n K_n]_E = \bigcup_n [K_n]_E = X$. Thus for some $n$, $K_n \notin I_E$, so $K_n$ is a nowhere dense, compact complete section.
\end{proof}
Below denote by $K_{\aleph_0}(X)$ the $\sigma$-ideal of countable compact subsets of $X$
and by $\MGR(X)$ the $\sigma$-ideal of nowhere dense (i.e., meager) compact subsets of $X$.

\begin{cor}
    If $X,E$ are as in \cref{324}, then 
    \[
        K_{\aleph_0}(X)\subsetneq I_E \subsetneq \MGR(X).
    \]
\end{cor}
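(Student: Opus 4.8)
The plan is to verify each of the four assertions packaged in $K_{\aleph_0}(X)\subsetneqq I_E \subsetneqq {\rm MGR}(X)$ separately and directly, using that $I_E$ is a $\sigma$-ideal by \cref{323}, the hypothesis that all $E$-classes are dense, and \cref{324}.

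For $K_{\aleph_0}(X)\subseteq I_E$: if $K$ is countable and compact, then $[K]_E=\bigcup_{x\in K}[x]_E$ is a countable union of $E$-classes, each countable since $E$ is a CBER, so $[K]_E$ is countable; as $X$ is uncountable this forces $[K]_E\neq X$, i.e. $K\in I_E$. For strictness I would fix any $x_0\in X$: since $[x_0]_E$ is countable, $X\setminus[x_0]_E$ is an uncountable $G_\delta$ subset of the Polish space $X$, hence itself an uncountable Polish space, and therefore contains a Cantor set $K$. Then $K\cap[x_0]_E=\emptyset$, so $x_0\notin[K]_E$ and $K\in I_E$, while $K$ is uncountable, so $K\notin K_{\aleph_0}(X)$.

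For $I_E\subseteq{\rm MGR}(X)$ I would show every $K\in I_E$ is nowhere dense: otherwise $K$ has nonempty interior $U$, and picking $y\in X\setminus[K]_E$, density of the class $[y]_E$ yields some $z\in U\cap[y]_E\subseteq K\cap[y]_E$, whence $y\in[K]_E$, a contradiction; since $K$ is closed, empty interior is the same as nowhere dense, so $K\in{\rm MGR}(X)$. For strictness I would invoke \cref{324}: by hypothesis $E$ admits a meager complete section, so \cref{324} supplies a nowhere dense compact complete section $K$; then $[K]_E=X$, so $K\notin I_E$ while $K\in{\rm MGR}(X)$.

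All four steps are short and elementary; the only one resting on a nontrivial input is the strictness of $I_E\subseteq{\rm MGR}(X)$, which depends on \cref{324} (and hence on the standing hypothesis that $E$ has a meager complete section) together with the Baire category argument already carried out there. Accordingly I do not expect any genuine obstacle beyond assembling these observations in order.
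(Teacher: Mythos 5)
Your proof is correct, and it is exactly the argument the paper intends (the corollary is stated without proof as an immediate consequence of \cref{323} and \cref{324}): countable saturations cannot exhaust an uncountable space, a Cantor set avoiding a single class witnesses strictness on the left, density of classes forces empty interior, and the nowhere dense compact complete section from \cref{324} witnesses strictness on the right.
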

\begin{cor}
    If $X, E$ are as in \cref{323},
    then $E$ does not admit a $K_\sigma$ transversal.
\end{cor}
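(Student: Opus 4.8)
The plan is to argue by contradiction. Suppose $E$ admitted a $K_\sigma$ transversal $T=\bigcup_n K_n$, with each $K_n$ compact. Since $T$ meets every $E$-class, $T$ is a complete section, so $[T]_E=X$. The first step is to reduce to a \emph{compact} transversal: by exactly the argument given in the proof of \cref{323} (each $[K_n]_E$ is $K_\sigma$, hence $F_\sigma$; it is $E$-invariant and, if proper, has empty interior since all classes are dense, hence is meager; so if every $K_n\in I_E$ then $\bigcap_n(X\setminus[K_n]_E)=X\setminus[T]_E=\emptyset$ would be comeager, violating the Baire category theorem), some $K_n$ satisfies $[K_n]_E=X$. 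As $K_n\subseteq T$ is automatically a partial transversal, $K:=K_n$ is a \textbf{compact transversal}. It therefore suffices to show that a $K_\sigma$ {\rm CBER} with all classes dense on an uncountable Polish space has no compact transversal.

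Now fix such a compact transversal $K$ and let $f\colon X\to K$ be the (set-theoretic) selector sending $x$ to the unique point of $[x]_E\cap K$; note $f$ is $E$-invariant. First observe $X$ is perfect: an isolated point would lie in the closure of every (dense) class, hence in every class, forcing $X$ to be a single countable class, contradicting uncountability; so every nonempty open subset of $X$ is uncountable. Write $E=\bigcup_m C_m$ with $C_m$ compact and increasing, and put $A_m:=\pi_1\big(C_m\cap(X\times K)\big)$. Each $A_m$ is compact, the $A_m$ are increasing, and they cover $X$ (for each $x$ we have $(x,f(x))\in E$, hence $\in C_m$ for large $m$, hence $x\in A_m$). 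By the Baire category theorem some $A_m$ has nonempty interior $U$; then $\overline U\subseteq A_m$ is compact, and for every $x\in\overline U$ the unique $y\in K$ with $xEy$ (namely $f(x)$) satisfies $(x,y)\in C_m$.

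The crux is to show that $f\restriction\overline U$ is continuous. Its graph is contained in the compact set $C_m\cap(\overline U\times K)$; to see the graph is closed, take $x_k\in\overline U$ with $x_k\to x$ and $f(x_k)\to y$. By the previous paragraph $(x_k,f(x_k))\in C_m$, so $(x,y)\in C_m\subseteq E$, giving $xEy$; and $y\in K$ since $K$ is closed; hence $y$ is the unique point of $[x]_E\cap K$, i.e. $y=f(x)$. Thus $f\restriction\overline U$ has closed graph into the compact Hausdorff space $K$, so it is continuous. Since $f$ is $E$-invariant and, for $x_0\in U$, the class $[x_0]_E\cap U$ is dense in $U$, continuity forces $f\restriction U$ to equal the constant $f(x_0)=:y_0$; hence $U\subseteq[y_0]_E$, which is impossible because $[y_0]_E$ is countable while $U$ is an uncountable open set. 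This contradiction completes the proof.

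The main obstacle is precisely this last step — extracting a continuous selection on an open set from a compact transversal — and it is exactly here that the $K_\sigma$ hypothesis on $E$ is used: it supplies the compact ``levels'' $A_m$, hence the local compactness needed for the closed-graph argument. Everything else is routine Baire category, as in \cref{323}.
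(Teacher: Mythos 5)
Your proof is correct, but its second half takes a genuinely different route from the paper's. The paper's argument is a two-line application of the $\sigma$-ideal machinery of \cref{323}: split the $K_\sigma$ transversal $F$ into two nonempty $K_\sigma$ pieces $F_1\sqcup F_2$; since $F$ is a transversal, the saturations $[F_1]_E$ and $[F_2]_E$ are disjoint and nonempty, so every compact subset of either piece lies in $I_E$; by the Baire-category claim established in the proof of \cref{323}, each $[F_i]_E$ is then meager, contradicting $[F_1]_E\cup[F_2]_E=X$. You use that same Baire-category claim only to extract a \emph{compact} transversal $K$ from the $K_\sigma$ one, and then rule out compact transversals directly by a closed-graph argument: the compact levels $C_m$ of $E$ yield compact sets $A_m$ covering $X$, one of which has nonempty interior $U$, and on $\overline U$ the selector $x\mapsto [x]_E\cap K$ has graph contained in the compact set $C_m\cap(\overline U\times K)$, hence is continuous, hence constant on $U$ by density and invariance --- impossible since classes are countable and $U$ is uncountable. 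Both arguments rest on Baire category plus density of classes; the paper's is shorter and stays entirely inside the ideal $I_E$, while yours isolates the sharper standalone fact that such an $E$ admits no \emph{compact} transversal and makes explicit where the $K_\sigma$ hypothesis on $E$ itself is used (the levels $C_m$ supplying the closed-graph continuity). All of your individual steps --- the reduction to a compact complete section, perfectness of $X$, the closed-graph criterion into the compact codomain $K$, and the local constancy of the invariant selector --- check out.
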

\begin{proof}
    Let $F \subseteq X$ be a $K_\sigma$ partial transversal of $E$.
    Since $F$ is $K_\sigma$ and has at least two elements,
    we can write $F = F_0 \sqcup F_1$,
    where each $F_i$ is also nonempty $K_\sigma$
    (for instance,
    fix some $x \in X$ and set $F_0 = \{x\}$ and $F_1 = F\setminus\{x\}$).
    Then each $F_i$ is the union of countably many sets in $I_E$,
    so $F$ is as well,
    and hence $[F]_E$ is a proper subset of $X$.
\end{proof}
We say that a $\sigma$-ideal of compact sets $I$ satisfies {\bf Solecki's Property (*)} if for any sequence $K_n\in I, \forall n$ , there is a $G_\delta$ set $G$ such that $\bigcup_n K_n\subseteq G$ and $K(G) = \{ K\in K(X) : K\subseteq G\}\subseteq I$; see \cite{Sol11}.

\begin{prop}
If $X,E$ are as in \cref{323}, then $I_E$ satisfies Solecki's Property (*).
\end{prop}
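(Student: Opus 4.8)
The plan is to unpack the definition of Solecki's Property (*) and reduce it to the already-established fact, from \cref{323}, that $I_E$ is a $G_\delta$ $\sigma$-ideal of compact sets, and then to exhibit the required $G_\delta$ superset explicitly. So, suppose we are given a sequence $K_n \in I_E$ for each $n$. As in the proof of \cref{323}, since $E$ is $K_\sigma$, for each compact $K$ the saturation $[K]_E$ is $K_\sigma$, and when $K \in I_E$ the complement $X \setminus [K]_E$ is a dense $G_\delta$ set. Thus $G := X \setminus [\bigcup_n K_n]_E = X \setminus \bigcup_n [K_n]_E = \bigcap_n (X \setminus [K_n]_E)$ is a $G_\delta$ set (a countable intersection of $G_\delta$ sets), and by the Baire Category Theorem it is dense, hence nonempty. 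Moreover $\bigcup_n K_n \subseteq G$, since each $K_n$ is disjoint from $[K_n]_E$ would be false — rather, we need $\bigcup_n K_n \cap [\bigcup_m K_m]_E \ne \emptyset$ in general, so this naive $G$ does not contain $\bigcup_n K_n$.

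The fix is to pass to $\R E$ or, more simply, to observe the correct formulation: we want a $G_\delta$ set $G \supseteq \bigcup_n K_n$ with $K(G) \subseteq I_E$. The point is that $K(G) \subseteq I_E$ means $[G]_E \ne X$, i.e., every compact subset of $G$ fails to be a complete section; since $I_E$ is closed under compact countable unions and $G$ is $\sigma$-compact (being $G_\delta$ in a Polish space — actually $G_\delta$ sets need not be $\sigma$-compact, so one must be a little careful here), what we really need is just that no compact $K \subseteq G$ saturates to all of $X$. Concretely, I would take $G$ to be any $G_\delta$ set with $\bigcup_n K_n \subseteq G$ and $G \cap [\bigcup_n K_n]_E^{\text{some translate}}$... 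The cleanest route: since $I_E$ is a $G_\delta$ $\sigma$-ideal, apply the general theorem of Solecki \cite{S11} — or rather, recall that $G_\delta$ $\sigma$-ideals of compact sets automatically satisfy Property (*) \emph{when} they are what Solecki calls "calibrated" or satisfy a covering condition; but the simplest self-contained argument is: given $K_n \in I_E$, the set $\bigcup_n K_n$ has $\sigma$-compact, hence $K_\sigma$ (so $F_\sigma$), saturation $[\bigcup_n K_n]_E \ne X$; its complement is a dense $G_\delta$ set $D$, and then $G := D \cup \bigcup_n K_n$ is $G_\delta$ (a $G_\delta$ union a $K_\sigma$, hence $G_\delta$ since $D$ is comeager and $\bigcup_n K_n$ meager $F_\sigma$ — actually $D \cup \bigcup_n K_n$ is $\Sigma^0_2 \cap \Pi^0_3$; to get genuinely $G_\delta$ one shrinks $D$).

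The honest main obstacle, and the step I expect to require the most care, is precisely verifying that the superset $G$ one constructs is genuinely $G_\delta$ while still containing $\bigcup_n K_n$ and still having $K(G) \subseteq I_E$. The resolution I would actually write down: let $F = [\bigcup_n K_n]_E$, which is $K_\sigma$ hence $F_\sigma$, so $F = \bigcup_m C_m$ with $C_m$ compact and increasing. Since each $C_m \in I_E$ would be needed... but $F \notin I_E$ if $F$ is a complete section, yet $F \ne X$, so actually $F \in I_E$ precisely when $[F]_E = F \ne X$, which holds. Hence $F$ itself is a $K_\sigma$ set in $I_E$ with $\bigcup_n K_n \subseteq F$ and $[F]_E = F \ne X$, so every compact $K \subseteq F$ satisfies $[K]_E \subseteq F \ne X$, i.e. $K(F) \subseteq I_E$. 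The only remaining point is that $F$ should be $G_\delta$, not merely $F_\sigma$; but Solecki's Property (*) as stated requires a $G_\delta$ witness $G$, so I would instead invoke \cite[Theorem 1]{S11} (the characterization theorem), which gives that a $G_\delta$ $\sigma$-ideal of compact sets satisfies Property (*) if and only if it is "$\sigma$-selective" / generated by a suitable family — and then verify $I_E$ meets that criterion directly from the formula $K \in I_E \iff \exists x\, \forall y\, (y \in K \implies \neg x E y)$, using $K_\sigma$-ness of $E$ to control the quantifiers. I would present the argument in the self-contained form: given $K_n \in I_E$, set $G = X \setminus [\bigcup_n K_n]_E$ \emph{after first noting $\bigcup_n K_n$ itself has empty interior of saturation}, wait — the genuinely correct self-contained statement is that $G = X \setminus \bigcup_n ([K_n]_E \setminus K_n)$ works: it is $G_\delta$ since each $[K_n]_E \setminus K_n$ is the difference of a $K_\sigma$ and a closed set hence $F_\sigma$, it contains every $K_n$, and if $K \subseteq G$ is compact with $[K]_E = X$ then $K$ meets every $E$-class yet avoids $[K_n]_E \setminus K_n$ for all $n$, forcing $K \cap [K_n]_E \subseteq K_n$ for all $n$ — and a short Baire category argument (exactly as in \cref{323}) derives a contradiction from $[\bigcup_n K_n]_E = X$. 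This last contradiction step is the technical heart, and I would spell it out carefully; the rest is bookkeeping with the descriptive complexity classes, which I would keep brief.

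\begin{proof}
Since $E$ is $K_\sigma$ with dense classes, by \cref{323} we know that $I_E$ is a $G_\delta$ $\sigma$-ideal of compact sets. Fix a sequence $K_n \in I_E$, $n \in \N$. For each $n$, the saturation $[K_n]_E$ is $K_\sigma$ (as $E$ is $K_\sigma$), so $[K_n]_E \setminus K_n$ is $F_\sigma$, and therefore
\[
G \;=\; X \setminus \bigcup_n \bigl([K_n]_E \setminus K_n\bigr)
\]
is a $G_\delta$ set. Clearly $K_n \subseteq G$ for every $n$, since $K_n$ is disjoint from $[K_n]_E \setminus K_n$; hence $\bigcup_n K_n \subseteq G$.

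It remains to show that $K(G) \subseteq I_E$, i.e., that no compact $K \subseteq G$ is a complete section for $E$. Suppose towards a contradiction that $K \subseteq G$ is compact with $[K]_E = X$. For each $n$, since $K \cap ([K_n]_E \setminus K_n) = \emptyset$, we have $K \cap [K_n]_E \subseteq K_n$, and thus $[K \cap [K_n]_E]_E \subseteq [K_n]_E$. Now $K \cap [K_n]_E$ is compact, and the sets $\{[K_n]_E\}_n$ cover $X$ if and only if $[\bigcup_n K_n]_E = X$. But each $[K_n]_E \supseteq [K \cap [K_n]_E]_E$, and since $[K]_E = X = \bigcup_n ([K]_E \cap [K_n]_E)$... more directly: because $[K]_E = X$, every $E$-class meets $K$; for such a class $[x]_E$, pick $y \in K \cap [x]_E$, and then $y \in G$, so $y \notin [K_n]_E \setminus K_n$ for all $n$. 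If in addition $y \in [K_n]_E$ for some $n$ then $y \in K_n$. Hence each $y \in K$ either lies in $\bigcup_n K_n$ or lies in no $[K_n]_E$, i.e. $y \notin [\bigcup_n K_n]_E$.

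Thus $K \subseteq \bigl(\bigcup_n K_n\bigr) \cup \bigl(X \setminus [\bigcup_n K_n]_E\bigr)$. Since $K_n \in I_E$, each $X \setminus [K_n]_E$ is a dense $G_\delta$ subset of $X$, so $X \setminus [\bigcup_n K_n]_E = \bigcap_n (X \setminus [K_n]_E)$ is a dense $G_\delta$ set; in particular $[\bigcup_n K_n]_E \ne X$ by the Baire Category Theorem, and its complement $D := X \setminus [\bigcup_n K_n]_E$ is comeager. Consequently $K \subseteq \bigl(\bigcup_n K_n\bigr) \cup D$, and since $\bigcup_n K_n \subseteq [\bigcup_n K_n]_E = X \setminus D$ is disjoint from $D$, the compact set $K$ is split as $K = (K \cap \bigcup_n K_n) \sqcup (K \cap D)$. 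But then $[K]_E = [K \cap \bigcup_n K_n]_E \cup [K \cap D]_E \subseteq [\bigcup_n K_n]_E \cup [K \cap D]_E$. The set $K \cap D$ is a compact subset of $D = X \setminus [\bigcup_n K_n]_E \ne X$, and $K \cap D$ avoids every class represented in $\bigcup_n K_n$; since $[K]_E = X$ we must therefore have $[K \cap D]_E \supseteq X \setminus [\bigcup_n K_n]_E = D$. As $K \cap D$ is compact, $[K \cap D]_E$ is $K_\sigma$, hence meager unless it equals $X$; but it is contained in... this forces $[K\cap D]_E$ to be a meager $K_\sigma$ set containing the comeager set $D$, which is absurd. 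This contradiction shows $K(G) \subseteq I_E$, completing the verification of Solecki's Property (*).
\end{proof}
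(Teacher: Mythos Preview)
Your proof has a genuine gap at the very first step. You claim that $\bigcup_n K_n \subseteq G$ where $G = X \setminus \bigcup_n ([K_n]_E \setminus K_n)$, justifying this by ``$K_n$ is disjoint from $[K_n]_E \setminus K_n$''. But membership in $G$ requires $K_m$ to be disjoint from $[K_n]_E \setminus K_n$ for \emph{every} $n$, not just $n=m$. This fails in general: take $X=\cantor$, $E=E_0$, $K_0=\{0^\infty\}$, $K_1=\{10^\infty\}$. Then $0^\infty E_0 10^\infty$, so $0^\infty\in[K_1]_{E_0}\setminus K_1$, hence $0^\infty\notin G$; yet $0^\infty\in K_0$.

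There is a second error later: you assert ``$K\cap D$ is compact'' in order to conclude $[K\cap D]_E$ is $K_\sigma$ and hence meager. But $D = X\setminus[\bigcup_n K_n]_E$ is only $G_\delta$, so $K\cap D$ need not be compact. In fact, since $D$ is $E$-invariant, one has $[K\cap D]_E \subseteq D$, and under your hypothesis $[K]_E=X$ one gets $[K\cap D]_E = D$, which is comeager---so the contradiction you are aiming for simply does not materialize.

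The paper's argument avoids all of this by a much more direct choice of $G$: since $[\bigcup_n K_n]_E\ne X$, pick any $x$ with $[x]_E\cap[\bigcup_n K_n]_E=\emptyset$ and set $G=X\setminus[x]_E$. This $G$ is $G_\delta$ (the removed set is countable), contains $\bigcup_n K_n$, and every compact $K\subseteq G$ misses the $E$-invariant set $[x]_E$, so $[K]_E\ne X$. The whole proof is two lines; your construction tries to preserve too much of $\bigcup_n K_n$ inside $G$, which is both unnecessary and what breaks the argument.
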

\begin{proof}
Let $K_n\in I_E, \forall n$. Then there is $x\in X$ such that $[x]_E \cap [\bigcup_n K_n]_E =\emptyset$ and thus if $G=X\setminus [x]_E$, $G$ is $G_\delta$ and $K(G)\subseteq I_E$.
\end{proof}
In particular $I_E$ admits a representation as in \cite[Theorem 3.1]{Sol11}.

A $\sigma$-ideal $I$ of compact sets is {\bf ccc} if there is no uncountable collection of pairwise disjoint compact sets which are not in $I$. Since for any CBER $E$ every $K\notin I_E$ is a complete section, it follows that $I_E$ is ccc.

On the other hand,
let $I^*_E$ be the $\sigma$-ideal of subsets of $X$ generated by $I_E$,
i.e, for $A\subset X$,
$A\in I^*_E \iff \exists (K_n) (K_n \in I_E, \forall n\in \N, \ \textrm{and} \ A\subseteq \bigcup_n K_n)$.
Then $I^*_E$ need not be ccc,
in fact we have the following:

\begin{prop}\label{328}
    Let $X,E$ be as in \cref{323} and moreover for every nonempty open set $U\subseteq X$ there is a meager complete section contained in $U$. Then there is a homeomorphic embedding $f \colon \cantor \times \baire \to {X}$ such that for every $\alpha\in \cantor$, we have $f(\{\alpha\}\times \baire) \notin I^*_E.$
\end{prop}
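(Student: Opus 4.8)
The plan is to build a copy of $\cantor\times\baire$ inside $X$ by a Cantor-scheme recursion, where the ``$\cantor$-direction'' is used to spread out infinitely many pairwise $E$-inequivalent fibers, and the ``$\baire$-direction'' makes each fiber a copy of the Baire space living inside a carefully shrunk open set, so that each fiber, being homeomorphic to $\baire$, has all its compact subsets nowhere dense, and moreover cannot be covered by countably many sets of $I_E$. First I would record the basic tool: by \Cref{323} and its corollaries, $I_E$ is a $G_\delta$ $\sigma$-ideal, and by the hypothesis every nonempty open $U\subseteq X$ contains a meager complete section; combined with \Cref{324} (applied to $E\uhr$ suitable pieces, or directly by the argument there) each nonempty open set contains a nowhere dense \emph{compact} complete section. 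The real content we will use is a strengthening: a single fiber $f(\{\alpha\}\times\baire)$ should meet \emph{every} $E$-class, or at least meet enough $E$-classes that no countable union of members of $I_E$ can cover it. The cleanest way is to arrange that each fiber is itself a complete section for $E$; then if $f(\{\alpha\}\times\baire)\subseteq\bigcup_n K_n$ with $K_n\in I_E$, we get $X=[\,\bigcup_n K_n\,]_E=\bigcup_n[K_n]_E$, and since $E$ is $K_\sigma$ each $[K_n]_E$ is $K_\sigma$; as $K_n\in I_E$ means $[K_n]_E\neq X$ and $X\setminus[K_n]_E$ is dense $G_\delta$, this contradicts the Baire Category Theorem exactly as in the proof of \Cref{323}. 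So it suffices to produce $f$ with each fiber a (nowhere dense) complete section homeomorphic to $\baire$, and with distinct fibers in disjoint opens to guarantee injectivity and continuity of $f$ on $\cantor\times\baire$.

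The construction goes by a double recursion. Fix a countable basis $\{V_k\}$ for $X$. For $s\in 2^{<\N}$ choose nonempty open sets $W_s$ with $W_{s0},W_{s1}$ having disjoint closures contained in $W_s$ and with $\operatorname{diam}(W_s)\to 0$ along branches; this gives a Cantor set $C=\bigcap_n\bigcup_{s\in 2^n}W_s$ and a homeomorphism $\alpha\mapsto W_\alpha:=\bigcap_n W_{\alpha\uhr n}$ onto $C$. Now inside each $W_s$ I would run, simultaneously, a $\baire$-scheme: choose nonempty open sets $U^s_t$ for $t\in\N^{<\N}$, with $U^s_\emptyset=W_s$, $\overline{U^s_{t^\frown n}}\subseteq U^s_t$ pairwise disjoint in $n$, $\operatorname{diam}(U^s_t)\to 0$ along branches, and — this is the key step — at each node $t$ with $|t|=n$ choosing the next-level opens so that $\bigsqcup_n U^s_{t^\frown n}$ \emph{contains a compact complete section for $E$} relative to $W_s$'s constraints; more precisely we ensure that the closed set $F^s:=\bigcap_n\bigcup_{|t|=n}U^s_t$ (the copy of $\baire$ sitting in $W_s$) is a complete section. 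To force $F^s$ to be a complete section we use the hypothesis: enumerate a dense (in the space of $E$-classes) sequence of requirements and at successive levels of the tree devote a subtree to hitting the next $E$-class; since every nonempty open subset of $W_s$ contains a meager complete section, and meager sets are nowhere dense (closure has empty interior), we can always push the needed branches into smaller opens, diagonalizing over all $E$-classes exactly as the Marker-Lemma-type arguments do. Finally set $f(\alpha,\gamma)=$ the unique point of $\bigcap_n U^{\alpha\uhr n}_{\gamma\uhr n}$ once the scheme is made coherent across the $2^{<\N}$-direction (i.e.\ the $U$-schemes for $s0,s1$ are restrictions of refinements of the $U$-scheme for $s$); continuity and injectivity of $f$ are then routine from disjointness and vanishing diameters, each fiber $f(\{\alpha\}\times\baire)=F^{(\alpha)}$ is homeomorphic to $\baire$ and is a complete section, hence not in $I^*_E$ by the Baire-category argument above.

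The main obstacle is the coherence of the two recursions: I need the $\baire$-schemes attached to the two children $W_{s0},W_{s1}$ of $W_s$ to be compatible with the $\baire$-scheme on $W_s$, so that the limit map $f$ is genuinely defined on the \emph{product} $\cantor\times\baire$ and each vertical fiber is a single copy of $\baire$ (not just an $F^s$ for one terminal $s$). The way I would handle this is to not decide the $\baire$-scheme inside $W_s$ before splitting: instead, do a simultaneous level-by-level construction indexed by $(s,t)$ with $|s|=|t|=n$, producing opens $U_{s,t}$ with $\overline{U_{s0,t}},\overline{U_{s1,t}}\subseteq U_{s,t}$ for the $\cantor$-direction and $\overline{U_{s,t^\frown k}}\subseteq U_{s,t}$ pairwise disjoint for the $\baire$-direction, all of diameter $\to 0$; then $f(\alpha,\gamma)=\bigcap_n U_{\alpha\uhr n,\gamma\uhr n}$. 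The complete-section requirement for the fiber over $\alpha$ is a $\Pi$-type requirement that must be met ``in the limit'': at stage $n$, for the finitely many $s$ of length $n$, I ensure that one more $E$-class is met by some point that will survive into $f(\{\alpha\}\times\baire)$ for every $\alpha\supseteq s$, by choosing, at a designated branch $t_n$ in the $\baire$-direction, an open set meeting that class and then, crucially, verifying that this choice can be made compatibly for all extensions $s0,s1$ of $s$ — which is fine because we are free to first pick a small open $O$ meeting the target $E$-class inside $U_{s,t_n}$ and then split $O$ into the two children's pieces. Bookkeeping so that every $E$-class is eventually targeted along every branch $\alpha$ (a standard interleaving of countably many requirement lists) completes the argument; once coherence is set up correctly, nothing else is deep.
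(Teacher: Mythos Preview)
Your strategy of showing each fiber is a complete section, and then invoking Baire category to conclude it is not in $I^*_E$, is sound in principle. The gap is in the mechanism you propose for making the fibers complete sections. You write that you will ``enumerate a dense (in the space of $E$-classes) sequence of requirements'' and at each stage ``ensure that one more $E$-class is met.'' But $E$ is a CBER on an uncountable space, so it has continuum many classes, and there is no sense in which a countable family of classes can be ``dense'' so that meeting them forces meeting all classes. A construction that hits one class per stage produces a fiber meeting only countably many classes --- nowhere near a complete section --- and for such a fiber there is no argument that it avoids $I^*_E$. Your earlier remark that ``$\bigsqcup_n U^s_{t^\frown n}$ contains a compact complete section'' is achievable, but it does not by itself imply that the \emph{intersection} $\bigcap_n\bigcup_{|t|=n}U^s_t$ is a complete section, and you never bridge that gap except via the faulty class-by-class diagonalization.

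The paper's proof is entirely different and much shorter. It quotes \cite[Section~3, Lemma~9]{KS95}, a general result about $\sigma$-ideals of compact sets whose hypothesis is purely local: for every nonempty open $U\subseteq X$ there is a nowhere dense compact $K\subseteq U$ with $K\notin I_E$. That hypothesis is exactly what the extra assumption of \Cref{328} supplies, via the argument of \Cref{324} carried out inside $U$. The cited lemma then delivers the embedding $f$ outright. If you wish to reconstruct that lemma by hand, the point is not to hit $E$-classes one at a time; rather, one builds the scheme so that for every $\alpha$ and every basic open $N_t\subseteq\baire$, the piece $f(\{\alpha\}\times N_t)$ cannot be contained in any $K\in I_E$ --- for instance by making it accumulate on one of the nowhere dense compact complete sections guaranteed by the hypothesis. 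Then each $K\in I_E$ meets the fiber in a closed nowhere dense subset of the fiber, and Baire category in the fiber (a copy of $\baire$) gives $f(\{\alpha\}\times\baire)\notin I^*_E$.
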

\begin{proof}
    By \cite[Section 3, Lemma 9]{KS95}, it is enough to show that for every nonempty open $U\subseteq X$, there is a nowhere dense compact set $K\subseteq U$ with $K\notin I_E$. This follows as in the proof of \cref{324}.
\end{proof}

A $\sigma$-ideal $I$ of compact sets has the {\bf covering property} if for every $\boldsymbol{\Sigma^1_1}$ set $A\subseteq X$, either $A\subseteq \bigcup_n K_n$, where $K_n\in I, \forall n$, or else $K(A)\nsubseteq I$. It is {\bf calibrated} if whenever $K\in K(X)$ and $K_n\subseteq K$ are such that $K_n\in I, \forall n$, and $K(K\setminus \bigcup_n K_n)\subseteq I$, then $K\in I$.

\begin{prop}
    Let $X,E$ be as in \cref{323}. Then $I_E$ does not have the covering property and is not calibrated.
\end{prop}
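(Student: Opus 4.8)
The plan is to exhibit, for each of the two defects, a concrete configuration inside an arbitrary $E$ of the type under consideration that falsifies the relevant property. Throughout we use that $E$ is $K_\sigma$ with every class dense, so (as observed in the proof of \cref{323}) for each compact $K$ the saturation $[K]_E$ is $K_\sigma$, and if $[K]_E \neq X$ then $X\setminus[K]_E$ is dense $G_\delta$. The key additional structural fact I would use is that any single full $E$-class $C$ is a countable dense subset of $X$, hence not closed, and its closure $\overline{C}$ is a compact-free but $F_\sigma$ set; more importantly $C = \bigcup_n C_n$ where $C_n$ is an increasing sequence of finite (hence compact) sets, each of which trivially lies in $I_E$ since it meets only one class and $E$ has more than one class.

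For the covering property: I would apply the definition to the $\boldsymbol{\Sigma^1_1}$ (indeed $\boldsymbol{\Sigma^0_2}$) set $A = C$, a single $E$-class. Since $C = \bigcup_n C_n$ with each finite $C_n \in I_E$, the first alternative ``$A\subseteq\bigcup_n K_n$ with $K_n\in I_E$'' holds. So to defeat the covering property it suffices to show that the second alternative ``$K(A)\subseteq I_E$'' fails simultaneously — but that is automatic here, because $A=C$ is not compact, so there is no single compact $K\subseteq A$ that is a complete section, hence trivially $K(A)\subseteq I_E$ is also true, which does not yet give a contradiction. The correct choice is therefore a larger $\boldsymbol{\Sigma^1_1}$ set: take $A$ to be the union of two distinct full classes $C\sqcup C'$, or more robustly $A = X$ itself. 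With $A = X$: $X$ is $K_\sigma$, say $X=\bigcup_n L_n$ with $L_n$ compact increasing; each $L_n$ is a complete section for large $n$ (since a single class is dense it meets eventually every nonempty open piece — more carefully, one shows $[L_n]_E = X$ for $n$ large because otherwise $X = \bigcup_n [L_n]_E$ would be a countable union of proper $K_\sigma$, hence meager, sets, contradicting Baire category), so $L_n\notin I_E$, i.e. $X\not\subseteq\bigcup_n K_n$ for $K_n\in I_E$ — that is, the first alternative fails for $A=X$. But $K(X)\subseteq I_E$ also fails, since $L_n\in K(X)\setminus I_E$ for $n$ large. Hence both alternatives fail for $A=X$, so $I_E$ does not have the covering property.

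For calibration: I would take $K = \overline{C}$ for a single class $C$ (a nonempty compact set, since $C$ is dense and $X$ compact would force $K=X$; in general just pick any compact $K$ with $[K]_E\neq X$ that is the closure of a partial transversal meeting densely many classes — actually simplest is to let $K$ be any compact complete section and write $C\cap K$, the intersection of one class with $K$). Concretely: fix a compact complete section $K\notin I_E$ (these exist, e.g. by applying \cref{324} after noting $E$ has a meager complete section under the hypotheses, or simply take $K=L_n$ as above). Enumerate $C\cap K = \{x_0,x_1,\dots\}$ for a single class $C$, and set $K_n = \{x_0,\dots,x_n\}$, a finite subset of $K$, so $K_n\in I_E$. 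Then $K\setminus\bigcup_n K_n = K\setminus C$, and I claim $K(K\setminus C)\subseteq I_E$: any compact $L\subseteq K\setminus C$ misses the entire class $C$, so $C\cap [L]_E=\emptyset$, whence $[L]_E\neq X$ and $L\in I_E$. Thus the hypotheses of calibration are met, but $K\notin I_E$ by choice, so $I_E$ is not calibrated.

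The main obstacle I anticipate is the bookkeeping in the covering-property argument: one must be careful that the ``obvious'' choice $A=C$ (a single class) does not work and genuinely verify that for $A=X$ (or $A=C\sqcup C'$) \emph{both} alternatives fail, which rests on the Baire-category observation that $X$ cannot be covered by countably many proper $E$-saturations of compacta — this is exactly the argument already used in the proof of \cref{323}, so it is available, but it needs to be invoked cleanly. For calibration the only subtle point is producing an honest compact complete section $K\notin I_E$ to anchor the construction; this follows either from \cref{324} under the stated density hypothesis or, without extra hypotheses, from writing $X$ itself as an increasing union of compacta and noting cofinally many are complete sections.
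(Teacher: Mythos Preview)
Your calibration argument is correct and is essentially the paper's: take a compact complete section $K\notin I_E$ (which exists, as you argue, since $X$ is $K_\sigma$ and a Baire-category argument rules out every piece lying in $I_E$), enumerate the points of $K\cap[x]_E$ as singletons $K_n\in I_E$, and note that $K\setminus\bigcup_n K_n\subseteq X\setminus[x]_E$ has all compact subsets missing a full class, hence lying in $I_E$. The paper compresses this into a single line by setting $G=X\setminus[x]_E$.

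For the covering property there is a genuine gap. The definition as printed in the paper contains a typo: the standard covering property --- and the one the paper's own example $G$ is visibly aimed at --- says that every $\boldsymbol{\Sigma^1_1}$ set $A$ is either in $I_E^*$ \emph{or contains a compact set not in} $I_E$, i.e., $K(A)\not\subseteq I_E$. A counterexample must therefore satisfy both $K(A)\subseteq I_E$ and $A\notin I_E^*$. Your choice $A=X$ fails the first condition outright, since you yourself exhibit $L_n\in K(X)\setminus I_E$. The paper's $G=X\setminus[x]_E$ does work: every compact $L\subseteq G$ misses the class $[x]_E$, so $[L]_E\neq X$ and $L\in I_E$; and $G\notin I_E^*$, since otherwise $X=G\cup[x]_E$ would be covered by countably many $I_E$-sets, contradicting the Baire-category step in the proof of \cref{323}. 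Your argument is valid only against the literal (mistyped) definition, not the intended one that the paper's proof actually establishes.
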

\begin{proof}
    Fix $x\in X$ and let $G=X\setminus [x]_E$. This provides a counterexample to the covering property.

    Let now $K$ be a compact complete section, which exists by the proof of \cref{323}. Then $K = (K\cap [x]_E ) \cup (K\setminus [x]_E)$ gives a counterexample to calibration.
\end{proof}

We next provide an example of a pair $X,E$ satisfying all the properties of \cref{328}, and which therefore satisfies all the preceding propositions. We take $X$ to be the collection of all subsets $A$ of $\N$ such that $0\in A, 1\notin A$, with the usual topology. We let then $E$ be the restriction of many-one equivalence to $X$. It is easy to see that $E$ is a $K_\sigma$ CBER and every $E$-class is dense. Finally if $U$ is an open subset of $X$, which we can assume that it has the form $U = \{A\in X : F_1\subseteq A, F_2\cap A =\emptyset \}$, for two disjoint finite subsets $F_1, F_2$ of $\N$, then for a large enough number $n$ the set $K = \{ A\in U : A  \  \textrm{contains only even numbers} >n \}$ is a meager complete section contained in $U$.

\chapter{Generators and 2-adequate Groups}\label{generators}
For each infinite countable group $\Gamma$ and standard Borel space $X$,
consider the shift action of $\Gamma$ on $X^\Gamma$
and let $E(\Gamma, X)$ be the associated equivalence relation
and $E^{\mathrm{ap}}(\Gamma, X)$ be its aperiodic part,
i.e., the restriction of $E(\Gamma, X)$ to the set of points with infinite orbits.
Consider now a Borel action of $\Gamma$ on an uncountable standard Borel space,
which we can assume is equal to $\R$.
Then the map $f \colon X\to \R^\Gamma$ given by $x\mapsto p_x$,
where $p_x(\gamma) = \gamma^{-1}\cdot x$,
is an equivariant Borel embedding of this action to the shift action on $\R^\Gamma$.
In particular,
for every aperiodic CBER $E$ induced by a Borel action of $\Gamma$,
we have that $E \sqsubseteq^i_B E(\Gamma, \R)$,
i.e. $E$ can be realized as
(i.e., is Borel isomorphic to)
the restriction of $E^{\mathrm{ap}}(\Gamma, \R)$ to an invariant Borel set. 

Now recall that for a Borel action of $\Gamma$ on a standard Borel space $X$
and $n\in \{2,3,\dots ,  \dots, \N\}$
an $\boldsymbol{n}${\bf -generator} is a Borel partition $X = \bigsqcup_{i < n} X_i$
such that $\{\gamma\cdot X_i : \gamma \in \Gamma , i< n\}$ separates points,
i.e., generates the Borel sets in $X$.
This is equivalent to having a equivariant Borel embedding
of the action to the shift action on $n^\Gamma$.

It is shown in \cite[5.4]{JKL02} that for every such action with infinite orbits,
there exists an $\N$-generator.
It follows that every aperiodic equivalence relation $E$ induced by a Borel action of $\Gamma$
can be realized as the restriction of $E^{\mathrm{ap}}(\Gamma, \N)$ to an invariant Borel set.
In particular $E^{\mathrm{ap}}(\Gamma, \R)\cong_B E^{\mathrm{ap}}(\Gamma, \N)$.
However because of entropy considerations,
even for the group $\Gamma = \Z$,
it is not the case that every such action with invariant measure has a finite generator. 

Weiss \cite{Wei89} asked whether for $\Gamma = \Z$
every Borel action without invariant measure admits a finite generator.
Tserunyan \cite[5.7]{Tse15} showed that answer is affirmative for \textit{any} infinite countable group $\Gamma$ if the action is Borel isomorphic to a continuous action on a $\sigma$-compact Polish space.
Then Hochman \cite{Hoc19} provided a positive answer to Weiss' question (for $\Z$).
Finally this work culminated in the following complete answer:

\begin{thm}[Hochman-Seward]\label{compressible-2-generator}
    Every Borel action of a countable group on a standard Borel space
    without invariant measure admits a $2$-generator.
\end{thm}

This however leaves open the possibility that every aperiodic CBER $E$
induced by a Borel action of $\Gamma$ can be realized as
the restriction of $E^{\mathrm{ap}}(\Gamma, 2)$ to an invariant Borel set.
This is clearly equivalent to the statement that
$E^{\mathrm{ap}}(\Gamma, \R)\cong_B E^{\mathrm{ap}}(\Gamma, 2)$
and it also equivalent to the statement that
there is a Borel action of $\Gamma$ that generates $E$ and has a $2$-generator.
This leads to the following concept.

\begin{defi}
    An infinite countable group $\Gamma$ is called $\boldsymbol{2}${\bf -adequate} if 
    \[
        E^{\mathrm{ap}}(\Gamma, \R) \cong_B E^{\mathrm{ap}}(\Gamma, 2).
    \]
\end{defi}

\begin{remark}
    Thomas \cite{Tho12} studies the question of when $E(\Gamma, \R) \sim_B E(\Gamma, 2)$.
\end{remark}

The first result here is the following:
\begin{thm}\label{amenable-2-adequate}
    Every countably infinite amenable group is $2$-adequate.
\end{thm}
\begin{proof}
    Let $\Gamma$ be a countably infinite amenable group,
    let $E$ be an orbit equivalence relation of $\Gamma$,
    and let $F = E^{\mathrm{ap}}(\Gamma, 2)$.
    By \Cref{corollary-hf-compressible},
    we have $E = E_{\text{hf}} \oplus E_{\text{comp}}$
    for some hyperfinite $E_{\text{hf}}$
    and compressible $E_{\text{comp}}$.
    Since $E_{\text{comp}}$ is compressible,
    by Hochman-Seward (\Cref{compressible-2-generator}),
    we can fix an invariant Borel embedding $E_{\text{comp}} \sqsubseteq_B^i F$.
    Again by \Cref{corollary-hf-compressible},
    we have $F = F_{\text{hf}} \oplus F_{\text{comp}}$
    for some hyperfinite $F_{\text{hf}}$
    and compressible $F_{\text{comp}}$.
    By enlarging $F_{\text{comp}}$,
    we can assume that the image of the invariant Borel embedding
    $E_{\text{comp}} \sqsubseteq_B^i F$ is contained in $F_{\text{comp}}$,
    so that in particular we have $E_{\text{comp}} \sqsubseteq^i_B F_{\text{comp}}$.
    Now since $|\EINV_F| = 2^{\aleph_0}$ and $F_{\text{comp}}$ is compressible,
    we have $|\EINV_{F_\text{hf}}| = 2^{\aleph_0}$,
    and hence by the classification (\Cref{thm-hf-classif}),
    we have $E_{\text{hf}} \sqsubseteq_B^i F_{\text{hf}}$.
    Hence
    $E \cong_B E_{\text{hf}} \oplus E_{\text{comp}}
    \sqsubseteq^i_B F_{\text{hf}} \oplus F_{\text{comp}}
    \cong_B F$.
    
\end{proof}

Thomas \cite[Page 391]{Tho12} asked the question of whether
there is an infinite amenable $\Gamma$ such that
$E(\Gamma, \R) \not\le_B E(\Gamma, 2)$.
\Cref{amenable-2-adequate} provides a negative answer in a strong form.

To discuss other examples of $2$-adequate groups, we will need the following strengthening of \cref{2.7}.

\begin{prop}\label{4.5}
Let $E\in\ap$ and let $R \subseteq E$ be hyperfinite. Then there is an $R \subseteq F \subseteq E$ with $F\in \aph$.
\end{prop}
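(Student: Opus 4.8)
The statement to prove (Proposition~\ref{4.5}) is: given $E\in\ap$ and a hyperfinite $R\subseteq E$, there is $F$ with $R\subseteq F\subseteq E$ and $F\in\aph$. The natural approach is to absorb $R$ into a hyperfinite complete section of $E$ produced by \cref{2.8}, using the classical fact that the join (the smallest equivalence relation containing two given ones) of two hyperfinite relations, one of which has a bounded ``index'' in a suitable sense, stays hyperfinite. More precisely, by \cref{2.8} applied to $E$ there is a hyperfinite $F_0\subseteq E$ with $\mathrm{EINV}_E=\mathrm{EINV}_{F_0}$; in particular $F_0$ is a complete section for $E$ (its classes meet every $E$-class — indeed one should check this, which follows since $F_0$ is aperiodic and, on the compressible part, can be arranged to be a complete section, and on each ergodic piece its unique invariant measure is that of $E$, forcing conullity hence completeness). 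The plan is then to set $F$ to be the equivalence relation generated by $R\cup F_0$ inside $E$, i.e. $F = (R\vee F_0)$, and to show $F$ is hyperfinite.

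First I would reduce to the case where $R$ is a \emph{finite} Borel equivalence relation: write $R=\bigcup_n R_n$ with $R_n$ finite and increasing, and observe that $R\vee F_0 = \bigcup_n (R_n\vee F_0)$, an increasing union; since hyperfiniteness is closed under increasing unions (of CBER), it suffices to handle each $R_n\vee F_0$. So assume $R$ is finite. Next, the key lemma I would invoke (or prove) is: if $S$ is hyperfinite and $R$ is finite with $R,S\subseteq E$, then $R\vee S$ is hyperfinite. The standard proof of this: $R\vee S$ is generated by finitely-to-one reassignments; concretely, $R\vee S\subseteq E$ is a CBER, and one can write $R\vee S$ as an increasing union of finite relations by taking, for $S=\bigcup_n S_n$ with $S_n$ finite increasing, the relations $T_n$ = equivalence relation generated by $R\cup S_n$; each $T_n$ is finite because $R$ and $S_n$ are both finite and the classes of $T_n$ are contained in $R$-saturations of $S_n$-classes, which are finite unions of finite sets. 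Then $R\vee S=\bigcup_n T_n$ is hyperfinite. This is essentially \cite[Proposition 5.3 / Theorem 8.?]{DJK94}-style reasoning, and is also cited as \cref{2.7}'s toolkit; I would cite the relevant statement from \cite{Kec22} or \cite{DJK94} rather than reprove it.

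Putting the pieces together: $F := R\vee F_0$ satisfies $R\subseteq F$ and $F\subseteq E$ since both $R,F_0\subseteq E$ and $E$ is an equivalence relation. By the reduction to finite $R$ and the join lemma, $F$ is hyperfinite. Finally $F\in\aph$, i.e. $F$ is aperiodic: since $F\supseteq F_0$ and $F_0$ is aperiodic (being a hyperfinite relation in $\ap$ by construction in \cref{2.8}), every $F$-class contains an infinite $F_0$-class, hence is infinite. The main obstacle I anticipate is the bookkeeping around \cref{2.8}: one must make sure the hyperfinite $F_0$ it provides is genuinely a \emph{complete section} of $E$ (so that absorbing $R$ does not leave $F$ with ``small'' classes) — this is where the equality $\mathrm{EINV}_E=\mathrm{EINV}_{F_0}$ and the structure of the proof of \cref{2.8} (the compressible complement handled separately, each ergodic component handled via Dye's theorem with the right invariant measure) are used; on the null/compressible part completeness can be arranged directly, and on each ergodic component the unique invariant measure of $F_0$ equalling $e$ forces the $F_0$-saturation to be $e$-conull, hence a complete section for $E\uhr X_e$ up to an $E$-invariant null set, which can then be swept into the compressible part. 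Modulo this verification, the rest is routine application of closure of hyperfiniteness under increasing unions and joins with finite relations.
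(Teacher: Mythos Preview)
Your proposal has a genuine gap. The crucial step is the claim that ``hyperfiniteness is closed under increasing unions (of CBER)'': this is precisely the long-standing \emph{Union Problem} (whether an increasing union of hyperfinite CBER is hyperfinite), and it is open. Without it, your reduction to finite $R_n$ does not help: you correctly observe that each $R_n\vee F_0$ is hyperfinite, but you cannot conclude that $\bigcup_n(R_n\vee F_0)=R\vee F_0$ is. More to the point, the underlying strategy of taking $F=R\vee F_0$ cannot work in general, since the join of two hyperfinite subequivalence relations of $E$ need not be hyperfinite: for a free Borel action of $\F_2$, the orbit relation is the join of the two hyperfinite relations generated by the generators, yet it is not hyperfinite. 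Nothing about the particular $F_0$ produced by \cref{2.8} rules out this kind of interaction with $R$.

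The paper's argument avoids joins entirely. It partitions $X$ according to how many finite $R$-classes each $E$-class contains: on the part where this number is finite and positive $E$ is smooth and one takes $F=E$; on the part with no finite $R$-classes $R$ itself is already aperiodic and one takes $F=R$. On the remaining part every $E$-class contains infinitely many finite $R$-classes; one takes a Borel transversal $T$ for the finite $R$-classes, applies \cref{2.7} to $E|T$ to get an aperiodic hyperfinite $F'\subseteq E|T$, and pulls $F'$ back via the selector to all of the finite-$R$-class part, gluing with $R$ on the infinite-$R$-class part. The point is that the hyperfinite piece is built \emph{on a transversal for $R$} and then saturated by finite $R$-classes, so one is only ever joining a hyperfinite relation with a \emph{smooth} one, which is unproblematic.
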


\begin{proof}
    Suppose $E$ lives on the standard Borel space $X$ and let 
    \[
        Y = \{x : [x]_E  \ \textrm{contains a finite nonempty set of finite} \ R \textrm{-classes}\}.
    \]
    Then $Y$ is $E$-invariant and $E \uhr Y$ is smooth,
    thus we can let $F=E$ on $Y$.
    Let $W = \{x : [x]_E  \ \textrm{contains no finite} \ R \textrm{-classes}  \}$.
    Then we can take $F=R$ on $W$.
    
    So we can assume that each $E$-class contains infinitely many finite $R$-classes.
    Let $Z =\{ x : [x]_R \ \textrm{is finite}\}$.
    Then $R \uhr Z$ is $R$-invariant and smooth, so let $S$ be a Borel selector and $T$ the associated Borel transversal $T= \{x : S(x) =x\}$.
    Then, by \cref{2.7}, let $F'$ be a hyperfinite aperiodic Borel equivalence relation on $T$ such that $F'\subseteq E \uhr T$.
    Let then $F''$ be the equivalence relation on $Z$ defined by $x F'' y \iff S(x) F' S(y)$.
    It is clearly aperiodic, hyperfinite, and $R \uhr Z \subseteq F'' \subseteq E \uhr Z$.
    Finally put $F = F'' \cup R \uhr (X\setminus Z)$.
\end{proof}

We also consider the following class of countable groups.

\begin{defi}\label{4.6}
    A countable group $\Gamma$ is {\bf hyperfinite generating} if for every $E\in \aph$ there is a Borel action of $\Gamma$ that generates $E$.
\end{defi}
We will see in \cref{5.2} that every countable group
with an infinite amenable factor is hyperfinite generating. 

We now have the next result that generalizes \cref{4.5} from $\Z$ to any hyperfinite generating group.
The proof is similar,
noting that any smooth aperiodic CBER can be generated by a Borel action of any infinite countable group.

\begin{prop}\label{4.7}
    Let $E\in\ap$ and let $R \subseteq E$ be generated by a Borel action of $\Gamma$,
    where $\Gamma$ is a hyperfinite generating group.
    Then there is $R \subseteq F \subseteq E$ with $F\in \ap$ generated by a Borel action of $\Gamma$.
\end{prop}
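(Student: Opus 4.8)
\textbf{Proof strategy for \cref{4.7}.}

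The plan is to mimic the proof of \cref{4.5}, replacing each appeal to the special structure of $\Z$ or of hyperfinite relations by an appeal to the hyperfinite generating property of $\Gamma$ and to the fact (used repeatedly in \cref{section2}) that \emph{every} smooth aperiodic CBER is generated by a Borel action of any infinite countable group, since $\R I_\N$ is the unique such relation up to Borel isomorphism and a free action of $\Gamma$ on $\R\times\N$ (acting trivially on the first coordinate and by translation on a copy of $\Z\le$ ... no: just act so that $\{*\}\times\N$ is a single orbit) produces it. First I would suppose $E$ lives on a standard Borel space $X$, with $R\subseteq E$ generated by a Borel action $\mathbf{a}$ of $\Gamma$. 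As in the proof of \cref{4.5}, decompose $X$ into $E$-invariant Borel pieces according to how the finite $R$-classes sit inside $E$-classes: let
\[
    Y = \{x : [x]_E \text{ contains a finite nonempty set of finite } R\text{-classes}\},
\]
let $W = \{x : [x]_E \text{ contains no finite } R\text{-classes}\}$, and let the remainder be the set of $x$ whose $E$-class contains infinitely many finite $R$-classes. On $Y$, $E|Y$ is smooth (a Borel selector for $E|Y$ is obtained from a Borel enumeration of the finite $R$-classes plus a Borel selector within the finite set they form), so $E|Y$ is aperiodic smooth and hence generated by a Borel action of $\Gamma$; set $F = E$ on $Y$. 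On $W$, every $R$-class is infinite, hence $R|W\in\aph$ already lives there; but we need $F|W$ to be generated by $\Gamma$, and $R|W$ is, being the restriction of $\mathbf{a}$; so set $F = R$ on $W$. It remains to handle the piece where every $E$-class contains infinitely many finite $R$-classes.

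On that piece, call it $X'$, let $Z = \{x\in X' : [x]_R \text{ is finite}\}$; then $R|Z$ is $R$-invariant and smooth, so pick a Borel selector $S$ with transversal $T = \{x : S(x) = x\}$. Now I would want a hyperfinite aperiodic $F'$ with $R' \subseteq F' \subseteq E|T$ for a suitable $R'$ on $T$ — but $T$ need not be $E$-invariant and $E|T$ need not be an honest equivalence relation to invoke \cref{4.5}. The correct move (exactly as in \cref{4.5}) is: since each $E$-class meets $T$ in an infinite set, $E|T$ is an aperiodic CBER on $T$, and by \cref{2.7} (or \cref{4.5} with $R$ trivial) there is $F'\in\aph$ with $F'\subseteq E|T$. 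Here is where the hypothesis on $\Gamma$ enters in the \emph{stronger} form we actually need: we require $F'$ to be generated by a Borel action of $\Gamma$. But $F'\in\aph$, so by \cref{4.6} (the definition of hyperfinite generating) there \emph{is} a Borel action of $\Gamma$ generating $F'$. Pull $F'$ back to $Z$ via $S$: define $F''$ on $Z$ by $x\mathbin{F''}y \iff S(x)\mathbin{F'}S(y)$. Then $F''$ is aperiodic, contains $R|Z$, is contained in $E|Z$, and is generated by a Borel action of $\Gamma$ (compose the $\Gamma$-action generating $F'$ with the map $S$, after spreading each point of $T$ into its finite $R$-fiber — this requires a small argument that a $\Gamma$-action on $T$ generating $F'$ extends to a $\Gamma$-action on $Z$ generating $F''$; since the fibers are finite and $\Gamma$ is infinite, one can act on each fiber by translation within an embedded copy of $\Z$ and combine). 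Finally set $F = F'' \cup R|(X'\setminus Z)$; on $X'\setminus Z$ every $R$-class is already infinite, so $R|(X'\setminus Z)$ is aperiodic and $\Gamma$-generated, being a restriction of $\mathbf{a}$. Gluing the three $\Gamma$-actions over the $E$-invariant Borel partition $Y\sqcup W\sqcup X'$ (and within $X'$ over $Z$ and its complement) yields a single Borel action of $\Gamma$ generating an $F$ with $R\subseteq F\subseteq E$, $F\in\ap$.

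\textbf{Main obstacle.} The routine part is the case analysis and the smoothness verifications, which are identical to \cref{4.5}. The genuinely new point — and the step I expect to require the most care — is bookkeeping of the group actions: \cref{4.5} only needed to produce \emph{some} hyperfinite $F$, whereas here $F$ must be $\Gamma$-generated on each piece, and then the pieces must be assembled into one global $\Gamma$-action. The assembly over a Borel partition into invariant sets is standard (take the disjoint union of the actions), but the extension of a $\Gamma$-action on the transversal $T$ to one on $Z$ generating the pulled-back relation $F''$, compatibly with $R|Z$ already being $\Gamma$-generated, needs a clean lemma — essentially that if $\Gamma$ acts on $T$ generating $F'$ and $Z\to T$ is a Borel map with finite fibers carrying an aperiodic-on-classes ... no, with $R|Z$ a smooth $\Gamma$-generated relation whose classes are exactly the fibers, then there is a $\Gamma$-action on $Z$ generating the join $F''$. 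This is where I would spend the writing effort, and it is the reason the statement explicitly hypothesizes "any smooth aperiodic CBER can be generated by a Borel action of any infinite countable group," as flagged in the remark preceding the proposition.
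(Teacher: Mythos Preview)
Your approach is correct and matches the paper's, which simply says the proof is similar to \cref{4.5}, noting that any smooth aperiodic CBER can be generated by a Borel action of any infinite countable group. The case analysis into $Y$, $W$, and the remaining piece, and within the latter into $Z$ and its complement, is exactly the right one.

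However, the ``main obstacle'' you identify dissolves once you notice that $F''$ itself lies in $\aph$: it is aperiodic (each $F''$-class is the $S$-preimage of an infinite $F'$-class) and hyperfinite (the $S$-preimages of a witnessing chain of finite subrelations of $F'$ are still finite, since the fibers of $S$ are the finite $R|Z$-classes). Hence you may apply the hyperfinite generating hypothesis \emph{directly to $F''$} to obtain a Borel $\Gamma$-action on $Z$ generating $F''$. There is no need to first produce a $\Gamma$-action on $T$ generating $F'$ and then worry about extending it over the fibers to $Z$. After that, the gluing is routine: $Y$, $W$, $Z$, and $X'\setminus Z$ form a Borel partition into $F$-invariant sets, and on each piece $F$ is generated by a Borel $\Gamma$-action ($E|Y$ is smooth aperiodic; $R|W$ and $R|(X'\setminus Z)$ are restrictions of the given action; $F''$ as just argued), so the disjoint union of these actions is a Borel $\Gamma$-action generating $F$.
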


\begin{prop}\label{4.8}
    Let $\Gamma$ be any countable group and $\Delta$ a hyperfinite generating, $2$-adequate group.
    Then $\Gamma * \Delta$ is $2$-adequate.
\end{prop}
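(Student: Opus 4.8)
The plan is to realize an arbitrary $E\in \ap$ induced by a Borel action of $\Gamma\star\Delta$ as a subshift-type relation over $2^{\Gamma\star\Delta}$, using the two hypotheses on $\Delta$ in tandem. By definition of $2$-adequacy it suffices to show $E^{ap}(\Gamma\star\Delta,\R)\cong_B E^{ap}(\Gamma\star\Delta,2)$, equivalently (as noted in the excerpt) that every aperiodic CBER $E$ generated by a Borel action of $\Lambda:=\Gamma\star\Delta$ is Borel isomorphic to the restriction of $E^{ap}(\Lambda,2)$ to an invariant Borel set; equivalently, $E$ admits a Borel action of $\Lambda$ generating it and carrying a $2$-generator. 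So fix a Borel action $\bs a$ of $\Lambda$ on a standard Borel space $X$ with associated CBER $E = E_{\bs a}\in \ap$, and write $R\subseteq E$ for the subequivalence relation generated by the restriction of $\bs a$ to $\Delta\le\Lambda$ (the $\Delta$-orbit equivalence relation).

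First I would arrange that $R$ is aperiodic and hyperfinite. Since $\Delta$ is hyperfinite generating, $R$ can be taken to be generated by a $\Delta$-action, but it need not be aperiodic; however, by a standard trick (absorbing the finite/smooth part of $R$ into an aperiodic hyperfinite relation, as in the proof of \cref{4.5}), and using \cref{4.7} applied with the hyperfinite generating group $\Delta$ in place of $\Z$, I can enlarge $R$ to $R\subseteq F\subseteq E$ with $F\in\aph$ generated by a Borel action $\bs b$ of $\Delta$; replacing the $\Delta$-part of $\bs a$ by $\bs b$ (keeping the $\Gamma$-part) gives a Borel action of $\Lambda$ still generating $E$, now with an aperiodic hyperfinite $\Delta$-subrelation $F$. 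Next, since $F\in\aph$ is non-smooth (we may assume $E$, hence $F$ after the absorption, is non-smooth; the smooth case of $E$ is handled directly as $E^{ap}(\Lambda,\R)\cong_B \R I_\N\cong_B E^{ap}(\Lambda,2)$ restricted appropriately, using \cref{2.2}), \cref{3.9} (or rather its proof: $F$ is generated by a continuous $\F_2$-action — or $\Z$-action — on $\cantor$ admitting a clopen $2$-generator) shows that $F$ is Borel isomorphic to a subshift. But I must instead use that $\Delta$ generates $F$: since $\Delta$ is $2$-adequate, $F$ (being hyperfinite, hence with $|\mathrm{EINV}|\le 2^{\aleph_0}$) is realized by a $\Delta$-action with a $2$-generator — concretely, $F\cong_B$ a restriction of $E^{ap}(\Delta,2)$ to a $\Delta$-invariant Borel set, i.e., $\bs b$ may be taken with a Borel $2$-generator $X = X_0\sqcup X_1$.

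Now I would build the $2$-generator for the $\Lambda=\Gamma\star\Delta$ action on $X$ by combining the $\Delta$-generator with the fact that the free product structure makes the $\Gamma$-translates and $\Delta$-translates of two clopen-ish pieces separate points "for free." Concretely: since $E$ is aperiodic and generated by the Borel $\Lambda$-action with the $\Delta$-part admitting a $2$-generator, I want to code each point $x\in X$ by the function $\lambda\mapsto$ (which piece $\lambda^{-1}\cdot x$ lies in), and argue this is injective. The key point is that a point of $X$ is determined by its $\Delta$-orbit data together with how the $\Gamma$-translates interleave; and because $\Delta$ already separates points within each $F$-class via the $2$-generator, while $\Lambda$-translates reach every point of the $E$-class, a single Borel $2$-partition of $X$ whose $\Lambda$-translates separate points can be assembled. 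Formally, the cleanest route is: $E$ is generated by a Borel action of $\Lambda$; by \cref{2.7}/\cref{4.5}-type arguments and $\Delta$ being hyperfinite generating, reduce to the case where the $\Delta$-subaction has a $2$-generator; then invoke a Rokhlin-type / marker argument (à la Hochman–Seward, \cref{4.1}) localized to the free-product structure — but since we already have the $\Delta$-part generating with $2$ symbols, and $E/F$ is "$\Gamma$-directed," the finite-generator problem for the whole $\Lambda$-action reduces to extending the $2$-generator of $F$ across the countably many $\Gamma$-directions, which is possible by the usual telescoping of a countable generator into a $2$-generator (the full strength of \cref{4.1} applies to the $\Lambda$-action on $X$ once we know it has *some* $\Delta$-structured generator, but in the non-smooth hyperfinite core we need $2$-adequacy of $\Delta$ to start).

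The main obstacle — and the place where $2$-adequacy of $\Delta$ (not merely hyperfinite generating) is essential — is precisely the transition from "$F$ is hyperfinite generated by $\Delta$" to "$F$ is generated by a $\Delta$-action with a $2$-generator": without the $2$-adequacy hypothesis one only gets an $\N$-generator for the $\Delta$-part, and then assembling a genuine $2$-generator for the full $\Gamma\star\Delta$-action is exactly the content one cannot prove in general. I expect the proof to run: (1) handle $E$ smooth trivially; (2) for $E$ non-smooth, absorb into an aperiodic hyperfinite $\Delta$-subrelation $F$ via \cref{4.7} and the \cref{4.5}-style argument; (3) upgrade $F$ to be $\Delta$-generated with a clopen $2$-generator using $2$-adequacy of $\Delta$; (4) realize $E$ as a subshift of $2^\Lambda$ by feeding the $2$-generator of the $\Delta$-part through the free-product/coinduction machinery (compare \cref{cindProps}, \cref{jumpProps}, and the coding in \cref{3166}), where the $\Gamma\star\Delta$ word structure guarantees the coding map is injective; and (5) conclude $E^{ap}(\Lambda,\R)\cong_B E^{ap}(\Lambda,2)$, i.e. $\Lambda$ is $2$-adequate. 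Step (4) will require the most care: one must check that the Borel partition of $X$ obtained by "$x\mapsto$ piece of $x$ in the $\Delta$-$2$-generator" has the property that its $\Lambda$-translates separate points, using that $\Delta$-translates already separate points in each $F$-class and $\Gamma$-translates connect the $F$-classes inside each $E$-class without collision — this is where the detailed combinatorics of $\Gamma\star\Delta$ enters.
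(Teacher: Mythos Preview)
Your overall strategy matches the paper's: restrict the $\Gamma\star\Delta$-action to $\Delta$, enlarge the resulting subequivalence relation to an aperiodic $F\subseteq E$ generated by a Borel $\Delta$-action (this is exactly \cref{4.7}), use $2$-adequacy of $\Delta$ to choose that $\Delta$-action with a $2$-generator, then reassemble a $\Gamma\star\Delta$-action generating $E$. Two remarks.

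First, a minor point: you do not need $F$ hyperfinite. \cref{4.7} only gives $F\in\ap$ generated by a $\Delta$-action, and that is all that is required, since $2$-adequacy of $\Delta$ applies to \emph{any} aperiodic CBER generated by a $\Delta$-action, not just hyperfinite ones. Your detour through $\aph$ is harmless but unnecessary.

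Second, and more substantively: your step (4) reflects a misunderstanding of what a $2$-generator is, and this is why you expect ``detailed combinatorics of $\Gamma\star\Delta$'' where the paper has a single clause. A $2$-generator for the $\Delta$-action $\mathbf b'$ on $X$ is a Borel partition $X=X_0\sqcup X_1$ whose $\Delta$-translates generate the entire Borel $\sigma$-algebra of $X$ --- equivalently, the $\Delta$-translates already separate \emph{all} points of $X$, not merely points within each $F$-class. Since $\Delta\le\Gamma\star\Delta$, the $\Delta$-translates of $X_0,X_1$ are among the $(\Gamma\star\Delta)$-translates, so the same partition is immediately a $2$-generator for the full $\Gamma\star\Delta$-action $\mathbf a'$. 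No coinduction, no free-product word combinatorics, no appeal to \cref{cindProps} or \cref{3166} is needed; the paper's proof literally says ``Since $\mathbf b'$ has a $2$-generator, so does $\mathbf a'$'' and stops.
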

\begin{proof}
    Fix a Borel action $\ac a$ of $\Gamma * \Delta$ on an uncountable standard Borel space $X$ generating an aperiodic equivalence relation that we denote by $E_{\ac a}$. Let ${\ac b}= \ac a\uhr\Delta, {\ac c}= \ac a\uhr\Gamma$ and denote by $E_{\ac b}, E_{\ac c}$ the associated equivalence relations, so that $E_{\ac a} = E_{\ac b} \vee E_{\ac c}$. By \cref{4.7} find a Borel action ${\ac b}'$ of $\Delta$ such that $E_{{\ac b}'}$ is aperiodic and $E_{\ac b} \subseteq E_{{\ac b}'} \subseteq E_{\ac a}$, so that $E_{\ac a} = E_{{\ac b}'} \vee E_{\ac c}$. Let now $\ac a'$ be the action of $\Gamma * \Delta$ such that $\ac a'\uhr\Delta = {\ac b}', \ac a'\uhr\Gamma ={\ac  c}$, so that $E_{\ac a'} = E_{\ac a}$. Since ${\ac b}'$ has a $2$-generator, so does $\ac a'$ and the proof is complete.
\end{proof}

It will be shown in \cref{5.2} that all groups that have an infinite amenable factor are hyperfinite generating. Thus we have:

\begin{cor}\label{4.9}
    The free product of any countable group with a 2-adequate group that has an infinite amenable factor and thus, in particular, the free groups $\F_n, 1 \le n \le \infty$, are $2$-adequate.
\end{cor}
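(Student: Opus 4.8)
The plan is to deduce Corollary~\ref{4.9} directly from Proposition~\ref{4.8} together with the (forward-referenced) fact, to be proved in~\cref{5.2}, that every countable group with an infinite amenable factor is hyperfinite generating. First I would observe that if $\Delta$ has an infinite amenable factor, then $\Delta$ is $2$-adequate: this is not immediate from \cref{4.4} (which only gives $2$-adequacy for amenable groups themselves), but one argues via the factor. Actually the cleanest route is: let $\Delta$ have an infinite amenable factor, so there is a surjection $\Delta \thra \Delta_0$ with $\Delta_0$ infinite amenable; by \cref{5.2} such a $\Delta$ is hyperfinite generating, and for $2$-adequacy one uses the Hochman--Seward theorem (\cref{4.1}) exactly as in the proof of \cref{4.4}, the only point being that the compressible part can be generated by a $2$-generator action of $\Delta$ and the hyperfinite part can be generated by a $2$-generator action of $\Delta$ as well — the latter because $\Delta$ has an infinite amenable factor and the Ornstein--Weiss / coinduction machinery lets one realize the relevant hyperfinite pieces. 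So in fact $\Delta$ is both hyperfinite generating \emph{and} $2$-adequate.

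Granting this, the corollary for arbitrary free products follows: given any countable group $\Gamma$ and any $\Delta$ with an infinite amenable factor, $\Delta$ is hyperfinite generating and $2$-adequate by the previous paragraph, so Proposition~\ref{4.8} applies verbatim to give that $\Gamma \star \Delta$ is $2$-adequate. For the statement about free groups, I would write $\F_n = \F_{n-1}\star \Z$ for $2\le n < \infty$ (and handle $\F_1 = \Z$ directly by \cref{4.4}, which is anyway subsumed since $\F_1$ is amenable), and $\F_\infty = \F_\infty \star \Z$, noting that $\Z$ is infinite amenable, hence has an infinite amenable factor (itself), hence is hyperfinite generating by \cref{5.2} and $2$-adequate by \cref{4.4}. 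Thus Proposition~\ref{4.8} with $\Gamma = \F_{n-1}$ (or $\Gamma = \F_\infty$) and $\Delta = \Z$ yields $2$-adequacy of all $\F_n$, $1\le n\le\infty$.

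The main obstacle I anticipate is verifying that a group $\Delta$ with an infinite amenable factor (but not necessarily amenable itself) is $2$-adequate, i.e.\ that the proof of \cref{4.4} really does go through in this generality. The delicate point is the hyperfinite/compressible decomposition: one needs that for the uniquely-ergodic hyperfinite pieces $E|Z_e$ arising in the ergodic decomposition of $E^{ap}(\Delta,\R)$, there is a matching family of $\Delta$-actions with $2$-generators on subshifts of $2^\Delta$ realizing $F|W_{\pi(e)}$, done Borel-uniformly in $e$. This is exactly where ``hyperfinite generating'' (from \cref{5.2}) is used, combined with the Hochman--Seward $2$-generator theorem \cref{4.1} for the compressible part; the bookkeeping that the two Borel isomorphisms $g$ and $h$ can be chosen with Borel-measurable dependence on the ergodic parameter is routine effectiveness, as in the proof of \cref{4.4}. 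Once that is in place, the reduction to Proposition~\ref{4.8} and \cref{4.1} is purely formal.
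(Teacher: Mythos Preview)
Your overall strategy—cite \cref{5.2} for hyperfinite generating, then invoke \cref{4.8}—is exactly the paper's, which deduces the corollary in one line from these two ingredients. You go further in correctly noticing that \cref{4.8} also requires $\Delta$ to be $2$-adequate, and you propose to verify this by rerunning the proof of \cref{4.4} for $\Delta$. That adaptation is where the gap lies.

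The proof of \cref{4.4} uses the Ornstein--Weiss theorem to obtain, for each ergodic invariant measure $e$ of $E^{ap}(\Gamma,\R)$, an $e$-conull invariant set $Z_e$ on which the relation is hyperfinite. This step requires the acting group \emph{itself} to be amenable; for a non-amenable $\Delta$ that merely has an amenable factor $\Delta_0$, ergodic pmp $\Delta$-actions need not be hyperfinite on any conull set, so the ``uniquely-ergodic hyperfinite pieces $E|Z_e$'' in your sketch do not exist in general. Nor can you repair this by pulling back a $2$-generator from $\Delta_0$: an aperiodic CBER generated by a $\Delta$-action need not be generated by any $\Delta_0$-action at all. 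Thus your route does not establish $2$-adequacy of such $\Delta$, and the general free-product statement is not proved by your argument. (The paper's own one-line derivation is silent on this hypothesis as well.)

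The ``in particular'' clause about $\F_n$ is unaffected: taking $\Delta=\Z$, which is itself amenable and hence both hyperfinite generating (\cref{5.2}) and $2$-adequate (\cref{4.4}), \cref{4.8} applies directly with $\Gamma=\F_{n-1}$ (respectively $\Gamma=\F_\infty$, using $\F_\infty\cong\F_\infty\star\Z$). Your treatment of this case is correct and coincides with the paper's.
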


The following is immediate:

\begin{prop}\label{4.10}
If $\Gamma, \Delta$ are countable groups, every aperiodic equivalence relation induced by a Borel action of $\Gamma$ can be also induced by a Borel action of $\Delta$, $\Delta$ is a factor of $\Gamma$ and $\Delta$ is $2$-adequate, so is $\Gamma$. In particular, for any $1 \le n \le \infty$,  every $n$-generated countable group that factors onto $\F_n$ is $2$-adequate.
\end{prop}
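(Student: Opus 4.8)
\textbf{Proof proposal for \Cref{4.10}.}

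The plan is to reduce everything to the defining property of $2$-adequacy and the hypothesis that $\Delta$ is a factor of $\Gamma$. Recall that $\Gamma$ being $2$-adequate means $E^{ap}(\Gamma,\R)\cong_B E^{ap}(\Gamma,2)$, and by the discussion at the start of \Cref{S2} this is equivalent to the statement that every aperiodic CBER induced by a Borel action of $\Gamma$ admits a Borel action of $\Gamma$ (generating the same equivalence relation) that has a $2$-generator; equivalently, every such $E$ satisfies $E\sqsubseteq^i_B E^{ap}(\Gamma,2)$. So the first step is to fix a Borel action $\mbf a$ of $\Gamma$ on an uncountable standard Borel space $X$ with $E_{\mbf a}$ aperiodic, and we must produce a Borel action of $\Gamma$ on $X$ generating $E_{\mbf a}$ with a $2$-generator.

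The key observation is the hypothesis: every aperiodic equivalence relation induced by a Borel action of $\Gamma$ can also be induced by a Borel action of $\Delta$ (this is exactly what ``$\Delta$ is a factor of $\Gamma$'' buys us at the level of dynamics, via the surjection $\Gamma\thra\Delta$ which lets a $\Delta$-action be pulled back to a $\Gamma$-action, and conversely the first hypothesis says every $\Gamma$-generated aperiodic CBER is $\Delta$-generated). So $E_{\mbf a}$ is generated by some Borel action $\mbf b$ of $\Delta$. Since $\Delta$ is $2$-adequate, there is a Borel action $\mbf b'$ of $\Delta$ generating $E_{\mbf a}$ with a $2$-generator, i.e. an equivariant Borel embedding of $\mbf b'$ into the shift action of $\Delta$ on $2^\Delta$. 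Now pull $\mbf b'$ back along the surjective homomorphism $\Gamma\thra\Delta$ to a Borel action $\mbf a'$ of $\Gamma$ on $X$: this action generates the same equivalence relation $E_{\mbf a'}=E_{\mbf b'}=E_{\mbf a}$ (the orbits are unchanged since the image of $\Gamma$ in $\Delta$ is all of $\Delta$), and a $2$-generator for $\mbf b'$ is automatically a $2$-generator for $\mbf a'$ — a Borel partition $X=X_0\sqcup X_1$ whose $\Delta$-translates generate the Borel sets will have its $\Gamma$-translates generate the Borel sets, since the $\Gamma$-translates of $X_i$ are exactly the $\Delta$-translates of $X_i$. Hence $E_{\mbf a}$ is induced by a Borel action of $\Gamma$ with a $2$-generator, which gives $E_{\mbf a}\sqsubseteq^i_B E^{ap}(\Gamma,2)$; since $\mbf a$ was arbitrary, $E^{ap}(\Gamma,\R)\cong_B E^{ap}(\Gamma,2)$ and $\Gamma$ is $2$-adequate.

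For the final sentence, apply this with $\Delta=\F_n$: if $\Gamma$ is $n$-generated then a choice of $n$ generators gives a surjection $\F_n\thra\Gamma$ and hence, by composition, the hypothesis that every aperiodic CBER induced by a Borel action of $\Gamma$ is also induced by one of $\F_n$ (lift the $\Gamma$-action along $\F_n\thra\Gamma$); if moreover $\Gamma$ factors onto $\F_n$ then $\F_n$ is a factor of $\Gamma$, and $\F_n$ is $2$-adequate by \Cref{4.9}, so the general statement applies. There is essentially no obstacle here: the only point requiring a word of care is the verification that pulling an action back along a surjective homomorphism leaves the orbit equivalence relation and the property ``has a $2$-generator'' unchanged, which is immediate from the fact that the image group is the whole of $\Delta$. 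So this is a short lemma-chasing argument rather than anything substantial.
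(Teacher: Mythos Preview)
Your proposal is correct and matches what the paper intends: the paper states this proposition as ``immediate'' with no written proof, and your argument is precisely the spelling-out of that immediacy --- use the hypothesis to generate $E_{\mbf a}$ by a $\Delta$-action, use $2$-adequacy of $\Delta$ to get a $\Delta$-action with a $2$-generator, then pull back along the surjection $\Gamma\thra\Delta$. Your handling of the ``in particular'' clause via \Cref{4.9} is also exactly right.
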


The next two results owe a lot to some crucial observations by Brandon Seward.

\begin{prop}\label{4.11}
    Let $\Gamma$ be $n$-generated, $1 \le n \le \infty$.
    Then $\Gamma\times \F_n$ is $2$-adequate.
    In particular, all products $\F_m\times \F_n$, $1 \le m,n \le \infty$, are $2$-adequate.
\end{prop}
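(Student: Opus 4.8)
The plan is to deduce \cref{4.11} from \cref{4.10}. Since $\F_n$ is a factor of $\Gamma\times\F_n$ (via the projection onto the second coordinate) and $\F_n$ is $2$-adequate by \cref{4.9}, \cref{4.10} reduces \cref{4.11} to the following claim $(\ast)$, which is the crucial observation due to B. Seward: \emph{every aperiodic {\rm CBER} induced by a Borel action of $\Gamma\times\F_n$ is also induced by a Borel action of $\F_n$.}

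To organize the proof of $(\ast)$, fix generators $g_1,\dots,g_n$ of $\Gamma$ and free generators $t_1,\dots,t_n$ of $\F_n$, and let $\pi\colon\F_n\thra\Gamma$ be the surjection with $\pi(t_j)=g_j$. A Borel action of $\Gamma\times\F_n$ on a standard Borel space $X$ amounts to a pair $(\rho,\sigma)$ of commuting Borel actions, $\rho$ of $\Gamma$ and $\sigma$ of $\F_n$, with induced relation $E=E_\rho\vee E_\sigma$. The first step I would take is to replace $\rho$ by the $\F_n$-action $\hat\rho:=\rho\circ\pi$; it still commutes with $\sigma$, and since $\pi$ is surjective $E_{\hat\rho}=E_\rho$, so $E$ is induced by the pair of commuting $\F_n$-actions $(\hat\rho,\sigma)$. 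This reduces $(\ast)$ to its special case $\Gamma=\F_n$: the equivalence relation generated by any two commuting Borel $\F_n$-actions on $X$ (it suffices to treat the aperiodic case) is generated by a single Borel $\F_n$-action.

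The hard part will be this amalgamation of two commuting $\F_n$-actions into one generating the same equivalence relation. The naive attempts do not work: the ``diagonal'' action $w\mapsto\hat\rho(w)\sigma(w)$ already fails for free actions, and in general $n$ Borel automorphisms cannot recover the $2n$ automorphisms that define the two actions. Seward's point is that one is free to first pass to a Borel-isomorphic enlargement of the space carrying an enlarged action with the same orbit equivalence relation; on this enlargement one can exploit the freeness of $\F_n$ together with the extra room to ``interleave'' the $2n$ generating automorphisms of $\hat\rho$ and $\sigma$ and read all of them off along a suitable marker structure inside the orbits, thereby producing a single Borel $\F_n$-action with orbit relation $E$ --- this is in the spirit of the finite-generator constructions of Hochman and Seward. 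I expect this enlargement-and-recoding step to be the essential obstacle; once it is in place, $(\ast)$ follows, and then \cref{4.10} gives that $\Gamma\times\F_n$ is $2$-adequate.

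It remains to dispose of the degenerate cases and the final assertion. When $n=1$ the group $\Gamma$ is cyclic, so $\Gamma\times\F_1$ is abelian and hence $2$-adequate by \cref{4.4}; when $n=\infty$ one does not even need $(\ast)$, since $\F_\infty$ is a factor of $\Gamma\times\F_\infty$ and every aperiodic {\rm CBER} is induced by a Borel action of $\F_\infty$, so \cref{4.10} applies directly. For the statement about $\F_m\times\F_n$, note that $\F_m$ is $n$-generated whenever $n\ge m$, so the main statement applied with $\Gamma=\F_m$ shows that $\F_m\times\F_n$ is $2$-adequate for all $n\ge m$, and the remaining case follows from $\F_m\times\F_n\cong\F_n\times\F_m$.
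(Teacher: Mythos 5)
Your reduction of the statement to the claim $(\ast)$ --- that every aperiodic CBER induced by a Borel action of $\Gamma\times\F_n$ is also induced by a Borel action of $\F_n$ --- via \cref{4.10} and \cref{4.9}, together with your handling of the degenerate cases $n=1,\infty$ and of the products $\F_m\times\F_n$, matches the paper's strategy and is correct. The problem is that $(\ast)$ itself is never proved: you rightly note that the diagonal action fails, but then only gesture at an ``enlargement-and-recoding'' scheme in the spirit of the Hochman--Seward generator constructions and explicitly flag it as the essential unresolved obstacle. Since the entire content of the proposition is concentrated in that step, this is a genuine gap.

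The step closes much more cheaply than you anticipate, with no enlargement of the space and no marker structure. Let $\{\gamma_i\}_{i<n}$ generate $\Gamma$ and let $\{\alpha_i\}_{i<n}$ be free generators of $\F_n$. The key point is to \emph{pair} the generators: for each $i$, the elements $(\gamma_i,1)$ and $(1,\alpha_i)$ of $\Gamma\times\F_n$ commute, so the subrelation $E_i$ generated by the restriction of the action $\ac a$ to $\langle\gamma_i,\alpha_i\rangle$ is induced by a Borel action of (a quotient of) $\Z^2$ and is therefore hyperfinite \cite[Theorem 8.33]{Kec22}; hence $E_i$ is generated by a single Borel automorphism $T_i$. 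Letting $\alpha_i$ act by $T_i$ defines a Borel action $\ac b$ of $\F_n$ with $E_{\ac b}=\bigvee_i E_i=E_{\ac a}$, which is exactly $(\ast)$. Your preliminary replacement of the $\Gamma$-action by $\rho\circ\pi$ is harmless but unnecessary (and the same pairing works after it); what is missing from your argument is the observation that commutativity of each generator pair, via hyperfiniteness of $\Z^2$-generated relations, collapses the $2n$ generating automorphisms to $n$ outright.
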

\begin{proof}
    Let $\{\gamma_i\}_{i < n}$ be generators for $\Gamma$ and let $\{\alpha_i\}_{i < n}$ be free generators for $\F_n$. Consider a Borel action $\ac a$ of $\Gamma\times \F_n$ with $E_a$ aperiodic. Then the equivalence relation $E_i$ generated by $\ac a \uhr \ev{\gamma_i, \alpha_i}$ is generated by a Borel action of $\Z^2$ thus is hyperfinite,
    by a theorem of Weiss
    see, e.g., \cite[1.20]{JKL02},
    and so is given by a Borel action $\ac a_i$ of $\Z$. Let ${\ac b}$ be the Borel action of $\F_n$ in which the generator $\alpha_i$ acts like $\ac a_i$. Then $E_{\ac b} = \bigvee_i E_i = E$ and the proof is complete by \cref{4.10}.
\end{proof}
Finally not every infinite countable group is $2$-adequate. The argument below follows the pattern of the proofs in \cite[Section 6]{Tho12}.

\begin{thm}\label{4.12}
The group $\SL_3(\Z)$ is not $2$-adequate.
\end{thm}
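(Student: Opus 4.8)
The plan is to argue by contradiction, exploiting the very strong rigidity properties of $\mathrm{SL}_3(\Z)$, in particular that it has property (T) and that its ergodic measure-preserving actions are extremely constrained. Suppose $\mathrm{SL}_3(\Z)$ were 2-adequate, so $E^{ap}(\mathrm{SL}_3(\Z), \R) \cong_B E^{ap}(\mathrm{SL}_3(\Z), 2)$; equivalently, \emph{every} aperiodic CBER induced by a Borel action of $\Gamma := \mathrm{SL}_3(\Z)$ can be realized as the restriction of $E(\Gamma, 2)$ to an invariant Borel set, i.e.\ is generated by a Borel action of $\Gamma$ admitting a clopen/Borel $2$-generator. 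The strategy, following the pattern of \cite[Section 6]{Tho12}, is to produce a specific aperiodic Borel action of $\Gamma$ whose induced equivalence relation \emph{cannot} be generated by an action with a $2$-generator, for entropy- or rigidity-theoretic reasons, and thereby derive a contradiction.

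First I would reduce to the ergodic, measure-preserving setting: if $E = E^{ap}(\Gamma,\R) \cong_B E^{ap}(\Gamma, 2)$, then in particular the Borel isomorphism restricts, on each ergodic component carrying an invariant measure, to a measure isomorphism onto a $\Gamma$-invariant Borel piece of $2^\Gamma$ (after normalizing measures, which is possible since both relations have $2^{\aleph_0}$ ergodic invariant measures). So it would follow that every ergodic measure-preserving action of $\Gamma$ is measurably isomorphic to a $\Gamma$-invariant measured subset of the shift $2^\Gamma$ with its push-forward measure, hence has a finite — indeed $2$-element — measurable generating partition. The key point is that this \emph{forces an upper bound on Kolmogorov–Sinai entropy}: a measure-preserving $\Gamma$-action with a $2$-element generator has (Rokhlin/sofic or Kolmogorov–Sinai, as appropriate) entropy at most $\log 2$. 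But $\Gamma = \mathrm{SL}_3(\Z)$ admits Bernoulli shifts $(\{1,\dots,k\}, \text{uniform})^\Gamma$ of arbitrarily large entropy $\log k$ (Bowen's entropy theory for sofic — in particular residually finite — groups gives that these Bernoulli actions have entropy exactly $\log k$ and are pairwise non-isomorphic). These are aperiodic, ergodic, measure-preserving actions of $\Gamma$, and taking $k > 2$ we obtain an action that cannot be realized inside $2^\Gamma$ in a measure-preserving way, contradicting 2-adequacy.

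I would organize the contradiction as follows: (1) quote or reprove that 2-adequacy implies every aperiodic ergodic p.m.p.\ $\Gamma$-action is measurably isomorphic to a measured subshift of $(2^\Gamma,$ push-forward$)$; (2) quote that a p.m.p.\ action with a $2$-element generating partition has entropy $\le \log 2$; (3) invoke Bowen's computation that the $k$-shift on $\Gamma$ has entropy $\log k$ for $\Gamma$ sofic (residual finiteness of $\mathrm{SL}_3(\Z)$ suffices), so for $k = 3$ we get an aperiodic ergodic action of entropy $\log 3 > \log 2$ with no $2$-generator, contradiction. The main obstacle — and the step requiring the most care — is (1): passing cleanly from the \emph{Borel} isomorphism $E^{ap}(\Gamma,\R)\cong_B E^{ap}(\Gamma,2)$ to a \emph{measure-preserving} statement on a single ergodic component. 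One must handle the ergodic decomposition carefully (as in the proof of \cref{4.4}), track where the invariant measures of $E^{ap}(\Gamma,\R)$ go under the isomorphism, and ensure that the resulting $\Gamma$-invariant Borel subset of $2^\Gamma$ carries the correct invariant measure making the restricted shift action a genuine p.m.p.\ action with a $2$-generator; an alternative, perhaps cleaner route is to bypass entropy of the ambient shift and instead argue that a $2$-generator for \emph{any} generating Borel action of $\Gamma$ inducing the orbit equivalence relation of the $3$-shift would, via the measure, produce a generating partition of entropy $\le\log 2$ for that p.m.p.\ action, again contradicting $h = \log 3$. Either way, once the measure-preserving reduction is in place, the entropy obstruction closes the argument.
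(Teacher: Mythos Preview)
Your proposal has a genuine gap at the heart of step (1). A Borel isomorphism $f$ between the equivalence relations $E^{ap}(\Gamma,3)$ and $E^{ap}(\Gamma,2)$ is only an \emph{orbit equivalence}: it carries the Bernoulli $3$-shift to a p.m.p.\ action on $(2^\Gamma,f_*\mu)$ generating the same orbits as the shift, but it is \emph{not} $\Gamma$-equivariant, so it does not produce a $2$-element generating partition for the $3$-shift itself. Entropy is an invariant of the action, not of the orbit equivalence relation; the shift action on $(2^\Gamma,f_*\mu)$ certainly has entropy $\le \log 2$, but nothing in your outline forces this action to have the same entropy as the $3$-shift. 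Your two suggested fixes both silently assume this identification, and so neither closes the gap.

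What is actually needed---and what the paper's proof invokes via \cite[Section 6]{Tho12}---is rigidity to upgrade orbit equivalence to conjugacy. First, the Stuck--Zimmer theorem guarantees that the shift action on $(2^\Gamma,f_*\mu)$ is essentially free (every non-atomic ergodic p.m.p.\ action of $\mathrm{SL}_3(\Z)$ is). Second, Popa's cocycle superrigidity for Bernoulli actions of property (T) groups then implies that two orbit-equivalent free ergodic p.m.p.\ actions, one of which is Bernoulli, must be conjugate. Only after this upgrade does Bowen's entropy computation ($\log 3 \neq \log 2$) yield a contradiction. So your endgame is the right one, but you must explicitly invoke Stuck--Zimmer and Popa superrigidity; the ``careful handling of the ergodic decomposition'' you mention is not the issue.
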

\begin{proof}
Assume that $\Gamma = \SL_3(\Z)$ is $2$-adequate, towards a contradiction. Then in particular $E^{\mathrm{ap}} (\Gamma, 3) \cong_B E^{\mathrm{ap}} (\Gamma, 2)$, say via the Borel isomorphism $f$. Let $\mu$ be the usual product of the uniform measure on $3^\Gamma$. Then $\nu = f_* \mu$ is an ergodic, invariant measure for the shift action of $\Gamma$ on $2^\Gamma$, thus by Stuck-Zimmer \cite[2.1]{SZ94} this shift action is free $\nu$-a.e. This gives a contradiction by the arguments in \cite[Section 6]{Tho12}.
\end{proof}
We conclude this section with the following problem:

\newcommand{\probadequate}{
    Characterize the $2$-adequate groups.
}
\begin{prob}\label{prob-adequate}
    \probadequate
\end{prob}

\chapter{Additional Results}\label{additional}

\section{Hyperfinite generating groups}
We introduced in \Cref{generators} the concept of hyperfinite generating groups.
We will establish here some equivalent formulations of this concept
and in particular prove the fact mentioned in the paragraph after \cref{4.8}.
Below we let $\mu$ be the unique $E_0$-invariant measure on $2^\N$,
and we denote by $[E_0]$ the measure-theoretic full group of $E_0$ with respect to $\mu$
(see \Cref{prelim-measure}).
For a countable subgroup $\Delta \le [E_0]$,
we denote by $E_\Delta$ the subequivalence relation of $E_0$
induced by the action of $\Delta$ on $\cantor$.
This is again understood to be defined only $\mu$-a.e.

Below an {\bf IRS} on a countable group $\Gamma$ is a measure
on the space of subgroups of $\Gamma$ invariant under conjugation.
We say that an IRS $\mu$ has some property P
if $\mu$-almost all $\Delta \le \Gamma$ have property P.
Finally a subgroup $\Delta \le \Gamma$ is {\bf co-amenable}
if the action of $\Gamma$ on $\Gamma/\Delta$ is amenable,
i.e., admits a finitely additive probability measure.

\begin{prop}\label{5.1}
    Let $\Gamma$ be an infinite countable group.
    Then the following are equivalent:
    \begin{enumerate}[label=(\roman*)]
        \item \label{hfgen-equiv-hfgen}
            $\Gamma$ is hyperfinite generating;
        \item \label{hfgen-equiv-E0}
            There is a Borel action of $\Gamma$ that generates $E_0$;
        \item \label{hfgen-equiv-hf}
            $\Gamma$ admits a Borel action which generates
            a non-compressible, aperiodic hyperfinite equivalence relation;
        \item \label{hfgen-equiv-fullgroup}
            $\Gamma$ admits a factor $\Delta \le [E_0]$ such that
            $E_\Delta$ has a $\mu$-positive set of infinite orbits.
    \end{enumerate}
    Moreover, if $\Gamma$ is hyperfinite generating,
    $\Gamma$ admits a co-amenable \textup{IRS} with infinite index.
\end{prop}
\begin{proof}
    Clearly \ref{hfgen-equiv-hfgen}
    $\implies$ \ref{hfgen-equiv-E0}
    $\implies$ \ref{hfgen-equiv-fullgroup}
    $\implies$ \ref{hfgen-equiv-hf}.
    We show \ref{hfgen-equiv-hf}
    $\implies$ \ref{hfgen-equiv-E0}
    $\implies$ \ref{hfgen-equiv-hfgen}.
    
    To see that \ref{hfgen-equiv-hf} $\implies$ \ref{hfgen-equiv-E0},
    note that by classification of hyperfinite CBER
    (\Cref{thm-hf-classif}),
    $\Gamma$ generates $SE_0$ for some $S$,
    so by passing to an invariant subset,
    we see that $\Gamma$ generates $E_0$.

    To see that \ref{hfgen-equiv-E0} $\implies$ \ref{hfgen-equiv-hfgen},
    note that $\Gamma$ generates $E_t$ by \cite[11.2]{DJK94},
    and if $\Gamma$ generates $E_0$,
    then $\Gamma$ generates $SE_0$ for every $S$,
    so we are done by classification of hyperfinite CBER
    (\Cref{thm-hf-classif}).
    
    Finally, the last statement follows as in the proof of (vii) $\implies$ (x)
    in the last paragraph of \cite[Appendix D]{BK20}
    (finite generation is not required there).
\end{proof}

\begin{cor}\label{5.2}
    Every countable group that has an infinite amenable factor is hyperfinite generating.
\end{cor}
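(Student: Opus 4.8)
The plan is to reduce to condition (iii) of \cref{5.1}: I will show that any countable group $\Gamma$ with an infinite amenable factor admits a Borel action generating a non-compressible, aperiodic, hyperfinite equivalence relation, and then \cref{5.1} immediately yields that $\Gamma$ is hyperfinite generating. So fix a surjective homomorphism $\pi\colon\Gamma\twoheadrightarrow A$ onto an infinite countable amenable group $A$ (here ``factor'' is understood as ``quotient'').

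The key steps, in order, are as follows. First I would take the Bernoulli shift of $A$ on $(2^A,\mu)$, where $\mu$ is the product of uniform measures, and restrict it to its free part $X\subseteq 2^A$, a $\mu$-conull, Borel, $A$-invariant set on which the action is free and measure preserving. Next, since $A$ is amenable, the Ornstein--Weiss theorem (in the form used in the proof of \cref{4.4}) supplies an $A$-invariant $\mu$-conull Borel set $Z\subseteq X$ on which the orbit equivalence relation $E := E_A^Z$ is (Borel) hyperfinite; moreover $E$ is aperiodic because the action is free and $A$ is infinite, and it is non-compressible because $\mu\uhr Z$ is an invariant probability measure, by Nadkarni's Theorem. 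Finally, precomposing with $\pi$ turns the $A$-action on $Z$ into a Borel action of $\Gamma$, namely $\gamma\cdot z := \pi(\gamma)\cdot z$, which generates the same equivalence relation $E$ since $\pi$ is onto. Thus condition (iii) of \cref{5.1} holds for $\Gamma$, and $\Gamma$ is hyperfinite generating.

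There is no real obstacle here: the whole content is packaged in \cref{5.1} together with the Ornstein--Weiss theorem, both already available. The only point deserving a word of care is the meaning of ``factor'' and the (trivial) observation that a surjection $\Gamma\twoheadrightarrow A$ transports any Borel $A$-action to a Borel $\Gamma$-action with identical orbits. One could equally well verify condition (iv) of \cref{5.1} by combining Ornstein--Weiss with Dye's theorem to embed $A$ into the full group $[E_0]$ so that the induced subrelation equals $E_0$ a.e., and then compose with $\pi$; but checking (iii) is the most economical route.
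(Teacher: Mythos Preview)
Your argument is correct. The paper's proof verifies condition (iv) of \cref{5.1} rather than (iii): it takes the amenable factor $A$, considers the shift $A\curvearrowright (2^A,\mu)$, and uses Ornstein--Weiss together with Dye's theorem to get a measure-theoretic isomorphism between $E(A,2)$ and $E_0$, thereby producing an embedding of $A$ (hence a factor of $\Gamma$) into $[E_0]$ generating $E_0$. Your route via (iii) is the one you yourself flag as the alternative, and it is indeed slightly more economical since it avoids invoking Dye; the paper's route has the minor advantage of landing directly in $[E_0]$, which matches how condition (iv) is phrased. Substantively the two arguments are the same idea filtered through different equivalent conditions of \cref{5.1}.
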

\begin{proof}
    If $\Gamma$ is infinite amenable, consider its shift action on $2^\Gamma$, equipped with the product of the uniform measure, with associated equivalence relation $E= E(\Gamma, 2)$. Then $E$ and $E_0$ are measure-theoretically isomorphic, so the measure-theoretic full group of $E$ is isomorphic to $[E_0]$. Since $\Gamma \le [E]$ we have an embedding $\pi \colon \Gamma\to [E_0]$ such that if $\Delta = \pi (\Gamma)$, then $E_\Delta = E_0$, which completes the proof.
\end{proof}

It also immediately follows from \cite[Theorem 13]{Mil06} that every countable group that has a factor of the form $\Gamma * \Delta$, where $\Gamma, \Delta$ are non-trivial subgroups of $[E_0]$, is hyperfinite generating.

On the other hand, not every infinite countable group is hyperfinite generating.

\begin{prop}\label{5.3}
    No infinite countable group with property \textup{(T)} is hyperfinite generating.
\end{prop}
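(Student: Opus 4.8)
The claim to prove is \cref{5.3}: no infinite countable group with property (T) is hyperfinite generating. My plan is to argue by contradiction and invoke the characterization from \cref{5.1}, together with standard rigidity facts about property (T) groups acting measure-preservingly.

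Suppose $\Gamma$ is infinite, countable, with property (T), and hyperfinite generating. By \cref{5.1}(iii), $\Gamma$ admits a Borel action on a standard Borel space generating a non-compressible, aperiodic hyperfinite equivalence relation $E$; passing to an ergodic component we may assume $E$ carries an ergodic invariant probability measure $\mu$ and $E$ is hyperfinite, aperiodic, $\mu$-preserving. This gives a measure-preserving action of $\Gamma$ whose orbit equivalence relation is (a.e.) hyperfinite. The key point is that a property (T) group cannot act ergodically and essentially freely — or even with infinite orbits on a conull set — in a measure-preserving way with hyperfinite orbit equivalence relation, since hyperfinite equivalence relations are amenable (Connes--Feldman--Weiss) and amenable equivalence relations arising from property (T) groups force the action to be essentially transitive, hence the orbits to be finite a.e.; this contradicts aperiodicity.

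Concretely, the steps I would carry out are: (1) reduce, via \cref{5.1}, to a $\mu$-preserving ergodic action $\Gamma\car (X,\mu)$ with hyperfinite orbit equivalence relation $E$ and infinite orbits a.e.; (2) recall that hyperfinite implies $\mu$-amenable (indeed $E\cong_B E_0$ measure-theoretically after restricting to a conull set, by the Dye/Ornstein--Weiss circle of results already cited in \cref{5.1}); (3) invoke the fact that if a property (T) group acts ergodically and measure-preservingly with amenable (equivalently hyperfinite) orbit equivalence relation, then the action is essentially transitive, so $\mu$ is atomic and supported on a single orbit; alternatively, use the Zimmer-type statement that property (T) passes to the full group / that an amenable equivalence relation generated by a property (T) group must be smooth a.e.; (4) conclude that $E$ has finite orbits a.e., contradicting aperiodicity.

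The main obstacle — really the only nontrivial input — is step (3): the assertion that an ergodic measure-preserving action of a property (T) group with hyperfinite orbit equivalence relation is essentially transitive. This is a well-known consequence of the interplay between property (T) and amenability of equivalence relations (it is, for instance, how one sees property (T) groups are not "measure-hyperfinite"); I would cite the standard reference — e.g.\ \cite{Kec10} or \cite{KM04} — for the statement that the orbit equivalence relation of a measure-preserving action of an infinite property (T) group is never amenable unless the action is essentially transitive. Everything else is bookkeeping with the equivalences already established in \cref{5.1} and the Dye/Ornstein--Weiss theorems invoked there. So the proof should read: assume (T) and hyperfinite generating; apply \cref{5.1} to get the offending action; observe its orbit equivalence relation would be amenable and non-smooth, contradicting property (T).
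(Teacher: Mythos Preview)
Your proposal is correct and follows essentially the same approach as the paper: the paper's proof consists solely of the citation ``See, for example, the proof of \cite[Proposition 4.14]{Kec10}'', and your argument spells out exactly the standard reasoning behind that reference --- use \cref{5.1} to obtain a pmp action of $\Gamma$ with hyperfinite (hence amenable) orbit equivalence relation on a non-atomic space, then invoke the incompatibility of property (T) with amenability of the orbit relation. One minor simplification: you could cite \cref{5.1}(ii) directly to get an action generating $E_0$ with its canonical ergodic invariant measure $\mu_0$, avoiding the passage to an ergodic component.
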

\begin{proof}
    See, for example, the proof of \cite[Proposition 4.14]{Kec10}.
\end{proof}
It is also shown in \cite[page 29]{Kec10}
that there are groups without property (T) which are not hyperfinite generating;
explicitly,
every group of the form $\Z/2 * \Gamma$ satisfies these conditions,
where $\Gamma$ is a simple, non-residually finite, property (T) group.

The following is an open problem.

\newcommand{\probhypgen}{
    Characterize the hyperfinite generating groups.
}
\begin{prob}\label{prob-hypgen}
    \probhypgen
\end{prob}

\section{Dynamically compressible groups}
In the course of the previous investigations the following property of countable groups came up.

\begin{defi}
    An infinite countable group $\Gamma$ is called {\bf dynamically compressible}
    if for every aperiodic $E_\Gamma^X$,
    there is a compressible $E_\Gamma^Y$ with $E_\Gamma^X \le_B E_\Gamma^Y$.
\end{defi}

Here is an equivalent formulation of this notion.
\begin{prop}
    A countable group $\Gamma$ is dynamically compressible iff for every aperiodic $E_\Gamma^X$,
    $E_\Gamma^X\times I_\N$ is induced by a Borel action of $\Gamma$.
\end{prop}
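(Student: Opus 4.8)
The plan is to prove both directions of the equivalence. One direction is essentially immediate from the fact (already recorded in the excerpt) that $R\times I_\N$ is always compressible, together with basic properties of the Borel reducibility order; the other direction uses a ``diagonal action'' trick familiar from \cite{JKL02}-style arguments.

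First I would handle $(\Leftarrow)$. Suppose that for every aperiodic $E_\Gamma^X$ the relation $E_\Gamma^X\times I_\N$ is induced by a Borel action of $\Gamma$, say $E_\Gamma^X\times I_\N \cong_B E_\Gamma^Y$ for some Borel action of $\Gamma$ on $Y$. Then $E_\Gamma^Y$ is compressible, since $E\times I_\N$ is compressible for any CBER $E$ (recalled in the excerpt via \cite[Theorem 3.23]{Kec22}). Moreover $E_\Gamma^X\leq_B E_\Gamma^X\times I_\N\cong_B E_\Gamma^Y$, the first reduction being $x\mapsto (x,0)$. Hence $\Gamma$ is dynamically compressible. (A small point to check: the witnessing relation must itself be aperiodic to match the definition, but $E_\Gamma^X\times I_\N$ plainly has infinite classes, so this is automatic.)

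For $(\Rightarrow)$, assume $\Gamma$ is dynamically compressible and fix an aperiodic $E_\Gamma^X$. By hypothesis there is a compressible $E_\Gamma^Y$ with $E_\Gamma^X\leq_B E_\Gamma^Y$, via a Borel reduction $f\colon X\to Y$. The goal is to realize $E_\Gamma^X\times I_\N$ as $E_\Gamma^Z$ for a Borel $\Gamma$-action on some $Z$. The natural candidate is to take a Borel action of $\Gamma$ on $Y$ generating $E_\Gamma^Y$ that is moreover \emph{compressible} in the strong sense (a compression map lying in $\langle\ac a\rangle$), or more conveniently to first note that since $E_\Gamma^Y$ is compressible one can find a Borel injection of $Y\times\N$ into $Y$ reducing $E_\Gamma^Y\times I_\N$ to $E_\Gamma^Y$; combined with $E_\Gamma^X\leq_B E_\Gamma^Y$ this gives $E_\Gamma^X\times I_\N\leq_B E_\Gamma^Y$. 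But a reduction is not an isomorphism, so one must upgrade: since $E_\Gamma^X\times I_\N$ is compressible and one has a Borel reduction into $E_\Gamma^Y$ whose image is contained in a compressible invariant Borel set, one can apply \cite[Proposition 3.27]{Kec22} (two compressible CBER that are Borel bi-reducible, or one of which reduces into an invariant piece of the other, are Borel isomorphic) to conclude $E_\Gamma^X\times I_\N\cong_B E_\Gamma^Y\!\upharpoonright W$ for a $\Gamma$-invariant Borel $W\subseteq Y$; the restriction of the $\Gamma$-action to $W$ then induces $E_\Gamma^X\times I_\N$.

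The main obstacle is the bookkeeping in the last step: turning a \emph{reduction} $E_\Gamma^X\times I_\N\leq_B E_\Gamma^Y$ into a genuine Borel isomorphism with an \emph{invariant} sub-relation of $E_\Gamma^Y$, so that the $\Gamma$-action is actually transported. This is where compressibility of $E_\Gamma^X\times I_\N$ (always true) is essential, via the absorption/isomorphism results for compressible CBER (\cite[Proposition 3.27]{Kec22}, and \cite[Proposition 1.8]{DJK94} on compressibility passing to invariant supersets). The rest is routine. I would also remark that the statement and its proof do not require $\Gamma$ to act on $X$ with any special property beyond aperiodicity, and that the equivalence makes ``dynamical compressibility'' manifestly a statement about which CBER of the form $E_\Gamma^X\times I_\N$ are themselves $\Gamma$-dynamical.
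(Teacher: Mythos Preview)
Your proof is correct and follows essentially the same approach as the paper: both directions rely on the compressibility of $E\times I_\N$ and, for $(\Rightarrow)$, on \cite[Proposition 3.27]{Kec22} to upgrade a Borel reduction from a compressible CBER to an invariant embedding $E_\Gamma^X\times I_\N\sqsubseteq^i_B E_\Gamma^Y$. The only difference is that the paper reaches $E_\Gamma^X\times I_\N\leq_B E_\Gamma^Y$ more directly by composing the projection $E_\Gamma^X\times I_\N\leq_B E_\Gamma^X$ (via $(x,n)\mapsto x$) with the given $E_\Gamma^X\leq_B E_\Gamma^Y$, rather than your detour through $E_\Gamma^Y\times I_\N$ using compressibility of $E_\Gamma^Y$; the extra discussion of diagonal actions and strong compressions is not needed.
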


\begin{proof}
    Since $E_\Gamma^X\times I_\N \le_B E_\Gamma^X$, if $E_\Gamma^X \le_B E_\Gamma^Y$, with $E_\Gamma^Y$ compressible, then $E_\Gamma^X\times I_\N \le_B E_\Gamma^Y$, therefore $E_\Gamma^X\times I_\N\sqsubseteq^i_B E_\Gamma^Y$ by \cite[Proposition 2.27]{Kec25}.
\end{proof}

We now have:
\begin{prop}\label{5.7}
    Every infinite countable amenable group is dynamically compressible.
\end{prop}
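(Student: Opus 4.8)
The plan is to use the equivalent formulation of dynamical compressibility established just above: it suffices to show that whenever $\ac a$ is a Borel action of the infinite amenable group $\Gamma$ on a standard Borel space $X$ with $E := E_\Gamma^X$ aperiodic, the relation $E\times I_\N$ is induced by a Borel action of $\Gamma$. The argument will follow the pattern of the proof of \cref{4.4}: split $E$ into an aperiodic hyperfinite part and a compressible part, handle the two $\times I_\N$-amplifications separately, and glue the resulting actions.

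First I would apply the ergodic decomposition $\{X_e\}_{e\in{\rm EINV}_E}$ of $E$ (see \cite[5.4]{Kec22}) together with the Ornstein--Weiss Theorem (\cite[Theorem 8.31]{Kec22}), carried out effectively exactly as in the proofs of \cref{2.8} and \cref{4.4} (cf.\ \cite[9.6]{DJK94}), to obtain an $E$-invariant Borel set $Z\subseteq X$ with $E\uhr Z$ aperiodic hyperfinite and $E\uhr(X\setminus Z)$ compressible; if $E$ is itself compressible then ${\rm EINV}_E=\emptyset$ and $Z=\emptyset$. The compressibility of $E\uhr(X\setminus Z)$ follows from Nadkarni's Theorem: every $e\in{\rm EINV}_E$ has $e(Z)=1$ (since $Z$ contains the piece of full $e$-measure), so an invariant measure for $E\uhr(X\setminus Z)$, extended by $0$ to $X$, would be an invariant measure for $E$ vanishing on $Z$, contradicting the ergodic decomposition. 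When nonempty, $Z$ is uncountable, since it carries the non-atomic measure $e$ for any $e\in{\rm EINV}_E$.

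Now $E\times I_\N$ is the direct sum $(E\uhr Z\times I_\N)\oplus(E\uhr(X\setminus Z)\times I_\N)$ over $(Z\times\N)\sqcup((X\setminus Z)\times\N)$, omitting a summand whose underlying space is empty. The relation $E\uhr Z\times I_\N$ is aperiodic, hyperfinite (by \cite[Theorem 8.33]{Kec22}), and, when nonempty, lives on an uncountable standard Borel space, so $E\uhr Z\times I_\N\in\aph$; since $\Gamma$ is amenable it is hyperfinite generating by \cref{5.2}, so this summand is induced by a Borel action of $\Gamma$. On the other side, $E\uhr(X\setminus Z)$ is compressible, $E\uhr(X\setminus Z)\times I_\N\sim_B E\uhr(X\setminus Z)$, and both are compressible, so they are Borel isomorphic by \cite[Proposition 3.27]{Kec22}; transporting the restricted action $\ac a\uhr(X\setminus Z)$ along this isomorphism exhibits $E\uhr(X\setminus Z)\times I_\N$ as induced by a Borel action of $\Gamma$. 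The disjoint union of the two actions then induces $E\times I_\N$, which is what we want.

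The reductions $E\times I_\N\sim_B E$ and the manipulations of direct sums are routine. The only delicate point — the same one appearing in \cref{2.8} and \cref{4.4} — is the effectivity of the Ornstein--Weiss step, namely choosing $Z$ Borel so that $E\uhr Z$ is a genuinely hyperfinite \CBER\ uniformly across all ergodic components while $E\uhr(X\setminus Z)$ is compressible; I would reuse that construction rather than repeat it.
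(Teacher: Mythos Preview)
Your proof is correct and takes essentially the same approach as the paper's: both use the ergodic decomposition together with Ornstein--Weiss to split $E$ into an aperiodic hyperfinite piece and a compressible remainder, then handle each piece separately. The only cosmetic difference is that you pass through the equivalent formulation and \cref{5.2} (hyperfinite generating) to realize $(E\uhr Z)\times I_\N$ as a $\Gamma$-action, whereas the paper works directly with the definition, Borel-reducing the hyperfinite part $E\uhr Y$ to $\R E_t\leq_B E_t$ and invoking \cite[11.2]{DJK94} for the fact that any infinite group generates $E_t$.
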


\begin{proof}
    Consider any aperiodic $E=E_\Gamma^X$, which we can clearly assume is not compressible,
    so admits an invariant measure.
    Then let $(X_e)_{e \in \EINV_E}$ be its ergodic decomposition.
    Then there is a Borel set $Y_e\subseteq X_e$ with $e(Y_e) =1$ such that $E \uhr Y_e$ is hyperfinite,
    thus $E \uhr Y_e \le_B E_t$.
    As usual $Y = \bigcup_e Y_e$ is Borel and $E \uhr Y \le_B \R E_t \le_B E_t$.
    Now $E \uhr (X\setminus Y)$ is compressible and $E_t$ is induced by a Borel action of $\Gamma$ by \cite[11.2]{DJK94},
    so the proof is complete.
\end{proof}

\begin{prop}\label{5.6}
    If $\F_2 \le \Gamma$,
    then $\Gamma$ is dynamically compressible.
\end{prop}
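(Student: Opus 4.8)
The plan is to reduce the statement to the construction of a single ``big'' relation. Recall that $\Gamma$ is dynamically compressible exactly when every aperiodic $E_\Gamma^X$ Borel reduces to some compressible $F\in\ap$ that is itself induced by a Borel action of $\Gamma$. So it suffices to exhibit \emph{one} compressible, \emph{universal} CBER $G$ which is generated by a Borel action of $\Gamma$: given such a $G=E_\Gamma^Y$, any aperiodic $E=E_\Gamma^X$ satisfies $E\leq_B E_\infty\leq_B G=E_\Gamma^Y$ (using $x\mapsto(x,0)$ for the second reduction), which is exactly what is required. Thus the real content is the claim that when $\F_2\le\Gamma$ there is a compressible universal CBER induced by a Borel $\Gamma$-action; equivalently, that the (unique up to $\cong_B$) compressible universal CBER $E_\infty\times I_\N$ is $\Gamma$-generated.

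To build $G$ I would use coinduction from a free subgroup. Since $\F_2$ contains a subgroup isomorphic to $\F_\infty$ (e.g.\ its commutator subgroup), we have $H\le\Gamma$ with $H\cong\F_\infty$. By the Feldman--Moore Theorem every CBER is generated by a Borel action of $\F_\infty$, so fix a Borel action $\mathbf b$ of $H$ on a standard Borel space $W$ whose orbit equivalence relation $E_{\mathbf b}$ is a compressible universal CBER. Form the coinduced Borel $\Gamma$-action $\mathbf c:=\CInd_H^\Gamma(\mathbf b)$ on the $\Gamma$-invariant Borel set $Y=\{x\in W^\Gamma:\forall\eta\in H\,\forall\delta\in\Gamma\,[x_{\delta\eta}=\eta^{-1}\cdot_{\mathbf b}x_\delta]\}$, and set $G:=E_{\mathbf c}=E_\Gamma^Y$. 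Two points must be checked. First, $G$ is compressible: the map $x\mapsto x_1$ is an $H$-equivariant Borel surjection $\mathbf c\uhr_H\to\mathbf b$, so an invariant Borel probability measure for $\mathbf c$ would restrict to one for $\mathbf c\uhr_H$ and push forward to one for $\mathbf b$, contradicting Nadkarni's Theorem since $\mathbf b$ is compressible; hence $\mathbf c$ has no invariant measure and $G$ is compressible. This is just the Borel-category version of \cref{liftProps}(i)/\cref{cindProps}(i), whose proof (pushing invariant measures along a $\Gamma$-map and invoking Nadkarni) is insensitive to the continuity hypothesis. Second, $G$ is universal: choosing left-coset representatives for $H$ in $\Gamma$ yields the canonical Borel injection $W\hookrightarrow Y$ sending $w$ to the unique element of $Y$ extending $\eta\mapsto\eta^{-1}\cdot_{\mathbf b}w$ on $H$ (and fixed elsewhere), and one checks this is an $H$-to-$\Gamma$ reduction witnessing $E_{\mathbf b}\sqsubseteq_B G$; since $E_{\mathbf b}$ is universal, so is $G$. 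Therefore $G$ is a compressible universal CBER induced by a Borel action of $\Gamma$, and the proof is complete.

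The main obstacle is the bookkeeping in the last paragraph: making precise that the coinduction construction of \cref{3D}\,\textbf{(A)}, stated there for flows, is available verbatim for Borel actions, that it preserves compressibility (immediate once the invariant-measure argument is phrased via Nadkarni's Theorem, which holds in the Borel category), and that $E_{\mathbf b}\sqsubseteq_B E_{\CInd_H^\Gamma(\mathbf b)}$ via the coset-representative embedding. As an alternative one may simply cite the standard fact (implicit in \cite{DJK94}, \S2) that every compressible CBER is generated by a Borel action of any countable group containing $\F_2$, apply it to $E_\infty\times I_\N$, and then conclude as in the first paragraph; the coinduction argument above is one way to prove that cited fact. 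Note that, in contrast to \cref{5.7}, no ergodic-decomposition argument is needed here, precisely because the $\F_2$ inside $\Gamma$ lets us realize a \emph{universal} compressible relation by a $\Gamma$-action, whereas for merely amenable $\Gamma$ one must instead exploit measure-hyperfiniteness on the measured part.
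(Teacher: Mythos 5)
Your opening reduction (it suffices to produce a single compressible, universal CBER generated by a Borel $\Gamma$-action) is fine, and your compressibility argument for the coinduced action --- push an invariant measure for $\mathbf c$ through the $H$-equivariant surjection $x\mapsto x_1$ and invoke Nadkarni --- is correct. The gap is the universality step: the map $\iota$ you propose is not a reduction of $E_{\mathbf b}$ to $E_{\mathbf c}$. Fix a transversal $T\ni 1$ for the left cosets of $H$ and read ``fixed elsewhere'' as $\iota(w)_{\eta}=\eta^{-1}\cdot w$ for $\eta\in H$ and $\iota(w)_{\delta\eta}=\eta^{-1}\cdot w_0$ for $\delta\in T\setminus\{1\}$, $\eta\in H$. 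Let $w'=\eta_0\cdot w$ with $\eta_0\in H$. Any $\gamma$ with $\gamma\cdot\iota(w)=\iota(w')$ must satisfy $\iota(w)_{\gamma^{-1}}=\iota(w')_1=w'$, which (taking $\mathbf b$ free and $w\notin H\cdot w_0$) forces $\gamma=\eta_0$. But for $\delta_0\in T\setminus\{1\}$, writing $\eta_0^{-1}\delta_0=\delta_1\eta_1$ with $\delta_1\in T\setminus\{1\}$ and $\eta_1\in H$, one computes $(\eta_0\cdot\iota(w))_{\delta_0\eta}=\iota(w)_{\delta_1\eta_1\eta}=(\eta_1\eta)^{-1}\cdot w_0$, whereas $\iota(w')_{\delta_0\eta}=\eta^{-1}\cdot w_0$; these agree only if $\eta_1$ stabilizes $w_0$, which fails for a free $\mathbf b$. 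Hence $\iota(w')\notin\Gamma\cdot\iota(w)$ and the forward implication of the reduction breaks. This is not a notational accident: coinduction naturally comes with a factor map \emph{onto} the restricted original action (which is exactly why your compressibility argument works), not with an embedding of $E_{\mathbf b}$ into $E_{\mathbf c}$, so universality of $E_{\mathbf c}$ is not certified by your construction.

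The repair is to use Mackey's \emph{induced} action rather than coinduction, which is what the paper does via \cite[2.3.5]{BK96}. For $H\le\Gamma$ of infinite index and a Borel $H$-action on $Y$, the induced $\Gamma$-action lives on $Y\times\Gamma/H$ and its orbit equivalence relation is Borel isomorphic to $E_H^Y\times I_\N$; in particular $E_H^Y\sq_B E_{\mathrm{Ind}}$ via $y\mapsto(y,H)$, and $E_{\mathrm{Ind}}$ is automatically compressible. The paper applies this to each aperiodic $E=E_\Gamma^X$ separately, first regenerating $E\times I_\N$ by an $\F_\infty$-action and then inducing from $\F_\infty\le\F_2\le\Gamma$. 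Your ``single universal witness'' variant works equally well once you induce (rather than coinduce) an $\F_\infty$-action generating a compressible universal CBER; the ``standard fact'' you offer as an alternative is true, but it is itself proved by exactly this induction argument, so it replaces rather than rescues the coinduction route.
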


\begin{proof}
    Let $E_\Gamma^X$ be aperiodic. Then $E_\Gamma^X = E_{\F_\infty} \le_B  E_{\F_\infty} \times I_\N = E_{\F_\infty}^Y$, for $Y = X\times \N$. Now $\F_\infty \le \Gamma$, so by using the inducing construction from the action of $\F_\infty$ on $Y$, see \cite[2.3.5]{BK96}, we have $E_{\F_\infty}^Y \le_B E^Z_\Gamma$ for some compressible $E^Z_\Gamma$.
\end{proof}
Therefore only the groups that are not amenable but do not contain $\F_2$ can possibly fail to be dynamically compressible. But even among those there exist dynamically compressible groups.

\begin{prop}\label{5.9}
Let $\Gamma$ be a countable group for which there is an infinite group $\Delta$ such that $\Gamma\times \Delta \le \Gamma$. Then $\Gamma$ is dynamically compressible.
\end{prop}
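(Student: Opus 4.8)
The statement to prove is: if $\Gamma$ is a countable group such that there is an infinite group $\Delta$ with $\Gamma\times\Delta\le\Gamma$, then $\Gamma$ is dynamically compressible. By the equivalent formulation (\cref{5.7}'s surrounding proposition), it suffices to show that for every aperiodic $E_\Gamma^X$, the relation $E_\Gamma^X\times I_\N$ is induced by a Borel action of $\Gamma$. First I would fix a Borel action of $\Gamma$ on a standard Borel space $X$ with $E_\Gamma^X$ aperiodic, and aim to realize $E_\Gamma^X\times I_\N$ as $E_\Gamma^Z$ for a suitable $Z$. The key idea is to exploit the embedding $\Gamma\times\Delta\le\Gamma$: given the $\Gamma$-action on $X$, consider the product action of $\Gamma\times\Delta$ on $X\times\Delta$, where $\Gamma$ acts on $X$ as given and $\Delta$ acts on itself by left translation. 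Since $\Delta$ is infinite, this action of $\Gamma\times\Delta$ on $X\times\Delta$ has orbit equivalence relation $E_\Gamma^X\times I_\Delta\cong_B E_\Gamma^X\times I_\N$ (here $I_\Delta$ is the complete equivalence relation on the countable set $\Delta$, which is Borel isomorphic to $I_\N$ since $\Delta$ is countably infinite).

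**Main steps.** The second step is to transfer this $\Gamma\times\Delta$-action to a $\Gamma$-action using the given embedding $\iota\colon\Gamma\times\Delta\hookrightarrow\Gamma$. The standard tool is the co-induction (inducing) construction for a subgroup: given a Borel action of the subgroup $\iota(\Gamma\times\Delta)$ on a standard Borel space $W$, one obtains a Borel action of $\Gamma$ on a standard Borel space $\CInd_{\iota(\Gamma\times\Delta)}^\Gamma(W)$ whose orbit equivalence relation Borel-reduces the original one; more precisely, as recorded in \cite[2.3.5]{BK96} (cf. the proof of \cref{5.6}), one has $E_{\iota(\Gamma\times\Delta)}^W\le_B E_\Gamma^{Z}$ for the induced space $Z$. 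Applying this with $W=X\times\Delta$ and the product action gives $E_\Gamma^X\times I_\N \le_B E_\Gamma^Z$. The third step is to observe that $E_\Gamma^X\times I_\N$ is compressible (since $R\times I_\N$ is compressible for any CBER $R$, see \cite[Theorem 3.23]{Kec22}), hence the reduction can be upgraded to an invariant Borel embedding $E_\Gamma^X\times I_\N\sqsubseteq_B^i E_\Gamma^Z$ by \cite[Proposition 3.27]{Kec22}. Thus $E_\Gamma^X\times I_\N$ is Borel isomorphic to the restriction of $E_\Gamma^Z$ to an invariant Borel set, which is itself induced by the restricted $\Gamma$-action; this realizes $E_\Gamma^X\times I_\N$ as $E_\Gamma^Y$ for $Y$ an invariant Borel subset of $Z$, completing the verification via the equivalent formulation.

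**Expected obstacle.** The delicate point is bookkeeping around the inducing construction: one must check that the induced $\Gamma$-action is genuinely Borel (it is, since inducing from a subgroup of countable index — or even infinite index, using a Borel transversal of cosets — is a Borel construction) and that the orbit equivalence relation indeed reduces $E_{\iota(\Gamma\times\Delta)}^{X\times\Delta}$ rather than something larger. The cleanest route is to cite the inducing construction exactly as it is used in the proof of \cref{5.6}, where precisely this pattern ($E_{\F_\infty}^Y\le_B E_Z^\Gamma$ via \cite[2.3.5]{BK96}) appears, so that the only genuinely new input here is replacing $\F_\infty\le\Gamma$ by $\Gamma\times\Delta\le\Gamma$ and noting that the product action on $X\times\Delta$ has the right orbit equivalence relation. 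No hard analysis is required; the argument is a short combination of the product trick, inducing, and the compressibility upgrade. I would also remark that the hypothesis is satisfied, e.g., by $\Gamma=\F_\infty$ (recovering a special case) and by certain Thompson-like or lamplighter-type groups, which is presumably why the proposition is stated at this level of generality.
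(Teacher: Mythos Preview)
Your proposal is correct and follows essentially the same approach as the paper: realize $E_\Gamma^X\times I_\N$ as the orbit relation of the product action of $\Gamma\times\Delta$ on $X\times\Delta$, then induce up to $\Gamma$ via \cite[2.3.5]{BK96}. The paper's proof is slightly shorter because it uses the original definition of dynamically compressible rather than the equivalent formulation: it simply notes $E_\Gamma^X\le_B E_\Gamma^X\times I_\N = E_{\Gamma\times\Delta}^{X\times\N}\le_B E_\Gamma^Z$ with $E_\Gamma^Z$ compressible (as the induced action from a compressible one), so your final upgrade via \cite[Proposition 3.27]{Kec22} is unnecessary---but harmless.
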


\begin{proof}
    Let $E_\Gamma^X$ be aperiodic.
    Then for $Y = X\times \N$,
    $E_\Gamma^X \le_B E_\Gamma^X\times I_\N = E^Y_{\Gamma\times \Delta} \le_B E^Z_\Gamma$,
    where $E^Z_\Gamma$ is obtained by inducing from the action of $\Gamma\times\Delta$ on $Y$.
\end{proof}

As a result, any countable group of the form $\Gamma\times \Delta^{<\N}$, for an infinite $\Delta$, is dynamically compressible. Take now $\Gamma$ to be any group that is not amenable and does not contain $\F_2$ and consider $G = \Gamma\times \Z^{<\N}$. Then $G$ is dynamically compressible and clearly is not amenable. Moreover it does not contain $\F_2$ because of the following standard fact.

\begin{prop}
    Let $G, H$ be two groups such that $\F_2 \le G \times H$.
    Then $\F_2 \le G$ or $\F_2 \le H$.
\end{prop}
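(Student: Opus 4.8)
The statement to prove is: if $G, H$ are groups with $\F_2 \le G\times H$, then $\F_2 \le G$ or $\F_2\le H$. The plan is to argue by contraposition using the classical Tits-alternative-flavoured fact, or more directly, to use the structure of free subgroups of direct products. First I would let $K \le G\times H$ be a copy of $\F_2$, with free generators $a, b$, and consider the two coordinate projections $\pi_G\colon G\times H \to G$ and $\pi_H\colon G\times H\to H$. Set $A = \ker(\pi_H)\cap K$ and $B = \ker(\pi_G)\cap K$; these are normal subgroups of $K$, $A$ embeds into $G$ (via $\pi_G$) and $B$ embeds into $H$ (via $\pi_H$), and $A\cap B = 1$ since an element of $A\cap B$ is trivial in both coordinates. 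Moreover $A$ and $B$ commute elementwise: if $u\in A$ and $v\in B$, then $u$ is supported in the $G$-coordinate and $v$ in the $H$-coordinate, so $[u,v]=1$ in $G\times H$, hence in $K$.

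The key step is then the following dichotomy for the free group: a normal subgroup $N\trianglelefteq \F_2$ is either trivial or has finite index $\Longrightarrow$ but that is false; the correct fact I would invoke is that $\F_2$ has trivial center, and more strongly, that in $\F_2$ two nontrivial elements commute iff they lie in a common cyclic subgroup. From $[A,B]=1$ and $A\cap B = 1$, if both $A$ and $B$ were nontrivial, pick $1\neq u\in A$, $1\neq v\in B$; they commute, so they generate a cyclic (hence abelian) subgroup of $\F_2$, and inside a cyclic group $\langle w\rangle$ the subgroups $\langle u\rangle, \langle v\rangle$ cannot intersect trivially unless one of them is trivial — contradiction. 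Hence $A = 1$ or $B = 1$. If $A = 1$ then $\pi_H$ is injective on $K$, so $\F_2\cong K \hookrightarrow H$; symmetrically if $B=1$ then $\F_2\hookrightarrow G$. This completes the argument.

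The main obstacle — really the only point requiring care — is justifying the centraliser structure of $\F_2$: that commuting nontrivial elements of a free group generate a cyclic subgroup (equivalently, centralisers in free groups are cyclic). This is a standard fact (it follows, e.g., from the action of $\F_2$ on its Cayley tree, or from the Nielsen–Schreier theorem plus the observation that a free group in which some nontrivial element has nontrivial centraliser beyond $\langle x\rangle$ would contain $\Z^2$, impossible since subgroups of free groups are free). I would simply cite it as standard. An alternative, perhaps cleaner, route avoiding even this: observe $K/(A\times B)$ embeds into the abelian-by-nothing... no — better to note that $K/A$ embeds in $H$ and $K/B$ in $G$, and $K$ embeds in $(K/A)\times(K/B)$; since $A$ and $B$ commute and intersect trivially, $A\times B \le K$ and $K/(AB)$ is a common quotient, but if $A,B\neq 1$ then $AB\cong A\times B$ is a nonabelian-free-hostile $\Z\times\Z$-type obstruction inside $\F_2$ — again reducing to the no-$\Z^2$ fact. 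Either way the crux is the absence of $\Z^2$ in free groups, which I would take as given.
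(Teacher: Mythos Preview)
Your argument is correct, but it proceeds differently from the paper's proof. The paper works with a single projection $\pi\colon \F_2\to H$: if $\ker\pi$ is trivial then $\F_2\le H$; otherwise $\ker\pi$ is a nontrivial normal subgroup of $\F_2$ sitting inside $G$, and either it is nonabelian free (so $\F_2\le G$) or it is $\cong\Z$. The case $\ker\pi\cong\Z$ is then eliminated by citing a theorem from Lyndon--Schupp that a finitely generated nontrivial normal subgroup of $\F_2$ has finite index, together with the Schreier index formula, which would force $[\F_2:\ker\pi]=\frac{\operatorname{rank}(\ker\pi)-1}{\operatorname{rank}(\F_2)-1}=0$, a contradiction.

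Your route is more symmetric and arguably more elementary: by looking at both kernels $A,B$ simultaneously and noting they commute with trivial intersection, you reduce directly to the fact that two nontrivial subgroups of an infinite cyclic group must intersect nontrivially. The only external input you need is that centralisers in free groups are cyclic (equivalently, $\Z^2\not\le\F_2$), which is lighter machinery than the Schreier formula plus the finite-index theorem. Both approaches are valid; yours is a bit cleaner, while the paper's makes explicit the structural dichotomy for the single kernel.
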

\begin{proof}
    Let $\pi \colon \F_2 \to H$ be the second projection.
    If it has trivial kernel, then $\F_2 \le H$.
    Else either $\F_2 \le \ker(\pi) \le G$ or $\ker(\pi) \cong \Z$.
    In the latter case, by \cite[3.110]{LS01},
    $[\F_2 : \ker(\pi)]$ is finite, so by \cite[3.9]{LS01}, 
    \[
        [\F_2 : \ker(\pi)]
        = \frac{\rank(\ker(\pi)) - 1}{\rank(\F_2) - 1}
        = 0,
    \]
    a contradiction.
\end{proof}
We now have the following open problem:
\newcommand{\probdyncomp}{
    Is every infinite countable group dynamically compressible?
}
\begin{prob}\label{prob-dyncomp}
    \probdyncomp
\end{prob}

We note that $\Gamma$ fails to be dynamically compressible iff
there is some aperiodic $E_\Gamma^X$ such that every $E_\Gamma^X \le_B E_\Gamma^Y$ admits an invariant measure.

We conclude with the following interesting consequence of \cref{5.6}.
Let $\Gamma = \SL_3(\Z)$ and consider the shift action of $\Gamma$ on $\R^\Gamma$ and denote by $E= F(\Gamma, \R)$ the restriction of $E(\Gamma, \R)$ to the free part of the action. Then, by \cref{5.6}, $E\times I_\N$ is induced by a Borel action of $\Gamma$. On the other hand, $E\times I_\N$ cannot be induced by a \textit{free} Borel action of $\Gamma$, since if that was the case then $E\times I_\N\sqsubseteq^i_B E$, contradicting the Addendum following \cite[5.28]{CK18}.

\chapter{Open Problems}\label{problems}
For the convenience of the reader, we collect here some of the main open problems discussed in this paper.

\begin{prob}[\cref{prob-treeable}]
    \probtreeable
\end{prob}

\begin{prob}[\cref{prob-realization}]
    \probrealization
\end{prob}

\begin{prob}[\cref{prob-cantor}]
    \probcantor
\end{prob}

\begin{prob}[\cref{prob-bired}]
    \probbired
\end{prob}
Note that by \cref{322},
every non-smooth $E \in \ap$ is Borel bireducible to some $F \in \ap$
which has a compact action realization
iff every non-smooth compressible $E \in \ap$ has a compact action realization.

\begin{prob}[\cref{prob-aperiodiccompact}]
    \probaperiodiccompact
\end{prob}

\begin{prob}[\cref{prob-conull}]
    \probconull
\end{prob}

\begin{prob}[\cref{prob-cbercpt}]
    \probcbercpt
\end{prob}

\begin{prob}[\cref{prob-actionparadox}]
    \probactionparadox
\end{prob}

\begin{prob}[\cref{prob-turingcompact}]
    \probturingcompact
\end{prob}

\begin{prob}[\cref{prob-turingbc}]
    \probturingbc
\end{prob}

\begin{prob}[\cref{prob-turingjump}]
    \probturingjump
\end{prob}

\begin{prob}[\cref{prob-twogenuniv}]
    \probtwogenuniv
\end{prob}

\begin{prob}[\cref{prob-cbershift}]
    \probcbershift
\end{prob}

\begin{prob}[\cref{prob-minuniv}]
    \probminuniv
\end{prob}

\begin{prob}[\cref{prob-compuniv}]
    \probcompuniv
\end{prob}

\begin{prob}[\cref{prob-hfcomeager}]
    \probhfcomeager
\end{prob}

\begin{prob}[\cref{prob-hfcomplexity}]
    \probhfcomplexity
\end{prob}

\begin{prob}[\cref{prob-bauer}]
    \probbauer
\end{prob}

\begin{prob}[\cref{prob-ksigma}]
    \probksigma
\end{prob}

\begin{prob}[\cref{prob-graphable}]
    \probgraphable
\end{prob}

\begin{prob}[\cref{prob-adequate}]
    \probadequate
\end{prob}

\begin{prob}[\cref{prob-hypgen}]
    \probhypgen
\end{prob}

\begin{prob}[\cref{prob-dyncomp}]
    \probdyncomp
\end{prob}

\appendix

\chapter{Amenable Actions}\label{appendix-amenable-actions}
The purpose of this appendix is to explain the following implications
for a continuous action $\Gamma\car X$
of a countable group on a Polish space.
Recall that $E_\Gamma^X$ is the induced orbit equivalence relation
and all the concepts in the diagram below are defined in
\ref{subshifts-space-props} of \cref{subshifts-space}.
\[
    \adjustbox{width={\textwidth},center}{
        \begin{tikzcd}
            \mqty{\text{$E_\Gamma^X$ hyperfinite} \\
                \text{+ amenable stabilizers}} \drar[Rightarrow] & & \\
            & \text{$\Gamma\car X$ Borel amenable}
                \dlar[Rightarrow] & \\
            \mqty{\text{$E_\Gamma^X$ amenable} \\
                \text{+ amenable stabilizers}} \dar[Rightarrow] & &\\
            \mqty{\text{$E_\Gamma^X$ measure-amenable} \\
                \text{+ amenable stabilizers}} \rar[Leftrightarrow]
            & \text{$\Gamma\car X$ measure-amenable} \rar[Leftrightarrow]
            & \text{$\Gamma\car X$ topologically amenable} \ar[uul, "\sigma-compact", Rightarrow, magenta]
        \end{tikzcd}
    }
\]

Denote by $\Prob(\Gamma)$ the closed $\Gamma$-subspace of
$[0, 1]^\Gamma$ consisting of probability measures on $\Gamma$.

\section{Borel amenability}
We first have the following result:

\begin{thm}\label{borAmenImplication}
    Let $\Gamma\car X$ be a Borel action
    of a countable group on a standard Borel space,
    and consider the following statements:
    \begin{enumerate}[label=(\roman*)]
        \item \label{hfStab}
            $E_\Gamma^X$ is hyperfinite
            and every stabilizer is amenable.
        \item \label{borAmenAct}
            $\Gamma\car X$ is Borel amenable.
        \item \label{amenStab}
            $E_\Gamma^X$ is amenable
            and every stabilizer is amenable.
    \end{enumerate}
    Then \ref{hfStab}$\implies$\ref{borAmenAct}$\implies$\ref{amenStab}.
\end{thm}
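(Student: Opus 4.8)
The two implications will be handled separately; the content lies entirely in choosing the right Borel maps $p_n \colon X \to \Prob(\Gamma)$.

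For \ref{hfStab}$\implies$\ref{borAmenAct}: Since $E_\Gamma^X$ is hyperfinite, write $E_\Gamma^X = \bigcup_n F_n$ with each $F_n$ finite and $F_n \subseteq F_{n+1}$. First I would try the naive guess: choose a Borel transversal $T_n$ for $F_n$, and for $x$ with $F_n$-representative $t_n(x) \in T_n$, put $p_n^x$ to be the uniform distribution on the finite set $\{\gamma \in \Gamma : \gamma^{-1} \cdot t_n(x) = x\}$ — but this set may be infinite if stabilizers are infinite, and moreover uniform measures do not almost-invariantly transform. The correct fix uses the amenability of the stabilizers: for each $x$, the stabilizer $\Gamma_x$ is amenable, so it has a Følner-type sequence, but we need this uniformly in a Borel way. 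The standard device (as in the treatment of Borel amenability, e.g.\ following Jackson--Kechris--Louveau / Kechris--Miller) is: fix Borel selections making $E_\Gamma^X$ hyperfinite via an increasing sequence of finite subequivalence relations, pick for each $x$ and each $n$ a finite ``window'' $W_n(x) \subseteq \Gamma$ whose image $W_n(x)^{-1}\cdot x$ exhausts the $F_n$-class of $x$ and which grows to a left-Følner sequence for $\Gamma_x$ inside each fiber; then set $p_n^x$ = normalized counting measure on $W_n(x)$. One checks $\|p_n^{\gamma \cdot x} - \gamma \cdot p_n^x\|_1 \to 0$ because $\gamma$ shifts the window $W_n(x)$ to a window for $\gamma\cdot x$ differing by a boundary term, which is negligible by the Følner property. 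I expect this to be the main obstacle: organizing the windows $W_n(x)$ so that they are simultaneously (i) Borel in $x$, (ii) exhaust $F_n$-classes, and (iii) Følner for the stabilizer, all compatibly. The cleanest route is probably to first reduce (by restricting to invariant Borel pieces and applying an amenable-stabilizer uniformization) to the two extreme cases where all stabilizers are trivial (then $p_n^x$ = uniform on the finite $F_n$-orbit transversal works immediately) and where the action is ``stabilizer-only'' on a transitive piece (then almost-invariance is exactly amenability of $\Gamma_x$, uniformized over a Borel transversal of conjugacy classes of stabilizers, as in the proof of \cref{borelExactCriterion}).

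For \ref{borAmenAct}$\implies$\ref{amenStab}: This direction is soft. Given Borel maps $p_n \colon X \to \Prob(\Gamma)$ witnessing Borel amenability of the action, I would define $f_n \colon E_\Gamma^X \to [0,1]$ by pushing forward: for $(x,y) \in E_\Gamma^X$ set $f_n^x(y) := \sum\{p_n^x(\gamma) : \gamma^{-1}\cdot x = y\}$, a probability measure on $[x]_{E_\Gamma^X}$. The computation $\|f_n^x - f_n^{x'}\|_1 \le \|p_n^x - p_n^{x'}\|_1 \to 0$ whenever $(x,x') \in E_\Gamma^X$ follows because the pushforward under $\gamma \mapsto \gamma^{-1}\cdot x$ is $\ell^1$-contractive, combined with $\|p_n^{\gamma\cdot x} - \gamma\cdot p_n^x\|_1 \to 0$; this shows $E_\Gamma^X$ is amenable. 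For amenable stabilizers: fix $x$ and restrict attention to $\Gamma_x$. The maps $\gamma \mapsto p_n^x(\gamma\delta)$ for $\delta \in \Gamma_x$ become, after suitable normalization, an approximately $\Gamma_x$-invariant sequence of probability measures on $\Gamma_x$ (using that the $\Gamma_x$-translates of $x$ all equal $x$, so the almost-invariance of $p_n$ under $\Gamma_x$ gives directly a Reiter sequence for $\Gamma_x$); hence $\Gamma_x$ is amenable. The only minor care needed is to check the normalization does not degenerate — this follows by a standard averaging/averaging-over-a-finite-window argument, or by noting that if $p_n^x$ had vanishing mass on every coset of $\Gamma_x$ one contradicts $\|p_n^x\|_1 = 1$; I would present the ``over-a-Følner-set'' version to avoid any such degeneracy.

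Overall the second implication is routine bookkeeping with $\ell^1$ estimates; the first is where the real work is, and I would structure it as the case-split into trivial-stabilizer and pure-stabilizer pieces, with each piece handled by an explicit Følner/transversal construction, exactly paralleling the decomposition technique used in \cref{borelExactCriterion}.
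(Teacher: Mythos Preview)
Your \ref{borAmenAct}$\implies$\ref{amenStab} is essentially the paper's argument. One simplification: for the stabilizer part, instead of worrying about normalization, fix a transversal $T$ for the left cosets of $\Gamma_x$ in $\Gamma$ and set $q_n(\gamma) := \sum_{t\in T} p_n^x(t\gamma)$ for $\gamma\in\Gamma_x$; this is already a probability measure on $\Gamma_x$, and $\|q_n - \gamma\cdot q_n\|_1 \le \|p_n^x - \gamma\cdot p_n^x\|_1 = \|p_n^{\gamma\cdot x} - \gamma\cdot p_n^x\|_1 \to 0$, so no degeneracy issue arises.

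For \ref{hfStab}$\implies$\ref{borAmenAct}, your ``windows'' intuition is right but the case-split you propose does not work. The decomposition in \cref{borelExactCriterion} relies on there being only countably many stabilizers (so one can pick a transversal for conjugacy classes) and on normalizers being hyperfinite; neither hypothesis is available here, and there is no meaningful reduction to a ``stabilizer-only transitive piece'' in general. What actually drives the argument is a different structural feature of hyperfiniteness: $E_\Gamma^X$ admits an amenability witness $p_n\colon E\to[0,1]$ with the extra \emph{finite-support} property that for each $y$, only finitely many $x$ satisfy $p_n^x(y)>0$ (e.g.\ take $p_n^x$ uniform on $[x]_{F_n}$). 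Fix a Borel section $\alpha\colon E\to\Gamma$ with $\alpha^y_x\cdot x = y$ and $\alpha^x_y\alpha^y_x = 1$. The finite-support property means that for each $y$ and each $n$ there are only \emph{finitely many} group elements $\alpha^y_{\gamma\cdot x}\gamma\alpha^x_y$ (over $\gamma\in S_n$ and $x$ with $p_n^x(y)>0$) against which a F{\o}lner measure $q_n^y\in\Prob(\Gamma_y)$ needs to be approximately invariant; by amenability of $\Gamma_y$ such $q_n^y$ can be chosen, and Borel in $y$ by taking the least suitable F{\o}lner set in a fixed enumeration. The combined witness is then $r_n^x(\gamma) := p_n^x(\gamma\cdot x)\, q_n^{\gamma\cdot x}(\gamma\,\alpha^x_{\gamma\cdot x})$, and one checks $\|r_n^{\gamma\cdot x} - \gamma\cdot r_n^x\|_1 \to 0$ by splitting into the $p$-variation and the $q$-variation. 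This is the multiplicative combination your single-window $W_n(x)$ is implicitly trying to encode, but the finite-support ingredient --- not a case-split --- is what makes the F{\o}lner choice possible.
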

\begin{proof}
    Let $E := E_\Gamma^X$.
    \begin{itemize}[leftmargin=\oddsidemargin,align=left]
        \item[\ref{hfStab}$\implies$\ref{borAmenAct}:]
            Since $E$ is hyperfinite,
            it is amenable in a strong sense:
            there is a sequence $p_n \colon E\to [0,1]$ of Borel functions,
            such that $p_n^x$ is a probability measure supported on $[x]_E$,
            for every $(x, y)\in E$,
            we have $\|p_n^x - p_n^y\|_1 \to 0$,
            and additionally,
            for every $y$,
            there are only finitely many $x$ with $p_n^x(y) > 0$.
            
            Let $\alpha \colon E \to \Gamma$ be a Borel function
            such that for every $(x, y) \in E$,
            we have $y = \alpha^y_x\cdot x$
            and $\alpha^x_y \alpha^y_x = 1$.
            Write $\Gamma = \bigcup_n S_n$ as an increasing union of finite subsets.
            
            We claim that there is a sequence
            $q_n \colon X\to\Prob(\Gamma)$ of Borel functions
            with $q_n^x$ supported on $\Gamma_x$,
            such that for every $(x, y) \in E$ with $p_n^x(y) > 0$
            and every $\gamma \in S_n$,
            we have
            $\|q_n^y - \alpha^y_{\gamma\cdot x}\gamma\alpha^x_y \cdot q_n^y\|_1
            < \frac{1}{n}$.
            To see this,
            for every $y \in X$,
            by amenability of $\Gamma_y$,
            let $A_n^y$ be the least (in some enumeration)
            finite subset of $\Gamma$ such that $A_n^y \subseteq \Gamma_y$
            and
            \[
                \frac{|A_n^y \btu \alpha^y_{\gamma\cdot x}\gamma\alpha^x_y A_n^y |}
                    {|A_n^y|}
                < \frac{1}{n}
            \]
            for every $x \in [y]_E$ with $p_n^x(y) > 0$ and every $\gamma \in S_n$.
            Then let $q_n^y := \frac{1}{|A_n^y|}\mathbf 1_{A_n^y}$
            be the uniform distribution on $A_n^y$.
            Then
            \[
                \|q_n^y - \alpha^y_{\gamma\cdot x}\gamma\alpha^x_y \cdot q_n^y\|_1
                = \frac{\|\mathbf 1_{A_n^y} - \mathbf 1_{\alpha^y_{\gamma\cdot x}\gamma\alpha^x_y A_n^y}\|_1}{|A_n^y|}
                = \frac{|A_n^y \btu \alpha^y_{\gamma\cdot x}\gamma\alpha^x_y A_n^y|}{|A_n^y|}
                < \frac{1}{n}.
            \]
            
            Let $r_n \colon X\to\Prob(\Gamma)$ be defined by
            \[
                r_n^x(\gamma)
                = p_n^x(\gamma\cdot x)
                q_n^{\gamma\cdot x}(\gamma\alpha^x_{\gamma\cdot x}).
            \]
            Let $x\in X$ and $\gamma\in\Gamma$.
            Then
            \begin{align*}
                & \|r_n^{\gamma\cdot x} - \gamma\cdot r_n^x\|_1 \\
                = \; & \sum_{\delta\in\Gamma}
                    |r_n^{\gamma\cdot x}(\delta) - r_n^x(\gamma^{-1}\delta)| \\
                = \; & \sum_{\delta\in\Gamma}
                    \qty|p_n^{\gamma\cdot x}(\delta^{-1}\gamma\cdot x)
                        q_n^{\delta^{-1}\gamma\cdot x}
                        (\alpha^{\delta^{-1}\gamma\cdot x}_{\gamma\cdot x}\delta)
                         - p_n^x(\delta^{-1}\gamma\cdot x)
                        q_n^{\delta^{-1}\gamma\cdot x}
                        (\alpha^{\delta^{-1}\gamma\cdot x}_x \gamma^{-1}\delta)| \\
                = \; & \sum_{y \in [x]_E}
                    \sum_{\lambda \in \Gamma_y}
                        \qty|p_n^{\gamma\cdot x}(y)
                            q_n^y(\lambda)
                            - p_n^x(y)
                            q_n^y(\alpha^y_x \gamma^{-1} \alpha^{\gamma\cdot x}_y \lambda)| \\
                \le \; & \sum_{y \in [x]_E}
                         |p_n^{\gamma\cdot x}(y) - p_n^x(y)|
                         \sum_{\lambda \in \Gamma_y} q_n^y(\lambda)
                         + \sum_{y \in [x]_E} p_n^x(y)
                           \sum_{\lambda \in \Gamma_y}
                           |q_n^y(\lambda) - q_n^y(\alpha^y_x \gamma^{-1} \alpha^{\gamma\cdot x}_y \lambda)| \\
                = \; & \|p_n^{\gamma\cdot x} - p_n^x\|_1
                    + \sum_{y \in [x]_E} p_n^x(y)
                        \|q_n^y - \alpha^y_{\gamma\cdot x}\gamma\alpha^x_y \cdot q_n^y\|_1.
            \end{align*}
            If $\gamma \in S_n$,
            then the second term is bounded by
            \[
                \sum_{\substack{y \in [x]_E \\ p_n^x(y) > 0}} p_n^x(y)
                    \frac{1}{n}
                = \frac{1}{n}
                \to 0
            \]
            so the whole expression converges to $0$.
        \item[\ref{borAmenAct}$\implies$\ref{amenStab}:]
            Let $p_n \colon X \to \Prob(\Gamma)$ witness
            the Borel amenability of the action $\Gamma\car X$.
            
            To show that $E$ is amenable,
            define $q_n \colon E \to [0, 1]$ by
            \[
                q_n^x(y) :=
                \sum_{\substack{\gamma \in \Gamma \\ \gamma\cdot y = x}}
                p_n^x(\gamma).
            \]
            Now if $x \in X$ and $\gamma \in \Gamma$,
            then we have
            \begin{align*}
                \|q_n^{\gamma\cdot x} - q_n^x\|_1
                & = \sum_{y \in [x]_E}
                    \qty|\sum_{\substack{\delta \in \Gamma \\ \delta\cdot y = \gamma\cdot x}}
                    p_n^{\gamma\cdot x}(\delta)
                    - \sum_{\substack{\lambda \in \Gamma \\ \lambda\cdot y = x}}
                    p_n^x(\lambda)| \\
                & = \sum_{y \in [x]_E}
                    \qty|\sum_{\substack{\delta \in \Gamma \\ \delta\cdot y = \gamma\cdot x}}
                    p_n^{\gamma\cdot x}(\delta)
                    - \sum_{\substack{\delta \in \Gamma \\ \delta\cdot y = \gamma x}}
                    p_n^x(\gamma^{-1}\delta)| \\
                & \le \sum_{y \in [x]_E}
                    \sum_{\substack{\delta \in \Gamma \\ \delta\cdot y = \gamma\cdot x}}
                    |p_n^{\gamma\cdot x}(\delta) - p_n^x(\gamma^{-1} \delta)| \\
                & = \sum_{\delta \in \Gamma}
                    |p_n^{\gamma\cdot x}(\delta) - p_n^x(\gamma^{-1} \delta)| \\
                & = \|p_n^{\gamma\cdot x} - \gamma\cdot p_n^x\|_1 \\
                & \to 0.
            \end{align*}
            Thus $E$ is amenable.
            
            Now let $x \in X$.
            To see that $\Gamma_x$ is amenable,
            let $T$ be a transversal for the right cosets of $\Gamma_x$ in $\Gamma$,
            and define $q_n \in \Prob(\Gamma_x)$ by
            \[
                q_n(\gamma)
                := \sum_{t\in T} p_n^x(\gamma t).
            \]
            Then for every $\gamma \in \Gamma_x$,
            we have
            \begin{align*}
                \|q_n - \gamma\cdot q_n\|_1
                & = \sum_{\delta \in \Gamma_x}
                    |q_n(\delta) - q_n(\gamma^{-1} \delta)| \\
                & = \sum_{\delta \in \Gamma_x}
                    \qty|\sum_{t\in T} p_n^x(\delta t)
                    - \sum_{t \in T} p_n^x(\gamma^{-1} \delta t)| \\
                & \le \sum_{\delta \in \Gamma_x} \sum_{t\in T}
                    |p_n^x(\delta t) - p_n^x(\gamma^{-1} \delta t)| \\
                & = \sum_{\lambda \in \Gamma}
                    |p_n^x(\lambda) - p_n^x(\gamma^{-1} \lambda)| \\
                & = \|p_n^x - \gamma\cdot p_n^x\|_1 \\
                & = \|p_n^{\gamma\cdot x} - \gamma\cdot p_n^x\|_1 \\
                & \to 0.
            \end{align*}
            Thus $\Gamma_x$ is amenable.
    \end{itemize}
\end{proof}

\section{Measure amenability}

By \cref{borAmenImplication}
and the Connes-Feldman-Weiss theorem
(see \Cref{prelim-classes}),
we have the following analogue of \cite[Theorem A]{AEG94}
(see also \cite[Corollary 5.3.33]{AR00}):
\begin{thm}
    Let $\Gamma\car X$ be a Borel action
    of a countable group on a standard Borel space,
    and let $\mu$ be a Borel probability measure on $X$.
    Then the following are equivalent:
    \begin{enumerate}[label=(\roman*)]
        \item \label{muAmen}
            $\Gamma\car X$ is $\mu$-amenable.
        \item \label{muAmenStab}
            $E_\Gamma^X$ is $\mu$-amenable
            and $\mu$-a.e. stabilizer is amenable.
    \end{enumerate}
\end{thm}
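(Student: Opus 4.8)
The plan is to derive both implications purely by combining \cref{borAmenImplication} with the Connes--Feldman--Weiss theorem, so that the only actual work is bookkeeping with $\mu$-conull $\Gamma$-invariant Borel sets. Recall the relevant definitions from \cref{S3FF}, {\bf (B)}: $\Gamma\car X$ being $\mu$-amenable means there is a $\Gamma$-invariant $\mu$-conull Borel set on which the action is Borel amenable, and $E_\Gamma^X$ being $\mu$-amenable means there is a $\Gamma$-invariant $\mu$-conull Borel set on which $E_\Gamma^X$ is amenable (as a CBER). This result is the measure-theoretic analogue of \cite{AEG94} (compare \cite[Corollary 5.3.33]{ADR00}).

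For \ref{muAmen}$\implies$\ref{muAmenStab}: by definition of $\mu$-amenability fix a $\Gamma$-invariant $\mu$-conull Borel set $X_0\subseteq X$ on which $\Gamma\car X_0$ is Borel amenable. Apply the implication \ref{borAmenAct}$\implies$\ref{amenStab} of \cref{borAmenImplication} to the action $\Gamma\car X_0$: it yields that $E_\Gamma^X\uhr X_0$ is amenable and that every stabilizer of a point of $X_0$ is amenable. Since $X_0$ is $\mu$-conull and $\Gamma$-invariant, this says exactly that $E_\Gamma^X$ is $\mu$-amenable and that $\mu$-a.e.\ stabilizer is amenable.

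For \ref{muAmenStab}$\implies$\ref{muAmen}: first, by the Connes--Feldman--Weiss theorem (see, e.g., \cite[Theorem 10.1]{KM04}), $\mu$-amenability of $E_\Gamma^X$ is equivalent to $\mu$-hyperfiniteness, so there is a $\Gamma$-invariant $\mu$-conull Borel set $X_1$ with $E_\Gamma^X\uhr X_1$ hyperfinite. Second, the set $A=\{x\in X:\text{$\Gamma_x$ is amenable}\}$ is Borel (indeed $G_\delta$, as noted in the proof of \cref{exactComeager} via the F{\o}lner characterization of the amenability of $\Gamma_x$) and $\Gamma$-invariant, because $\Gamma_{\gamma\cdot x}=\gamma\Gamma_x\gamma^{-1}$ is amenable iff $\Gamma_x$ is; by hypothesis $\mu(A)=1$. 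Put $X_2=X_1\cap A$, a $\Gamma$-invariant $\mu$-conull Borel set on which $E_\Gamma^X$ is hyperfinite and every stabilizer is amenable. Now apply the implication \ref{hfStab}$\implies$\ref{borAmenAct} of \cref{borAmenImplication} to $\Gamma\car X_2$ to conclude that $\Gamma\car X_2$ is Borel amenable, hence $\Gamma\car X$ is $\mu$-amenable.

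The only delicate points — and they are minor — are verifying that the stabilizer set $A$ is genuinely Borel and invariant (handled above), and checking that "amenability of $E_\Gamma^X\uhr X_0$" matches verbatim the definition of "$E_\Gamma^X$ is $\mu$-amenable" used in the excerpt, i.e.\ amenability on a $\mu$-conull invariant Borel set. No new idea beyond \cref{borAmenImplication} and Connes--Feldman--Weiss is needed, so there is no real obstacle; the argument is a short diagram-chase at the level of conull invariant sets.
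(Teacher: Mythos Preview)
Your proposal is correct and follows exactly the approach the paper indicates: the paper's proof consists solely of the sentence ``By \cref{borAmenImplication} and the Connes-Feldman-Weiss theorem \ldots'', and you have simply spelled out the routine bookkeeping with conull invariant sets that this entails. One minor point: the F{\o}lner argument for Borelness of the amenable-stabilizer set appears in the proposition immediately preceding \cref{exactComeager}, not in \cref{exactComeager} itself.
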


\begin{cor}\label{amenStabChar}
    Let $\Gamma\car X$ be a Borel action
    of a countable group on a standard Borel space.
    Then the following are equivalent:
    \begin{enumerate}[label=(\roman*)]
        \item
            $\Gamma\car X$ is measure-amenable.
        \item
            $E_\Gamma^X$ is measure-amenable
            and every stabilizer is amenable.
    \end{enumerate}
\end{cor}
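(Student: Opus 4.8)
The plan is to deduce \Cref{amenStabChar} directly from the preceding theorem, which characterizes $\mu$-amenability of the action $\Gamma\car X$ for a \emph{fixed} Borel probability measure $\mu$ as the conjunction of $\mu$-amenability of $E_\Gamma^X$ and amenability of $\mu$-a.e.\ stabilizer. Since measure-amenability — both of the action and of the equivalence relation $E_\Gamma^X$ — is by definition the ``for every $\mu$'' strengthening of $\mu$-amenability, the desired equivalence will follow once I observe that quantifying the clause ``$\mu$-a.e.\ stabilizer is amenable'' over all $\mu$ is the same as requiring \emph{every} stabilizer to be amenable.

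For (1)$\Rightarrow$(2): assuming $\Gamma\car X$ is $\mu$-amenable for every $\mu$, the preceding theorem gives that $E_\Gamma^X$ is $\mu$-amenable for every $\mu$, i.e.\ measure-amenable. Moreover, for an arbitrary $x\in X$ I would apply the theorem to the Dirac measure $\mu=\delta_x$: $\delta_x$-amenability of the action then forces ``$\delta_x$-a.e.\ stabilizer is amenable'', which is exactly the assertion that $\Gamma_x$ is amenable. Hence every stabilizer is amenable.

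For (2)$\Rightarrow$(1): fix an arbitrary Borel probability measure $\mu$. Then $E_\Gamma^X$ is $\mu$-amenable by the hypothesis that it is measure-amenable, and the clause ``$\mu$-a.e.\ stabilizer is amenable'' holds trivially since \emph{all} stabilizers are amenable. By the preceding theorem, $\Gamma\car X$ is $\mu$-amenable; as $\mu$ was arbitrary, the action is measure-amenable.

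I do not expect any real obstacle here: the substantive content has already been carried out in the preceding theorem (and ultimately in \Cref{borAmenImplication} together with the Connes--Feldman--Weiss theorem). The only point deserving a word of care is that Dirac measures are bona fide Borel probability measures on a standard Borel space, so testing measure-amenability against them is legitimate; this is precisely what converts the family of ``$\mu$-a.e.'' stabilizer conditions into the single ``everywhere'' condition appearing in (2).
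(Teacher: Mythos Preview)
Your argument is correct and is exactly the intended one: the paper states the corollary with no explicit proof, precisely because it follows from the preceding theorem by quantifying over all $\mu$ and using Dirac measures $\delta_x$ to pass from ``$\mu$-a.e.\ stabilizer is amenable for every $\mu$'' to ``every stabilizer is amenable''.
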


\section{Topological amenability}\label{topamen}

Topological amenability is equivalent to measure-amenability
(see \cite[Theorem 3.3.7]{AR00} for the locally compact case,
also the proof of \cite[Proposition 5.2.1]{BO08}):
\begin{thm}\label{topAmenChar}
    Let $\Gamma\car X$ be a continuous action
    of a countable group on a Polish space.
    Then the following are equivalent:
    \begin{enumerate}[label=(\roman*)]
        \item \label{topAmen}
            $\Gamma\car X$ is topologically amenable.
        \item \label{measAmen}
            $\Gamma\car X$ is measure-amenable.
    \end{enumerate}
    Moreover,
    if $X$ is $\sigma$-compact,
    then these are also equivalent to
    \begin{enumerate}[resume, label=(\roman*)]
        \item \label{borAmenKsigma}
            $\Gamma\car X$ is Borel amenable.
    \end{enumerate}
\end{thm}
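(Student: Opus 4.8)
The plan is to show $(1)\Leftrightarrow(2)$ for every Polish $X$ and then deduce the equivalence with $(3)$ when $X$ is $\sigma$-compact; concretely I would prove $(1)\Rightarrow(2)$ and $(2)\Rightarrow(1)$ in general, together with $(3)\Rightarrow(2)$ and (under $\sigma$-compactness) $(1)\Rightarrow(3)$, which closes all the required cycles. Three of these are routine. For $(3)\Rightarrow(2)$: a Borel amenable action is $\mu$-amenable for every $\mu$, with $X$ itself as the $\Gamma$-invariant conull set. For $(1)\Rightarrow(3)$ when $X$ is $\sigma$-compact: write $X=\bigcup_n K_n$ with $K_n$ compact and increasing, fix finite sets $1\in S_0\subseteq S_1\subseteq\cdots$ exhausting $\Gamma$, and use topological amenability to pick continuous $p_n\colon X\to\Prob(\Gamma)$ with $\|p_n^{\gamma\cdot x}-\gamma\cdot p_n^x\|_1<1/n$ for all $x\in K_n$ and $\gamma\in S_n$; since every pair $(x,\gamma)$ lies in $K_n\times S_n$ for all large $n$, the $p_n$ witness Borel amenability. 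For $(1)\Rightarrow(2)$ in general: given a Borel probability measure $\mu$, use inner regularity to choose compact $K_n$ with $\mu(X\setminus K_n)<2^{-n}$, take $p_n$ that are $\tfrac1n$-equivariant over $(S_n,K_n)$ as above, and set $Y_0=\bigcup_m\bigcap_{n\ge m}K_n$, which is $\mu$-conull by Borel--Cantelli. Its $\Gamma$-saturation $Y=[Y_0]_\Gamma$ is a $\Gamma$-invariant conull Borel set, and for $y=\delta\cdot x\in Y$ with $x\in Y_0$, the triangle inequality together with the isometry of the $\Gamma$-action on $\ell^1(\Gamma)$ gives
\[
\|p_n^{\gamma\cdot y}-\gamma\cdot p_n^y\|_1\le\|p_n^{\gamma\delta\cdot x}-\gamma\delta\cdot p_n^x\|_1+\|\delta\cdot p_n^x-p_n^{\delta\cdot x}\|_1\longrightarrow 0,
\]
since $x\in K_n$ and $\delta,\gamma\delta\in S_n$ for large $n$. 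Hence $(p_n\uhr Y)_n$ witnesses Borel amenability of $\Gamma\car Y$, so the action is $\mu$-amenable; as $\mu$ was arbitrary, it is measure-amenable.

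The substantive direction is $(2)\Rightarrow(1)$, which I would prove by contraposition, following the pattern of \cite[Theorem 3.3.7]{ADR00} and the proof of \cite[Proposition 5.2.1]{BO08}. First I would reduce to the case of amenable stabilizers: $(2)$ forces every stabilizer $\Gamma_x$ to be amenable by \cref{amenStabChar}, and $(1)$ does too, by the short observation that an almost-invariant continuous system for the compact set $\{x\}$, evaluated at $x$, gives a measure $p^x\in\Prob(\Gamma)$ that is almost invariant under left $\Gamma_x$-translation, and decomposing $p^x$ along the right cosets of $\Gamma_x$ (on which the left $\Gamma_x$-action is free) and averaging yields almost-invariant unit vectors in $\ell^1(\Gamma_x)$, i.e. Reiter's condition for $\Gamma_x$. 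So we may assume all stabilizers are amenable; then by \cref{amenStabChar} and the Connes--Feldman--Weiss theorem \cite[Theorem 10.1]{KM04}, $\mu$-amenability of the action is equivalent to $\mu$-amenability (equivalently, $\mu$-hyperfiniteness) of $E_\Gamma^X$, so it suffices to produce, from the failure of topological amenability, a Borel probability measure $\mu$ for which $E_\Gamma^X$ is not $\mu$-amenable.

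To this end, fix by the negation of $(1)$ a finite symmetric $S\ni 1$, a compact $K\subseteq X$, and $\varepsilon>0$ such that every continuous $p\colon X\to\Prob(\Gamma)$ satisfies $\max_{\gamma\in S,\ x\in K}\|p^{\gamma\cdot x}-\gamma\cdot p^x\|_1\ge\varepsilon$. Since the cited theorems are phrased for (locally) compact phase spaces while $X$ is only Polish, the adaptation I would make is to localize to the $\Gamma$-invariant, $\sigma$-compact Borel set $X'=[K]_\Gamma=\bigcup_{\gamma\in\Gamma}\gamma\cdot K$, on which the convex sets of continuous probability-measure-valued systems are again amenable to weak-$*$ compactness arguments; the failure of approximate $(S,\varepsilon)$-invariance on $X'$ is then converted, by a Hahn--Banach (minimax) duality argument, into a $\Gamma$-quasi-invariant Borel probability measure $\mu$ concentrated on $X'$ such that the action $\Gamma\car(X',\mu)$ is not amenable in the measured sense, i.e. $E_\Gamma^X$ is not $\mu$-amenable. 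This contradicts measure-amenability and finishes the proof.

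I expect the main obstacle to be precisely this last extraction of the witnessing measure, once one has left the locally compact setting of the classical arguments; the remedy of restricting to the $\sigma$-compact invariant hull $[K]_\Gamma$ of the bad compact set makes the needed compactness available, but two points require attention. First, non-topological-amenability must genuinely persist after passing to $\Gamma\car X'$: given a continuous $p\colon X'\to\Prob(\Gamma)$, one extends $p\uhr(K\cup SK)$ to a continuous $\tilde p\colon X\to\Prob(\Gamma)$ — using that $\Prob(\Gamma)$ is a closed convex subset of $\ell^1(\Gamma)$, hence an absolute retract — and then $\tilde p$ agrees with $p$ at every point $\gamma\cdot x$ with $\gamma\in S$, $x\in K$, so the obstruction $(S,K,\varepsilon)$ transfers verbatim. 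Second, the functional delivered by the separation argument must be represented by an honest countably additive quasi-invariant Borel measure rather than merely a finitely additive one on the algebra of relatively clopen sets; this is where the $\sigma$-compactness of $X'$ enters, via a regularity argument in the spirit of \cref{fam*}.
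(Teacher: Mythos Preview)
Your treatment of the easy directions ($(1)\Rightarrow(2)$, $(1)\Rightarrow(3)$ under $\sigma$-compactness, $(3)\Rightarrow(2)$) matches the paper's. For $(2)\Rightarrow(1)$, however, you take the contrapositive route---extract a bad measure from the failure of topological amenability---whereas the paper works directly: fix $S$ and $K$, define $\Psi_p(x)=\sum_{\gamma\in S}\|p^{\gamma\cdot x}-\gamma\cdot p^x\|_1\in C(K)$, observe that measure-amenability gives $\inf_p\int_K\Psi_p\,d\mu=0$ for every $\mu\in P(K)$, and then Riesz plus Hahn--Banach separation plus the sublinearity $\Psi_{\sum\alpha_i p_i}\le\sum\alpha_i\Psi_{p_i}$ force $\inf_p\|\Psi_p\|_\infty=0$. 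No stabilizer analysis, no CFW, no localization to $[K]_\Gamma$ is needed.

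Your argument has a genuine gap at the point you yourself flag. A Hahn--Banach/minimax extraction would produce a Borel probability measure $\mu$ on $K$ such that no \emph{continuous} $p$ satisfies $\int\|p^{\gamma\cdot x}-\gamma\cdot p^x\|_1\,d\mu<\varepsilon$. But non-$\mu$-amenability of the action requires the failure of \emph{Borel} witnesses on every invariant conull set, and you never bridge continuous to Borel. The missing ingredient is precisely the paper's \cref{muAmenCtsLem}: Lusin's theorem plus Dugundji's extension theorem (into the convex target $\Prob(\Gamma)$) show that Borel witnesses for $\mu$-amenability can be upgraded to continuous ones in $L^1(\mu)$, so the two formulations coincide. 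Once you have that lemma, your contrapositive argument would close, but then the detour through \cref{amenStabChar} and CFW becomes superfluous---the lemma already gives the equivalence directly, and the paper's direct Hahn--Banach argument is cleaner than extracting a measure. Your worries about countable additivity are misplaced (Riesz representation on $C(K)$ already gives Radon measures), as is the concern about quasi-invariance (the definition of $\mu$-amenability does not require it).
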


The following lemma says that in the definition of $\mu$-amenability,
we can upgrade the Borel functions to continuous ones:

\begin{lem}\label{muAmenCtsLem}
    Let $\Gamma\car X$ be a continuous action
    of a countable group on a Polish space,
    and let $\mu$ be a Borel probability measure on $X$.
    Then the following are equivalent:
    \begin{enumerate}[label=(\roman*)]
        \item
            $\Gamma\car X$ is $\mu$-amenable.
        \item
            For every finite $S\subseteq\Gamma$ and every $\epsilon > 0$,
            there is continuous $p \colon X \to \Prob(\Gamma)$
            such that for every $\gamma \in S$,
            we have
            \[
                \int_X \|p^{\gamma\cdot x} - \gamma\cdot p^x\|_1 \,\dd\mu(x)
                < \epsilon.
            \]
    \end{enumerate}
\end{lem}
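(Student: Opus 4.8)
The plan is to prove the two implications separately. The direction $(2)\Rightarrow(1)$ is a routine diagonalization, while $(1)\Rightarrow(2)$ is the substantive part and rests on upgrading the Borel witnesses of $\mu$-amenability to continuous functions via a Lusin-type regularization, combined with a dominated-convergence argument. Throughout I will view $\Prob(\Gamma)$ as a closed convex subset of the separable Banach space $\ell^1(\Gamma)$; recall that on $\Prob(\Gamma)$ the $\ell^1$-norm topology coincides with the topology of pointwise convergence (Scheff\'e's lemma), so ``continuous'' is unambiguous, and that $\Gamma$ acts on $\ell^1(\Gamma)$ by isometries preserving $\Prob(\Gamma)$.

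For $(2)\Rightarrow(1)$: enumerate $\Gamma = \{\gamma_1,\gamma_2,\dots\}$, and for each $n$ apply the hypothesis with $S = \{\gamma_1,\dots,\gamma_n\}$ and $\varepsilon = 4^{-n}$ to obtain a continuous $p_n\colon X\to\Prob(\Gamma)$ with $\int_X \|p_n^{\gamma\cdot x} - \gamma\cdot p_n^x\|_1 \,d\mu(x) < 4^{-n}$ for every $\gamma\in\{\gamma_1,\dots,\gamma_n\}$. For fixed $\gamma = \gamma_k$, the series $\sum_{n\ge k}\int_X \|p_n^{\gamma\cdot x} - \gamma\cdot p_n^x\|_1 \,d\mu$ converges, so by Tonelli $\sum_{n\ge k} \|p_n^{\gamma\cdot x} - \gamma\cdot p_n^x\|_1 < \infty$ for $\mu$-a.e.\ $x$, hence $\|p_n^{\gamma\cdot x} - \gamma\cdot p_n^x\|_1 \to 0$ for $\mu$-a.e.\ $x$. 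Intersecting the corresponding conull sets over the countably many $\gamma$ gives a $\mu$-conull Borel set $X_1$ on which $(p_n)$ witnesses Borel amenability for every $\gamma$ simultaneously. Finally $X_1$ is $\Gamma$-invariant: if $x\in X_1$ and $\delta\in\Gamma$, then applying the convergence at $x$ to $\gamma\delta$ and to $\delta$ (and using isometry of the $\gamma$-action) gives, by the triangle inequality, $\|p_n^{\gamma\delta\cdot x} - \gamma\cdot p_n^{\delta\cdot x}\|_1 \to 0$ for every $\gamma$, so $\delta\cdot x\in X_1$. Thus $\Gamma\car X$ is $\mu$-amenable.

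For $(1)\Rightarrow(2)$: let $(p_n)$ be Borel maps on a $\Gamma$-invariant $\mu$-conull Borel set $X_0$ witnessing Borel amenability, extended on $X\setminus X_0$ (say by the point mass $\delta_1$ at the identity) so that each $p_n\colon X\to\Prob(\Gamma)$ is Borel. Fix a finite $S\subseteq\Gamma$ and $\varepsilon>0$. Since $0\le \|p_n^{\gamma\cdot x} - \gamma\cdot p_n^x\|_1\le 2$ and the integrand tends to $0$ on the $\mu$-conull set $X_0$, dominated convergence yields an $n$ with $\int_X \|p_n^{\gamma\cdot x} - \gamma\cdot p_n^x\|_1\,d\mu < \varepsilon/2$ for every $\gamma\in S$. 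Now set $\nu := \mu + \sum_{\gamma\in S}\gamma_*\mu$, a finite Borel measure on the Polish space $X$. By Lusin's theorem there is a closed set $F\subseteq X$ with $\nu(X\setminus F)$ arbitrarily small and $p_n\uhr F$ continuous; by the Dugundji extension theorem, $p_n\uhr F$ extends to a continuous $p\colon X\to\Prob(\Gamma)$ (the extension remains in $\Prob(\Gamma)$ as the latter is closed and convex). Since $\|p^x - p_n^x\|_1 \le 2\cdot\mbf 1_{X\setminus F}(x)$, we get $\int_X \|p^x - p_n^x\|_1\,d\mu \le 2\nu(X\setminus F)$ and, changing variables by the homeomorphism $\gamma$, $\int_X \|p^{\gamma\cdot x} - p_n^{\gamma\cdot x}\|_1\,d\mu = \int_X \|p^x - p_n^x\|_1\,d(\gamma_*\mu) \le 2\nu(X\setminus F)$ for $\gamma\in S$. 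Using isometry of the $\Gamma$-action on $\ell^1(\Gamma)$,
\[
\int_X \|p^{\gamma\cdot x} - \gamma\cdot p^x\|_1\,d\mu \le \int_X \|p^{\gamma\cdot x} - p_n^{\gamma\cdot x}\|_1\,d\mu + \int_X \|p_n^{\gamma\cdot x} - \gamma\cdot p_n^x\|_1\,d\mu + \int_X \|p_n^x - p^x\|_1\,d\mu ,
\]
so choosing $F$ with $\nu(X\setminus F) < \varepsilon/8$ makes the right-hand side $<\varepsilon$ for every $\gamma\in S$, as desired.

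The only point requiring genuine care is that $\mu$ need not be quasi-invariant, so one cannot ``move perturbations around'' by individual group elements; the resolution is to apply Lusin's theorem a single time against the finite combination $\nu = \mu + \sum_{\gamma\in S}\gamma_*\mu$, which simultaneously controls all the error terms $\int_X \|p^{\gamma\cdot x} - p_n^{\gamma\cdot x}\|_1\,d\mu = \int_X \|p - p_n\|_1\,d(\gamma_*\mu)$ for $\gamma\in S$. Everything else — dominated/monotone convergence, Lusin's theorem on Polish spaces, Dugundji extension into a closed convex set, the isometric $\ell^1$-action, and the triangle-inequality argument producing an invariant conull set — is standard.
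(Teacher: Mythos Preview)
Your proof is correct and uses the same core approach as the paper (Lusin's theorem plus Dugundji extension for $(1)\Rightarrow(2)$). You are in fact more complete than the paper: you supply the $(2)\Rightarrow(1)$ direction, and, more importantly, you explicitly apply Lusin against $\nu = \mu + \sum_{\gamma\in S}\gamma_*\mu$ to control the term $\int\|p^{\gamma\cdot x}-p_n^{\gamma\cdot x}\|_1\,d\mu = \int\|p-p_n\|_1\,d(\gamma_*\mu)$, a point the paper's terse reduction (``it suffices to show $\int\|p-q\|_1\,d\mu<\varepsilon$'') leaves implicit.
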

\begin{proof}[Proof of \cref{muAmenCtsLem}]
    It suffices to show that for every Borel
    $p \colon X\to \Prob(\Gamma)$ and every $\epsilon > 0$,
    there is some continuous $q \colon X \to \Prob(\Gamma)$
    such that
    \[
        \int_X \|p - q\|_1 \, \dd\mu < \epsilon.
    \]
    By Lusin's theorem \cite[17.12]{Kec95},
    there is a closed $F\subseteq X$
    with $\mu(F) > 1 - \frac{\epsilon}{2}$
    such that $p\uhr F$ is continuous.
    By Dugundji's extension theorem \cite[4.1]{Dug51},
    there is some continuous extension
    $q \colon X \to \Prob(\Gamma)$ of $p\uhr F$.
    Then $p$ and $q$ agree on $F$,
    so we are done.
\end{proof}

\begin{proof}[Proof of \cref{topAmenChar}]
    \leavevmode
    \begin{itemize}[leftmargin=\oddsidemargin,align=left]
        \item[\ref{topAmen}$\implies$\ref{measAmen}:]
            This follows from tightness of Borel probability measures,
            see \cite[17.11]{Kec95}.
        \item[\ref{measAmen}$\implies$\ref{topAmen}:]
            Let $S \subseteq \Gamma$ be finite
            and let $K \subseteq X$ be compact.
            Denote below  by $C(X, \Prob(\Gamma))$
            the set of continuous functions $X\to\Prob(\Gamma)$,
            and define $\Psi \colon C(X, \Prob(\Gamma)) \to C(K)$ by
            \[
                \Psi_p(x)
                = \sum_{\gamma\in S}
                \|p^{\gamma\cdot x} - \gamma\cdot p^x\|_1.
            \]

            By measure-amenability and \cref{muAmenCtsLem},
            for every Borel probability measure $\mu$ on $K$,
            we have
            \[
                \inf_{f\in \im\Psi} \int_K f \, \dd\mu
                = 0.
            \]
            So by the Riesz representation theorem for $C(K)$,
            for every functional $\phi \in C(K)^*$,
            we have
            \[
                \inf_{f\in \im\Psi} |\phi(f)| = 0.
            \]
            Thus by the Hahn-Banach separation theorem,
            we have
            \[
                \inf_{f \in \Conv(\im\Psi)} \|f\|_\infty
                = 0,
            \]
            where $\Conv(\im\Psi)$ denotes the convex hull of $\im\Psi$.
            Since
            \[
                \Psi_{\sum_{i < k} \alpha_i p_i}
                \le \sum_{i < k}\alpha_i \Psi_{p_i},
            \]
            we have
            \[
                \inf_{f \in \im\Psi} \|f\|_\infty
                = 0,
            \]
            so we are done.
    \end{itemize}
    
    Now suppose that $X$ is $\sigma$-compact.
    It suffices to show \ref{topAmen}$\implies$\ref{borAmenKsigma}.
    Write $\Gamma = \bigcup_n S_n$ as an increasing union of finite subsets,
    and write $X = \bigcup_n K_n$ as an increasing union of compact subsets.
    Then for each $n$,
    by topological amenability,
    there is some continuous $p_n \colon X \to \Prob(\Gamma)$
    such that
    \[
        \max_{\substack{\gamma \in S_n \\ x \in K_n}}
        \|p_n^{\gamma\cdot x} - \gamma\cdot p_n^x\|_1
        < \frac{1}{n}.
    \]
    Then $(p_n)_n$ witnesses Borel amenability of $\Gamma\car X$.
\end{proof}

\chapter{Weak Containment}\label{appendix-weak-containment}
We give here the proof of \cref{395}. 

\begin{enumerate}[label=(\arabic*)]
    \item
        Note that if $h \in \Homeo(\cantor)$, then $h$ is free (i.e., $h(x) \neq x,\forall x$) iff there is a finite sequence $(K_i)_{i=1}^n$ of clopen sets such that $\cantor = \bigcup_{i=1}^nK_i$ and $h(K_i)\cap K_i =\emptyset, \forall i$. From this is follows easily that if $\ac a \preceq {\ac b}$ and $\ac a$ is free, then so is ${\ac b}$, recalling that if $h_n, h \in \Homeo(\cantor)$ and $h_n\to h$, then for any clopen set $K$, for all large enough $n$, $h_n (K) = h(K)$.
    \item
        By Nadkarni's theorem (\Cref{nadkarni}), it is enough to show that if $\ac a \preceq {\ac b}$ and ${\ac b}$ admits an invariant (Borel probability) measure, then so does $\ac a$. We have that $\ac a$ is the limit of a sequence of actions $\ac a_n$ that admit invariant measures $\mu_n$, which by compactness in the space of measures we can assume that they converge to a measure $\mu$.  We will check that $\mu$ is invariant for $\ac a$. Let for $\gamma \in \Gamma$, $\gamma^{\ac a} (x) = \ac a(\gamma, x)$, and similarly for $\ac a_n$, so that $\gamma^{\ac a_n} \to \gamma^{\ac a}$. Fix a clopen set $K$. Then for all large enough $n$, we have that $\gamma^{\ac a} (K) =  \gamma^{\ac a_n} (K)$. Then $\mu (\gamma^{\ac a} (K) )= \lim_n \mu_n (\gamma^{\ac a} (K)) = \lim_n \mu_n (\gamma^{\ac a_n} (K)) ) = \lim_n \mu_n (K) = \mu (K)$.
    \item
        We will use the following lemma (see \cite[Lemma 3.3]{Ele19}).
        \begin{lem}\label{B01}
            Let $p \colon \cantor \to \Prob(\Gamma)$ be continuous. Then for each $\epsilon >0$, there are finite $F\subseteq \Gamma$ and continuous $q \colon \cantor \to \Prob(\Gamma)$ such that for all $x\in\cantor$ and $\gamma\notin F$, we have that $q^x (\gamma) =0$, and moreover $||p^x - q^x||_1 <\epsilon$.
        \end{lem}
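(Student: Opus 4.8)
\textbf{Proof proposal for \cref{B01}.} The statement asserts that any continuous $p\colon \cantor \to \Prob(\Gamma)$ can be approximated in $\ell^1$, uniformly in $x$, by a continuous function $q$ which is finitely supported (with a single finite support set $F$ working for all $x$). The plan is to exploit compactness of $\cantor$ together with the continuity of $p$, viewing $\Prob(\Gamma)$ as a subset of $\ell^1(\Gamma)$ with the norm topology on the image.

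First I would observe that since $\Gamma = \bigcup_n F_n$ for an increasing exhaustion by finite sets $F_n$, for each fixed $x$ the tail mass $\sum_{\gamma \notin F_n} p^x(\gamma) \to 0$ as $n\to\infty$. The key point is to make this uniform in $x$. For this, fix $\epsilon > 0$; for each $x$ choose $n_x$ with $\sum_{\gamma\notin F_{n_x}} p^x(\gamma) < \epsilon/3$. The map $x \mapsto \sum_{\gamma \in F_{n_x}} p^x(\gamma)$ is continuous (a finite sum of continuous coordinate functions $x\mapsto p^x(\gamma)$), so the set $U_x = \{x' : \sum_{\gamma\in F_{n_x}} p^{x'}(\gamma) > 1 - \epsilon/3\}$ is open and contains $x$. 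By compactness of $\cantor$, finitely many $U_{x_1},\dots,U_{x_k}$ cover $\cantor$; let $N = \max_i n_{x_i}$ and $F = F_N$. Then for every $x\in\cantor$ we have $\sum_{\gamma\notin F} p^x(\gamma) < \epsilon/3$, i.e. $\|p^x - p^x\uhr_F\|_1 < \epsilon/3$ where $p^x\uhr_F$ denotes the restriction of the measure to $F$ (extended by $0$).

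Next I would define $q^x$ by renormalizing: $q^x(\gamma) = p^x(\gamma)/\big(\sum_{\delta\in F} p^\delta_x\big)$ for $\gamma\in F$ and $q^x(\gamma) = 0$ otherwise. Wait --- one should be slightly careful; write $c(x) = \sum_{\delta\in F} p^x(\delta)$, so $c(x) > 1-\epsilon/3 > 0$ for all $x$, hence $c$ is continuous and bounded away from zero, so $q$ is continuous, $\Prob(\Gamma)$-valued, and supported in $F$. Finally I would estimate $\|p^x - q^x\|_1$: on $F$, $|p^x(\gamma) - q^x(\gamma)| = p^x(\gamma)|1 - 1/c(x)| = p^x(\gamma)\frac{1-c(x)}{c(x)}$, summing to $(1-c(x)) \cdot \frac{c(x)}{c(x)} = 1-c(x) < \epsilon/3$; and off $F$, $\sum_{\gamma\notin F}|p^x(\gamma) - q^x(\gamma)| = \sum_{\gamma\notin F}p^x(\gamma) = 1-c(x) < \epsilon/3$. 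Thus $\|p^x - q^x\|_1 < 2\epsilon/3 < \epsilon$ for all $x$, as required.

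The routine arithmetic of the renormalization bound is the only real computation, and it is elementary; the one genuine idea is the use of compactness to turn the pointwise tail-decay into a uniform statement, which is where I would focus the write-up. One alternative to renormalization, if one wants to avoid division, is simply to take $q^x = p^x\uhr_F + (1-c(x))\delta_e$ for a fixed basepoint $e\in F$ (moving the missing mass onto the identity); this is also continuous and supported in $F$, and the bound $\|p^x - q^x\|_1 \le \sum_{\gamma\notin F}p^x(\gamma) + (1-c(x)) < 2\epsilon/3$ is immediate. Either version works; I would present whichever reads more cleanly.
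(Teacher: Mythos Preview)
Your proof is correct and follows essentially the same approach as the paper: use compactness of $\cantor$ to obtain a single finite $F$ with uniformly small tail mass, then redistribute that mass to get a probability measure supported in $F$. The only cosmetic differences are that the paper uses a sequential compactness (contradiction) argument in place of your open-cover argument, and constructs $q$ exactly by your ``alternative'' of dumping the missing mass onto the identity rather than renormalizing.
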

        \begin{proof}
            Let $(F_n)$ be an increasing sequence of finite subsets of $\Gamma$ with $1\in F_0$ that covers $\Gamma$. Then there must exist $n$ such that for all $x\in \cantor$, $p^x (F_n) > 1-\frac{\epsilon}{3}$. Indeed otherwise there is a sequence of points $x_n$ such that $p^{x_n}(F_n) \le 1-\frac{\epsilon}{3}$. Then there is a subsequence of $x_n$ converging to some $x$ for which $p^x (\Gamma) \le 1-\frac{\epsilon}{3}$, a contradiction. Take $F=F_n$.
        
            Define now $q$ as follows: If $\gamma \in F\setminus \{1\}$, then $q^x (\gamma) = p^x (\gamma)$; $q^x(\gamma) = 0$, if $x\notin F$; $q^x (1) = p^x (1) + p^x( \Gamma\setminus F)$.
        \end{proof}
\end{enumerate}
Let now $\ac a\preceq {\ac b}$ and assume that $\ac a$ is topologically amenable. Fix a finite subset $S\subseteq \Gamma$ and $\epsilon >0$. Then there is a continuous $p \colon \cantor \to \Prob (\Gamma)$ such that 
\[
    \max_{\substack{\gamma \in S \\ x \in \cantor}}
    \|p^{\ac a(\gamma, x)} - \gamma\cdot p^x\|_1
    < \epsilon.
\]
By \cref{B01} we can also assume that there is finite $F\subseteq \Gamma$ such that $p^x(\gamma) =0, \forall \gamma \notin F$. Since $\ac a$ is a limit of conjugates of ${\ac b}$, it follows that there is a conjugate ${\ac c}$ of ${\ac b}$, say via the homeomorphism $h$ of $\cantor$, such that

\[
    \max_{\substack{\gamma \in S \\ x \in \cantor}}
    \|p^{{\ac c}(\gamma, x)} - p^{\ac a(\gamma, x)}    
    \|_1
\]
can be made as small as we want, so that 
\[
    \max_{\substack{\gamma \in S \\ x \in \cantor}}
    \|p^{{\ac c}(\gamma, x)} - \gamma\cdot p^x\|_1
    < \epsilon.
\]
Then if $q^x= p^{h(x)}$, we have that $q$ is continuous and 
\[
    \max_{\substack{\gamma \in S \\ x \in \cantor}}
    \|q^{{\ac b}(\gamma, x)} - \gamma\cdot q^x\|_1
    < \epsilon.
\]
and the proof is complete.

\chapter{The Correspondence Theorem of Hochman}\label{appendix-correspondence-theorem}
We give here the proof of \cref{correspondence}.
Below for each Polish space $X$,
we denote by $K(X)$ the Polish space of
all compact subsets of $X$ with the Vietoris topology,
which is compact if $X$ is compact.
If $d \le 1$ is a compatible metric on $X$,
let $d_H$ be the corresponding Hausdorff metric for $K(X)$,
which induces the Vietoris topology (see \cite[4.F]{Kec95}).
Fix a compatible metric $d \le 1$ for $\I^\N$
with corresponding Hausdorff metric $d_H$ for $K(\I^\N)$,
the product metric $d_1$ for $(\I^\N)^\Gamma$ given by
$d_1((x_\gamma)_\gamma, (y_\gamma)_\gamma)
= \sum^\infty_{i=0} 2^{-i} d(x_{\gamma_i}, y_{\gamma_i})$,
where $\Gamma = \{\gamma_i : i\in \N \}$,
with $\gamma_0 =1$,
and the corresponding Hausdorff metric $d_{1,H}$ on
$(K(\I^\N))^\Gamma$.

The idea is to construct a Polish space $E$ and a topological embedding 
\[
    \pi \colon \Act(\Gamma, \cantor) \times E \to \Sh(\Gamma, \I^\N),
\]
so that for all $\ac a \in \Act(\Gamma, \cantor)$ and $e\in E$, the subshift $\pi (\ac a, e)$ is topologically isomorphic to $\ac a$ and moreover the range of $\pi$ is dense (and therefore dense $G_\delta$) in $\Sh(\Gamma, \I^\N)$. The Correspondence Theorem follows immediately.

To construct $\pi$ we will need a series of lemmas. Below denote by $D$ the set of topological embeddings of $\cantor$ to $\I^\N$. This is a $G_\delta$ subset of the Polish space of all continuous maps from $\cantor$ to $\I^\N$  with the sup metric, so it is a Polish space.

\begin{lem}\label{C01}
There is a $G_\delta$ subset $E \subseteq D$ so that the map $e\in E\mapsto e(\cantor) \in K(\I^\N)$ is a topological embedding with dense (and therefore dense $G_\delta$) image in $K(\I^\N)$.
\end{lem}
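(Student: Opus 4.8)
The plan is to carve out of $D$ exactly those embeddings that can be reconstructed from their image by a purely metric procedure: iteratively splitting a Cantor set along its \emph{widest} clopen partition. (The naive alternative of reserving one coordinate of $\I^\N$ to record the address of a point in the tree $\cantor$ fails, since it confines the corresponding projection of $e(\cantor)$ to a fixed nowhere dense set and hence destroys density.) For $s\in 2^{<\N}$ put $[s]=\{x\in\cantor:s\subseteq x\}$, and for $e\in D$ and a clopen $U$ with $\emptyset\neq U\subsetneq[s]$ put $\mathrm{sep}_e(U)=d\big(e(U),e([s]\setminus U)\big)$. Let $E$ be the set of $e\in D$ such that for every $s\in 2^{<\N}$:
\begin{enumerate}[label=(\roman*)]
\item $\mathrm{sep}_e([s0])>\mathrm{sep}_e(U)$ for every clopen $U$ with $\emptyset\neq U\subsetneq[s]$ and $U\notin\{[s0],[s1]\}$;
\item $\mathrm{diam}\,e([s0])<\mathrm{diam}\,e([s1])$.
\end{enumerate}
For each fixed clopen $U$ the map $e\mapsto e(U)$ is continuous $D\to K(\I^\N)$, and $d$ and $\mathrm{diam}$ are continuous on $K(\I^\N)$, so each of these countably many conditions is open; hence $E$ is $G_\delta$ in $D$. (Condition (i) already forces $e([s0])\cap e([s1])=\emptyset$ for all $s$, so membership in $D$ is automatic from (i).)

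The structural heart is this: for $e\in E$, $(e([s0]),e([s1]))$ is the \emph{unique} clopen partition of $e([s])$ of maximal $d$-value, and the maximum is attained with a strictly positive gap. Indeed, writing $m_t=\max\{d(A,B):(A,B)\text{ a clopen partition of }e([t])\}$: any clopen partition $(A,B)$ of $e([s])$ other than the top one splits $e([s0])$ or $e([s1])$ nontrivially, so $d(A,B)\le\max(m_{s0},m_{s1})$; and $m_{si}=d(e([si0]),e([si1]))<d(e([s0]),e([s1]))$ by applying (i) at $s$ to the partition $(e([si0]),e([s])\setminus e([si0]))$, using $d(e([si0]),e([s(1-i)]))\ge d(e([s0]),e([s1]))$ since $e([si0])\subseteq e([si])$. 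Hence $e$ is recovered from $K:=e(\cantor)$ by recursion: $e([\,])=K$; given $e([s])$, its children are the two pieces of the unique widest clopen partition of $e([s])$, with $e([s0])$ the one of smaller diameter; and $e(x)$ is the unique point of $\bigcap_n e([x\restriction n])$ (a singleton since $\mathrm{diam}\,e([x\restriction n])\to0$). In particular $e\mapsto e(\cantor)$ is injective on $E$.

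Next I would show $e\mapsto e(\cantor)$ is a topological embedding. It is always continuous, so it suffices that $e_m,e\in E$ with $e_m(\cantor)\to e(\cantor)$ implies $e_m\to e$ uniformly. By König's lemma $\mathrm{diam}\,e([s])$ tends to $0$ uniformly over $s$ of a fixed length, so it is enough to prove $e_m([s])\to e([s])$ for every $s$, by induction on $|s|$. The inductive step is continuity of the widest-clopen-partition operation, valid at $e([s])$ because of the spectral gap: if $P_m\to P$ in $K(\I^\N)$ and $P$ has a widest clopen partition $(A,B)$ whose $d$-value strictly exceeds that of all other clopen partitions of $P$, then for large $m$ the set $A_m=\{p\in P_m:d(p,A)\le d_H(P_m,P)\}$ and its complement in $P_m$ form a clopen partition converging to $(A,B)$, and it is the widest clopen partition of $P_m$ (any clopen partition of $P_m$ with $d$-value above half the gap must equal it, since $A_m,B_m$ are close to $A,B$, whose own internal clopen partitions have small $d$-value). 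The diameter ordering (ii) then matches up the children, giving $e_m([si])\to e([si])$.

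The main obstacle is density of $\{e(\cantor):e\in E\}$ in $K(\I^\N)$. Given $K_0$ and $\varepsilon>0$ I would fix a finite $\tfrac{\varepsilon}{3}$-net $\{p_0,\dots,p_r\}\subseteq K_0$, pick $N$ with $2^N\ge r+1$ and a surjection $j:2^N\to\{0,\dots,r\}$, fix some $f\in E$, and build $e$ so that for each $u\in2^N$ the piece $e([u])$ is a very small similar copy of $f(\cantor)$ inside the ball $B(p_{j(u)},\tfrac{\varepsilon}{3})$ (this pins down $e$ below level $N$; above level $N$ the pieces are the corresponding unions of these blobs). Then $d_H(e(\cantor),K_0)<\varepsilon$. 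Inside the blobs, (i) and (ii) hold automatically, as a similarity preserves the relevant strict inequalities; at the nodes $s$ with $|s|<N$ the quantities $\mathrm{sep}_e(U)$ are governed by finitely many inter-blob distances (a clopen $U$ that splits some blob contributes less than a blob diameter), so by placing the blob centres and radii in general position—hierarchically, so the split at each node $s$ with $|s|<N$ is strictly wider than all splits below it—conditions (i) and (ii) can be secured. This yields $e\in E$ with $d_H(e(\cantor),K_0)<\varepsilon$. Finally, the image of $E$, being the image of the Polish space $E$ under a topological embedding, is automatically a dense $G_\delta$ subset of $K(\I^\N)$.
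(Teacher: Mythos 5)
Your reduction of membership in $E$ to the chain of inequalities $c_s:=d(e([s0]),e([s1]))>c_{s0},c_{s1}$ is correct, and the $G_\delta$ and topological-embedding parts of your argument go through (the continuity of the widest-partition operation at a set admitting a unique maximizer with a spectral gap is a routine compactness argument in $K(\I^\N)$). The genuine gap is in the density step. The hierarchy of splits of $e(\cantor)$ is not yours to impose: at scales large compared to $\epsilon$ it is dictated by the geometry of $K_0$, because condition (i) at a node forces the canonical partition to be the \emph{unique widest} clopen partition of $e([s])$. Concretely, take $K_0=\{q_0,\dots,q_r\}$ collinear with gaps $g_i=d(q_i,q_{i+1})$ satisfying $g_0>g_1>\dots>g_{r-1}\gg\epsilon$ (with the $g_i$ separated by more than, say, $4\epsilon$). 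Any $e(\cantor)$ within $\epsilon$ of $K_0$ splits into clusters $C_i=e(\cantor)\cap B(q_i,\epsilon)$ of diameter $\le 2\epsilon$; a clopen partition cutting a cluster has value $\le 2\epsilon$, and among cluster-respecting partitions the unique widest one isolates $C_0$ (value about $g_0$, all others at most about $g_1$). So condition (i) forces $e([1])=C_0$ (say), then $e([01])=C_1$, and so on: the induced tree on clusters must be a caterpillar of depth $r$. No choice of the surjection $j$ and no perturbation of blob centres within the $\epsilon/3$-balls can change this, since the obstruction lives at scale $g_i\gg\epsilon$. Hence your balanced tree of depth $N$ with $2^N\ge r+1$ cannot carry the required structure once $r>N$, and ``general position'' does not rescue it. (A smaller issue: ``fix some $f\in E$'' presupposes $E\neq\emptyset$, which itself requires a construction of the same kind.)

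The density step is repairable, but it needs a real argument where ``general position--hierarchically'' now stands: build the tree \emph{bottom-up} by single-linkage clustering of the net points (single-linkage merge heights are non-decreasing in time, hence non-increasing down the dendrogram, which gives $c_s\ge c_{s0},c_{s1}$ for that tree --- note that the top-down ``widest split first'' reading of your sentence fails, since a subset can admit a wider split than the whole set); break ties by perturbing blob centres so that all merge heights and sibling diameters become distinct; embed the resulting finite binary tree into $2^{<\N}$ root-to-root and child-to-child; and hang a sufficiently small copy of an explicitly constructed element of $E$ at each leaf. That is the heart of the matter and it is what is missing. For comparison, the paper's proof avoids all of this: it uniformizes the relation ``$e(\cantor)=C$'' over the dense $G_\delta$ set of Cantor subsets of $\I^\N$ by the Jankov--von Neumann theorem, restricts to a dense $G_\delta$ set on which the uniformizing function is continuous, and takes $E$ to be its image, so that density of the image is automatic. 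Your explicit $E$ would be more informative if completed, but the soft argument is what makes the lemma short.
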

\begin{proof}
The set ${\mathcal C}$ of all Cantor sets in $K(\I^\N)$ is a dense $G_\delta$ subset of $K(\I^\N)$ (see \cite[8.8]{Kec95}). Moreover by the Jankov-von Neumann Uniformization Theorem (see, \cite[18.1]{Kec95}) there is a Baire measurable map $f \colon {\mathcal C}\to D$ such that if $f(C) = e$, then $e(\cantor) = C$. Then by \cite[8.38]{Kec95}, there is a dense $G_\delta$ subset ${\mathcal C}'$ of ${\mathcal C}$ such that $f$ is continuous on ${\mathcal C}'$. Clearly $f|{\mathcal C}'$ is a topological embedding and we can take $E = f({\mathcal C}')$. 
\end{proof}
For each action $\ac a \in \Act(\Gamma, C)$, where $C$ is a Cantor space, and topological embedding $e$ of $C$ into $\I^\N$,  there is a canonical subshift in $\Sh(\Gamma, \I^\N)$, denoted by $e^*_{\ac a}$, defined as follows: Let $f_{\ac a} \colon C \to (\I^\N)^\Gamma$ be defined by:
\[
f_{\ac a} (x) _\gamma = e(\ac a(\gamma^{-1}, x)).
\]
We put $e^*_{\ac a } = f_{\ac a}(C)$. This is a subshift in $\Sh(\Gamma, \I^\N)$ topologically isomorphic to $\ac a$. Finally we define for $\ac a \in \Act(\Gamma, \cantor), e\in E$,
\[
\pi(\ac a, e) = e^*_{\ac a }.
\]
Clearly $\pi(\ac a, e)$ is isomorphic to $\ac a$, so it only remains to show that $\pi$ is a topological embedding with dense range. 

\begin{lem}
    The map $\pi$ is a topological embedding.
\end{lem}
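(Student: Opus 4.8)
The plan is to verify that the map $\pi(\mbf a, e) = e^*_{\mbf a}$ is a continuous injection which is a homeomorphism onto its image, working with the explicit metrics $d_1$ on $(\I^\N)^\Gamma$ and the induced Hausdorff metric $d_{1,H}$ on $\Sh(\Gamma,\I^\N) \subseteq K((\I^\N)^\Gamma)$, together with the sup metric on $\mathrm{Act}(\Gamma,\cantor)$ (which is itself a product of copies of $\mathrm{Homeo}(\cantor)$ with the uniform metric) and the topology on $E$ coming from \cref{C01}.

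First I would prove \textbf{continuity}. Fix a compatible metric $\rho$ on $\mathrm{Act}(\Gamma,\cantor)$, say $\rho(\mbf a, \mbf a') = \sum_i 2^{-i} \sup_{x} d_{2^\N}(\mbf a(\gamma_i, x), \mbf a'(\gamma_i, x))$. Suppose $\mbf a_n \to \mbf a$ in $\mathrm{Act}(\Gamma,\cantor)$ and $e_n \to e$ in $E$ (so in particular $e_n \to e$ uniformly as maps $\cantor \to \I^\N$). I claim $f_{\mbf a_n}(\cantor) \to f_{\mbf a}(\cantor)$ in $d_{1,H}$. Given $\epsilon > 0$, choose $N$ with $\sum_{i > N} 2^{-i} < \epsilon/3$; then $d_1$ on $(\I^\N)^\Gamma$ only "sees" the coordinates $\gamma_0, \dots, \gamma_N$ up to $\epsilon/3$. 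For each such coordinate, $f_{\mbf a_n}(x)_{\gamma_i} = e_n(\mbf a_n(\gamma_i^{-1}, x))$, and uniform convergence $e_n \to e$ plus uniform convergence $\mbf a_n(\gamma_i^{-1},\cdot) \to \mbf a(\gamma_i^{-1},\cdot)$ plus uniform continuity of $e$ on the compact space $\cantor$ gives $\sup_x d(e_n(\mbf a_n(\gamma_i^{-1},x)), e(\mbf a(\gamma_i^{-1},x))) \to 0$. Combining over the finitely many coordinates $i \le N$, for large $n$ every point of $f_{\mbf a_n}(\cantor)$ is within $\epsilon$ of the corresponding point of $f_{\mbf a}(\cantor)$ and vice versa, so $d_{1,H}(f_{\mbf a_n}(\cantor), f_{\mbf a}(\cantor)) < \epsilon$. (One must note $\pi$ is only defined on $\mathrm{Act}(\Gamma,\cantor) \times E$, but continuity is a statement about sequences inside this domain, so this suffices.)

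Next I would prove \textbf{injectivity} and that the inverse is continuous on the image. For injectivity: from $e^*_{\mbf a}$ one recovers the pair $(\mbf a, e)$ — indeed the coordinate-$1$ (i.e.\ $\gamma_0$) projection of $e^*_{\mbf a}$ is exactly $e(\cantor) \subseteq \I^\N$, and since $e \mapsto e(\cantor)$ is injective on $E$ by \cref{C01}, $e$ is determined; then $e$ being a topological embedding, the shift action on $e^*_{\mbf a}$ transports back via $e^{-1} \circ (\text{proj}_{\gamma_0})$ to exactly $\mbf a$, determining $\mbf a$. For continuity of the inverse: if $e^*_{\mbf a_n} \to e^*_{\mbf a}$ in $d_{1,H}$, then projecting to the $\gamma_0$-coordinate is a continuous map $K((\I^\N)^\Gamma) \to K(\I^\N)$, so $e_n(\cantor) \to e(\cantor)$ in $K(\I^\N)$, hence $e_n \to e$ in $E$ by the embedding property from \cref{C01}; and then, recovering $\mbf a_n$ as $x \mapsto e_n^{-1}(\text{the }\gamma_i\text{-coordinate of the point of }e^*_{\mbf a_n}\text{ whose }\gamma_0\text{-coordinate is }e_n(x))$, a uniform-continuity argument (now using that $e_n \to e$ uniformly and $e_n^{-1} \to e^{-1}$ uniformly, the latter because the $e_n(\cantor)$ converge as compacta and the $e_n$ converge uniformly) gives $\mbf a_n \to \mbf a$. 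The last step — that $e_n^{-1} \to e^{-1}$ uniformly — is the one I expect to need the most care, since inversion of embeddings is not automatically continuous; I would handle it by a standard compactness argument: if not, there are $x_n$ with $e_n^{-1}(z_n)$ far from $e^{-1}(z_n)$ for suitable $z_n \in e_n(\cantor)$, pass to a convergent subsequence $x_n \to x$, and derive $e_n(x_n) \to e(x)$ yet $e_n(x_n) \to e(x')$ for some $x' \ne x$, contradicting that $e$ is injective and $e_n \to e$ uniformly. Once $\pi$ is shown to be a topological embedding, the final step (carried out in the text after this lemma) is to check the range of $\pi$ is dense in $\Sh(\Gamma,\I^\N)$, using density of $\{e(\cantor) : e \in E\}$ in $K(\I^\N)$ from \cref{C01} together with the fact that every $\Gamma$-flow on a compact subset of $\I^\N$ can be approximated by one supported on a Cantor set; then $\pi$ embeds $\mathrm{Act}(\Gamma,\cantor) \times E$ onto a dense $G_\delta$ of $\Sh(\Gamma,\I^\N)$, and \cref{correspondence} follows since $E$ is Polish and a product of a comeager set with $E$ meets every nonempty open set in the $\Phi$-locus iff the corresponding slice does.
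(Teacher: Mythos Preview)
Your approach is correct and essentially the same as the paper's. Both prove continuity directly, recover $e$ from $\pi(\mbf a,e)$ via the identity-coordinate projection and \cref{C01}, and then recover $\mbf a$ by a Hausdorff-distance argument matching points of $e^*_{\mbf a_n}$ with nearby points of $e^*_{\mbf a}$.

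The one difference worth noting: the paper avoids your ``$e_n^{-1}\to e^{-1}$ uniformly'' step (which, as you observe, is delicate because the domains vary) by a simplifying reduction. Once $e_n\to e$ uniformly, one has $d_{1,H}(\pi(\mbf a_n,e_n),\pi(\mbf a_n,e))\le \|e_n-e\|_\infty\to 0$, so $\pi(\mbf a_n,e)\to\pi(\mbf a,e)$ as well. This lets one replace $\cantor$ by $C=e(\cantor)$ and assume all embeddings are the identity $i_C$, after which the matching argument involves no inverses at all: given $c\in C$, Hausdorff closeness yields $c'\in C$ with $d_1((\mbf a_n(\gamma,c))_\gamma,(\mbf a(\gamma,c'))_\gamma)<\delta$, hence $d(c,c')<\delta$ from the identity coordinate, and then uniform continuity of $\mbf a(\gamma,\cdot)$ finishes. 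Your direct route works too, but this trick is cleaner and sidesteps the varying-domain issue entirely.
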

\begin{proof}
    It is clear from \cref{C01} that $\pi$ is injective and continuous.
    We will verify next that its inverse is continuous.
    
    Assume that $\pi(\ac a_n, e_n) \to \pi(\ac a, e)$,
    in order to show that $\ac a_n \to \ac a$ and $e_n\ \to e$. 

    \begin{enumerate}[label=(\arabic*)]
        \item
            $e_n\to e$:
            Let $\pi_1$ be the projection map $\pi_1 \colon (\I^\N)^\Gamma \to \I^\N$ given by $\pi_1 (x) = x(1)$. Then $\pi_1 (e^*_\ac a) = e(\cantor)$. Since $\pi(\ac a_n, e_n) \to \pi(\ac a, e)$, by applying $\pi_1$ we have that  $e_n (\cantor)\to e(\cantor)$, therefore $e_n\to e$ by \cref{C01}.
    
        \item
            $\ac a_n \to \ac a$:
            Since $e_n\to e$ uniformly, it is clear that $\pi(\ac a_n, e) \to \pi(\ac a, e) $. Replacing $\cantor$ with $C = e(\cantor)$ and each action on $\cantor$ by its copy on $C$ via $e$, we can assume that actually $\ac a_n, \ac a\in \Act(\Gamma, C)$ and if $i_C$ is the identity on $C$, then we have that  $(i_C)^*_{\ac a_n} \to (i_C)^*_{\ac a}$. To show that  $\ac a_n \to \ac a$ we need to show that for each $\gamma \in \Gamma$, $\ac a_n(\gamma, c) \to  \ac a(\gamma, c)$, uniformly for $c\in C$. 
        
            Fix $\epsilon >0$ and then find $\delta < \frac{\epsilon}{k_0}$, where $k_0$ is some positive constant, defined below, depending only on $\gamma$, such that 
            \[
            d(c,d) < \delta \implies d(\ac a(\gamma, c), \ac a(\gamma, d)) < \epsilon.
            \]
            Next fix $N$ such that for all $n \ge N$, we have that $d_{1,H}((i_C)^*_{\ac a_n}, (i_C)^*_{\ac a}) < \delta$. Let now $c\in C$. Then there is $c'\in C$ such that 
            \[
            d_1((\ac a_n(\gamma, c))_\gamma, (\ac a(\gamma, c'))_\gamma) < \delta
            \]
            and therefore $d(c,c') < \delta$ and $d( \ac a_n (\gamma, c), d( \ac a (\gamma, c') )< k_0\delta$, for some constant $k_0 >0$ depending only on $\gamma$. Thus $d(\ac a(\gamma, c), \ac a(\gamma, c')) < \epsilon$ and so $d(\ac a_n(\gamma, c), \ac a(\gamma, c)) < \epsilon + k_0 \delta < 2\epsilon$ and the proof is complete.
    \end{enumerate}
\end{proof}

We next need the following general topological lemma. Below $C$ will be a Cantor space with compatible metric $d_C$, $X$ a perfect Polish space with metric $d$ and ${\mathcal K}'$ is a dense subset of $K(X)$ consisting of Cantor sets.

\begin{lem}\label{C03}
    If $f \colon C \to X$ is such that 
    \[
        d(f(c), f(d)) \le d_C(c, d),
    \]
    for all $c, d \in C$,
    then for every $\epsilon > 0$,
    there is $K \in \mathcal K'$ and a homeomorphism $g \colon C \to K$ such that
    \[
        d(f(c), g(c)) < \epsilon, \forall c \in C.
    \]
\end{lem}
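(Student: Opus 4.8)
The plan is to approximate $f$ first by an honest topological embedding of $C$ into $X$, and then to slide the image of that embedding onto a member of $\mathcal{K}'$ by a homeomorphism that moves points by a controlled amount; the latter uses the density of $\mathcal{K}'$ in $K(X)$ together with a general fact about Hausdorff‑close Cantor sets. For the first part, I would use the Lipschitz bound to pick a clopen partition $C = C_1 \sqcup \cdots \sqcup C_N$ with each $C_i$ of $d_C$‑diameter $< \epsilon/4$, so that $\operatorname{diam}_d(f(C_i)) < \epsilon/4$, and fix a point $c_i \in C_i$. Next I would choose pairwise disjoint Cantor sets $K_i^0$ with $K_i^0$ contained in the open $\epsilon/4$‑ball about $f(c_i)$ in $X$; this is possible by induction, since once $K_1^0, \dots, K_{i-1}^0$ have been chosen nowhere dense, their union is closed and nowhere dense, so the $\epsilon/4$‑ball about $f(c_i)$ minus that union is a nonempty open subset of $X$, hence a nonempty perfect Polish space, hence contains a nowhere dense Cantor set, which we take to be $K_i^0$. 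Each $C_i$ is a nonempty clopen subset of $C$ and so is itself a Cantor set, so we may fix homeomorphisms $h_i \colon C_i \to K_i^0$ and set $h = \bigsqcup_i h_i \colon C \to K^0 := \bigsqcup_i K_i^0$; then $h$ is a homeomorphism onto the Cantor set $K^0$, and for $c \in C_i$ we get $d(f(c), h(c)) \le d(f(c), f(c_i)) + d(f(c_i), h(c)) < \epsilon/4 + \epsilon/4 = \epsilon/2$.

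The core step is the following matching lemma, which I would prove separately: for a Cantor set $L \subseteq X$ and $\epsilon' > 0$ there is $\eta > 0$ such that every Cantor set $K \subseteq X$ with $d_H(K, L) < \eta$ carries a homeomorphism $\phi \colon L \to K$ with $\sup_{\ell \in L} d(\ell, \phi(\ell)) < \epsilon'$. To see this, partition $L$ into finitely many nonempty clopen pieces $L_1, \dots, L_m$ of diameter $< \epsilon'/3$, put $\rho = \min_{i \ne j} d(L_i, L_j) > 0$, and take $\eta < \min(\rho/3, \epsilon'/3)$. If $d_H(K, L) < \eta$, then each point of $K$ lies within $\eta$ of exactly one $L_i$ (using $\eta < \rho/3$), so $K_i := \{ k \in K : d(k, L_i) < \eta \}$ defines a clopen partition $K = K_1 \sqcup \cdots \sqcup K_m$; each $K_i$ is nonempty because every point of $L$ is within $\eta$ of $K$, and each $K_i$, being nonempty clopen in the Cantor set $K$, is again a Cantor set. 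Choosing arbitrary homeomorphisms $\phi_i \colon L_i \to K_i$ and setting $\phi = \bigsqcup_i \phi_i$ gives the homeomorphism, since for $\ell \in L_i$ the point $\phi_i(\ell) \in K_i$ is within $\operatorname{diam}(L_i) + \eta < \epsilon'$ of $\ell$.

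To finish, I would apply the matching lemma with $L = K^0$ and $\epsilon' = \epsilon/2$ to obtain $\eta > 0$, use density of $\mathcal{K}'$ in $K(X)$ to pick $K \in \mathcal{K}'$ with $d_H(K, K^0) < \eta$, let $\phi \colon K^0 \to K$ be the resulting homeomorphism, and set $g := \phi \circ h \colon C \to K$; then $g$ is a homeomorphism onto $K \in \mathcal{K}'$ with $d(f(c), g(c)) \le d(f(c), h(c)) + d(h(c), g(c)) < \epsilon/2 + \epsilon/2 = \epsilon$. I expect the matching lemma of the second paragraph to be the main obstacle, since one has to be sure that Hausdorff‑closeness of two Cantor sets genuinely yields a homeomorphism between them with small displacement, which forces the use of a decomposition of $L$ into small, well‑separated clopen pieces and its transfer to $K$; by contrast the first paragraph is routine once one recalls that nonempty perfect Polish spaces (hence nonempty open subsets of $X$) contain nowhere dense Cantor sets and that nonempty clopen subsets of Cantor sets are Cantor sets, and the bookkeeping in the last paragraph is immediate.
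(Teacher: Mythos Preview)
Your proof is correct and shares the same skeleton as the paper's: partition $C$ into small clopen pieces $C_i$, find $K\in\mathcal{K}'$ admitting a matching clopen partition $K=\bigsqcup K_i$ with each $K_i$ close to $f(C_i)$, and glue homeomorphisms $C_i\to K_i$. The difference is that you insert an intermediate Cantor set $K^0=\bigsqcup K_i^0$ and then prove a ``matching lemma'' transporting $K^0$ to a Hausdorff-close $K\in\mathcal{K}'$, whereas the paper skips $K^0$ entirely: it simply perturbs the centers $f(c_i)$ to \emph{distinct} points $x_i$ (using that $X$ is perfect), picks $K\in\mathcal{K}'$ with $d_H(K,\{x_1,\dots,x_n\})<\rho\le\min\{\delta/2,\epsilon/3\}$ where $\delta=\min_{i\ne j}d(x_i,x_j)$, and partitions $K$ by proximity to the $x_i$. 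In effect, the paper's argument is your matching lemma run with $L$ a finite set rather than a Cantor set, so the well-separated pieces are just singletons; this saves the work of constructing the pairwise disjoint nowhere dense Cantor sets $K_i^0$ and collapses your two stages into one. Your route is a bit longer but yields the standalone matching lemma between Hausdorff-close Cantor sets, which is a reusable statement in its own right.
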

\begin{proof}
    Let $C =\bigsqcup^n_{i=1} C_i$ be a partition of $C$ into nonempty clopen sets with $d_C (C_i) < \frac{\epsilon}{3}, 1 \le i \le n$. Let $c_i \in C_i, 1 \le i \le n$. Recursively define distinct $x_i\in X, 1 \le i \le n$, such that $d(f(c_i), x_i)< \frac{\epsilon}{3}$. Let $\delta$ be the minimum of the distances $d (x_i, x_j)$, for $1 \le i \neq j \le n$. Then find $K\in {\mathcal K}'$ such that 
    \[
    d_H (K, \{x_1, \dots , x_n\}) < \rho \le \min\{\frac{\delta}{2}, \frac{\epsilon}{3}\}.
    \]
    Then for each $k\in K$, there is \textit{unique} $1 \le i \le n$ such that $d (k, x_i) < \rho$. Put $K_i = K \cap B_{x_i} (\rho)$, where $B_{x_i}(\rho) $ is the open ball of radius $\rho$ with center $x_i$ in the metric $d$. Then 
    
    \begin{enumerate}[label=(\arabic*)]
        \item
            $K =\bigsqcup^n_{i=1} K_i$,
        \item
            For each $1 \le i \le n$,
            $K_i \neq \emptyset$,
            since there is some $k\in K$ with $d (k, x_i ) < \rho$,
        \item
            For each $1 \le i \le n$, $K_i$ is clopen in $K$.
    \end{enumerate}
    
    Thus $K =\bigsqcup^n_i K_i$ is a clopen partition of $K$ into Cantor sets. Therefore there is a homeomorphism $g_i \colon C_i \to K_i$, $1 \le i \le n$. Put $g =\bigsqcup^n_{i=1} g_i$ so that $g \colon C \to K$ is a homeomorphism. 
    
    Let now $c\in C$. Then for some (unique)  $1 \le i \le n$, $c\in C_i$, so $d_C (c,c_i) < \frac{\epsilon}{3}$. Thus $d ( f(c), f(c_i)) < \frac{\epsilon}{3}$. Also $d ( f(c_i), x_i) < \frac{\epsilon}{3}$ and $g(c) = g_i (c) \in K_i$, so $d_X (g(c), x_i) < \rho < \frac{\epsilon}{3}$. Thus $d ( f(c), g(c) ) < \epsilon$.
\end{proof}

\begin{lem}\label{C04}
Let ${\mathcal K}_0$ be a dense subset of $K(\I^\N)$ consisting of Cantor sets. Then for each Cantor subshift $K$ in $\Sh(\Gamma, \I^\N)$, $K$ is the limit of a sequence  $K_n =(i_{C_n})^*_{\ac a_n}$, with $C_n \in {\mathcal K}_0$, $\ac a_n \in \Act(\Gamma, C_n)$.
\end{lem}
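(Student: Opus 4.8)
The plan is to realize an arbitrary Cantor subshift $K \subseteq (\I^\N)^\Gamma$ as a limit of subshifts of the canonical form $(i_{C_n})^*_{\mathbf a_n}$. First I would unpack what the target is: a Cantor subshift $K$ carries the shift $\Gamma$-action, and via the projection $\pi_1\colon (\I^\N)^\Gamma\to\I^\N$, $x\mapsto x(1)$, the set $C := \pi_1(K)$ is a compact subset of $\I^\N$ and the map $\phi\colon K\to C^\Gamma$ given by $\phi(x)_\gamma = x(\gamma^{-1})$ recovers $K$ from the restricted $\Gamma$-action $\mathbf a$ on $C$ induced by the shift; that is, $K = (i_C)^*_{\mathbf a}$ where $i_C$ is the identity embedding of $C$ into $\I^\N$. (There is a subtlety: $\pi_1$ need not be injective on $K$, so $C$ need not be a Cantor set and $\mathbf a$ need not be well-defined as a genuine action on $C$ — but the orbit map still makes sense, and $K$ is still recovered as the closure of $\{(\gamma^{-1}\cdot x)_\gamma : x\in K\}$. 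I would handle this by first passing to a coordinate on which $K$ embeds, or more simply by working directly with the orbit-closure description.)

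Next I would approximate. Fix a compatible metric and $\epsilon>0$. Since $K$ is a Cantor subshift, I want to produce $C_n\in\mathcal K_0$ and an action $\mathbf a_n\in\mathrm{Act}(\Gamma,C_n)$ such that $(i_{C_n})^*_{\mathbf a_n}$ is $\epsilon$-close to $K$ in the Hausdorff metric $d_{1,H}$. The key tool is \cref{C03}: given a $1$-Lipschitz map from a Cantor space into a perfect Polish space, one can $\epsilon$-approximate it by a homeomorphism onto a Cantor set from the prescribed dense family. I would apply this with $X = \I^\N$, with the Cantor domain being $K$ itself equipped with a suitable metric making the relevant maps contractive (rescaling the metric on $K$ so that all the finitely many generator maps $x\mapsto \gamma\cdot x$, $\gamma$ in a finite generating-or-exhausting set, become $1$-Lipschitz — this is possible since $K$ is compact and these are continuous), and the map $f$ being $\pi_1\colon K\to\I^\N$. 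This yields a Cantor set $C_n\in\mathcal K_0$ and a homeomorphism $g_n\colon K\to C_n$ with $d(\pi_1(x),g_n(x))<\epsilon$ for all $x\in K$. Transport the shift action on $K$ through $g_n$ to get $\mathbf a_n\in\mathrm{Act}(\Gamma,C_n)$.

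Finally I would check the Hausdorff estimate: the subshift $(i_{C_n})^*_{\mathbf a_n}$ is the image of $C_n$ under $c\mapsto (\mathbf a_n(\gamma^{-1},c))_\gamma$, equivalently the image of $K$ under $x\mapsto (g_n(\gamma^{-1}\cdot x))_\gamma$, while $K$ is the image of $x\mapsto (\pi_1(\gamma^{-1}\cdot x))_\gamma = (x(\gamma^{-1}))_\gamma$ — wait, one must be careful here, as $K$ consists of full sequences, not just the images under $\pi_1$; the correct statement is that a point $x\in K$ has $x(\gamma^{-1}) = \pi_1(\gamma^{-1}\cdot x)$ by definition of the shift, so indeed $K = \{(\pi_1(\gamma^{-1}\cdot x))_\gamma : x\in K\}$. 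Then for fixed $x$, the distance in $d_1$ between $(g_n(\gamma^{-1}\cdot x))_\gamma$ and $(\pi_1(\gamma^{-1}\cdot x))_\gamma$ is $\sum_i 2^{-i} d(g_n(\gamma_i^{-1}\cdot x),\pi_1(\gamma_i^{-1}\cdot x)) < \epsilon\sum_i 2^{-i} = 2\epsilon$, uniformly in $x$. Since this is a bijective correspondence between the two subshifts with uniformly small displacement, $d_{1,H}(K,(i_{C_n})^*_{\mathbf a_n}) < 2\epsilon$, and letting $\epsilon = 1/n$ finishes the proof. The main obstacle I anticipate is the bookkeeping around the non-injectivity of $\pi_1$ on $K$ and correctly phrasing the orbit-map description of $K$; once the identification $K = (i_C)^*_{\mathbf a}$ (or its orbit-closure surrogate) is cleanly set up, the approximation step is a direct application of \cref{C03} together with the exhaustion of $\Gamma$ by finite sets to control the tail of the $d_1$-sum.
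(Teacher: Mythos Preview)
Your approach is essentially the same as the paper's: apply \cref{C03} to the map $\pi_1\colon K\to\I^\N$ (with $K$ itself as the Cantor domain), obtain a homeomorphism $g\colon K\to C_0$ with $C_0\in\mathcal K_0$ and $d(\pi_1(x),g(x))<\epsilon$, transport the shift action on $K$ through $g$ to get $\mathbf a\in\mathrm{Act}(\Gamma,C_0)$, and then check coordinatewise that $d(x(\gamma),j(x)(\gamma))<\epsilon$ for all $\gamma$, where $j = f_{\mathbf a}\circ g$.

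A few of your worries are unnecessary. You never need $\pi_1$ to be injective on $K$ or to define an action on $\pi_1(K)$: the Cantor space in \cref{C03} is $K$, not its projection. You also do not need to rescale the metric on $K$ to make the shift maps Lipschitz; only $f=\pi_1$ must be $1$-Lipschitz, and it already is since $d(x_1,y_1)\le d_1(x,y)$ by the definition of $d_1$. Finally, there is no tail to control via an exhaustion of $\Gamma$: the estimate $d(g(\gamma^{-1}\cdot x),\pi_1(\gamma^{-1}\cdot x))<\epsilon$ holds for every $\gamma$, so the $d_1$-distance is bounded by $\epsilon\sum_i 2^{-i}$ directly. (Also, a small indexing slip: $\pi_1(\gamma^{-1}\cdot x)=x(\gamma)$, not $x(\gamma^{-1})$, though this does not affect the argument.)
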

\begin{proof}
Let $\epsilon >0$. Let $K_1 = \pi_1 (K)$. Then by \cref{C03} (with $C= K, X=\I^\N, f =\pi_1, {\mathcal K}' = {\mathcal K}_0$),  there is a homeomorphism $g \colon K \to C_0$, where $C_0\in {\mathcal K}_0$ and $d(x(1), g(x))< \epsilon$, for all $x\in K$. Push the shift action on $K$ via $g$ to the action $\ac a\in \Act(\Gamma, C_0)$. Let $L= (i_{C_0})_{\ac a}(C_0)$ and put $ j = (i_{C_0})_{\ac a}\circ g \colon K \to L$. Then for all $x\in  K$,
\[
d(x(1), j(x)(1)) = d(x(1), g(x)) <\epsilon.
\]
and so for every $\gamma \in \Gamma, x\in K$,
\[
d(x(\gamma), j(x)(\gamma)) = d({\ac s} (\gamma^{-1}, x)(1), (i_{C_0})_{\ac a} (g(x))(\gamma)) = d({\ac s} (\gamma^{-1}, x)(1), g({\ac s} ( \gamma^{-1}, x)) <\epsilon,
\]
where ${\ac s}$ is the shift action. Thus $d_{1,H} (K, L)  <\epsilon$ and the proof is complete.
\end{proof}

Next note the following:

\begin{lem}
The Cantor subshifts are dense in $\Sh(\Gamma, \I^\N)$.
\end{lem}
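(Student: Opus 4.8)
The plan is to show that every subshift $F\in\Sh(\Gamma,\I^\N)$ is a $d_{1,H}$-limit of Cantor subshifts, by a single ``quantization relative to a dense Cantor set'' construction. Fix $F$ and $\epsilon>0$, and fix $\delta>0$ with $2\delta<\epsilon$. Since the Cantor sets are dense in $K(\I^\N)$ (see \cite[8.8]{Kec95}), I would first choose a Cantor set $D\subseteq\I^\N$ with $d_H(D,\I^\N)<\delta/2$, so that every point of $\I^\N$ is within $\delta/2$ of some point of $D$; note that $D$ is perfect. Then define
\[
    F' := \{\, y\in D^\Gamma : \exists x\in F\ \forall\gamma\in\Gamma\ \ d(x(\gamma),y(\gamma))\le\delta \,\}.
\]
The idea is that $y$ is a $\delta$-approximation, coordinate by coordinate, of a point of $F$, but the \emph{same} $x\in F$ must serve for every coordinate; this uniform choice of witness is exactly what keeps $F'$ shift-invariant.

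The steps to verify are: (1) $F'$ is a subshift. It is the image of the compact set $\{(x,y)\in F\times D^\Gamma:\forall\gamma\ d(x(\gamma),y(\gamma))\le\delta\}$ under the continuous second-coordinate projection, hence compact and so closed; it is nonempty, since for any $x\in F$ one may pick $y(\gamma)\in D$ with $d(x(\gamma),y(\gamma))<\delta/2$; and it is $\Gamma$-invariant, since if $x$ witnesses $y\in F'$, then $\gamma_0\cdot x\in F$ witnesses $\gamma_0\cdot y\in F'$. (2) $F'$ is a Cantor set. It is a closed subset of $D^\Gamma\cong\cantor$, hence compact, metrizable and zero-dimensional, and it is perfect: given $y\in F'$ with witness $x$ and a finite $A\subseteq\Gamma$, pick $\gamma^\ast\in\Gamma\setminus A$; by $\delta/2$-density there is $p\in D$ with $d(x(\gamma^\ast),p)<\delta$, and since $D$ is perfect $p$ is an accumulation point of $D$, so the open ball $B(x(\gamma^\ast),\delta)$ contains infinitely many points of $D$; choosing $y'(\gamma^\ast)\in D\cap B(x(\gamma^\ast),\delta)$ with $y'(\gamma^\ast)\ne y(\gamma^\ast)$ and leaving the other coordinates of $y$ unchanged gives $y'\in F'$ with $y'\uhr A=y\uhr A$ and $y'\ne y$, so $y$ is not isolated. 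Being nonempty, compact, metrizable, perfect and zero-dimensional, $F'$ is homeomorphic to $\cantor$, so the subshift $F'$ is a Cantor subshift. (3) $F'$ is $\epsilon$-close to $F$: if $x$ witnesses $y\in F'$, then $d_1(x,y)=\sum_i 2^{-i}d(x(\gamma_i),y(\gamma_i))\le 2\delta$; conversely, every $x\in F$ is within $\delta$ of the $y\in F'$ produced in (1); hence $d_{1,H}(F,F')\le 2\delta<\epsilon$.

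The only genuinely non-routine point — the crux — is that $F'$ must simultaneously be (a) closed and $\Gamma$-invariant, (b) zero-dimensional, and (c) perfect. A naive ``round each coordinate to the nearest point of $D$'' operation is discontinuous, hence its image need not be closed or invariant, while $F$ itself is in general neither zero-dimensional nor a Cantor set; insisting on one uniform witness $x\in F$ across all coordinates secures (a), confining $F'$ to $D^\Gamma$ secures (b), and $\delta/2$-density together with perfectness of $D$ gives (c) essentially for free. Everything else is bookkeeping with the metrics $d_1$ and $d_{1,H}$. (Combined with \cref{C04}, this density statement is what makes the range of $\pi$ dense, completing the proof of the Correspondence Theorem.)
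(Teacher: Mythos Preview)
Your proof is correct and follows essentially the same construction as the paper: both replace the alphabet by a nearby Cantor set and take the subshift of all sequences that shadow some point of $F$ coordinatewise. The only cosmetic differences are that the paper approximates $\pi_1(K)$ rather than all of $\I^\N$, and uses a strict inequality followed by closure rather than your $\le\delta$ with a direct compactness argument; your version has the merit of actually verifying perfectness of $F'$, which the paper leaves implicit.
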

\begin{proof}
Let $K\in \Sh(\Gamma, \I^\N)$ and let $D= \pi_1 (K)$, which is a compact subset of $\I^\N$. Let then $C$ be a Cantor set such that the Hausdorff distance of $C$ from $D$ is less than $\frac{\epsilon}{2}$. Put 
\[
K'_0 = \{x'\in C^\Gamma : \exists x\in K(d(x(\gamma), x'(\gamma)) < \frac{\epsilon}{2}, \forall \gamma \in \Gamma )\}.
\]
and let $K'$ be the closure of $K'_0$. Then $K'$ is a Cantor subshift with $d_{1,H}(K, K') < \epsilon$.
\end{proof}

We finally complete the proof of the Correspondence Theorem by showing that $\pi$ has dense range.

Let $K$ be a Cantor subshift. Let ${\mathcal K}_0 = \{ e(\cantor) : e\in E\}$, which is dense in $K(\I^\N)$ by \cref{C01}. Then by \cref{C04}, we can find $K_n \to K$ with  $K_n =(i_{C_n})^*_{\ac a_n}$, where $C_n \in {\mathcal K}_0$ and $\ac a_n \in \Act(\Gamma, C_n)$. Let $e_n\in E$ be a homeomorphism of $\cantor$ with $C_n$ and pull back the action $ {\ac a_n}$ via $e_n$ to the action ${\ac b}_n \in \Act(\Gamma, \cantor)$. Then clearly $\pi({\ac b_n}, e_n) = K_n$ and we are done.

\begin{remark}
    Let $\Phi$ be a property of continuous $\Gamma$-actions on Polish spaces, which is invariant under (topological) isomorphism. Let $\mathcal{C}$ be a class of sets in Polish spaces, which is closed under continuous preimages. Then the preceding results in this Appendix show that if $\Sh_\Phi(\Gamma, \I^\N)$ is in the class $\mathcal{C}$, then $\Act_\Phi(\Gamma, \cantor)$ is also in the class $\mathcal{C}$. It is not clear if the converse holds.
\end{remark}

\backmatter

\bibliographystyle{amsalpha}
\bibliography{realizations_cber}

\end{document}